\newtheorem{firstthm}{Proposition}[chapter]
\newtheorem{thm}[firstthm]{Theorem}
\newtheorem{prop}[firstthm]{Proposition}
\newtheorem{fact}[firstthm]{Fact}
\newtheorem{lemma}[firstthm]{Lemma}
\newtheorem{cor}[firstthm]{Corollary}
\newtheorem{conjecture}[firstthm]{Conjecture}
\def\COMMENT#1{}
\def\TASK#1{}
\def\noproof{{\unskip\nobreak\hfill\penalty50\hskip2em\hbox{}\nobreak\hfill%
        $\square$\parfillskip=0pt\finalhyphendemerits=0\par}\goodbreak}
\def\endproof{\noproof\bigskip}
\newdimen\margin   
\def\textno#1&#2\par{%
    \margin=\hsize
    \advance\margin by -4\parindent
           \setbox1=\hbox{\sl#1}%
    \ifdim\wd1 < \margin
       $$\box1\eqno#2$$%
    \else
       \bigbreak
       \hbox to \hsize{\indent$\vcenter{\advance\hsize by -3\parindent
       \sl\noindent#1}\hfil#2$}%
       \bigbreak
    \fi}
\def\proof{\removelastskip\penalty55\medskip\noindent{\bf Proof. }}
\def\cC{\mathcal{C}}
\def\cM{\mathcal{M}}
\def\cP{\mathcal{P}}
\def\eps{\varepsilon}
\def\ex{\mathbb{E}}
\def\prob{\mathbb{P}}
\def\pr{\mathbb{P}}
\def\i{(i_1,i_2,i_3,i_4)}
\def\I{i_1,i_2,i_3,i_4}
\def\cJ{\mathcal{J}}
\def\epszero{\eps_0}
\numberwithin{section}{chapter}
\numberwithin{equation}{section}
\numberwithin{firstthm}{section}
\numberwithin{figure}{section}
\begin{document}

\frontmatter

\title[Proof of the $1$-factorization \& Hamilton Decomposition Conjectures]{Proof of the $1$-factorization and Hamilton Decomposition Conjectures}


\author{B\'ela Csaba}
\address{Bolyai Institute,
University of Szeged,
H-6720 Szeged, Aradi \break v\'ertan\'uk tere 1.
Hungary}
\email{bcsaba@math.u-szeged.hu}

\author{Daniela K\"uhn}
\address{School of Mathematics
University of Birmingham
Edgbaston
Birmingham 
B15 2TT
UK}
\email{d.kuhn@bham.ac.uk}

\author{Allan Lo}
\address{School of Mathematics
University of Birmingham
Edgbaston
Birmingham 
B15 2TT
UK}
\email{s.a.lo@bham.ac.uk}

\author{Deryk Osthus}
\address{School of Mathematics
University of Birmingham
Edgbaston
Birmingham 
B15 2TT
UK}
\email{d.osthus@bham.ac.uk}

\author{Andrew Treglown}
\address{School of Mathematics
University of Birmingham
Edgbaston
Birmingham 
B15 2TT
UK}
\email{a.treglown@bham.ac.uk}
\thanks{The research leading to these results was partially supported by the  European Research Council under the European Union's Seventh Framework Programme (FP/2007--2013) / ERC Grant Agreement no. 258345 (B.~Csaba, D.~K\"uhn and A.~Lo), 306349 (D.~Osthus) and 259385 (A.~Treglown). The research was also partially supported by the EPSRC, grant no. EP/J008087/1 (D.~K\"uhn and D.~Osthus).}

\date{}

\subjclass[2010]{Primary 05C70, 05C45}

\keywords{1-factorization, Hamilton cycle, Hamilton decomposition}


\begin{abstract}
In this paper we prove the following results (via a unified approach) for all sufficiently large $n$:
\begin{itemize}
\item[(i)] [\emph{$1$-factorization conjecture}]
Suppose  that $n$ is even and $D\geq 2\lceil n/4\rceil -1$. 
Then every $D$-regular graph $G$ on $n$ vertices has a decomposition into perfect matchings.
Equivalently, $\chi'(G)=D$.

\item[(ii)] [\emph{Hamilton decomposition conjecture}]
Suppose that $D \ge   \lfloor n/2 \rfloor $.
Then every $D$-regular graph $G$ on $n$ vertices has a decomposition
into Hamilton cycles and at most one perfect matching.

\item[(iii)] [\emph{Optimal packings of Hamilton cycles}] Suppose that $G$ is a graph on $n$ vertices with
minimum degree $\delta\ge n/2$.
Then $G$ contains at least ${\rm reg}_{\rm even}(n,\delta)/2 \ge (n-2)/8$ edge-disjoint Hamilton cycles.
Here $\textnormal{reg}_{\textnormal{even}}(n,\delta)$ denotes the degree of the 
largest even-regular spanning subgraph one can guarantee in a graph on $n$ vertices
with minimum degree~$\delta$.
\end{itemize}
(i) was first  explicitly stated by Chetwynd and Hilton.
(ii) and the special case $\delta= \lceil n/2 \rceil$ of (iii) answer questions of Nash-Williams from 1970.
All of the above bounds are best possible. 
\end{abstract}

\maketitle

\tableofcontents

\mainmatter

\chapter{Introduction}

\section{Introduction}\label{intro}

In this paper we provide a unified approach towards proving three long-standing conjectures
for all sufficiently large graphs. Firstly, the $1$-factorization conjecture, which can be formulated as an edge-colouring problem;
secondly, the Hamilton decomposition conjecture, which provides a far-reaching generalization of Walecki's result~\cite{lucas} that
every complete graph of odd order has a Hamilton decomposition and  
thirdly, a best possible result on packing edge-disjoint 
Hamilton cycles in Dirac graphs. The latter two problems were raised by Nash-Williams~~\cite{initconj,Diracext,decompconj} in 1970. 


\subsection{The $1$-factorization Conjecture}

Vizing's theorem states that for any graph~$G$ of maximum degree $\Delta$, its edge-chromatic number
$\chi'(G)$ is either $\Delta$ or $\Delta+1$. 
However, the problem of determining the precise value of $\chi '(G)$
for an arbitrary graph $G$ is NP-complete~\cite{NP}. 
Thus, it is of interest to determine classes of graphs $G$ that attain the (trivial) lower bound~$\Delta$
-- much of the recent book~\cite{stiebitz} is devoted to the subject.
For regular graphs $G$, $\chi'(G)=\Delta(G)$ is equivalent to the existence of a $1$-factorization:
a \emph{$1$-factorization} of a graph~$G$ consists of a set of edge-disjoint perfect matchings covering all edges of~$G$.
The long-standing $1$-factorization conjecture states that every regular graph of sufficiently high degree has a 
$1$-factorization. It was first stated explicitly by Chetwynd and Hilton~\cite{1factorization,CH} (who also proved partial results).
However, they state that
according to Dirac, it was already discussed in the 1950s. Here we prove the conjecture for large graphs.

\begin{thm}\label{1factthm}
There exists an $n_0 \in \mathbb N$ such that the following holds.
Let $ n,D \in \mathbb N$ be such that $n\geq n_0$ is even and $D\geq 2\lceil n/4\rceil -1$. 
Then every $D$-regular graph $G$ on $n$ vertices has a $1$-factorization.%
    \COMMENT{So this means that $D\ge n/2$ if $n = 2 \pmod 4$ and $D\ge n/2-1$ if $n = 0 \pmod 4$.}
    Equivalently, $\chi'(G)=D$.
\end{thm}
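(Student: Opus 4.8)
The plan is to reduce the statement to finding an approximate decomposition plus an absorbing structure. First I would split into two regimes according to the edge density. Write $\eps_0$ for a small constant and set $\delta^* := 2\lceil n/4\rceil - 1$, so that $D \ge \delta^*$. In the \emph{dense case}, where $D \ge (1/2+\eps_0)n$, I would aim to apply the robust-expansion/Hamilton-decomposition machinery (or a bipartite analogue): since $D$ is large, one can find many edge-disjoint Hamilton cycles or perfect matchings greedily, but the crucial point is to finish exactly. I would therefore first set aside a sparse "absorbing" regular spanning subgraph $H$ (chosen via a random slice, so that it inherits robust expansion), decompose $G - H$ approximately into perfect matchings leaving a sparse leftover $L$, then use the flexibility of $H$ to absorb $L$: concretely, one shows $H \cup L$ itself has a $1$-factorization. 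In the \emph{near-critical case}, where $\delta^* \le D \le (1/2+\eps_0)n$, the graph $G$ need not be an expander — it may be close to the extremal example (two cliques, or a complete bipartite-type graph) — so here I would run a separate argument: classify $G$ by how close it is to one of the extremal configurations, and in each case peel off perfect matchings by hand, exploiting the rigid structure to keep the remainder regular and still within the same case until it vanishes.

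The key steps, in order, would be: (1) fix the hierarchy of constants $1/n_0 \ll \eps_0 \ll 1$ and reduce to $D$-regular $G$ with $n$ even; (2) handle the dense case by (2a) reserving a random sparse regular subgraph $H$ with good expansion/absorbing properties, (2b) using a result on approximate $1$-factorizations (e.g. via Hamilton decompositions of a robustly expanding remainder, applied to $G-H$) to cover all but $o(n)$ of the degree at each vertex, and (2c) showing the small regular leftover together with $H$ decomposes into perfect matchings; (3) in the near-extremal case, run a stability analysis: if $G$ is $\eps_0$-close to the disjoint union of two near-cliques, or to a near-complete bipartite graph, handle each by explicit matching extraction, and otherwise show $G$ is already an expander and defer to step (2); (4) combine to conclude $\chi'(G) = D$.

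The main obstacle, and where essentially all the work lies, is step (2c) together with the case analysis of step (3): turning an \emph{approximate} decomposition into an \emph{exact} one. Greedy or probabilistic arguments comfortably strip off matchings until the remainder is sparse, but the final sparse regular leftover can be adversarial, and one cannot in general $1$-factorize an arbitrary sparse regular graph. The resolution must be to engineer the absorbing subgraph $H$ in advance with enough structured flexibility (for instance, a union of many "exchangeable" structures such as short paths or cycles through prescribed vertices) that $H \cup L$ is guaranteed decomposable no matter what $L$ turns out to be; making this robust simultaneously for the expander case and for each near-extremal configuration — where $H$ must respect the rigid global structure — is the heart of the proof. A secondary difficulty is the parity/divisibility bookkeeping: at the threshold $D = 2\lceil n/4\rceil - 1$ the bound is tight, so the argument must be exact with no slack, and one has to track carefully that every intermediate graph remains regular of the correct parity so that perfect matchings continue to exist.
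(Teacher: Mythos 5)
Your overall strategy is in the right spirit and broadly matches the paper's architecture: split by structure, set aside an absorbing structure, use an approximate decomposition, and then finish exactly by absorbing the sparse leftover. The paper organizes the split via a structural trichotomy (close to two disjoint cliques, close to $K_{n/2,n/2}$, or a robust expander), which is equivalent to your density-plus-stability classification, and it also reduces to Hamilton decompositions rather than extracting perfect matchings directly (each Hamilton cycle splits into two matchings as $n$ is even; this streamlines the parity bookkeeping you flag as a secondary worry).

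The genuine gap is in your step (3). You propose that in the near-extremal cases one can ``peel off perfect matchings by hand, exploiting the rigid structure to keep the remainder regular and still within the same case until it vanishes.'' That is not going to work, and it underestimates by a large margin where the real difficulty lies: the two-cliques case is not an easy side case but the hardest part of the entire proof, occupying two full chapters of the paper. The obstruction is the same one you correctly diagnose for the expander case --- the leftover from any approximate decomposition is adversarial, and the rigid near-extremal structure does not make greedy extraction any easier; if anything it makes it harder, because at the threshold $D = 2\lceil n/4\rceil -1$ there is zero slack and the small number of edges joining the two near-cliques (respectively, the small number of edges inside each class of the near-bipartite graph) must be distributed among the Hamilton cycles with extreme precision. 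The paper's actual route is to run the full absorbing machinery inside each near-clique (or each side of the near-bipartite graph) separately, after replacing the ``cross'' edges and edges at high-degree exceptional vertices by a carefully constructed system of fictive edges (``exceptional systems'' / ``balanced exceptional systems'') so that a pair of Hamilton cycles, one per near-clique, stitches back together into a single Hamilton cycle of $G$. Building this decomposition of the exceptional edges --- localized, balanced, and compatible with the robust decomposition lemma --- is the heart of the proof, not a simpler fallback. Your closing paragraph does hint at this (``$H$ must respect the rigid global structure''), but that observation is in tension with your explicit step (3) plan, and you would need to discard the ``peel by hand'' idea entirely and instead show that the absorbing construction can be made to work in the degenerate near-extremal geometry, which is precisely where all the technical effort goes.
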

The bound on the minimum degree in Theorem~\ref{1factthm} is best possible. To see this, suppose first that $n = 2 \pmod{4}$.
Consider the graph which is the disjoint union of two cliques of order $n/2$ (which is odd). If $n = 0 \pmod 4$,
consider the graph obtained from the disjoint union of cliques of orders $n/2-1$ and $n/2+1$ (both odd)
by deleting a Hamilton cycle in the larger clique.

Note that Theorem~\ref{1factthm} implies that for every regular graph~$G$ on an even number of vertices, 
either~$G$ or its complement has a $1$-factorization.
Also, Theorem~\ref{1factthm} has an interpretation in terms of scheduling round-robin tournaments
(where $n$ players play all of each other in $n-1$ rounds):
one can schedule the first half of the rounds arbitrarily before one needs to plan the remainder of the tournament.

The best previous result towards Theorem~\ref{1factthm} is due to Perkovic and Reed~\cite{reed}, who proved an approximate version,
i.e.~they assumed that $D \ge n/2+\eps n$.
This was generalized by Vaughan~\cite{vaughan} to multigraphs of bounded multiplicity.
Indeed, he proved an approximate form of the following multigraph version of the $1$-factorization conjecture which was raised by
Plantholt and Tipnis~\cite{PT}:
Let $G$ be a regular multigraph of even order $n$ with multiplicity at most $r$.
If the degree of $G$ is at least $rn/2$ then $G$ is $1$-factorizable.

In 1986, Chetwynd and Hilton~\cite{overfull} made the following `overfull subgraph' conjecture.%
\COMMENT{Deryk changed this}
Roughly speaking, this says that a dense graph satisfies
$\chi'(G)=\Delta(G)$ unless there is a trivial obstruction in the form of a dense subgraph $H$ on an odd number of vertices.
Formally, we say that a subgraph $H$ of $G$ is \emph{overfull} if  
$e(H) > \Delta (G) \lfloor |H|/2 \rfloor$ (note this requires $|H|$ to be odd). 
\begin{conjecture}
A graph $G$ on $n$ vertices with $\Delta (G) \geq n/3$ satisfies $\chi'(G)=\Delta(G)$ if and only if $G$ contains no overfull subgraph. 
\end{conjecture}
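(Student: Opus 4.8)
\medskip\noindent\textbf{Proof plan.}
Since the statement above is a conjecture, what I can offer is a plan that settles the regular case as a consequence of Theorem~\ref{1factthm} and then indicates precisely where a proof of the general statement must do new work. The first step is the trivial implication, which needs no hypothesis on $\Delta(G)$: in any proper edge-colouring of $G$, each colour class meets a subgraph $H$ in a matching of at most $\lfloor |H|/2\rfloor$ edges, so if $e(H)>\Delta(G)\lfloor |H|/2\rfloor$ then more than $\Delta(G)$ colours are used and hence $\chi'(G)=\Delta(G)+1$ by Vizing's theorem. So the content of the conjecture is the forward direction: if $G$ has no overfull subgraph and $\Delta(G)\ge n/3$, then $\chi'(G)=\Delta(G)$.

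Next I would handle the case that $G$ is $D$-regular with $D\ge 2\lceil n/4\rceil-1$. Here I claim that $G$ has \emph{no} overfull subgraph at all, so that the hypothesis is vacuous and the conjecture reduces exactly to the conclusion of Theorem~\ref{1factthm}. Indeed, suppose $H\subseteq G$ with $h:=|H|$ odd and $e(H)>D(h-1)/2$. Double counting the edges between $H$ and $V(G)\setminus H$ shows at most $D-2$ of them exist; combining this with the trivial bound $2e(G-H)\le (n-h)(n-h-1)$ yields $D<n-h$, and combining $\sum_{v\in H}(D-d_H(v))\le D-2$ with $d_H(v)\le h-1$ yields $D<h$ (the latter uses $h\le D+1$, which in turn follows from $D<n-h$ and $D\ge 2\lceil n/4\rceil-1$). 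Thus $D<h<n-D$, and a short check of the two residues of $n$ modulo $4$ shows the only surviving possibility forces $h$ to be even or $D$ to lie below the threshold, a contradiction. For regular $G$ in the wider window $n/3\le D<2\lceil n/4\rceil-1$ one would instead have to prove that every such graph which is not $1$-factorizable contains an overfull subgraph; the extremal configurations behind Theorem~\ref{1factthm} (two cliques of order $n/2$, and the variant obtained by deleting a Hamilton cycle from one of two odd cliques) all do, but I do not attempt a full argument for this range.

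For general, irregular $G$, the natural route is to imitate the architecture of the proof of Theorem~\ref{1factthm}: extract from $G$ a spanning regular (or almost regular) subgraph of degree close to $\Delta(G)$, $1$-factorize or Hamilton-decompose this ``bulk'' with the robust-expander and absorbing machinery developed here, and then colour the sparse remainder with the leftover colours so as to match the colour deficiencies of $G$ while never overloading a dense odd subgraph. I expect this last coordination to be the main obstacle, and it is why the conjecture is still open. There are three separate difficulties: irregularity is genuine, so one cannot simply peel off perfect matchings and, since class~$2$ graphs with no overfull subgraph are poorly understood, it is delicate to pin down exactly how many colours each vertex needs; the threshold $n/3$ lies well below $n/2$, where the Dirac-type expansion underlying every decomposition result in this paper fails, so a new structural input would be required in the range $n/3\le \Delta(G)<n/2$; and close to an overfull subgraph the problem has essentially no slack, so a successful argument must first locate the densest odd subgraphs of $G$ and then build the colouring to steer around them exactly.
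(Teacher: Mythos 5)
The statement you were asked to prove is the Chetwynd--Hilton overfull subgraph conjecture, which the paper presents precisely as a conjecture: immediately after stating it, the authors write that it ``is still wide open,'' and they offer no proof, only the remark that it implies the $1$-factorization conjecture and a pointer to the literature. There is therefore no proof in the paper to compare your attempt against, and a full proof was not to be expected. You correctly recognise this, and your submission is best read as an accurate account of the relationship between the conjecture and Theorem~\ref{1factthm}.

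Your partial contributions check out. The counting argument for the ``only if'' direction is the standard one and is fine. More substantively, your claim that an $n$-vertex $D$-regular graph with $n$ even and $D\ge 2\lceil n/4\rceil-1$ contains no overfull subgraph is correct: with $h:=|H|$ odd, $e(H)>D(h-1)/2$ forces $e_G(H,V\setminus H)=hD-2e(H)\le D-2$, from which $D<h$ follows (if $D\ge h$ then $\sum_{v\in H}(D-d_H(v))\ge h(D-h+1)>D-2$), while the degree sum of $G-H$ gives $D<n-h$; adding yields $D<n/2$, which is an immediate contradiction when $n\equiv 2\pmod 4$ and forces $h=n/2$ (even) when $n\equiv 0\pmod 4$, contradicting the oddness of $h$. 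So in this regular regime the overfull hypothesis is vacuous and Theorem~\ref{1factthm} is exactly the claimed conclusion. (Your parenthetical appeal to $h\le D+1$ is superfluous; the contradiction already issues from the assumption $D\ge h$.) The rest of your discussion is a reasonable survey of the obstacles; the paper offers nothing more on the general case, so there is no ``paper route'' for you to have missed. Just be explicit, as you largely are, that the conjecture remains open and that this paper settles only the regular case at degree threshold $2\lceil n/4\rceil-1$, not the conjecture itself.
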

It is easy to see that this generalizes the $1$-factorization conjecture (see e.g.~\cite{car} for the details).
The overfull subgraph conjecture is still wide open -- partial results are discussed in~\cite{stiebitz}, which also discusses further results
and questions related to the $1$-factorization conjecture.

\subsection{The Hamilton Decomposition Conjecture}

Rather than asking for a $1$-factorization, Nash-Williams~\cite{initconj,decompconj} raised the
more difficult problem of finding a Hamilton decomposition 
in an even-regular graph.
Here, a \emph{Hamilton decomposition} of a graph~$G$ consists of a set of edge-disjoint Hamilton cycles covering all edges of~$G$.
A natural extension of this to regular graphs $G$ of odd degree is to ask for a decomposition into 
Hamilton cycles and one perfect matching (i.e.~one perfect matching $M$ in $G$ together with a 
Hamilton decomposition of $G-M$). The following result solves the problem of Nash-Williams for all large graphs.
\begin{thm} \label{HCDthm} 
There exists an $n_0 \in \mathbb N$ such that the following holds.
Let $ n,D \in \mathbb N$ be such that $n \geq n_0$ and
$D \ge   \lfloor n/2 \rfloor $.
Then every $D$-regular graph $G$ on $n$ vertices has a decomposition into Hamilton cycles and 
at most one perfect matching.
\end{thm}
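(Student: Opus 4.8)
The plan is to reduce the Hamilton decomposition problem to the $1$-factorization result (Theorem~\ref{1factthm}), or rather to a common technical engine underlying both, by distinguishing two regimes according to how close $D$ is to $\lfloor n/2\rfloor$. First I would handle the (easier) case where $D$ is not too close to $n/2$, say $D\ge n/2+\eps n$: here $G$ is a dense robustly expanding graph (or can be made so after removing a sparse regular "correcting" subgraph), and one can invoke the robust-expander machinery for Hamilton decompositions — find an approximate Hamilton decomposition covering almost all edges, then absorb the sparse leftover using a small reservoir of Hamilton cycles set aside in advance. When $n$ is even and $D$ is odd one first pulls out a single perfect matching $M$ (which exists since $G$ is regular of even order), so that $G-M$ is even-regular; when $n$ is odd, $D$ is automatically even and no matching is needed. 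The genuinely delicate regime is $\lfloor n/2\rfloor \le D \le n/2+\eps n$, where $G$ need not be a robust expander — for instance $G$ could be close to the disjoint union of two cliques, or close to a complete bipartite graph.

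For that critical regime the key step is a structural dichotomy: either $G$ is already a robust outexpander (so the expander machinery applies directly), or $G$ is $\eps$-close to one of a small number of extremal configurations, principally (a) the union of two nearly-disjoint cliques of order roughly $n/2$, or (b) a nearly-complete-bipartite graph with parts of size roughly $n/2$. In the extremal cases one argues by hand, exploiting the near-product structure: one first extracts a bounded number of Hamilton cycles (and at most one perfect matching) that "localize" the few edges crossing between the two near-cliques (or the few edges inside the parts of the near-bipartite graph), thereby reducing to a union of two graphs each of which is nearly complete of odd order — and complete graphs of odd order have Hamilton decompositions by Walecki's theorem, with the nearly-complete case handled by a stability/absorption argument. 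The whole scheme is set up so that the matching $M$ (when $D$ is odd and $n$ even) is chosen at the very start, compatibly with whichever case $G$ falls into.

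The main obstacle, as in the $1$-factorization theorem, is the critical regime and in particular the case where $G$ is close to two disjoint cliques of order $n/2$ with $n/2$ even: here the parities are adversarial, the "gadgets" that move the few cross-edges into controllable Hamilton cycles must be built very carefully so that what remains on each side is regular of odd order and still dense enough to decompose, and one must track the single allowed perfect matching through all of this. I expect essentially all the work — and the large $n_0$ — to come from making these exceptional-case arguments fit together with the robust-expander arguments under one uniform framework; the non-extremal case, by contrast, should follow fairly directly from the Hamilton decomposition results for robust expanders once the sparse regularizing subgraph is split off and reserved for absorption. Finally, Theorem~\ref{1factthm} itself should drop out of the same analysis, since a $1$-factorization of a $D$-regular graph of even order with $D$ even is just a refinement of a Hamilton decomposition into perfect matchings, and the case $D$ odd is reduced to the even case by removing one perfect matching.
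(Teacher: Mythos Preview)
Your trichotomy (robust expander / close to two cliques / close to complete bipartite) is exactly the organizing principle the paper uses, and the robust-expander case is handled essentially as you describe, via Theorem~\ref{undir_decomp}. One small slip: a regular graph of even order need not have a perfect matching; you get it here because $D\ge n/2$ forces a Hamilton cycle by Dirac (or via Proposition~\ref{singlematching}).

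The real issue is your treatment of the two-cliques case. You propose to ``extract a bounded number of Hamilton cycles \dots\ that localize the few edges crossing between the two near-cliques, thereby reducing to a union of two graphs each of which is nearly complete of odd order''. This cannot work. First, the number of $A'B'$-edges is at least $D\approx n/2$ (Proposition~\ref{prp:e(A',B')}), not ``a few''. Second, and more fundamentally, once you have used up all the cross-edges, every remaining $2$-factor of the leftover is a union of a cycle on $A'$ and a cycle on $B'$, never a Hamilton cycle of $G$. So Walecki on each near-clique produces a $2$-factorization of the leftover, but not a Hamilton decomposition. Every Hamilton cycle of $G$ must contain a positive even number of $A'B'$-edges; hence these edges must be distributed across \emph{all} $\lfloor D/2\rfloor$ Hamilton cycles, not concentrated in a bounded initial batch.

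The paper's mechanism for doing this is the key missing idea in your sketch: it packages the cross-edges (together with the edges at exceptional vertices) into \emph{exceptional systems} $J$, one per Hamilton cycle, and replaces each $J$ by a pair of ``fictive'' matchings $J^*_A\subseteq A$, $J^*_B\subseteq B$. A Hamilton cycle $C_A$ on $A$ containing $J^*_A$ together with a Hamilton cycle $C_B$ on $B$ containing $J^*_B$ then corresponds, after swapping $J^*$ back for $J$, to a genuine Hamilton cycle of $G$ (Proposition~\ref{prop:ES}). So one does indeed ``reduce to two near-cliques'', but only after this substitution, and each resulting Hamilton cycle still carries its share of cross-edges. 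Constructing the exceptional systems so that this works (and so that every cross-edge is used exactly once) is the hardest part of the whole paper --- it occupies Chapters~\ref{paper1} and~\ref{paper4} and splits into three subcases depending on whether $G$ is ``critical'' and whether $e_G(A',B')\ge D$. Your sketch would need to incorporate something of this kind to go through.
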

Again, the bound on the degree in Theorem~\ref{HCDthm} is best possible. 
Indeed, Proposition~\ref{bestposs} shows that a smaller degree bound would not even ensure connectivity.
Previous results include the following:
Nash-Williams~\cite{NWreg} showed that the degree bound in Theorem~\ref{HCDthm} ensures a single Hamilton cycle.
Jackson~\cite{Jackson79} showed that one can ensure close to $D/2-n/6$ edge-disjoint Hamilton cycles.
Christofides, K\"uhn and Osthus~\cite{CKO} obtained an approximate decomposition under the assumption that $D \ge n/2 +\eps n$.
Under the same assumption, K\"uhn and Osthus~\cite{KellyII} obtained an exact decomposition
(as a consequence of the main result in~\cite{Kelly} on Hamilton decompositions of robustly expanding graphs).

Note that Theorem~\ref{HCDthm} does not quite imply Theorem~\ref{1factthm},
as the degree threshold in the former result is slightly higher.

A natural question is whether one can extend Theorem~\ref{HCDthm} to sparser (quasi)-random
graphs. Indeed, for random regular graphs of bounded degree this was proved by Kim and Wormald~\cite{KW01}
and for (quasi-)random regular graphs of linear degree this was proved in~\cite{KellyII} as a consequence of
the main result in~\cite{Kelly}. However, the intermediate range remains open.

\subsection{Packing Hamilton Cycles in Graphs of Large Minimum Degree}

Although Dirac's theorem is best possible in the sense that the minimum degree condition $\delta \ge n/2$ is best possible, 
the conclusion can be strengthened considerably:
a remarkable result of Nash-Williams~\cite{Diracext} states that every graph $G$ on $n$ vertices with minimum degree $\delta(G) \ge n/2$
contains $\lfloor 5n/224 \rfloor$ edge-disjoint Hamilton cycles.
He raised the question of finding the best possible bound, which we answer in Corollary~\ref{NWmindegcor} below.

We actually answer a more general form of this question: what is the number of edge-disjoint Hamilton cycles one can guarantee in a graph $G$ of minimum degree $\delta$?

A natural upper bound is obtained by considering the largest degree of an even-regular spanning subgraph of $G$.
Let $\textnormal{reg}_{\textnormal{even}}(G)$
be the largest degree of an even-regular spanning subgraph of $G$. Then let
\[
\textnormal{reg}_{\textnormal{even}}(n,\delta):=\min\{\textnormal{reg}_{\textnormal{even}}(G):|G|=n,\ \delta(G)=\delta\}.
\]
Clearly, in general we cannot guarantee more than $\textnormal{reg}_{\textnormal{even}}(n,\delta)/2$
edge-disjoint Hamilton cycles in a graph of order $n$ and minimum
degree $\delta$. The next result shows that this bound is best possible (if $\delta < n/2$, then $\textnormal{reg}_{\textnormal{even}}(n,\delta)=0$).%
\COMMENT{Andy: changed this so it is the same as the sentence added in paper II
by Deryk}
\begin{thm}\label{NWmindeg}
There exists an $n_0 \in \mathbb N$ such that the following holds. Suppose that $G$ is a graph on $n\ge n_0$ vertices with
minimum degree $\delta\ge n/2$. Then $G$ contains at least ${\rm reg}_{\rm even}(n,\delta)/2$ edge-disjoint Hamilton cycles.
\end{thm}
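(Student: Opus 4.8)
The plan is to deduce Theorem~\ref{NWmindeg} from Theorem~\ref{HCDthm} together with a result that extracts a spanning even-regular subgraph of near-optimal degree. First I would recall that by the definition of $\textnormal{reg}_{\textnormal{even}}(n,\delta)$, any graph $G$ on $n$ vertices with $\delta(G)=\delta$ contains a spanning $r$-regular subgraph $R$ with $r \ge \textnormal{reg}_{\textnormal{even}}(n,\delta)$ and $r$ even; so it suffices to show that $R$ itself contains at least $r/2$ edge-disjoint Hamilton cycles. If $r \ge \lfloor n/2\rfloor$, then Theorem~\ref{HCDthm} applied to $R$ yields a decomposition into Hamilton cycles and at most one perfect matching; since $r$ is even there is no leftover matching, so $R$ decomposes into exactly $r/2$ Hamilton cycles, and we are done. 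The content of the proof is therefore entirely in the regime $\delta < $ (something close to $n/2$) — but here $\textnormal{reg}_{\textnormal{even}}(n,\delta)$ itself is small, so one only needs a modest number of edge-disjoint Hamilton cycles.

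Concretely, I would split into two cases according to the size of $\delta$. In the "dense" case, say $\delta \ge n/2 + c$ for a suitable constant or slowly growing function $c$, one can first argue that $\textnormal{reg}_{\textnormal{even}}(G)$ is itself at least $\lfloor n/2\rfloor$ (using a defect-version of the Max-Flow-Min-Cut / Gale–Ryser type argument, or a result on the existence of large regular spanning subgraphs in graphs of high minimum degree — this is exactly the quantity $\textnormal{reg}_{\textnormal{even}}(n,\delta)$), extract such a subgraph $R$, and apply Theorem~\ref{HCDthm} to $R$ as above. In the "sparse" case $n/2 \le \delta < n/2 + c$, the value $\textnormal{reg}_{\textnormal{even}}(n,\delta)$ is only $O(c)$ (indeed the extremal examples are disjoint-clique-like constructions), so it suffices to find $O(c)$ edge-disjoint Hamilton cycles. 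For this one can iterate Dirac's theorem (or rather the stronger stability-free statement): as long as we have removed few enough edges, the minimum degree stays above $n/2$ and we can pull out one more Hamilton cycle; a short computation shows this can be repeated $\textnormal{reg}_{\textnormal{even}}(n,\delta)/2$ times.

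The main obstacle, and the place where almost all the work lies, is establishing the precise correspondence between $\textnormal{reg}_{\textnormal{even}}(n,\delta)$ and the degree threshold of Theorem~\ref{HCDthm}: namely that whenever the optimal even-regular spanning subgraph has degree $r$, either $r \ge \lfloor n/2\rfloor$ (so that Theorem~\ref{HCDthm} applies directly and optimally) or else $r$ is small enough that the elementary iterated-Dirac argument already yields $r/2$ Hamilton cycles. This requires understanding the extremal graphs for $\textnormal{reg}_{\textnormal{even}}(n,\delta)$ precisely — i.e.\ computing $\textnormal{reg}_{\textnormal{even}}(n,\delta)$ as an explicit function of $n$ and $\delta$ and checking there is no intermediate range where $r$ is simultaneously too large for the naive argument and too small for Theorem~\ref{HCDthm}. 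I expect this combinatorial analysis of regular spanning subgraphs (via a suitable flow/parity argument, treating the cases $\delta \ge n/2$ and the fine structure near $\delta = n/2$ separately) to be the technical heart of the proof; once it is in place, the deduction from Theorem~\ref{HCDthm} is immediate.
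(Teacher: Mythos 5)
Your reduction fails exactly in the range you flag as needing verification, and the intermediate range — where $r$ is too small for Theorem~\ref{HCDthm} yet $\textnormal{reg}_{\textnormal{even}}(n,\delta)/2$ is far larger than iterated Dirac can deliver — is not only nonempty but is where all the work lies. From the estimate \eqref{regbound}, $\textnormal{reg}_{\textnormal{even}}(n,\delta) \approx \tfrac{1}{2}\bigl(\delta + \sqrt{n(2\delta-n)}\bigr)$, so already at $\delta = n/2$ one has $\textnormal{reg}_{\textnormal{even}}(n,\delta) \approx n/4$, and the threshold $\textnormal{reg}_{\textnormal{even}}(n,\delta) \ge \lfloor n/2\rfloor$ is crossed only once $\delta$ exceeds $n/2$ by a positive fraction of $n$. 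Throughout the range $n/2 \le \delta < n/2 + \Omega(n)$, then, the regular spanning subgraph $R$ you extract has degree well below $\lfloor n/2 \rfloor$, so Theorem~\ref{HCDthm} does not apply to it, while the target $\textnormal{reg}_{\textnormal{even}}(n,\delta)/2$ remains linear in $n$ (already about $n/8$ at $\delta = n/2$). The iterated-Dirac argument cannot bridge this: after removing $k$ Hamilton cycles the minimum degree drops to $\delta - 2k$, so Dirac stops applying once $k > (\delta - n/2)/2$, which is $o(n)$ whenever $\delta - n/2 = o(n)$ and in particular yields at most one Hamilton cycle when $\delta = n/2$. So your sparse case is not elementary and the reduction collapses there.

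The paper's proof takes a structurally different route. It never extracts a near-$n/2$-regular spanning subgraph and never reduces to Theorem~\ref{HCDthm}. Instead it invokes Theorem~\ref{NWmindegrob}, a consequence of the main results of~\cite{KLOmindeg}, which gives a trichotomy for graphs with $\delta(G) \ge n/2$: either $G$ already contains $\textnormal{reg}_{\textnormal{even}}(n,\delta)/2$ edge-disjoint Hamilton cycles (the robust-expander case), or $G$ is $\eps$-close to $K_{n/2,n/2}$, or $G$ is $\eps$-close to the union of two disjoint copies of $K_{n/2}$. In the two extremal cases one may also assume $\delta \le (1/2 + 5\eps)n$, hence $\textnormal{reg}_{\textnormal{even}}(n,\delta) \le 3n/10$, and dedicated theorems (Theorem~\ref{NWmindegbip} for the near-bipartite case, Theorem~\ref{thm:NWminclique} for the two-cliques case) each produce on the order of $n/4$ edge-disjoint Hamilton cycles, comfortably more than $\textnormal{reg}_{\textnormal{even}}(n,\delta)/2 \le 3n/20$. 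This structural case analysis is precisely what your plan hoped to avoid, but it is forced because the two degree thresholds do not align as you conjectured.
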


The main result of K\"uhn, Lapinskas and Osthus~\cite{KLOmindeg} proves Theorem~\ref{NWmindeg} unless $G$ is close to one of the extremal graphs for
Dirac's theorem.
This will allow us to restrict our attention to the latter situation (i.e.~when $G$ is close to
the complete balanced bipartite graph or close to the union of two disjoint copies of a clique).

An approximate version of Theorem~\ref{NWmindeg} for $\delta \ge n/2+\eps n$
was obtained earlier by Christofides, K\"uhn and Osthus~\cite{CKO}.
Hartke and Seacrest~\cite{HartkeHCs} gave a simpler argument with improved error bounds.

Precise estimates for ${\rm reg}_{\rm even}(n,\delta)$ 
(which yield either one or two possible values for any $n$, $\delta$)
are proved in~\cite{CKO,Hartkefactors} using Tutte's theorem:
Suppose that $n,\delta\in\mathbb{N}$ and $n/2\le\delta<n$.
Then the bounds in~\cite{Hartkefactors} imply that
\begin{equation} \label{regbound}
\frac{\delta+\sqrt{n(2\delta-n)+8}}{2}-\eps\le\textnormal{reg}_{\textnormal{even}}(n,\delta)
\le\frac{\delta+\sqrt{n(2\delta-n)}}{2}+1, 
\end{equation}
where $0<\eps  \le 2$ is chosen to make the left hand side of~\eqref{regbound} an even
integer. Note that~\eqref{regbound} determines $\textnormal{reg}_{\textnormal{even}}(n,n/2)$
exactly (the upper bound in this case was already proved by Katerinis~\cite{Katerinis}).
Moreover, \eqref{regbound} implies that if $\delta \geq n/2$ then ${\rm reg}_{\rm even}(n,\delta) \ge (n-2)/4$.%
\COMMENT{let $r$ be the lower bound in~\ref{regbound}. 
for $n$ odd, there is room to spare, as then $\delta>n/2$ and so $r \ge n/4+\Omega(\sqrt{n})$.
For $n=8k$, we have $\delta=4k$ and so their formula gives $r = 2k+\sqrt{2}-\eps=2k=n/4$.
For $n=8k+2$, we have $\delta=4k+1$ and so $r = 2k+1/2+\sqrt{2}-\eps=2k=(n-2)/4$.
For  $n=8k+4$, we have $\delta=4k+2$ and so $r = 2k+1+\sqrt{2}-\eps=2k+2=(n+4)/4$.
For $n=8k+6$, we have $\delta=4k+3$ and so $r = 2k+3/2+\sqrt{2}-\eps=2k+2=(n+2)/4$.}
So we obtain the following immediate corollary of Theorem~\ref{NWmindeg}, 
which answers a question of Nash-Williams~\cite{initconj,Diracext,decompconj}.
\begin{cor}\label{NWmindegcor}
There exists an $n_0 \in \mathbb N$ such that the following holds. Suppose that $G$ is a graph on $n\ge n_0$ vertices with
minimum degree $\delta\ge n/2$. Then $G$ contains at least $(n-2)/8$ edge-disjoint Hamilton cycles.
\end{cor}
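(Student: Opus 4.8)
The plan is to obtain Corollary~\ref{NWmindegcor} as a direct consequence of Theorem~\ref{NWmindeg} together with the lower bound on $\textnormal{reg}_{\textnormal{even}}(n,\delta)$ recorded in~\eqref{regbound}. No new combinatorial argument is needed: all of the weight lies in Theorem~\ref{NWmindeg} (and hence, via the structural dichotomy of K\"uhn, Lapinskas and Osthus, in the extremal analysis underlying it), so here the task is purely to feed in the right numerical estimate.

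Concretely, first I would take $n_0$ to be the constant supplied by Theorem~\ref{NWmindeg}, let $G$ be any graph on $n\ge n_0$ vertices with $\delta(G)\ge n/2$, and set $\delta:=\delta(G)$, so that $\delta\ge n/2$. Applying Theorem~\ref{NWmindeg} to $G$ then produces at least $\textnormal{reg}_{\textnormal{even}}(n,\delta)/2$ pairwise edge-disjoint Hamilton cycles in $G$. Hence it suffices to show that $\delta\ge n/2$ forces $\textnormal{reg}_{\textnormal{even}}(n,\delta)\ge (n-2)/4$; combined with the previous sentence this at once yields the desired $(n-2)/8$ edge-disjoint Hamilton cycles.

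The inequality $\textnormal{reg}_{\textnormal{even}}(n,\delta)\ge (n-2)/4$ is exactly the one noted in the text following~\eqref{regbound}, and I would read it off from the left-hand side of~\eqref{regbound}. If $n$ is odd or $\delta\ge n/2+1$, then $n(2\delta-n)+8\ge n+8$, so the square-root term already contributes $\Omega(\sqrt n)$ and even the crude estimate gives $\textnormal{reg}_{\textnormal{even}}(n,\delta)\ge \delta/2+\sqrt{n+8}/2-2\ge (n-2)/4$ with room to spare. The only genuinely tight case is $n$ even with $\delta=n/2$, where $n(2\delta-n)+8=8$ and the left-hand side of~\eqref{regbound} equals $n/4+\sqrt{2}$ rounded down to an even integer; here one checks the four residues $n\equiv 0,2,4,6\pmod 8$ separately, the bound $(n-2)/4$ being attained with equality precisely when $n\equiv 2\pmod 8$. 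This short residue computation --- the one step that is not a literal one-line substitution --- is the only point that requires any care; everything else is immediate.
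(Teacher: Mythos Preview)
Your proposal is correct and matches the paper's argument exactly: the corollary is stated as an immediate consequence of Theorem~\ref{NWmindeg} together with the observation (noted just before the corollary) that~\eqref{regbound} gives $\textnormal{reg}_{\textnormal{even}}(n,\delta)\ge (n-2)/4$ whenever $\delta\ge n/2$. The residue-by-residue check you sketch for the tight case $\delta=n/2$ is precisely the verification the authors carry out (in a comment), with equality at $n\equiv 2\pmod 8$.
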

The following construction (which is based on a construction of Babai, see~\cite{initconj}) shows that the bound in 
Corollary~\ref{NWmindegcor} is best possible for $n=8k+2$, where $k \in \mathbb{N}$. 
Consider the graph $G$ consisting of one empty
vertex class $A$ of size $4k$, one vertex class $B$ of size $4k+2$ containing
a perfect matching and no other edges, and all possible edges between $A$ and $B$.
Thus $G$ has order $n=8k+2$ and minimum degree $4k+1=n/2$. 
Any Hamilton cycle in $G$ must contain at least two edges of
the perfect matching in~$B$, so $G$ contains at most $\lfloor |B|/4 \rfloor =k=(n-2)/8$ edge-disjoint
Hamilton cycles. The lower bound on ${\rm reg}_{\rm even}(n,\delta)$ 
in~\eqref{regbound} follows from a generalization of this construction.

The following conjecture from~\cite{KLOmindeg} would be a common generalization of both Theorems~\ref{HCDthm} and~\ref{NWmindeg}
(apart from the fact that the degree threshold in Theorem~\ref{HCDthm}  is slightly lower).
It would provide a result which is best possible for every graph~$G$
(rather than the class of graphs with minimum degree at least $\delta$).

\begin{conjecture} \label{con:betterconj}
Suppose that $G$ is a graph on $n$ vertices
with minimum degree $\delta(G) \ge n/2$. Then $G$ contains
$\textnormal{reg}_{\textnormal{even}}(G)/2$ edge-disjoint
Hamilton cycles.
\end{conjecture}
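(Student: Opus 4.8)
Fix a graph $G$ on $n$ vertices with $\delta(G)\ge n/2$ and put $r:=\textnormal{reg}_{\textnormal{even}}(G)$, so that $G$ contains an $r$-regular spanning subgraph $H$ and, by \eqref{regbound}, $r$ is essentially pinned down (via Tutte's theorem) by how unbalanced or bipartite-like $G$ is; in particular $r\ge(n-2)/4$. The goal is to produce \emph{exactly} $r/2$ edge-disjoint Hamilton cycles; since $r\ge\textnormal{reg}_{\textnormal{even}}(n,\delta)$, this also yields Theorem~\ref{NWmindeg}. The plan is the usual dichotomy: either $G$ is $\eps$-far from both extremal configurations for Dirac's theorem (the complete balanced bipartite graph, and two disjoint cliques of order $\approx n/2$), or it is $\eps$-close to one of them.

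If $G$ is far from extremal, I would use the robust-expansion machinery: the methods of K\"uhn, Lapinskas and Osthus~\cite{KLOmindeg}, built on the Hamilton decomposition theorem for robust expanders of~\cite{Kelly}, deliver (close to) the optimal packing here. To reach the exact count one first sets aside a sparse, regular, robustly expanding absorbing subgraph, Hamilton-decomposes the expanding bulk of $H$, and then uses the absorber to swallow the few leftover edges into the cycles; the point is that away from the extremal cases there is no parity obstruction beyond the one already encoded in $r$, so $r/2$ cycles are attainable.

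If $G$ is close to the complete balanced bipartite graph, write $V(G)=A\cup B$ with $|A|,|B|\approx n/2$ and few edges inside $A$ or inside $B$; the near-bipartiteness (together with the parities of $|A|,|B|$ and of the numbers of within-part and crossing edges) is exactly what forces $r$ down to the value given by~\eqref{regbound}. I would choose a maximum even-regular spanning subgraph $H$, analyse the exceptional edges (those inside the parts) and the small set of vertices of atypical within-bipartition degree, reserve a small bipartite-like absorbing/flexible structure, Hamilton-decompose the balanced-bipartite bulk using the bipartite extremal-case techniques developed in this paper for Theorems~\ref{1factthm} and~\ref{HCDthm}, and finally reinsert the exceptional edges and the absorber while correcting parities, so that precisely $r/2$ Hamilton cycles remain. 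The case where $G$ is close to two disjoint cliques is handled symmetrically, the bottleneck now being the set of edges crossing between the two near-cliques, using the ``two cliques'' extremal machinery of the paper: one builds crossing linking structures and an absorber, decomposes the dense interiors, and merges to hit the $r/2$ count.

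The main obstacle is exactness. Finding roughly $r/2$ edge-disjoint Hamilton cycles follows from approximate decomposition results; matching the Tutte-theorem upper bound on $\textnormal{reg}_{\textnormal{even}}(G)$ with an actual packing of that size is where all the work lies. This forces one to track, throughout the construction, the handful of edges and vertices carrying the parity and connectivity slack — the exceptional set, the residue left by the absorber, the within-part or crossing edges — and to guarantee that none of them is wasted. That bookkeeping is precisely the engine this paper develops for Theorems~\ref{1factthm}--\ref{NWmindeg}; the extra difficulty for Conjecture~\ref{con:betterconj} is running that engine against the lower, graph-dependent target $r=\textnormal{reg}_{\textnormal{even}}(G)$ rather than the near-$n/2$ regularity treated there.
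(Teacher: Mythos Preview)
This statement is a \emph{conjecture}, not a theorem: the paper does not prove it and explicitly records it as open, noting only that it is known for $\delta\ge(2-\sqrt{2}+\eps)n$ (via~\cite{KellyII}) and in an approximate form (via~\cite{FKS}). So there is no ``paper's own proof'' to compare against, and your proposal is an attempt at an open problem.

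The genuine gap is that every piece of machinery you invoke is calibrated to a different target. In the robust-expander case, the results of~\cite{KLOmindeg} produce $\textnormal{reg}_{\textnormal{even}}(n,\delta)/2$ Hamilton cycles, not $\textnormal{reg}_{\textnormal{even}}(G)/2$; these can differ by a linear amount (e.g.\ if $G$ is nearly complete, $\textnormal{reg}_{\textnormal{even}}(G)$ is close to $n$ while $\textnormal{reg}_{\textnormal{even}}(n,\delta)$ is close to $n/4$ when $\delta$ is close to $n/2$). In the extremal cases, the exact decomposition results of this paper (Theorems~\ref{1factstrong} and~\ref{1factbip}) require $G$ to be \emph{regular}; the robustly decomposable graph $G^{\rm rob}$ and the entire bookkeeping of exceptional systems are built around a fixed global degree $D$. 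For non-regular $G$ the paper only proves the approximate Theorem~\ref{thm:NWminclique} (which gives $(1/4-\eps)n$ cycles, not $\textnormal{reg}_{\textnormal{even}}(G)/2$) and Theorem~\ref{NWmindegbip} (which assumes a regular spanning subgraph of degree at most $(1/2-\alpha)n$, again not the general target). Your final paragraph correctly identifies exactness as the obstacle, but ``running that engine against the lower, graph-dependent target'' is precisely what nobody knows how to do: passing to an $r$-regular spanning subgraph $H$ destroys the minimum-degree hypothesis and the near-extremal structure that the engine needs, and there is no mechanism in the paper for absorbing the irregular remainder $G-H$ into the packing.
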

For $\delta\ge(2-\sqrt{2}+\eps)n$, this conjecture was proved in~\cite{KellyII}, based on the main result of~\cite{Kelly}.
Recently, Ferber, Krivelevich and Sudakov~\cite{FKS} were able to obtain an approximate version of Conjecture~\ref{con:betterconj},
i.e.~a set of $(1-\eps)\textnormal{reg}_{\textnormal{even}}(G)/2$ edge-disjoint Hamilton cycles under the assumption that
$\delta(G) \ge (1+\eps)n/2$.
It also makes sense to consider a directed version of Conjecture~\ref{con:betterconj}.
Some related questions for digraphs are discussed in~\cite{KellyII}.

It is natural to ask for which other graphs one can obtain similar results. 
One such instance is the binomial random graph $G_{n,p}$:
for any $p$, asymptotically almost surely it contains $\lfloor \delta(G_{n,p})/2 \rfloor$ edge-disjoint 
Hamilton cycles, which is clearly optimal.
This follows from the main result of Krivelevich and Samotij~\cite{KrS} combined with that of Knox, K\"uhn and Osthus~\cite{KnoxKO} 
(which builds on a number of previous results). 
The problem of packing edge-disjoint Hamilton cycles in hypergraphs has been considered in~\cite{ferber}.
Further questions in the area are discussed in the recent survey~\cite{ICM}.

\subsection{Overall Structure of the Argument}

For all three of our main results, we split the argument according to the structure of the graph $G$ under consideration:
\begin{enumerate}
\item[{\rm (i)}] $G$ is  close to the complete balanced bipartite graph $K_{n/2,n/2}$;
\item[{\rm (ii)}] $G$ is close to the union of two disjoint copies of a clique $K_{n/2}$;
\item[{\rm (iii)}] $G$ is a `robust expander'.
\end{enumerate}
Roughly speaking, $G$ is a robust expander if for every set $S$ of vertices, its neighbourhood is at least a little larger than $|S|$,
even if we delete a small proportion of the vertices and edges%
   \COMMENT{Daniela: included vertices here}
of $G$.
The main result of~\cite{Kelly} states that every dense regular robust expander
has a Hamilton decomposition (see Theorem~\ref{undir_decomp}).
This immediately implies Theorems~\ref{1factthm} and~\ref{HCDthm} in Case~(iii).
For Theorem~\ref{NWmindeg}, Case (iii) is proved in~\cite{KLOmindeg} using a more involved argument, but also based on
the main result of~\cite{Kelly} (see Theorem~\ref{NWmindegrob}).

Case~(i) is proved in~Chapter~\ref{paper2} whilst Chapter~\ref{paper1} tackles Case~(ii).
We defer the proof of some of the key lemmas needed for Case~(ii) until Chapter~\ref{paper4}.
(These lemmas provide a suitable decomposition of the set of `exceptional edges' -- these include the edges between the two almost 
complete graphs induced by~$G$.)
Case~(ii) is by far the hardest case for Theorems~\ref{1factthm} and~\ref{HCDthm}, as the extremal examples are all close to the union of two 
cliques. On the other hand, the proof of Theorem~\ref{NWmindeg} is comparatively simple in this case, as for this result, 
the extremal construction is close to the complete balanced bipartite graph.

The arguments in Cases~(i) and (ii) 
make use of an `approximate' decomposition result.
We defer the proof of this result until Chapter~\ref{paper3}.
The arguments for both (i) and (ii) use the main lemma from~\cite{Kelly} (the `robust decomposition lemma') when transforming this approximate
decomposition into an exact one.

In Section~\ref{split}, we derive Theorems~\ref{1factthm},~\ref{HCDthm} and~\ref{NWmindeg} from the structural results covering 
Cases (i)--(iii). 

The main proof in~\cite{Kelly} (but not the proof of the robust decomposition lemma) makes use of Szemer\'edi's regularity lemma.
So due to Case (iii) the bounds on $n_0$ in our results are very large (of tower type).
However, the case of Theorem~\ref{1factthm} when both $\delta \ge n/2$ and (iii) hold was proved by Perkovic and Reed~\cite{reed}
using `elementary' methods, i.e.~with a much better bound on $n_0$. 
Since the arguments for Cases (i) and (ii) do not rely on the regularity lemma, this means that if we assume that $\delta \ge n/2$,
we get much better bounds on $n_0$ in our $1$-factorization result (Theorem~\ref{1factthm}).

\section{Notation}\label{notation}

Unless stated otherwise, all the graphs and digraphs considered in this paper are simple and do not contain loops. So in a digraph $G$, we allow up to
two edges between any two vertices, at most one edge in each direction.
Given a graph or digraph $G$, we write $V(G)$ for its vertex set, $E(G)$ for its edge set, $e(G):=|E(G)|$ for
the number of edges in $G$ and $|G|:=|V(G)|$ for the number of vertices in $G$. We denote the complement
of $G$ by $\overline{G}$.

Suppose that $G$ is an undirected graph. We write $\delta(G)$ for the minimum degree of $G$, $\Delta(G)$ for its maximum degree
and $\chi'(G)$ for the edge-chromatic number of~$G$.
Given a vertex $v$ of $G$, we write $N_G(v)$ for the set of all neighbours of~$v$ in~$G$.
Given  a set $A\subseteq V(G)$,
we write $d_G(v,A)$ for the number of  neighbours of $v$ in $G$ which lie in~$A$. Given $A,B\subseteq V(G)$,
we write $E_G(A)$ for the set of  edges of $G$ which have both endvertices in $A$ and $E_G(A,B)$ for the set of
 edges of $G$ which have one endvertex in $A$ and its other endvertex in $B$. We also call the edges in $E_G(A,B)$
\emph{$AB$-edges} of $G$. We let $e_G(A):=|E_G(A)|$ and $e_G(A,B):=|E_G(A,B)|$.
We denote by $G[A]$ the subgraph of $G$ with vertex set $A$ and edge set $E_G(A)$.
If $A\cap B=\emptyset$, we denote by $G[A,B]$ the bipartite subgraph of $G$
with vertex classes $A$ and $B$ and edge set $E_G(A,B)$. 
If $A=B$ we
 define $G[A,B]:=G[A]$.
 We often omit the index $G$ if the graph $G$ is clear from the context.
An \emph{$AB$-path} in $G$ is a path with one endpoint in $A$ and the other in $B$.
A spanning subgraph $H$ of $G$ is an \emph{$r$-factor} of $G$ if the degree of every vertex of $H$ is~$r$.

Given a vertex set $V$ and two multigraphs%
   \COMMENT{Have to define this for multigraphs instead of graph since if we take $G+H+F$ then $G+H$ might already be a multigraph.}
$G$ and $H$ with $V(G),V(H)\subseteq V$, we write $G+H$ for the multigraph whose vertex
set is $V(G)\cup V(H)$ and in which the multiplicity of $xy$ in $G+H$ is the sum of the multiplicities of $xy$ in $G$ and in~$H$
(for all $x,y\in V(G)\cup V(H)$). Similarly, if $\mathcal{H}:=\{H_1,\dots,H_\ell\}$ is a set of graphs, we define
$G+\mathcal{H}:=G+H_1+\dots+H_\ell$.
If $G$ and $H$ are simple graphs, we write $G\cup H$ for the (simple) graph whose vertex set is
$V(G)\cup V(H)$ and whose edge set is $E(G)\cup E(H)$.%
   \COMMENT{I think this is a good convention. It has the advantage that if we talk about
$G+J^*$ then it is clear that we treat the edges in $J^*$ as being distinct from those in $G$. However, this means that we cannot
talk about $G+EPS^*$ (we would need to write $G+{\rm Fict}(EPS^*)$ instead). Have to make sure that this fits with the almost decomposition section/paper.}
We write $G-H$ for the subgraph of $G$ which is obtained from $G$
by deleting all the edges in $E(G)\cap E(H)$.%
    \COMMENT{So we don't require that $H\subseteq G$ when using this notation.} 
Given $A\subseteq V(G)$, we write $G-A$ for the graph obtained from $G$ by deleting all vertices in~$A$.

We say that a graph or digraph $G$ has a \emph{decomposition} into $H_1,\dots,H_r$ if $G=H_1+\dots +H_r$ and the $H_i$ are pairwise edge-disjoint.

A \emph{path system} is a graph $Q$ which is the union of vertex-disjoint paths (some of them might be trivial).
We say that $P$ is a \emph{path in Q} if $P$ is a component of $Q$ and, abusing the notation, sometimes write $P\in Q$ for this.
A \emph{path sequence} is a digraph which is the union of vertex-disjoint directed paths (some of them might be trivial).
We often view a matching $M$ as a graph (in which every vertex has degree precisely one).%
    \COMMENT{This is different to eg the bipartite paper where a matching us a set of edges. CHECK whether we always use this
def in this paper, ie whether we write $e(M)$ for the number of edges and not $|M|$.}

If $G$ is a digraph, we write $xy$ for an edge directed from $x$ to $y$. If $xy\in E(G)$, we say that $y$ is an
\emph{outneighbour} of~$x$ and $x$ is an \emph{inneighbour} of~$y$.
 A digraph $G$ is an \emph{oriented graph} if there are no $x,y\in V(G)$ such that $xy, yx\in E(G)$.
Unless stated otherwise, when we
refer to paths and cycles in digraphs, we mean directed paths and cycles, i.e.~the edges on these paths/cycles are oriented consistently.
If $x$ is a vertex of a digraph $G$, then $N^+_G(x)$ denotes the \emph{outneighbourhood} of $x$, i.e.~the
set of all those vertices $y$ for which $xy\in E(G)$. Similarly, $N^-_G(x)$ denotes the \emph{inneighbourhood} of $x$, i.e.~the
set of all those vertices $y$ for which $yx\in E(G)$. The \emph{outdegree} of $x$ is $d^+_G(x):=|N^+_G(x)|$ and the
\emph{indegree} of $x$ is $d^-_G(x):=|N^-_G(x)|$. 
We write $d^+_G(x,A)$ for the number of%
\COMMENT{Andy: deleted `all'} 
outneighbours of $x$ lying inside $A$ and define $d^-_G(x,A)$ similarly.
We denote the minimum outdegree of $G$ by $\delta^+(G)$ and the minimum indegree by $\delta^-(G)$.
We write $\delta(G)$ and $\Delta(G)$ for the minimum and maximum degrees of the underlying simple%
   \COMMENT{Daniela: added simple}
undirected graph of $G$ respectively.%
\COMMENT{AL: added definitions of $\delta(G)$ and $\Delta(G)$. use this convention in e.g. the proof of Lemma~\ref{lma:EF-bipartite}}

Given a digraph $G$ and $A,B\subseteq V(G)$, an \emph{$AB$-edge} is an edge with initial vertex in $A$ and final vertex in $B$,
and $e_G(A,B)$ denotes the number of these edges in~$G$. If $A\cap B=\emptyset$, we denote by $G[A,B]$ the bipartite subdigraph of $G$
whose vertex classes are $A$ and $B$ and whose edges are all $AB$-edges of $G$.
By a bipartite digraph $G=G[A,B]$ we mean a digraph which only contains $AB$-edges.
A spanning subdigraph $H$ of $G$ is an \emph{$r$-factor} of $G$ if the outdegree and the indegree of every vertex of $H$ is~$r$.

If $P$ is a path and $x,y\in V(P)$, we write $xPy$ for the subpath of $P$ whose endvertices are $x$ and $y$.
We define $xPy$ similarly if $P$ is a directed path and $x$ precedes $y$ on~$P$.%
   \COMMENT{Daniela added these 2 sentences}

Let $V_1,\dots,V_k$ be pairwise disjoint sets of vertices and let $C=V_1\dots V_k$ be a directed cycle on these sets.
We say that an edge $xy$ of a digraph $R$ \emph{winds around $C$} if there is some $i$ such that $x\in V_i$ and $y\in V_{i+1}$.
In particular, we say that $R$ \emph{winds around $C$} if all edges of $R$ wind around~$C$.

In order to simplify the presentation, we omit floors and ceilings and treat large numbers as integers whenever this does
not affect the argument. The constants in the hierarchies used to state our results have to be chosen from right to left.
More precisely, if we claim that a result holds whenever $0<1/n\ll a\ll b\ll c\le 1$ (where $n$ is the order of the graph or digraph),
then this means that
there are non-decreasing functions $f:(0,1]\to (0,1]$, $g:(0,1]\to (0,1]$ and $h:(0,1]\to (0,1]$ such that the result holds
for all $0<a,b,c\le 1$ and all $n\in \mathbb{N}$ with $b\le f(c)$, $a\le g(b)$ and $1/n\le h(a)$. 
We will not calculate these functions explicitly. Hierarchies with more constants are defined in a similar way.
We will write $a = b \pm c$ as shorthand for $ b - c \le a \le b+c$.


\section[Derivation of Theorems~$\text{\ref{1factthm}}$,~$\text{\ref{HCDthm}}$,~$\text{\ref{NWmindeg}}$ from  Main Structural Results]{Derivation of Theorems~$\text{\ref{1factthm}}$,~$\text{\ref{HCDthm}}$,~$\text{\ref{NWmindeg}}$ from the Main Structural Results}
\label{split}

In this section, we combine the main auxiliary results of this paper (together with  results from~~\cite{KellyII} and~\cite{KLOmindeg})
to derive Theorems~\ref{1factthm},~\ref{HCDthm} and~\ref{NWmindeg}.
Before this, we first show that the bound on the minimum degree in Theorem~\ref{HCDthm} is best possible.

\begin{prop}\label{bestposs}
For every $n \ge 6$,%
\COMMENT{Need $n \geq 6$ here as we need $\lceil n/2\rceil+1 \geq 4$}
 let $D^*:= \lfloor n/2 \rfloor -1$. Unless both $D^*$ and $n$ are odd, 
there is a disconnected $D^*$-regular graph $G$ on $n$ vertices. 
If both $D^*$ and $n$ are odd, there is a disconnected $(D^*-1)$-regular graph $G$ on $n$ vertices.
\end{prop}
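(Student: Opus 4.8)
The plan is to construct explicit disconnected regular graphs as disjoint unions of two cliques (or near-cliques), handling the parity obstructions by deleting a suitable sparse regular subgraph (a perfect matching or a Hamilton cycle) from one component. Recall that a $d$-regular graph on $m$ vertices exists precisely when $dm$ is even and $d\le m-1$; in particular $K_m$ is $(m-1)$-regular, and removing a Hamilton cycle from $K_m$ (possible whenever $m\ge 3$) yields an $(m-3)$-regular graph, while removing a perfect matching from $K_m$ (possible whenever $m$ is even) yields an $(m-2)$-regular graph. Throughout write $D^*=\lfloor n/2\rfloor-1$.

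First I would dispose of the case $n$ even, so $n=2m$ and $D^*=m-1$. If $m$ is even, take $G$ to be the disjoint union of two copies of $K_m$; this is $(m-1)=D^*$-regular on $n$ vertices and disconnected. If $m$ is odd, then $m-1=D^*$ is even, and again two disjoint copies of $K_m$ form a $D^*$-regular disconnected graph (the product $D^* m$ is even since $D^*$ is even). So for $n$ even one always gets a disconnected $D^*$-regular graph, consistent with the statement (the exceptional clause requires $n$ odd). Next, $n$ odd, say $n=2m+1$, so $D^*=m-1$. If $D^*=m-1$ is even (equivalently $m$ odd), I would take the disjoint union of $K_m$ and $K_{m+1}$: the first is $(m-1)$-regular and the second is $m$-regular, so these degrees don't match. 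Instead, to get a common degree $D^*=m-1$ on components of sizes summing to $n$, use $K_m$ (which is $(m-1)$-regular) together with a $(m-1)$-regular graph on $m+1$ vertices; such a graph exists because $m-1\le m$ and $(m-1)(m+1)$ is even when $m$ is odd (then $m-1$ is even). Concretely take $K_{m+1}$ minus a perfect matching, which is $(m+1)-2=(m-1)$-regular and needs $m+1$ even, i.e. $m$ odd — exactly our subcase. This gives a disconnected $D^*$-regular graph on $n$ vertices when $n$ is odd and $D^*$ is even.

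The remaining, genuinely exceptional case is $n$ odd and $D^*=m-1$ odd (equivalently $m$ even), where the proposition only claims a disconnected $(D^*-1)$-regular graph. Here $D^*-1=m-2$ is even. I would again split $n=2m+1$ as $m+(m+1)$, but now realise degree $m-2$ on both parts: on $m$ vertices use $K_m$ minus a perfect matching ($m$ even, so this exists and is $(m-2)$-regular), and on $m+1$ vertices use $K_{m+1}$ minus a Hamilton cycle (needs $m+1\ge 3$, and gives $(m+1)-3=(m-2)$-regular); note $(m-2)(m+1)$ is even since $m-2$ is even. The disjoint union is $(D^*-1)$-regular, disconnected, on $n$ vertices. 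The condition $n\ge 6$ guarantees all the clique orders involved are at least $3$ so that the Hamilton cycle and perfect matching deletions are legitimate.

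I do not expect a serious obstacle here; the only care needed is a clean organisation of the four parity subcases (each determined by the parities of $n$ and of $D^*$, equivalently of $n$ modulo $4$) and verifying in each the elementary existence criterion $dm$ even, $d\le m-1$ for the regular graph on each component. The mild subtlety worth flagging is why the fourth subcase forces us down to $D^*-1$: one checks that no disconnected $D^*$-regular graph on $n$ vertices can exist when $n$ and $D^*$ are both odd, since each component would have an odd number of vertices of odd degree — impossible by the handshake lemma — so the drop by one is genuinely necessary and the proposition is sharp in this sense.
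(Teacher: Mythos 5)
Your proof is correct, and it follows essentially the same strategy as the paper: build $G$ as a disjoint union of two cliques and delete a sparse regular spanning subgraph from one or both components to reach the target degree. Your constructions for $n$ even and for $n$ odd with $D^*$ even match the paper's exactly. The one place you diverge is the last case ($n$ and $D^*$ both odd, i.e.~$n\equiv 1\pmod 4$): you keep the same two component sizes $m=\lfloor n/2\rfloor$ and $m+1$ and delete a perfect matching from $K_m$ and a Hamilton cycle from $K_{m+1}$, whereas the paper instead shifts to clique sizes $m-1$ and $m+2$ and deletes a $3$-factor from the larger clique only. Both yield a disconnected $(D^*-1)$-regular graph; your variant has the mild advantage of reusing the same split of $n$ across all cases, at the cost of modifying both components. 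Your sharpness remark (a disconnected $D^*$-regular graph cannot exist when $n$ and $D^*$ are both odd, since some component has odd order and odd degree) is sound; the paper makes the slightly stronger and simpler observation that no $D^*$-regular graph on $n$ vertices exists at all in that case, since $nD^*$ is odd.
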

Note that if both $D^*$ and $n$ are odd, no $D^*$-regular graph exists.
\proof
If $n$ is even,
take $G$ to be the disjoint union of two cliques of order $n/2$.
Suppose that $n$ is odd and $D^*$ is even. This implies $n=3 \pmod 4$.
Let $G$ be the graph obtained from the disjoint union of cliques of orders
$\lfloor n/2\rfloor$ and $\lceil n/2\rceil$ by deleting a perfect matching in the bigger clique.%
\COMMENT{This can be done since $\lceil n/2\rceil$ is even.}
Finally, suppose that $n$ and $D^*$ are both odd.
This implies that $n = 1 \pmod 4$. 
In this case, take $G$ to be the graph obtained from the disjoint union of cliques of orders $\lfloor n/2\rfloor-1$
and $\lceil n/2\rceil+1$ by deleting a $3$-factor in the bigger clique.%
   \COMMENT{This can be done since $\lceil n/2\rceil+1$ is even. Note $G$ is regular of degree $\lfloor n/2\rfloor-2$.}
\endproof

\subsection{Deriving Theorems~\ref{1factthm} and~\ref{HCDthm}}

As indicated in Section~\ref{intro}, in the proofs of our main results
we will distinguish the cases when our given graph $G$ is close to the union of two disjoint copies of $K_{n/2}$,
close to a complete bipartite graph $K_{n/2,n/2}$ or a robust expander. We will start by defining these concepts.

We say that a graph $G$ on $n$ vertices%
   \COMMENT{Daniela}
is \emph{$\eps$-close to the union of two disjoint copies of $K_{n/2}$}
if there exists $A\subseteq V(G)$ with $|A|=\lfloor n/2\rfloor$ and such that $e(A,V(G)\setminus A)\le\eps n^{2}$.
We say that $G$ is \emph{$\eps$-close to $K_{n/2,n/2}$} if there exists $A\subseteq V(G)$ with $|A|=\lfloor n/2\rfloor$
and such that $e(A)\le \eps n^{2}$. We say that $G$ is \emph{$\eps$-bipartite} if there exists $A\subseteq V(G)$ with $|A|=\lfloor n/2\rfloor$
such that $e(A), e(V(G)\setminus A) \le \eps n^{2}$. So every $\eps$-bipartite graph is $\eps$-close to $K_{n/2,n/2}$.
Conversely, if $1/n\ll \eps$ and $G$ is a regular graph on $n$ vertices which
$\eps$-close to $K_{n/2,n/2}$, then $G$ is $2\eps$-bipartite.

Given $0<\nu \leq \tau<1$, we say that a graph $G$ on $n$ vertices is a \emph{robust $(\nu, \tau)$-expander},
if for all $S\subseteq V(G)$ with $\tau n\le |S|\le (1-\tau)n$ the number of vertices that have at least $\nu n$
neighbours in $S$ is at least $|S|+\nu n$. 

The following observation from~\cite{KLOmindeg} implies that we can split the proofs of Theorems~\ref{1factthm} and~\ref{HCDthm}
into three cases.

\begin{lemma}\label{lem:characteriseexpanders}
Suppose that $0<1/n\ll\kappa\ll\nu\ll\tau,\eps<1$.
Let $G$ be a graph on $n$ vertices of minimum degree $\delta:=\delta(G)\ge(1/2-\kappa)n$.
Then $G$ satisfies one of the following properties:
\begin{enumerate}
\item[{\rm (i)}] $G$ is $\eps$-close to $K_{n/2,n/2}$;
\item[{\rm (ii)}] $G$ is $\eps$-close to the union of two disjoint copies of $K_{n/2}$;
\item[{\rm (iii)}] $G$ is a robust $(\nu,\tau)$-expander.
\end{enumerate}
\end{lemma}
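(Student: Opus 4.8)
The plan is to prove the contrapositive, or rather a trichotomy-by-elimination: assume $G$ is not a robust $(\nu,\tau)$-expander and deduce that $G$ must be $\eps$-close to $K_{n/2,n/2}$ or to the union of two copies of $K_{n/2}$. Since $G$ fails to be a robust $(\nu,\tau)$-expander, there is a set $S\subseteq V(G)$ with $\tau n\le|S|\le(1-\tau)n$ such that the set $RN_{\nu}(S)$ of vertices having at least $\nu n$ neighbours in $S$ satisfies $|RN_{\nu}(S)|<|S|+\nu n$. The goal is to show this single "bad cut" forces $G$ to look almost bipartite or almost disconnected.

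First I would extract from $S$ a near-partition of $V(G)$. Let $A':=S$ and $B':=V(G)\setminus S$. Write $T:=V(G)\setminus RN_{\nu}(S)$, so every vertex of $T$ has fewer than $\nu n$ neighbours in $S$, and $|T|>n-|S|-\nu n=|B'|-\nu n$. Because $\delta(G)\ge(1/2-\kappa)n$, every vertex of $T$ has more than $(1/2-\kappa)n-\nu n$ neighbours in $B'$; summing over $T$ shows $B'$ is internally quite dense, in fact $e(B')\ge\frac12|T|\big((1/2-\kappa-\nu)n-|S^c\setminus T|\big)$ up to lower-order terms, which combined with $|B'|\le(1-\tau)n$ pins $|B'|$ close to $n/2$ (otherwise a vertex of $T$ could not have enough neighbours, since its neighbours almost all lie in $B'$ and $|B'|$ would be too small, or $e(B')$ would exceed $\binom{|B'|}{2}$). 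Hence $|S|=(1/2\pm c)n$ for a small constant $c=c(\kappa,\nu,\tau)$, and symmetrically $|B'|=(1/2\pm c)n$; adjust $S$ by moving at most $cn$ vertices so that $|A|=\lfloor n/2\rfloor$, $|B|=\lceil n/2\rceil$, changing all edge counts by at most $cn^2$.

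Next I would do the case split on $e(A,B)$. The number of edges between $A$ and $B$ is controlled as follows: vertices in $T$ send almost all their degree into $B$, so $e_G(A,B)\le\sum_{v\in T}d_G(v,A)+\sum_{v\in A\setminus T}d_G(v,B)+O(\nu n^2)\le \nu n\cdot|T|+|A\setminus T|\cdot|B|+O(\nu n^2)$, and $|A\setminus T|=|A|-|T|\le|A|-(|B|-\nu n)=O(\nu n)$ by the size estimates, so in fact $e_G(A,B)=O(\sqrt{\eps_1}\,n^2)$ is \emph{small} — wait, this shows we are essentially always in case (ii). More carefully: the failure of expansion together with large minimum degree forces almost all of $A$'s vertices to also have few neighbours in $S=A$, i.e. $A$ itself is almost independent OR $A$ is almost a clique with few edges to $B$. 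So one runs the dichotomy: either $e_G(A)\le\eps n^2$ — then (i) holds and (since $G$ is regular and $\eps$-close to $K_{n/2,n/2}$, as remarked in the excerpt) — or $e_G(A)>\eps n^2$, in which case a counting argument (many vertices of $A$ have $\gg\nu n$ neighbours in $A$, so they lie in $RN_\nu(S)$, forcing $RN_\nu(S)$ to contain most of $A$ \emph{and} most of $T\subseteq B$ contributes the rest, and the deficiency $|RN_\nu(S)|<|S|+\nu n$ then squeezes $e_G(A,B)\le\eps n^2$) gives (ii). I would carry out this counting to show that $e_G(A,B)\le\eps n^2$ whenever $e_G(A)>\eps n^2$, using that every vertex in $B$ with $\ge\nu n$ neighbours in $A$ eats into the expansion budget.

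The main obstacle is the careful bookkeeping in the last step: one must simultaneously track $|RN_\nu(S)|$, the sizes of $A,B,T$, and the three quantities $e(A),e(B),e(A,B)$, and show that "not many edges both ways" is impossible — i.e. that $\min\{e(A),e(A,B)\}$ or $\min\{e(B),e(A,B)\}$ must be $\le\eps n^2$. This is precisely where the constant hierarchy $\kappa\ll\nu\ll\tau,\eps$ is used: $\nu n$-sized error terms must be absorbable into $\eps n^2$, and the $\tau n\le|S|\le(1-\tau)n$ bound is what prevents $S$ from being trivially small or large so that the min-degree condition has real force. Since the statement is quoted as an observation from~\cite{KLOmindeg}, I would in practice cite that reference; the sketch above is the self-contained argument behind it.
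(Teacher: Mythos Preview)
The paper does not prove this lemma; it is quoted verbatim from~\cite{KLOmindeg}. So there is no ``paper's proof'' to compare against. Your overall strategy --- take a witness set $S$ with $\tau n\le|S|\le(1-\tau)n$ and $|RN_\nu(S)|<|S|+\nu n$, show $|S|=(1/2\pm o(1))n$, then run a dichotomy --- is exactly the approach used in~\cite{KLOmindeg}, and your argument that $|S|\approx n/2$ is essentially correct (the lower bound follows from a double count of $\sum_{v\in V}d(v,S)$, which you don't write down but is routine).

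However, your dichotomy step has a real gap. You split on whether $e_G(A)\le\eps n^2$, and in the complementary case claim that ``many vertices of $A$ have $\gg\nu n$ neighbours in $A$, so they lie in $RN_\nu(S)$, forcing $RN_\nu(S)$ to contain most of $A$''. This implication fails: $e_G(A)>\eps n^2$ only guarantees that roughly $4\eps n$ vertices of $A$ lie in $RN_\nu(S)$, not \emph{most} of $A$. Both $|A\cap RN_\nu(S)|$ and $|B\cap RN_\nu(S)|$ could be around $n/4$ without contradicting $|RN_\nu(S)|<|S|+\nu n$, and your edge bounds would then give $e(A),e(A,B)\approx n^2/8$, neither small. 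You also repeatedly write $|A\setminus T|=|A|-|T|$ and ``$T\subseteq B$'', implicitly assuming $T$ lies on one side, which is exactly what must be proved.

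The correct split is on $|T\cap S|$, not on $e(A)$. Every vertex of $T$ has at least $(1/2-\kappa-\nu)n$ neighbours in $B=V\setminus S$, hence is adjacent to all but $c'n$ vertices of $B$ (where $c'=O(\kappa+\nu+\nu/\tau)$). If $|T\cap S|>\sqrt{c'}\,n+\nu n$, then a double count of non-edges between $T\cap S$ and $B$ shows that all but $O(\sqrt{c'}\,n)$ vertices of $B$ have more than $\nu n$ neighbours in $T\cap S\subseteq S$; hence $|B\cap RN_\nu(S)|\ge|B|-O(\sqrt{c'}\,n)$, forcing $|S\cap RN_\nu(S)|\le O(\sqrt{c'}\,n)$, and then $e(S)\le\eps n^2$ --- case~(i). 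If instead $|T\cap S|\le\sqrt{c'}\,n+\nu n$, then $|S\cap RN_\nu(S)|\ge|S|-O(\sqrt{c'}\,n)$, so $|B\cap RN_\nu(S)|\le O(\sqrt{c'}\,n)$, and $e(S,B)\le\eps n^2$ --- case~(ii). This is the missing ingredient: the structure of $T$ (every vertex of $T$ nearly complete to $B$) forces the robust neighbourhood to be concentrated on one side, which is what makes the dichotomy work.
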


Recall that in Chapter~\ref{paper1} we prove Theorems~\ref{1factthm} and~\ref{HCDthm} in Case (ii) when our given graph~$G$
is $\eps$-close to the union of two disjoint copies of $K_{n/2}$. 
The following result is sufficiently general to imply both Theorems~\ref{1factthm} and~\ref{HCDthm}
in this case. We will prove it in Section~\ref{sec:1factstrong}.
\begin{thm}\label{1factstrong}
For every $\eps_{\rm ex} > 0$ there exists an $n_0\in\mathbb{N}$ such that the following holds for all $n\ge n_0$.
Suppose that $D \ge n - 2 \lfloor n/4 \rfloor -1$ and that $G$ is a $D$-regular graph on $n$ vertices which
is $\eps_{\rm ex}$-close to the union of two disjoint copies of $K_{n/2}$.
Let $F$ be the size of a minimum cut in $G$.
Then $G$ can be decomposed into $\lfloor \min \{D,F\} /2 \rfloor$ Hamilton cycles and $D - 2 \lfloor \min \{D,F\} /2 \rfloor$ perfect matchings.
\end{thm}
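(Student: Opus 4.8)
The plan is to reduce Theorem~\ref{1factstrong} to an approximate decomposition result combined with the robust decomposition lemma of~\cite{Kelly}, exploiting heavily the near-bipartite-complement structure coming from $G$ being $\eps_{\rm ex}$-close to the union of two cliques. First I would fix $A, B$ with $|A|=\lfloor n/2\rfloor$, $|B|=\lceil n/2\rceil$ and $e(A,B)\le \eps_{\rm ex} n^2$, and clean up: move the small number of vertices with abnormal degree towards $A$ or $B$ across the cut so that, outside a tiny exceptional set, every vertex sends almost all of its edges inside its own part and only $o(n)$ edges across. The crucial numerical point is that a $D$-regular graph with $D\ge n-2\lfloor n/4\rfloor-1$ that is close to two cliques of order $n/2$ has each of $G[A]$ and $G[B]$ of density close to $1$ but slightly sparse; the Hamilton cycles we build must repeatedly cross between $A$ and $B$, and each crossing uses up two of the scarce $AB$-edges, which is exactly why the number of Hamilton cycles is governed by $\min\{D,F\}/2$ where $F$ is the minimum cut.

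Next I would set up the exceptional system. Following the strategy outlined in the introduction, the edges incident to the exceptional set together with the $AB$-edges are grouped into a bounded number of sparse `exceptional path systems', using the decomposition lemmas deferred to Chapter~\ref{paper4}: the point is that after removing these, what remains is essentially a disjoint union of two almost-complete balanced (or near-balanced) graphs on $A$ and $B$, each of which is a very dense regular-ish graph, hence (after a further small adjustment) a robust expander on its own vertex class. I would then apply the approximate decomposition result of Chapter~\ref{paper3} to $G[A]$ and $G[B]$ separately to obtain almost all of the desired Hamilton cycles and perfect matchings, where each path system built will need to be `joined up' across the cut using the reserved $AB$-edges: a Hamilton cycle of $G$ is obtained by stitching a Hamilton path of $G[A]$ and a Hamilton path of $G[B]$ together with two crossing edges (so $F/2$ is the natural cap), while a perfect matching of $G$ is built from a perfect matching of $G[A]$, a perfect matching of $G[B]$, unless parities force us to use crossing edges, in which case we account for this carefully.

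The endgame is to convert the approximate decomposition into an exact one. Here I would invoke the robust decomposition lemma from~\cite{Kelly}: reserve at the outset a sparse `robustly decomposable' spanning structure inside each of $G[A]$ and $G[B]$ (a union of appropriately chosen edge-disjoint `chord sequences'/`Hamilton exceptional factors' in the language of that paper), run the approximate decomposition on the rest, and absorb the small leftover into the reserved structure to complete the remaining Hamilton cycles and perfect matchings exactly. Throughout, I would keep a running ledger of how many crossing edges each constructed object consumes, to guarantee that the total is precisely $\lfloor\min\{D,F\}/2\rfloor$ Hamilton cycles plus $D-2\lfloor\min\{D,F\}/2\rfloor$ perfect matchings and that no edge is left over; when $D\le F$ this is just the `two cliques glued along few edges' case, and when $D>F$ the minimum cut $F$ is realized by an almost-balanced vertex bipartition and one argues that the $AB$-edges genuinely run out after $\lfloor F/2\rfloor$ Hamilton cycles.

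The main obstacle, I expect, is precisely this bookkeeping of the scarce $AB$-edges: one must show that the $AB$-edges (together with those forced to be used by degree/parity irregularities at the exceptional vertices) can be partitioned into the right number of pieces, each piece usable to close up exactly one Hamilton cycle (or redirected into a perfect matching), with no piece too large and none wasted — and simultaneously ensure the leftover inside $G[A]$ and $G[B]$ after the approximate step has the exact degree sequence the robust decomposition lemma demands. Balancing `use enough crossing edges to reach $\lfloor\min\{D,F\}/2\rfloor$ Hamilton cycles' against `don't use so many that $G[A]$ or $G[B]$ becomes unbalanced or non-regular' is the delicate heart of the argument, and is where the detailed case analysis according to the residues of $n$ and $D$ modulo $4$ (mirroring the extremal examples in the introduction) will be unavoidable.
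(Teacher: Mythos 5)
Your high-level architecture matches the paper's: fix a partition $A',B'$ via Proposition~\ref{prop:framework}, strip out edges inside $A_0,B_0$, decompose the remaining $A'B'$-edges into localized exceptional systems (Chapter~\ref{paper4}), reserve a robustly decomposable graph $G^{\rm rob}$ via the robust decomposition lemma applied to $G[A]$ and $G[B]$ separately (Corollary~\ref{rdeccor}), run the approximate decomposition (Lemma~\ref{almostthm}), and absorb the leftover. You also correctly identify the bookkeeping of $A'B'$-edges as the delicate heart. However, two of the mechanisms you describe would not work as stated and need to be replaced by the ones the paper actually uses.

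First, your mechanism for producing the perfect matchings is wrong. You propose taking ``a perfect matching of $G[A]$, a perfect matching of $G[B]$,'' but this leaves the exceptional vertices $A_0\cup B_0$ uncovered, and it doesn't mesh with the fact that each exceptional vertex has degree two in every exceptional system. What the proof actually does is construct, for every exceptional system $J$ (whether a Hamilton ES or a matching ES), a pair of Hamilton cycles $C_A$ on $A$ and $C_B$ on $B$ consistent with the fictive edges $J^*_A$, $J^*_B$; Proposition~\ref{prop:ES} then shows that $C_A+C_B-J^*+J$ is either a Hamilton cycle of $G$ (when $J$ is a Hamilton ES, i.e.\ has an even positive number of $AB$-paths) or the union of a Hamilton cycle on $A'$ and one on $B'$, each of which is split into two perfect matchings (when $J$ is a matching ES and $|A'|,|B'|$ are even, which Proposition~\ref{prp:e(A',B')}(ii) forces exactly when $e_G(A',B')<D$). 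So perfect matchings always come in pairs from split Hamilton cycles, never from matchings on $G[A]$ and $G[B]$. Relatedly, you never mention the fictive-edge machinery ($J\mapsto J^*_A+J^*_B$), which is the device that lets one treat $A$ and $B$ independently and feed the problem into the robust decomposition lemma.

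Second, your account of where $\lfloor F/2\rfloor$ comes from is a bit off and, if taken literally, would be harder to make rigorous than what the paper does. You suggest tracking the consumption of $AB$-edges so that ``the $AB$-edges genuinely run out after $\lfloor F/2\rfloor$ Hamilton cycles.'' In fact the proof never directly identifies $F$ with $e_G(A',B')$; instead, in the case $e_G(A',B')<D$ it arranges the decomposition so that every Hamilton cycle uses exactly two $A'B'$-edges and every matching exceptional system uses none, so the Hamilton cycle count is exactly $\lfloor e_G(A',B')/2\rfloor$; since $e_G(A',B')\ge F$ ($A',B'$ is a cut) and no graph has more than $\lfloor F/2\rfloor$ edge-disjoint Hamilton cycles, equality follows by a squeeze. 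In the case $e_G(A',B')\ge D$, Proposition~\ref{prp:e(A',B')parity}/Corollary~\ref{cor:cases} and the construction give $\lfloor D/2\rfloor$ Hamilton cycles and one or zero perfect matchings. Also note the exceptional-system decomposition is not a generic residues-mod-$4$ case split: the real trichotomy (Corollary~\ref{cor:cases}) is non-critical with $e(A',B')\ge D$, critical with $e(A',B')\ge D$ (where one must track the one or two high-degree vertices $w_1,w_2$ through the $G^{\rm rob}$ step to keep~\eqref{additional} valid), and $e(A',B')<D$. Fixing these two mechanisms is necessary before your outline becomes a proof.
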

Note that Theorem~\ref{1factstrong} provides structural insight into the extremal graphs for Theorem~\ref{HCDthm}%
		\COMMENT{Previously have Theorem~\ref{HCDthm}, but I think it should be Theorem~\ref{1factthm}.
Deryk: we do mean Theorem~\ref{HCDthm}, the range is allowed to go below that for Theorem~\ref{HCDthm} but not below that of~\ref{1factthm}}
-- they are those with a cut of size less than $D$.

Throughout this paper, we will use the following fact.
\begin{equation} \label{minexact}
n - 2\lfloor n/4 \rfloor -1=
\begin{cases}
n/2-1 & \textrm{if $n = 0 \pmod 4$,}\\
(n-1)/2 & \textrm{if $n = 1 \pmod 4$,}\\
n/2 & \textrm{if $n = 2 \pmod 4$,}\\
(n+1)/2 & \textrm{if $n = 3 \pmod 4$.}
\end{cases}
\end{equation}

The next result from~\cite{KellyII} (derived from the main result of~\cite{Kelly})%
\COMMENT{osthus added bracket}
shows that every even-regular robust expander of linear degree has a Hamilton decomposition.
It will be used to prove Theorems~\ref{1factthm} and~\ref{HCDthm} in the case when our given graph $G$ is a robust expander.

\begin{thm}\label{undir_decomp}
For every $ \alpha >0$ there exists $\tau>0$ such that for every $\nu> 0$ there exists $n_0=n_0 (\alpha,\nu,\tau)$ for which the following holds. 
Suppose that
\begin{itemize}
\item[{\rm (i)}] $G$ is an $r$-regular graph on $n \ge n_0$ vertices, where $r\ge \alpha n$ is even;
\item[{\rm (ii)}] $G$ is a robust $(\nu,\tau)$-expander.
\end{itemize}
Then $G$ has a Hamilton decomposition.
\end{thm}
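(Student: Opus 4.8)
The plan is to follow the strategy of~\cite{Kelly}: set aside a sparse ``robustly decomposable'' spanning subgraph $G^{\rm rob}\subseteq G$, find an approximate Hamilton decomposition of $G-G^{\rm rob}$, and then absorb the sparse leftover of that approximate decomposition using $G^{\rm rob}$. First I would fix a hierarchy of constants $1/n_0\ll\nu\ll\tau\ll\alpha$ together with the usual auxiliary regularity constants, and apply Szemer\'edi's regularity lemma to $G$ to obtain a partition of $V(G)$ into clusters $V_1,\dots,V_k$ of equal size and a reduced graph $R$ on $[k]$ whose edges correspond to $\eps$-regular pairs of density at least $d$. A standard argument shows that $R$ inherits a (slightly weaker) form of robust expansion from $G$; in particular one can extract from $R$ a ``skeleton'' consisting of a Hamilton cycle together with a suitable system of chords, to be used as the combinatorial template around which later structures wind.

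Next I would invoke the robust decomposition lemma (the main lemma of~\cite{Kelly}). Using this skeleton in $R$ and the $\eps$-regularity of the corresponding pairs, it produces a spanning subgraph $G^{\rm rob}\subseteq G$ which is $r^{\rm rob}$-regular with $r^{\rm rob}=o(n)$ and which is \emph{robustly decomposable}: whenever $H$ is an $r$-regular spanning subgraph of $G$ with $r$ small and whose edges are suitably distributed with respect to the cluster partition, the graph $G^{\rm rob}\cup H$ has a Hamilton decomposition. Setting $G':=G-G^{\rm rob}$, the graph $G'$ is $(r-r^{\rm rob})$-regular and, since only $o(n)$ edges were removed at each vertex, $G'$ is still a robust $(\nu/2,2\tau)$-expander of linear degree.

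Then I would find an approximate Hamilton decomposition of $G'$: a collection of edge-disjoint Hamilton cycles covering all but a sparse spanning subgraph $H_0$ with $\Delta(H_0)=o(n)$. This again uses the regularity method together with robust expansion — one repeatedly extracts Hamilton cycles respecting the reduced-graph structure, or uses a fractional/random-greedy argument. A little care converts $H_0$ into an $r$-regular graph $H$ (by deleting a few further edge-disjoint Hamilton cycles, or by first reserving a sparse ``flexible'' graph) whose edge distribution with respect to $V_1,\dots,V_k$ satisfies the hypotheses of the robust decomposition lemma. Applying that lemma to $G^{\rm rob}\cup H$ yields a Hamilton decomposition of $G^{\rm rob}\cup H$, and combining it with the approximate decomposition of $G'-H$ gives a Hamilton decomposition of all of $G$.

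The main obstacle is the interface between the approximate-decomposition step and the robust decomposition lemma: the leftover must be massaged into a genuinely regular graph $H$ meeting the precise quasirandomness and distribution conditions under which $G^{\rm rob}\cup H$ decomposes, and the parameters $r^{\rm rob}$, $r=\deg(H)$ and the regularity constants must all be balanced against $\nu$ and $\tau$ in the hierarchy. The other technically demanding ingredient is the construction of $G^{\rm rob}$ itself — the chord/universal-walk gadgetry inside $R$ and its faithful realization inside $G$ via regular pairs; everything else is bookkeeping with the constant hierarchy.
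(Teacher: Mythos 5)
This theorem is not proved in the present paper; it is imported verbatim from~\cite{KellyII}, where it is derived as an undirected corollary of the main (directed) result of~\cite{Kelly}. There is therefore no ``paper's own proof'' here to compare your sketch against, only the cited source. That said, your sketch does correctly recount the overall strategy of~\cite{Kelly}: reserve a sparse robustly decomposable subgraph $G^{\rm rob}$ (constructed via the robust decomposition lemma from a regularity-based skeleton involving universal walks and chord sequences), approximately decompose the rest, then absorb the regular leftover with $G^{\rm rob}$.

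Two caveats worth flagging. First, the robust decomposition lemma of~\cite{Kelly} is a statement about \emph{digraphs}, so before any of your steps can run one must first turn $G$ into a regular digraph: since $r$ is even, decompose $G$ into $2$-factors by Petersen's theorem and orient each $2$-factor consistently to obtain an $(r/2)$-regular digraph whose underlying undirected graph is $G$. Your sketch never mentions this step, but it is essential to the interface with~\cite{Kelly} and is exactly how~\cite{KellyII} passes from the directed to the undirected setting. Second, the phrase ``a little care converts $H_0$ into an $r$-regular graph $H$'' undersells the real subtlety: in~\cite{Kelly} the approximate decomposition is not run blindly and then repaired afterwards; it is engineered from the start, on a refinement of the same cluster partition used by the robust decomposition lemma, so that the leftover has precisely the prescribed degree and lives on the intended superregular pairs. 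You correctly identify this interface as the main obstacle, but in the actual argument it is handled by a priori bookkeeping of degrees throughout the approximate decomposition rather than by post hoc massaging of $H_0$.
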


The following result implies Theorems~\ref{1factthm} and \ref{HCDthm} in the case when
our given graph is $\eps$-close to $K_{n/2,n/2}$.
Note that unlike the case when $G$ is $\eps$-close to the union of two disjoint copies of $K_{n/2}$, we have room to spare in the lower bound on~$D$.

\begin{thm}\label{1factbip}
There are $\eps_{\rm ex} >0$ and $n_0\in\mathbb{N}$ such that the following holds. Let $n\ge n_0$ and suppose that
$D \ge (1/2-\eps_{\rm ex})n$ is even. 
Suppose that $G$ is a $D$-regular graph on $n$ vertices which is $\eps_{\rm ex}$-bipartite.
Then $G$ has a Hamilton decomposition.
\end{thm}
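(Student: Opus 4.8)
The plan is to prove Theorem~\ref{1factbip} by a standard ``approximate decomposition plus absorption'' strategy, exploiting that $G$ is $\eps_{\rm ex}$-bipartite with vertex classes $A,B$ (of sizes roughly $n/2$) and that $D\ge(1/2-\eps_{\rm ex})n$ is even. First I would apply a small amount of preprocessing: since $e(A),e(B)\le\eps_{\rm ex}n^2$ and $G$ is $D$-regular, every vertex has almost all of its neighbours across the bipartition, so one can choose a set of edge-disjoint ``exceptional'' structures (a bounded number of Hamilton cycles, or short path systems) that absorb \emph{all} edges inside $A$ and inside $B$ together with a controlled set of $AB$-edges, leaving a graph $G'$ that is bipartite with vertex classes $A',B'$ of equal size and is close to $(D-o(n))$-regular. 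The point of this step is to reduce to a genuinely bipartite regular (or near-regular) graph, where Hamilton decompositions behave much more rigidly.

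Next I would invoke the robust decomposition machinery. The key is that a dense bipartite graph which is ``balanced'' and has minimum degree a bit above half of each side is a robust bipartite expander, so one can set aside a sparse \emph{robustly decomposable} graph $G^{\rm rob}$ (via the robust decomposition lemma of~\cite{Kelly}, as used in Chapters~\ref{paper2} and~\ref{paper3}) such that $G^{\rm rob}$ plus any sparse regular bipartite ``leftover'' winding suitably around the cycle $AB$ has a Hamilton decomposition. Then I would apply the approximate decomposition result (the one deferred to Chapter~\ref{paper3}) to $G'-G^{\rm rob}$ to obtain a set of edge-disjoint Hamilton cycles covering all but a sparse, evenly-distributed remainder $H$; crucially $H$ can be arranged to be regular and to wind around $AB$. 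Finally $G^{\rm rob}+H$ is decomposed into Hamilton cycles by the robust decomposition lemma, and combining the three pieces — the exceptional Hamilton cycles from the first step, the approximate-decomposition Hamilton cycles, and the Hamilton cycles from $G^{\rm rob}+H$ — yields a full Hamilton decomposition of $G$. One must check a parity/divisibility bookkeeping point throughout: $D$ is even and each side has size $n/2$, so the number of Hamilton cycles is exactly $D/2$ and no perfect matching is left over, consistent with the statement.

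The main obstacle, as usual in this circle of ideas, is the first step: cleaning up the edges inside $A$ and inside $B$ and correcting the imbalance between $|A|$ and $|B|$ (if $n/2$ is not an integer, or if $G$ is only $\eps_{\rm ex}$-close rather than exactly balanced) while keeping the resulting ``core'' graph regular, balanced bipartite, and a robust expander, and while ensuring the exceptional edges can be packed into a bounded number of Hamilton cycles rather than merely path systems. This requires a careful choice of which $AB$-edges to pull into the exceptional structure so that degrees are corrected uniformly; here the slack $\eps_{\rm ex}n$ below $n/2$ in the degree bound gives exactly the room needed (this is the ``room to spare'' remarked on before the statement). Once the core graph is in this clean form, the two imported black boxes — the approximate decomposition result of Chapter~\ref{paper3} and the robust decomposition lemma of~\cite{Kelly} — do the heavy lifting, and only routine estimates remain to glue the pieces together.
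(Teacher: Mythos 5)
Your proposal is correct and follows essentially the same strategy as the paper's proof: first a cleanup phase that removes $A_0B_0$-edges via a few Hamilton cycles and packs the remaining internal edges into ``balanced exceptional systems'' (your short path systems), then setting aside a sparse robustly decomposable graph $G^{\rm rob}$ via the bipartite robust decomposition lemma from~\cite{Kelly}, then the approximate decomposition of Chapter~\ref{paper3}, and finally decomposing $G^{\rm rob}$ plus the sparse leftover. One small caveat on presentation: the paper does not first ``bipartize'' $G$ once and for all — rather, the balanced exceptional systems persist throughout as path systems and are converted to fictive $AB$-edges inside each of the later black boxes (both the approximate decomposition and the robust decomposition), which is what lets those bipartite machines ingest them; this interleaving matters because the exceptional systems must be localized to a few clusters to be extendable into Hamilton cycles, so one cannot simply pre-absorb all internal edges into a bounded number of Hamilton cycles. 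Since you hedged with ``or short path systems'' and identified exactly this step as the obstacle, your outline captures the real structure of the argument.
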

Theorem~\ref{1factbip} is one of the two main results proven in Chapter~\ref{paper2}.
The following result is an easy consequence of Tutte's theorem
and gives the degree threshold for a single perfect matching in  a regular graph.
Note the condition on $D$ is the same as in Theorem~\ref{1factthm}.
\begin{prop}\label{singlematching}
Suppose that $D\ge 2\lceil n/4 \rceil -1$ and $n$ is even.
Then every $D$-regular graph $G$ on $n$ vertices has a perfect matching.
\end{prop}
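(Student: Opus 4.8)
The plan is to derive this from Tutte's $1$-factor theorem, which states that a graph $H$ has a perfect matching if and only if for every $S\subseteq V(H)$, the number $q(H-S)$ of odd components of $H-S$ satisfies $q(H-S)\le |S|$. So the goal is to verify this condition for our $D$-regular graph $G$ with $D\ge 2\lceil n/4\rceil-1$ and $n$ even, using~\eqref{minexact} to make the bound on $D$ explicit in each residue class of $n$ modulo $4$.

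First I would fix $S\subseteq V(G)$ and set $s:=|S|$ and let $C_1,\dots,C_q$ be the odd components of $G-S$. The main counting idea is a double edge-count between $S$ and the odd components. Since $G$ is $D$-regular, each component $C_i$ sends at most $D|C_i|$ edges to the rest of $G$, but more usefully, since $C_i$ is a component of $G-S$, all edges leaving $C_i$ go to $S$, so $e(C_i,S)\le D|S| $ summed over all $i$ gives $\sum_i e(C_i,S)\le Ds$. On the other hand I want a lower bound on each $e(C_i,S)$. If $|C_i|\ge D+1$ this is not immediately helpful, so the standard trick is to split into cases. If some $C_i$ is "large", then because the $C_i$ are disjoint and avoid $S$, there cannot be too many of them, and one bounds $q$ directly by $n/(\text{something})$. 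If all odd components are "small" — say of size at most $D$ — then each vertex of $C_i$ has all but at most $|C_i|-1$ of its $D$ neighbours inside... wait, rather each vertex in $C_i$ has at least $D-(|C_i|-1)$ neighbours outside $C_i$, hence in $S$; summing, $e(C_i,S)\ge |C_i|(D-|C_i|+1)\ge D-|C_i|+1$, and if $|C_i|$ is odd and at most $D$ this is at least... one wants this to be comparable to $s/q$. Combining $\sum_i e(C_i,S)\le Ds$ with $e(C_i,S)\ge D-|C_i|+1$ and $\sum_i |C_i|\le n-s$ should yield, after rearranging, an inequality of the form $q\le s$ provided $D$ is large enough relative to $n$.

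More carefully, I expect the cleanest route is: assume for contradiction $q\ge s+1$; note $q$ and $s$ have the same parity since $q + s \equiv \sum|C_i| + (\text{even comps contribute even}) + s \equiv n \pmod 2$ and $n$ is even, so actually $q\equiv s\pmod 2$, giving $q\ge s+2$. Each odd component has odd size, so size at least $1$; if all have size exactly... no. Instead use that each $C_i$ has $|C_i|\ge 1$ and $\sum|C_i|\le n-s$, so the number of components of size $\le $ some threshold is controlled. The real dichotomy: either there are at least two components of size $>D$ — impossible if $2(D+1)>n-s$, which combined with the target $s\le$ something forces a contradiction; or at most one large component, and then for the $\ge s+1$ small ones we get $\sum e(C_i,S)\ge (s+1)(D - (\text{max small size}) +1)$, but small size $\le D$ so we need a better bound: a small odd component has size $\le n-s-(D+1)$ if there's a large one, or $\le n-s$ otherwise, and with $s$ small and $D\ge n/2-1$ this forces small components to be quite small. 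Plugging the explicit values from~\eqref{minexact} and checking each of the four residues of $n\bmod 4$ should close the gap; the tightness of the extremal examples mentioned after Theorem~\ref{1factthm} (two cliques of odd order) indicates the estimates must be done with care and no slack.

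The main obstacle I anticipate is the bookkeeping in the boundary case where $s$ is small (say $s=0$ or $s=1$) and there is exactly one "large" odd component together with several singletons or small components: here the double-count is tight and one genuinely needs the parity observation ($q\equiv s\pmod 2$, so $q=0$ is the only option when $s=0$, i.e. $G$ has no odd component, which for $n$ even means $G$ itself — fine) plus the precise value of $D$ from~\eqref{minexact} to rule out the configuration. In other words, the inequality $q(G-S)\le |S|$ holds with essentially no room to spare exactly at the extremal graphs, so the proof amounts to showing those disconnected near-extremal graphs are the only obstructions and they are excluded by $D$-regularity together with the stated degree bound. I would organise the write-up as: (1) state Tutte's theorem; (2) the double edge-count; (3) the parity remark; (4) case analysis on the number and sizes of odd components, invoking~\eqref{minexact}; (5) conclude $q(G-S)\le|S|$ for all $S$, hence $G$ has a perfect matching.
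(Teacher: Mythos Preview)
Your plan via Tutte's theorem is viable, but the paper takes a cleaner and shorter route that sidesteps most of the case analysis you anticipate. The key simplification is to first dispose of the case $D\ge n/2$ entirely by Dirac's theorem (a Hamilton cycle gives a perfect matching), so that Tutte is only needed when $D=n/2-1$ and $n\equiv 0\pmod 4$. In this single remaining case, rather than double-counting edges between $S$ and the odd components, the paper uses a direct component-size bound: since $\delta(G)=n/2-1$, every component of $G-S$ has at least $n/2-|S|$ vertices, so $G-S$ has at most $\lfloor (n-|S|)/(n/2-|S|)\rfloor$ components, and an elementary quadratic inequality shows this is at most $|S|$ for $2\le |S|\le n/2-2$ (the boundary case $n=8$, $|S|=2$ is checked by hand). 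The extreme cases $|S|\le 1$ and $|S|\ge n/2-1$ are then direct.

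Your edge-counting strategy would eventually work, but it forces you to track all four residues of $n\bmod 4$ and to handle the large/small component dichotomy carefully, which is exactly the ``bookkeeping in the boundary case'' you flagged as an obstacle. The paper's two-step reduction (Dirac first, then component sizes) avoids both of these: there is only one residue class left to consider, and the component-size argument needs no edge-counting and no separate treatment of large components. The parity observation $q\equiv s\pmod 2$ you mention is not even needed.
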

\proof If $D\ge n/2$ then $G$ has a Hamilton cycle (and thus a perfect matching) by Dirac's theorem.
So we may assume that $D=n/2-1$ and so $n = 0 \pmod 4$. In this case, we will use Tutte's
theorem which states that a graph $G$ has a perfect matching if for every
set $S\subseteq V(G)$ the graph $G-S$ has at most $|S|$ odd components (i.e.~components on an odd number of vertices).
The latter condition  holds if $|S|\le 1$ and%
    \COMMENT{If $|S|=0$ then this follows since $n= 0\pmod 4$ (If $G$ is disconnected, it is the union of two cliques on $n/2$ vertices).
If $|S|=1$ then $G-S$ can have at most two components
(as $D\ge n/2-1$) and at most one of these can be odd (as $n$ is even).}
if $|S|\ge n/2$.

If $|S|=n/2-1$ and $G-S$ has more than $|S|$ odd components, then $G-S$ consists of isolated vertices.%
     \COMMENT{If one component has at least 3 vertices then we have at most $n/2+1-2=|S|$ components.
If $k\ge 1$ components are an edge, then have at most $n/2+1-k$ components and so at most $n/2+1-2k\le |S|$ odd components.}
But this implies that each vertex outside $S$ is joined to all vertices in $S$, contradicting the $(n/2-1)$-regularity of $G$.

If $2\le |S|\le n/2-2$, then every component of $G-S$ has at least $n/2-|S|$ vertices
and so $G-S$ has at most $\lfloor (n-|S|)/(n/2-|S|)\rfloor$ components. But $\lfloor (n-|S|)/(n/2-|S|)\rfloor\le |S|$
unless $n=8$ and $|S|=2$.
(Indeed, note that $(n-|S|)/(n/2-|S|)\le |S|$ if and only if $n+|S|^2-(n/2+1)|S|\le 0$.
The latter holds for $|S|=3$ and $|S|=n/2-2$, and so for all values in between.
The case $|S|=2$ can be checked separately.)%
   \COMMENT{Indeed, if $|S|=n/2-2$ then $(n-|S|)/(n/2-|S|)=(n/2+2)/2=n/4+1\le n/2-2=|S|$ if $n\ge 12$. (If $n<12$ then
we have $n=8$ (since $n = 0 \pmod 4$) and $|S|=n/2-2=2$.)
If $|S|=3$ (and so $n\ge 12$) then $(n-|S|)/(n/2-|S|)=(n-3)/(n/2-3)\le 3$.
In the remaining case $|S|=2$ we have
$\lfloor(n-|S|)/(n/2-|S|)\rfloor=\lfloor(n-2)/(n/2-2)\rfloor = 2+\lfloor 4/(n-4) \rfloor = 2 $ unless $n=8$.}
If $n=8$ and $|S|=2$, it is easy to see that $G-S$ has at most two odd components.
\endproof

\removelastskip\penalty55\medskip\noindent{\bf Proof of Theorem~\ref{1factthm}. }
Let $\tau=\tau(1/3)$ be the constant returned by Theorem~\ref{undir_decomp} for $\alpha:=1/3$.
Choose $n_0\in\mathbb{N}$ and constants $\nu,\eps_{\rm ex}$ such that $1/n_0\ll \nu\ll \tau, \eps_{\rm ex}$ and $\eps_{\rm ex}\ll 1$.
Let $n\ge n_0$ and let $G$ be a $D$-regular graph as in Theorem~\ref{1factthm}.
Lemma~\ref{lem:characteriseexpanders} implies that $G$ satisfies one of the following properties:
\begin{enumerate}
\item[{\rm (i)}] $G$ is $\eps_{\rm ex}$-close to $K_{n/2,n/2}$;
\item[{\rm (ii)}] $G$ is $\eps_{\rm ex}$-close to the union of two disjoint copies of $K_{n/2}$;
\item[{\rm (iii)}] $G$ is a robust $(\nu,\tau)$-expander.
\end{enumerate}
If (i) holds and $D$ is even, then as observed at the beginning of this subsection, this implies that
$G$ is $2\eps_{\rm ex}$-bipartite. So Theorem~\ref{1factbip} implies that $G$ has a Hamilton decomposition and thus also
a $1$-factorization (as $n$ is even and so every Hamilton cycle can be
decomposed into two perfect matchings). 
Suppose that  (i) holds and $D$ is odd. Then Proposition~\ref{singlematching}
implies that $G$ contains a perfect matching $M$. Now $G-M$ is still $\eps_{\rm ex}$-close to $K_{n/2,n/2}$
and so Theorem~\ref{1factbip} implies that $G-M$ has a Hamilton decomposition.
Thus $G$ has a $1$-factorization.
If (ii) holds, then Theorem~\ref{1factstrong} and~\eqref{minexact} imply that $G$ has a $1$-factorization.
If (iii) holds and $D$ is odd, we use Proposition~\ref{singlematching} to choose a perfect matching $M$ in $G$
and let $G':=G-M$.  
If $D$ is even, let $G':=G$. In both cases, $G'-M$ is still a robust $(\nu/2,\tau)$-expander.
So Theorem~\ref{undir_decomp} gives a Hamilton decomposition of $G'$. 
So  $G$ has a $1$-factorization.
\endproof

The proof of Theorem~\ref{HCDthm} is similar to that of Theorem~\ref{1factthm}.

\removelastskip\penalty55\medskip\noindent{\bf Proof of Theorem~\ref{HCDthm}. }
Choose $n_0\in\mathbb{N}$ and constants $\tau, \nu,\eps_{\rm ex}$ as in the proof of Theorem~\ref{1factthm}.
Let $n\ge n_0$ and let $G$ be a $D$-regular graph as in Theorem~\ref{HCDthm}.
As before, Lemma~\ref{lem:characteriseexpanders} implies that $G$ satisfies one of (i)--(iii).
Suppose first that (i) holds. 
If $D$ is odd, $n$ must be even and so $D \ge n/2$. Choose a perfect matching $M$ in $G$ (e.g.~by applying Dirac's theorem)
and let $G':=G-M$. If $D$ is even, let $G':=G$. Note that in both cases $G'$ is $\eps_{\rm ex}$-close to $K_{n/2,n/2}$
and so $2\eps_{\rm ex}$-bipartite. Thus Theorem~\ref{1factbip} implies that $G'$ has a Hamilton decomposition.

Suppose next that (ii) holds. 
Note that by~\eqref{minexact}, $D  \ge n-2 \lfloor n/4 \rfloor -1$
unless $n =3 \pmod 4$ and $D=\lfloor n/2 \rfloor$. But the latter would mean that both $n$ and $D$ are odd, which is impossible.
So the conditions of  Theorem~\ref{1factstrong} are satisfied.
Moreover, since  $D \ge \lfloor n/2 \rfloor$, Proposition~\ref{prp:e(A',B')}(ii) implies that the size of 
a minimum cut in $G$ is at least $D$. Thus
Theorem~\ref{1factstrong} implies that $G$ has a decomposition into Hamilton cycles and at most one perfect matching.

Finally, suppose that (iii) holds. 
If $D$ is odd (and thus $n$ is even), we can apply Proposition~\ref{singlematching} again to find a perfect matching $M$ in $G$ and let $G':=G-M$.
If $D$ is even, let $G':=G$. In both cases, $G'$ is still a robust $(\nu/2,\tau)$-expander.
So Theorem~\ref{undir_decomp} gives a Hamilton decomposition of $G'$.
\endproof

\subsection{Deriving Theorem~\ref{NWmindeg}}

The derivation of Theorem~\ref{NWmindeg} is similar to that of the previous two results.
We will replace the use of Lemma~\ref{lem:characteriseexpanders} and Theorem~\ref{undir_decomp}
with the following result, which is an immediate consequence of the two main results in~\cite{KLOmindeg}.

\begin{thm}\label{NWmindegrob}
For every $\eps_{\rm ex}>0$ there exists an $n_0 \in \mathbb N$ such that the following holds. Suppose that $G$ is a graph on $n\ge n_0$ vertices with
$\delta(G)\ge n/2$.
Then $G$ satisfies one of the following properties:
\begin{itemize}
\item[{\rm (i)}] $G$ is $\eps_{\rm ex}$-close to $K_{n/2,n/2}$;
\item[{\rm (ii)}] $G$ is $\eps_{\rm ex}$-close to the union of two disjoint copies of $K_{n/2}$;
\item[{\rm (iii)}] $G$ contains ${\rm reg}_{\rm even}(n,\delta)/2$ edge-disjoint Hamilton cycles.
\end{itemize}
\end{thm}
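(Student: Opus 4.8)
The plan is to deduce Theorem~\ref{NWmindegrob} by combining the two main results of~\cite{KLOmindeg}. The first of these is the structural dichotomy already recorded as Lemma~\ref{lem:characteriseexpanders}: a graph of minimum degree at least $(1/2-\kappa)n$ is either $\eps$-close to $K_{n/2,n/2}$, or $\eps$-close to the union of two disjoint copies of $K_{n/2}$, or a robust $(\nu,\tau)$-expander. The second is the packing result of~\cite{KLOmindeg} asserting that every robust $(\nu,\tau)$-expander $G$ on $n$ vertices with $\delta(G)\ge n/2$ contains ${\rm reg}_{\rm even}(n,\delta(G))/2$ edge-disjoint Hamilton cycles (which in turn rests on the robust decomposition lemma of~\cite{Kelly}).

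First I would fix the constants. Given $\eps_{\rm ex}>0$, I would apply the robust-expander packing result of~\cite{KLOmindeg} to obtain a suitable $\tau>0$, then choose $\nu>0$ small enough for that result, then $\kappa>0$ with $\kappa\ll\nu$, and finally take $n_0$ large in terms of all of these. Since the hypothesis $\delta(G)\ge n/2$ certainly gives $\delta(G)\ge(1/2-\kappa)n$, Lemma~\ref{lem:characteriseexpanders} applies with parameters $\kappa,\nu,\tau,\eps_{\rm ex}$.

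Next I would split into the three cases produced by Lemma~\ref{lem:characteriseexpanders}. If $G$ is $\eps_{\rm ex}$-close to $K_{n/2,n/2}$ or to the union of two disjoint copies of $K_{n/2}$, then conclusion (i) or (ii) of Theorem~\ref{NWmindegrob} holds and there is nothing more to do. Otherwise $G$ is a robust $(\nu,\tau)$-expander, and since also $\delta(G)\ge n/2$ and $n\ge n_0$, the packing result of~\cite{KLOmindeg} yields ${\rm reg}_{\rm even}(n,\delta(G))/2$ edge-disjoint Hamilton cycles, i.e.\ conclusion (iii).

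The only points requiring care here are bookkeeping ones rather than a genuine mathematical obstacle: checking that the constant hierarchy demanded by the packing result of~\cite{KLOmindeg} is compatible with that of Lemma~\ref{lem:characteriseexpanders} (so that a single choice of $\nu,\tau$ works for both), and checking that the packing result really delivers the exact extremal bound ${\rm reg}_{\rm even}(n,\delta)/2$ under the sharp hypothesis $\delta\ge n/2$, rather than merely an approximate count or a count valid only for $\delta\ge n/2+\eps n$. All of the substantive work is contained in~\cite{KLOmindeg} and, through it, in the Hamilton decomposition machinery of~\cite{Kelly}; the derivation presented here is purely a matter of assembling these results.
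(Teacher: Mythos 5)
Your proposal matches the paper's treatment exactly: the paper records Theorem~\ref{NWmindegrob} as an immediate consequence of the two main results of~\cite{KLOmindeg} (the structural dichotomy underlying Lemma~\ref{lem:characteriseexpanders}, together with the robust-expander packing result), and does not give any further argument. You have simply spelled out the constant hierarchy and case split that the paper leaves implicit, and your bookkeeping caveats correctly identify the only points one would need to verify against~\cite{KLOmindeg}.
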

To deal with the near-bipartite case (i), we will apply the following result which we prove in  Chapter~\ref{paper2}.
\begin{thm}\label{NWmindegbip}
For each $\alpha>0$ there are $\eps_{\rm ex} >0$ and $n_0\in\mathbb{N}$ such that the following holds. Suppose that
$F$ is an $\eps_{\rm ex}$-bipartite graph on $n \ge n_0$ vertices with $\delta(F)\ge (1/2-\eps_{\rm ex})n$.
Suppose that $F$ has a $D$-regular spanning subgraph $G$%
\COMMENT{later on we say e.g. `let $G$ be as in Theorem~\ref{NWmindegbip} so this is necessary here}  
 such that $n/100\le D\le (1/2-\alpha)n$ and $D$ is even. Then $F$ contains $D/2$ edge-disjoint Hamilton cycles.
\end{thm}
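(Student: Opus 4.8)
The plan is to set up a constant hierarchy $0 < 1/n_0 \ll \eps_{\rm ex} \ll \alpha$ and reduce the problem to an approximate Hamilton decomposition of the `bipartite part' of $G$, followed by an application of the robust decomposition lemma from~\cite{Kelly} to absorb the leftover edges into a genuine set of Hamilton cycles. Concretely, since $F$ is $\eps_{\rm ex}$-bipartite, fix a partition $V(F)=A\cup B$ with $|A|=\lfloor n/2\rfloor$, $|B|=\lceil n/2\rceil$ and $e_F(A),e_F(B)\le \eps_{\rm ex} n^2$; call the edges inside $A$ or inside $B$ \emph{exceptional}. The $D$-regular subgraph $G$ inherits only few exceptional edges, so almost all of $G$ consists of $AB$-edges. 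First I would balance the two sides: move a small number of vertices or, more cleanly, pre-select a sparse `exceptional system' — a small set of edge-disjoint path systems covering all exceptional edges of $G$ together with a few $AB$-edges — exactly as is done for Case~(ii) of Theorems~\ref{1factthm} and~\ref{HCDthm} via the decomposition of exceptional edges deferred to Chapter~\ref{paper4}, but now in the (easier) near-bipartite setting. The point of this step is that after removing these path systems, what remains is a near-balanced bipartite graph that is still close to regular of degree roughly $D$, and each path system can later be extended through the `robustly decomposable' bipartite graph into a Hamilton cycle.

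The core of the argument is then to Hamilton-decompose (approximately) the balanced bipartite graph $G[A,B]$ minus the chosen exceptional path systems. For this I would invoke the approximate decomposition result deferred to Chapter~\ref{paper3}: a near-regular bipartite (or near-bipartite) graph of linear degree that is a robust expander in the bipartite sense has a decomposition into Hamilton cycles covering all but a $o(1)$-proportion of its edges. The hypothesis $\delta(F)\ge(1/2-\eps_{\rm ex})n$ together with $D\le(1/2-\alpha)n$ gives the necessary `room to spare' $\alpha n$ which makes the relevant bipartite robust expansion hold (this is exactly the kind of slack that is unavailable in the two-cliques case and is why Theorem~\ref{1factstrong} is harder). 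I would reserve, before running the approximate decomposition, a sparse `robustly decomposable graph' $G^{\rm rob}\subseteq G[A,B]$ via the robust decomposition lemma of~\cite{Kelly}; then apply the approximate decomposition to $G[A,B] - G^{\rm rob}$ minus the exceptional path systems, obtaining many edge-disjoint Hamilton cycles plus a sparse leftover $H$; finally the robust decomposition lemma guarantees that $G^{\rm rob} + H +(\text{exceptional path systems})$ decomposes into exactly the right number of Hamilton cycles. Counting degrees, the total number of Hamilton cycles produced is $D/2$, as required (using that $D$ is even so no leftover perfect matching is needed).

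The main obstacle I expect is the bookkeeping around the exceptional edges and the side-imbalance: one must choose the exceptional path systems so that (a) they cover \emph{all} edges of $G$ not inside the eventual balanced bipartite robustly-expanding piece, (b) they are sufficiently sparse and have the right degree sequences on $A$ and $B$ so that removing them leaves a graph which is still (nearly) regular \emph{and} still a bipartite robust expander, and (c) each of them can be incorporated into a distinct Hamilton cycle by the robust decomposition lemma (so their number must match the number of `fictive' edges the lemma can handle, and their endpoints must be suitably distributed). Verifying that the parameters can be chosen consistently — i.e.\ that the hierarchy $1/n_0\ll\eps_{\rm ex}\ll\alpha$ can be made to satisfy simultaneously the requirements of Chapter~\ref{paper4}'s exceptional decomposition, Chapter~\ref{paper3}'s approximate decomposition, and the robust decomposition lemma — is the delicate part, but it is routine in the sense that each ingredient is available off the shelf from the earlier chapters; the near-bipartite case is strictly easier here than the near-clique case because the extremal configuration for Theorem~\ref{NWmindeg} lives on the \emph{bipartite} side, so there is genuine degree slack throughout.
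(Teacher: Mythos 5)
Your proposal contains the right ingredients — split off the exceptional/unbalanced part of $G$, encode it as balanced path systems, and finish with the approximate decomposition lemma from Chapter~\ref{paper3} — but the central structural decision is wrong: you invoke the robust decomposition lemma of~\cite{Kelly}, which the paper deliberately avoids here. As the paper explicitly remarks at the start of Section~\ref{sec:proof1} (and in the itemisation ending Section~\ref{sec:sketch1}), Theorem~\ref{NWmindegbip} does \emph{not} need the robustly decomposable graph $G^{\rm rob}$; only Theorem~\ref{1factbip} does. The reason is the degree slack you yourself identified but did not exploit: since $D \le (1/2-\alpha)n$ while $\delta(F)\ge (1/2-\eps_{\rm ex})n$, the graph $F$ (not $G$) has $\Omega(\alpha n)$ spare degree at every vertex, so the approximate decomposition (Lemma~\ref{almostthmbip}) applied to a suitable subgraph of $F$ already outputs exactly the required number $D_2/2$ of Hamilton cycles, and the sparse leftover can simply be discarded. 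Concretely, the paper takes $\mathcal{C}_1$ from Corollary~\ref{coverA0B02c} (cleaning $A_0B_0$-edges), $\mathcal{C}_2$ from Corollary~\ref{BEScor} (covering all remaining exceptional edges while producing the localized balanced exceptional systems $\cJ'$), and $\mathcal{C}_3$ from Lemma~\ref{almostthmbip}, with $|\cC_1|+|\cC_2|+|\cC_3|=D/2$. No robust decomposition, no chord absorber, no special factors.

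Moreover, following your proposal literally would be worse than overkill: if the approximate and robust decompositions are run on $G[A,B]$ (as you write), you would be attempting a full Hamilton decomposition of $G$, which can simply fail. For instance, let $F=K_{n/2,n/2}$ with classes $A,B$, split $A=A_1\cup A_2$ and $B=B_1\cup B_2$ with all four parts of size $n/4$, and take $G=K_{A_1,B_1}\cup K_{A_2,B_2}$. Then $G$ is $(n/4)$-regular and $0$-bipartite, so the hypotheses hold for suitable $\alpha$, but $G$ is disconnected and has no Hamilton cycle whatsoever; yet $F$ certainly contains $n/8$ edge-disjoint Hamilton cycles. This is exactly why the paper's construction routes the cycles through $F$ rather than $G$, and why the robust-decomposition step (which demands a regular leftover \emph{inside} the graph to which it is applied, and produces a full decomposition of that graph) does not fit the problem. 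A small additional slip: the exceptional-edge machinery you should cite for the near-bipartite case is the `balanced exceptional system' toolkit of Chapter~\ref{paper2} (Sections~\ref{eliminating}--\ref{findBES}), not the `exceptional system' toolkit of Chapter~\ref{paper4}, which treats the two-cliques case and where the balancing constraint~\eqref{eJA} plays no role.
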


The next result immediately implies Theorem~\ref{NWmindeg} in Case (ii) when $G$ is $\eps$-close
to the union of two disjoint copies of $K_{n/2}$.
We will prove it in Chapter~\ref{paper1} (Section~\ref{sec:NWminclique}).
Since~$G$ is far from extremal in this case, we obtain almost twice as many edge-disjoint Hamilton cycles as
needed for Theorem~\ref{NWmindeg}. 

\begin{thm}\label{thm:NWminclique}
For every $\eps >0$, there exist $\eps_{\rm ex} >0$ and $n_0 \in \mathbb N$ such that the following holds.
Suppose $n \geq n_0$ and $G$ is a graph on $n$ vertices such that $G$ is $\eps_{\rm ex}$-close to the union of two disjoint copies
of $K_{n/2}$ and such that $\delta(G) \ge n/2$. Then $G$ has at least $(1/4 - \eps ) n$ edge-disjoint Hamilton cycles.
\end{thm}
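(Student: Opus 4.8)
The plan is to exploit the fact that a graph $G$ which is $\eps_{\rm ex}$-close to the union of two disjoint copies of $K_{n/2}$ but has $\delta(G)\ge n/2$ must in fact be very dense: if $A$ is the set of size $\lfloor n/2\rfloor$ with $e(A,V(G)\setminus A)\le \eps_{\rm ex}n^2$, then writing $B:=V(G)\setminus A$, essentially every vertex of $A$ sends almost all of its $\ge n/2$ edges inside $A$, and likewise for $B$. Hence $G[A]$ and $G[B]$ are each $(1-\eta)n$-dense on $\sim n/2$ vertices for some $\eta=\eta(\eps_{\rm ex})\to 0$, and only $O(\eps_{\rm ex}n^2)$ edges cross between $A$ and $B$. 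The key structural input I would use is a cleaning/preprocessing step: delete a small number of `bad' vertices (those with too many or too few neighbours across the cut) to pass to sets $A',B'$ on which the minimum degree inside is at least $(1-\eta')|A'|$, reabsorbing the deleted vertices at the end, and show that in what remains there is still a small but positive number of $A'B'$-edges at essentially every vertex (or handle separately the sub-case where the cut is tiny, where one can treat the two cliques almost independently and stitch Hamilton cycles together through a few carefully reserved crossing edges).

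The core of the argument is then an approximate Hamilton decomposition of an almost-complete graph on each side, interleaved through the cut. Concretely I would first set aside a small `reservoir' of crossing edges — at least $2$ per Hamilton cycle we wish to build, partitioned so that each Hamilton cycle gets an even number $\ge 2$ of $A'B'$-edges — and then build the cycles greedily/via an approximate decomposition result. Since $G[A']$ and $G[B']$ are each within $\eta' n$ of being complete, one can apply a standard approximate Hamilton-path-decomposition statement (or directly the approximate decomposition machinery deferred to Chapter~\ref{paper3}, of which this theorem's chapter is allowed to make use) to decompose almost all of $E(G[A'])$ into $\sim (1/4-\eps/2)n$ spanning path systems of $A'$ with few components, and similarly for $B'$; then one matches up the endpoints of the $A'$-path system with the endpoints of the $B'$-path system using two reserved crossing edges to close each into a single Hamilton cycle of $G$. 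Counting: we have $\approx (1-\eta')(n/2-1)/2 - O(1) \ge (1/4-\eps)n$ edge-disjoint path systems available on each side once $\eps_{\rm ex}$ (hence $\eta'$) is small enough compared to $\eps$, which gives the claimed bound; the error terms from the deleted bad vertices, the leftover edges of the approximate decomposition, and the reserved crossing edges are all $o(n)$ and hence absorbed into the slack $\eps n$.

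The main obstacle I expect is the \emph{parity and connectivity bookkeeping at the cut}: each Hamilton cycle must use an even number of $A'B'$-edges, it must actually be a single cycle rather than a union of two cycles (one in $A$-ish, one in $B$-ish), and the crossing edges are a scarce resource when the minimum cut $F$ is small — indeed if $F<D$ this is genuinely the regime where fewer Hamilton cycles exist, but here we only need $(1/4-\eps)n\le F/2$-ish worth of them, and since $\delta(G)\ge n/2$ one checks (cf.\ the minimum-cut estimates used in the derivation of Theorem~\ref{HCDthm}, e.g.\ Proposition~\ref{prp:e(A',B')}) that any cut separating a large part of $A$ from a large part of $B$ has size $\ge n/2 > 2(1/4-\eps)n$, while cuts isolating a tiny set are ruled out by $\delta(G)\ge n/2$; so crossing edges are in fact plentiful and each vertex near the cut has enough of them to route through. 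The remaining care is purely in organizing the endpoint-matching so that the two path systems on the two sides have a compatible number of components and so that the reserved crossing edges join them into one cycle — a routine but slightly fiddly argument of the kind carried out in Chapter~\ref{paper1}, which is where the full proof is promised.
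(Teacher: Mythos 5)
Your plan is architecturally aligned with the paper's: split $V(G)$ into two near-cliques plus a small set of exceptional vertices, reserve an even number ($\ge 2$) of $A'B'$-edges per desired Hamilton cycle so the two sides splice into a single cycle, approximately decompose the dense parts, and invoke the Chapter~\ref{paper3} machinery. The one genuine divergence is your treatment of the exceptional vertices. You propose to delete them and reabsorb at the end; the paper instead builds them directly into \emph{Hamilton exceptional systems} $J$ covering $V_0 = A_0 \cup B_0$ (Lemma~\ref{lma:BESnash2}, which uses the random slicing of Lemma~\ref{lma:randomslice} to make the $J$'s \emph{localized}), and then replaces each $J$ by fictive edges $J^*_A \subseteq A$, $J^*_B \subseteq B$ (Section~\ref{sec:BES}, Proposition~\ref{prop:ES}) so that the exceptional vertices disappear from the graphs on which the approximate decomposition actually operates. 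This buys two things you do not get for free from delete-and-reabsorb: first, localization is precisely what Lemma~\ref{almostthm} needs as input, and constructing localized $J$'s before the decomposition is substantially easier than trying to retrofit it afterwards; second, it avoids the tight bookkeeping in your reabsorption step — each deleted $w$ with $\deg(w) \ge n/2$ must be spliced into $\sim n/4$ edge-disjoint cycles using two fresh edges each (so essentially all of $w$'s edges), the breakable edge on each cycle must have both endpoints in $N(w)$, and the insertions for different $w \in W$ on the same cycle must not collide, which for $|W| = \Theta(\eps_{\rm ex}^{1/2} n)$ requires a Hall-type argument of its own. Your parity observation (even number of crossing edges per cycle) and your use of the minimum-cut estimate via Proposition~\ref{prp:e(A',B')} to guarantee enough crossing edges match the paper exactly. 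In short: same decomposition skeleton, same key lemma, but the paper front-loads the exceptional-vertex handling into a localized path-system formalism rather than deferring it to an absorption step at the end.
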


We will also use the following well-known result of Petersen.

\begin{thm}\label{petersen}
Every regular graph of positive even degree contains a $2$-factor.
\end{thm}

\removelastskip\penalty55\medskip\noindent{\bf Proof of Theorem~\ref{NWmindeg}. }
Choose $n_0\in\mathbb{N}$ and $\eps_{\rm ex}$ such that%
   \COMMENT{Daniela: replaced $n$ by $n_0$ in the hierarchy} 
$1/n_0\ll \eps_{\rm ex}\ll 1$.
In particular, we choose $\eps_{\rm ex}\le \eps^1_{\rm ex}(1/12)$, where $\eps^1_{\rm ex}(1/12)$ is the constant
returned by Theorem~\ref{thm:NWminclique} for $\eps:=1/12$, as well as
$\eps_{\rm ex}\le \eps^2_{\rm ex}(1/6)/2$, where $\eps^2_{\rm ex}(1/6)$ is the constant
returned by Theorem~\ref{NWmindegbip} for $\alpha:=1/6$.
Let $G$ be a graph on $n\ge n_0$ vertices with $\delta:=\delta(G)\ge n/2$. 
Theorem~\ref{NWmindegrob} implies that we may assume that $G$ satisfies either (i) or (ii).
Note that in both cases it follows that $\delta(G)\le (1/2+5\eps_{\rm ex})n$.%
    \COMMENT{Let $A$ be a set of size $\lfloor n/2 \rfloor$. If $\delta(G)\ge (1/2+5\eps_{\rm ex})n$
 then every vertex in $A$ must have at least $5\eps_{\rm ex}n$ neighbours outside $A$ and
so $e(A,V(G)\setminus A)\ge 2 \eps_{\rm ex}n^2$. Also every vertex in $A$ must have at least
$5\eps_{\rm ex}n-1$ neighbours inside $A$ and so $e(A)> \eps_{\rm ex}n^2$.}
So~\eqref{regbound} implies that $n/5\le {\rm reg}_{\rm even}(n,\delta)\le 3n/10$.

Suppose first that (i) holds. As mentioned above, this implies that $G$ is $2\eps_{\rm ex}$-bipartite.
Let $G'$ be a $D$-regular spanning subgraph of $G$ such that $D$ is even and
$D\ge {\rm reg}_{\rm even}(n,\delta)$. 
Petersen's theorem (Theorem~\ref{petersen}) implies that by successively deleting $2$-factors
of $G'$, if necessary, we may in addition assume that $D\le n/3$. Then Theorem~\ref{NWmindegbip} (applied with $\alpha:=1/6$)
implies that $G$ contains at least $D/2\ge {\rm reg}_{\rm even}(n,\delta)/2$ edge-disjoint Hamilton cycles.

Finally suppose that (ii) holds. Then Theorem~\ref{thm:NWminclique} (applied with $\eps:=1/12)$
implies that $G$ contains $n/6\ge {\rm reg}_{\rm even}(n,\delta)/2$ edge-disjoint Hamilton cycles.
\endproof


\section{Tools} \label{sec:tools}
\subsection{$\eps$-regularity} 
If $G=(A,B)$ is an undirected bipartite graph with vertex classes $A$ and $B$, then the
\emph{density} of $G$ is defined as
$$ d(A, B) := \frac{e_G(A,B)}{|A||B|}.$$
For any $\eps >0$, we say that $G$ is \emph{$\eps$-regular} if for any $A'\subseteq A$
and $B' \subseteq B$ with $|A'| \geq \eps |A|$ and $|B'| \geq \eps |B|$
we have $|d(A',B') - d(A, B)| < \eps$. We say that $G$ is \emph{$(\eps, \ge d)$-regular} if
it is $\eps$-regular and has density $d'$ for some $d' \ge d-\eps$.

We say that $G$ is \emph{$[\eps,d]$-superregular} if it is $\eps$-regular and $d_G(a)=(d\pm \eps)|B|$
for every $a\in A$ and $d_G(b)=(d\pm \eps)|A|$ for every $b\in B$. $G$ is \emph{$[\eps, \ge d]$-superregular} if
it is  \emph{$[\eps, d']$-superregular} for some $d' \ge d$.

Given disjoint vertex sets $X$ and $Y$ in a digraph $G$, recall that $G[X,Y]$ denotes
the bipartite subdigraph of $G$ whose vertex classes are $X$ and $Y$ and whose
edges are all the edges of $G$ directed from $X$ to $Y$. We often view $G[X,Y]$ as
an undirected bipartite graph. In particular, we say $G[X,Y]$ is \emph{$\eps$-regular},
\emph{$(\eps,\ge d)$-regular}, $[\eps,d]$-superregular or $[\eps, \ge d]$-superregular
if this holds when $G[X,Y]$ is viewed as an undirected graph.

The following proposition states that the graph obtained from a superregular pair
by removing a small number of edges at every vertex is still
superregular (with slightly worse parameters). We omit the proof which follows straightforwardly from the definition of superregularity.
A similar argument is for example included in~\cite{Kelly}.%
   \COMMENT{In the Kelly paper we have $0 <1/m \ll \eps \le d' \le d \ll 1$ and the proof doesn't work for
$0 <1/m \ll \eps \le d' \le d \le 1/2$ say (we want to keep $d' \le d$ there). So here is the proof for
the hierarchy in the prop. Let us first prove check that $G'$ is $2\sqrt{d'}$-regular. Let $d^*$ denote the density of $G$.
Suppose that $S\subseteq A$, $T\subseteq B$ are such that $|S|\ge \sqrt{d'}m$
and $|T|\ge \sqrt{d'}m$. 
Then $e_{G'}(S,T) \ge (d^*-\eps)|S||T| - |S|d'm   \ge (d^*-\eps)|S||T|-|S|d'\cdot |T|/\sqrt{d'}\ge (d^*-2\sqrt{d'})|S||T|$.
Since clearly $e_{G'}(S,T)\le e_{G}(S,T)\le (d^*+\eps)|S||T|$, $G$ is $2\sqrt{d'}$-regular.
Note that in $G'$ the degrees of the vertices in $A'$ are still at most
$(d+\eps)m\le (d+\eps)|B'|/(1-d')\le (d+2\sqrt{d'})|B'|$. Similarly, the degrees in $G'$ of the vertices in
$B'$ are still at most $(d+2\sqrt{d'})|A'|$.}

\begin{prop} \label{superslice}
Suppose that $0 <1/m \ll \eps \le d' \ll d \leq 1$.
Let $G$ be a bipartite graph with vertex classes $A$ and $B$ of size $m$.
Suppose that $G'$ is obtained from $G$ by removing at most $d'm$ vertices from each vertex class and at most $d'm$ edges incident to each vertex from $G$.
If $G$ is $[\eps,d]$-superregular then $G'$ is $[2\sqrt{d'},d]$-superregular.
\end{prop}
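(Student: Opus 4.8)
The plan is to verify the two defining conditions of $[2\sqrt{d'},d]$-superregularity for $G'$ directly from the hypotheses, treating the density/regularity condition and the degree condition separately.

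First I would pin down the density of $G$. Since $G$ is $[\eps,d]$-superregular, every vertex of $A$ has degree $(d\pm\eps)m$, so counting edges gives $e_G(A,B)=(d\pm\eps)m^2$ and hence the density $d^*:=d(A,B)$ satisfies $d^*=d\pm\eps$. Now let $A'$, $B'$ be the vertex classes of $G'$, so $|A'|,|B'|\ge(1-d')m\ge m/2$. To check $2\sqrt{d'}$-regularity of $G'$, take $S\subseteq A'$ and $T\subseteq B'$ with $|S|,|T|\ge 2\sqrt{d'}|A'|\ge\sqrt{d'}m$ (say), which in particular forces $|S|,|T|\ge\eps m$ since $\eps\ll d'$. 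Then $\eps$-regularity of $G$ gives $e_G(S,T)=(d^*\pm\eps)|S||T|$. Passing to $G'$, we removed at most $d'm$ edges at each vertex of $S$, so
\[
e_{G'}(S,T)\ge e_G(S,T)-|S|d'm\ge (d^*-\eps)|S||T|-|S|d'm.
\]
Since $|T|\ge\sqrt{d'}m$ we have $d'm\le\sqrt{d'}|T|$, so the error term is at most $\sqrt{d'}|S||T|$, giving $e_{G'}(S,T)\ge(d^*-\eps-\sqrt{d'})|S||T|\ge(d^*-2\sqrt{d'})|S||T|$ (using $\eps\le d'\le\sqrt{d'}$). The upper bound $e_{G'}(S,T)\le e_G(S,T)\le(d^*+\eps)|S||T|\le(d^*+2\sqrt{d'})|S||T|$ is immediate. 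Hence the density of $G'$ on any such pair is within $2\sqrt{d'}$ of $d^*$, and taking $S=A'$, $T=B'$ shows the density $d'^*$ of $G'$ itself is $d^*\pm 2\sqrt{d'}$; combining, $|d_{G'}(S,T)-d'^*|<4\sqrt{d'}$. A cleaner route is to just bound against $d^*$ and then note $d'^*=d^*\pm2\sqrt{d'}$, absorbing constants; one can rescale $d'$ at the start (replace $d'$ by $d'/16$) so that the final parameter is exactly $2\sqrt{d'}$ — this is the standard bookkeeping and I would not belabour it.

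Second, I would check the degree condition: I want $d_{G'}(a)=(d\pm2\sqrt{d'})|B'|$ for every $a\in A'$, and symmetrically for $B'$. For the lower bound, $d_{G'}(a)\ge d_G(a)-d'm-d'm\ge(d-\eps)m-2d'm\ge(d-3\sqrt{d'})m$, and since $|B'|\le m$ this is at least $(d-3\sqrt{d'})|B'|$; after the rescaling of $d'$ mentioned above this becomes $(d-2\sqrt{d'})|B'|$. For the upper bound, $d_{G'}(a)\le d_G(a)\le(d+\eps)m$, and I need to compare with $|B'|\ge(1-d')m$: we get $d_{G'}(a)\le(d+\eps)m\le\frac{d+\eps}{1-d'}|B'|\le(d+2\sqrt{d'})|B'|$, using that $\frac{d+\eps}{1-d'}\le d+2\sqrt{d'}$ when $\eps\le d'\ll d\le1$ (expand: $(d+\eps)\le(d+2\sqrt{d'})(1-d')=d+2\sqrt{d'}-dd'-2d'^{3/2}$, and $2\sqrt{d'}-dd'-2d'^{3/2}\ge\eps$ for $d'$ small). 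The symmetric bounds for $b\in B'$ follow identically. Together with the regularity bound from the previous paragraph, this shows $G'$ is $[2\sqrt{d'},d]$-superregular.

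I do not expect a genuine obstacle here — this is a routine "robustness of superregularity" lemma, and the only mildly delicate point is getting the numerical constant to come out as exactly $2\sqrt{d'}$ rather than some larger multiple of $\sqrt{d'}$. That is handled by the cheap trick of shrinking $d'$ by an absolute constant factor at the outset (the statement is monotone in $d'$ in the relevant sense, since removing at most $(d'/16)m$ edges is a stronger hypothesis than removing at most $d'm$), after which all the inequalities above have slack to spare. The hierarchy $1/m\ll\eps\le d'\ll d\le1$ is exactly what is needed: $\eps\le d'$ lets me replace all $\eps$'s by $d'$'s, $d'\ll d$ keeps the multiplicative distortion $\frac{1}{1-d'}$ negligible compared to the gap between $d$ and $d\pm2\sqrt{d'}$, and $1/m\ll\eps$ ensures the subsets $S,T$ of size $\ge2\sqrt{d'}|A'|$ are large enough ($\ge\eps m$) to apply $\eps$-regularity of $G$.
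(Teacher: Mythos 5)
The paper omits the proof of this proposition (noting only that it follows straightforwardly from the definition), so there is no argument in the paper to compare against; your strategy of checking the regularity and degree conditions directly is exactly the intended routine verification, and the main computations are right. However, the ``rescaling'' step you invoke to repair the constant is not valid. Since $d'$ appears both in the hypothesis (at most $d'm$ vertices and edges removed) and in the conclusion (parameter $2\sqrt{d'}$), replacing $d'$ by $d'/16$ simultaneously strengthens the hypothesis and the conclusion; the monotonicity you appeal to therefore runs in the wrong direction and does not let you absorb a spare factor.

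Fortunately no rescaling is needed, because your direct argument already gives the claimed constant once you replace the crude triangle inequality $|d_{G'}(S,T)-d'^*|<4\sqrt{d'}$ by a sharper observation: for every pair $S\subseteq A'$, $T\subseteq B'$ with $|S|,|T|\ge\sqrt{d'}m$ (including $S=A'$, $T=B'$), your two estimates show that $d_{G'}(S,T)$ lies in the fixed interval $[\,d^*-\eps-\sqrt{d'},\;d^*+\eps\,]$, whose width is $2\eps+\sqrt{d'}\le 2d'+\sqrt{d'}<2\sqrt{d'}$ (using $\eps\le d'$ and $d'<1/4$). Hence any two such densities, in particular $d_{G'}(S,T)$ and $d'^*=d_{G'}(A',B')$, differ by less than $2\sqrt{d'}$. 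The degree bounds likewise come out with $2\sqrt{d'}$ as written: $d_{G'}(a)\ge (d-\eps-2d')m\ge(d-3d')m\ge(d-2\sqrt{d'})|B'|$ since $3d'\le 2\sqrt{d'}$, and your upper bound $(d+\eps)/(1-d')\le d+2\sqrt{d'}$ goes through exactly as you computed. With the rescaling remark deleted and this interval observation inserted, the proof is correct.
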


We will also use the following well-known observation, which 
easily follows from Hall's theorem and the definition of $[\eps, d]$-superregularity.

\begin{prop}\label{perfmatch}
Suppose that $0 <1/m \ll \eps \ll d \le 1$.
Suppose that $G$ is an $[\eps, d]$-superregular bipartite graph with vertex classes of size $m$.
Then $G$ contains a perfect matching.
\end{prop}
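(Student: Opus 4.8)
The statement is a standard consequence of Hall's theorem: I will check Hall's condition for the bipartite graph $G=(A,B)$, where $|A|=|B|=m$, and then conclude that $G$ has a matching saturating $A$, which (since $|A|=|B|$) is a perfect matching. So the whole task is to show that $|N_G(S)|\ge|S|$ for every $S\subseteq A$. I would split the verification into three ranges for $|S|$, according to whether $\eps$-regularity of $G$ can be applied directly or whether one has to fall back on the degree condition coming from superregularity.

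\textbf{Small $S$.} If $|S|\le (d-\eps)m$, then picking any $a\in S$ we have $|N_G(S)|\ge d_G(a)\ge (d-\eps)m\ge|S|$, using that in an $[\eps,d]$-superregular pair every vertex of $A$ has degree $(d\pm\eps)m$. This settles all $S$ with $|S|\le(d-\eps)m$ (in particular all $S$ with $|S|\le\eps m$, since $\eps\ll d$).

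\textbf{Intermediate $S$.} If $(d-\eps)m<|S|\le(1-\eps)m$, I argue by contradiction: suppose $|N_G(S)|<|S|$ and set $T:=B\setminus N_G(S)$, so that $e_G(S,T)=0$ and $|T|=m-|N_G(S)|>m-|S|\ge\eps m$. Since also $|S|>(d-\eps)m\ge\eps m$, $\eps$-regularity gives $d(S,T)>d(A,B)-\eps$. The degree condition forces $e_G(A,B)=(d\pm\eps)m^2$ and hence $d(A,B)\ge d-\eps$, so $d(S,T)>d-2\eps>0$, contradicting $e_G(S,T)=0$.

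\textbf{Large $S$.} If $(1-\eps)m<|S|\le m$ and $|N_G(S)|<|S|\le m$, then some $b\in B\setminus N_G(S)$ has all its neighbours in $A\setminus S$, so $d_G(b)\le m-|S|<\eps m$, contradicting $d_G(b)\ge(d-\eps)m>\eps m$ (here again $\eps\ll d$). Thus Hall's condition holds in every case and the proposition follows. The only (minor) obstacle is the bookkeeping in the two extreme regimes, where $S$ or its complementary non-neighbourhood $T$ is too small for $\eps$-regularity to apply — precisely the first and third cases — but there the superregular degree bounds substitute cleanly.
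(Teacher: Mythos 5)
Your proof is correct, and it follows exactly the route the paper indicates (the paper just says the result ``easily follows from Hall's theorem and the definition of $[\eps,d]$-superregularity'' without writing out the verification). The three-regime check of Hall's condition — small sets via the minimum-degree bound, intermediate sets via $\eps$-regularity applied to $S$ and the non-neighbourhood $T$, and large sets via the minimum degree of a vertex outside $N_G(S)$ — is the standard argument and is carried out accurately.
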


We will also apply the following simple fact.
\begin{fact}\label{simplefact} Let $\eps >0$.
Suppose that $G$ is a bipartite graph with vertex classes  of size $n$ such that
$\delta (G) \geq (1-\eps)n$. Then $G$ is $[\sqrt{\eps},1]$-superregular.
\end{fact}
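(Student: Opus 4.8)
The plan is to verify directly the two defining properties of $[\sqrt\eps,1]$-superregularity from the degree hypothesis; the argument is entirely elementary, and the only point needing a little care is to bound the number of missing edges \emph{per vertex} rather than in total. We may assume $\eps<1$, since when $\eps\ge 1$ the parameter $\sqrt\eps\ge 1$ makes both the regularity condition and the degree condition vacuous. Write $A,B$ for the two vertex classes, each of size $n$, and observe that every vertex $v$ has at most $n-\delta(G)\le \eps n$ non-neighbours in the opposite class.

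For the degree condition, note that for every $a\in A$ we have $(1-\eps)n\le d_G(a)\le n$, and since $1-\sqrt\eps\le 1-\eps$ and $1+\sqrt\eps\ge 1$ this gives $d_G(a)=(1\pm\sqrt\eps)n=(1\pm\sqrt\eps)|B|$; the same holds for every $b\in B$. In particular $e_G(A,B)\ge(1-\eps)n^2$, so the overall density satisfies $1-\eps\le d(A,B)\le 1$.

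For $\sqrt\eps$-regularity, take $A'\subseteq A$ and $B'\subseteq B$ with $|A'|,|B'|\ge\sqrt\eps n$. Since each vertex of $A'$ has at most $\eps n$ non-neighbours in $B'$, the number of non-edges of $G$ between $A'$ and $B'$ is at most $|A'|\eps n$, whence
$$ d(A',B')\ \ge\ \frac{|A'||B'|-|A'|\eps n}{|A'||B'|}\ =\ 1-\frac{\eps n}{|B'|}\ \ge\ 1-\sqrt\eps, $$
and trivially $d(A',B')\le 1$. Thus both $d(A',B')$ and $d(A,B)$ lie in $[1-\sqrt\eps,1]$; since moreover $d(A,B)\ge 1-\eps>1-\sqrt\eps$ (using $0<\eps<1$), a short case check — if $d(A,B)=1$ then $G$ is complete bipartite and the difference is $0$, otherwise $d(A,B)-d(A',B')<\sqrt\eps$ and $d(A',B')-d(A,B)\le\eps<\sqrt\eps$ — gives $|d(A',B')-d(A,B)|<\sqrt\eps$. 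Together with the degree condition this shows that $G$ is $[\sqrt\eps,1]$-superregular.

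The only subtlety worth flagging is that the cruder bound of $\eps n^2$ on the \emph{total} number of non-edges between $A$ and $B$ is too weak once $|A'|,|B'|$ are as small as $\sqrt\eps n$, so one genuinely needs the per-vertex bound $\eps n$ on non-neighbours; beyond that there is no real obstacle.
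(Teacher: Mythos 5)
Your proof is correct. The paper states Fact~\ref{simplefact} without proof (it is introduced merely as a ``simple fact''), and your direct verification of the degree condition and of $\sqrt{\eps}$-regularity from the per-vertex count of non-neighbours is exactly the intended elementary argument; the reduction to $\eps<1$, the per-vertex (rather than aggregate) bound on missing edges, and the short case check to obtain the \emph{strict} inequality $|d(A',B')-d(A,B)|<\sqrt{\eps}$ are all handled correctly.
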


\subsection{A Chernoff-Hoeffding Bound}
We will often use the following Cher\-noff-Hoeffding bound for binomial and hypergeometric
distributions (see e.g.~\cite[Corollary 2.3 and Theorem 2.10]{Janson&Luczak&Rucinski00}).
Recall that the binomial random variable with parameters $(n,p)$ is the sum
of $n$ independent Bernoulli variables, each taking value $1$ with probability $p$
or $0$ with probability $1-p$.
The hypergeometric random variable $X$ with parameters $(n,m,k)$ is
defined as follows. We let $N$ be a set of size $n$, fix $S \subseteq N$ of size
$|S|=m$, pick a uniformly random $T \subseteq N$ of size $|T|=k$,
then define $X:=|T \cap S|$. Note that $\ex X = km/n$.

\begin{prop}\label{chernoff}
Suppose $X$ has binomial or hypergeometric distribution and $0<a<3/2$. Then
$\mathbb{P}(|X - \mathbb{E}X| \ge a\mathbb{E}X) \le 2 e^{-a^2\mathbb{E}X /3}$.
\end{prop}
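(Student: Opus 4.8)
The final statement is Proposition~\ref{chernoff}, the Chernoff-Hoeffding bound.

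\medskip

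The plan is to derive the stated two-sided bound from the standard exponential moment (Bernstein/Chernoff) method, treating the binomial case first and then reducing the hypergeometric case to it. Write $\mu:=\ex X$. For the binomial case, $X=\sum_{i=1}^n Y_i$ with $Y_i$ independent Bernoulli$(p_i)$ (we may even allow non-identical $p_i$; here $p_i=p$), so by independence $\ex \eul^{tX}=\prod_i \ex \eul^{tY_i}=\prod_i(1-p_i+p_i\eul^t)$. Using $1+x\le \eul^x$ we get $\ex \eul^{tX}\le \exp(\mu(\eul^t-1))$ for $t>0$, and similarly for $t<0$. Applying Markov's inequality to $\eul^{tX}$ and optimising over $t$ gives the classical bounds $\pr(X\ge (1+a)\mu)\le \eul^{-\mu h_+(a)}$ and $\pr(X\le (1-a)\mu)\le \eul^{-\mu h_-(a)}$, where $h_+(a)=(1+a)\ln(1+a)-a$ and $h_-(a)=(1-a)\ln(1-a)+a$. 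The remaining analytic task is the elementary inequality that for $0<a<3/2$ both $h_+(a)$ and $h_-(a)$ are at least $a^2/3$; this is a one-variable calculus estimate (compare Taylor expansions, or check the derivative of $h_\pm(a)-a^2/3$), and it is exactly where the restriction $a<3/2$ is used on the upper-tail side. Summing the two one-sided bounds yields the factor $2$ in $\pr(|X-\mu|\ge a\mu)\le 2\eul^{-a^2\mu/3}$.

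\medskip

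For the hypergeometric case, the cleanest route is to invoke the classical observation (due to Hoeffding) that a hypergeometric random variable with parameters $(n,m,k)$ has the same mean $\mu=km/n$ and is, in the sense of convex ordering, dominated by the binomial random variable with parameters $(k,m/n)$: more precisely, $\ex f(X_{\mathrm{hyp}})\le \ex f(X_{\mathrm{bin}})$ for every convex $f$. Since $x\mapsto \eul^{tx}$ is convex for every real $t$, the moment generating function bound $\ex \eul^{tX}\le \exp(\mu(\eul^t-1))$ carries over verbatim to the hypergeometric case, and hence so does the entire Chernoff argument above. Alternatively one can cite the version of this bound stated in \cite{Janson&Luczak&Rucinski00}, which is the reference already given, and simply note that the stated form with constant $1/3$ and range $0<a<3/2$ follows from Corollary~2.3 and Theorem~2.10 there.

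\medskip

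The only genuine obstacle is the analytic inequality $h_\pm(a)\ge a^2/3$ on $(0,3/2)$; everything else is the textbook exponential-moment computation. For the upper tail, one checks that $g(a):=(1+a)\ln(1+a)-a-a^2/3$ satisfies $g(0)=0$, $g'(0)=0$, and $g''(a)=\frac{1}{1+a}-\frac23\ge 0$ precisely for $a\le 1/2$; for $1/2<a<3/2$ one instead verifies $g'(a)=\ln(1+a)-2a/3\ge 0$ directly (it holds at $a=1/2$ and at $a=3/2$, and $g'$ is concave, so it holds throughout). For the lower tail, $g_-(a):=(1-a)\ln(1-a)+a-a^2/3$ has $g_-(0)=0$, $g_-'(0)=0$ and $g_-''(a)=\frac{1}{1-a}-\frac23>0$ for all $a\in(0,1)$, so $g_-\ge 0$ on $[0,1)$, and for $a\ge 1$ the lower-tail probability is zero. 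Combining these estimates with the exponential-moment bounds and adding the two tails completes the proof.
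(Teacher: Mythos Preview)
The paper does not give a proof of this proposition at all: it is stated as a standard fact with a citation to \cite[Corollary~2.3 and Theorem~2.10]{Janson&Luczak&Rucinski00}. Your proposal supplies the standard textbook argument, which is exactly what the reference does, so in that sense your approach matches.

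Your write-up is essentially correct, but there is a computational slip in the verification of $h_+(a)\ge a^2/3$ on the upper tail. You claim that $g'(a)=\ln(1+a)-2a/3\ge 0$ at $a=3/2$, but in fact $g'(3/2)=\ln(5/2)-1\approx -0.084<0$, so the concavity argument as stated does not go through. The conclusion $g(a)\ge 0$ on $[0,3/2]$ is nonetheless true: since $g''$ changes sign exactly once (at $a=1/2$), $g'$ has a unique zero $a^*\in(1/2,3/2)$, so $g$ is increasing on $[0,a^*]$ and decreasing on $[a^*,3/2]$; hence the minimum of $g$ on $[0,3/2]$ is attained at an endpoint, and one checks directly that $g(0)=0$ and $g(3/2)=(5/2)\ln(5/2)-3/2-3/4>0$. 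Alternatively (and this is closer to how the cited reference argues), one can use the sharper inequality $h_+(a)\ge a^2/(2(1+a/3))$, valid for all $a>0$, which immediately gives $h_+(a)\ge a^2/3$ once $a\le 3/2$.
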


\subsection{Other Useful Results}
We will need the following fact, which is a simple consequence of 
Vizing's theorem and was first observed by McDiarmid and independently by de Werra (see e.g.~\cite{west}).%
\COMMENT{no need for original references as it's not that deep...
osthus deleted the reference to a proof in paperII , as it's been deleted there}

\begin{prop} \label{prop:matchingdecomposition}
Let $G$ be a graph with $\chi '(G) \le m $. Then $G$ has a decomposition into $m$ matchings $M_1, \dots ,M_m$
with $|e(M_i) - e(M_j)| \le 1$ for all $i, j  \le m$.
\end{prop}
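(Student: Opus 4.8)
The plan is to derive this directly from Vizing's theorem together with a simple edge-swapping (Euler-tour style) balancing argument. First I would apply Vizing's theorem in the given form $\chi'(G)\le m$ to obtain a proper edge-colouring of $G$ with colour classes $M_1,\dots,M_m$; these are matchings whose union is $E(G)$, so they already give a decomposition into $m$ matchings. It remains only to rebalance their sizes so that any two differ by at most one.

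For the balancing step, suppose some pair of classes satisfies $e(M_i)\ge e(M_j)+2$. Consider the subgraph $H:=M_i\cup M_j$. Since $M_i$ and $M_j$ are matchings, every vertex of $H$ has degree at most $2$, so $H$ is a disjoint union of paths and even cycles, and along each such component the edges alternate between $M_i$ and $M_j$. On a cycle the two colours contribute equally; on a path of odd length the two colours contribute equally as well; only a path of even length (equivalently, odd number of vertices) can have one more edge of one colour than the other. Hence there is at least one even-length path component $P$ in which $M_i$ contributes one more edge than $M_j$. Swapping the colours along $P$ (recolour $M_i$-edges of $P$ as $M_j$ and vice versa) keeps each colour class a matching — indeed it keeps the whole edge-colouring proper, since $P$ is a component of $M_i\cup M_j$ and the two endpoints of $P$ each meet only one edge of $H$ — while decreasing $e(M_i)$ by one and increasing $e(M_j)$ by one.

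Thus each such swap strictly decreases the quantity $\sum_{i} e(M_i)^2$ (moving from sizes $a>b+1$ to $a-1,b+1$ strictly decreases the sum of squares, the other classes being unchanged), so the process terminates. When it terminates, no pair of classes differs by $2$ or more, i.e.\ $|e(M_i)-e(M_j)|\le 1$ for all $i,j\le m$, and we still have a decomposition of $G$ into $m$ matchings, as required. One can equally phrase termination via the potential $\max_i e(M_i)-\min_i e(M_j)$ or simply observe that $\sum e(M_i)=e(G)$ is fixed while $\sum e(M_i)^2$ is a strictly decreasing positive-integer-valued (up to scaling) quantity.

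The only mild subtlety — and the single point that needs care rather than being truly hard — is checking that recolouring along a whole component $P$ of $M_i\cup M_j$ preserves properness of the edge-colouring globally: this is immediate because the swap only permutes the two colours $i,j$ within $P$, and outside $P$ no vertex is incident to two edges of colours in $\{i,j\}$ that could now clash, while inside $P$ the alternation is simply reversed. Everything else is routine, so I would keep the write-up short.
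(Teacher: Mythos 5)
Your overall strategy is correct and is exactly the standard Kempe-chain balancing argument attributed to McDiarmid and de Werra (the paper itself does not give a proof, only a citation). However, you have the parity of the path components backwards. In a component of $M_i\cup M_j$ that is a path with $k$ edges, the alternation gives $\lceil k/2\rceil$ edges of one colour and $\lfloor k/2\rfloor$ of the other; these are \emph{equal} when $k$ is even (path with an odd number of vertices) and differ by one precisely when $k$ is \emph{odd} (path with an even number of vertices). So the sentence ``on a path of odd length the two colours contribute equally \dots only a path of even length (equivalently, odd number of vertices) can have one more edge of one colour than the other'' should be reversed: it is the odd-length paths that are imbalanced. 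The argument survives the correction: since cycles and even-length paths contribute the same to $e(M_i)$ and $e(M_j)$, and $e(M_i)\ge e(M_j)+2$, some odd-length path component contributes one more $M_i$-edge than $M_j$-edge, and swapping colours along it is valid for the reasons you give.

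One smaller point: the aside proposing $\max_i e(M_i)-\min_j e(M_j)$ as an alternative termination potential does not work — this quantity need not strictly decrease after a single swap (e.g.\ sizes $5,5,3,3$ becoming $4,5,4,3$). The $\sum_i e(M_i)^2$ potential you gave as the primary one is fine, so just drop the alternative.
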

It is also useful to state Proposition~\ref{prop:matchingdecomposition} in the following alternative form.
\begin{cor}\label{basic_matching_dec}\
 Let $H$ be a graph with maximum degree at most $\Delta.$
Then $E(H)$ can be decomposed into $\Delta+1$ edge-disjoint matchings $M_1, \ldots, M_{\Delta+1}$
such that $|e(M_i)-e(M_j)|\le 1$ for all $i, j\le \Delta+1$.
\end{cor}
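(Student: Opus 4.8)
The plan is to derive Corollary~\ref{basic_matching_dec} directly from Proposition~\ref{prop:matchingdecomposition} together with Vizing's theorem. The two statements are essentially the same fact dressed up differently: Proposition~\ref{prop:matchingdecomposition} takes as input a bound on the edge-chromatic number, whereas the corollary takes as input a bound on the maximum degree. So the only work is to supply the missing link, namely that a bound on $\Delta(H)$ yields a bound on $\chi'(H)$.

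First I would let $H$ be a graph with $\Delta(H) \le \Delta$. By Vizing's theorem, $\chi'(H) \le \Delta(H) + 1 \le \Delta + 1$. Hence $H$ satisfies the hypothesis of Proposition~\ref{prop:matchingdecomposition} with $m := \Delta + 1$: we have $\chi'(H) \le m$. Applying Proposition~\ref{prop:matchingdecomposition}, we obtain a decomposition of $E(H)$ into $m = \Delta+1$ matchings $M_1, \dots, M_{\Delta+1}$ with $|e(M_i) - e(M_j)| \le 1$ for all $i, j \le \Delta+1$. These matchings are pairwise edge-disjoint since they form a decomposition. This is exactly the conclusion of Corollary~\ref{basic_matching_dec}, so the proof is complete.

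There is no real obstacle here — the statement is purely a reformulation. The only minor point to be careful about is the edge case where $H$ has fewer than $\Delta+1$ colour classes in an optimal proper edge-colouring (e.g.\ $H$ is empty or very sparse): one should allow some of the $M_i$ to be empty matchings, which is consistent with the balancedness condition $|e(M_i) - e(M_j)| \le 1$ as long as Proposition~\ref{prop:matchingdecomposition} is read the same way. Since Proposition~\ref{prop:matchingdecomposition} is already stated with "a decomposition into $m$ matchings", this convention is inherited automatically and requires no extra argument.
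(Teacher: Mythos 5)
Your derivation is exactly what the paper intends: the corollary is stated as an "alternative form" of Proposition~\ref{prop:matchingdecomposition}, and the only link needed is Vizing's theorem giving $\chi'(H)\le\Delta(H)+1\le\Delta+1$, after which Proposition~\ref{prop:matchingdecomposition} applies with $m=\Delta+1$. Your proof is correct and matches the paper's (implicit) argument.
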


The following partition result will also be useful.

\begin{lemma}\label{lma:partition2}
Suppose that $0 <  1/n \ll \eps , \eps_1 \ll \eps_2  \ll 1/K \ll 1$, that $r \le 2K$,
that $Km \ge n/4$ and that $r,K,n,m \in \mathbb N$.%
\COMMENT{Andy: added "and that $r,K,n,m \in \mathbb N$."}
Let $G$ and $F$ be graphs on $n$ vertices with $V(G)=V(F)$.
Suppose that there is a vertex partition of $V(G)$ into $U,R_1, \dots, R_r$ with the following properties:
\begin{itemize}
                \item $|U| = K m $.
                \item $\delta(G[U]) \ge \eps n$ or $\Delta(G[U]) \le  \eps n $.
                \item For each $j \le r$ we either have $d_G(u,R_j)\le \eps n$ for all $u\in U$ or $d_G(x,U)\ge \eps n$ for all $x\in R_j$.
\end{itemize}
Then there exists a partition of $U$ into $K$ parts $U_1, \dots, U_K$
satisfying the following properties:
\begin{itemize}
                \item[\rm (i)] $|U_{i}|  = m $ for all $i \le K$.
                \item[\rm (ii)] $d_G(v,U_i)  = (d_G(v,U) \pm \eps_1 n) /K $ for all $v \in V(G)$ and all $i\le K$.
                \item[\rm (iii)] $e_G( U_i, U_{i'} )  = 2 (e_G(U) \pm \eps_2 \max\{  n, e_G(U) \} ) / K^2$ for all $1\le i \neq i' \le K$.
                \item[\rm (iv)] $e_G( U_i)  = (e_G(U) \pm \eps_2 \max\{  n, e_G(U) \} ) / K^2$ for all $ i \le K$.
                \item[\rm (v)] $e_G( U_i, R_{j} ) = ( e_G(U,R_j) \pm   \eps_2 \max \{ n , e_G(U,R_j) \} ) /K  $ for all $i \le K$ and $j \le r$.
                \item[\rm (vi)] $d_F(v,U_i)  = (d_F(v,U) \pm \eps_1 n) /K $ for all $v \in V(F)$ and all $i\le K$.
\end{itemize}
\end{lemma}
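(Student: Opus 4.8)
The plan is to obtain the partition $U_1,\dots,U_K$ by a single random choice: pick a uniformly random partition of $U$ into $K$ parts of size exactly $m$ (equivalently, order the vertices of $U$ uniformly at random and cut into consecutive blocks of size $m$), and then show that all six properties hold simultaneously with positive probability by a union bound over the relevant bad events. Property~(i) holds deterministically by construction, so the work is entirely in (ii)--(vi).

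The key tool will be the Chernoff--Hoeffding bound for the hypergeometric distribution (Proposition~\ref{chernoff}), since for a fixed vertex $v$ the quantity $d_G(v,U_i)$ is exactly a hypergeometric random variable with parameters $(|U|, d_G(v,U), m)$, with mean $d_G(v,U) m / |U| = d_G(v,U)/K$; this immediately handles (ii), and (vi) is identical with $F$ in place of $G$. For the edge-counts (iii)--(v) I would first expose the partition of the \emph{non-endpoints} and then argue one part at a time, or more cleanly use the standard two-round exposure / Azuma-type concentration: $e_G(U_i)$, $e_G(U_i,U_{i'})$ and $e_G(U_i,R_j)$ each have the right expectation (computed by linearity: an edge inside $U$ lands in $E_G(U_i)$ with probability $\binom{m}{2}/\binom{Km}{2} = (1\pm o(1))/K^2$, lands in $E_G(U_i,U_{i'})$ with probability $(1\pm o(1))2/K^2$, and a $U$--$R_j$ edge lands in $E_G(U_i,R_j)$ with probability $m/(Km)=1/K$), and each is a low-influence function of the random ordering so concentrates within $\eps_2 \max\{n, \cdot\}/K^2$ (resp.\ $\eps_2\max\{n,\cdot\}/K$) with failure probability exponentially small in $n$. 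Taking a union bound over the $\le n$ vertices for (ii) and (vi), the $\le K^2$ pairs for (iii)--(iv), and the $\le Kr \le 2K^2$ pairs for (v), and using $1/n \ll \eps,\eps_1 \ll \eps_2 \ll 1/K$, the total failure probability is $o(1)<1$, so a good partition exists.

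The one genuinely delicate point is the error term in the edge estimates: the naive Chernoff bound for $e_G(U_i)$ has standard deviation of order $\sqrt{e_G(U)}$ times a $\mathrm{poly}(1/K)$ factor, and we must check that this is always at most $\eps_2\max\{n,e_G(U)\}/K^2$. When $e_G(U)\ge n$ this is fine since $\sqrt{e_G(U)}\le e_G(U)/\sqrt n$ is much smaller than $\eps_2 e_G(U)/K^2$ once $n$ is large; when $e_G(U)< n$ we need the deviation to be at most $\eps_2 n/K^2$, and since $e_G(U)<n$ the deviation is at most (roughly) $\sqrt n \ll \eps_2 n /K^2$, again using $1/n\ll \eps_2, 1/K$. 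So in fact the Chernoff bound as stated, applied with $a$ of order $\eps_2\max\{n,e_G(U)\}/(K^2\mathbb{E})$, directly yields failure probability at most $2\exp(-\Omega(\eps_2^2 \max\{n,e_G(U)\}/K^4)) = 2\exp(-\Omega_K(\eps_2^2 n))$ in all regimes. I expect this case-split on whether the relevant edge-count exceeds $n$ to be the main (though routine) obstacle; everything else is a clean union bound. Note that the hypotheses on $\delta(G[U])/\Delta(G[U])$ and on $d_G(u,R_j)$ versus $d_G(x,U)$ are not needed for \emph{this} lemma's conclusion — they are presumably used by the caller to control the meaning of the $\max\{n,\cdot\}$ error terms — so I would not invoke them in the proof.
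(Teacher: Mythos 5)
Your opening moves (random equipartition, $(i)$ holds by construction, hypergeometric + Chernoff for $(ii)$ and $(vi)$, union bound) match the paper's proof exactly. The gap is in your treatment of $(iii)$--$(v)$, and in particular in your final remark that the hypotheses on $\delta(G[U])/\Delta(G[U])$ and on $d_G(u,R_j)$ versus $d_G(x,U)$ ``are not needed for this lemma's conclusion.'' They are needed, and without them the statement is simply false. Take $G[U]$ to be a star: one centre $u_0$ joined to all other $Km-1$ vertices of $U$, and no other edges. Then $e_G(U)=Km-1\approx Km\le n$, so $(iv)$ demands $e_G(U_i)=(e_G(U)\pm\eps_2 n)/K^2\approx m/K\pm\eps_2 n/K^2$. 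But $e_G(U_i)$ is $\approx m$ if $u_0\in U_i$ and $0$ otherwise, so with probability $1/K$ it deviates by $\approx m(1-1/K)\ge n/(8K)\gg\eps_2 n/K^2$ (using $Km\ge n/4$ and $\eps_2\ll 1/K$). In this example $\Delta(G[U])\approx Km>\eps n$ and $\delta(G[U])=1<\eps n$, so the hypothesis fails precisely for the graph where the conclusion fails.

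Correspondingly, your ``low-influence function'' / Azuma sketch does not deliver the claimed exponentially small failure probability for $(iii)$--$(v)$, because the Lipschitz constant when one vertex changes class is $\Delta(G[U])$, not $O(1)$. With $\Delta(G[U])$ as large as $\Theta(n)$ the bounded-differences inequality gives nothing; even with $\Delta(G[U])\le\eps n$ (the useful half of the hypothesis) the resulting tail bound is weaker than what the paper's argument yields. The paper does not use Chernoff/Azuma for $(iii)$--$(v)$ at all: it computes $\Var(e_G(U_i,U_{i'}))$ by a pairwise-covariance estimate, obtaining $\Var(X)\le e_G(U)\,\Delta(G[U])$, and then applies Chebyshev to get each bad event with probability at most $\eps^{1/2}$ (polynomial, not exponential, but there are only $O(K^2)$ events so $\eps\ll 1/K$ suffices for a union bound). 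The variance bound $e_G(U)\,\Delta(G[U])$ is then shown to be $\le\eps_2^2\max\{n,e_G(U)\}^2/K^4$ (up to a factor) by a case split on whether $\Delta(G[U])\le\eps n$ (subdivided by $e_G(U)\lessgtr n$) or $\delta(G[U])\ge\eps n$ (in which case $\Delta(G[U])\le n\le\eps\, e_G(U)$). So the $\delta/\Delta$ dichotomy is exactly what feeds the variance estimate — it is not a hypothesis left for ``the caller''. The same mechanism, using the $d_G(u,R_j)\lessgtr\eps n$ dichotomy, handles $(v)$. You would need to replace your concentration sketch with this second-moment argument (or another that genuinely exploits the degree hypotheses) for the proof to go through.
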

 
\begin{proof}
Consider an equipartition $U_1, \dots , U_K$ of $U$ which is chosen uniformly at random.%
     \COMMENT{the previous binomial approach had the problem that at some stage the argument involved a sum over a random variable.
The present one also seems simpler.}
So (i) holds by definition.
Note that for a given vertex $v \in V(G)$, $d_G(v,U_i)$ has the hypergeometric distribution
with mean $d_G(v, U)/K$.
So if $d_G(v, U) \ge \eps_1 n/K$,  Proposition~\ref{chernoff} implies that
$$
                \pr \left( \left| d_G(v,U_{i}) -  \frac{d_G(v, U)}{K} \right|  \ge \frac{\eps_1 d_G(v,U) }{K}\right) 
  \le 2 \exp \left(- \frac{\eps_1^2 d_G(v,U)}{3K} \right)
\le \frac{1}{n^2}.
$$
Thus we deduce that for all $v \in V(G)$ and all $i \le K$,
\begin{align*} 
                \pr \left( \left| d_G(v,U_{i}) -  d_G(v, U)/K \right|  \ge \eps_1 n /K \right) 
\le 1/n^2.
\end{align*}
Similarly, 
\begin{equation*} 
\pr \left( \left| d_F(v,U_{i}) -  d_F(v, U)/K \right|  \ge \eps_1 n /K \right) \le 1/n^2.
\end{equation*}
So with probability at least 3/4, both (ii) and (vi) are satisfied.
 
We now consider (iii) and (iv).
Fix $i,i' \le K$. If $i \neq i'$, let $X:=e_G(U_{i}, U_{i'})$.
If $i = i'$, let $X:=2e_G(U_{i})$.
For an edge $f \in E(G[U])$, let $E_f$ denote the event that $f \in E(U_i, U_{i'})$.
So if $f=xy$ and $i \neq i'$, then%
    \COMMENT{to derive these and other expressions, it's best to view the sample space as the set of all
injective mappings from $U$ to $[|U|]$, where
we assign $x$ to $U_i$ if it is mapped to $[(i-1)m+1,im]$. Then each equipartition has the same number of mappings which produce this equipartition.
So as probability space one can consider the uniform distribution over the above mappings.}
\begin{equation} \label{prf}
\pr (E_f) =2 \pr (x \in U_i) \pr (y \in U_{i'} \mid x \in U_i) = 2 \frac{m}{|U|} \cdot  \frac{m}{|U|-1}.
\end{equation}
Similarly, if $f$ and $f'$ are disjoint (that is, $f$ and $f'$ have no common 
endpoint)\COMMENT{B.Cs: the explanation for disjointness is given here} and $i \neq i'$, then%
   \COMMENT{note $\frac{m-1}{m} \frac{|U|}{|U|-2} \le (1-1/m)(1+3/|U|) \le 1$ and similarly for the second fraction.}
\begin{equation} \label{ff'}
\pr ( E_{f'} \mid E_f )=2\frac{m-1}{|U|-2} \cdot \frac{m-1}{|U|-3} \le 
2\frac{m}{|U|} \cdot \frac{m}{|U|-1} =
\pr ( E_{f'}).
\end{equation}
By (\ref{prf}), if $i \not =i'$, we also have
\begin{equation}  \label{exX}
\ex ( X ) = 2 \frac{e_G(U)}{K^2} \cdot \frac{|U|}{|U|-1}= \left( 1 \pm \frac{2}{|U|} \right) \frac{2e_G(U)}{K^2}= \left( 1 \pm \eps_2/4 \right)\frac{2e_G(U)}{K^2}.
\end{equation}
If%
\COMMENT{Deryk divided by 4 in the final equality as we need this in eq. devex}
$f=xy$ and $i = i'$, then  
\begin{equation} \label{prf2}
\pr (E_f) = \pr (x \in U_i) \pr (y \in U_{i} \mid x \in U_i) =  \frac{m}{|U|} \cdot  \frac{m-1}{|U|-1}.
\end{equation}
So if $i = i'$, similarly to (\ref{ff'}) we also obtain $\pr ( E_{f'} \mid E_f ) \le \pr (E_f)$ for disjoint $f$ and $f'$%
\COMMENT{$\pr ( E_{f'} \mid E_f )=\frac{m-2}{|U|-2} \cdot \frac{m-3}{|U|-3} \le
\frac{m}{|U|} \frac{m-1}{|U|-1} =\pr ( E_{f'}).$}
and we obtain the same bound as in (\ref{exX}) on $\ex ( X )$ (recall that $X=2e_G(U_i)$ in this case).%
     \COMMENT{Daniela added brackets}
 
Note that if $i\neq i'$ then%
   \COMMENT{To see the first inequality note that by (\ref{ff'}) we have $\pr(E_{f'} \mid E_f ) -   \pr ( E_{f'} )\le 0$ if $f$ and
$f'$ are disjoint. But there are at most $2 \Delta (G[U])$ edges $f$ which share an endvertex with $f'$ (this includes the
case $f=f'$) and each of these edges $f$ contributes at most 1.}
\begin{eqnarray*}
\textrm{Var} ( X ) 
    & = & \sum_{f \in E(U)}  \sum_{f' \in E(U)}\left(\pr ( E_f \cap E_{f'} )  -   \pr ( E_f ) \pr ( E_{f'} )\right) \\
                & = & \sum_{f \in E(U)} \pr ( E_f ) \sum_{f' \in E(U)}  \left( \pr ( E_{f'} \mid E_f )  -    \pr ( E_{f'} ) \right) \\  
                & \stackrel{(\ref{ff'})}{\le} & \sum_{f \in E(U)} \pr ( E_f )\cdot 2 \Delta (G[U])  
 \stackrel{(\ref{exX})}{\le} \frac{3e_G(U)}{K^2}  \cdot 2\Delta (G[U]) \\
     & \le & e_G(U)  \Delta (G[U]).
\end{eqnarray*}
Similarly, if $i=i'$ then
\begin{eqnarray*}
\textrm{Var} ( X ) 
     =  4\sum_{f \in E(U)}  \sum_{f' \in E(U)}\left(\pr ( E_f \cap E_{f'} )  -   \pr ( E_f ) \pr ( E_{f'} ) \right)
                \le e_G(U)  \Delta (G[U]).
\end{eqnarray*}
Let $a:= e_G(U)  \Delta (G[U])$. In both cases, from Chebyshev's inequality,%
   \COMMENT{$\pr \left( | X- \ex(X) |  \ge b  \right) \le Var(X)/b^2$}
it follows that
\begin{align}
\pr \left( | X- \ex(X) |  \ge \sqrt{ a/\eps^{1/2} } \right) \le \eps^{1/2}.  \nonumber
\end{align}
Suppose that $\Delta (G[U]) \le \eps n$.
If we also have have $e_G(U) \le  n$, then 
$\sqrt{ a/\eps^{1/2} } \le   \eps^{1/4}  n \le \eps_2n/2K^2$.
If $e_G(U) \ge n$, then $\sqrt{ a/ \eps^{1/2} } \le \eps^{1/4 }e_G(U) \le \eps_2 e_G(U)/2K^2$.

If we do not have $\Delta (G[U]) \le \eps n$, then our assumptions imply that $\delta (G[U]) \ge \eps n$.
So $\Delta(G[U]) \le n \le \eps e_G(G[U])$ with room to spare.
This in turn means that $\sqrt{ a/ \eps^{1/2} } \le \eps^{1/4} e_G(U) \le \eps_2 e_G(U)/2K^2$.
So in all cases, we have 
\begin{align}
                \pr \left(  \left| X - \ex(X) \right| \ge  \frac{\eps  _2 \max \{ n, e_G(U)\} }{2K^2}\right) \le \eps^{1/2}. \label{eqn:preU}
\end{align}
Now note that by (\ref{exX}) we have
\begin{equation} \label{devex}
\left| \ex(X)- \frac{2e_G(U)}{K^2} \right| \le \frac{\eps  _2 e_G(U) }{2K^2}.
\end{equation}
So (\ref{eqn:preU}) and~(\ref{devex}) together imply that for fixed $i,i'$ the bound in (iii) fails with probability
at most $\eps^{1/2}$. The analogue holds for the bound in~(iv).
By summing over all possible values of $i,i' \le K$, we have that~(iii) and~(iv) hold with probability at least $3/4$.

A similar argument shows that for all $i \le K$ and $j \le r$, we have%
    \COMMENT{
Fix $i \le K$, $j \le r$ and let $X:= e_G(U_i, R_j)$.
For an edge $f$, let $E_f$ denote the event that $f \in E(U_i, R_j)$.
Note $\pr (E_f)=m/|U|=1/K$ and that if $f,f'$ have different endpoints in $U$, we have
$\pr ( E_{f'} \mid E_f )= \frac{m-1}{|U|-1} \le \frac{m}{|U|} =\pr(E_f).
$
Clearly, $\ex ( X ) = e_G(U, R_j) /K$.
Also let $\Delta'(U,R_j))=\max_{u \in U} d (u,R_j)$.
Note that
\begin{eqnarray*}
\textrm{Var} ( X ) 
                & = & \sum_{f \in E(U,R_j)} \pr ( E_f ) \sum_{f' \in E(U,R_j)}  \left( \pr ( E_{f'}) | E_f )  -    \pr ( E_{f'} ) \right) \\       
                & \le & \sum_{f \in E(U,R_j)} \pr ( E_f )\Delta' (U,R_j) \le  \frac{e_G(U,R_j)}{K}  \cdot \Delta' (U,R_j)
\le  e_G(U,R_j)  \Delta' (U,R_j).
\end{eqnarray*}
Let $a:=e_G(U,R_j)\Delta' (U,R_j)$. From Chebyshev's inequality, it follows that
\begin{align}
\pr \left( | X- \ex(X) |  \ge \sqrt{ a/\eps^{1/2} } \right) \le \eps^{1/2}.  \nonumber
\end{align}
Suppose that $\Delta' (U,R_j) \le \eps n$.
If we also have have $e_G(U,R_j) \le  n$, then 
$\sqrt{ a/\eps^{1/2} } \le  \eps^{1/4} n \le \eps_2n/K$.
If $e_G(U,R_j) \ge n$, then $\sqrt{ a/ \eps^{1/2} } \le  \eps^{1/4} e_G(U,R_j) \le \eps_2 e_G(U,R_j)/K$.
If we do not have $\Delta' (U,R_j) \le \eps n$, then our assumptions imply that $d_G(x,U) \ge \eps n$
for every $x \in R_j$.
So $\Delta'(U,R_j) \le |R_j| \le \eps e_G(U,R_j)$ with room to spare.
This in turn means that $\sqrt{ a/ \eps^{1/2} } \le \eps^{1/4} e_G(U,R_j) \le \eps_2 e_G(U,R_j)/K$.
}
\begin{equation} \label{eqn:preUR}
                \pr \left(  \left| e_G(U_i, R_j) - \frac{e_G(U,R_j)}{K} \right| \ge  \frac{\eps_2 \max \{ n, e_G(U,R_j)\} }{K} \right)
\le \eps^{1/2}.  
\end{equation}
Indeed, fix $i \le K$, $j \le r$ and let $X:= e_G(U_i, R_j)$.
For an edge $f \in G[U,R_j]$, let $E_f$ denote the event that $f \in E(U_i, R_j)$.
Then $\pr (E_f)=m/|U|=1/K$ and so $\ex ( X ) = e_G(U, R_j) /K$.
The remainder of the argument proceeds as in the previous case
(with slightly simpler calculations).
 
So (v) holds with probability at least 3/4, by summing over all possible values of $i \le K$ and $j \le r$ again.
So with positive probability, the partition satisfies all requirements.
\end{proof}

\chapter{The two cliques case}\label{paper1}
This chapter is concerned with proving Theorems~\ref{1factthm},~\ref{HCDthm} and ~\ref{NWmindeg} in the case when our graph is close  to the union of two disjoint copies of a clique $K_{n/2}$ (Case (ii)).
More precisely, we prove Theorem~\ref{thm:NWminclique} (i.e.~Case (ii) of Theorem~\ref{NWmindeg})
and  Theorem~\ref{1factstrong}, which is a common generalization of 
Case (ii) of Theorems~\ref{1factthm} and~\ref{HCDthm}.
In Section~\ref{overview}, we give a sketch of the arguments for the `two cliques' Case (ii)
(i.e.~the proofs of Theorems~\ref{1factstrong} and~\ref{thm:NWminclique}).
Sections~\ref{sec:new}--\ref{sec:schemes} (and part of  Section~\ref{sec:NWminclique})
are common to the proofs of both Theorems~\ref{1factstrong} and~\ref{thm:NWminclique}.
Theorem~\ref{thm:NWminclique} is proved in Section~\ref{sec:NWminclique}.
All the subsequent sections of this chapter are devoted to the proof of Theorem~\ref{1factstrong}.

In this chapter (and Chapter~\ref{paper4}) it is convenient to view matchings as graphs (in which every vertex has degree precisely one).

\section{Overview of the Proofs of Theorems~$\text{\ref{1factstrong}}$ and~$\text{\ref{thm:NWminclique}}$}\label{overview}

The proof of Theorem~\ref{thm:NWminclique} is much simpler than that of Theorems~\ref{1factstrong}
(mainly because its assertion leaves some leeway -- one could probably find a slightly larger set of edge-disjoint Hamilton cycles than guaranteed by Theorem~\ref{thm:NWminclique}).
Moreover, the ideas used in the former all appear in the proof of the latter too.

\subsection{Proof Overview for Theorem~\ref{thm:NWminclique}} \label{sec:overI}

Let $G$ be a graph on $n$ vertices with $\delta(G) \ge n/2$ which is close to being the union of two disjoint cliques.
So there is a vertex partition of $G$ into sets $A$ and $B$ of roughly equal size so that $G[A]$ and $G[B]$ are almost complete.
Our aim is to construct almost $n/4$ edge-disjoint Hamilton cycles.

Several techniques have recently been developed which yield
approximate decompositions of dense (almost) regular graphs, i.e.~a set of Hamilton cycles covering almost all the edges 
(see e.g.~\cite{CKO,FKS,fk,KOTkelly,OS}).
This leads to the following idea: replace $G[A]$ and $G[B]$ by multigraphs $G_A$ and $G_B$
so that any suitable pair of Hamilton cycles $C_A$ and $C_B$ of $G_A$ and $G_B$ respectively corresponds
to a single Hamilton cycle $C$ in the original graph $G$. 
We will construct $G_A$ and $G_B$ by deleting some edges of $G$ and introducing some `fictive edges'.
(The introduction of these fictive edges is the reason why $G_A$ and $G_B$ are multigraphs.)

We next explain the key concept of these `fictive edges'.
The following graph $G$ provides an instructive example: suppose that $n=0  \pmod 4$. 
Let $G$ be obtained from two disjoint cliques induced by sets $A$ and $B$ of size $n/2$
by adding a perfect matching $M$ between~$A$ and $B$. Note that $G$ is $n/2$-regular. 
Now pair up the edges of $M$ into $n/4$ pairs $(e_i,e_{i+1})$ for $i = 1, 3, \dots, n/2-1$.
Write $e_i=:x_iy_i$ with $x_i \in A$ and $y_i \in B$.
Next let $G_A$ be the multigraph obtained from $G[A]$ by adding all the edges $x_ix_{i+1}$, where $i$ is odd.
Similarly, let $G_B$ be obtained from $G[B]$ by adding all the edges $y_iy_{i+1}$, where $i$ is odd.
We call the edges $x_ix_{i+1}$ and $y_iy_{i+1}$ fictive edges.
Note that $G_A$ and $G_B$ are regular multigraphs.
Now pair off the  fictive edges in $G_A$ with those in $G_B$, i.e.~$x_ix_{i+1}$ is paired off with 
$y_iy_{i+1}$. Suppose that $C_A$ is a Hamilton cycle in $G_A$ which contains $x_ix_{i+1}$ (and no
other fictive edges)
and $C_B$ is a Hamilton cycle in $G_B$ which contains $y_iy_{i+1}$ (and no
other fictive edges).
Then together, $C_A$ and $C_B$ correspond to a Hamilton cycle $C$ in the original graph $G$
(where fictive edges are replaced by the corresponding matching edges in $M$ again).

So we have reduced the problem of finding many edge-disjoint Hamilton cycles in $G$ to that of finding
many edge-disjoint Hamilton cycles in the almost complete graph $G_A$ (and $G_B$), 
with the additional requirement that each such Hamilton cycle contains a unique fictive edge.
This can be achieved via the `approximate decomposition result'  (see Lemma~\ref{almostthm} which is proved in Chapter~\ref{paper3}).

Additional difficulties arise from `exceptional' vertices, namely those which have high degree into both $A$ and $B$.
(It is easy to see that there cannot be too many of these vertices.)
Fictive edges also provide a natural way of `eliminating' these exceptional vertices.
Suppose for example that $G'$ is obtained from the graph~$G$ above by adding a vertex
$a$  so that $a$ is adjacent to half of the vertices in $A$ and half of the vertices in $B$. 
(Note that $\delta(G')$ is a little smaller than $|G'|/2$, but $G'$ is%
   \COMMENT{Daniela: replaced it by $G'$} 
similar to graphs actually occurring in the proof.)
Then we can pair off the neighbours of $a$ into pairs  within $A$ and introduce a fictive edge $f_i$ between each pair of neighbours.
We also introduce fictive edges $f_i$ between pairs of neighbours of $a$ in $B$. 
Without loss of generality, we have fictive edges $f_1,f_3,\dots,f_{n/2-1}$ (and recall that $|G'| = n+1$).
So we have $V(G'_A)=A$ and $V(G'_B)=B$ again.
We then require each pair of Hamilton cycles $C_A$, $C_B$ of $G'_A$ and $G'_B$ to contain 
$x_ix_{i+1}$, $y_iy_{i+1}$ and a fictive edge $f_i$ (which may lie in $A$ or $B$) where $i$ is odd, see Figure~\ref{fig1}.
Then $C_A$ and $C_B$ together correspond to a Hamilton cycle $C$ in $G'$ again.
The subgraph~$J$ of $G'$ which corresponds to three such fictive edges $x_ix_{i+1}$, $y_iy_{i+1}$ and $f_i$
of $C$ is called a `Hamilton exceptional system'.
$J$ will always be a path system.
So in general, we will first find a sufficient number of edge-disjoint Hamilton exceptional systems~$J$.
Then we apply Lemma~\ref{almostthm}  to find edge-disjoint Hamilton cycles in $G'_A$ and $G'_B$,
where each pair of cycles contains a suitable set~$J^*$ of fictive edges (corresponding to some Hamilton exceptional system~$J$).

\begin{figure}[tbp]
\centering
\begin{tikzpicture}[scale=0.6]
			\draw  (-3,0) ellipse (2.3  and 3.8 );
			\draw (3,0) ellipse (2.3  and 3.8 );
			\node at (-3,-4.5) {$A$};
			\node at (3,-4.5) {$B$}; 	
			\fill (0,4) circle (4pt);
			\node at (0,4.5) {$a$}; 
			\node at (-1.5,1) {$x_i$};
			\node at (-1.6,-1.9) {$x_{i+1}$};
			\node at (1.5,1) {$y_i$};
			\node at (1.6,-1.9) {$y_{i+1}$};
			\node at (3,2.3) {$f_i$};
			\node at (-3,1) {$C_A$};
			\node at (3,1) {$C_B$};
			\begin{scope}[start chain]
			\foreach \i in {0.5,-1.5}
			\fill (-1.5,\i) circle (4pt);
			\end{scope}
			\begin{scope}[start chain]
			\foreach \i in {0.5,-1.5}
			\fill (1.5,\i) circle (4pt);
			\end{scope}
			\fill (2.2,2.7) circle (4pt);
			\fill (3.8,2.7) circle (4pt);
			
			\begin{scope}[dashed,line width=1.5pt]
			\draw (-1.5,0.5)--(1.5,0.5);
			\draw (-1.5,-1.5)--(1.5,-1.5);
			\draw (2.2,2.7)--(0,4);
			\draw (0,4)--(3.8,2.7);
			\end{scope}

			\begin{scope}[line width=1.5pt]
			\draw (-1.5,0.5)--(-1.5,-1.5);
			\draw (1.5,0.5)--(1.5,-1.5);
			\draw (2.2,2.7)--(3.8,2.7);
			\end{scope}
			\begin{scope}[line width=1pt]
			\draw[rounded corners]  (-1.5,0.5) to [out=170, in=-70] (-3,2) to [out=65, in=-150] (-1.5,2.2) to [out=120, in=-10] (-3,3.4) to [out=-150, in=110] (-3.5,1.5) to [out=-150, in=-20] (-4.5,2)  to [out=-135, in=105] (-3,-0.5) to [out=-110, in=30] (-4.5,0) to  [out=-60, in= 180] (-3,-3.5) to [out=30, in= -120] (-1.8,-2.5) to [out=120, in=-65] (-3,-1.5) to [out=65, in=-130] (-2.5,0.5) to [out=-30, in=160] (-1.5,-1.5);		
			\draw[rounded corners] (3.8,2.7)  to [out=-45, in=105] (5,1.2) to [out=-115, in=55] (4.5,0.5) to [out=160, in=-30] (2.5,1.75) to [out=-135, in=110] (3, 0)  to [out=-120, in=75] (2.5,-1) to [out=-20, in=155] (4.2,0) to [out=-30, in= 120] (5,-1) to [out=-110, in= 50] (3.5,-2) to [out=-45, in= 130] (4.5, -2.5)  to [out=-110, in= -70] (2,-2.5) to [out=75, in= -150] (3,-2) to [out=80, in=10](1.5,-1.5);
			\draw[rounded corners] (2.2,2.7) to [out=-165, in=20] (1.5,0.5);
			\end{scope}
\end{tikzpicture}
\caption{Transforming the problem of finding a Hamilton cycle on $V(G')$ into finding two Hamilton cycles $C_A$ and $C_B$ on $A$ and $B$ respectively.}
\label{fig1}
\end{figure}
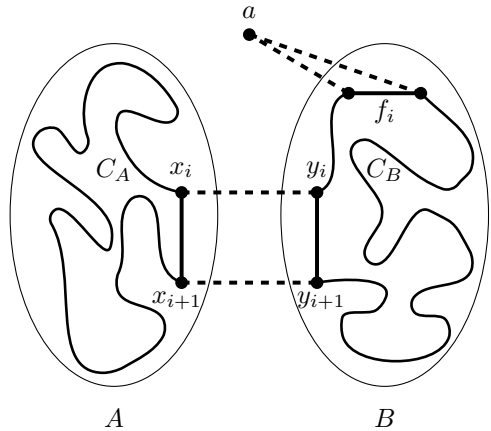

For Lemma~\ref{almostthm}, we need each of the Hamilton exceptional systems $J$ to be `localized':
given a partition of $A$ and $B$ into clusters, the endpoints of the corresponding set~$J^*$ of fictive edges need to be contained
in a single cluster of $A$ and of $B$. 
The fact that the Hamilton exceptional systems need to be localized is one reason for treating exceptional vertices
differently from the others by introducing fictive edges for them. 

\subsection{Proof Overview for Theorem~\ref{1factstrong}}

The main result of this chapter is Theorem~\ref{1factstrong}.
Suppose that $G$ is a $D$-regular graph satisfying the conditions of that theorem.

Using the approach of the previous subsection, one can obtain an approximate decomposition of $G$,
i.e.~a set of edge-disjoint Hamilton cycles covering almost all edges of~$G$.
However, one does not have any control over the `leftover' graph~$H$, which makes a complete decomposition seem infeasible.
This problem was overcome in~\cite{Kelly} by introducing the concept of a `robustly decomposable graph'~$G^{\rm rob}$.
Roughly speaking, this is a sparse regular graph with the following property:
given \emph{any} very sparse regular graph~$H$ with $V(H)=V(G^{\rm rob})$ which is edge-disjoint from $G^{\rm rob}$,
one can guarantee that $G^{\rm rob} \cup H$ has a Hamilton decomposition.
This leads to a natural (and very general) strategy to obtain a decomposition of $G$:
\begin{itemize}
\item[(1)] find a (sparse) robustly decomposable graph~$G^{\rm rob}$ in $G$ and let $G'$ denote the leftover;
\item[(2)] find an approximate Hamilton decomposition of $G'$ and let $H$ denote the (very sparse) leftover;
\item[(3)] find a Hamilton decomposition of~$G^{\rm rob} \cup H$.
\end{itemize}
It is of course far from clear that one can always find such a graph $G^{\rm rob}$.
The main `robust decomposition lemma' of~\cite{Kelly} guarantees such a graph $G^{\rm rob}$ in any regular robustly expanding graph of linear degree.
Since $G$ is close to the disjoint union of two cliques, we are of course not in this situation. 
However, a regular almost complete graph is certainly a robust expander,
i.e.~our assumptions imply that~$G$ is close to being the disjoint union of two regular robustly expanding graphs $G_A$ and $G_B$,
with vertex sets $A$ and $B$.

So very roughly, the strategy is to apply the robust decomposition lemma of~\cite{Kelly} to $G_A$ and $G_B$ separately, 
to obtain a Hamilton decomposition of both $G_A$ and $G_B$.
Now we pair up Hamilton cycles of $G_A$ and $G_B$ in this decomposition, so that each such pair corresponds to a single 
Hamilton cycle of $G$ and so that all edges of $G$ are covered.
It turns out that we can achieve this as in the proof of Theorem~\ref{thm:NWminclique}:
we replace all edges of $G$ between $A$ and $B$  by suitable `fictive edges' in $G_A$ and $G_B$.
We then need to ensure that each Hamilton cycle in $G_A$ and $G_B$ contains a suitable set of fictive edges
-- and the set-up of the robust decomposition lemma does allow for this.

One significant difficulty compared to the proof of Theorem~\ref{thm:NWminclique}
is that this time we need a \emph{decomposition} of all the `exceptional' edges
(i.e.~those between $A$ and $B$ and those incident to the exceptional vertices)
into Hamilton exceptional systems. 
The nature of the decomposition depends on the structure of the bipartite subgraph $G[A',B']$ of $G$,
where $A'$ is obtained from $A$ by including some subset $A_0$ of the exceptional vertices, and $B'$ is obtained
from $B$ by including the remaining set $B_0$ of exceptional vertices.
We say that $G$ is `critical' if many edges of $G[A',B']$ are incident 
to very few (exceptional) vertices. In our decomposition into Hamilton exceptional systems,
we will need to distinguish between the critical and non-critical case (when in addition $G[A',B']$ contains many edges)
and the case when $G[A',B']$ contains
only a few edges. The lemmas guaranteeing this decomposition are stated and discussed in Section~\ref{sec:locES},
but their proofs are deferred until Chapter~\ref{paper4}.

Finding these localized Hamilton exceptional systems becomes more feasible if we can assume that 
there are no edges with both endpoints in the exceptional set $A_0$ or both endpoints in~$B_0$.
So in Section~\ref{sec:V_0}, we find and remove a set of edge-disjoint Hamilton cycles covering all edges in $G[A_0]$ and $G[B_0]$.
We can then find the localized Hamilton exceptional systems in Section~\ref{sec:locES}. After this, we need to extend and combine them into 
certain path systems and factors in Section~\ref{sec:SFandEF}, before we can use them as an `input' for the robust decomposition lemma in~Section~\ref{sec:robdec}.
Finally, all these steps are combined in Section~\ref{sec:1factstrong} to prove Theorem~\ref{1factstrong}.

\section{Partitions and Frameworks}\label{sec:new}

\subsection{Edges between Partition Classes}

Let $A'$, $B'$ be a partition of the vertex set of a graph $G$. 
The aim of this subsection is to give some useful bounds on the number $e_G(A',B')$ of edges between $A'$ and $B'$ in~$G$.

\begin{prop} \label{prp:e(A',B')}
Let $G$ be a graph on $n$ vertices with $\delta(G) \ge D$.
Let $A',B'$ be a partition of $V(G)$.%
    \COMMENT{Daniela: deleted "If $|A'|\ge |B'|$" in (i) since it is not needed}
Then the following properties hold:
\begin{itemize}
\item[{\rm (i)}] $e_G(A', B')  \ge  (D - |B'|+1) |B'|. $ 
\item[{\rm (ii)}] If $D \geq n - 2\lfloor n/4 \rfloor -1$, then $e_G (A',B') \geq D$ unless
$n = 0 \pmod4$, $D = n/2 -1$ and $|A'| = |B'| = n/2$.
\end{itemize} 
\end{prop}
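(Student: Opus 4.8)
The plan is to prove both parts by a straightforward counting argument based on the minimum degree condition. First I would handle (i): each vertex $v\in B'$ has $d_G(v)\ge D$, and its neighbours lie in $A'\cup B'$, so $d_G(v,A')\ge D - d_G(v,B')\ge D - (|B'|-1)$. Summing over all $v\in B'$ gives $e_G(A',B')=\sum_{v\in B'}d_G(v,A')\ge (D-|B'|+1)|B'|$, which is exactly (i). This is essentially immediate; the only mild point is that the bound is only useful when $|B'|\le D+1$, but as a stated inequality it holds unconditionally (if $|B'|>D+1$ the right-hand side is negative).

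Next I would derive (ii) from (i). Write $b:=|B'|$ and assume without loss of generality $b\le n/2$ (so $b\le\lfloor n/2\rfloor$), swapping $A'$ and $B'$ if necessary. By (i) it suffices to show $(D-b+1)b\ge D$ except in the listed exceptional case. Viewing $f(b):=(D-b+1)b - D = -(b-1)(b-D)$ as a function of the integer $b$ with $1\le b\le\lfloor n/2\rfloor$, note $f(1)=0$ and $f(b)\ge 0$ precisely when $1\le b\le D$. Since $D\ge n-2\lfloor n/4\rfloor -1\ge \lfloor n/2\rfloor -1$ by~\eqref{minexact} (indeed $D\ge n/2-1$ in all residue classes), we have $b\le\lfloor n/2\rfloor\le D+1$. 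So the only way $f(b)<D$, i.e. the only way to fail $e_G(A',B')\ge D$, is to have $f(b)$ small, and I would check the boundary cases: $b=1$ gives $f(1)=0<D$, but then $e_G(A',B')\ge D\cdot 1 = D$ directly from (i) (here the bound $(D-|B'|+1)|B'|=D$ already suffices), so $b=1$ is fine; and $b=D+1$ would give $f(b)=0$, but $b\le\lfloor n/2\rfloor\le D+1$ forces $b=\lfloor n/2\rfloor=D+1$, i.e. $D=\lfloor n/2\rfloor -1$, which combined with $D\ge n-2\lfloor n/4\rfloor-1$ and~\eqref{minexact} forces $n\equiv 0\pmod 4$, $D=n/2-1$, $|A'|=|B'|=n/2$ — the stated exception. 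For all intermediate $2\le b\le D$ we get $f(b)\ge f(2)=2(D-1)-D=D-2\ge 0$ for $n$ large (or one notes $f(b)\ge 0$ on that whole range), so $e_G(A',B')\ge D$.

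I expect the only real care needed is bookkeeping with floors: one must verify via~\eqref{minexact} that $D\ge n-2\lfloor n/4\rfloor-1$ indeed implies $D\ge\lfloor n/2\rfloor -1$ in every residue class mod $4$, and that equality $D=\lfloor n/2\rfloor-1$ together with the degree hypothesis pins down exactly $n\equiv 0\pmod 4$ (since for $n\equiv 1,3\pmod 4$ the hypothesis gives $D\ge (n\mp1)/2\ge\lfloor n/2\rfloor$, and for $n\equiv 2\pmod 4$ it gives $D\ge n/2>\lfloor n/2\rfloor-1$). The main obstacle, such as it is, is just making sure the single degenerate case is correctly identified and that no other configuration (e.g. $b$ slightly below $\lfloor n/2\rfloor$) sneaks below the threshold; the function $f$ is concave in $b$ so its minimum over the admissible range is at an endpoint, which makes this check clean.
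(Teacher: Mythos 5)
Your proof is correct and follows essentially the same path as the paper: part (i) by summing minimum-degree bounds over $B'$, and part (ii) by analysing the quadratic $(D-|B'|+1)|B'|$ on the admissible range of $|B'|$ and using~\eqref{minexact} to locate the unique bad case $|B'|=D+1=\lfloor n/2\rfloor$. One small arithmetic slip: at $b=D+1$ it is the lower bound $(D-b+1)b$ that equals $0$, not $f(b)$ (in fact $f(D+1)=-D$), though this does not affect the argument since all you need is that $f(b)<0$ there.
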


\begin{proof}
Since $\delta(G) \ge D$ we have $d(v,A') \ge  D - |B'|+1$ for all $v \in B'$ and so $e_G(A', B') \ge  (D - |B'|+1) |B'|$,
which implies (i). (ii) follows from~\eqref{minexact} and (i).%
\COMMENT{Without loss of generality, $|A'|\ge |B'|$. $(D - |B'|+1) |B'|$ is a quadratic function in $|B'|$ which is maximized if $|B'|=(D+1)/2$. So we have to check whether
$(D - |B'|+1) |B'|\ge D$ if $|B'|=1$ and if $|B'|=\lfloor n/2\rfloor$. The former is always the case. The latter holds unless
$n = 0 \pmod4$, $D = n/2 -1$ and $|A'| = |B'| = n/2$.}
\end{proof}

\begin{prop} \label{prp:e(A',B')parity}
Let $G$ be a $D$-regular graph on $n$ vertices together with a vertex partition $A',B'$.
Then 
\begin{itemize}
\item[{\rm (i)}] $e_G(A',B')$ is odd if and only if both $|A'|$ and $D$ are odd. 
\item[{\rm (ii)}]
$e_G(A', B')   = e_{\overline{G}}(A') +e_{\overline{G}}(B') + \frac{(2D+2-n)n}{4} - \frac{(|A'|-|B'|)^2}4.$
\end{itemize}
\end{prop}
\begin{proof}
Note that $e_G(A',B') = \sum_{v \in A'} d(v,B') = \sum_{v \in A'} (D - d(v,A')) = |A'| D - 2 e_G(A')$.
Hence (i) follows.

For (ii), note that
\begin{align*}
	e_{\overline{G}}(A') & =  \binom{|A'|}2 - e_G (A') = \binom{|A'|}2 - \frac12 \left( D |A'| - e_G(A',B') \right),
\end{align*}
and similarly $	e_{\overline{G}}(B')  = \binom{|B'|}2 - \left( D|B'|  - e_G(A',B')\right)/2$.
Since $|A'|+|B'|=n$ it follows that
\begin{align*}
	e_G(A',B') & = e_{\overline{G}}(A') + e_{\overline{G}}(B') - \frac12 \left( |A'|^2 +|B'|^2 -n \left( D + 1\right) \right)\\
	& = e_{\overline{G}}(A') + e_{\overline{G}}(B') + \frac{(2D +2 - n )n}{4} - \frac{(|A'|-|B'|)^2}{4},
\end{align*}
as required.%
    \COMMENT{To see the last equality we need to check that $\frac{|A'|^2+|B'|^2}{2}=\frac{n^2}{4}+\frac{(|A'|-|B'|)^2}{4}$
which holds since $\frac{n^2}{4}=\frac{(|A'|+|B'|)^2}{4}$.}
\end{proof}

\begin{prop}  \label{prp:e(A',B')2}
Let $G$ be a $D$-regular graph on $n$ vertices with $D \ge \lfloor n/2 \rfloor$.
Let $A',B'$ be a partition of $V(G)$ with $|A'| , |B'| \ge D/2$ and $\Delta(G[A',B']) \le D/2$.
Then 
\begin{align}
e_{G - U}(A',B') \ge 
\begin{cases}
D - 28 & \textrm{if $D \ge n/2$,}\\
D/2 - 28 & \textrm{if $D = (n-1)/2$}
\end{cases}
\nonumber
\end{align}
for every $U \subseteq V(G)$ with $|U| \le 3$.
\end{prop}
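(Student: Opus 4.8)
The plan is to combine the counting estimate of Proposition~\ref{prp:e(A',B')parity}(ii) with the structural constraint $\Delta(G[A',B']) \le D/2$, which forces the two parts to be large in a precise quantitative sense, and then to track how deleting at most three vertices can decrease $e_G(A',B')$.

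First I would estimate $e_G(A',B')$ itself. By Proposition~\ref{prp:e(A',B')parity}(ii),
\[
e_G(A',B') = e_{\overline{G}}(A') + e_{\overline{G}}(B') + \frac{(2D+2-n)n}{4} - \frac{(|A'|-|B'|)^2}{4} \ge \frac{(2D+2-n)n}{4} - \frac{(|A'|-|B'|)^2}{4},
\]
since the two complement terms are nonnegative. The key point is to bound $||A'|-|B'||$ from above. Here I would use $\Delta(G[A',B']) \le D/2$ together with $\delta(G) \ge D$: every vertex $v \in A'$ has at most $D/2$ neighbours in $B'$, hence at least $D/2$ neighbours inside $A'$, so $|A'| \ge D/2 + 1$; symmetrically $|B'| \ge D/2+1$ (this is also just the hypothesis $|A'|,|B'| \ge D/2$, but the "$+1$" helps). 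Since $|A'|+|B'| = n$, we get $|A'|, |B'| \le n - D/2 - 1$, so $||A'|-|B'|| \le n - D - 2$. When $D \ge n/2$ this gives $||A'|-|B'|| \le n/2 - 2$, and when $D = (n-1)/2$ it gives $||A'|-|B'|| \le (n-1)/2 - 1 \le (n-1)/2$. Plugging into the displayed inequality: in the case $D \ge n/2$, write $2D + 2 - n = 2(D - n/2) + 2$ and one computes $e_G(A',B') \ge \frac{(2(D-n/2)+2)n}{4} - \frac{(n/2-2)^2}{4}$; a short calculation shows this is at least $D$ with plenty of room (indeed of order $n$). In the case $D = (n-1)/2$ one gets $2D+2-n = 1$, so $e_G(A',B') \ge \frac{n}{4} - \frac{((n-1)/2)^2}{4}$, which is \emph{negative} for large $n$ — so in this regime the crude bound is useless and I would instead fall back on Proposition~\ref{prp:e(A',B')}(i): $e_G(A',B') \ge (D - |B'| + 1)|B'|$, and since $D/2 \le |B'| \le n - D/2 - 1 = (n-1)/2 \approx D$, the product $(D-|B'|+1)|B'|$ is at least roughly $(D/2)(D/2) \gg D$ once $|B'| \le D$, or more carefully at least $\Omega(D^2/?)$... actually the honest route is: $|B'| \le n - D/2 - 1$, so $D - |B'| + 1 \ge D - (n - D/2 - 1) + 1 = \frac{3D}{2} - n + 2 = \frac{3(n-1)/2}{2} - n + 2 = \frac{3n-3-4n+8}{4}\cdot\frac{?}{}$; one checks this is $\ge$ a positive constant times $n$ when $D = (n-1)/2$, and multiplying by $|B'| \ge D/2$ gives $e_G(A',B') \ge \Omega(n^2)$, far exceeding $D/2$.

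Now I would handle the deletion of $U$ with $|U| \le 3$. Each deleted vertex removes from $E_G(A',B')$ at most its number of $A'B'$-edges at that vertex, which is at most $\Delta(G[A',B']) \le D/2$. Hence $e_{G-U}(A',B') \ge e_G(A',B') - 3 \cdot (D/2) = e_G(A',B') - 3D/2$. So it suffices to show $e_G(A',B') \ge D + 28 + 3D/2 = 5D/2 + 28$ in the case $D \ge n/2$, and $e_G(A',B') \ge D/2 + 28 + 3D/2 = 2D + 28$ in the case $D = (n-1)/2$. Both of these follow from the bounds of the previous paragraph, since in each case we showed $e_G(A',B')$ is of order $n \ge $ (a constant times $D$); one just needs $n$ (equivalently $n_0$ implicit in "sufficiently large", or the explicit small-case check) large enough that the quadratic lower bound clears the linear threshold. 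The modest absolute constant $28$ is there precisely to absorb the low-order terms and any small-$n$ slack.

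The main obstacle is the case $D = (n-1)/2$: here the "bipartite-type" counting identity of Proposition~\ref{prp:e(A',B')parity}(ii) degenerates (the coefficient $2D+2-n$ collapses to $1$) and gives nothing, so one genuinely must argue via Proposition~\ref{prp:e(A',B')}(i) and exploit that $\Delta(G[A',B']) \le D/2$ forces $|A'|, |B'|$ to be close to $D \approx n/2$ — bounded away from $0$ and from $n$ — so that the quadratic $(D-|B'|+1)|B'|$ is $\Theta(n^2)$ rather than small. I would double-check the edge cases $|A'| = |B'|$ and the exact value of the constant there, and confirm that the excluded configuration in Proposition~\ref{prp:e(A',B')}(ii) ($n \equiv 0 \bmod 4$, $D = n/2-1$, $|A'|=|B'|=n/2$) does not arise here, since the hypothesis $D \ge \lfloor n/2 \rfloor$ rules out $D = n/2-1$.
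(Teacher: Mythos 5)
Your proposal has a genuine gap, concentrated exactly in the regime where the proposition is hardest, namely $D$ close to $n/2$ and $|B'|$ close to $D$.

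The first problem is the claimed estimate for $D\ge n/2$. Your bound $||A'|-|B'||\le n-D-2$ is correct and is the best one can do from $\Delta(G[A',B'])\le D/2$ alone. But plugging it into the parity identity gives
$e_G(A',B')\ge \frac{(2D+2-n)n}{4}-\frac{(n-D-2)^2}{4}$,
and at $D=n/2$ this is roughly $\frac{n}{2}-\frac{n^2}{16}$, which is \emph{negative} for large $n$ — not ``at least $D$ with plenty of room.'' The quadratic term $(|A'|-|B'|)^2/4$ swamps the linear term $(2D+2-n)n/4$ precisely when $D$ is near $n/2$, which is the only regime the paper ever invokes this proposition in.

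The second problem is the fallback to Proposition~\ref{prp:e(A',B')}(i). The product $(D-|B'|+1)|B'|$ is \emph{not} $\Omega(n^2)$ in general: if $|B'|$ is close to $D$ (which is allowed, since all the hypotheses give is $D/2\le |B'|\le n-D/2-1$), then $D-|B'|+1$ is $O(1)$ and the product is only of order $D$. Your own intermediate calculation $D-|B'|+1\ge 3D/2-n+2$ equals $(5-n)/4$ at $D=(n-1)/2$, a negative number, and the aborted fraction in your writeup suggests you noticed this. There is no $\Omega(n^2)$ bound here, so subtracting $3D/2$ for the deleted vertices $U$ — which is the only way you ever control the deletion — cannot be absorbed.

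The paper's proof avoids both traps by splitting on $|B'|$, not on $D$. When $|B'|\le D-4$, the product bound already gives $e(G')\ge 5D/2$, and subtracting $3\cdot D/2$ is fine. When $|B'|\ge D-3$, the crucial gain is twofold: first, $|A'|-|B'|\le 7$ so the quadratic term in the parity identity is $O(1)$; second, and this is the idea you are missing, the terms $e_{\overline{G}}(A')+e_{\overline{G}}(B')$ are \emph{not} discarded but are used to pay for $\sum_{u\in U}d_{G'}(u)$, via the pointwise inequality $d_{G'}(v)\le d_{\overline{G}}(v,B')+4$ for $v\in B'$ (valid because $|B'|\ge D-3$), and its analogue in $A'$. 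In other words, in the large-$|B'|$ regime a vertex can have large $A'B'$-degree only at the cost of many non-edges inside its own side, and those non-edges are exactly what the parity identity counts. Dropping the complement-edge terms, as you do, throws away the only leverage available in this case.
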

\begin{proof}
Without loss of generality, we may assume that $|A'|\ge |B'|$.
Set  $G' := G[A',B']$.
If $|B'| \le D-4$, then $e(G') \ge (D - |B'|+1) |B'| \ge 5D/2$ by Proposition~\ref{prp:e(A',B')}(i).
Since $\Delta(G') \le D/2$ we have $e(G'- U) \ge e(G')  - 3 D/2 \ge D$.
Thus we may assume that $|B'| \ge D-3$. For every $v \in B'$, we have
\begin{align*}
d_{G'}(v) = d_G(v,A')  = D - d_G(v,B')
	= D - (|B'| - d_{\overline{G}}(v,B') - 1)
	\le  d_{\overline{G}}(v,B') + 4,
\end{align*}
and similarly $d_{G'}(v) \le d_{\overline{G}}(v,A') + 4$ for all $v \in A'$.
Thus
\begin{align}\label{eq:sumdegU}
\sum_{u \in U} d_{G'}(u) & \le  12+\sum_{u \in U \cap A'} d_{\overline{G}}(u,A') + \sum_{u \in U \cap B'} d_{\overline{G}}(u,B') \nonumber\\
& \le 15 + e_{\overline{G}}(A') + e_{\overline{G}}(B').
\end{align}
Note that $|A'| - |B'| \le 7$ since $|A'|\geq |B'| \geq D-3\ge \lfloor n/2\rfloor -3$.
By Proposition~\ref{prp:e(A',B')parity}(ii), we have 
\begin{eqnarray*}
	e(G' - U) & \ge & e(G') - \sum_{u \in U} d_{G'}(u) \\
	& \ge & e_{\overline{G}}(A') +e_{\overline{G}}(B') + \frac{(2D+2-n)n}{4} - \frac{(|A'|-|B'|)^2}4 - \sum_{u \in U} d_{G'}(u)\\
	& \stackrel{\eqref{eq:sumdegU}}{\ge} & \frac{(2D+2-n)n}{4} - \frac{(|A'|-|B'|)^2}4 - 15
	\ge \frac{(2D+2-n)n}{4} - 28.
\end{eqnarray*}
Hence the proposition follows.%
   \COMMENT{$\frac{(2D+2-n)n}{4}\ge D$ if $D\ge n/2$. So assume that $D=(n-1)/2$. We need to check that $\frac{(2D+2-n)n}{4}\ge D/2$.
But this holds iff $(2D+2-n)n\ge 2D$ iff $(n-1)2D\ge n(n-2)$ iff $D\ge \frac{n(n-2)}{2(n-1)}=\frac{n}{2}-\frac{n}{2(n-1)}$. So it holds
if $D=(n-1)/2$ since $\frac{n}{2(n-1)}\ge 1/2$.}
\end{proof}

The following result is an analogue of Proposition~\ref{prp:e(A',B')2} for the case when $G$ is $(n/2-1)$-regular with $n = 0 \pmod4$ and $|A'| = n/2 = |B'|$.

\begin{prop}  \label{prp:e(A',B')3}
Let $G$ be an $(n/2-1)$-regular graph on $n$ vertices with $n = 0 \pmod{4}$.
Let $A'$, $ B'$ be a partition of $V(G)$ with $|A'| = n/2 = |B'|$.
Then $$e_G(A' \setminus X,B') \ge e_G( X, B' ) - |X|(|X|-1)$$
for every vertex set $X \subseteq A'$.
Moreover, $\Delta(G[A',B']) \le e_G(A',B')/2$.
\end{prop}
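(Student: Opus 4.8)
The plan is to prove the two statements separately, and to note that the second is an immediate consequence of the first together with a count of $e_G(A',B')$. For the main inequality, fix $X \subseteq A'$ and set $Y := A' \setminus X$, so $|Y| = n/2 - |X|$. I would count $e_G(X, B')$ and $e_G(Y, B')$ via degrees. Since $G$ is $(n/2-1)$-regular, every $x \in X$ satisfies $d_G(x, B') = (n/2-1) - d_G(x, A') = (n/2-1) - d_G(x, X) - d_G(x, Y)$. Summing over $x \in X$ gives $e_G(X, B') = |X|(n/2-1) - 2e_G(X) - e_G(X, Y)$. Similarly, summing the identity $d_G(y, B') = (n/2-1) - d_G(y, A')$ over $y \in Y$ and using $|Y| = n/2 - |X|$ gives $e_G(Y, B') = |Y|(n/2-1) - 2e_G(Y) - e_G(X,Y)$. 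Subtracting, the $e_G(X,Y)$ terms cancel and we get
\[
e_G(A' \setminus X, B') - e_G(X, B') = (|Y| - |X|)(n/2 - 1) - 2e_G(Y) + 2e_G(X).
\]
Now $e_G(X) \ge 0$ and $e_G(Y) \le \binom{|Y|}{2}$, while $|Y| - |X| = n/2 - 2|X|$. So the right-hand side is at least $(n/2 - 2|X|)(n/2-1) - |Y|(|Y|-1)$. Writing $|Y| = n/2 - |X|$ and expanding, one checks that $(n/2 - 2|X|)(n/2 - 1) - (n/2 - |X|)(n/2 - |X| - 1) = -|X|(|X| - 1) + \text{(something nonnegative)}$; more precisely I expect the difference to come out exactly as $-|X|(|X|-1)$ after the algebra, possibly with a further nonnegative slack term, which yields $e_G(A' \setminus X, B') \ge e_G(X, B') - |X|(|X|-1)$ as claimed. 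This is the one routine computation to carry out carefully, but it is elementary.

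For the "moreover" statement, I would first compute $e_G(A', B')$ exactly. By Proposition~\ref{prp:e(A',B')parity}(ii) with $D = n/2 - 1$ and $|A'| = |B'| = n/2$, the last two correction terms vanish: $(2D + 2 - n)n/4 = 0$ and $(|A'| - |B'|)^2/4 = 0$, so $e_G(A', B') = e_{\overline{G}}(A') + e_{\overline{G}}(B')$. Alternatively, and more simply, $e_G(A', B') = |A'|D - 2e_G(A') = n(n/2-1)/2 - 2e_G(A')$; either way $e_G(A',B')$ is determined by the densities inside the classes. Now take $v \in A'$ (the case $v \in B'$ is symmetric). Apply the main inequality with $X = \{v\}$: since $|X|(|X|-1) = 0$, we get $e_G(A' \setminus \{v\}, B') \ge e_G(\{v\}, B') = d_G(v, B') = \Delta$-candidate. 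But $e_G(A' \setminus \{v\}, B') = e_G(A', B') - d_G(v, B')$, so $e_G(A', B') - d_G(v, B') \ge d_G(v, B')$, i.e. $d_G(v, B') \le e_G(A', B')/2$. Taking the maximum over all $v \in A' \cup B'$ gives $\Delta(G[A', B']) \le e_G(A', B')/2$, as required.

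I would expect no serious obstacle here: both parts are degree-counting plus one short quadratic estimate. The only place to be slightly careful is signs and the bookkeeping of which terms ($e_G(X)$, $e_G(Y)$, $e_G(X,Y)$) are being dropped or bounded, and making sure the final quadratic simplification genuinely gives $-|X|(|X|-1)$ rather than something weaker. A good sanity check is $X = A'$, where the claimed inequality reads $0 = e_G(\emptyset, B') \ge e_G(A', B') - (n/2)(n/2 - 1)$, which holds since $e_G(A',B') \le |A'||B'| \le (n/2)^2$ — actually one should double-check this boundary case matches, and if the bound is not tight there it simply confirms there is nonnegative slack, consistent with the derivation above. Another check: $X = \{v\}$ gives exactly the inequality used for the "moreover" part, so the two halves are consistent.
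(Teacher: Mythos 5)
Your proof is correct and takes essentially the same route as the paper's: both derive the key inequality from degree sums over $X$ and $A'\setminus X$ (the paper phrases this via the complement graph, observing $d_G(v,B')=d_{\overline G}(v,A')$ and hence $e_G(A',B')=2e_{\overline G}(A')$, but the crude bounds it invokes are exactly your $e_G(X)\ge 0$ and $e_G(A'\setminus X)\le\binom{|A'\setminus X|}{2}$ in disguise), and then obtain the maximum-degree bound by specializing to $X=\{v\}$. Incidentally, the quadratic identity you hedge on comes out to exactly $-|X|(|X|-1)$ with no extra term, so the hedging in your middle paragraph is unnecessary.
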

\begin{proof}
For every $v \in A'$, we have
\begin{align*}
  d_G(v,B')  & = n/2 - 1 - d_G(v,A') = |A'| -1  - d_G(v,A')	=  d_{\overline{G}}(v,A') .
\end{align*}
By summing over all $v \in A'$ we obtain
\begin{align*}
e_G(A',B') & = 2e_{\overline{G}}(A')  \ge  2 \left( \sum_{ x \in X} d_{\overline{G}}(x,A') - \binom{|X|}{2} \right) \\ &
 =  2 \sum_{ x \in X} d_{G}(x,B') - |X|(|X|-1)\\
& = 2 e_G(X, B') - |X| (|X|-1).
\end{align*}
Therefore,
\begin{align}
\nonumber	e_G( A' \setminus X, B') & = e_G(A',B') -e_G(X,B')
\nonumber \ge e_G(X,B') - |X| (|X|-1).
\end{align}
In particular, this implies that for each vertex $x \in A'$ we have $e_G( A' \setminus \{x\}, B') \ge e_G(\{x\},B') = d_G(x,B')$ and so $2 d_G(x,B') \le e_G( A', B')$.
By symmetry, for any $y \in B'$ we have $2 d(y,A') \le e_G( A', B')$.
Therefore, $\Delta( G[A',B']) \le e_G( A', B')/2$.
\end{proof}


\subsection{Frameworks}

Throughout this chapter, we will consider partitions into sets $A$ and $B$ of equal size (which induce `near-cliques')
as well as `exceptional sets' $A_0$ and $B_0$. The following definition formalizes this.
Given a graph $G$, we say that $(G,A,A_0,B,B_0)$ is an \emph{$(\epszero,K)$-framework} if the following holds,
where $A':=A_0\cup A$, $B':=B_0\cup B$ and $n:=|V(G)|$: 
\begin{itemize}
\item[{\rm (FR1)}] $A,A_0,B,B_0$ forms a partition of $V(G)$. 
\item[{\rm (FR2)}] $e(A',B')\le \epszero n^2$.
\item[{\rm (FR3)}] $|A|=|B|$ is divisible by $K$, $|A_0| \ge |B_0|$ and $|A_0| + |B_0| \le \epszero n$.
\item[{\rm (FR4)}] If $v\in A$ then $d(v, B')< \epszero n$ and if $v\in B$ then $d(v, A')< \epszero n$.\COMMENT{Deryk}
\end{itemize}
We often write $V_0$ for $A_0\cup B_0$ and think of the vertices in $V_0$ as `exceptional vertices'.
Also, whenever $(G,A,A_0,B,B_0)$ is an $(\epszero,K)$-framework, we will write $A':=A_0\cup A$, $B':=B_0\cup B$.

\begin{prop} \label{prop:framework}
Let $0 < 1/n \ll \eps_{\rm ex} ,1/K \ll 1$ and $\eps_{\rm ex} \ll \epszero \ll 1$.
Let $G$ be a graph on $n$ vertices with $\delta(G) = D \ge n - 2\lfloor n/4 \rfloor -1$ that is $\eps_{\rm ex} $-close to the union of two disjoint copies of $K_{n/2}$.
Then there is a partition $A,A_0,B,B_0$ of $V(G)$ such that $(G,A,A_0,B,B_0)$ is an $(\epszero, K)$-framework, $d(v, A') \ge d(v)/2$ for all $v\in A'$ and
$d(v, B') \ge d(v)/2$ for all $v\in B'$.
\end{prop}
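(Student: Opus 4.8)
The plan is to start from the hypothesis that $G$ is $\eps_{\rm ex}$-close to the union of two disjoint copies of $K_{n/2}$, which gives us a set $A^*\subseteq V(G)$ with $|A^*|=\lfloor n/2\rfloor$ and $e(A^*, V(G)\setminus A^*)\le \eps_{\rm ex} n^2$. Setting $B^*:=V(G)\setminus A^*$, this cut is small, so most vertices of $A^*$ have almost all of their neighbours inside $A^*$, and likewise for $B^*$. First I would define the exceptional set: call a vertex $v$ \emph{bad} if $d(v,A^*)$ and $d(v,B^*)$ are both at least, say, $\sqrt{\eps_{\rm ex}}\, n$ (or some threshold intermediate between $\eps_{\rm ex} n$ and $\epszero n$). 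Since each bad vertex contributes at least $\sqrt{\eps_{\rm ex}}\,n$ to the cut $e(A^*,B^*)\le\eps_{\rm ex} n^2$, there are at most $\sqrt{\eps_{\rm ex}}\,n$ bad vertices. All non-bad vertices $v$ satisfy $d(v,A^*)<\sqrt{\eps_{\rm ex}}\,n$ or $d(v,B^*)<\sqrt{\eps_{\rm ex}}\,n$; by the minimum degree condition $d(v)\ge D\ge n/2 - 1$ exactly one of these holds, so every non-bad vertex has a well-defined `majority side'.

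Next I would build the partition. Let $A_1$ be the non-bad vertices with majority side $A^*$ and $B_1$ the non-bad vertices with majority side $B^*$; note $A_1\triangle A^*$ and $B_1\triangle B^*$ are both contained in the bad set, so $||A_1|-n/2|, ||B_1|-n/2|\le \sqrt{\eps_{\rm ex}}\,n$. Distribute the bad vertices, together with a few vertices moved from the larger of $A_1,B_1$, so as to form exceptional sets $A_0\supseteq\{$bad vertices assigned to the $A$-side$\}$ and $B_0$, and clusters $A:=A_1\setminus A_0$ (resp.\ $B$) from which we then delete a handful more vertices to make $|A|=|B|$ divisible by $K$ — this only moves $O(\sqrt{\eps_{\rm ex}}\,n + K)$ vertices into $V_0:=A_0\cup B_0$, so $|A_0|+|B_0|\le \epszero n$, which is (FR3); relabel so that $|A_0|\ge|B_0|$. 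Then (FR1) is immediate. For (FR4): a vertex $v\in A$ is non-bad with majority side $A^*$, so $d(v,B^*)<\sqrt{\eps_{\rm ex}}\,n$, and $B'=B_0\cup B\subseteq B^*\cup V_0$, whence $d(v,B')\le d(v,B^*)+|V_0|\le \sqrt{\eps_{\rm ex}}\,n+\epszero n<\epszero n$ after adjusting constants (here I'd want $\eps_{\rm ex}\ll\epszero$ and the threshold chosen as a suitable power of $\eps_{\rm ex}$); symmetrically for $v\in B$. For (FR2): $e(A',B')$ can be bounded by $e_G(A^*,B^*) + |V_0|\cdot n \le \eps_{\rm ex} n^2 + \epszero n^2/K \cdot n$ — more carefully, the edges between $A'$ and $B'$ that are not already counted in $e(A^*,B^*)$ all have an endpoint in $V_0$ or among the $O(\sqrt{\eps_{\rm ex}}\,n)$ swapped vertices, so $e(A',B')\le \eps_{\rm ex} n^2 + (|V_0|+\sqrt{\eps_{\rm ex}}\,n)\cdot n \le \epszero n^2$, giving (FR2).

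Finally, the last two conditions $d(v,A')\ge d(v)/2$ for $v\in A'$ and $d(v,B')\ge d(v)/2$ for $v\in B'$. For $v\in A$ this is clear since $d(v,B')<\epszero n \le d(v)/2$ (as $d(v)\ge D\ge n/2-1\gg \epszero n$), so $d(v,A')=d(v)-d(v,B')>d(v)/2$; symmetrically for $v\in B$. The only real content is at the exceptional vertices $v\in A_0$ (and $v\in B_0$): here we need $d(v,A')\ge d(v,B')$. This is where I would be most careful — it is the reason the assignment of bad vertices to $A_0$ versus $B_0$ must \emph{not} be arbitrary but must be done greedily: for each bad vertex $v$, put $v$ into whichever of the two exceptional sets $A_0, B_0$ corresponds to the side in which $v$ currently has at least half its neighbours (the sides $A_1, B_1$ are already essentially determined, and $d(v,A_1)+d(v,B_1)$ is within $|V_0|$ of $d(v)$, so $\max\{d(v,A_1),d(v,B_1)\}\ge (d(v)-|V_0|)/2$; assigning $v$ to that side and noting $A_1\subseteq A'$, $B_1\subseteq B'$ gives $d(v,A')\ge d(v,A_1)\ge d(v)/2$ up to the negligible $|V_0|$ slack, which we absorb by choosing the constants appropriately, or by noting $d(v)\ge D$ is large while any slack is $O(\epszero n)$ — in fact one can get the clean inequality by first doing the greedy assignment and only afterwards trimming for divisibility). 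The potential subtlety is that trimming $A$ down to make $|A|$ divisible by $K$ moves vertices into $A_0$, and for those moved vertices $v$ we again have $d(v,A')\ge d(v,A) - O(K) \ge d(v)/2$ since $d(v,A)\ge d(v)-\epszero n$. The constraint $|A_0|\ge|B_0|$ is achieved by, if necessary, moving a few extra vertices from $A$ into $A_0$ (vertices of $A$ trivially satisfy the degree inequality into $A'$), so this costs nothing. Assembling these estimates with the hierarchy $\eps_{\rm ex}\ll\epszero\ll 1$ and $1/n\ll 1/K$ completes the proof; the main obstacle is bookkeeping the degree inequality at exceptional vertices through the greedy assignment and the subsequent divisibility trimming, rather than any single hard estimate.
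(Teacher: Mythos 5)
The overall architecture of your proof is close to the paper's, but there is a genuine gap at the point you yourself flagged, and the fix you propose does not actually close it. The problem is the \emph{single-pass greedy} assignment of the bad vertices. You assign a bad vertex $v$ to the $A$-side whenever $d(v,A_1)\ge d(v,B_1)$, and then argue $d(v,A')\ge d(v,A_1)\ge (d(v)-|V_0|)/2$. But the proposition demands the \emph{exact} inequality $d(v,A')\ge d(v)/2$, equivalently $d(v,A')\ge d(v,B')$, and the slack of order $|V_0|$ cannot be ``absorbed into the constants'': once a bad vertex $v$ is placed on the $A$-side, the other bad vertices adjacent to $v$ may all subsequently be placed in $B_0$, so that $d(v,B')=d(v,B_1)+d(v,B_0\cap\text{bad})$ exceeds $d(v,A')=d(v,A_1)+d(v,A_0\cap\text{bad})$ by up to roughly $|\text{bad}|\approx\sqrt{\eps_{\rm ex}}\,n$. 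Your parenthetical remark that one can avoid the slack ``by first doing the greedy assignment and only afterwards trimming for divisibility'' does not help, because the slack does not come from the trimming step; it comes from the other bad vertices. And this is not a cosmetic loss: the exact inequality is exactly what yields $\Delta(G[A',B'])\le D/2$, which is used as a hard hypothesis downstream (e.g.\ in Proposition~\ref{prp:e(A',B')2}, Lemma~\ref{critical'}, Lemma~\ref{V_0BES}).

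The clean way to obtain the exact inequality, and what the paper in fact does, is to run a \emph{local-search/iterative} vertex-moving process rather than a one-pass greedy: starting from the partition $A^{*},B^{*}$, repeatedly pick any vertex $v$ with strictly more neighbours on the opposite side and move it across, and iterate until no such vertex exists. This terminates because each move strictly decreases the integer quantity $e(A'',B'')$; at the fixed point every vertex $v\in A'$ satisfies $d(v,A')\ge d(v,B')$, i.e.\ $d(v,A')\ge d(v)/2$, with no slack. Only \emph{after} reaching this fixed point does one identify the exceptional sets $A'_0,B'_0$ (as the vertices with $\ge\sqrt{\eps_{\rm ex}}\,n$ cross-neighbours in the \emph{final} partition), and then trim for divisibility. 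Your identification of bad vertices relative to the \emph{initial} partition $A^{*},B^{*}$ is also slightly off for (FR4): after reassignments, the side $B'$ contains vertices of $A^{*}$ (those non-bad with majority $B^{*}$), so $d(v,B')\le d(v,B^{*})+|V_0|$ is not quite the right bookkeeping, though this particular point is repairable with a more careful count. The degree inequality at exceptional vertices, by contrast, genuinely requires replacing the greedy pass by the iterative process.
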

\begin{proof}
Write $\eps: = \eps_{\rm ex}$.
Since $G$ is $\eps $-close to the union of two disjoint copies of $K_{n/2}$, there exists a partition $A'', B''$ of $V(G)$ such that
$|A''|=\lfloor n/2\rfloor$ and $e(A'',B'') \le \eps n^2$.
If there exists a vertex $v \in A''$ such that $d(v,A'') < d(v,B'')$, then we move $v$ to $B''$. We still denote the
vertex classes thus obtained by $A''$ and $B''$.
Similarly, if there exists a vertex $v \in B''$ such that $d(v,B'') < d(v,A'')$, then we move $v$ to $A''$.
We repeat this process until $d(v,A'') \ge d(v,B'')$ for all $v \in A''$ and $d(v,B'') \ge d(v,A'')$ for all $v \in B''$.
Note that this process must terminate since at each step the value of $e(A'',B'')$ decreases.
Let $A',B'$ denote the resulting partition. By relabeling the classes if necessary we may assume that $|A'| \ge |B'|$.
By construction, $e(A',B') \le e(A'',B'') \le \eps n^2$ and so (FR2) holds. 
Suppose that $|B'| < (1- 5\eps)n/2$. Then at some stage in the process we have that $|B''|=(1- 5 \eps)n/2$.
But then by Proposition~\ref{prp:e(A',B')}(i), 
\begin{align*}
	 e(A'',B'') \ge (D - |B''|+1 ) |B''| > \eps n^2,
\end{align*}
a contradiction to the definition of $\eps$-closeness (as the number of edges between the partition classes has 
not increased while moving the vertices). Hence, $|A'|\ge |B'| \ge (1- 5\eps)n/2$. Let $B'_0$ be the set of vertices $v$ in $B'$ such that
$d(v,A') \ge \sqrt{\eps} n$. Since $\sqrt{\eps} n |B'_0|  \le e(A',B') \le \eps n^2$ we have $|B'_0| \le \sqrt{\eps}n$.
Note that\COMMENT{Deryk} 
\begin{align}\label{eq:sizeB'minus}
|B'| - |B'_0| \ge ( 1- 5 \eps ) n /2 - \sqrt{\eps } n \ge (1- 3\sqrt{ \eps} )n/2.
\end{align}
Similarly, let $A'_0$ be the set of vertices $v$ in $A'$ such that $d(v,B') \ge \sqrt{\eps} n $.
Thus, $|A'_0|  \le \sqrt{\eps }n$ and $|A'| - |A'_0| \ge n/2-|A'_0| \ge (1-2\sqrt{ \eps} )n/2$.
Let $m$ be the largest integer such that $Km \le |A'| - |A'_0|, |B'| - |B'_0|$.
Let $A$ and $B$ be $Km$-subsets of $A' \setminus A'_0$ and $B' \setminus B'_0$ respectively.
Set $A_0 := A' \setminus A$ and $B_0 := B' \setminus B$.
Note that (\ref{eq:sizeB'minus}) and its analogue for~$A'$ together imply that%
   \COMMENT{Daniela: changed the explanation here}
$|A_0| + |B_0| \le 3 \sqrt{\eps}n +2K\le \eps_0 n$.
Therefore, $(G, A,A_0,B,B_0)$ is an $(\epszero,K)$-framework.
\end{proof}


\section{Exceptional Systems and $(K,m,\eps_0)$-Partitions}\label{sec:BES}

The definitions and observations in this section will enable us to `reduce' the problem of 
finding Hamilton cycles in $G$ to that of finding suitable pairs $C_A$, $C_B$ of cycles with $V(C_A)=A$ and $V(C_B)=B$.
In particular, they will enable us to `ignore' the exceptional set $V_0=A_0 \cup B_0$.
Roughly speaking, for each Hamilton cycle we seek, we find a certain path system $J$  covering $V_0$
(called an exceptional system). From this, we derive a set $J^*$ of edges whose endvertices lie in $A \cup B$
by replacing paths of $J$ with `fictive edges' in a suitable way. We can then work with $J^*$ instead of $J$
when constructing our Hamilton cycles (see Proposition~\ref{prop:ES} and the explanation preceding it).

Suppose that $A,A_0,B,B_0$ forms a partition of a vertex set $V$ of size $n$ such that $|A| = |B|$. Let $V_0:=A_0\cup B_0$.
An \emph{exceptional cover} $J$ is a graph which satisfies the following properties:
\begin{enumerate}[label={(EC{\arabic*})}]
\item $J$ is a path system with $V_0\subseteq V(J)\subseteq V$.%
   \COMMENT{One might set $V(J)=V$ instead. Or alternatively, $d_J(v)= 1$ for every $v \in V(J) \setminus V_0$.
However, this does not fit well with the bipartite case.}
\item $d_J(v) =2 $ for every $v \in V_0$ and $d_J(v) \le 1$ for every $v \in V(J) \setminus V_0$.
\item $e_J(A), e_J(B) = 0$.
\end{enumerate}
We say that $J$ is an \emph{exceptional system with parameter~$\eps_0$}, or an \emph{ES} for short, if $J$ satisfies the following properties:
\begin{enumerate}[label={(ES{\arabic*})}]
	\item $J$ is an exceptional cover.
	\item One of the following is satisfied:
	\begin{itemize}
	\item[(HES)] The number of $AB$-paths in $J$ is even and positive. In this case we say $J$ is a \emph{Hamilton exceptional system}, or \emph{HES} for short.
	\item[(MES)] $e_J(A',B')=0$. In this case we say $J$ is a \emph{matching exceptional system}, or \emph{MES} for short. 
\end{itemize}
	\item $J$ contains at most $\sqrt{\epszero} n $ $AB$-paths.
\end{enumerate}
Note that by definition, every $AB$-path in $J$ is maximal. 
So the number of $AB$-paths in $J$ is the number of genuine `connections' between $A$ and $B$
(and thus between $A'$ and $B'$).
If we want to extend $J$ into a Hamilton cycle using only edges induced by $A$ and edges induced by~$B$,
this number clearly has to be even and positive.
Hamilton exceptional systems will always be extended into Hamilton cycles and matching exceptional systems will
always be extended into two disjoint even cycles which together span all vertices (and thus consist of two edge-disjoint perfect matchings).

Since each maximal path in $J$ has endpoints in $A \cup B$ and internal vertices in $V_0$, an exceptional system $J$
naturally induces a matching $J^*_{AB}$ on $A \cup B$.
More precisely, if $P_1, \dots ,P_{\ell'}$ are the non-trivial paths%
   \COMMENT{Daniela: need non-trivial here (don't want to have loops for the isolated vertices in $J$). TO DO: have to change this
in paper~2 (I've already changed it in paper 3)}
in~$J$ and $x_i, y_i$ are the endpoints of $P_i$, then
we define $J^*_{AB} := \{x_iy_i : i  \le \ell'\}$. Thus $e_{J^*_{AB}}(A,B)$ is equal to the number of $AB$-paths in $J$.
In particular, if $J$ is a matching exceptional system,%
\COMMENT{The def of a MES had to change. The previous one was $e_{J^*_{AB}}(A,B)=0$.
But then Prop\ref{prop:ES}(ii) does not hold.}
then $e_{J^*_{AB}}(A,B)=0$.

Let $x_1y_1, \dots, x_{2\ell}y_{2\ell}$ be a fixed enumeration of the edges of $J^*_{AB}[A,B]$ with $x_i \in A$ and $y_i \in B$.
Define 
$$
J_A^* := J^*_{AB}[A] \cup \{x_{2i-1} x_{2i} :1 \le i \le \ell \}  \ \mbox{ and } \ J_B^* := J^*_{AB}[B] \cup \{y_{2i} y_{2i+1} :1 \le i \le \ell \}$$ 
(with indices considered modulo $2\ell$).
Let $J^* := J_A^* + J_B^*$, see Figure~\ref{fig2}. Note that $J^*$ is the union of one matching induced by $A$ and another on $B$,
and $e(J^*) = e(J^*_{AB})$.%
\COMMENT{Deryk: new first part of sentence, ref to figure and additional inequality for later reference.}
Moreover, by (EC2) we have
\begin{equation}
	e(J^*) = e(J^*_{AB}) \le |V_0| + e_J(A',B') \le 2 \sqrt{\eps_0} n. \label{ESeq}
\end{equation}
We will call the edges in $J^*$ \emph{fictive} edges. Note that if $J_1$ and $J_2$ are two edge-disjoint exceptional systems, then $J^*_1$
and $J^*_2$ may not be edge-disjoint. However, we will always view fictive edges as being distinct from each other and from the edges
in other graphs. So in particular, whenever $J_1$ and $J_2$ are two exceptional systems, we will view $J^*_1$
and $J^*_2$ as being edge-disjoint.

We say that a path $P$ is \emph{consistent with $J_A^*$} if $P$ contains $J_A^*$ and (there is an orientation of $P$ which)
visits the vertices $x_1, \dots ,x_{2\ell}$ in this order.%
    \COMMENT{We need to prescribe a vertex rather than an edge ordering as it is important in Proposition~\ref{prop:ES}.}
A path $P$ is \emph{consistent with $J_B^*$} if $P$ contains $J_B^*$ and visits the vertices $y_2, \dots ,y_{2\ell},y_1$ in this order.%
   \COMMENT{Note that the order is $y_2,y_3, \dots ,y_{2\ell}, y_1$ rather than $y_1, \dots ,y_{2\ell}$.}
In a similar way we define when a cycle is consistent with $J_A^*$ or $J_B^*$.

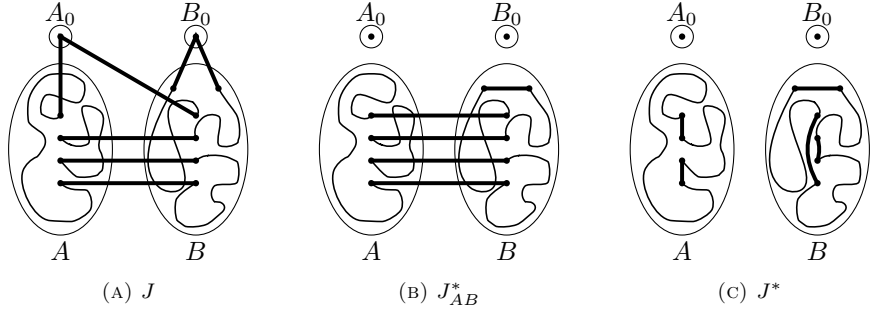
\begin{figure}[tbp]
\centering
\subfloat[$J$]{
\begin{tikzpicture}[scale=0.30]
			\draw  (-3,0) ellipse (2.3  and 3.8 );
			\draw (3,0) ellipse (2.3  and 3.8 );
			\node at (-3,-4.5) {$A$};
			\node at (3,-4.5) {$B$}; 			
			\draw (-3,5) circle(0.5);
			\draw (3,5) circle (0.5);
			\node at (-3,6) {$A_0$};
			\node at (3,6) {$B_0$};
			\fill (-3,5) circle (4pt);
			\fill (3,5) circle (4pt);
			\begin{scope}[start chain]
			\foreach \i in {1.5,0.5,...,-1.5}
			\fill (-3,\i) circle (4pt);
			\end{scope}
			\begin{scope}[start chain]
			\foreach \i in {1.5,0.5,...,-1.5}
			\fill (3,\i) circle (4pt);
			\end{scope}
			\fill (2,2.7) circle (4pt);
			\fill (4,2.7) circle (4pt);

			\begin{scope}[line width=1.5pt]
			\draw (-3,1.5)--(-3,5)--(3,1.5);
			\draw (2,2.7)--(3,5)--(4,2.7);
				\begin{scope}[start chain]
				\foreach \i in {0.5,-0.5,-1.5}
				\draw (-3,\i)--(3,\i);
				\end{scope}
			\end{scope}
			\begin{scope}[line width=0.6pt]
			\draw[rounded corners]  (-3,1.5) to [out=235, in=-70] (-4,2) to [out=50, in=-130] (-1.8,2.5) to [out=110, in=-10] (-3,3.4) to [out=-170, in=80] (-4.5,1.7) to [out=-80, in=145] (-3.5,0.5) to [out=-120, in=20] (-4.5,-0.5) 
 to [out=-110, in=175]  (-3,-3.2) to [out=0, in= -110] (-1.5,-2.2) to [out=130, in=-45] (-3,-1.5);		
 			\draw[rounded corners]  (-3,0.5) to [out=-45, in=150] (-2.2,0) to [out=30, in=-170] (-1.5,2) to [out=-50, in= 120] (-1,-0.5) to [out=-110, in=20] (-1.8,-1.3) to [out=170, in= -50] (-3,-0.5);
			 \draw[rounded corners]  (3,-0.5) to [out=40, in=-170] (4,-0.2) to [out=-20, in= 110] (5,-1) to [out=-110, in= 50] (3.5,-2) to [out=-45, in= 160] (4.5, -2.5) to [out=-140, in= 20] (3,-3.5) to [out=170, in= -70] (2,-2.7) to [out=75, in= -150] (3,-1.5);
			\draw[rounded corners] (2,2.7) to [out=-120, in= 140]  (1.75,-2) to [out=20, in= -155]  (2.5,2.3) to [out=-30, in= 30] (3,1.5);
			\draw[rounded corners] (4,2.7)  to [out=-60, in= 110]  (5,1)  to [out=-100, in= 40]  (4.5,0) to [out=160, in=-30] (3.6,1.7) to [out=-135, in=110]  (3,0.5);
			\end{scope}
\end{tikzpicture}
}
\qquad
\subfloat[$J_{AB}^*$]{
\begin{tikzpicture}[scale=0.30]
			\draw  (-3,0) ellipse (2.3  and 3.8 );
			\draw (3,0) ellipse (2.3  and 3.8 );
			\node at (-3,-4.5) {$A$};
			\node at (3,-4.5) {$B$}; 			
			\draw (-3,5) circle(0.5);
			\draw (3,5) circle (0.5);
			\node at (-3,6) {$A_0$};
			\node at (3,6) {$B_0$};
			\fill (-3,5) circle (4pt);
			\fill (3,5) circle (4pt);
			\begin{scope}[start chain]
			\foreach \i in {1.5,0.5,...,-1.5}
			\fill (-3,\i) circle (4pt);
			\end{scope}
			\begin{scope}[start chain]
			\foreach \i in {1.5,0.5,...,-1.5}
			\fill (3,\i) circle (4pt);
			\end{scope}
			\fill (2,2.7) circle (4pt);
			\fill (4,2.7) circle (4pt);

			\begin{scope}[line width=1.5pt]
			\draw (-3,1.5)--(3,1.5);
			\draw (2,2.7)--(4,2.7);
				\begin{scope}[start chain]
				\foreach \i in {0.5,-0.5,-1.5}
				\draw (-3,\i)--(3,\i);
				\end{scope}
			\end{scope}

			\begin{scope}[line width=0.6pt]
			\draw[rounded corners]  (-3,1.5) to [out=235, in=-70] (-4,2) to [out=50, in=-130] (-1.8,2.5) to [out=110, in=-10] (-3,3.4) to [out=-170, in=80] (-4.5,1.7) to [out=-80, in=145] (-3.5,0.5) to [out=-120, in=20] (-4.5,-0.5) 
 to [out=-110, in=175]  (-3,-3.2) to [out=0, in= -110] (-1.5,-2.2) to [out=130, in=-45] (-3,-1.5);		
 			\draw[rounded corners]  (-3,0.5) to [out=-45, in=150] (-2.2,0) to [out=30, in=-170] (-1.5,2) to [out=-50, in= 120] (-1,-0.5) to [out=-110, in=20] (-1.8,-1.3) to [out=170, in= -50] (-3,-0.5);
			 \draw[rounded corners]  (3,-0.5) to [out=40, in=-170] (4,-0.2) to [out=-20, in= 110] (5,-1) to [out=-110, in= 50] (3.5,-2) to [out=-45, in= 160] (4.5, -2.5) to [out=-140, in= 20] (3,-3.5) to [out=170, in= -70] (2,-2.7) to [out=75, in= -150] (3,-1.5);
			\draw[rounded corners] (2,2.7) to [out=-120, in= 140]  (1.75,-2) to [out=20, in= -155]  (2.5,2.3) to [out=-30, in= 30] (3,1.5);
			\draw[rounded corners] (4,2.7)  to [out=-60, in= 110]  (5,1)  to [out=-100, in= 40]  (4.5,0) to [out=160, in=-30] (3.6,1.7) to [out=-135, in=110]  (3,0.5);
				\end{scope}
\end{tikzpicture}
}
\qquad
\subfloat[$J^*$]{
\begin{tikzpicture}[scale=0.30]
			\draw  (-3,0) ellipse (2.3  and 3.8 );
			\draw (3,0) ellipse (2.3  and 3.8 );
			\node at (-3,-4.5) {$A$};
			\node at (3,-4.5) {$B$}; 			
			\draw (-3,5) circle(0.5);
			\draw (3,5) circle (0.5);
			\node at (-3,6) {$A_0$};
			\node at (3,6) {$B_0$};
			\fill (-3,5) circle (4pt);
			\fill (3,5) circle (4pt);
			\begin{scope}[start chain]
			\foreach \i in {1.5,0.5,...,-1.5}
			\fill (-3,\i) circle (4pt);
			\end{scope}
			\begin{scope}[start chain]
			\foreach \i in {1.5,0.5,...,-1.5}
			\fill (3,\i) circle (4pt);
			\end{scope}
			\fill (2,2.7) circle (4pt);
			\fill (4,2.7) circle (4pt);

			\begin{scope}[line width=1.5pt]
			\draw (-3,1.5)--(-3,0.5);
			\draw (-3,-1.5)--(-3,-0.5);
			\draw (2,2.7)--(4,2.7);
			\draw (3,0.5)to[out=-75, in=75](3,-0.5);
			\draw (3,1.5)to[out=-120, in=120](3,-1.5);
			\end{scope}

			\begin{scope}[line width=0.6pt]		
			\draw[rounded corners]  (-3,1.5) to [out=235, in=-70] (-4,2) to [out=50, in=-130] (-1.8,2.5) to [out=110, in=-10] (-3,3.4) to [out=-170, in=80] (-4.5,1.7) to [out=-80, in=145] (-3.5,0.5) to [out=-120, in=20] (-4.5,-0.5) 
 to [out=-110, in=175]  (-3,-3.2) to [out=0, in= -110] (-1.5,-2.2) to [out=130, in=-45] (-3,-1.5);		
 			\draw[rounded corners]  (-3,0.5) to [out=-45, in=150] (-2.2,0) to [out=30, in=-170] (-1.5,2) to [out=-50, in= 120] (-1,-0.5) to [out=-110, in=20] (-1.8,-1.3) to [out=170, in= -50] (-3,-0.5);
			 \draw[rounded corners]  (3,-0.5) to [out=40, in=-170] (4,-0.2) to [out=-20, in= 110] (5,-1) to [out=-110, in= 50] (3.5,-2) to [out=-45, in= 160] (4.5, -2.5) to [out=-140, in= 20] (3,-3.5) to [out=170, in= -70] (2,-2.7) to [out=75, in= -150] (3,-1.5);
			\draw[rounded corners] (2,2.7) to [out=-120, in= 140]  (1.75,-2) to [out=20, in= -155]  (2.5,2.3) to [out=-30, in= 30] (3,1.5);
			\draw[rounded corners] (4,2.7)  to [out=-60, in= 110]  (5,1)  to [out=-100, in= 40]  (4.5,0) to [out=160, in=-30] (3.6,1.7) to [out=-135, in=110]  (3,0.5);
			\end{scope}
\end{tikzpicture}
}
\caption{The thick lines illustrate the edges of $J$, $J_{AB}^*$ and $J^*$ respectively.}
\label{fig2}
\end{figure}

The next result shows that if $J$ is a Hamilton exceptional system and $C_A, C_B$ are two Hamilton cycles on
$A$ and $B$ respectively which are consistent with $J^*_A$ and $J^*_B$, then the graph obtained from $C_A+ C_B$ by replacing $J^*=J^*_A + J^*_B$
with $J$ is a Hamilton cycle on $V$ which contains~$J$, see Figure~\ref{fig2}.
When choosing our Hamilton cycles, this property will enable us ignore all the vertices in $V_0$ and to consider
the (almost complete) graphs induced by $A$ and by $B$ instead. 
Similarly, if $J$ is a matching exceptional system and both $|A'|$ and $|B'|$ are even,
then the graph obtained from $C_A+ C_B$ by replacing $J^*$ with $J$ is the edge-disjoint union of two perfect matchings on~$V$.

\begin{prop} \label{prop:ES}
Suppose that $A,A_0,B,B_0$ forms a partition of a vertex set $V$.
Let $J$ be an exceptional system. Let $C_A$ and $C_B$ be two cycles such that 
\begin{itemize}
	\item $C_A$ is a Hamilton cycle on $A$ that is consistent with $J_A^*$;
	\item $C_B$ is a Hamilton cycle on $B$ that is consistent with $J_B^*$.
\end{itemize}
Then the following assertions hold. 
\begin{itemize}
	\item[\rm (i)] If $J$ is a Hamilton exceptional system, then $C_A+C_B - J^* +J$ is a Hamilton cycle on $V$.
	\item[\rm (ii)] If $J$ is a  matching exceptional system, then $C_A+C_B - J^* +J$ is the union of a Hamilton cycle on $A'$ and a Hamilton cycle on $B'$.
In particular, if both $|A'|$ and $|B'|$ are even, then $C_A+C_B - J^* +J$ is the union of two edge-disjoint perfect matchings on $V$.
\end{itemize}
\end{prop}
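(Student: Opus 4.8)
The plan is to work with $H := C_A + C_B - J^* + J$ directly: first check it is a well-defined $2$-regular spanning graph, and then determine its number of components by decomposing $C_A - J_A^*$, $C_B - J_B^*$ and $J$ into paths and seeing how they glue together at the vertices $x_1,\dots,x_{2\ell},y_1,\dots,y_{2\ell}$.

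For the setup, note that since $e_J(A)=e_J(B)=0$ every edge of $J$ joins two distinct classes among $A,A_0,B,B_0$, while every edge of $J^*=J_A^*+J_B^*$ lies inside $A$ or inside $B$; as $C_A$ is consistent with $J_A^*$ we have $J_A^*\subseteq C_A$, similarly $J_B^*\subseteq C_B$, so $J^*\subseteq C_A+C_B$ and $J$ is edge-disjoint from $C_A+C_B-J^*$. Hence $H$ is a well-defined simple graph with $V(H)=A\cup B\cup V(J)=V$. A short degree check shows $H$ is $2$-regular: a vertex of $V_0$ has degree $2$ in $J$ and lies in none of $C_A,C_B,J^*$; and a vertex $v\in A$ lies in $J_A^*$ exactly when it is an endvertex of a non-trivial path of $J$ (necessarily with both endvertices in $A$, or with one in $A$ and one in $B$), i.e.\ exactly when $d_J(v)=1$, so $d_{J_A^*}(v)=d_J(v)$ and $d_H(v)=2-d_J(v)+d_J(v)=2$; symmetrically for $v\in B$. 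So $H$ is a vertex-disjoint union of cycles covering $V$, and everything reduces to counting its components.

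For the gluing step, decompose $J$ into its non-trivial paths: the $AB$-paths, which by the definition of $J^*_{AB}$ are paths $Q_1,\dots,Q_{2\ell}$ with $Q_j$ joining $x_j$ to $y_j$; the paths with both endvertices in $A$, whose endvertex pairs form $J^*_{AB}[A]$; and the paths with both endvertices in $B$, whose endvertex pairs form $J^*_{AB}[B]$ (the other components of $J$ are isolated vertices, and play no role). Consistency of $C_A$ with $J_A^*$ means precisely that after deleting the $\ell$ edges $x_{2i-1}x_{2i}$ the cycle $C_A$ decomposes into arcs $\alpha_1,\dots,\alpha_\ell$ partitioning $A$, where $\alpha_i$ runs from $x_{2i}$ to $x_{2i+1}$ (indices mod $2\ell$, $x_{2\ell+1}:=x_1$); deleting in addition the edges of $J^*_{AB}[A]$ (each of which lies within one arc) and re-inserting the corresponding paths of $J$ with both endvertices in $A$ turns $\alpha_i$ into a path $\tilde\alpha_i$ from $x_{2i}$ to $x_{2i+1}$, and $\bigcup_i V(\tilde\alpha_i)$ equals $A$ together with all $V_0$-vertices lying on paths of $J$ with both endvertices in $A$. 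The same argument for $C_B$, now using that $J_B^*$ pairs $y_{2i}y_{2i+1}$ cyclically, produces paths $\tilde\gamma_1,\dots,\tilde\gamma_\ell$ with $\tilde\gamma_i$ from $y_{2i+1}$ to $y_{2i+2}$ (indices mod $2\ell$). Thus $H$ is exactly the union of the internally disjoint paths $\tilde\alpha_1,\dots,\tilde\alpha_\ell,\tilde\gamma_1,\dots,\tilde\gamma_\ell,Q_1,\dots,Q_{2\ell}$, meeting only at the $x_j,y_j$, and one traces
\[
x_{2i-1}\;\xrightarrow{\,Q_{2i-1}\,}\;y_{2i-1}\;\xrightarrow{\,\tilde\gamma_{i-1}\,}\;y_{2i}\;\xrightarrow{\,Q_{2i}\,}\;x_{2i}\;\xrightarrow{\,\tilde\alpha_i\,}\;x_{2i+1}
\]
for $i=1,\dots,\ell$ (with $\tilde\gamma_0:=\tilde\gamma_\ell$), which for $\ell\ge 1$ uses each of these $4\ell$ paths exactly once and closes up at $x_{2\ell+1}=x_1$: so $H$ is a single cycle, proving (i), since $J$ being a Hamilton exceptional system forces $2\ell\ge 2$.

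For (ii), $J$ being a matching exceptional system gives $e_J(A',B')=0$, which forces $\ell=0$ and, moreover, that every path of $J$ lies inside $A'$ or inside $B'$, so that the $V_0$-vertices on paths of $J$ with both endvertices in $A$ are exactly $A_0$ and those on paths with both endvertices in $B$ are exactly $B_0$. Then there are no $Q_j$, and the construction above shows $H$ is the disjoint union of a Hamilton cycle on $A\cup A_0=A'$ (namely $C_A-J_A^*$ with the paths of $J$ inside $A'$ re-inserted, which re-closes into a single cycle) and a Hamilton cycle on $B\cup B_0=B'$. Finally, if $|A'|$ and $|B'|$ are both even, each of these two cycles splits into two perfect matchings, and pairing a perfect matching of $A'$ with one of $B'$ exhibits $H$ as a union of two edge-disjoint perfect matchings of $V$. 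The only real obstacle is the index bookkeeping in the gluing step: one must verify that the deliberate asymmetry between $J_A^*$ (pairing $x_1x_2,x_3x_4,\dots$) and $J_B^*$ (pairing $y_2y_3,\dots,y_{2\ell}y_1$, cyclically shifted) is exactly what links the $2\ell$ paths $Q_j$ and the $2\ell$ arcs into a single cycle rather than several, and that re-inserting the non-crossing paths of $J$ into $C_A$ and $C_B$ genuinely yields paths with the stated endvertices — which is where consistency, rather than merely $J_A^*\subseteq C_A$, is used.
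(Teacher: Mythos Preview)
Your proof is correct and follows essentially the same approach as the paper: both arguments delete the fictive pairing edges $x_{2i-1}x_{2i}$ and $y_{2i}y_{2i+1}$ from $C_A$ and $C_B$, use consistency to identify the resulting arcs and their endpoints, and then trace through the concatenation via the $AB$-paths of $J$ to obtain a single cycle. The only cosmetic difference is that the paper proceeds in two stages (first passing to the intermediate cycle $C^*=C_A+C_B-J^*+J^*_{AB}$ on $A\cup B$, then replacing $J^*_{AB}$ by $J$), whereas you replace the $J^*_{AB}[A]$- and $J^*_{AB}[B]$-edges by the corresponding $J$-paths already when forming your arcs $\tilde\alpha_i,\tilde\gamma_i$; the underlying index bookkeeping is identical.
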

\begin{proof}
Suppose that $J$ is a Hamilton exceptional system.
Let $x_1y_1, \dots, x_{2\ell}y_{2\ell}$ be an enumeration of the edges of $J^*_{AB}[A,B]$ with $x_i \in A$ and $y_i \in B$ and such that
$J_A^* = J^*_{AB}[A] \cup \{x_{2i-1} x_{2i} :1 \le i \le \ell \}$ and $J_B^* = J^*_{AB}[B] \cup \{y_{2i} y_{2i+1} :1 \le i \le \ell \}$.
Let $P_1^A, \dots ,P_\ell^A$ be the paths in $C_A - \{x_{2i-1} x_{2i} : 1 \le i \le \ell \}$.
Since $C_A$ is consistent with $J_A^*$, we may assume that $P_i^A$ is a path from $x_{2i-2}$ to $x_{2i-1}$ for all $i \le \ell$.
Similarly, let $P_1^B, \dots ,P_\ell^B$ be the paths in $C_B - \{y_{2i} y_{2i+1} : 1 \le i \le \ell \}$.
Again, we may assume that $P_i^B$ is a path from $y_{2i-1}$ to $y_{2i}$ for all $i \le \ell$.
Define $C^*$ to be the $2$-regular graph on $A \cup B$ obtained from concatenating $P^A_1,x_1y_1,P^B_1,y_2x_2,P^A_2,x_3y_3, \dots ,P^B_\ell$ and $y_{2\ell}x_{2\ell}$.
Together with (HES), the construction implies that $C^*$ is a Hamilton cycle on $A\cup B$ and $C^*=C_A+C_B - J^*+J^*_{AB}$.
Thus $C:=C^* - J^*_{AB}+J$ is a Hamilton cycle on~$V$. Since $C=C_A+C_B - J^* +J$, (i) holds.

The proof of (ii) is similar to that of (i). Indeed, the previous argument shows that $C^*$ is the union of a Hamilton cycle on $A$ and a Hamilton cycle on $B$. (MES) now implies that $C$ is the union of a Hamilton cycle on $A'$ and one on $B'$.
\end{proof}

In general, we construct an exceptional system by first choosing an exceptional system candidate (defined below) and then extending it to an
exceptional system. More precisely, suppose that $A,A_0,B,B_0$ forms a partition of a vertex set $V$.
Let $V_0:=A_0\cup B_0$. A graph $F$ is called an \emph{exceptional system candidate with parameter
$\eps_0$}, or an \emph{ESC} for short, if $F$ satisfies the following properties:
\begin{itemize}
	\item[(ESC1)] $F$ is a path system with $V_0\subseteq V(F)\subseteq V$ and such that $e_F(A), e_F(B) = 0$.
	\item[(ESC2)] $d_F(v) \le 2$ for all $v \in V_0$ and $d_F(v) = 1$ for all $v \in V(F) \setminus V_0$.
	\item[(ESC3)] $e_F(A',B') \le \sqrt{ \epszero}  n/2$. In particular, $|V(F) \cap A|, |V(F) \cap B| \le 2|V_0| + \sqrt{ \epszero } n/2$.
	\item[(ESC4)] One of the following holds:
\begin{itemize}
\item[(HESC)] Let $b(F)$ be the number of maximal paths in $F$ with one endpoint in $A'$ and the other in $B'$. Then $b(F)$ is even
and $b(F)>0$. In this case we say that $F$ is a \emph{Hamilton exceptional system candidate}, or \emph{HESC} for short.
\item[(MESC)] $e_F(A',B') =0$. In this case, $F$ is called a \emph{matching exceptional system candidate} or \emph{MESC} for short.
\end{itemize}
\end{itemize}
Note that if $d_F(v) = 2$ for all $v \in V_0$, then $F$ is an exceptional system. 
Also, if $F$ is a Hamilton exceptional system candidate with $e(F) =2$, then $F$ consists of two independent $A'B'$-edges.
Moreover, note that (EC2) allows an exceptional cover~$J$
(and so also an exceptional system~$J$) to contain vertices in $A\cup B$ which are isolated in~$J$. However, (ESC2) does not allow for this
in an exceptional system candidate~$F$.%
    \COMMENT{The latter is quite handy since it allows us to write
$|V(F) \cap A|$ for the number of vertices in $A$ which a incident to an edge of $F$. The former ensures that things are the same as in the
bipartite case for (balanced) exceptional systems and so in the approx paper we don't use two different versions...}

Similarly to condition (HES), in (HESC) the parameter $b(F)$ counts the number of `connections' between $A'$ and $B'$.
In order to extend a Hamilton exceptional system candidate into a Hamilton cycle without using any additional $A'B'$-edges, it is clearly necessary that $b(F)$
is positive and even.

The next result shows that we can extend an exceptional system candidate into an exceptional system by adding suitable $A_0A$- and $B_0B$-edges.
In the proof of Lemma~\ref{V_0BES} we will use that if $G$ is a $D$-regular graph with $D\ge n/100$ (say) and  $(G,A,A_0,B,B_0)$ is an $(\epszero,K)$-framework
with $\Delta(G[A',B'])\le D/2$, then conditions (i) and~(ii) below are satisfied.
\begin{lemma} \label{lma:ESextend}
Suppose that $ 0 < 1/n \ll \epszero \ll 1$ and that $n \in \mathbb{N}$.
Let $G$ be a graph on $n$ vertices so that%
\COMMENT{previously, we said that $G$ is a framework instead of the 1stcondition below. But then we have to define a framework in paper4, which
we don't want to do.}
\begin{itemize}
\item[{\rm (i)}] $A,A_0,B,B_0$ forms a partition of $V(G)$ with $|A_0 \cup B_0| \le \eps_0 n$. 
\item[{\rm (ii)}] $d(v,A) \ge \sqrt{\epszero} n$ for all $v \in A_0$ and $d(v,B) \ge \sqrt{\epszero} n$ for all $v \in B_0$.
\end{itemize}
Let $F$ be an exceptional system candidate with parameter $\eps_0$. 
Then there exists an exceptional system $J$ with parameter $\eps_0$ such that $F\subseteq J\subseteq G+ F$
and such that every edge of $J-F$ lies in $G[A_0,A]+G[B_0,B]$.
Moreover, if $F$ is a Hamilton exceptional system candidate, then $J$ is a Hamilton exceptional system.
Otherwise $J$ is a matching exceptional system.
\end{lemma}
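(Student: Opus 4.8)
The plan is to greedily add $A_0A$-edges and $B_0B$-edges to $F$ so as to raise every vertex of $V_0$ to degree exactly $2$, while keeping the total number of $A'B'$-paths small enough for (ES3) and without creating edges inside $A$ or $B$. First I would observe that by (ESC2), each vertex $v\in V_0$ currently has $d_F(v)\in\{0,1,2\}$; let $A_0^-\subseteq A_0$ (resp. $B_0^-\subseteq B_0$) be those vertices of $A_0$ (resp. $B_0$) with $d_F(v)<2$. For each such $v\in A_0^-$ we want to add $2-d_F(v)$ edges from $v$ to $A$; symmetrically for $B_0^-$. Crucially, any edge added in $G[A_0,A]$ or $G[B_0,B]$ is \emph{not} an $A'B'$-edge and has at least one endpoint in $V_0$, so adding such edges does not increase $e_J(A',B')$, does not create edges of $E(A)$ or $E(B)$, and (by (EC3)/(ESC1)) keeps $e_J(A),e_J(B)=0$; it also preserves the parity and positivity of the number of $AB$-paths, which is the key point for distinguishing the HES/MES cases.

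The main technical point is the \emph{degree bookkeeping} that lets the greedy choice go through: when we come to add edges at a vertex $v\in A_0^-$, we must avoid (a) the at most $\sqrt{\eps_0}n/2 + 2|V_0|$ vertices of $A$ already touched by $F$ that have $d_F=1$ and cannot take another edge, and (b) the $O(|V_0|)$ vertices of $A$ used so far in this extension process, and (c) we must not pick a neighbour of $v$ already joined to $v$ in $F$. Since $|V_0|\le\eps_0 n$ and $0<1/n\ll\eps_0\ll1$, the total number of forbidden vertices is at most, say, $3\sqrt{\eps_0}n$, while by hypothesis (ii) we have $d(v,A)\ge\sqrt{\eps_0}n\gg 3\sqrt{\eps_0}n$ — wait, that is the wrong direction, so in fact one should run the greedy argument more carefully: process the $v\in V_0$ one at a time, and at each step note that $v$ has $\ge\sqrt{\eps_0}n$ neighbours in the relevant side, of which we have forbidden only $O(|V_0|)\le\eps_0 n = o(\sqrt{\eps_0}n)$, leaving many admissible choices. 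Each $v$ needs at most two edges and there are at most $|V_0|\le\eps_0 n$ vertices to fix, so the process terminates having added at most $2|V_0|\le 2\eps_0 n$ edges.

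It then remains to check that the resulting graph $J:=F\cup(\text{added edges})$ is an exceptional system with parameter $\eps_0$. By construction $V_0\subseteq V(J)\subseteq V$, $d_J(v)=2$ for all $v\in V_0$, $d_J(v)\le 1$ for $v\notin V_0$ (vertices of $A$ or $B$ receive at most one added edge and, if they already had an $F$-edge, received none), and $e_J(A)=e_J(B)=0$, giving (EC1)–(EC3), i.e. (ES1). Since every maximal path of $J$ still has endpoints in $A\cup B$ and internal vertices in $V_0$, and since we added no $A'B'$-edge, the number of $AB$-paths of $J$ equals $b(F)$ in the HESC case (even and positive, giving (HES)) and $e_J(A',B')=e_F(A',B')=0$ in the MESC case (giving (MES)); so (ES2) holds. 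Finally, the number of $AB$-paths of $J$ is at most $e_J(A',B')=e_F(A',B')\le\sqrt{\eps_0}n/2\le\sqrt{\eps_0}n$ by (ESC3), which is (ES3). Hence $J$ is a Hamilton exceptional system if $F$ is an HESC and a matching exceptional system if $F$ is an MESC, and $F\subseteq J\subseteq G+F$ with $J-F\subseteq G[A_0,A]+G[B_0,B]$ as required. The only real obstacle is phrasing the greedy degree count so the error terms genuinely beat $\sqrt{\eps_0}n$; once the hierarchy $1/n\ll\eps_0\ll1$ is used, this is routine.
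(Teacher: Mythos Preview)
Your approach is exactly the paper's: greedily attach $2-d_F(v)$ fresh $A_0A$-edges (resp.\ $B_0B$-edges) at each $v\in V_0$, then verify (ES1)--(ES3) and that the HESC/MESC type is preserved. The verification in your last paragraph is fine.

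The one place that needs fixing is precisely the count you flagged. In your ``careful'' rerun you drop constraint (a) and assert that only $O(|V_0|)$ vertices of $A$ are forbidden; that is wrong, since you must still avoid all of $V(F)\cap A$, and this set can have size up to $\sqrt{\eps_0}n/2 + 2|V_0|$ by (ESC3). The right inequality (and this is what the paper writes) is simply
\[
|V(F)\cap A| + 2|V_0| \;\le\; \big(2|V_0| + \sqrt{\eps_0}\,n/2\big) + 2|V_0| \;\le\; \sqrt{\eps_0}\,n/2 + 4\eps_0 n \;<\; \sqrt{\eps_0}\,n \;\le\; d_G(v,A),
\]
using $\eps_0\ll 1$ so that $4\eps_0 < \sqrt{\eps_0}/2$. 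Here $2|V_0|$ bounds the number of new endpoints in $A$ created during the greedy process (at most two per vertex of $A_0$). So your first attempt was not ``the wrong direction'' --- you just overcounted with $3\sqrt{\eps_0}n$ when the true bound is below $\sqrt{\eps_0}n$.
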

\begin{proof}
For each vertex $v \in A_0$, we select $2-d_{F}(v)$ edges $uv$ in $G$ with $u \in A \setminus V(F)$.
Since $ d_G(v,A) \ge \sqrt{\epszero} n \ge |V(F) \cap A| + 2|V_0|$ by (ESC3), these edges can be chosen such that they have no common endpoint in $A$.
Similarly, for each vertex $v \in B_0$, we select $2-d_{F}(v)$ edges $uv$ in $G$ with $u \in B \setminus V(F)$.
Again, these edges are chosen such that they have no common endpoint in $B$. Let $J$ be the graph obtained from $F$ by adding all these edges. 
Note that $J$ is an exceptional cover such that every edge of $J-F$ lies in $G[A_0,A]+G[B_0,B]$.
Furthermore, the number of $AB$-paths in $J$ is at most $e_F(A',B') \le \sqrt{\epszero} n/2$.

Suppose $F$ is a Hamilton exceptional system candidate with parameter $\eps_0$.
Our construction of $J$ implies that the number of $AB$-paths in $J$  equals $b(F)$. So (HES) follows from (HESC).
Now suppose $F$ is a matching exceptional system candidate. Then (MES) is satisfied since $e_J(A',B')=e_F(A',B')=0$ by (MESC). This proves the lemma.
\end{proof}

Let $K,m\in\mathbb{N}$ and $\eps_0>0$.
A \emph{$(K,m,\eps_0)$-partition $\mathcal{P}$} of a set $V$ of vertices is a partition of $V$ into sets $A_0,A_1,\dots,A_K$
and $B_0,B_1,\dots,B_K$ such that $|A_i|=|B_i|=m$ for all $i\ge 1$ and $|A_0\cup B_0|\le \eps_0 |V|$.
The sets $A_1,\dots,A_K$ and $B_1,\dots,B_K$ are called \emph{clusters} of $\mathcal{P}$
and $A_0$, $B_0$ are called \emph{exceptional sets}. We often write $V_0$ for $A_0\cup B_0$ and think of the
vertices in $V_0$ as `exceptional vertices'. Unless stated otherwise, whenever $\mathcal{P}$ is a $(K,m,\eps_0)$-partition,
we will denote the clusters by $A_1,\dots,A_K$ and $B_1,\dots,B_K$ and the exceptional sets by $A_0$ and $B_0$.
We will also write $A:=A_1\cup\dots\cup A_K$, $B:=B_1\cup\dots\cup B_K$, $A':=A_0\cup A_1\cup\dots\cup A_K$
and $B':=B_0\cup B_1\cup\dots\cup B_K$.

Given a $(K,m, \epszero)$-partition $\mathcal{P}$ and $1\le i,i' \le K$, we say that $J$ is an \emph{$ (i,i')$-localized Hamilton exceptional system}
(abbreviated as \emph{$(i,i')$-HES}) if $J$ is a Hamilton exceptional system and $V(J)\subseteq V_0 \cup A_{i} \cup B_{i'}$.
In a similar way, we define
\begin{itemize}
\item \emph{$ (i,i')$-localized matching exceptional systems} (\emph{$(i,i')$-MES}),
\item \emph{$ (i,i')$-localized exceptional systems} (\emph{$(i,i')$-ES}),
\item \emph{$ (i,i')$-localized Hamilton exceptional system candidates} (\emph{$(i,i')$-HESC}),
\item \emph{$ (i,i')$-localized matching exceptional system candidates} (\emph{$(i,i')$-MESC}),
\item \emph{$ (i,i')$-localized exceptional system candidates} (\emph{$(i,i')$-ESC}). 
\end{itemize}
To make clear with which partition we are working, we sometimes also say that
$J$ is an $ (i,i')$-localized Hamilton exceptional system with respect to $\cP$ etc.


\section{Schemes and Exceptional Schemes} \label{sec:schemes}

It will often be convenient to consider the `exceptional' and `non-exceptional' part of a graph $G$ separately.
For this, we introduce a `scheme' (which corresponds to the non-exceptional part and also incorporates a refined partition of~$G$) 
and an `exceptional scheme' (which corresponds to the exceptional part and also incorporates a refined partition of~$G$).

Given a graph $G$ and a partition $\mathcal P$ of a vertex set $V$, we call $(G, \mathcal P)$ a
\emph{$(K,m,\eps _0, \break \eps )$-scheme} if the following properties hold:
\begin{enumerate}[label={(Sch{\arabic*})}]
	\item  $\mathcal P$ is a $(K,m,\eps _0)$-partition of $V$.
	\item $V(G)=A\cup B$ and $e_G(A,B)=0$.
	\item For all $1\le i \le K$ and  all $v \in A$ we have $d(v,A_i) \ge (1- \eps) m$. Similarly, for all $1\le i \le K$ and  all $v \in B$
we have $d(v,B_i) \ge (1- \eps) m$.%
	\COMMENT{Replaced superregularity with the degree condition. This implies that $G[A_i,A_{i'}]$ and $G[B_i,B_{i'}]$ are
$(\sqrt{\eps}, 1)$-superregular for all $i, i' \le K$ such that $i \ne i'$.}
\end{enumerate}

The next proposition shows that if $(G, \mathcal{P})$ is a scheme and $G'$ is obtained from $G$ by removing a small number of edges
at each vertex, then $(G', \mathcal P)$ is also a scheme with slightly worse parameters. Its proof is immediate from
the definition of a scheme.
 
\begin{prop} \label{deleteBS}
Suppose that $0 <1/m \ll\eps, \eps' \ll 1$ and that $K,m\in \mathbb{N}$.
Let $(G, \mathcal{P})$ be a $(K,  m, \epszero,\eps)$-scheme. Let $G'$ be a spanning subgraph of $G$ such that $\Delta(G- G') \le\eps' m$.
Then $(G', \mathcal P)$ is a $(K, m, \epszero, \eps + \eps')$-scheme.
\end{prop}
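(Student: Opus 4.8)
The plan is to simply check the three defining properties of a $(K,m,\eps_0,\eps+\eps')$-scheme for the pair $(G',\mathcal P)$, since nothing beyond the definitions is needed. Property (Sch1) is immediate: the partition $\mathcal P$ is untouched when we pass from $G$ to $G'$, so it remains a $(K,m,\eps_0)$-partition of $V$. For (Sch2), note that $G'$ is a spanning subgraph of $G$, hence $V(G')=V(G)=A\cup B$; and since $E(G')\subseteq E(G)$ we have $e_{G'}(A,B)\le e_G(A,B)=0$, so $e_{G'}(A,B)=0$.

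The only point requiring the hypothesis $\Delta(G-G')\le\eps' m$ is (Sch3). Fix $1\le i\le K$ and $v\in A$. By (Sch3) for $(G,\mathcal P)$ we have $d_G(v,A_i)\ge(1-\eps)m$, while the bound on $\Delta(G-G')$ gives $d_{G-G'}(v,A_i)\le d_{G-G'}(v)\le\eps' m$. Hence
\[
d_{G'}(v,A_i)\ \ge\ d_G(v,A_i)-d_{G-G'}(v,A_i)\ \ge\ (1-\eps)m-\eps' m\ =\ (1-(\eps+\eps'))m.
\]
The argument for $v\in B$ and the clusters $B_i$ is identical by symmetry. Thus $(G',\mathcal P)$ satisfies (Sch1)--(Sch3) with parameters $(K,m,\eps_0,\eps+\eps')$, as required. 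There is no real obstacle here — the proof is purely a matter of unwinding the definition of a scheme and using subadditivity of degrees, exactly as the remark preceding the statement indicates.
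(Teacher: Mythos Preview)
Your proof is correct and follows exactly the approach the paper intends: the paper simply states that the result is immediate from the definition of a scheme, and your argument spells this out by verifying (Sch1)--(Sch3) directly, with the only nontrivial step being the subtraction of at most $\eps' m$ from each degree in (Sch3).
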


Given a graph $G$ on $n$ vertices and a partition $\mathcal P$ of $V(G)$ we call
$(G, \mathcal{P})$ a \emph{$(K,  m, \epszero,\eps)$-exceptional scheme} if the following properties are satisfied:
\begin{enumerate}[label={(ESch{\arabic*})}]
	\item $\mathcal P$ is a $(K, m, \epszero)$-partition of $V(G)$.
	\item $e(A),e(B) = 0$.
	\item If $v\in A$ then $d(v, B')< \epszero n$ and if $v\in B$ then $d(v, A')< \epszero n$.%
	\COMMENT{Previously had "If $d(v,A'), d(v,B') \ge \epszero n$, then $v \in V_0$". But this doesn't seem to be strong enough.
Also, note that there is no condition on $\Delta(G[A',B'])$ in the definition of exceptional scheme.}
	\item For all $v \in V(G)$ and all $1\le i \le K$ we have
    $d(v, A_i) = ( d(v,A) \pm\eps n ) / K$ and $d(v,B_i) = ( d(v,B) \pm\eps n ) / K$.
	\item For all $1\le i,i' \le K$ we have
	\begin{align*}
	&e(A_0,A_i)  = ( e(A_0,A) \pm \eps \max \{ e(A_0,A) , n \}     )/K ,\\
   &e(B_0,A_i)  = ( e(B_0,A) \pm \eps \max \{ e(B_0,A) , n \}     )/K ,\\
	&e(A_0,B_i)  = ( e(A_0,B) \pm \eps \max \{ e(A_0,B) , n \}     )/K ,\\
	&e(B_0,B_i)  = ( e(B_0,B) \pm \eps \max \{ e(B_0,B) , n \}     )/K ,\\
	&e(A_i,B_{i'})  = ( e(A,B) \pm \eps \max \{ e(A,B) , n \}     )/K^2 .
	\end{align*}
\end{enumerate}

Suppose that $(G,A,A_0,B,B_0)$ is an $(\epszero, K)$-framework.
The next lemma shows that there is a refinement of the vertex partition $A,A_0,B,B_0$ of $V(G)$ into a $(K, m, \epszero)$-partition $\mathcal{P}$
such that $(G[A] +G[B], \mathcal{P})$ is a scheme and $(G - G[A]  - G[B], \mathcal{P})$ is an exceptional scheme.

\begin{lemma} \label{lma:partition}
Suppose that  $0 <  1/n \ll \epszero \ll 1/K \ll 1$, that $ \eps_0 \ll \eps_1\le \eps_2\ll 1$, that $1/n \ll \mu\ll \eps_2$
and that $n,K,m \in \mathbb N$. Let $G$ be a graph on $n$ vertices such that $\delta(G) \ge (1- \mu) n/2$.
Let $(G,A,A_0,B,B_0)$ be an $(\epszero, K)$-framework with $|A| = |B| = Km$.
Then there are partitions $A_1,\dots,A_K$ of $A$ and $B_1,\dots,B_K$ of $B$ which satisfy the following properties:
\begin{itemize}
\item[{\rm (i)}] The partition $\mathcal{P}$ formed by $A_0$, $B_0$ and all these $2K$ clusters is a $(K, m, \epszero)$-partition of $V(G)$.
\item[{\rm (ii)}] $(G[A]+G[B], \mathcal{P})$ is a $(K, m, \epszero,\eps_2 )$-scheme.
\item[{\rm (iii)}] $(G - G[A] - G[B], \mathcal{P})$ is a $(K, m, \epszero,\eps_1)$-exceptional scheme.
\item[{\rm (iv)}] For all $v \in V(G)$ and all $1 \le i \le K$ we have $d_G(v,A_i) = (d_G(v,A) \pm \eps_0 n)/K $ and $d_G(v,B_i) = (d_G(v,B) \pm \eps_0 n)/K $.
\end{itemize}
\end{lemma}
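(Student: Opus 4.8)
The plan is to obtain $\cP$ by two successive applications of Lemma~\ref{lma:partition2}: first with $U:=A$, splitting $A$ into $A_1,\dots,A_K$ with auxiliary parts $R_1,R_2,R_3:=A_0,B_0,B$ (so $r=3$), and then with $U:=B$, splitting $B$ into $B_1,\dots,B_K$ with auxiliary parts $R_1,R_2:=A_0,B_0$ and $R_3,\dots,R_{K+2}:=A_1,\dots,A_K$ (so $r=K+2\le 2K$, using that $K$ is large), where in the second application the $A_i$ are the already-fixed clusters coming from the first. I would invoke Lemma~\ref{lma:partition2} with its internal constants $\eps^{(2)}:=\epszero$, $\eps_1^{(2)}:=\epszero$ and some $\eps_2^{(2)}$ with $\epszero\ll\eps_2^{(2)}\ll\eps_1/K$; this respects the hierarchy of that lemma since $1/n\ll\epszero$. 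The arithmetic hypothesis $Km\ge n/4$ holds because $2Km=n-|A_0|-|B_0|\ge(1-\epszero)n$ by (FR3).

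Next I would verify the two structural hypotheses of Lemma~\ref{lma:partition2}. For the condition on $G[U]$: if $v\in A$ then, using $\delta(G)\ge(1-\mu)n/2$ and (FR4), $d_G(v,A)=d_G(v)-d_G(v,A_0)-d_G(v,B')\ge(1-\mu)n/2-2\epszero n\ge\epszero n=\eps^{(2)}n$, so $\delta(G[A])\ge\eps^{(2)}n$, and symmetrically $\delta(G[B])\ge\eps^{(2)}n$. For the condition on the $R_j$ in the second application: every $R_j$ is contained in $A'$ or equals $B_0$, and (FR4) gives $d_G(u,A')<\epszero n=\eps^{(2)}n$ for all $u\in B=U$ while $d_G(u,B_0)\le|B_0|\le\epszero n$, so the first alternative of the hypothesis holds for each $R_j$; the first application is symmetric. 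Hence Lemma~\ref{lma:partition2} applies in both cases and produces equipartitions into sets of size $m$.

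It then remains to read off (i)--(iv) from the conclusions of Lemma~\ref{lma:partition2}. Parts~(i), (Sch1) and (ESch1) are immediate from $|A_i|=|B_i|=m$ together with (FR3). Part~(iv) is exactly part~(ii) of Lemma~\ref{lma:partition2} in the two applications, with error $\eps_1^{(2)}n/K=\epszero n/K$. For the scheme $(G[A]+G[B],\cP)$, condition (Sch2) holds since $G[A]+G[B]$ has vertex set $A\cup B$ and no $A$--$B$ edges, and (Sch3) follows from Lemma~\ref{lma:partition2}(ii): for $v\in A$ we get $d_G(v,A_i)\ge(d_G(v,A)-\epszero n)/K\ge((1-\mu)n/2-3\epszero n)/K\ge(1-\eps_2)m$, using $m\le n/(2K)$ and $\mu,\epszero\ll\eps_2$ (and symmetrically in $B$). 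For the exceptional scheme $(G-G[A]-G[B],\cP)$: (ESch2) is clear; (ESch3) is inherited from (FR4) since deleting $G[A]$ and $G[B]$ removes no edge from $A$ to $B'$ or from $B$ to $A'$; (ESch4) follows from Lemma~\ref{lma:partition2}(ii), noting that in $G-G[A]-G[B]$ the relevant degrees either agree with those in $G$ or vanish on both sides of the asserted equality. Finally, in (ESch5) the four bounds involving $A_0$ or $B_0$ are direct instances of Lemma~\ref{lma:partition2}(v) with the appropriate $R_j$ (the deleted edges affecting none of these counts, with error $\eps_2^{(2)}\le\eps_1$), while the bound on $e(A_i,B_{i'})$ is obtained by substituting the first application's estimate $e_G(A_i,B)=(e_G(A,B)\pm\eps_2^{(2)}\max\{n,e_G(A,B)\})/K$ into the second application's estimate $e_G(B_{i'},A_i)=(e_G(B,A_i)\pm\eps_2^{(2)}\max\{n,e_G(B,A_i)\})/K$; a short computation gives total error at most $4\eps_2^{(2)}\max\{n,e_G(A,B)\}/K\le\eps_1\max\{n,e_G(A,B)\}/K^2$, as required.

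The whole argument is bookkeeping on top of Lemma~\ref{lma:partition2}, so I do not expect a genuine obstacle. The only points needing care are translating the framework axioms (FR1)--(FR4) and the minimum-degree bound into the hypotheses of that lemma, and — the one real subtlety — the two-stage estimate for $e(A_i,B_{i'})$: since the first application has already spread its error over all of $A_1,\dots,A_K$ before the second application refines $B$, the internal parameter $\eps_2^{(2)}$ must be chosen a factor of order $K$ smaller than the target accuracy $\eps_1$ of the exceptional scheme.
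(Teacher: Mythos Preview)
Your proposal is correct and follows essentially the same approach as the paper: two applications of Lemma~\ref{lma:partition2}, first with $U=A$ and $R_1,R_2,R_3=A_0,B_0,B$, then with $U=B$ and $R_1,\dots,R_{K+2}=B_0,A_0,A_1,\dots,A_K$, using the same parameter choices and the same two-stage substitution for $e(A_i,B_{i'})$. The paper introduces an intermediate constant $\eps'_1$ with $\eps_0\ll\eps'_1\ll\eps_1,1/K$ playing the role of your $\eps_2^{(2)}$, and otherwise the bookkeeping is identical.
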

\begin{proof}
Define a new constant $\eps'_1$  such that $\eps_0 \ll  \eps'_1 \ll \eps_1,1/K$.%
   \COMMENT{Before, we also had: "Let $A'_0$ be the set of  vertices $v\in A_0$ for which $d(v,B) \ge \epszero n$ and set $A''_0 := A_0 \setminus A'_0$.
Similarly let $B'_0$ be the set of  vertices $v\in B_0$ for which $d(v,A) \ge \epszero n$ and set $B''_0 := B_0 \setminus B'_0$."
But I don't think that this partition is necessary.
Moreover, before, we have $\eps'_1 = \eps_0$ and $\eps_0 \ll  \eps''_1 \ll \eps_1,1/K$. I have found the definition of $\eps''_1$ redundant.
So we just define $\eps_0 \ll  \eps'_1 \ll \eps_1,1/K$.
}
In order to find the required partitions $A_1,\dots,A_K$ of $A$ and $B_1,\dots,B_K$ of $B$
we will apply Lemma~\ref{lma:partition2} twice, as follows. 

In our first application of Lemma~\ref{lma:partition2} we let $F:=G$, $U:=A$ and let $A_0,B_0,B$ play the roles of $R_1, R_2,R_3$.
Note that $\delta(G[A]) \ge  \delta(G)-|A_0|-\eps_0 n\ge \eps_0 n$ (with room to spare) by (FR3), (FR4) and that
$d(a,R_j)\le |R_j|\le \eps_0 n$ for all $a\in A$ and $j=1,2$ by (FR3).
Moreover, (FR4) implies that $d(a,R_3)\le d(a,B') \le \eps_0 n$ for all $a\in A$.
Thus we can apply Lemma~\ref{lma:partition2} with $ \epszero , \eps_0$ and $ \eps_1'$ playing the roles of $\eps, \eps _1$ and $\eps _2$
to obtain a partition of $A$ into $K$ clusters $A_1,\dots,A_K$, each of size~$m$.
Then by Lemma~\ref{lma:partition2}(ii) for all $v \in V(G)$ and all $1 \le i \le K$ we have 
\begin{equation}\label{eq:dvAi}
d_G(v,A_i) = (d_G(v,A) \pm \eps_0 n)/K.
\end{equation}
Moreover, Lemma~\ref{lma:partition2}(v) implies that the first two equalities in~(ESch5) hold 
with respect to $\eps ' _1$ (for $G$ and thus also for $G - G[A] - G[B]$).
Furthermore,
\begin{equation}\label{eq:eAiB}
e_G(A_i, B)=(e_G(A, B) \pm \eps'_1 \max\{n, e_G(A, B)\})/K.
\end{equation}
For the second application of Lemma~\ref{lma:partition2} we let $F:=G$, $U:=B$ and let
$B_0,A_0,A_1,$ $\dots,A_K$  play the roles of $R_1, \dots ,R_{K+2}$.
As before, $\delta(G[B]) \ge \eps_0 n$ by (FR3), (FR4) and $d(b,R_j)\le |R_j|\le \eps_0 n$ for all $b\in B$ and $j=1,2$ by (FR3).
Moreover, (FR4) implies that $d(b,R_j)\le d(b,A')\le \eps_0 n$ for all $b\in B$ and all $j=3,\dots,K+2$.
Thus we can apply Lemma~\ref{lma:partition2} with $ \epszero , \eps_0$ and $ \eps_1'$ playing the roles of $\eps, \eps _1$ and $\eps _2$
to obtain a partition of $B$ into $K$ clusters $B_1,\dots,B_K$, each of size~$m$.
Similarly as before one can show that for all $v \in V(G)$ and all $1 \le i \le K$ we have
\begin{equation}\label{eq:dvBi}
d_G(v,B_i) = (d_G(v,B) \pm \eps_0 n)/K,
\end{equation}
and that the third and the fourth equalities in~(ESch5) hold with respect to $\eps ' _1$
(for $G$ and thus also for $G - G[A] - G[B]$).
Moreover, Lemma~\ref{lma:partition2}(v) implies that for all $1 \le i' \le K$ we have
\begin{eqnarray*}
e_G(A_i, B_{i'}) & = & (e_G(A_i, B) \pm \eps'_1 \max\{n, e_G(A_i, B)\})/K\\
& \stackrel{\eqref{eq:eAiB}}{=} & \frac{e_G(A, B) \pm \eps'_1 \max\{n, e_G(A, B)\} \pm K \eps'_1 \max\{n, e_G(A_i, B)\}}{K^2}\\
& = & (e_G(A, B) \pm \eps_1 \max\{n, e_G(A, B)\})/K^2,
\end{eqnarray*}
i.e.~the last equality in~(ESch5) holds too.%
   \COMMENT{Daniela: replaced $e_G(A, B)$ by $ \max\{n, e_G(A, B)\}$ in the last displayed expression}
Let $\mathcal{P}$ be the partition formed by $A_0,A_1,\dots,A_K$ and $B_0,B_1,\dots,B_K$.
Then (i) holds.

Let us now verify~(ii). Clearly $(G[A]+G[B], \mathcal{P})$ satisfies (Sch1) and~(Sch2). In order to check~(Sch3), let
$G_1:=G[A]+G[B]$ and note that for all $v \in A$ and all $1 \le i \le K$ we have 
\begin{align*}
d_{G_1}(v,A_i) & = d_{G}(v,A_i)\stackrel{\eqref{eq:dvAi}}{\ge} (d_G(v,A) -\eps_0 n)/K 
\stackrel{\rm (FR4)}{\ge} (\delta(G)- |A_0|-2\eps_0 n)/K\\
& \stackrel{\rm (FR3)}{\ge} ((1- \mu ) n/2-3\eps_0 n)/K\ge (1-\eps_2)m.
\end{align*}
Similarly one can use \eqref{eq:dvBi} to show that $d_{G_1}(v,B_i)\ge (1-\eps_2)m$ for all $v \in B$ and all $1 \le i \le K$.
This implies (Sch3) and thus~(ii).

Note that (iv) follows from \eqref{eq:dvAi} and \eqref{eq:dvBi}. Thus it remains to check~(iii).
Clearly $(G - G[A] - G[B], \mathcal{P})$ satisfies (ESch1), (ESch2) and we have already verified~(ESch5).
(ESch3) follows from~(FR4) and (ESch4) follows from \eqref{eq:dvAi} and \eqref{eq:dvBi}.
\end{proof}

\section{Proof of Theorem~$\text{\ref{thm:NWminclique}}$} \label{sec:NWminclique}

An important tool in the proof of Theorem~\ref{thm:NWminclique} is Lemma~\ref{almostthm},
which guarantees an `approximate' Hamilton decomposition of a graph $G$, provided that $G$ is close to the union of 
two disjoint copies of $K_{n/2}$. This yields the required number of Hamilton cycles for Theorem~\ref{thm:NWminclique}.
As an `input', Lemma~\ref{almostthm} requires an appropriate number of localized Hamilton exceptional systems.

To find these, we proceed as follows: the next lemma (Lemma~\ref{lma:BESnash}) guarantees many edge-disjoint
Hamilton exceptional systems in a given framework. 
We will apply it to `localized subgraphs' (obtained from Lemma~\ref{lma:randomslice}) of the original graph
to ensure that the exceptional systems guaranteed by Lemma~\ref{lma:BESnash} are also localized.
These can then be used as the required input for Lemma~\ref{almostthm}.

\begin{lemma}\label{lma:BESnash}
Suppose that $ 0 < 1/n \ll \epszero \ll \eps \ll  \alpha \ll 1$ and that $n, \alpha n \in \mathbb{N}$.
Let $G$ be a graph on $n$ vertices. Suppose that $(G,A,A_0,B,B_0)$ is an $(\epszero, K)$-framework
which satisfies the following conditions:
\begin{itemize}
\item[\rm (a)] $e_G(A',B') \ge 2 (\alpha + \eps) n $.
\item[\rm (b)] $e_{G- v}(A',B') \ge  \alpha n  $ for all $v \in A_0 \cup B_0$.
\item[\rm (c)] $d(v) \ge 2(\alpha + \eps) n $ for all $v \in A_0 \cup B_0$.
\item[\rm (d)] $d(v,A') \ge d(v,B') - \eps n$ for all $v \in A_0$ and $d(v,B') \ge d(v,A') - \eps n $ for all $v \in B_0$.
\end{itemize}
Then there exist $\alpha n $ edge-disjoint Hamilton exceptional systems with parameter $\eps_0$ in~$G$.
\end{lemma}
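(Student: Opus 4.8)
The plan is to construct the $\alpha n$ Hamilton exceptional systems greedily, one at a time and pairwise edge-disjoint; the point is that since fewer than $\alpha n$ of them have been built at any stage, conditions (a)--(d) always leave enough room to build the next one. Throughout, write $V_0:=A_0\cup B_0$, so $|V_0|\le\epszero n$ by (FR3). I would begin by unpacking (c) and (d): for $v\in A_0$, adding $d(v,A')\ge d(v,B')-\eps n$ to $d(v,A')+d(v,B')=d(v)\ge 2(\alpha+\eps)n$ gives $d(v,A')\ge\alpha n+\eps n/2$, hence $d(v,A)\ge d(v,A')-|A_0|\ge\alpha n$; symmetrically $d(v,B)\ge\alpha n$ for $v\in B_0$, and for every $v\in V_0$ one gets $d(v,A)+d(v,B)\ge d(v)-|V_0|\ge 2\alpha n$. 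In particular the hypotheses of Lemma~\ref{lma:ESextend} hold with room to spare, so it suffices to build $\alpha n$ edge-disjoint Hamilton exceptional system candidates --- reserving, at each exceptional vertex, enough unused $A_0A$- and $B_0B$-edges for the subsequent extensions --- or, what amounts to the same and is more transparent to describe, the full systems $J_1,\dots,J_{\alpha n}$ directly, as follows.

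Each $J_i$ is built from a fixed template. For every $v\in V_0$, add two fresh edges forming a length-two path through $v$ with both other endpoints in clusters: either $a\text{--}v\text{--}a'$ with $a,a'\in A$ (``type $AA$'') or $b\text{--}v\text{--}b'$ with $b,b'\in B$ (``type $BB$'') when $v\in A_0$ or $v\in B_0$ respectively, or else $a\text{--}v\text{--}b$ with $a\in A$, $b\in B$ (``type $AB$''). If the resulting number of $AB$-type coverages is odd or zero, adjust it to an even positive number by adding one or two further vertex-disjoint $AB$-connections --- direct $A$--$B$ edges, or paths through one or two vertices of $V_0$ --- which exist by (a). Since every edge of $J_i$ is then incident to $V_0$, we get $e_{J_i}(A)=e_{J_i}(B)=0$ for free; every $v\in V_0$ has degree exactly $2$ and every other vertex degree at most $1$; and $J_i$ is a path system whose $AB$-paths are exactly the components with an odd number of $A'B'$-edges, namely the $AB$-type coverages together with the added $AB$-connections. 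Thus the number of $AB$-paths is even and positive, and is at most $|V_0|+2\le\sqrt{\epszero}n$, so the defining properties (EC1)--(EC3) and (ES2)--(ES3) of a Hamilton exceptional system with parameter $\epszero$ will hold once the edges have been chosen edge-disjointly.

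The heart of the proof is to carry this out for all $\alpha n$ systems at once. For each $v\in V_0$ one must split its $2\alpha n$ incident edges (two per system) between $A$ and $B$ with at most $d(v,A)$ going into $A$ and at most $d(v,B)$ into $B$; this is possible because $d(v,A)+d(v,B)\ge 2\alpha n$, and the split can be spread over the systems so that $v$ uses at most two such edges in each. The $A'B'$-edges consumed by the coverages --- the $B$-edges at $A_0$-vertices and the $A$-edges at $B_0$-vertices --- lie in pairwise disjoint subsets of $E_G(A',B')$, so their total is automatically at most $e_G(A',B')$, and the few extra $AB$-connections are absorbed by the surplus of at least $2\eps n$ that (a) provides. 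At the step producing $J_i$ (with $i-1<\alpha n$), at most $2(i-1)<2\alpha n$ edges have been used at any exceptional vertex, so by (c) at least $2\eps n$ remain on whichever side is needed; likewise at most $2(i-1)$ $A'B'$-edges of $G$ have been used, so at least $2\eps n$ remain, and if those do not contain two vertex-disjoint edges then they form a star about some vertex $w$, which by (FR4) must lie in $V_0$ and carry at least $2\eps n$ edges to the opposite side --- a degenerate configuration one resolves by letting the coverage of $w$ itself provide the $AB$-connections for $J_i$. It is exactly condition (b), which forbids any one vertex from carrying too large a share of the $A'B'$-edges, that keeps these $AB$-connections available throughout the process.

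Finally one checks that each $J_i$ so obtained satisfies (EC1)--(EC3) and (ES2)--(ES3) --- immediate from the template --- and that the $J_i$ are pairwise edge-disjoint, which holds by construction. I expect the main obstacle to be precisely the bookkeeping described above: the exceptional vertices must be treated according to their degree profiles (those with few neighbours on the opposite side are covered entirely on their own side, the others must distribute their edges across both sides), this must be reconciled with the per-system requirement that the number of $AB$-paths be positive and even, and one must verify at each step that the required vertex-disjoint $A'B'$-edges are still present; all four hypotheses (a)--(d) enter here in an essential way.
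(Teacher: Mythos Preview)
Your overall strategy --- greedy construction with degree bookkeeping at the exceptional vertices --- matches the paper's in spirit, but you have reversed the order of two key steps, and that reversal is exactly where the argument breaks.

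The paper first secures $\alpha n$ edge-disjoint matchings of size~$2$ in $G[A',B']$, using (a) together with either Proposition~\ref{prop:matchingdecomposition} (when $\Delta(G[A',B'])$ is small) or, when some vertex $v$ has $d_{G[A',B']}(v)\ge(\alpha+\eps/2)n$, condition~(b) to find $\alpha n$ edges in $G[A',B']-v$ and pair each with a distinct edge at $v$. Only \emph{then} does it extend each $F_s:=M_s$ to cover $V_0$, processing one $v\in V_0$ at a time and adding edges so that $b(F_s)$ never changes: if $d_{F_s}(v)=1$ one adds a single edge to the same side, and if $d_{F_s}(v)=0$ one adds two edges \emph{both to the same side}. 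This guarantees $b(J_s)=2$ throughout without any parity repair at the end.

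Your template does the opposite: cover $V_0$ first, then patch up the $AB$-path count. Two things go wrong. First, the claim that at step $i$ ``at most $2(i-1)$ $A'B'$-edges have been used'' is simply false --- each prior $J_{s}$ may have consumed up to $|V_0|+2$ such edges through its type-$AB$ coverages, not two. Second, and more seriously, once every $v\in V_0$ has degree $2$ in $J_i$, the \emph{only} remaining source of $AB$-connections is $G[A,B]$, but nothing in (a)--(d) bounds $e_G(A,B)$ from below: it is perfectly consistent with the hypotheses that every edge of $G[A',B']$ meets $V_0$ (condition~(b) only forbids a \emph{single} vertex from carrying them all). Your degenerate-star remedy --- reroute through the apex $w\in V_0$ --- fails because $w$ already has degree $2$ in $J_i$ at that point. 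What you would actually need to show is that the type-$AA$/type-$BB$/type-$AB$ labels can be assigned across all $\alpha n$ systems so that each system gets an even positive number of type-$AB$ coverages \emph{and} no $v\in V_0$ is assigned type-$AB$ more than $d_{G[A',B']}(v)$ times; this may well be true, but you have not argued it, and the paper's order of operations sidesteps the issue entirely.
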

\begin{proof}
First we will find $\alpha n$ edge-disjoint matchings of size~2 in $G[A',B']$.
If $\Delta(G[A',B']) \le (\alpha+\eps/2) n$, then by (a) and Proposition~\ref{prop:matchingdecomposition} we can find such matchings.
So suppose that $\Delta(G[A',B']) \ge (\alpha+\eps/2) n$ and let $v$ be a vertex such that $d_{G[A',B']}(v) \ge (\alpha+\eps/2) n$. 
Thus $v\in A_0 \cup B_0$ by~(FR4).
By (b) there are $\alpha n$ edges $e_1,\dots,e_{\alpha n}$ in $G[A',B']-v$. Since $d_{G[A',B']}(v) \ge (\alpha+\eps/2) n$, for each
$e_s$ in turn we can find an edge $e'_s$ incident to $v$ in $G[A',B']$ such that $e'_s$ is vertex-disjoint from $e_s$ and such
that the $e'_s$ are distinct for different indices $s\le \alpha n$. Then the matchings consisting of $e_s$ and $e'_s$ are as required. 
Thus in both cases we can find edge-disjoint matchings $M_1, \dots ,M_{\alpha n }$ of size~2 in $G[A',B']$.

Our aim is to extend each $M_s$ into a Hamilton exceptional system $J_s$ such that all these $J_s$ are pairwise edge-disjoint. 
Initially, we set $F_s:=M_s$ for all $s \le \alpha n$. So each $F_s$ is a Hamilton exceptional system candidate.
For each $v \in V_0$ in turn, we are going to assign at most two edges joining $v$ to $A\cup B$ to each of $F_1, \dots ,F_{\alpha n }$ in such a way
that now each $F_s$ is a Hamilton exceptional system candidate with $d_{F_s}(v) = 2$. Thus after we have carried out these
assignments for all $v\in V_0$, every $F_s$ will be a Hamilton exceptional system with parameter~$\eps_0$.

So consider any $v \in V_0$. Without loss of generality we may assume that $v \in A_0$.
Moreover, by relabelling the $F_s$ if necessary, we may assume that there exists an integer $0\le r \le \alpha n $ such that
$d_{F_s}(v) = 1$ for all $s \le r$ and $d_{F_s}(v) = 0$ for $r < s \le \alpha n$.
For each $s\le r$ our aim is to assign some edge $vw_s$ between $v$ and $A$ to $F_s$ such that $w_s \notin V(F_s)$ and such
that the vertices $w_s$ are distinct for different $s\le r$.
To check that such an assignment of edges is possible, note that $|V(F_s) \cap A|, |V(F_s) \cap B| \le 2|V_0|+2\le 3\eps_0 n$.
Together with (c) and (d) this implies that
$$d(v,A) \ge d(v,A') - |A_0| \ge (\alpha + \eps/2 - \epszero) n > r + |V(F_s) \cap A|.$$
Thus for all $s\le r$ we can assign an edge $vw_s$ to $F_s$ as required. 

It remains to assign two edges at $v$ to each of $F_{r+1},\dots,F_{\alpha n}$. We will do this for each $s=r+1,\dots, \alpha n$
in turn and for each such $s$ we will either assign two edges between $v$ and $A$ to $F_s$ or two edges between $v$ and $B$.
(This will ensure that we still have $b(F_s)=2$, where $b(F_s)$ is the number of vertex-disjoint $A'B'$-paths in the path system~$F_s$.)
So suppose that for some $r<s\le \alpha n$ we have already assigned  two edges at $v$ to each of $F_{r+1}, \dots ,F_{s-1}$.
Set $G_s := G - \sum_{s'=1}^{\alpha n} F_{s'}$.%
    \COMMENT{We really want to sum over all $F_{s'}$ here.}
The fact that $v$ has degree at most two in each $F_{s'}$ and (c) together imply that
$d_{G_s}(v) \ge d_{G}(v) - 2 \alpha n \ge 10\epszero n$. So either $d_{G_s}(v,A')\ge 5\eps_0 n$ or $d_{G_s}(v,B')\ge 5\eps_0 n$.
If the former holds then%
\COMMENT{Deryk: had $A'$ instead of $A_0$ here}
$$d_{G_s}(v,A) \ge d_{G_s}(v,A')-|A_0|\ge 4\eps_0 n \ge |V(F_s) \cap A|+2$$
and so we can assign two edges $vw$ and $vw'$ of $G_s$ to $F_s$ such that $w,w' \in A \setminus V(F_s)$.
Similarly if $d_{G_s}(v,B')\ge 5\eps_0 n$ then we can assign two edges $vw$ and $vw'$ in $G_s$ to $F_s$ such that $w,w' \in  B \setminus V(F_s)$.
This shows that to each of $F_{r+1},\dots,F_{\alpha n}$ we can assign two suitable edges at~$v$. 

Let $J_1,\dots,J_{\alpha n}$ be the graphs obtained after carrying out these assignments for all $v\in V_0$.
Then the $J_s$ are pairwise edge-disjoint and it is easy to check that each $J_s$ is a Hamilton exceptional system with parameter~$\eps_0$.
(Note that (ES2) and (ES3) hold since $ b(J_s)=2$ and so the number of $AB$-paths is two.)%
\COMMENT{Deryk: previously had $e_{(J_s)^*_{AB}}(A,B)=b(J_s)=2$, but it seems nicer to check ES2 without the additional notation}
\end{proof}

The next lemma guarantees a decomposition of an exceptional scheme $(G, \mathcal{P})$ into suitable `localized slices' $G(i,i')$ whose edges are induced by
$A_0$, $B_0$ and two clusters of~$\mathcal{P}$.%
   \COMMENT{Daniela: replaced "of a given partition" by "of~$\mathcal{P}$}
We will use it again in~Chapter~\ref{paper4}.

\begin{lemma} \label{lma:randomslice}
Suppose that $0 <  1/n  \ll \epszero \ll  \eps  \ll 1/K \ll 1$ and that $n, K,m\in \mathbb{N}$.
Let $(G, \mathcal{P})$ be a $(K, m, \epszero, \eps )$-exceptional scheme with $|G|=n$ and $e_G(A_0),e_G(B_0)$  $=0$.
Then $G$ can be decomposed into edge-disjoint spanning subgraphs $H(i,i')$ and $H'(i,i')$ of $G$ (for all $i,i' \le K$)
such that the following properties hold, where $G(i,i'):=H(i,i')+H'(i,i')$:
\begin{itemize}
\item[$(a_1)$] Each $H(i,i')$ contains only $A_0A_i$-edges and $B_0B_{i'}$-edges.
\item[$(a_2)$] All edges of $H'(i,i')$ lie in $G[A_0 \cup A_i, B_0 \cup B_{i'}]$.
\item[$(a_3)$] $e ( H'(i,i') )   =  ( e_{G}(A',B')  \pm  4 \eps \max \{ n, e_{G}(A',B') \}) /K^2$.
\item[$(a_4)$] $d_{H'(i,i')}(v )  =  ( d_{G[A',B']}(v)  \pm 2\eps n)/K^2$ for all $v \in V_0$.
\item[$(a_5)$] $d_{G(i,i')}(v )  =  ( d_{G}(v)  \pm 4 \eps n)/K^2$ for all $v \in V_0$.
\end{itemize}
\end{lemma}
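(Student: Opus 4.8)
The plan is to obtain the decomposition by a random partition of the edges of $G$, treating separately the edges incident to $A_0$ that go into $A$, the edges incident to $B_0$ that go into $B$ (these will go into the graphs $H(i,i')$), and the edges of $G[A',B']$ (these will go into the graphs $H'(i,i')$). Since $e_G(A_0)=e_G(B_0)=0$ by hypothesis, every edge of $G$ either lies in $G[A',B']$, or is an $A_0A$-edge, or is a $B_0B$-edge (there are no $A$--$B$ edges by (ESch2), and no edges inside $A_0$ or $B_0$), so this really does cover all of $E(G)$ and the three classes are (almost) disjoint — the only overlap is that an $A_0B_0$-edge lies in $G[A',B']$, so I would simply declare $A_0B_0$-edges to be part of the $H'(i,i')$ family and never part of the $H(i,i')$ family.

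First I would handle the $H(i,i')$ part deterministically using (ESch5): for each vertex $v\in A_0$, distribute its edges to $A$ among the clusters $A_1,\dots,A_K$; for each $v\in B_0$, distribute its edges to $B$ among $B_1,\dots,B_K$. Concretely, for $i\le K$ let $H_A(i)$ be any subgraph of $G[A_0,A_i]$ and $H_B(i')$ any subgraph of $G[B_0,B_{i'}]$, chosen so that $H(i,i'):=H_A(i)\cup H_B(i')$ and the $H_A(i)$ partition $E(G[A_0,A])$, the $H_B(i')$ partition $E(G[B_0,B])$. This immediately gives $(a_1)$. Then for the $H'(i,i')$ part I would assign each edge $e\in E(G[A',B'])$ independently and uniformly at random to one of the $K^2$ pairs $(i,i')$, and let $H'(i,i')$ consist of the edges assigned to $(i,i')$ together with those $A_0B_0$-edges allotted to it. Here I need to be slightly careful: an $A_0A_j$-edge or an $A_jB_0$-edge etc. is \emph{not} an edge of $G[A',B']$ that can go to an arbitrary pair — an edge with one endpoint in a cluster $A_j$ must be placed in $H'(j,i')$ for some $i'$, since $(a_2)$ requires all edges of $H'(i,i')$ to lie in $G[A_0\cup A_i,B_0\cup B_{i'}]$. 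So the randomisation is: an edge of $G[A',B']$ with endpoints in $A_j$ and $B_{j'}$ (with $j,j'\ge 1$) is forced into $H'(j,j')$; an edge from $A_j$ ($j\ge1$) to $B_0$ is placed in $H'(j,i')$ with $i'$ chosen uniformly at random; an edge from $A_0$ to $B_{j'}$ ($j'\ge1$) is placed in $H'(i,j')$ with $i$ uniform; an $A_0B_0$-edge is placed in $H'(i,i')$ with $(i,i')$ uniform. This choice makes $(a_2)$ automatic, and $(a_1)$, $(a_2)$ together with the construction of $G(i,i')=H(i,i')+H'(i,i')$ give that the $G(i,i')$ are edge-disjoint and cover $G$.

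It remains to verify the concentration statements $(a_3)$--$(a_5)$, which hold with high probability. For $(a_3)$: $e(H'(i,i'))$ is a sum of independent indicator variables plus a fixed quantity (the contribution of cluster-to-cluster edges $e_G(A_i,B_{i'})$, controlled by the last line of (ESch5)), and by (ESch5) the non-forced part has expectation within the claimed error of $e_G(A',B')/K^2$; a Chernoff bound (Proposition~\ref{chernoff}), applied with a failure probability beaten by a union bound over the $K^2$ pairs, gives the bound with room to spare, using $\eps\ll 1/K$. For $(a_4)$ and $(a_5)$: fix $v\in V_0$, say $v\in A_0$; then $d_{H'(i,i')}(v)$ counts the edges at $v$ in $G[A',B']$ landing in column $i$ (any $i'$) — more precisely it is $\sum_{j'\ge1} d_G(v,B_{j'})\mathbf{1}[\text{that edge sent to }H'(i,\cdot)]$ plus a binomial contribution from the $A_0B_0$-edges at $v$ — wait, I should phrase it as: the $A_0A$-edges at $v$ are irrelevant to $H'$, the edges from $v$ to $B$ go to a \emph{fixed} $i$ only if $v$'s neighbour is in a cluster... no: $v\in A_0$, so an edge $vw$ with $w\in B_{j'}$ is forced into $H'(j',\cdot)$? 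No — $v\in A_0\subseteq A_0\cup A_i$ for every $i$, so this edge may go to \emph{any} $H'(i,j')$, with $i$ uniform. Hence $d_{H'(i,i')}(v)$ (for $v\in A_0$) is, for each fixed $i$, a sum over the neighbours of $v$ in $B'$ of independent indicators of probability $1/K$ (the choice of first index), restricted further to those with second index $i'$ — so it is binomial-type with mean $d_{G[A',B']}(v)/K^2$, and Proposition~\ref{chernoff} gives $(a_4)$ after a union bound over $v\in V_0$ and $(i,i')$. Finally $(a_5)$ follows by combining $(a_4)$ with the deterministic estimate on $d_{H(i,i')}(v)$: for $v\in A_0$, $d_{H(i,i')}(v)=d_{H_A(i)}(v)=e(A_0,A_i)$-type quantity controlled by (ESch5), i.e. $d_{H(i,i')}(v)=(d_G(v,A)\pm \eps n)/K$... hmm, here I actually want a per-vertex bound, which (ESch5) does not directly give — but (ESch4) does: $d_G(v,A_i)=(d_G(v,A)\pm\eps n)/K$, so I can simply require in the deterministic step that $H_A(i)$ is chosen (greedily, cluster by cluster) so that $d_{H_A(i)}(v)=d_G(v,A_i)$ for every $v\in A_0$, which is possible since $H_A(i):=G[A_0,A_i]$ works. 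Then $d_{G(i,i')}(v)=d_{H_A(i)}(v)+d_{H'(i,i')}(v)$, and adding the two error terms and using $\eps\ll 1/K$ gives the $4\eps n/K^2$ bound in $(a_5)$, with room to spare. Taking a union bound over all $O(K^2\cdot n)$ events and noting each has failure probability $\le n^{-2}$ shows a valid decomposition exists. The main obstacle is purely bookkeeping: making sure the forced placements (edges meeting a cluster) are handled so that $(a_2)$ holds while the expectations still come out right, and that the per-vertex bounds $(a_4)$, $(a_5)$ use (ESch4) rather than only (ESch5).
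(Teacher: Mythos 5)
There is a genuine gap in your construction of the graphs $H(i,i')$, and it breaks both the edge-disjointness requirement of the lemma and property $(a_5)$.

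You set $H(i,i') := H_A(i) \cup H_B(i')$, where the $H_A(i)$ partition $E(G[A_0,A])$ (forcing $H_A(i)=G[A_0,A_i]$) and the $H_B(i')$ partition $E(G[B_0,B])$. But then for a fixed $i$ and two distinct $i',i''$, the graphs $H(i,i')$ and $H(i,i'')$ both contain \emph{all} of $G[A_0,A_i]$, so the family $\{H(i,i')\}$ is not edge-disjoint: each $A_0A_i$-edge appears in $K$ of them. The lemma explicitly asks for a decomposition into edge-disjoint subgraphs, so this already fails. The error surfaces quantitatively in $(a_5)$: for $v\in A_0$ you get $d_{H(i,i')}(v)=d_G(v,A_i)=(d_G(v,A)\pm\eps n)/K$ by (ESch4), which is a factor of $K$ too large to be absorbed into $(d_G(v)\pm 4\eps n)/K^2$, since for $v\in A_0$ (with $e_G(A_0)=0$) one has $d_G(v)=d_G(v,A)+d_{G[A',B']}(v)$ and there is no reason for $d_G(v,A)$ to be small. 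Your sentence ``adding the two error terms'' hides the fact that $d_G(v,A)/K$ itself, not merely an error term, would have to be $O(\eps n/K^2)$.

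The fix is exactly the step you are missing: for each $i$, the edges in $E(A_0,A_i)$ must themselves be split (randomly, into $K$ nearly equal pieces indexed by $i'$) and distributed among $H(i,1),\dots,H(i,K)$, and symmetrically for $E(B_0,B_{i'})$. This is what the paper does. With that extra split each $A_0A_i$-edge lands in exactly one $H(i,i')$, the subgraphs really decompose $G$, and $d_{H(i,i')}(v)\approx d_G(v,A_i)/K\approx d_G(v,A)/K^2$, which is the right order for $(a_5)$. Your treatment of the $H'(i,i')$ part and of $(a_4)$ is essentially fine (though note that the paper gets $(a_3)$ deterministically by partitioning each relevant edge class into \emph{equal-size} pieces, so no Chernoff argument is needed there — your independent-assignment version also works but requires extra concentration), and one small slip: (ESch2) says $e_G(A)=e_G(B)=0$, not that there are no $AB$-edges; $G$ can and typically does contain $A_iB_{i'}$-edges, which are exactly the ``forced'' edges you correctly route into $H'(i,i')$.
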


\begin{proof}
First we decompose $G$ into $K^2$ `random' edge-disjoint spanning subgraphs $G(i,i')$ (one for all $i,i' \le K$)
as follows:
\begin{itemize}
\item Initially set $V(G(i,i')):=V(G)$ and $E(G(i,i')):=\emptyset$ for all $i,i' \le K$.
\item Add all the $A_iB_{i'}$-edges of $G$ to $G(i,i')$.
\item Choose a partition of $E(A_0,B_0)$ into $K^2$ sets $U_{i,i'}$ (one for all $i,i'\le K$) whose sizes are as
equal as possible. Add the edges in $U_{i,i'}$ to $G(i,i')$.
\item For all $i\le K$, choose a random partition of $E(A_0,A_i)$ into $K$ sets $U'_{i'}$ of equal size (one for each $i'\le K$)
and add the edges in $U'_{i'}$ to $G(i,i')$. (If $e(A_0,A_i)$ is not divisible by $K$, first distribute up to $K-1$ edges
arbitrarily among the $U'_{i'}$ to achieve divisibility.) For all $i'\le K$ proceed similarly to distribute each edge in $E(B_0,B_{i'})$
to $G(i,i')$ for some $i\le K$.
\item For all $i'\le K$, choose a random partition of $E(A_0,B_{i'})$ into $K$ sets $U''_{i}$ of equal size (one for each $i\le K$)
and add the edges in $U''_{i}$ to $G(i,i')$. (If $e(A_0,B_{i'})$ is not divisible by $K$, first distribute up to $K-1$ edges
arbitrarily among the $U''_{i}$ to achieve divisibility.) For all $i\le K$ proceed similarly to distribute each edge in $E(B_0,A_{i})$
to $G(i,i')$ for some $i'\le K$.
\end{itemize}
Thus every edge of $G$ is added to precisely one of the subgraphs $G(i,i')$.
Set $H(i,i'): = G(i,i')[A'] + G(i,i')[B']$ and $H'(i,i') :=  G(i,i')[A',B']$. 
So conditions (a$_1$) and (a$_2$) hold. Fix any $i, i' \le K$ and set $H:= H(i,i')$ and $H':=H'(i,i')$.
To verify~(a$_3$), note that
\begin{align*}
&e(H')  =e_{H'}(A_i,B_{i'})+e_{H'}(A_0,B_0)+e_{H'}(A_0,B_{i'})+e_{H'}(B_0,A_i)\\
& = e_{G}(A_i,B_{i'})+e_{G}(A_0,B_0)/K^2+e_{G}(A_0,B_{i'})/K+e_{G}(B_0,A_i)/K\pm 3\\
& = \frac{e_{G}(A,B)+e_{G}(A_0,B_0)+e_{G}(A_0,B)+e_{G}(B_0,A)\pm 3\eps \max \{e_G(A',B'), n \}}{K^2}\pm 3\\
& = \frac{e_{G}(A',B')\pm 4\eps \max \{e_G(A',B'), n \}}{K^2}.
\end{align*}
Here the third equality follows from (ESch5).

To prove (a$_4$), suppose first that $v\in A_0$. If $d_G(v,B_{i'}) \leq  \eps n/K^2$ then
clearly $0 \leq d_{H'(i,i')} (v)  \leq  \eps n/K^2+ |V_0| \leq 2 \eps n/K^2$.
Further by (ESch4) we have
$d_G(v,B) \leq K d_G(v,B_{i'}) +\eps n =\eps n/K +\eps n.$
So $d_G(v,B') \leq 2 \eps n$.
Together this shows that (a$_4$) is satisfied.

So assume that $d_G(v,B_{i'}) \geq  \eps n/K^2$.
 Proposition~\ref{chernoff}  implies that with probability at least $1-{\rm e}^{-\sqrt{n}}$ (with room to spare) we have%
    \COMMENT{Daniela: added (ESch4) as stackrel}
\begin{equation}\label{eq:degBi'}
d_{G(i,i')}(v,B_{i'} )=(d_{G}(v,B_{i'})\pm \eps n/2K)/K \stackrel{{\rm (ESch4)}}{=}( d_G(v,B) \pm 3\eps n/2 ) / K^2.
\end{equation}
Since
\begin{eqnarray*}
d_{H'(i,i')}(v ) & = & d_{G(i,i')}(v,B_{i'} )+d_{G(i,i')}(v,B_0 )= d_{G(i,i')}(v,B_{i'} )\pm \eps_0 n\\
& \stackrel{\eqref{eq:degBi'}}{=} & (d_{G}(v,B' )\pm 2\eps n)/K^2,
\end{eqnarray*}
it follows that $v$ satisfies~(a$_4$). The argument for the case when $v\in B_0$ is similar.
Thus (a$_4$) holds with probability at least  $1-n {\rm e}^{-\sqrt{n}}$.

Similarly as \eqref{eq:degBi'} one%
    \COMMENT{Note that \eqref{eq:degBi'} trivially holds for vertices $v$ with $d_G(v,B_{i'})< 3\eps n/2K^2$.}
can show that with probability at least  $1-n {\rm e}^{-\sqrt{n}}$ we have
$d_{G(i,i')}(v,A_{i} )= ( d_G(v,A) \pm 3\eps n/2 ) / K^2$ for all $v\in A_0$ and
$d_{G(i,i')}(v,B_{i'} ) =( d_G(v,B) \pm 3\eps n/2) / K^2$ for all $v\in B_0$.
Together with the fact that%
   \COMMENT{Daniela: previously had $|V_0|\le \eps_0 n$ instead of $e_G(A_0), e_G(B_0)=0$ here. Both are correct, but perhaps the
first is a little misleading}
$e_G(A_0), e_G(B_0)=0$ and (a$_4$) this now implies~(a$_5$). 
\end{proof}

The next lemma first applies the previous one to construct localized subgraphs $G(i,i')$ and then applies 
Lemma~\ref{lma:BESnash} to find many Hamilton exceptional systems within each of the localized slices $G(i,i')$.
Altogether, this yields many localized Hamilton exceptional systems in~$G$.

\begin{lemma} \label{lma:BESnash2}
Suppose that $0 <  1/n  \ll \epszero \ll  \eps  \ll \phi,  1/K \ll 1$ and that $n, K, m, (1/4 - \phi)n/K^2 \in \mathbb{N}$.%
   \COMMENT{In addition, previously also had that $\phi n \in \mathbb{N}$. But I don't think this is needed and one cannot require
   that both $\phi n, (1/4 - \phi)n/K^2 \in \mathbb{N}$.}
Suppose that $(G,A,A_0,B,B_0)$ is an $(\eps_0,K)$-framework with%
   \COMMENT{Daniela: reworded}
$|G|=n$, $\delta(G) \ge n/2$ and such that 
$d_G(v,A') \ge d_G(v)/2$ for all $v \in A'$ and $d_G(v,B') \ge d_G(v)/2$ for all $v \in B'$.
Suppose that $\mathcal{P}=\{A_0,A_1,\dots,A_K,B_0,B_1,\dots,$ $B_K\}$ is a refinement of the partition $A,A_0,B,B_0$
such that $(G - G[A] - G[B],\mathcal{P})$ is a $(K,m, \epszero, \eps)$-exceptional scheme. 
Then there is a set $\mathcal{J}$ of $(1/4 - \phi) n$ edge-disjoint Hamilton exceptional systems with parameter $\eps_0$ in $G$
such that, for each $i, i' \le K$, $\mathcal{J}$ contains precisely $(1/4 - \phi) n/K^2$ $(i,i')$-HES.
\end{lemma}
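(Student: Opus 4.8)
The plan is to split the `exceptional part' of $G$ into $K^2$ localized slices, and then find the required exceptional systems inside each slice separately, so that localization comes for free. Throughout, $A'=A_0\cup A$, $B'=B_0\cup B$, $V_0=A_0\cup B_0$ as usual.

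First I would note that we may assume $e_G(A_0)=e_G(B_0)=0$: the (few) edges of $G$ inside $A_0$ or inside $B_0$ will never be used, and deleting them from $G-G[A]-G[B]$ leaves a $(K,m,\eps_0,\eps)$-exceptional scheme (such edges do not appear in (ESch3)--(ESch5)) in which $A_0$ and $B_0$ are independent. I would then apply Lemma~\ref{lma:randomslice} to this exceptional scheme to decompose it into edge-disjoint spanning subgraphs $G(i,i')=H(i,i')+H'(i,i')$ $(i,i'\le K)$, where each $G(i,i')$ has all its edges incident to $V_0\cup A_i\cup B_{i'}$ and satisfies the estimates $(a_1)$--$(a_5)$. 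Write $\hat G(i,i')$ for $G(i,i')[V_0\cup A_i\cup B_{i'}]$.

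Now fix $i,i'\le K$. The idea is to apply Lemma~\ref{lma:BESnash} to $\hat G(i,i')$, with $A_i,B_{i'}$ playing the roles of $A,B$, with framework parameter $\eps_0':=2K\eps_0$, with $\eps':=\eps$, and with $\alpha'$ defined by $\alpha'|V(\hat G(i,i'))|=(1/4-\phi)n/K^2$ (so $\alpha'\approx(1/4-\phi)/K$, and the required hierarchy $\eps_0'\ll\eps'\ll\alpha'\ll1$ holds because $\eps\ll\phi,1/K$ and $\eps_0\ll\eps$; also $|V(\hat G(i,i'))|$ and $\alpha'|V(\hat G(i,i'))|$ are integers). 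That $(\hat G(i,i'),A_0,A_i,B_0,B_{i'})$ is an $(\eps_0',1)$-framework is immediate from (FR1)--(FR4) for $G$ together with $(a_2),(a_3),(a_4)$ and the fact that $|V_0|$ and $e(H'(i,i'))$ are small. Conditions (c) and (d) of Lemma~\ref{lma:BESnash} would follow from $\delta(G)\ge n/2$, the hypotheses $d_G(v,A')\ge d_G(v)/2$ and $d_G(v,B')\ge d_G(v)/2$, and $(a_4),(a_5)$; condition (a) would follow from $(a_3)$ together with the bound $e_G(A',B')\ge |B|=Km\ge(1/2-\eps_0)n$, which holds since $\delta(G)\ge n/2$ and $|B'|\le n/2$ force every vertex of $B$ to have a neighbour in $A'$.

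The main obstacle is condition (b): for each $v\in V_0$ one needs $e_{\hat G(i,i')-v}(A_0\cup A_i,B_0\cup B_{i'})\ge(1/4-\phi)n/K^2$, which by $(a_3)$ and $(a_4)$ reduces to showing $e_{G-v}(A',B')\ge(1/4-2\phi)n$ (and in fact one gets $\ge(1/2-3\eps_0)n$). If $|A_0|=|B_0|$ this holds much as (a) does, since deleting a vertex $v\in A_0$ from $A'$ destroys none of the $\ge|A|$ cross-edges emanating from $A$ (and symmetrically for $v\in B_0$, using the $\ge|B|$ cross-edges from $B$). If $|A_0|>|B_0|$ then $B'$ is strictly smaller than $A'$, so every vertex of $B'$ has at least $(|A_0|-|B_0|)/2+1$ neighbours in $A'$, giving $e_G(A',B')\ge(1-2\eps_0)n$; meanwhile the degree hypothesis yields $d_G(v,B')\le d_G(v,A')\le|A'|-1\le(1/2+\eps_0)n$ for $v\in A_0$, so $e_{G-v}(A',B')=e_G(A',B')-d_G(v,B')\ge(1/2-3\eps_0)n$. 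Once (a)--(d) are verified, Lemma~\ref{lma:BESnash} yields $(1/4-\phi)n/K^2$ edge-disjoint Hamilton exceptional systems $J$ in $\hat G(i,i')$; since $V(J)\subseteq V_0\cup A_i\cup B_{i'}$ and each such $J$ produced by Lemma~\ref{lma:BESnash} has exactly two $AB$-paths (so (ES3) holds with the original parameter $\eps_0$), each $J$ is an $(i,i')$-HES with parameter $\eps_0$ with respect to $\mathcal P$. Taking $\mathcal J$ to be the union over all $i,i'\le K$ of these families gives the desired $(1/4-\phi)n$ Hamilton exceptional systems, exactly $(1/4-\phi)n/K^2$ of them being $(i,i')$-HES for each pair $i,i'$; they are pairwise edge-disjoint because the $G(i,i')$ are pairwise edge-disjoint and no edge inside $A$, $B$, $A_0$ or $B_0$ is ever used.
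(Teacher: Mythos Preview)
Your approach is correct and follows the same two-step strategy as the paper: delete the edges inside $A_0$ and $B_0$, apply Lemma~\ref{lma:randomslice} to split $G-G[A]-G[B]-G[A_0]-G[B_0]$ into localized slices $G(i,i')$, and then apply Lemma~\ref{lma:BESnash} separately to each slice. The paper carries out the key estimates (on $e_G(A',B')$ and $e_{G-v}(A',B')$) essentially as you do.

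There is one difference worth noting. You apply Lemma~\ref{lma:BESnash} to the \emph{restricted} graph $\hat G(i,i'):=G(i,i')[V_0\cup A_i\cup B_{i'}]$ with the rescaled framework $(\hat G(i,i'),A_i,A_0,B_{i'},B_0)$ and parameters $\eps_0'=2K\eps_0$, $\alpha'\approx(1/4-\phi)/K$. This works, but it forces you to re-verify (FR1)--(FR4) for the new framework and then to argue separately that the resulting HES (a priori with parameter $\eps_0'$) are in fact $(i,i')$-HES with parameter $\eps_0$ with respect to $\cP$. The paper avoids both steps by applying Lemma~\ref{lma:BESnash} directly to $(G(i,i'),A,A_0,B,B_0)$ on the \emph{full} vertex set $V(G)$, with $\alpha:=(1/4-\phi)/K^2$ and $\sqrt{\eps}$ playing the role of $\eps$: this tuple is automatically an $(\eps_0,K)$-framework (framework conditions are monotone under taking subgraphs), and since by $(a_1)$--$(a_2)$ all edges of $G(i,i')$ already lie in $G[A_0\cup A_i\cup B_0\cup B_{i'}]$, the HES returned are $(i,i')$-HES with parameter~$\eps_0$ for free. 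So your route is a legitimate variant, but the paper's formulation is a little shorter and sidesteps the parameter rescaling.
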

\begin{proof}
Let $\alpha : = (1/4 - \phi)/K^2$ and choose a new constant $\eps'$ such that $\eps  \ll \eps'\ll \phi,  1/K$.
Note that (FR3) implies that $|A'| \ge |B'|$.
If $|B'| < n/2$, then Proposition~\ref{prp:e(A',B')}(i) implies that%
    \COMMENT{ This clearly holds if $|B'| \le n/2-1$. But if $n$ is odd and $|B'|=\lfloor n/2\rfloor$ then $\delta(G) \ge \lceil n/2\rceil$
so we have that $e_{G}(A',B') \ge 2|B'|$ in this case as well.}
$e_{G}(A',B') \ge 2|B'| \ge (1- \epszero )n \ge 3 K^2 \alpha n$ (where the second inequality follows from (FR3)
and there is room to spare in the final inequality).
Since $d_{G[A',B']}(v)\le n/2$ for every vertex $v \in V(G)$, it follows that $e_{G- v}(A',B') \ge (1/2- \epszero)n \ge 3K^2\alpha n/2$.
If $|B'| = n/2$, then $|A'|  = |B'|$ and Proposition~\ref{prp:e(A',B')}(i) implies that $e_{G}(A',B') \ge |B'| = n/2 \ge 2 K^2( \alpha + \eps')n$.
Moreover, $|A'|  = |B'|$ together with the fact that $\delta(G) \ge n/2 $ also implies that $d_{G[A',B']}(v) \ge 1$ for any vertex $v \in V(G)$. 
Hence $e_{G- v}(A',B') \ge n/2 - 1\ge 3K^2\alpha n/2$. Thus regardless of the size of $B'$, we always have
\begin{equation}\label{eqn:nash1}
	e_{G}(A',B') \ge  2 K^2( \alpha + \eps') n
\end{equation}
and
\begin{equation}\label{eqn:nash2}
	e_{G- v}(A',B') \ge 3K^2\alpha n/2 \ge K^2(\alpha + \eps') n \ \ \  \textrm{for any $v \in V(G)$.}
\end{equation}
Set $G^{\diamond} : = G - G[A] - G[B] - G[A_0] - G[B_0]$.
Note that each vertex $v \in V_0$ satisfies
\begin{equation}\label{eq:degGdiamond}
d_{G^{\diamond}}(v) \ge (1/2 - \epszero) n \ge 2K^2( \alpha + \eps') n.
\end{equation}
Moreover, both \eqref{eqn:nash1} and \eqref{eqn:nash2} also hold for $G^{\diamond}$, and since $(G - G[A] - G[B],\mathcal{P})$ is a $(K,m, \epszero, \eps)$-exceptional scheme,
$(G^{\diamond}, \mathcal{P})$ is also a $(K,m, \epszero, \eps)$-exceptional scheme. 
Thus we can apply Lemma~\ref{lma:randomslice} to $G^{\diamond}$ to obtain edge-disjoint spanning subgraphs $H(i,i')$, $H'(i,i')$ of
$G^{\diamond}$ (for all $i,i' \le K$) which satisfy (a$_1$)--(a$_5$) of Lemma~\ref{lma:randomslice}.
Set $G(i,i') : = H(i,i') + H'(i,i')$ for all $i,i' \le K$.
We claim that each $G(i,i')$ satisfies the following properties:
\begin{itemize}
\item[{\rm (i)}] All edges of $G(i,i')$ lie in $G^{\diamond}[A_0 \cup A_i \cup B_0 \cup B_{i'}]$.
\item[{\rm (ii)}] $e_{G(i,i')}(A',B')  \ge  2(\alpha + \sqrt{\eps} ) n $.
\item[{\rm (iii)}] $e_{G(i,i')- v }(A',B')  \ge  \alpha n $ for all $v \in V_0$.
\item[{\rm (iv)}] $d_{G(i,i')}(v )  \ge  2(\alpha + \sqrt{\eps}) n $ for all $v \in V_0$.
\item[{\rm (v)}] $d_{G(i,i')}(v,A' )  \ge  d_{G(i,i')}(v,B' ) - \sqrt{\eps} n$ for all $v \in A_0$ and%
   \COMMENT{We don't get $\eps$ here (but we could get $10\eps$ if we want).}
$d_{G(i,i')}(v,B' )  \ge  d_{G(i,i')}(v,A' ) - \sqrt{\eps} n$ for all $v \in B_0$.
\end{itemize}
Indeed, (i) follows from (a$_1$) and~(a$_2$).
To prove~(ii), note that $e_{G(i,i')}(A',B')=e(H'(i,i'))$.
Now apply (a$_3$) and~\eqref{eqn:nash1}.
For~(iii),%
	\COMMENT{Previsouly, we have `(iii) follows from~\eqref{eqn:nash2} and (a$_3$).', but I don't think that is not enough. This stuff is new.}
note that (a$_4$) and $\Delta(G[A',B']) \le n/2$ imply that for all $v \in V_0$, 
$$d_{G(i,i')[A',B']}(v) = d_{H'(i,i')} (v) \le (d_{G[A',B']}(v)+ 2 \eps n) /K^2 \le (1/2 +2\eps)n/K^2.$$ 
If $e_G(A',B') \ge n$, then (a$_3$) implies that $e_{G(i,i')}(A',B') \ge (1- 4 \eps) n/K^2 \ge \alpha n + d_{G(i,i')[A',B']}(v)$ and so (iii) follows.
If $e_G(A',B') < n$, then for all $v \in V_0$
\begin{align*}
e_{G(i,i')- v }(A',B') 
& = e ( H'(i,i') ) - d_{H'(i,i')}(v) \\ & \stackrel{({\rm a_3}),({\rm a_4})}{\ge} (  e_{G - v}(A',B')   - 6 \eps n  )/K^2 \stackrel{\eqref{eqn:nash2}}{\ge} \alpha n.
\end{align*}
So (iii) follows again.
(iv) follows from (a$_5$) and~\eqref{eq:degGdiamond}.
For~(v), note that (a$_1$) and (a$_2$) imply that 
for $v \in A_0$, 
\begin{align*}
d_{G(i,i')}(v,A' ) & =d_{G(i,i')}(v) -d_{H'(i,i')}(v) \stackrel{({\rm a_4}),({\rm a_5})}{\geq} ( d_{G}(v,A')  - 6\eps n)/K^2 \\
& \ge  (d_{G}(v,B')  - 6\eps n)/K^2 \stackrel{({\rm a_4})}{\geq} d_{H'(i,i')}(v)- 8\eps n= d_{G(i,i')}(v,B' )- 8\eps n.
\end{align*}
The second part of~(v) follows similarly.

Note that each $(G(i,i'),A,A_0,B,B_0)$ is an $(\epszero, K)$-framework since this holds for $(G,A,A_0,B,B_0)$.
Thus for all $i,i'\le K$ we can apply Lemma~\ref{lma:BESnash} (with $\sqrt{\eps}$ playing the role of $\eps$)
to the $(\epszero, K)$-framework $(G(i,i'),A,A_0,B,B_0)$
in order to obtain $\alpha n $ edge-disjoint Hamilton exceptional systems
with parameter $\eps_0$ in $G(i,i')$. 
By (i), we may delete any vertices outside $A_0 \cup A_i \cup B_0 \cup B_{i'}$ from these systems without affecting their edges.
So each of these Hamilton exceptional systems is in fact an $(i,i')$-HES.
The set $\mathcal{J}$ consisting of all these $K^2\alpha n$ Hamilton exceptional systems is as required in the lemma.
\end{proof}

Given the appropriate set $\cJ$ of localized Hamilton exceptional systems, the next lemma
guarantees a set of $|\cJ|$ edge-disjoint Hamilton cycles in a graph $G$ such that each of them contains
one exceptional system from $\cJ$, provided that $G$ is sufficiently close to the union of two disjoint copies of~$K_{n/2}$.
The lemma also allows $\cJ$ to contain matching exceptional systems (each of these will then be extended into a
perfect matching of $G$). Note that with a suitable $\cJ$ and an appropriate choice of parameters we can achieve that the `uncovered'
graph has density $2\rho \pm 2/K \ll 1$, i.e.~we do have an approximate decomposition.
We defer the proof of the lemma until Chapter~\ref{paper3}.

\begin{lemma}\label{almostthm}
Suppose that $0<1/n \ll \eps_0  \ll 1/K \ll \rho  \ll 1$ and $0 \le \mu \ll 1$,
where $n,K \in \mathbb N$ and $K$ is odd.%
	\COMMENT{previously had $1/K \leq \mu$, but this seems clearer. When reading, need to make sure all estimates involving $\mu$ are ok now}
Suppose that $G$ is a graph on $n$ vertices and $\mathcal{P}$ is a $(K, m, \eps _0)$-partition of $V(G)$.
Furthermore, suppose that the following conditions hold:%
	\COMMENT{Previously we had `$(G[A]+G[B],\mathcal{P})$ is a $(K, m, \eps _0, \eps )$-scheme', but this is not neeeded.}
\begin{itemize}
	\item[{\rm (a)}] $d(v,A_i) = (1 - 4 \mu \pm 4 /K) m $ and $d(w,B_i) = (1 - 4 \mu \pm 4 /K) m $ for all
	$v \in A$, $w \in B$ and $1\leq i \leq K$.
	\item[{\rm (b)}] There is a set $\mathcal J$ which consists of at most $(1/4-\mu - \rho)n$ edge-disjoint exceptional systems with parameter $\eps_0$ in~$G$.%
	\COMMENT{By the definition of exceptional system, $J \subseteq G - G[A] - G[B]$ for all $J\in \mathcal{J}$. So we can omit this
condition (which we had previously).}
	\item[{\rm (c)}] $\mathcal J$ has a partition into $K^2$ sets $\mathcal J_{i,i'}$ (one for all $1\le i,i'\le K$) such that each $\mathcal J_{i,i'}$ consists of precisely $|\mathcal J|/{K^2}$ $(i,i')$-ES with respect to~$\cP$.
   \item[{\rm (d)}] If $\mathcal{J}$ contains matching exceptional systems then $|A'|=|B'|$ is even.
\end{itemize}
Then $G$ contains $|\mathcal J|$ edge-disjoint spanning subgraphs $H_1,\dots,H_{|\mathcal J|}$ which satisfy the following
properties:
\begin{itemize}
\item For each $H_s$ there is some $J_s\in \mathcal{J}$ such that $J_s\subseteq H_s$.
\item If $J_s$ is a Hamilton exceptional system, then $H_s$ is a Hamilton cycle of $G$. If
$J_s$ is a matching exceptional system, then $H_s$ is the edge-disjoint union of two perfect matchings in $G$.
\end{itemize}
\end{lemma}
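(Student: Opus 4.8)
The plan is to set up the problem as finding an approximate Hamilton decomposition of two almost-complete graphs $G[A]$ and $G[B]$ (minus the edges used by the exceptional systems), but with the crucial extra requirement that the Hamilton cycles produced be \emph{consistent} with the fictive-edge sets $J_s^*$ associated to the exceptional systems $J_s\in\mathcal J$. Recall from Proposition~\ref{prop:ES} that if $C_A$ is a Hamilton cycle on $A$ consistent with $(J_s)_A^*$ and $C_B$ is a Hamilton cycle on $B$ consistent with $(J_s)_B^*$, then $C_A+C_B-J_s^*+J_s$ is a Hamilton cycle on $V(G)$ (or, in the matching case, the union of two perfect matchings, using condition~(d) that $|A'|=|B'|$ is even). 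So it suffices to produce, for each $s$, such a pair $C_A^s, C_B^s$, with all the $C_A^s$ pairwise edge-disjoint and contained in $G[A]$ (after we delete the edges of the $(J_t)_A^*$-parts that are genuine edges of $G[A]$), and similarly for $B$; and moreover so that the fictive edges are consumed exactly once. Since the $J_s^*$ are viewed as pairwise edge-disjoint by convention (even if some fictive edges coincide as pairs), this bookkeeping is consistent.

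The first step is a reduction to the cluster structure. For each pair $i,i'\le K$ and each $J\in\mathcal J_{i,i'}$, the fictive edge set $J_A^*$ is a matching on $A$ whose endpoints all lie in $A_i$ (this is what ``$(i,i')$-localized'' buys us), and $|J_A^*|\le 2\sqrt{\eps_0}n$ by \eqref{ESeq}. The idea, following the approach sketched in Section~\ref{sec:overI} and developed in Chapter~\ref{paper3}, is to build each Hamilton cycle $C_A^s$ on $A$ by first laying down a short path through the cluster $A_i$ that realizes the consistency order demanded by $(J_s)_A^*$ — i.e.\ visits $x_1,\dots,x_{2\ell}$ in order and contains $J_{AB}^*[A]$ plus the added edges $x_{2j-1}x_{2j}$ — using only a bounded number of edges, all within or incident to $A_i$; and then extending this collection of short paths (one per $s$ assigned to cluster $A_i$) to a full Hamilton cycle on $A$ by weaving through all the other clusters. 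The degree condition~(a), namely $d(v,A_i)=(1-4\mu\pm 4/K)m$, guarantees that $G[A]$ is essentially a ``blown-up'' near-complete graph with superregular cluster pairs (cf.\ the comment after (Sch3) and Fact~\ref{simplefact}), which is precisely the setting in which approximate Hamilton decomposition results (with prescribed local structure) are available; the bound $|\mathcal J|\le(1/4-\mu-\rho)n$ with $1/K\ll\rho$ leaves density $\rho$ to spare, so there is enough room to absorb all the short prescribed paths and still complete the cycles greedily/via an absorbing or Pósa-rotation type argument cluster by cluster. The analogous construction on $B$ uses the second half of~(a).

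The main obstacle — and the reason the proof is deferred to Chapter~\ref{paper3} — is carrying out the extension step \emph{simultaneously and edge-disjointly} for all $\approx n/4$ cycles while respecting the prescribed local orders at each of the $K^2$ cluster pairs, and doing so with quantitative control good enough that the leftover graph has density only $2\rho\pm 2/K$. One has to partition the ``budget'' of edges inside and between clusters among the $|\mathcal J|/K^2$ cycles assigned to each pair $(i,i')$, verify that after reserving the prescribed short paths the residual bipartite graphs between clusters remain superregular with the right densities (here Proposition~\ref{superslice} and the $\eps$-regularity toolkit of Section~\ref{sec:tools} are the workhorses), and then invoke the approximate-decomposition machinery. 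A secondary point to check is that the number $2\ell$ of $AB$-edges in each $J$, hence the length of the prescribed path, is at most $\sqrt{\eps_0}n$ by (ES3), so it is negligible compared to the cluster size $m\approx n/2K$; this is what makes ``localized'' exceptional systems usable here while arbitrary ones would not be. Once the pairs $(C_A^s,C_B^s)$ are in hand, Proposition~\ref{prop:ES} immediately yields the $H_s$ with the stated properties, and edge-disjointness of the $H_s$ follows from edge-disjointness of the $J_s$ (given), of the $C_A^s$ among themselves, of the $C_B^s$ among themselves, and of the fictive parts by convention.
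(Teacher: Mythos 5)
Your reduction via Proposition~\ref{prop:ES} to finding, for each $J_s$, a pair of consistent Hamilton cycles $C_A^s$ on $A$ and $C_B^s$ on $B$ is exactly the paper's starting point, and you correctly note that localization and the $O(\sqrt{\eps_0}n)$ bound on $|J_s^*|$ are what make the prescription of these short pieces feasible. But the mechanism you sketch for actually producing the $\approx n/4$ pairwise edge-disjoint consistent Hamilton cycles on $A$ is where the proposal falls short of an argument, and it elides a concrete obstruction rather than a merely technical one.

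The paper does not ``weave through the clusters greedily / by P\'osa rotation / by absorption.'' It first uses Walecki's theorem (this is where the hypothesis that $K$ is odd is needed, which your sketch never uses) to decompose the complete graph on $\{A_1,\dots,A_K\}$ into $(K-1)/2$ Hamilton cycles $C_{A,1},\dots,C_{A,(K-1)/2}$, splits $G[A]$ accordingly into $(K-1)/2$ edge-disjoint oriented blow-ups $G_{A,j,\mathrm{dir}}$ of these cluster-cycles (Lemma~\ref{sysdecom}), and distributes the matchings $\{(J_s)^*_{A,\mathrm{dir}}\}$ evenly among the $j$'s. Every Hamilton cycle it constructs must, apart from its prescribed piece, ``wind around'' the corresponding $C_{A,j}$. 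This is the structural device that makes a near-perfect decomposition tractable, and nothing of the sort appears in the proposal.

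Once you commit to winding around a fixed cluster-cycle, there is a parity/balance obstruction that your plan of ``lay down the prescribed path inside $A_i$ and extend outwards'' cannot overcome: if $(J_s)^*_A$ is a matching of size $\ell>0$ entirely inside $A_i$, then a directed Hamilton cycle containing it and otherwise winding around $C_{A,j}$ would have only $m-\ell$ edges leaving $A_i$ to its successor cluster, yet all $m$ vertices of that successor must still be entered from $A_i$ --- impossible. The paper's fix is the notion of a \emph{locally balanced path sequence} (Section~\ref{system}) and the \emph{balanced extension} construction of Lemma~\ref{balanceextension}: one augments each $(J_s)^*_{A,\mathrm{dir}}$ with a matching of $\ell$ chords from $A_{i-1}$ to $A_{i+1}$ taken from a reserved sparse regular graph $H_{A,j}$, which restores the in/out counts at every edge of $C_{A,j}$. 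Only then can the path sequence be extended to a $1$-factor winding around $C_{A,j}$ (Lemma~\ref{extend1-factor}) and the cycles of that $1$-factor merged into a single Hamilton cycle consistent with $(J_s)^*_{A,\mathrm{dir}}$ using a second reserved superregular graph and a robust-expander argument (Lemmas~\ref{mergecycles},~\ref{ordercycle},~\ref{mergecycles2}). Your proposal acknowledges the need to ``partition the budget'' and ``keep the residual pairs superregular,'' but without the blown-up-cycle decomposition and without the balancing step the plan does not yield near-$n/4$ edge-disjoint cycles, and without the ordering device of Lemma~\ref{ordercycle} it does not yield cycles \emph{consistent} with the $J_s^*$. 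These are the missing ideas, not just missing details.
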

Matching exceptional systems do no play any role in the current application to prove
Theorem~\ref{thm:NWminclique}, but they will occur when we use Lemma~\ref{almostthm} again in the proof of Theorem~\ref{1factstrong}.

To prove Theorem~\ref{thm:NWminclique}, we first apply Lemma~\ref{lma:BESnash2} to find suitable localized 
Hamilton exceptional systems and then apply Lemma~\ref{almostthm} to transform these into Hamilton cycles.

\removelastskip\penalty55\medskip\noindent{\bf Proof of Theorem~\ref{thm:NWminclique}. }
Choose new constants $\eps_{\rm ex}$, $\eps_0$, $\eps_1$, $\eps_2$, $\phi$ and an odd number $K\in \mathbb{N}$ such that
$$1/n_0\ll \eps_{\rm ex}\ll \eps_0\ll \eps_1\ll \eps_2\ll 1/K\ll \phi\ll \eps.$$
Further, we may assume that $\eps \ll 1$.
Let $n\ge n_0$ and let $G$ be any graph on $n$ vertices such that $\delta(G)\ge n/2$ and such that
$G$ is $\eps_{\rm ex}$-close to two disjoint copies of $K_{n/2}$. By modifying $\phi$ slightly, we may assume that
$(1/4-\phi)n/K^2\in\mathbb{N}$.

Apply Proposition~\ref{prop:framework} to obtain a partition
$A,A_0,B,B_0$ of $V(G)$ such that such that $(G,A,A_0,B,B_0)$ is an $(\epszero, K)$-framework,
$d(v, A') \ge d(v)/2$ for all $v\in A'$ and
$d(v, B') \ge d(v)/2$ for all $v\in B'$. Let $m:=|A|/K=|B|/K$. Apply Lemma~\ref{lma:partition} with $\eps_0$ playing the
role of $\mu$ to obtain partitions $A_1,\dots,A_K$ of $A$ and $B_1,\dots,B_K$ of $B$ which satisfy the following properties,
where $\mathcal{P}=\{A_0,A_1,\dots,A_K,B_0,B_1,\dots,B_K\}$:
\begin{itemize}
\item $(G[A]+G[B], \mathcal{P})$ is a $(K, m, \epszero,\eps_2 )$-scheme.
\item $(G - G[A] - G[B], \mathcal{P})$ is a $(K, m, \epszero,\eps_1)$-exceptional scheme.
\end{itemize}
Apply Lemma~\ref{lma:BESnash2} to obtain a set $\mathcal{J}$ of $(1/4 - \phi) n$ edge-disjoint Hamilton exceptional
systems with parameter $\eps_0$ in $G$ such that, for each $i, i' \le K$, $\mathcal{J}$ contains precisely $(1/4 - \phi) n/K^2$ $(i,i')$-HES.
Finally, our aim is to apply Lemma~\ref{almostthm} with $\mu:=1/K$ and $\rho:=\phi-1/K$.
So let us check that conditions~(a)--(c) of Lemma~\ref{almostthm} hold (note that (d) is not relevant).%
\COMMENT{Deryk: added the bracket}
Clearly (b) and (c) hold.
 To verify (a) note that (Sch3) implies that for all $v\in A$ we have
$d(v,A_i) \ge (1-\eps_2)m\ge (1-1/K)m \ge (1 - 4 \mu - 4 /K) m$. Similarly, for all $w\in B$ we have
$d(w,B_i) \ge (1 - 4 \mu - 4 /K) m$.
So we can apply Lemma~\ref{almostthm} to obtain $|\mathcal J|\ge (1/4-\eps)n$ edge-disjoint Hamilton cycles.
\endproof

\section{Eliminating the Edges inside $A_0$ and $B_0$}\label{sec:V_0}

This and the remaining sections of the chapter are all devoted to the proof of Theorem~\ref{1factstrong}.
Suppose that $G$ is a $D$-regular graph and $(G,A,A_0,B,B_0)$ is an $(\epszero,K)$-framework with $\Delta (G[A',B']) \le D/2$.
The aim of this section is to construct a small number of Hamilton cycles 
(and perfect matchings if appropriate) which together cover all the edges of $G[A_0]$ and $G[B_0]$.
The first step is to construct a small number of exceptional systems containing all the edges of $G[A_0]$ and $G[B_0]$.

\begin{lemma} \label{V_0BES}
Suppose that $ 0 < 1/n \ll \epszero \le  \lambda \ll 1$ and that $n, \lambda n, D, K \in \mathbb{N}$.%
   \COMMENT{Here, $K$ is not important.}
Let $G$ be a $D$-regular graph on $n$ vertices with $D \ge n - 2\lfloor n/4 \rfloor -1$.
Suppose that $(G,A,A_0,B,B_0)$ is an $(\epszero,K)$-framework with $\Delta (G[A',B']) \le D/2$. Let 
\begin{align*}
\ell & := \left\lfloor \frac{ \max \{0 , D - e_G(A',B') \} }2 \right\rfloor \ \ \ \ \text{and} \ \ 
& \phi n & := \begin{cases} 
2 \lambda n +1 & \textrm{if $D$ is odd,}\\
2 \lambda n  & \textrm{if $D$ is even.}
\end{cases}
\end{align*}
Let $w_1$ and $w_2$ be vertices of $G$ such that $d_{G[A',B']}(w_1)\ge d_{G[A',B']}(w_2)\ge d_{G[A',B']}(v)$ for all $v\in V(G)\setminus \{w_1,w_2\}$.
Then there exist $\lambda n+1$ edge-disjoint subgraphs $J_0,J_1, \dots ,J_{\lambda n }$ of $G$ which cover all the edges in $G[A_0]+G[B_0]$
and satisfy the following properties: 
\begin{itemize}
	\item[{\rm (i)}] If $D$ is odd, then $J_0$ is a perfect matching in $G$ with $e_{J_0}(A',B') \le 1$.
	If $D$ is even, then $J_0$ is empty. 
	\item[{\rm (ii)}] $J_s$ is a matching exceptional system with parameter $\eps_0$ for all $1 \le  s \le \min \{ \ell, \lambda n \}$.
	\item[{\rm (iii)}] $J_s$ is a Hamilton exceptional system with parameter $\eps_0$ and such that $e_{J_s}(A',B') = 2$ for all $\ell < s \le \lambda n$. 
	\item[{\rm (iv)}] Let $\mathcal{J} $ be the union of all the $J_s$ and let $H^{\diamond} := G[A',B'] - \mathcal{J}$.
	Then $e_{\mathcal{J}}(A',B') \le \phi n $ and $d_{\mathcal{J}}(v) = \phi n $ for all $v \in V_0$.
	Moreover, $e(H^{\diamond})$ is even.
	\item[{\rm (v)}] $ d_{ H^{\diamond} }(w_1) \le (D-\phi n )/2$.
	Furthermore, if $D = n/2-1$%
   \COMMENT{Note that this means that $n = 0 \pmod4$. Previously this was part of the statement of (v).}
    then $ d_{H^{\diamond} }(w_2) \le (D-\phi n )/2$.
	\item[{\rm (vi)}] If $e_G(A',B')<D$, then $e(H^\diamond)\le D-\phi n$ and $ \Delta( H^{\diamond} ) \le e(H^{\diamond})/2$.%
  \COMMENT{Conditions (v) and (vi) are only needed when $D = (n-1)/2$ or $D = n/2 - 1$. Note that if $\ell>0$ then $e_G(A',B') < D$
and so Proposition~\ref{prp:e(A',B')}(ii) implies that $D= n/2-1$, $n = 0 \pmod4$ and $|A'| = |B'| = n/2$. So
conditions (ii) and (vi) are only relevant in this case.}
\end{itemize}
\end{lemma}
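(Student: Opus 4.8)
The plan is to construct each $J_s$ ($s\ge 1$) by choosing a Hamilton or matching \emph{exceptional system candidate} and completing it to an exceptional system via Lemma~\ref{lma:ESextend}; that lemma applies since its hypothesis~(i) is (FR3) (as $|A_0\cup B_0|\le\eps_0 n$) and, for~(ii), every $v\in A_0$ satisfies $d(v,A)\ge d(v,A')-|A_0|=D-d(v,B')-|A_0|\ge D/2-\eps_0 n\ge\sqrt{\eps_0}n$ using $\Delta(G[A',B'])\le D/2$ and $D\ge n-2\lfloor n/4\rfloor-1$ (symmetrically for $v\in B_0$). The same lower bound $D/2-\eps_0 n$ on degrees into one's own side guarantees that the greedy choices below survive, since we only ever consume $O(\lambda n)=o(D)$ edges at each exceptional vertex. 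I first build $J_0$: if $D$ is even, take $J_0:=\emptyset$; if $D$ is odd then $n$, hence $|A_0|+|B_0|$, is even and $|A'|\equiv|B'|\pmod 2$, and using Proposition~\ref{singlematching} (applicable since $D\ge 2\lceil n/4\rceil-1$) together with a greedy matching of $A_0$ into $A$ and $B_0$ into $B$ followed by completion inside the near-complete sets $A$, $B$, I obtain a perfect matching $J_0$ of $G$ with $e_{J_0}(A',B')=|A'|\bmod 2\in\{0,1\}$ (using at most one $AB$-edge, forced by parity). Since every other $J_s$ contributes an even number of $A'B'$-edges, Proposition~\ref{prp:e(A',B')parity}(i) then yields that $e(H^\diamond)=e_G(A',B')-e_{\mathcal{J}}(A',B')$ is even, which is the last clause of~(iv).

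Working in $G_1:=G-J_0$: as $\Delta(G_1[A_0]+G_1[B_0])\le|V_0|\le\eps_0 n$, Corollary~\ref{basic_matching_dec} decomposes $E(G_1[A_0])\cup E(G_1[B_0])$ into at most $\eps_0 n+1$ matchings $M_1,\dots,M_t$, each using only $A_0A_0$- and $B_0B_0$-edges (so $e_{M_j}(A',B')=0$). Put $h:=\lambda n-\ell$ if $\ell\le\lambda n$ and $h:=0$ otherwise, so $\min\{\ell,\lambda n\}$ systems will be matching exceptional systems and $h$ will be Hamilton exceptional systems; the case $\ell>\lambda n$ (equivalently $e_G(A',B')<D-2\lambda n$, which by Proposition~\ref{prp:e(A',B')}(ii) forces $n\equiv 0\pmod 4$, $D=n/2-1$, $|A'|=|B'|=n/2$) is the sparse case where (v),(vi) are immediate. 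For the $h$ Hamilton exceptional systems I select $2h$ $A'B'$-edges of $G_1$, two per system and vertex-disjoint within each system (so they form two $A'B'$-paths and $b=2$), choosing them so as to use as many edges as possible incident to $w_1$, and — when $D=n/2-1$ — also to $w_2$, at most one per system at each of $w_1,w_2$ (two $A'B'$-edges of a Hamilton exceptional system with exactly two such edges cannot share a $V_0$-endpoint without killing $b$). Enough crossing edges survive away from $\{w_1\},\{w_2\}$ and their endpoints to make these selections: Proposition~\ref{prp:e(A',B')2} when $D\ge\lfloor n/2\rfloor$, and Proposition~\ref{prp:e(A',B')3} when $D=n/2-1$. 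Finally I attach each $M_j$ to a distinct system (a matching exceptional system if one is available, else a Hamilton one, which may also carry $A_0A_0$/$B_0B_0$-edges), pad the remaining candidates with the empty path system, and extend all $\lambda n$ of them edge-disjointly (and disjointly from $J_0$) to exceptional systems $J_1,\dots,J_{\lambda n}$ by iterated use of Lemma~\ref{lma:ESextend}. Then (i)--(iii) hold by construction; $d_{\mathcal{J}}(v)=d_{J_0}(v)+2\lambda n=\phi n$ for $v\in V_0$; and $e_{\mathcal{J}}(A',B')=e_{J_0}(A',B')+2h\le\phi n$, completing~(iv).

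For (v) and (vi): $e(H^\diamond)=e_G(A',B')-e_{J_0}(A',B')-2h$, and $2\ell\le D-e_G(A',B')$ gives $e(H^\diamond)\le D-\phi n$ whenever $e_G(A',B')<D$ (a parity check on $D$, $|A'|$ and $D-e_G(A',B')$ supplies the extra $1$ needed when $D$ is odd), the first half of~(vi). Next, $d_{H^\diamond}(w_1)=d_{G[A',B']}(w_1)-(\#\,\text{edges of }\mathcal{J}\text{ at }w_1)$; since $d_{G[A',B']}(w_1)\le\Delta(G[A',B'])\le\lfloor D/2\rfloor$ and I removed $\min\{d_{G[A',B']}(w_1),h\}$ edges (plus $J_0$'s crossing edge when helpful) at $w_1$, the same bookkeeping yields $d_{H^\diamond}(w_1)\le(D-\phi n)/2$, and likewise for $w_2$ when $D=n/2-1$ (using $d_{G[A',B']}(w_1)\le e_G(A',B')/2$ from Proposition~\ref{prp:e(A',B')3}). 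Lastly $\Delta(H^\diamond)\le e(H^\diamond)/2$: for $x\notin\{w_1,w_2\}$ one has $d_{H^\diamond}(x)\le d_{G[A',B']}(x)\le d_{G[A',B']}(w_2)\le e(H^\diamond)/2$ by Proposition~\ref{prp:e(A',B')3} applied with $X\subseteq\{w_1,w_2\}$ (which is precisely why $w_1,w_2$ are singled out), and for $x\in\{w_1,w_2\}$ it is the bound just established.

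The main obstacle is not finding the exceptional systems — that is routine given Lemma~\ref{lma:ESextend} and the abundance of edges at the exceptional vertices — but satisfying the arithmetic in (i), (iv), (v), (vi) simultaneously: one must choose $e_{J_0}(A',B')\in\{0,1\}$, the split $\min\{\ell,\lambda n\}$ versus $h$, and the exact set of crossing edges absorbed by $\mathcal{J}$ (above all, how many lie at $w_1,w_2$) so that $e(H^\diamond)$ is at once even, at most $D-\phi n$, and satisfies $\Delta(H^\diamond)\le e(H^\diamond)/2$. This is delicate only in the boundary case $D=n/2-1$, $n\equiv 0\pmod 4$, $|A'|=|B'|=n/2$, where the slack is $O(1)$ rather than $\Omega(n)$, and it is exactly there that Propositions~\ref{prp:e(A',B')}(ii) and~\ref{prp:e(A',B')3} are needed.
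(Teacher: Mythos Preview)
Your overall strategy matches the paper's: construct $J_0$ as a perfect matching when $D$ is odd (empty otherwise), decompose $E(G[A_0])\cup E(G[B_0])$ into few matchings $M_s$, attach two well-chosen $A'B'$-edges to each $M_s$ destined to become a Hamilton exceptional system (prioritizing edges at $w_1$, and at $w_2$ when $D=n/2-1$), and complete via Lemma~\ref{lma:ESextend}. Two points, however, do not go through as written.

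The first is the treatment of a possible edge $w_1w_2$. Your $J_0$, which matches $A_0$ into $A$ and $B_0$ into $B$, cannot contain $w_1w_2$ when both lie in $A_0$ (or both in $B_0$); that edge then appears in some $M_s$. But once $w_1w_2\in M_s$, placing $f_s$ at $w_1$ and $f'_s$ at $w_2$ creates a single path $f_s w_1 w_2 f'_s$ with both endpoints on the $B'$ side, giving $b(F_s)=0$ rather than~$2$. You must therefore forgo $w_2$ for that one~$s$, and the count of $A'B'$-edges removed at $w_2$ drops by one. When $D=n/2-1$ there is no slack: using $d_{G[A',B']}(w_2)\le e_G(A',B')/2$ from Proposition~\ref{prp:e(A',B')3} and removing $h-1$ rather than $h$ edges only yields $d_{H^\diamond}(w_2)\le e(H^\diamond)/2+1$, so~(v) can fail. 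The paper avoids this by building $J_0$ with the extra property that $w_1w_2\in J_0$ whenever $w_1w_2\in E(G[A'])\cup E(G[B'])$; then $w_1w_2$ never lies in any~$M_s$, and the exception in their condition~$(\alpha_3)$ is never triggered in the case $D=n/2-1$.

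The second is in your verification of $\Delta(H^\diamond)\le e(H^\diamond)/2$ for $x\notin\{w_1,w_2\}$. The chain $d_{H^\diamond}(x)\le d_{G[A',B']}(x)\le d_{G[A',B']}(w_2)\le e(H^\diamond)/2$ is not justified: Proposition~\ref{prp:e(A',B')3} only gives $d_{G[A',B']}(w_2)\le e_G(A',B')/2$, which can exceed $e(H^\diamond)/2$ (since $e(H^\diamond)=e_G(A',B')-2h$). The paper instead bounds the \emph{third}-highest degree: since $w_1,w_2,x$ each have $G[A',B']$-degree at least $d_{G[A',B']}(x)$, one has $3\,d_{G[A',B']}(x)\le e_G(A',B')+3\le D+3$, so $d_{G[A',B']}(x)\le (D+3)/3<(D-\phi n)/2=e(H^\diamond)/2$.
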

As indicated in Section~\ref{overview},%
	\COMMENT{Deryk: added pointer to Sec 11.}
 the main proof of Theorem~\ref{1factstrong} splits into three cases: (a) the non-critical case with $e_G(A',B') \ge D$,
(b) the critical case with $e_G(A',B') \ge D$ and (c) the case with $e_G(A',B') < D$.
The formal definition of `critical' and a more detailed discussion of the different cases is given in Section~\ref{sec:locES}.

The above lemma will be used in all three cases.
In these different cases, we will need that the Hamilton cycles or perfect matchings produced by the lemma use
appropriate edges between $A'$ and $B'$ (and thus the `leftover' $H^\diamond$ has suitable properties).
In particular, (v) will ensure that we can apply Lemma~\ref{lma:BESdecomcritical} in case (b).
Similarly, (vi) will ensure that we can apply Lemma~\ref{lma:PBESdecom} in case (c).
(ii) and (vi) will only be relevant in case (c).

\removelastskip\penalty55\medskip\noindent{\bf Proof of Lemma~\ref{V_0BES}. }
Set $H := G[A',B']$ and $W:= \{w_1,w_2\}$.
First, we construct $J_0$.
If $D$ is even, then (i) is trivial, so we may assume that $D$ is odd (and so $n$ is even). 
We will construct $J_0$ such that it satisfies (i) as well as the following additional property: 
\begin{itemize}
	\item[(i$'$)] If $ w_1w_2$ is an edge in $G[A']+G[B']$, then $w_1w_2$ lies in $J_0$. Moreover, $e_{J_0}(A',B')=1$ if $|A'|$ is odd
and $e_{J_0}(A',B')=0$ if $|A'|$ is even.
\end{itemize}
Suppose first that $|A'|$ is even (and so $|B'|$ is even as well).
Since our assumptions imply that $\delta(G[A']) \ge \lceil D/2 \rceil \ge 3 \epszero n $, there exists a matching $M'_A$ in $G[A']$ of size at
most $|A_0| + 2$ covering all the vertices of $A_0 \cup (A' \cap W)$.
Moreover, if $w_1w_2$ is an edge in $G[A']$, then we can ensure that $w_1w_2\in M'_A$.
Note that $A'' : = A' \setminus V(M'_A)$ is a subset of $A$ and $|A''|$ is even. 
(FR4) implies that $\delta(G[A'']) \ge D - \epszero n - 2(|A_0|+2) \ge |A''|/2$.
Therefore, there exists a perfect matching $M_A''$ in $G[A'']$ (e.g.~by Dirac's theorem).
Hence, $M_A:= M_A' + M_A''$ is a perfect matching in $G[A']$.
Similarly, there is a perfect matching $M_B$ in $G[B']$ such that if $w_1w_2$ is an edge in $G[B']$, then $w_1w_2$ is in $M_B$.
Set $J_0: = M_A + M_B$.

Next assume that $|A'|$ is odd.
If $D \ge \lfloor n/2\rfloor$, then Proposition~\ref{prp:e(A',B')2} implies that $e(H- W) >0$.%
    \COMMENT{We can apply Proposition~\ref{prp:e(A',B')2} since $D\le (1/2+\eps_0)n$ as
$(G,A,A_0,B,B_0)$ is an $(\epszero,K)$-framework and so $|A'|\ge |B'|\ge D/2$.} 
If $D= n/2-1$, then $n = 0 \pmod4$ and so $|B'|\le n/2-1$ since $|A'|$ is odd.
Together with Proposition~\ref{prp:e(A',B')}(ii) this implies that $e(H) \ge n/2 -1$.
Since in this case we also have that $\Delta(H) \le \lfloor D/2 \rfloor = n/4-1$, it follows that $e(H- W) \ge e(H) - 2 \Delta(H) >0$.
Thus in both cases there exists an edge $ab$ in $H-W$ with $a\in A'$ and $b\in B'$.
Note that both $|A' \setminus \{a\}|$ and $|B' \setminus \{b\}|$ are even.
Moreover, $\delta(G[A' \setminus \{a\}]) \ge \lceil D/2 \rceil -1 \ge 3 \epszero n $ and
$\delta(G[B' \setminus \{b\}]) \ge \lceil D/2 \rceil -1 \ge 3 \epszero n $.
Thus we can argue as in the case when $|A'|$ is even to find perfect matchings $M_A$ and $M_B$ in $G[A' \setminus \{a\}]$ and $G[B' \setminus \{b\}]$ respectively
such that if $ w_1w_2$ is an edge in $G[A']+G[B']$ then $w_1w_2 \in M_A+M_B$. Set $J_0: = M_A + M_B+ab$.

This completes the construction of $J_0$. (If $D$ is even we set $J_0:=\emptyset$.) So (i) and~(i$'$) hold.%
   \COMMENT{Daniela: added (i$'$)}
Let $G' := G - J_0$ and $H' := G'[A',B']$.
Since $|A_0|+|B_0| \le \epszero n \le \lambda n $, Vizing's theorem implies that
we can decompose $G'[A_0] + G'[B_0]$ into $\lambda n$ edge-disjoint (possibly empty) matchings $M_1, \dots ,M_{\lambda n }$.
By relabeling these matchings if necessary, we may assume that if
$w_1w_2\in E_{G'}(A_0)$ or $w_1w_2\in E_{G'}(B_0)$, then $w_1w_2 \in M_1$.

\medskip

\noindent \textbf{Case 1: $e(H)\ge D$.}

\smallskip

\noindent
Note that in this case $\ell =0$ and $e(H') \ge D-1$.
For each $s=1,\dots,\lambda n$ in turn we will extend $M_s$ into a Hamilton exceptional system $J_s$ with
$e_{J_s}(A',B') =2$ and such that $J_s$ and $J_{s'}$ are edge-disjoint for
all $0\le s' < s$. In order to do this, we will first extend $M_s$ into a Hamilton exceptional system candidate $F_s$
by adding two independent $A'B'$-edges $f_s$ and $f'_s$. We will then use Lemma~\ref{lma:ESextend}
to extend $F_s$ into a Hamilton exceptional system $J_s$. 
For all $s$ with $1\le s\le \lambda n$, we will choose these edges and sets to satisfy the following:
\begin{itemize}
\item[($\alpha_1$)] $J_s$ is a Hamilton exceptional system with parameter $\eps_0$ such that \break $e_{J_s}(A',B') = 2$. 
\item[($\alpha_2$)] Suppose that $d_{H}(w_1) \ge 2 \lambda n$. Then $w_1$ is an endpoint of $f_s$.  
\item[($\alpha_3$)] Suppose that $d_{H}(w_2) \ge 2 \lambda n$. Then $w_2$ is an endpoint of $f'_s$,
unless both $s=1$ and $w_1w_2\in M_1$.
\item[($\alpha_4$)] $J_s$ contains $M_s$ as well as the edges $f_s$ and $f'_{s}$.
$J_s -M_s-f_s-f'_s$ only contains $A_0A$-edges and $B_0B$-edges of~$G$.
$J_s$ is edge-disjoint from $J_0,\dots,J_{s-1}$.
\end{itemize}
First suppose that $w_1w_2 \in M_1$. We construct $J_1$ satisfying the above.
Our assumption means that $w_1w_2$ is an edge in $G[A'] +G[B']$, so $D$ is even (or else $w_1w_2 \in J_0$ by~(i$'$)).
Moreover, $H' = H$ and $D \ge \lfloor n/2 \rfloor$ by~\eqref{minexact} and the fact that $D$ is even.
Together with Proposition~\ref{prp:e(A',B')2} this implies that $e(H'-W) = e(H- W) >0$. 
Pick an $A'B'$-edge $f'_1$ in $H'-W$. Let $U_1$ be the connected component in $M_1+f'_1$ containing $f'_1$.
So $|U_1| \le 4$ and $w_1\notin U_1$.%
    \COMMENT{This holds since neither $w_1$ nor $w_2$ is an endpoint of $f'_1$ and $w_1w_2\in M_1$.}
If $d_{H}(w_1) \ge 2 \lambda n$, we can find an $A'B'$-edge $f_1$ such that $w_1$ is one endpoint of $f_1$
and the other endpoint of $f_1$ does not lie in $U_1$. If $d_{H}(w_1) < 2 \lambda n$, then the choice of $w_1$ implies that $\Delta(H) \le 2 \lambda n$.
So there exists an $A'B'$-edge $f_1$ in $H'-V(U_1) = H-V(U_1)$ since $e(H-V(U_1)) \ge e(H) - |U_1| \Delta(H) \ge e(H) - 8 \lambda n >0$. 
Set $F_1 := M_1 + f_1 +f'_1$. Note that $f_1$ satisfies ($\alpha_2$) and that $F_1$ is a Hamilton exceptional system candidate with $e_{F_1}(A',B') = 2$.
By Lemma~\ref{lma:ESextend}, we can extend $F_1$ into a Hamilton exceptional system $J_1$ with parameter $\eps_0$ in
$G$ such that $F_1 \subseteq J_1$ and such that $J_1 - F_1$ only contains $A_0A$-edges and $B_0B$-edges of $G$.

Next, suppose that for some $1\le s\le \lambda n$ we have already constructed $J_0, \dots , \break J_{s-1}$ satisfying ($\alpha_1$)--($\alpha_4$). 
So $s \ge 2$ if $w_1w_2 \in M_1$. Let $G_s := G - \sum_{j=s}^{\lambda n} M_j - \sum_{j=0}^{s-1} J_{j} $ and $H_s := G_s[A',B']$.
Note that
\begin{align} \label{eqn:V_0BES}
e(H_s) & \ge e(H) - 2(s-1) -1  \ge D - 2 \lambda n .
\end{align}
Moreover, note that $d_{G_s}(v,A) \ge d_{G}(v,A) - 2(s-1)-1 \ge \sqrt{ \epszero} n $ for all $v \in A_0$
and $d_{G_s}(v,B) \ge \sqrt{ \epszero} n $ for all $v \in B_0$.

We%
   \COMMENT{Daniela: new sentence}
first pick the edge $f'_s$ as follows.
If $d_{H}(w_2) \ge 2\lambda n$, then $d_{H_s}(w_2) \ge d_{H}(w_2) - s \ge \lambda n$.%
    \COMMENT{Since each of $J_1,\dots,J_{s-1}$ contains precisely two $A'B'$-edges, these 2 edges will be independent.
So at most one of them can be incident to $w_2$.}
So we can pick an $A'B'$-edge $f'_s$ of $H_s$ such that $w_2$ is an endpoint of $f'_s$ and the connected component $U_s$ of $M_s+f'_s$ containing $f'_s$
does not contain~$w_1$. If $d_{H}(w_2) < 2 \lambda n$, then pick an $A'B'$-edge $f'_s$ of $H_s$ such that the connected component
$U_s$ of $M_s+f'_s$ containing $f'_s$ does not contain $w_1$. To see that such an edge exists, note that in this case the neighbour $w'_1$ of
$w_1$ in $M_s$ satisfies $d_H(w'_1)\le d_H(w_2)< 2 \lambda n$ (if $w'_1$ exists) and that~\eqref{eqn:V_0BES}
implies that $e(H_s) \ge D- 2 \lambda n >D/2+2 \lambda n\ge d_H(w_1)+2 \lambda n$.
Observe that in both cases $|U_s| \le 4$.

We%
   \COMMENT{Daniela: new sentence and more detail in the next 2 sentences}
now pick the edge $f_s$ as follows. If $d_{H}(w_1) \ge 2 \lambda n$, then $d_{H_s}(w_1) \ge d_{H}(w_1) - s \ge \lambda n$.
So we can find an $A'B'$-edge $f_s$ of $H_s$
such that $w_1$ is one endpoint of $f_s$ and the other endpoint of $f_s$ does not lie in $U_s$.
If $d_{H}(w_1) < 2 \lambda n$, then $\Delta(H) \le 2 \lambda n$ and thus~\eqref{eqn:V_0BES} implies that
\begin{align} \nonumber
e(H_s - V(U_s)) & \ge  D- 2\lambda n - 2 \lambda n |U_s|  \ge 1 .
\end{align}
So there exists an $A'B'$-edge $f_s$ in $H_s-V(U_s)$.

In all cases the edges $f_s$ and $f'_s$ satisfy ($\alpha_2$) and ($\alpha_3$).
Set $F_s := M_s +f_s +f'_s$. Clearly, $F_s$ is a Hamilton exceptional system candidate with $e_{F_s}(A',B') = 2$.
Recall that $d_{G_s}(v,A) \ge \sqrt{ \epszero} n $ for all $v \in A_0$ and $d_{G_s}(v,B) \ge \sqrt{ \epszero} n $ for all $v \in B_0$.
Thus by Lemma~\ref{lma:ESextend}, we can extend $F_s$ into a Hamilton exceptional system $J_s$ with parameter $\eps_0$
such that $F_s \subseteq J_s\subseteq G_s+F_s$ and such that $J_s - F_s$ only contains $A_0A$-edges and $B_0B$-edges of $G_s$.
Hence we have constructed $J_1, \dots ,J_{\lambda n }$ satisfying ($\alpha_1$)--($\alpha_4$). 
So (iii) holds. Note (ii) and (vi) are vacuously true.

To verify (iv), recall that $\mathcal{J}:=J_0\cup\dots \cup J_{\lambda n}$ and $H^{\diamond}= G[A', B'] - \mathcal{J}$.
For all $1 \le s \le \lambda n$ we have $e_{J_s}(A',B') = 2$ by~(iii).
Moreover, (i) and (i$'$) together imply that $e_{J_0}(A',B') = 1$ if and only if both $|A'|$ and $D$ are odd.
Therefore, $e_{\mathcal{J}}(A',B') \le \phi n $. Moreover, since $e(H^{\diamond})=e(H)-2\lambda n-e_{J_0}(A',B')$,
Proposition~\ref{prp:e(A',B')parity}(i) implies that $e(H^{\diamond})$ is even. Thus (iv) holds.

To verify (v), note that if $d_H(w_1) \le  2 \lambda n $ then clearly $d_{H^{\diamond}}(w_1)\le 2 \lambda n\le (D- \phi n )/2$.
If $d_H(w_1) \ge  2 \lambda n $ then ($\alpha_2$) implies that $d_{J_s[A',B']}(w_1) = 1$ for all $1 \le s \le \lambda n$.
Hence $d_{H^{\diamond}}(w_1) \le \lfloor D/2 \rfloor - \lambda n = (D- \phi n )/2$.
Now suppose that $D = n/2 - 1$ and so $n = 0 \pmod4$ by~\eqref{minexact}. Thus $D$ is odd and so (i$'$) implies that
if $w_1w_2$ is an edge in $G[A']+G[B']$, then $w_1w_2 \in J_0$. In particular $w_1w_2\notin M_1$.
(Note that if $w_1w_2 \in G[A',B']$, then $w_1w_2$ is not contained in $M_1$ either since $M_1\subseteq G[A_0]+G[B_0]$.)%
    \COMMENT{Daniela: slightly changed the previous sentence}
Thus in the case when $d_H(w_2) \ge 2 \lambda n$, ($\alpha_3$) implies that $d_{J_s[A',B']}(w_2) = 1$ for all $1 \le s \le \lambda n$.
Hence $d_{H^{\diamond}}(w_2) \le \lfloor D/2 \rfloor - \lambda n = (D- \phi n )/2$.
If $d_H(w_2) \le  2 \lambda n $ then clearly $d_{H^{\diamond}}(w_2)\le 2 \lambda n\le (D- \phi n )/2$.
Therefore (v) holds. 

\medskip

\noindent \textbf{Case 2: $e(H)<D$}

\smallskip

\noindent Together with Proposition~\ref{prp:e(A',B')}(ii) this implies that $n = 0 \pmod4$, $D = n/2-1$ and $|A'| = n/2 = |B'|$.
So $D$ is odd and $|A'|$ is even.
In particular, by Proposition~\ref{prp:e(A',B')parity}(i) $e(H)$ is even and by~(i) and~(i$'$) $J_0$ is a perfect matching with $e_{J_0}(A',B') =0$.
Moreover, Proposition~\ref{prp:e(A',B')3} implies that $\Delta(H) \le e(H) /2$ in this case
(recall that $H:=G[A',B']$).%
\COMMENT{Deryk: this bracket and the next sentence are new.}

Note that each $M_s$ is a matching exceptional system candidate.
By Lemma~\ref{lma:ESextend}, for each $1 \le s \le \min \{\ell, \lambda n \}$ in turn, we can extend $M_s$ into a matching exceptional system $J_s$
with parameter $\eps_0$ in $G'=G-J_0$ such that $M_s \subseteq J_s$, 
and such that $J_s$ and $J_{s'}$ are edge-disjoint whenever $1\le s' < s\le \min \{\ell, \lambda n \}$. Thus~(ii) holds.

If $\ell \ge \lambda n $, then $e(H) \le D - 2\lambda n  = D - \phi n+1$. But since $e(H)$ is even and $D - \phi n+1$ is odd this
means that $e(H) \le D - \phi n$. Thus $\Delta(H) \le e(H) /2 \le (D- \phi n ) /2$.
Moreover, $d_{\mathcal J}(v)=2 \lambda n+ d_{J_0}(v)= \phi n$ for all $v \in V_0$.
Hence (iv)--(vi) hold since $H^{\diamond} = H$. ((iii) is vacuously true.)

Therefore, we may assume that $\ell < \lambda n$. 
Using a similar argument as in Case~1, for all $\ell < s \le \lambda n$ we can extend the matchings $M_s$
into edge-disjoint Hamilton exceptional systems $J_s$ satisfying ($\alpha_1$)--($\alpha_4$) and which are edge-disjoint from $J_0,\dots,J_\ell$. 
Indeed, suppose that for $\ell < s\le \lambda n$ we have already constructed $J_{\ell+1}, \dots ,J_{s-1}$ satisfying ($\alpha_1$)--($\alpha_4$). 
(Note that (i$'$) implies that the exception in~($\alpha_3$) is not relevant.) The fact that $D$ is odd and $e(H)$ is even implies that $\ell=(D-e(H)-1)/2$.
Then defining $H_s$ analogously to Case~1, we have%
   \COMMENT{Daniela: previously had $e(H_s)  \ge e(H) - 2(s-1-\ell)-1$ below, deleted the $-1$ and added some detail after the display}
$$ 
e(H_s)  \ge e(H) - 2(s-1-\ell) = D - 2s  \ge D-2 \lambda n,
$$
where in the first inequality we use that $e_{J_0}(A',B')=0$ by~(i$'$).
So the analogue of~\eqref{eqn:V_0BES} holds. 
Hence we can proceed exactly as in Case~1 to construct $J_s$ (the remaining calculations go through as before).%
\COMMENT{Deryk: previously had `are changed slightly but not significantly 
by the removal of the matching exceptional systems from $G$).' but I don't see the difference}
Thus~(iii) holds.

To verify~(iv), 
note that $e_{\mathcal{J}}(A',B') = 2(\lambda n - \ell )$. So
\begin{align}
e(H^{\diamond}) = e(H) - 2(\lambda n - \ell ) = e(H) - 2 \lambda n + (D- e(H)-1)  = D - \phi n \label{eqn:eHdia}.
\end{align}
In particular, $e(H^{\diamond})$ is even and $e_\mathcal{J}(A',B')=e(H)-e(H^{\diamond})<\phi n$. So (iv) holds.

In order to verify~(vi), recall that $\Delta(H) \le e(H)/2$. Moreover, note that ($\alpha_2$) implies that if $d_H (w_1) \ge 2 \lambda n$,
then $d_{J_s[A',B']}(w_1) = 1$ for all $\ell <  s \le \lambda n$.
Hence 
\begin{align}
	d_{H^{\diamond}}(w_1) & = d_{H}(w_1) - (\lambda n - \ell) = \Delta(H) -  \lambda n + \ell \nonumber \\
	& \le e(H)/2 - \lambda n + (D - e(H)-1)/2 = (D- \phi n )/ 2 \stackrel{\eqref{eqn:eHdia}}{=} e( H^{\diamond} )/2. \nonumber
\end{align}
Similarly if $d_H (w_2) \ge 2 \lambda n$, then $d_{H^{\diamond}}(w_2) \le e(H^{\diamond})/2$.
If $d_H (w_1) \le 2 \lambda n$, then $d_{H^{\diamond}}(w_1) \le 2 \lambda n\le e(H^{\diamond})/2$ by \eqref{eqn:eHdia} and
the analogue also holds for $w_2$. Thus in all cases $d_H (w_1), d_H (w_2) \le e(H^{\diamond})/2$.
Our choice of $w_1$ and $w_2$ implies that for all $v \in V(G) \setminus W$ we have
$$d_{H}(v) \le (e(H)+3) /3 \le (D+3)/3 \stackrel{\eqref{eqn:eHdia}}{<} e(H^{\diamond})/2.$$
Therefore, $\Delta(H^{\diamond}) \le e(H^{\diamond})/2$. Together with \eqref{eqn:eHdia} this implies~(vi) and thus~(v).
\endproof

The next lemma implies that each of the exceptional systems $J_s$ guaranteed by Lemma~\ref{V_0BES} can
be extended into a Hamilton cycle (if $J_s$ is a Hamilton exceptional system) or into two perfect matchings
(if $J_s$ is a matching exceptional system and both $|A'|$ and $|B'|$ are even).

\begin{lemma} \label{badBES}
Suppose that $ 0 < 1/n \ll \epszero \le  \lambda \ll 1$ and that $n, \lambda n,K \in \mathbb{N}$.
Suppose that $(G,A,A_0,B,B_0)$ is an $(\epszero,K)$-framework such that $\delta(G[A]) \ge 4|A|/5$ and $\delta(G[B]) \ge 4 |B| /5$.%
     \COMMENT{$K$ is not important here. Previously had instead "Let $V$ be a set of $n$ vertices and let $A,A_0,B,B_0$ be a partition of $V$.
Suppose that $G$ is a graph on $V$ such that $\delta(G[A]) \ge 4|A|/5$ and $\delta(G[B]) \ge 4 |B| /5$."
But if we state the lemma this way, we also need to add that $|B|$ is close to $n/2$. So I thought it's shorter to
talk about frameworks.}
Let $J_1, \dots ,J_{ \lambda n }$ be exceptional systems with parameter $\eps_0$.
Suppose that $G$ and $J_1, \dots ,J_{\lambda n}$ are pairwise edge-disjoint.
Then there are edge-disjoint subgraphs $H_1, \dots ,H_{\lambda n }$ in $G + \sum_{s=1}^{\lambda n} J_s$ which satisfy the following properties:%
   \COMMENT{Daniela: previously had $H_s - J_s\subseteq G$ instead of $E(H_s - J_s)\subseteq E(G[A]+G[B])$, but in the proof of
Lemma~\ref{V_0elimination} we need that we don't add any $A'B'$-edges}
\begin{itemize}
	\item[{\rm (i)}] $J_s\subseteq H_s$ and $E(H_s - J_s)\subseteq E(G[A]+G[B])$ for all $1\le s\le \lambda n$. 
	\item[{\rm (ii)}] If $J_s$ is a Hamilton exceptional system, then $H_s$ is a Hamilton cycle on $V(G)$. 
	\item[{\rm (iii)}] If $J_s$ is a matching exceptional system, then $H_s$ is an union of a Hamilton cycle on $A'=A\cup A_0$
and a Hamilton cycle on~$B'=B\cup B_0$.
\end{itemize}
\end{lemma}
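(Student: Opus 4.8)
The plan is to reduce the lemma to Proposition~\ref{prop:ES} together with a greedy construction of Hamilton cycles through a prescribed small matching. For each $s$, write $J_{s,A}^*$ and $J_{s,B}^*$ for the graphs $J_A^*$ and $J_B^*$ associated with the exceptional system $J_s$ as in Section~\ref{sec:BES}, so $J_s^* = J_{s,A}^* + J_{s,B}^*$ and, by~\eqref{ESeq}, $e(J_{s,A}^*), e(J_{s,B}^*) \le e(J_s^*) \le 2\sqrt{\eps_0}n$. Suppose we can find, for each $s \le \lambda n$, a Hamilton cycle $C_A^s$ on $A$ consistent with $J_{s,A}^*$ and a Hamilton cycle $C_B^s$ on $B$ consistent with $J_{s,B}^*$, such that the edges of $(C_A^s - J_{s,A}^*) + (C_B^s - J_{s,B}^*)$ all lie in $G[A]+G[B]$ and are pairwise distinct over all $s$. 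Put $H_s := C_A^s + C_B^s - J_s^* + J_s$. Then Proposition~\ref{prop:ES}(i) shows $H_s$ is a Hamilton cycle on $V(G)$ when $J_s$ is a Hamilton exceptional system, and Proposition~\ref{prop:ES}(ii) shows $H_s$ is the union of a Hamilton cycle on $A'$ and one on $B'$ when $J_s$ is a matching exceptional system, giving (ii) and (iii). Since $J_s$ and $J_s^*$ are treated as edge-disjoint we have $J_s \subseteq H_s$, and $H_s - J_s = (C_A^s - J_{s,A}^*) + (C_B^s - J_{s,B}^*) \subseteq G[A]+G[B]$, giving (i). Edge-disjointness of the $H_s$ follows since the $J_s$ are pairwise edge-disjoint and edge-disjoint from $G$ by hypothesis, while the $G$-parts $H_s-J_s\subseteq G[A]+G[B]$ are pairwise distinct by construction; moreover $H_s \subseteq G + \sum_t J_t$.

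We build the $C_A^s, C_B^s$ by induction on $s$. Having constructed $H_1,\dots,H_{s-1}$, let $G_s := G - \sum_{t<s}(H_t - J_t)$. Each $H_t - J_t$ is a subgraph of a Hamilton cycle, or of a union of two vertex-disjoint Hamilton cycles, hence has maximum degree at most $2$; so $\Delta(G - G_s) \le 2(s-1) < 2\lambda n$, and these edges all lie in $G[A]+G[B]$. Using (FR3) (which gives $n \le 4|A|$, say) and $\lambda \ll 1$, we get $\delta(G_s[A]) \ge 4|A|/5 - 2\lambda n \ge 3|A|/4$ and likewise $\delta(G_s[B]) \ge 3|B|/4$. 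It therefore suffices to prove the following claim: \emph{if $R$ is a graph on a vertex set $X$ with $\delta(R) \ge 3|X|/4$, and $M$ is a matching using at most $|X|/50$ of the vertices of $X$ equipped with a prescribed cyclic order $a_1b_1,\dots,a_pb_p$ of its edges (each oriented $a_i\to b_i$), then $R + M$ has a Hamilton cycle that contains $M$ and, in one of its two traversals, crosses these edges in the given cyclic order and with the given orientations.} To apply this, unravel the definition of `consistent with $J_A^*$' from Section~\ref{sec:BES}: a Hamilton cycle $C_A^s$ on $A$ is consistent with $J_{s,A}^*$ precisely when it contains $J_{s,A}^*$ and, for one traversal, crosses the edges $x_1x_2,x_3x_4,\dots$ of $J_{s,A}^*$ (oriented $x_{2i-1}\to x_{2i}$) in this cyclic order. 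So apply the claim with $R:=G_s[A]$, $X:=A$, $M:=J_{s,A}^*$, taking the prescribed order to be $x_1x_2, x_3x_4,\dots$ followed by the edges of $J^*_{AB}[A]$ in any order; the bound $e(J_{s,A}^*)\le 2\sqrt{\eps_0}n$ together with (FR3) gives the hypothesis on $|M|$. For $C_B^s$ argue identically with $R:=G_s[B]$ and prescribed order $y_2y_3, y_4y_5,\dots,y_{2\ell}y_1$ (oriented $y_{2i}\to y_{2i+1}$), which makes the traversal visit $y_2,\dots,y_{2\ell},y_1$ in the required order.

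To prove the claim, let $S$ be the set of endpoints of $M$, so $|S| = 2p \le |X|/50$. If $p=0$, take any Hamilton cycle of $R$, which exists by Dirac's theorem since $\delta(R) \ge |X|/2$. If $p \ge 1$, greedily choose, for $i=1,\dots,p-1$, a vertex $w_i \in N_R(b_i)\cap N_R(a_{i+1})$ with $w_i \notin S \cup \{w_1,\dots,w_{i-1}\}$; this is possible since $|N_R(b_i)\cap N_R(a_{i+1})| \ge 2\delta(R) - |X| \ge |X|/2$ while $|S| + (i-1) < |X|/2$. Set $X'' := \{b_p, a_1\} \cup \bigl(X \setminus (S \cup \{w_1,\dots,w_{p-1}\})\bigr)$. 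Then $\delta(R[X'']) \ge \delta(R) - |S| - p \ge 7|X|/10 \ge (|X''|+1)/2$, so $R[X'']$ is Hamilton-connected and has a Hamilton path $P$ from $b_p$ to $a_1$. Concatenating $a_1b_1\,w_1\,a_2b_2\,w_2\,a_3\cdots a_pb_p$ with $P$ (traversed from $b_p$ back to $a_1$) gives a Hamilton cycle of $R+M$ that contains $M$ and crosses its edges in the prescribed cyclic order and orientations, as required.

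The only real work is the last paragraph, and the degree bounds are so generous that there is ample room; the one point needing care is the correct unravelling of `consistent with $J_A^*$' and `consistent with $J_B^*$' and the matching choice of cyclic orders, so that the resulting cycles visit $x_1,\dots,x_{2\ell}$, respectively $y_2,\dots,y_{2\ell},y_1$, in exactly the prescribed order—after which Proposition~\ref{prop:ES} yields (i)--(iii).
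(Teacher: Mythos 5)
Your proposal is correct and follows essentially the same route as the paper: for each $s$ in turn, greedily thread $J^*_{s,A}$ (resp.\ $J^*_{s,B}$) into a short path and then close it into a Hamilton cycle on $A$ (resp.\ $B$) consistent with $J^*_{s,A}$ (resp.\ $J^*_{s,B}$), finishing with Proposition~\ref{prop:ES}. The only cosmetic difference is in the closing step, where you invoke a Hamilton-connectedness criterion for $R[X'']$, while the paper identifies the two endpoints of the short path into a single auxiliary vertex and applies Dirac's theorem; both are standard and interchangeable here.
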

\begin{proof}
Recall that, given an exceptional system $J$, we have defined matchings $J^*_A$, $J^*_B$ and $J^*=J^*_A+J^*_B$ in Section~\ref{sec:BES}.
We will write $J_{s,A}^*:=(J_s)_A^*$ and $J_{s,B}^*:=(J_s)_B^*$.
For each $s \le \lambda n$ in turn, we will find a subgraph $H_s^*$ of $G[A]+G[B] +J_s^*$ containing%
    \COMMENT{Daniela: replaced $G+J_s^*$ by $G[A]+G[B] +J_s^*$}
$J_s^*$ such that $H_s^*$ is edge-disjoint from $H_1^*, \dots ,H_{s-1}^*$. 
Moreover, $H_s^*$ will be the union of two cycles $C_A$ and $C_B$ such that $C_A$ is a Hamilton cycle on $A$ which is consistent with
$J_{s,A}^*$ and $C_B$  is a Hamilton cycle on $B$ which is consistent with $J_{s,B}^*$.
(Recall from Section~\ref{sec:BES} that we always view different
$J^*_i$ as being edge-disjoint from each other. So asking $H_s^*$ to be edge-disjoint from $H_1^*, \dots ,H_{s-1}^*$
is the same as asking $H_s^*-J^*_s$ to be edge-disjoint from $H_1^*-J^*_1, \dots ,H_{s-1}^*-J^*_{s-1}$.)

Suppose that for some $1\le s\le \lambda n$ we have already found $H_1^*, \dots ,H_{s-1}^*$. For all $i<s$, let $H_i:=H^*_i-J^*_i+J_i$.
Let $G_s:=G-(H_1\cup\dots \cup H_{s-1})$.
First we construct $C_A$ as follows. Recall from (\ref{ESeq}) that $J_{s,A}^*$ is a matching of size at most $2\sqrt{\epszero} n$.%
\COMMENT{Deryk: new extra factor of two here}
Note that $\delta(G_s[A]) \ge \delta(G[A]) - 2s \ge (4/5 - 5 \lambda n ) |A|$.
So we can greedily find a path $P_A$ of length at most $6 \sqrt{\epszero} n$ in $G_s[A] + J_{s,A}^*$ such that
$P_A$ is consistent with $J_{s,A}^*$. Let $u$ and $v$ denote the endpoints of $P_A$.
Let $G_s^A$ be the graph obtained from $G_s[A]-V(P_A)$ by adding a new vertex $w$ whose neighbourhood is precisely
$(N_{G_s}(u) \cap N_{G_s}(v)) \setminus V(P_A)$.
Note that $\delta(G_s^A) \ge |G_s^A|/2$ (with room to spare). Thus $G_s^A$ contains a Hamilton cycle $C'_A$ by Dirac's theorem.
But $C'_A$ corresponds to a Hamilton cycle $C_A$ of $G_s[A]+J_{s,A}^*$ that is consistent with $J_{s,A}^*$.
Similarly, we can find a Hamilton cycle $C_B$ of $G_s[B] +J_{s,B}^*$ that is consistent with $J_{s,B}^*$.
Let $H^*_s=C_A+C_B$. This completes the construction of $H_1^*, \dots ,H_{\lambda n}^*$.

For each $1\le s\le \lambda n$ we take $H_s : = H_s^* - J_s^* +J_s$. Then~(i) holds.
Proposition~\ref{prop:ES} implies (ii) and (iii).
\end{proof}

By combining Lemmas~\ref{V_0BES} and~\ref{badBES} we obtain the following result, which guarantees a set of edge-disjoint
Hamilton cycles covering all edges of $G[A_0]$ and $G[B_0]$.

\begin{lemma} \label{V_0elimination}
Suppose that $ 0 < 1/n \ll \epszero \ll \phi \ll 1$ and that $D, n, (D- \phi n )/2,K \in \mathbb{N}$.%
   \COMMENT{Again, $K$ is not important. Previously, we have $\phi n \in \mathbb{N}$, but this is implied by $D, (D- \phi n )/2 \in \mathbb{N}$.}
Let $G$ be a $D$-regular graph on $n$ vertices with $D \ge n - 2\lfloor n/4 \rfloor -1$.
Suppose that $(G,A,A_0,B,B_0)$ is an $(\epszero,K)$-framework with $\Delta (G[A',B']) \le D/2$.
Let $w_1$ and $w_2$ be (fixed) vertices of $G$ such that $d_{G[A',B']}(w_1)\ge d_{G[A',B']}(w_2)\ge d_{G[A',B']}(v)$ for all $v\in V(G)\setminus \{w_1,w_2\}$.
Then there exists a $\phi n $-regular spanning subgraph $G_0$ of $G$ which satisfies the following properties:
\begin{itemize}
	\item[{\rm (i)}] $G[A_0]+G[B_0] \subseteq G_0$. 
	\item[{\rm (ii)}] $e_{G_0}(A',B') \le \phi n$ and $e_{G - G_0}(A',B')$ is even.
	\item[{\rm (iii)}] $G_0$ can be decomposed into $\lfloor e_{G_0}(A',B')/2 \rfloor$ Hamilton cycles and
$\phi n - 2\lfloor e_{G_0}(A',B')/2 \rfloor$ perfect matchings. Moreover, if $e_G(A',B') \ge D$, then
this decomposition of $G_0$ uses $\lfloor \phi n/2 \rfloor$ Hamilton cycles and one perfect matching if $D$ is odd.
    \item[{\rm (iv)}] Let $H^{\diamond}:=G[A',B'] - G_0$. Then $ d_{H^{\diamond}}(w_1) \le (D-\phi n)/2$.
	Furthermore, if $D = n/2-1$ then $ d_{H^{\diamond} }(w_2) \le (D-\phi n)/2$.
	\item[{\rm (v)}] If $e_G(A',B') < D$, then $ \Delta( H^{\diamond} ) \le e(H^{\diamond})/2 \le (D-\phi n)/2$.
\end{itemize}
\end{lemma}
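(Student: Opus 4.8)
The plan is to combine Lemmas~\ref{V_0BES} and~\ref{badBES} directly, so the main work is bookkeeping rather than new ideas.

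\textbf{Setup.} First I would choose an auxiliary constant $\lambda$ with $\epszero\ll\lambda\ll\phi$, and set $\lambda n:=\phi n/2$ (adjusting $\lambda$ slightly so that $\lambda n\in\mathbb{N}$; note $(D-\phi n)/2\in\mathbb{N}$ forces $\phi n\in\mathbb{N}$). Then I would apply Lemma~\ref{V_0BES} with this $\lambda$ and with $w_1,w_2$ the given vertices, obtaining edge-disjoint subgraphs $J_0,J_1,\dots,J_{\lambda n}$ covering all edges of $G[A_0]+G[B_0]$ and satisfying (i)--(vi) there. Observe that the parameter "$\phi n$" in Lemma~\ref{V_0BES} equals $2\lambda n$ (resp.\ $2\lambda n+1$) depending on the parity of $D$, which is exactly $\phi n$ in our present notation when $D$ is even; when $D$ is odd one takes $\phi n$ here to be $2\lambda n+1$, i.e.\ one re-labels so that the regular graph $G_0$ has degree $\phi n$ and $J_0$ is the perfect matching. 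One must check this parity matching is consistent with the divisibility hypothesis $D,(D-\phi n)/2\in\mathbb{N}$, which it is: $D-\phi n$ is even in both cases.

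\textbf{Extending to Hamilton cycles and matchings.} Next I would apply Lemma~\ref{badBES} to the exceptional systems $J_1,\dots,J_{\lambda n}$ (and, if $D$ is odd, keep $J_0$ aside as a perfect matching). To invoke Lemma~\ref{badBES} I need $\delta(G[A])\ge 4|A|/5$ and $\delta(G[B])\ge 4|B|/5$; this follows from (FR3), (FR4) together with $D\ge n-2\lfloor n/4\rfloor-1\ge(1/2-o(1))n$ and $|A|=|B|$ close to $n/2$, so $\delta(G[A])\ge D-\epszero n-|A_0|\ge 4|A|/5$ with room to spare. I also need $G$ and $J_1,\dots,J_{\lambda n}$ to be pairwise edge-disjoint — which holds since by construction each $J_s\subseteq G$ and the $J_s$ are pairwise edge-disjoint, so one applies Lemma~\ref{badBES} to $G':=G-\bigcup_s J_s$ playing the role of "$G$" there and $J_1,\dots,J_{\lambda n}$ (a small notational adjustment). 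This yields edge-disjoint $H_1,\dots,H_{\lambda n}$ with $J_s\subseteq H_s$, $E(H_s-J_s)\subseteq E(G[A]+G[B])$, each $H_s$ either a Hamilton cycle (if $J_s$ is an HES) or a union of a Hamilton cycle on $A'$ and one on $B'$ (if $J_s$ is an MES; here $|A'|,|B'|$ are even — this is the case $D=n/2-1$, $n\equiv 0\pmod 4$, $|A'|=|B'|=n/2$, which is precisely when Lemma~\ref{V_0BES}(ii) produces matching exceptional systems). Set $G_0:=\bigcup_{s=1}^{\lambda n}H_s$, together with $J_0$ if $D$ is odd.

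\textbf{Verifying the conclusions.} It remains to read off (i)--(v). Property (i) holds because all edges of $G[A_0]+G[B_0]$ lie in $\bigcup_s J_s\subseteq G_0$. For (ii): $e_{G_0}(A',B')=\sum_s e_{J_s}(A',B')+e_{J_0}(A',B')=e_{\mathcal J}(A',B')\le\phi n$ by Lemma~\ref{V_0BES}(iv) (the extensions in Lemma~\ref{badBES} add only edges inside $A$ or inside $B$), and $e_{G-G_0}(A',B')=e(H^\diamond)$ is even by Lemma~\ref{V_0BES}(iv). $G_0$ is $\phi n$-regular since each $H_s$ is $2$-regular, $J_0$ (if present) is $1$-regular, and there are $\lambda n$ of the $H_s$ with $2\lambda n+\mathbf{1}_{D\text{ odd}}=\phi n$. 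For (iii): each $H_s$ with $J_s$ an HES is a Hamilton cycle; each $H_s$ with $J_s$ an MES is the edge-disjoint union of two perfect matchings (a Hamilton cycle on $A'$ and one on $B'$, each of even order, decomposes into two perfect matchings of $G$); adding $J_0$ gives one more perfect matching when $D$ is odd. Counting via Lemma~\ref{V_0BES}(ii),(iii): there are $\min\{\ell,\lambda n\}$ matching exceptional systems, hence the number of Hamilton cycles produced is $\lambda n-\min\{\ell,\lambda n\}$, and one checks $2(\lambda n-\min\{\ell,\lambda n\})=\phi n-2\lfloor e_{G_0}(A',B')/2\rfloor$ using $e_{G_0}(A',B')=2(\lambda n-\min\{\ell,\lambda n\})+\mathbf{1}$; when $e_G(A',B')\ge D$ we have $\ell=0$, so all $\lambda n$ are Hamilton cycles plus possibly $J_0$. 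Finally (iv) and (v) are immediate from Lemma~\ref{V_0BES}(v),(vi) since $H^\diamond=G[A',B']-G_0$ is exactly the graph "$H^\diamond$" there (the extension edges avoid $G[A',B']$).

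\textbf{Main obstacle.} The only real care needed is the parity/divisibility bookkeeping tying the parameter "$\phi n$" of Lemma~\ref{V_0BES} to the present $\phi n$, the split into the cases $e_G(A',B')\ge D$ versus $<D$ (the latter forcing $D=n/2-1$ and the appearance of matching exceptional systems), and checking the hypotheses of Lemma~\ref{badBES} on $\delta(G[A]),\delta(G[B])$ — all routine given (FR1)--(FR4) and $\epszero\ll\phi$.
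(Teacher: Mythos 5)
Your proposal is correct and follows essentially the same route as the paper: choose $\lambda n=\lfloor\phi n/2\rfloor$ so that $2\lambda n+\mathbf{1}_{D\text{ odd}}=\phi n$, invoke Lemma~\ref{V_0BES} to produce $J_0,\dots,J_{\lambda n}$, then extend $J_1,\dots,J_{\lambda n}$ to Hamilton cycles or unions of two perfect matchings via Lemma~\ref{badBES} (applied to the graph with the $J_s$ removed), and set $G_0:=J_0+\sum_s H_s$, reading off (i)--(v) from Lemma~\ref{V_0BES}(iv)--(vi) exactly as you indicate. The only cosmetic differences from the paper are that it feeds $G[A']+G[B']-\bigcup J_s$ rather than $G-\bigcup J_s$ into Lemma~\ref{badBES} (immaterial, since the extensions use only edges inside $A$ and inside $B$) and phrases the $\lambda$/$\phi$ parity matching via the floor rather than a relabelling.
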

\begin{proof}
Let
\begin{align*}
\ell & := \left\lfloor \frac{ \max \{0 , D - e_G(A',B') \} }2 \right\rfloor \  \text{and}  
& \lambda n := \lfloor \phi n /2 \rfloor = & \begin{cases} 
(\phi n -1)/2 & \textrm{if $D$ is odd,}\\
\phi n/2  & \textrm{if $D$ is even.}
\end{cases}
\end{align*}
(The last equality holds since our assumption that $(D- \phi n )/2 \in \mathbb{N}$
implies that $D$ is odd if and only if $\phi n$ is odd.)
So $\ell$, $\phi $ and $\lambda$ are as in Lemma~\ref{V_0BES}.
Thus we can apply Lemma~\ref{V_0BES} to $G$ in order to obtain $\lambda n +1$ subgraphs $J_0, \dots ,J_{\lambda n}$ as described there.
Let $G'$ be the graph obtained from $G[A']+G[B']$ by removing all the edges in $J_0 \cup \dots \cup J_{\lambda n}$.
Recall that $J_0$ is either a perfect matching in $G$ or empty. 
Since each of $J_1, \dots ,J_{\lambda n}$ is an exceptional system and so by (EC3) we have $e_{J_s}(A)=0$ for all $1\le s \le \lambda n$,
it follows that $\delta(G'[A])\ge \delta(G[A])-1\ge 4|A|/5$, where%
   \COMMENT{Daniela: had $\delta(G'[A])\ge \delta(G[A])-1\ge \delta(G[A'])-|A_0|-1\ge 4|A|/5$, but then the last inequality is wrong}
the final inequality follows from (FR3) and (FR4).
Similarly $\delta(G'[B])\ge 4|B|/5$.
So we can apply Lemma~\ref{badBES} with $G'$ playing the role of $G$ in order to extend $J_1, \dots ,J_{\lambda n }$ into
edge-disjoint subgraphs $H_1, \dots ,H_{\lambda n }$ of $G'+ \sum_{s=1}^{\lambda n} J_s$
such that
\begin{itemize}
\item[(a)] $H_s$ is a Hamilton cycle on $V(G)$ which contains precisely two $A'B'$-edges for all $\ell<s\le \lambda n$;
\item[(b)] $H_s$ is the union of a Hamilton cycle on $A'$ and a Hamilton cycle on~$B'$ for all $1\le s\le \min \{ \ell,\lambda n \}$.
\end{itemize}
Indeed, the property $e_{H_s}(A',B')=2$ in (a) follows from Lemma~\ref{V_0BES}(iii) and \ref{badBES}(i).%
   \COMMENT{Daniela: new sentence}
Let $G_0: = J_0 + \sum_{s=1}^{\lambda n} H_s$.
Then~(i) holds since by Lemma~\ref{V_0BES} all the $J_0, \dots ,J_{\lambda n}$ together
cover all edges in $G[A_0]$ and $G[B_0]$.
Let $\mathcal{J}_{\rm HC}$ be the union of all $J_s$ with $\ell<s\le \lambda n$ and
let $\mathcal{J}$ be the union of all $J_s$ with $0\le s\le \lambda n$.
The definition of $G_0$,%
   \COMMENT{Daniela: had $G'$ instead of $G_0$}
Lemma~\ref{V_0BES}(ii),(iii) and Lemma~\ref{badBES}(i) together imply that
$G_0[A',B']=\mathcal{J}[A',B']=J_0[A',B']+\mathcal{J}_{\rm HC}[A',B']$ and so%
\COMMENT{Deryk: I split this display to make clearer what we use when}
\begin{align}
e_{G_0}(A',B') & =e_{\mathcal{J}}(A',B') \label{eq:edgesG0} \\
& =e_{J_0}(A',B')+2(\max\{0,\lambda n-\ell\}). \label{eq:edgesG02}
\end{align}
Together with Lemma~\ref{V_0BES}(iv), (\ref{eq:edgesG0}) implies~(ii).
Moreover, the graph $H^{\diamond}$ defined in~(iv) is the same as the graph $H^{\diamond}$ defined
in Lemma~\ref{V_0BES}(iv). Thus (iv) and (v) follow from Lemma~\ref{V_0BES}(v) and~(vi).

So it remains to verify~(iii).
Note that if $\ell>0$ then $e_G(A',B') < D$
and so $n = 0 \pmod4$, $D = n/2-1$ and $|A'| = n/2 = |B'|$ by Proposition~\ref{prp:e(A',B')}(ii). In particular,
both $A'$ and $B'$ are even and so for all $1\le s\le \ell$ the graph $H_s$ can be decomposed into two edge-disjoint
perfect matchings. Recall that by Lemma~\ref{V_0BES}(i) the graph $J_0$ is a perfect matching if $D$ is odd
and empty if $D$ is even. 
Thus, if $ \ell \le \lambda n$, then $G_0$ can be decomposed into $\lambda n-\ell$ edge-disjoint Hamilton cycles and $n_{\rm match}$ edge-disjoint
perfect matchings, where $n_{\rm match}=2\ell$ if $D$ is even and $n_{\rm match}=2\ell+1$ if $D$ is odd.
In particular, this implies the `moreover part' of (iii) (since $\ell=0$ if $e_G(A',B')\ge D$).
Also, \eqref{eq:edgesG02} together with the fact that $e_{J_0}(A',B')\le 1$ by Lemma~\ref{V_0BES}(i) implies that
$\lambda n-\ell = \lfloor e_{G_0}(A',B')/2 \rfloor$ and so
$\phi n - 2\lfloor e_{G_0}(A',B')/2 \rfloor=n_{\rm match}$. Thus~(iii) holds in this case.
If $ \ell > \lambda n$, then (a) implies that there are no Hamilton cycles at all in the decomposition.
Also~\eqref{eq:edgesG02} implies that $\lfloor e_{G_0}(A',B')/2 \rfloor =0$, as required in~(iii).
Similarly, (b) implies that $n_{\rm match}=2 \lambda n$ if $D$ is even and $n_{\rm match}=2 \lambda n+1$ if $D$ is odd, which also agrees with~(iii).
\end{proof}

\section{Constructing Localized Exceptional Systems}\label{sec:locES}

Suppose that $(G,A,A_0,B,B_0)$ is an $(\epszero,K)$-framework and that $G_0$ is the spanning subgraph of our given $D$-regular graph $G$ obtained by Lemma~\ref{V_0elimination}.
Set $G' := G - G_0$. (So $G'$ has no edges inside $A_0$ or $B_0$.) 
Roughly speaking, the aim of this section is to decompose $G'- G'[A] - G'[B]$ into edge-disjoint exceptional systems.
Each of these exceptional systems $J$ will then be extended into a Hamilton cycle (in the case when $J$ is a Hamilton exceptional system) or into
two perfect matchings (in the case when $J$ is a matching exceptional system). We will ensure that all but a small number of these
exceptional systems are localized (with respect to some $(K,m, \epszero)$-partition $\mathcal{P}$ of $V(G)$ refining
the partition $A,A_0,B,B_0$). Moreover, for all $1\le i, i' \le K$, the number of $ (i,i')$-localized exceptional systems in our decomposition
will be the same. (Recall that $ (i,i')$-localized exceptional systems were defined in Section~\ref{sec:BES}.) 

However, rather than decomposing the above `leftover' $G'- G'[A] - G'[B]$ in a single step, we actually need to proceed in two steps:
initially, we find a small number of exceptional systems $J$ which have some additional useful properties (e.g.~the number of $A'B'$-edges of $J$
is either zero or two). These exceptional systems will be used to construct the robustly decomposable graph $G^{\rm rob}$.
(Recall that the role of $G^{\rm rob}$ was discussed in Section~\ref{overview}.)%
    \COMMENT{Daniela: new sentence}
Let $G'':=G-G_0-G^{\rm rob}$. Some of the additional properties of the exceptional systems contained in $G^{\rm rob}$ then allow us to 
find the desired decomposition of  $G^{\diamond} := G''- G''[A] - G''[B]$.
(We need to proceed in two steps rather than one as we have little control over the structure of $G^{\rm rob}$.)

Recall that
in order to construct the required (localized) exceptional systems, we will distinguish three cases:
\begin{itemize}
\item[(a)] the case when $G$ is `non-critical' and contains at least $D$ $A'B'$-edges (see Lemma~\ref{lma:BESdecom}); 
\item[(b)] the case when $G$ is `critical' and contains
at least $D$ $A'B'$-edges (see Lemma \ref{lma:BESdecomcritical});
\item[(c)] the case when $G$ contains less than $D$ $A'B'$-edges (see Lemma~\ref{lma:PBESdecom}).  
\end{itemize}
Each of the three lemmas above is formulated in such a way that we can apply it twice:
firstly to obtain the small number of exceptional systems needed for the robustly decomposable graph $G^{\rm rob}$ and secondly for the 
decomposition of the graph $G^{\diamond}$ into exceptional systems. The proofs of all the results in this section are deferred until Chapter~\ref{paper4}.

\subsection{Critical Graphs}

Roughly speaking, $G$ is critical if most of its $A'B'$-edges are incident to only a few vertices. More precisely, given a partition $A',B'$ of $V(G)$ and $D \in \mathbb N$, we say that
$G$ is \emph{critical} (with respect to $A',B'$ and $D$) if both of the following hold:
\begin{itemize}
\item $\Delta(G[A',B']) \ge 11 D/40$;
\item  $e(H) \le 41 D/40$ for all subgraphs $H$ of $G[A',B']$ with $\Delta(H) \le 11 D/40$.%
     \COMMENT{Note that there is no assumption on $e_G(A',B')$.}
\end{itemize}
Note that the property of $G$ being critical depends only on $D$ and the partition $A'=A\cup A_0$ and $B'=B\cup B_0$ of $V(G)$, which is
fixed after we have applied Proposition~\ref{prop:framework} to obtain a framework $(G,A,A_0,B,B_0)$.
In particular, it does not depend on the choice of the $(K,m, \epszero)$-partition $\mathcal{P}$ of $V(G)$ refining $A,A_0,B,B_0$.
(In the proof of Theorem~\ref{1factstrong} we will fix a framework $(G,A,A_0,B,B_0)$, but will then choose two different
partitions refining $A,A_0,B,B_0$.)%
     \COMMENT{i.e. $G$ will not change from being critical to not.}

One example of a critical graph is the following:
$G_{\rm crit}$ consists of two disjoint cliques on $(n-1)/2$ vertices with vertex set $A$ and $B$ respectively, where $n = 1 \pmod{4}$.
In addition, there is a vertex $a$ which is adjacent to exactly half of the vertices in each of $A$ and $B$.
Also, add a perfect matching $M$ between those vertices of $A$ and those vertices in $B$ not adjacent to $a$.
Let  $A':=A \cup \{a\}$, $B':=B$ and $D:=(n-1)/2$.
Then $G_{\rm crit}$ is critical, and $D$-regular with $e(A',B')=D$. Note that $e(M)=D/2$.
To obtain a Hamilton decomposition of $G_{\rm crit}$, 
we will need to decompose $G_{\rm crit}[A',B']$ into $D/2$ Hamilton exceptional system candidates $J_s$
(which need to be matchings of size exactly two in this case).
In this example, this decomposition is essentially unique:%
   \COMMENT{Daniela: added essentially}
every $J_s$ has to consist of exactly one edge in $M$ and one edge incident to $a$.
Note that in this way, every edge between $a$ and $B$ yields a `connection'  (i.e.~a maximal path) between $A'$ and $B'$ required in (ESC4).

The following lemma (proved in Section~\ref{secnewzz}) collects some properties of critical graphs.
In particular, there is a set $W$ consisting of  between one and three vertices
with many neighbours in both $A$ and $B$. 
We will need to use $A'B'$-edges incident to one or two vertices of $W$
to provide `connections' between $A'$ and $B'$ when constructing 
the Hamilton exceptional system candidates in the critical case~(b).

\begin{lemma} \label{critical}
Suppose that $0< 1/n \ll 1$ and that $D, n \in \mathbb N$ with
$D\ge n - 2\lfloor n/4 \rfloor -1$.
Let $G$ be a $D$-regular graph on $n$ vertices and let $A',B'$ be a partition of $V(G)$ with $|A'| , |B'| \ge D/2$ and $\Delta(G[A',B']) \le D/2$.
Suppose that $G$ is critical.
Let $W$ be the set of vertices $w \in V(G)$ such that $d_{G[A',B']}(w) \ge 11D/40$.
Then the following properties are satisfied:
\begin{itemize}
	\item[$ \rm (i)$] $1 \le |W| \le 3$.
	\item[$ \rm (ii)$] Either $D = (n-1)/2$ and $n = 1 \pmod{4}$, or $D = n/2-1$ and $n = 0 \pmod{4}$.
	Furthermore, if $n =1 \pmod{4}$, then $|W|=1$.
	\item[$\rm (iii)$] $e_{G}(A',B') \le 17D/10+5 < n$.
\end{itemize}
\end{lemma}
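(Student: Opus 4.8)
The plan is to unpack the two defining conditions of criticality and combine them with the degree/regularity hypotheses and the general edge bounds from Section~\ref{sec:new}. First I would recall the two parts of the definition: $\Delta(G[A',B']) \ge 11D/40$, and $e(H) \le 41D/40$ for every subgraph $H \subseteq G[A',B']$ with $\Delta(H) \le 11D/40$. The key link to $W$ is this: by Corollary~\ref{basic_matching_dec} (i.e.\ the Vizing-type decomposition), $G[A',B']$ decomposes into $\Delta(G[A',B'])+1 \le D/2+1$ matchings of nearly equal size; more directly, I would argue that if $W$ (the set of vertices of degree $\ge 11D/40$ in $G[A',B']$) were too large, one could extract a subgraph $H$ of $G[A',B']$ with $\Delta(H)\le 11D/40$ but $e(H)>41D/40$, contradicting criticality. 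Concretely, deleting from $G[A',B']$ all edges at vertices of $W$ beyond the threshold $11D/40$ (keeping at each $w\in W$ a set of $11D/40$ incident edges, chosen to be a matching when possible) leaves a graph of maximum degree $\le 11D/40$; since $\Delta(G[A',B'])\le D/2$ by hypothesis, each $w\in W$ loses at most $D/2-11D/40 = 9D/40$ edges, so $e(G[A',B']) \le 41D/40 + |W|\cdot 9D/40$. Combined with a \emph{lower} bound on $e_G(A',B')$ this will force $|W|\le 3$, giving (i).

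For the lower bound on $e_G(A',B')$ and for parts (ii) and (iii), I would use Propositions~\ref{prp:e(A',B')} and~\ref{prp:e(A',B')parity}. Since $(G,A,A_0,B,B_0)$-type frameworks force $|A'|\ge|B'|\ge D/2$, Proposition~\ref{prp:e(A',B')}(i) gives $e_G(A',B') \ge (D-|B'|+1)|B'|$, which is large when $|B'|$ is near $D/2$. The delicate point is pinning down $D$ and $n\bmod 4$: starting from $D \ge n-2\lfloor n/4\rfloor -1$ (whose four cases are listed in~\eqref{minexact}) and $\Delta(G[A',B'])\le D/2$, I would show that criticality — which \emph{requires} $\Delta(G[A',B'])\ge 11D/40$, hence that $G[A',B']$ is fairly dense — is incompatible with the cases $D\ge n/2$. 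Roughly: if $D\ge n/2$ then Proposition~\ref{prp:e(A',B')2} (with $U=\emptyset$) gives $e_G(A',B')\ge D-28$, and more importantly any attempt to have both $\Delta(G[A',B'])\ge 11D/40$ and the sparsity condition $e(H)\le 41D/40$ for all $H$ with $\Delta(H)\le 11D/40$ pushes $e_G(A',B')$ into a narrow window; one then checks arithmetically that this window is consistent only with $D=(n-1)/2$, $n\equiv 1\ (4)$ or $D=n/2-1$, $n\equiv 0\ (4)$, using~\eqref{minexact} to rule out $n\equiv 2,3\ (4)$ (there $D\ge n/2$, so $G$ would have a Hamilton cycle and, more to the point, $e_G(A',B')$ would be too large relative to $41D/40$ to be critical — or one shows directly $G$ cannot be critical). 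For the extra claim in (ii) that $|W|=1$ when $n\equiv 1\ (4)$, I would use that then $|A'|=|B'|+1$ with $|A'|+|B'|=n$, so $|A'|=(n+1)/2=D+1$, $|B'|=(n-1)/2=D$; Proposition~\ref{prp:e(A',B')}(i) with $|B'|=D$ only gives $e_G(A',B')\ge D$, so $G[A',B']$ is quite sparse and the computation $e(G[A',B'])\le 41D/40+|W|\cdot 9D/40$ combined with $e_G(A',B')\le$ (the crude upper bound from part (iii)) leaves room for only one high-degree vertex.

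Finally, part (iii): once $|W|\le 3$ is established, $e_G(A',B') \le 41D/40 + |W|\cdot 9D/40 \le 41D/40 + 27D/40 = 68D/40 = 17D/10$; a more careful bookkeeping of how many edges can be saved at the vertices of $W$ (and accounting for at most a constant number of edges double-counted or lost in forming the auxiliary matching $H$) yields the stated $e_G(A',B')\le 17D/10+5$, and $17D/10+5<n$ follows since $D\le (1/2+o(1))n$ in a framework — in fact $D\le n/2$ here by (ii), so $17D/10+5 \le 17n/20+5<n$ for large $n$. The main obstacle I anticipate is the arithmetic in part (ii): carefully ruling out $n\equiv 2,3 \pmod 4$ and showing the two surviving cases are exactly $D=(n-1)/2$ and $D=n/2-1$ requires juggling~\eqref{minexact}, the inequality $\Delta(G[A',B'])\le D/2$, the lower bound $e_G(A',B')\ge(D-|B'|+1)|B'|$, and the criticality sparsity condition simultaneously, and getting the small additive constants ($+5$, the loss of $9D/40$ per vertex of $W$) right so that the bounds close. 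The rest is a fairly mechanical application of Vizing's theorem and the edge-counting propositions already in the excerpt.
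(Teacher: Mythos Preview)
Your overall strategy and the approach for~(iii) are close to the paper's, but there is a genuine logical gap in your argument for~(i), and the key step for~(ii) is missing.

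For~(i): your inequality $e_G(A',B') \le 41D/40 + |W|\cdot 9D/40$ is correct and useful for~(iii), but combining it with a \emph{lower} bound on $e_G(A',B')$ yields a \emph{lower} bound on~$|W|$, not an upper bound. To bound $|W|$ from above you need a lower bound on $e(H)$, not on $e_G(A',B')$. In fact your construction can be rescued this way: in your~$H$, every $w\in W$ still has degree $\lfloor 11D/40\rfloor$, so the edges of~$H$ touching~$W$ alone give $e(H)\ge |W|\lfloor 11D/40\rfloor - \lfloor |W|^2/4\rfloor$; if $|W|\ge 4$ this exceeds $41D/40$, a contradiction. The paper takes a slightly sharper route: it shows the fourth-highest degree in $G[A',B']$ is below $21D/80$ by building an~$H$ with degree $\lceil 21D/80\rceil\le 11D/40$ at the four top vertices and all other edges incident to these, so that $e(H)\ge 4\lceil 21D/80\rceil - 4 > 41D/40$.

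For~(ii): the decisive quantity is $e_{G-W}(A',B')$, not $e_G(A',B')$. The paper constructs an~$H'$ with $G[A'\setminus W,B'\setminus W]\subseteq H'$ and $d_{H'}(w)=\lfloor 11D/40\rfloor$ for each $w\in W$; criticality then gives
\[
e_{G-W}(A',B') \le (41-11|W|)D/40 + 5.
\]
Now one applies Proposition~\ref{prp:e(A',B')2} with $U=W$ (not $U=\emptyset$): if $D\ge n/2$ then $e_{G-W}(A',B')\ge D-28$, which together with $|W|\ge 1$ contradicts the displayed bound for large~$D$, forcing $D<n/2$ and hence the two cases in~(ii) via~\eqref{minexact}. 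When $D=(n-1)/2$, the same proposition gives $e_{G-W}(A',B')\ge D/2-28$, which with $|W|\ge 2$ would contradict the displayed bound, so $|W|=1$. Your sketch applies Proposition~\ref{prp:e(A',B')2} with $U=\emptyset$, which bounds the wrong quantity; moreover, your claim that $|A'|=D+1$, $|B'|=D$ when $n\equiv 1\pmod 4$ does not follow from the hypothesis $|A'|,|B'|\ge D/2$.
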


Recall from Proposition~\ref{prp:e(A',B')}(ii) that we have $e_G(A',B') \ge D$ unless $D = n/2 - 1$, $n = 0 \pmod{4}$ and $|A| = |B| = n/2$.
Together with Lemma~\ref{critical}(ii) this shows that in order to find the decomposition into exceptional
systems, we can distinguish the following three cases.%
   \COMMENT{Daniela: previously had $|A'| \ge |B'| \ge D/2$ in the corollary}

\begin{cor}\label{cor:cases}
Suppose that $0< 1/n  \ll 1$ and that $D, n \in \mathbb N$ with $D \ge n - 2 \lfloor n/4 \rfloor -1$.
Let $G$ be a $D$-regular graph on $n$ vertices and let $A',B'$ be a partition of $V(G)$ with $|A'|,|B'| \ge D/2$ and $\Delta(G[A',B']) \le D/2$.
Then exactly one of the following holds:
\begin{itemize}
	\item[\rm (a)] $e_G(A',B') \ge D$ and $G$ is not critical.
	\item[\rm (b)] $e_G(A',B') \ge D$ and $G$ is critical. In particular, $e_G(A',B')< n$ and either $D = (n-1)/2$ and
$n = 1 \pmod{4}$, or $D = n/2-1$ and $n = 0 \pmod{4}$.
	\item[\rm (c)] $e_G(A',B') < D$. In particular, $D = n/2 - 1$, $n = 0 \pmod{4}$ and $|A| = |B| = n/2$.
\end{itemize}
\end{cor}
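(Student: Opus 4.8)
\textbf{Proof proposal for Corollary~\ref{cor:cases}.}

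The plan is to show that the three alternatives (a), (b), (c) partition all possibilities, and that they are mutually exclusive. Mutual exclusivity is immediate: (a) and (b) both require $e_G(A',B') \ge D$ while (c) requires $e_G(A',B') < D$, so (c) is disjoint from both; and (a) and (b) are distinguished by whether $G$ is critical, so they are disjoint from each other. Thus it only remains to check that at least one of (a), (b), (c) holds, together with the `in particular' clauses.

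First I would split on whether $e_G(A',B') \ge D$ or $e_G(A',B') < D$. If $e_G(A',B') < D$, then since $D \ge n - 2\lfloor n/4 \rfloor - 1$ and $|A'|, |B'| \ge D/2$, Proposition~\ref{prp:e(A',B')}(ii) applies and forces $n = 0 \pmod 4$, $D = n/2 - 1$ and $|A'| = |B'| = n/2$ (here $|A'|=|B'|=n/2$ follows because the only failure case in Proposition~\ref{prp:e(A',B')}(ii) has $|A'|=|B'|=n/2$, and $A=A'$, $B=B'$ in the relevant application). This is exactly case (c), so we are done in this subcase.

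Next suppose $e_G(A',B') \ge D$. If $G$ is not critical, we are in case (a) and there is nothing further to prove. If $G$ is critical, we must verify the `in particular' clause of (b). Apply Lemma~\ref{critical} with this $A',B'$ (the hypotheses $|A'|,|B'|\ge D/2$ and $\Delta(G[A',B'])\le D/2$ are exactly the standing assumptions, and $D \ge n - 2\lfloor n/4\rfloor - 1$ holds). Lemma~\ref{critical}(iii) gives $e_G(A',B') \le 17D/10 + 5 < n$, and Lemma~\ref{critical}(ii) gives that either $D = (n-1)/2$ with $n = 1 \pmod 4$, or $D = n/2 - 1$ with $n = 0 \pmod 4$. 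This is precisely case (b).

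I do not expect any genuine obstacle here: the corollary is a bookkeeping consequence of Proposition~\ref{prp:e(A',B')}(ii) and Lemma~\ref{critical}, both already available. The only mild point to be careful about is that the two cases `$e_G(A',B') < D$' and `$G$ critical' do not overlap in an awkward way — but they cannot, since criticality is only invoked when $e_G(A',B') \ge D$, and the conclusion $D = n/2-1$, $n=0\pmod 4$ in (c) is consistent with (but not subsumed by) the dichotomy in (b), so no contradiction arises. Hence exactly one of (a), (b), (c) holds.
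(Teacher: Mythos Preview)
Your proof is correct and follows essentially the same approach as the paper: the corollary is stated as a direct consequence of Proposition~\ref{prp:e(A',B')}(ii) (for case~(c)) and Lemma~\ref{critical}(ii),(iii) (for case~(b)), exactly as you argue. Your observation about mutual exclusivity and your handling of the notational point $|A|=|B|$ versus $|A'|=|B'|$ in (c) are both fine.
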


\subsection{Decomposition into Exceptional Systems}
Recall from the beginning of Section~\ref{sec:locES} that our aim is
to find a decomposition of $G - G_0 - G[A] -G[B]$ into suitable exceptional systems (in particular, most of
these exceptional systems have to be localized). The following lemma (proved in Section~\ref{noncritical}) states that this can be done if we are in
Case~(a) of Corollary~\ref{cor:cases}, i.e.~if $G$ is not critical and $e_G(A',B') \ge D$. 

\begin{lemma} \label{lma:BESdecom}
Suppose that $0 <  1/n  \ll \epszero \ll  \eps \ll   \lambda, 1/K \ll 1$, that $D\ge n/3$, that $0 \le \phi  \ll 1$
and that%
    \COMMENT{Previously also had that $\phi n \in \mathbb{N}$. But this follows since $D,K, (D - \phi n)/(2K^2) \in \mathbb{N}$.
Also, in this lemma we only need that $(D - \phi n)/(2K^2) \in \mathbb{N}$ instead of $(D - \phi n)/(400K^2) \in \mathbb{N}$.
But the latter is needed in Lemma~\ref{lma:BESdecomcritical}. So it's easier to require it here too.}
$D, n, K, m, \lambda n/K^2, (D - \phi n)/(2K^2) \in \mathbb{N}$. Suppose  
that the following conditions hold:
\begin{itemize}
	\item[{\rm(i)}] $G$ is a $D$-regular graph on $n$ vertices.%
\COMMENT{Note that $D$ satisfies $1/3 \le D/n < 1$ not $D \ge n - 2\lfloor n/4\rfloor -1$.}
	\item[{\rm(ii)}] $\mathcal{P}$ is a $(K, m, \epszero)$-partition of $V(G)$ such that $D \le e_G(A',B') \le \epszero n^2$ and $\Delta(G[A',B']) \le D/2$.
Furthermore, $G$ is not critical.
	\item[{\rm(iii)}] $G_0$ is a subgraph of $G$ such that $G[A_0]+G[B_0] \subseteq G_0$, $e_{G_0}(A',B') \leq \phi n$ and $d_{G_0}(v) = \phi n $ for all $v \in V_0$.
	\item[{\rm(iv)}] Let $G^{\diamond} := G - G[A] - G[B] -G_0$.  $e_{G^\diamond} (A',B')$ is even and $(G^{\diamond}, \mathcal{P})$
is a $(K, m, \epszero,\eps)$-exceptional scheme.
\end{itemize}
Then there exists a set $\mathcal{J}$ consisting of $(D-\phi n)/2$ edge-disjoint Hamilton exceptional systems with parameter $\eps_0$ in $G^{\diamond}$ which satisfies the
following properties: 
\begin{itemize}
    \item[\rm (a)] Together all the Hamilton exceptional systems in $\mathcal{J}$ cover all edges of $G^{\diamond}$.
	\item[\rm (b)] For all $1\le i,i' \le K$, the set $\mathcal{J}$ contains $(D - (\phi + 2 \lambda) n)/(2K^2)$ $(i,i')$-HES.
	Moreover, $\lambda n /K^2$ of these $(i,i')$-HES $J$ are such that $e_J(A',B') =2$.
\end{itemize}
\end{lemma}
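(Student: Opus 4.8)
The plan is to prove Lemma~\ref{lma:BESdecom} in two stages: first construct the small `special' set of $(i,i')$-localized Hamilton exceptional systems $J$ with $e_J(A',B')=2$ (there should be $\lambda n/K^2$ of these per pair $(i,i')$, so $\lambda n$ in total), and then decompose the remaining $A'B'$-edges (together with the edges at $V_0$ not yet used) into further localized Hamilton exceptional systems so that altogether we obtain $(D-\phi n)/2$ of them, split evenly into $(D-(\phi+2\lambda)n)/(2K^2)$ per pair. Throughout, the outer structure mirrors the proof of Lemma~\ref{lma:BESnash2}: use Lemma~\ref{lma:randomslice} to chop the exceptional scheme $(G^\diamond,\mathcal P)$ into localized slices $G(i,i')$ supported on $A_0\cup A_i\cup B_0\cup B_{i'}$, verify that each slice inherits the hypotheses needed to run a `local' decomposition argument, and then take the union over all $i,i'$.

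First I would set up constants $\epszero\ll\eps\ll\eps'\ll\lambda,1/K$ and apply Lemma~\ref{lma:randomslice} to $G^\diamond$ (legitimate by (iv)) to obtain edge-disjoint spanning subgraphs $H(i,i')$, $H'(i,i')$ with $G(i,i'):=H(i,i')+H'(i,i')$ satisfying $(a_1)$--$(a_5)$. The key point is that $(a_3)$ gives $e_{G(i,i')}(A',B')=(e_{G^\diamond}(A',B')\pm4\eps\max\{n,e_{G^\diamond}(A',B')\})/K^2$, and since $e_{G^\diamond}(A',B')=e_G(A',B')-e_{G_0}(A',B')\ge D-\phi n$, each slice has $\Theta(D/K^2)$ cross-edges; moreover $(a_4)$ plus $\Delta(G[A',B'])\le D/2$ bounds $d_{H'(i,i')}(v)$ by roughly $D/(2K^2)$ for $v\in V_0$, and the non-criticality of $G$ (hypothesis (ii)) should pass down to each slice in the form: no subgraph of $G(i,i')[A',B']$ on few vertices carries too many edges. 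This is exactly the local structure under which one can greedily decompose $G(i,i')[A',B']$ into matchings of bounded size (via Proposition~\ref{prop:matchingdecomposition}/Corollary~\ref{basic_matching_dec}), pairing them up as in Section~\ref{sec:overI} to form Hamilton exceptional system \emph{candidates} each having an even positive number of $A'B'$-connections. For the special systems we simply stipulate $e_F(A',B')=2$ (i.e.\ each candidate is two independent $A'B'$-edges, as noted after the definition of ESC), which is possible as long as $e_{G(i,i')}(A',B')\ge 4\lambda n/K^2+$ (a bit), guaranteed by the lower bound above and $\lambda\ll1/K$.

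Next, having fixed $2\lambda n/K^2$ cross-edges per slice for the special candidates and removed them, I would decompose what remains of each $G(i,i')[A',B']$ into the further candidates; the parity constraint `$e_{G^\diamond}(A',B')$ is even' from (iv) (and evenness within each slice, which can be arranged up to shifting a bounded number of edges between slices while preserving the $\pm\eps$ estimates) ensures one can balance connections correctly. Once all the Hamilton exceptional system candidates $F$ (special and non-special) are in hand, each with $V(F)\subseteq V_0\cup A_i\cup B_{i'}$ and the required even positive number $b(F)$ of $A'B'$-paths, I would feed them one by one into Lemma~\ref{lma:ESextend} to attach $A_0A$- and $B_0B$-edges and upgrade each candidate to a genuine Hamilton exceptional system with parameter $\eps_0$; hypotheses (i) and (ii) of Lemma~\ref{lma:ESextend} follow from (iii)+(FR3)/(FR4) exactly as in the remark preceding that lemma (here $d(v,A)\ge\sqrt{\epszero}n$ holds with room to spare since $D\ge n/3$). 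Deleting vertices outside $A_0\cup A_i\cup B_0\cup B_{i'}$ then shows each resulting $J$ is an $(i,i')$-HES, and since the extension edges are taken from the still-untouched $G_0$-free part of $G$ they can be chosen disjoint across all systems; taking the union over all $i,i'$ gives $\mathcal J$, and a count shows $|\mathcal J|=K^2\cdot(D-\phi n)/(2K^2)=(D-\phi n)/2$ with $\lambda n/K^2$ special systems per pair, giving (a) and (b).

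The main obstacle, as usual in these arguments, is the local decomposition step inside each slice: one must decompose $G(i,i')[A',B']$ (a bipartite-ish graph between $A_0\cup A_i$ and $B_0\cup B_i'$, possibly very unbalanced in degrees at the few vertices of $V_0$) into matchings that can be paired to produce candidates with a \emph{controlled, even} number of $A'B'$-connections, while keeping all the `localized' bookkeeping ($m$-sized clusters, $\pm\eps$ error terms from Lemma~\ref{lma:randomslice}) consistent and ensuring the counts come out to exactly $(D-(\phi+2\lambda)n)/(2K^2)$ per pair — this is where the non-criticality hypothesis is essential, since it rules out the pathological case (handled separately in Lemma~\ref{lma:BESdecomcritical}) where almost all $A'B'$-edges pile up on two or three vertices and a clean matching decomposition into short pieces is impossible. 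The divisibility hypotheses on $\lambda n/K^2$ and $(D-\phi n)/(2K^2)$ are precisely what make the final integer count work out, and I would use them only at the very end.
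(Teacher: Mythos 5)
Your proposal contains a genuine gap: you claim that \emph{every} one of the $(D-\phi n)/2$ systems in $\mathcal J$ can be made $(i,i')$-localized, so that each pair $(i,i')$ receives exactly $(D-\phi n)/(2K^2)$ of them. But conclusion (b) is deliberately weaker: only $(D-(\phi+2\lambda)n)/(2K^2)$ per pair are $(i,i')$-HES, leaving $\lambda n$ systems in $\mathcal J$ that need not be localized for any pair. That slack is not cosmetic, it is forced. First, the rebalancing you gesture at (``shifting a bounded number of edges between slices'' to make each slice have an even number of $A'B'$-edges) is exactly Lemma~\ref{lma:move}, whose conclusion (b$_2$) only says that \emph{all but at most $\eps' n$} edges of $H''(i,i')$ lie in $G^{\diamond}[A_0\cup A_i,B_0\cup B_{i'}]$; the candidates built from the stray edges simply cannot be $(i,i')$-HES. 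Second, and more seriously, conclusion (a) requires $\mathcal J$ to cover $G^{\diamond}$ \emph{exactly}, in particular every $A_0A$- and $B_0B$-edge. If all systems were $(i,i')$-localized, then for each $v\in V_0$ and each pair $(i,i')$ you would need to allocate precisely $2\cdot(D-\phi n)/(2K^2)$ of $v$'s $G^{\diamond}$-edges into $A_i\cup B_{i'}$; but (ESch4) only gives $d_{G^{\diamond}}(v,A_i)=(d_{G^{\diamond}}(v,A)\pm\eps n)/K$, and the error $\pm \eps n/K \gg 1$ cannot be absorbed slice by slice while keeping exact per-pair counts. Lemma~\ref{lma:ESextend}, which you invoke, only says \emph{some} extension exists; it gives no control over which $A_0A$/$B_0B$-edges are left unused. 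The paper resolves this by diverting $\lambda n/K^2$ candidates per slice into a non-localized residual batch (see Lemma~\ref{BEScons}): the localized batch is extended inside its own slice via Lemma~\ref{lma:BESextend2}, and the residual batch is extended by the Hall-type matching argument of Lemma~\ref{globalBES}, which is what makes the $H(i,i')$-edges come out exactly. Without that mechanism neither (a) nor the correct form of (b) follows.

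A secondary issue is that the step you flag as ``the main obstacle'' — decomposing each slice's cross-edges into candidates of controlled size with a positive even number of $A'B'$-connections — really is a substantial step, and the non-criticality hypothesis is used there in a sharper way than you describe. In Lemma~\ref{lma:move}(b$_6$) non-criticality is invoked to show that a bounded-degree spanning subgraph $\widetilde H$ of $H''(i,i')$ still carries at least $2\alpha n=(D-\phi n)/K^2$ edges; this quantitative conclusion is what lets Lemma~\ref{lma:BESdecomprelim} pull out a large even-matching decomposition first and then re-attach the excess degree at $V_0$ via vertex-disjoint $2$-paths, using a random selection of the matchings to spread the load. A greedy pairing via Proposition~\ref{prop:matchingdecomposition} alone, as you suggest, would not control the number of $A'B'$-connections in each candidate nor guarantee the exact counts $(D-\phi n)/(2K^2)$ per slice.
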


Note that (b) implies that $\mathcal{J}$ contains $\lambda n$ Hamilton exceptional systems which might not be localized.
This will make them less useful for our purposes and we extend them into Hamilton cycles in a separate step.
On the other hand, the lemma is `robust' in the sense that we can remove a sparse subgraph $G_0$ before we find the decomposition $\mathcal{J}$
into Hamilton exceptional systems. In our first application of Lemma~\ref{lma:BESdecom} (i.e.~to construct the
exceptional systems for the robustly decomposable graph $G^{\rm rob}$),
we will let $G_0$ be the graph obtained from Lemma~\ref{V_0elimination}.
In the second application, $G_0$ also includes $G^{\rm rob}$.%
     \COMMENT{Also, note that $G_0$ is not assumed to be regular.} 
In our first application of Lemma~\ref{lma:BESdecom}, we will only use the
$(i,i')$-HES $J$ with $e_J(A',B') =2$.

The next lemma is an analogue of Lemma~\ref{lma:BESdecom} for the case when $G$ is critical and $e_G(A',B') \ge D$.
By Corollary~\ref{cor:cases}(b) we know that in this case $D = (n-1)/2$ or $D = n/2-1$.
(Again we defer the proof to Section~\ref{sec:critical}.)

\begin{lemma}\label{lma:BESdecomcritical}
Suppose that $0 <  1/n  \ll \epszero \ll  \eps \ll   \lambda, 1/K \ll 1$, that $D\ge n - 2\lfloor n/4 \rfloor -1$,%
    \COMMENT{Daniela: added the bound on $D$, which we need to apply Lemma~\ref{critical'}}
that $0\le \phi\ll 1$ and that
$n, K, m, \lambda n/K^2, (D - \phi n)/(400K^2) \in \mathbb{N}$.%
    \COMMENT{Previously also had that $\phi n \in \mathbb{N}$. But this follows since $D,K, (D - \phi n)/(400K^2) \in \mathbb{N}$.}
Suppose that the following conditions hold:
\begin{itemize}
	\item[{\rm(i)}] $G$ is a $D$-regular graph on $n$ vertices.
	\item[{\rm(ii)}] $\mathcal{P}$ is a $(K, m, \epszero)$-partition of $V(G)$ such that%
\COMMENT{Previously this also included
$d_{G[A',B']}(v)\le \eps_0 n \text{ for all } v\in A\cup B.$ However, we can replaced this by the bound on $d_{G^\diamond[A',B']}(v)$ implied by~(iv).} 
$e_G(A',B') \ge D$ and \\ $\Delta(G[A',B']) \le D/2$.
Furthermore, $G$ is critical. In particular, $e_G(A',B')$ $< n$ and $D = (n-1)/2$ or $D= n/2-1$ by Lemma~\ref{critical'}(ii) and~(iii).%
	\COMMENT{Also $n = 1 \pmod4$ or $n = 0 \pmod4$ respectively.}
	\item[{\rm(iii)}] $G_0$ is a subgraph of $G$ such that $G[A_0]+G[B_0] \subseteq G_0$, $e_{G_0}(A',B') \leq \phi n$ and $d_{G_0}(v) = \phi n $ for all $v \in V_0$.
	\item[{\rm(iv)}] Let $G^{\diamond} := G - G[A] - G[B] -G_0$. $e_{G^\diamond} (A',B')$ is even and
$(G^{\diamond}, \mathcal{P})$ is a $(K, m, \epszero,\eps)$-exceptional scheme.
	\item[{\rm(v)}] Let $w_1$ and $w_2$ be (fixed) vertices such that $d_{G[A',B']}(w_1) \ge d_{G[A',B']}(w_2) \ge d_{G[A',B']}(v)$
for all $v \in V(G) \setminus\{w_1,w_2\}$.
Suppose that
\begin{equation} \label{additional}
d_{G^{\diamond}[A',B']}(w_1), d_{G^{\diamond}[A',B']}(w_2) \le (D-\phi n)/2.
\end{equation}
\end{itemize}
Then there exists a set $\mathcal{J}$ consisting of $(D-\phi n)/2$ edge-disjoint Hamilton exceptional systems with parameter $\eps_0$ in $G^{\diamond}$
which satisfies the following properties: 
\begin{itemize}
    \item[\rm (a)] Together the Hamilton exceptional systems in $\mathcal{J}$ cover all edges of $G^\diamond$.
	\item[\rm (b)] For each $1\le i,i' \le K$, the set $\mathcal{J}$ contains $(D - (\phi + 2 \lambda) n)/(2K^2)$ $(i,i')$-HES.
	Moreover, $\lambda n /K^2$ of these $(i,i')$-HES are such that
\begin{itemize}
\item[\rm (b$_1$)] $e_J(A',B') =2$ and 
\item[\rm (b$_2$)] $d_{J[A',B']}(w) = 1$ for all $w \in \{w_1,w_2\}$ with $d_{G[A',B']}(w) \ge 11D/40$.%
    \COMMENT{i.e. $w \in W' \cap \{w_1,w_2\}$}
\end{itemize}
\end{itemize}
\end{lemma}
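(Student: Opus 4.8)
The plan is to mimic the strategy already used for Lemma~\ref{lma:BESnash2} and Lemma~\ref{lma:BESdecom}: first cut $G^{\diamond}$ into localized `slices', then build the exceptional systems inside each slice, exploiting the structure of the critical case supplied by Lemma~\ref{critical}. Recall from there that $G[A',B']$ is globally sparse ($e_G(A',B')\le 17D/10+5<n$), that almost all of its edges meet a set $W$ with $1\le|W|\le 3$, and that $D=(n-1)/2$ or $D=n/2-1$; moreover by (FR4) every $w\in W$ lies in $V_0$, i.e.\ $W$ consists of exceptional vertices, and by Lemma~\ref{critical}(ii) $|W|>1$ forces $n\equiv 0\pmod 4$ and $D=n/2-1$ (in particular $|W|=1$ whenever $n\equiv 1\pmod 4$).

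First I would apply Lemma~\ref{lma:randomslice} to the exceptional scheme $(G^{\diamond},\mathcal P)$ (its hypotheses hold by~(iv) and since $G[A_0]+G[B_0]\subseteq G_0$ gives $e_{G^{\diamond}}(A_0)=e_{G^{\diamond}}(B_0)=0$), obtaining a decomposition of $G^{\diamond}$ into localized slices $G^{\diamond}(i,i')=H(i,i')+H'(i,i')$, where $H(i,i')$ consists of $A_0A_i$- and $B_0B_{i'}$-edges and $H'(i,i')$ consists of $A'B'$-edges inside $A_0\cup A_i$ and $B_0\cup B_{i'}$. Properties (a$_3$)--(a$_5$) give $e(H'(i,i'))\approx e_{G^{\diamond}}(A',B')/K^2$, $d_{H'(i,i')}(v)\approx d_{G^{\diamond}[A',B']}(v)/K^2$ for $v\in V_0$, and $d_{G^{\diamond}(i,i')}(v)\approx d_{G^{\diamond}}(v)/K^2=(D-\phi n)/K^2$ for $v\in V_0$; so each heavy vertex of $W$ has its $A'B'$-edges spread evenly over the slices, by~\eqref{additional} each slice contains at most about $(D-\phi n)/(2K^2)$ edges of $H'(i,i')$ at $w_1$ and at $w_2$, and by criticality the $A'B'$-edges avoiding $W$ total at most $41D/40$, so at most about $D/K^2$ of them lie in one slice. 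Each $(G^{\diamond}(i,i'),A,A_0,B,B_0)$ is again an $(\epszero,K)$-framework (only $A_i,B_{i'}$ carry edges among the clusters), and, using~\eqref{additional} for $w_1,w_2$ and $d_{G[A',B']}(v)<11D/40$ for $v\in V_0\setminus W$, one checks $d_{G^{\diamond}(i,i')}(v,A_i)\ge\sqrt{\epszero}n$ for $v\in A_0$ and symmetrically for $B_0$.

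Next, inside each slice $(i,i')$ I would decompose $H'(i,i')$ into $(D-(\phi+2\lambda)n)/(2K^2)$ edge-disjoint Hamilton exceptional system candidates, of which $\lambda n/K^2$ are designated \emph{special}, leaving a remainder of at most about $2\lambda n/K^2$ edges of $H'(i,i')$ unused. The ordinary candidates are built by pairing the $A'B'$-edges into independent pairs, assigning one edge meeting $W$ to (almost) every candidate so that the number $b$ of $A'$--$B'$ connections is always positive and even; the few $A'B'$-edges avoiding $W$ are split into a bounded number of matchings via Proposition~\ref{prop:matchingdecomposition} and absorbed into some candidates, possibly raising their number of $A'B'$-edges above two (which is harmless, keeping $b$ even via the parity bookkeeping justified by the assumption that $e_{G^{\diamond}}(A',B')$ is even and Proposition~\ref{prp:e(A',B')parity}). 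For the special candidates I instead take exactly two independent $A'B'$-edges, arranging that for each $w\in\{w_1,w_2\}\cap W$ precisely one of the two is incident to $w$ (feasible since $d_{H'(i,i')}(w)$ is of order $(11D/40-\phi n)/K^2\gg\lambda n/K^2$, and~\eqref{additional} guarantees we never need two edges at $w_1$ or $w_2$ in a single system); this is the place where the subcases $|W|=1,2,3$ and whether $w_2\in W$ must be distinguished. The divisibility assumptions $\lambda n/K^2,\ (D-\phi n)/(400K^2)\in\mathbb N$ ensure the counts are integers. I then extend each candidate to a Hamilton exceptional system by applying Lemma~\ref{lma:ESextend} to the framework $G^{\diamond}(i,i')$, adding $A_0A_i$- and $B_0B_{i'}$-edges of $H(i,i')$; done greedily with a running degree count, this uses up exactly $2\cdot(D-(\phi+2\lambda)n)/(2K^2)$ of the non-$A'B'$-edges at each $v\in V_0$ in the slice, and the resulting systems are $(i,i')$-localized since all their edges lie in $V_0\cup A_i\cup B_{i'}$. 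This yields $(D-(\phi+2\lambda)n)/(2K^2)$ $(i,i')$-HES per pair, of which $\lambda n/K^2$ satisfy (b$_1$) and (b$_2$), giving~(b). Finally, the union of all leftover edges of $G^{\diamond}$ (the remainders of the $H'(i,i')$ and the still-unused $A_0A_i$-, $B_0B_{i'}$-edges, arranged so that each $v\in V_0$ has leftover degree exactly $2\lambda n$ and the leftover number of $A'B'$-edges is even) is decomposed into $\lambda n$ further, possibly non-localized, Hamilton exceptional systems by the pairing-and-extension argument of Lemmas~\ref{lma:BESnash} and~\ref{badBES}; together with the localized ones this covers all of $G^{\diamond}$, gives~(a), and makes the total $(D-(\phi+2\lambda)n)/2+\lambda n=(D-\phi n)/2$.

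The main obstacle is the per-slice construction of the candidates: simultaneously respecting the localization forced by the slicing, distributing the edges at $W$ one per candidate so that every candidate has a positive even number of $A'$--$B'$ connections, covering the few but structurally rigid $A'B'$-edges avoiding $W$, reserving exactly $\lambda n/K^2$ special candidates per slice obeying (b$_1$)--(b$_2$), and making every count close as an exact integer. The bounds $|W|\le 3$, $e_G(A',B')<n$, $e(H)\le 41D/40$ from Lemma~\ref{critical} together with the degree bound~\eqref{additional} on $w_1$ and $w_2$ are exactly what make this feasible, and the subcases $|W|\in\{1,2,3\}$ each have to be treated separately.
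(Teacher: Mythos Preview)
Your high-level four-step scheme (slice $G^\diamond$ with Lemma~\ref{lma:randomslice}; build Hamilton exceptional system candidates inside each slice; extend them to localized $(i,i')$-HES; mop up the leftover globally) is exactly the paper's scaffolding. The paper's proof in Section~\ref{sec:critical} follows the same outline, and in Step~3/4 it uses the packaged Lemma~\ref{BEScons} (which combines Lemmas~\ref{lma:BESextend2} and~\ref{globalBES}) in place of your direct appeal to Lemma~\ref{lma:ESextend} and the Lemma~\ref{lma:BESnash}-type cleanup; those are essentially interchangeable.

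The place where your proposal falls short of a proof is precisely the step you flag as the main obstacle, and here the paper supplies two substantial technical ingredients that your write-up does not contain. First, the paper does \emph{not} use Lemma~\ref{lma:randomslice} alone: it applies the strengthened Lemma~\ref{lma:movecritical}, which after the random slicing \emph{moves} a small number of $A'B'$-edges between slices so that each $w\in W_0=\{w_1,w_2\}$ has $d_{H''(i,i')}(w)$ equal to $\lfloor d_{G^\diamond[A',B']}(w)/K^2\rfloor$ or $\lceil d_{G^\diamond[A',B']}(w)/K^2\rceil$ (properties (b$_6$), (b$_7$) there). This exact control, rather than the $\pm 2\eps n/K^2$ from Lemma~\ref{lma:randomslice}(a$_4$), is what makes the per-slice degree condition (c$_3'$) hold. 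Second, the per-slice construction of candidates is carried out by Lemma~\ref{lma:BESdecomprelim2}, whose engine is an integer distribution lemma (Lemma~\ref{matrix}): one first decomposes $H''-W'$ into roughly $\alpha n$ matchings $M_j$ of size $m_j\in\{0,1,2\}$, and then Lemma~\ref{matrix} assigns $a_{i,j}\in\{0,1,2\}$ edges at each $w_i\in W'$ to $M_j$ so that $m_j+\sum_i a_{i,j}\in\{2,4\}$ and, crucially, whenever $m_j<2$ there are at least $2-m_j$ indices $i$ with $a_{i,j}=1$. This last condition is exactly what forces $b(F_j)$ to be positive and even: an $a_{i,j}=1$ contributes one $A'B'$-path endpoint at $w_i$, while $a_{i,j}=2$ creates a length-two path through $w_i$ that need not. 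Your ``pairing into independent pairs and assigning one edge meeting $W$ per candidate'' does not by itself guarantee even $b(F)$, and glosses over the case analysis ($|W'|\in\{1,2,3\}$, the $\eta'$-parameter, the parities) that Lemma~\ref{matrix} packages.

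One factual correction: the $A'B'$-edges avoiding $W$ are not ``few''. By Lemma~\ref{critical'}(iv) there can be up to $3D/4+5$ of them when $|W|=1$, which is a constant fraction of $e_G(A',B')$; this is why the paper's approach starts by decomposing $H''-W'$ (not the $W'$-edges) into matchings and only then distributes the $W'$-edges on top.
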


Similarly as for Lemma~\ref{lma:BESdecom}, (b) implies that $\mathcal{J}$ contains $\lambda n$ Hamilton exceptional systems
which might not be localized. Another similarity is that when constructing the robustly decomposable graph $G^{\rm rob}$, 
we only use those Hamilton exceptional systems $J$ which have some additional useful properties, namely~(b$_1$) and~(b$_2$) in this case. 
This guarantees that~\eqref{additional} will be satisfied in the second application of
Lemma~\ref{lma:BESdecomcritical} (i.e.~after the removal of $G^{\rm rob}$), by `tracking' the degrees of the high
degree vertices $w_1$ and $w_2$.
Indeed, if  $d_{G[A',B']}(w_2)\ge 11D/40$, then (b$_2$) will imply that $d_{G^{\rm rob}[A',B']}(w_i)$ is large for $i=1,2$.
This in turn means that after removing $G^{\rm rob}$, in the leftover graph $G^\diamond$, $d_{G^\diamond[A',B']}(w_i)$ is comparatively small, 
i.e.~condition~\eqref{additional} will hold in the  second application  of Lemma~\ref{lma:BESdecomcritical}.%
    \COMMENT{We construct $G^{\text rob}$ as follows.
We pick those $J_s$ satisfying (b$_1$) and (b$_2$) to construct $G^{\text rob}$.
Let $G'' : = G - G_0 - G^{\text rob}$, where $G_0$ is the graph defined by Lemma~\ref{V_0elimination}.
Next, we apply Lemma~\ref{lma:BESdecomcritical} to $G''$ taking $G_0 = G_0 +G^{rob}$ to decompose $(G'')^{\diamond} := G'' - G''[A] -G''[B]$ into exceptional systems for almost decomposition on a different partition $\mathcal{P}'$, which is found by applying Lemma~\ref{lma:partition2} to $G''$. 
Notice that $(G'')^{\diamond}$ is basically the union of $J_s$ that are not used to constructed $G^{\text rob}$.
Since $G^{\text rob}$ contains those $J_s$ satisfying (b$_1$) and (b$_2$), the degree of $w_1$ in $G''[A',B']$ does indeed decrease if it is large, and similarly for $w_2$. 
Hence, (v) is holds for $(G'')^{\diamond}$.
The reason of using a different $\mathcal{P}'$ rather than refinement of $\mathcal{P}$ is because we do not have any control on the structure of $G^{rob}$.
Hence, $( G''[A]+G''[B], \mathcal{P}) $ is $(K, m, \epszero, \eps ')$-scheme, where $\eps'$ is too large.
Moreover, taking refinement of $\mathcal{P}$ does not decrease $\eps'$.
}

Condition~\eqref{additional} itself is natural for the following reason:
suppose for example that it is violated  for $w_1$ and that $w_1 \in A_0$. 
Then for some Hamilton exceptional system $J$ returned by the lemma, both edges of $J$ incident to $w_1$ will
have their other endpoint in $B'$. So (the edges at) $w_1$ cannot be used as a `connection' between $A'$ and $B'$
in the Hamilton cycle which will extend $J$, and it may be impossible to find these connections elsewhere.

The next lemma is an analogue of Lemma~\ref{lma:BESdecom} for the case when $e_G(A',B')< D$. (Again we defer the proof to Section~\ref{1factsec}.)
Recall that Proposition~\ref{prp:e(A',B')}(ii) (or Corollary~\ref{cor:cases}) implies that in this case we have
$n = 0 \pmod4$, $D = n/2-1$ and $|A'| = |B'| = n/2$.
In particular, $|A'|$ and $|B'|$ are both even.
This agrees with the fact that the decomposition may also involve matching exceptional systems in the current case:
we will later extend each such system to a cycle spanning $A'$ and one spanning $B'$.
As $|A'|$ and $|B'|$ are both even, these cycles correspond to two edge-disjoint perfect matchings in~$G$.

\begin{lemma} \label{lma:PBESdecom}
Suppose that $0 <  1/n  \ll \epszero \ll  \eps \ll   \lambda, 1/K \ll 1$, that $0 \le \phi  \ll 1$ and that
$n/4, K, m, \lambda n/K^2, (n/2-1 - \phi n)/(2K^2) \in \mathbb{N}$.
Suppose that the following conditions hold:
\begin{itemize}
	\item[\rm (i)] $G$ is an $(n/2 -1)$-regular graph on $n$ vertices.
	\item[\rm (ii)] $\mathcal{P}$ is a $(K, m, \epszero)$-partition of $V(G)$ such that
$\Delta(G[A',B']) \le n/4$ and $|A'| = |B'| = n/2$.%
\COMMENT{Probably could actually assume that $\Delta(G[A',B']) \le n/4-1$. We also have $e_G(A',B') < D$ but this statement
is not needed explicitly. This statement is incorporated in (v)}
	\item[\rm (iii)] $G_0$ is a subgraph of $G$ such that $G[A_0]+G[B_0] \subseteq G_0$ and $d_{G_0}(v) = \phi n $ for all $v \in V_0$.
	\item[\rm (iv)] Let $G^{\diamond} := G - G[A]  - G[B] - G_0$. $e_{G^\diamond} (A',B')$ is even and
$(G^{\diamond}, \mathcal{P})$ is a $(K, m, \epszero,\eps)$-exceptional scheme.
	\item[\rm (v)] $\Delta(G^{\diamond}[A',B']) \le e_{G^{\diamond}}(A',B')/2  \le (n/2 - 1-  \phi n ) /2$.
\end{itemize}
Then there exists a set $\mathcal{J}$ consisting of $(n/2-1-\phi n)/2$ edge-disjoint exceptional systems in $G^{\diamond}$
which satisfies the following properties:
\begin{itemize}
    \item[\rm (a)] Together the exceptional systems in $\mathcal{J}$ cover all edges of $G^\diamond$.
Each $J$ in $\mathcal J$ is either a Hamilton exceptional system with $e_{J}(A',B') = 2$ or a matching exceptional system.
	\item[\rm (b)] For all $1\le i,i' \le K$, the set $\mathcal{J}$ contains $(n/2-1-(\phi n+2 \lambda))/(2K^2)$ $(i,i')$-ES.
\end{itemize}
\end{lemma}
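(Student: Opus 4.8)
The plan is to follow the template of the other two decomposition lemmas of this section (Lemmas~\ref{lma:BESdecom} and~\ref{lma:BESdecomcritical}), with the extra feature that matching exceptional systems are now also allowed in the decomposition. First I record the arithmetic forced by regularity: since $d_G(v)=D=n/2-1$ and $d_{G_0}(v)=\phi n$, while $G[A]$ and $G[B]$ contain no edge at any $v\in V_0$, we get $d_{G^\diamond}(v)=n/2-1-\phi n=:2N$ for every $v\in V_0$, where $N=(n/2-1-\phi n)/2$ is exactly the number of exceptional systems demanded (note $N/K^2,\ \lambda n/K^2\in\mathbb N$, so $(N-\lambda n)/K^2\in\mathbb N$); this is consistent with the fact that every exceptional system $J$ has $d_J(v)=2$ on $V_0$. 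Because $n\equiv 0\pmod 4$ and $|A'|=|B'|=n/2$, both $|A'|$ and $|B'|$ are even, so matching exceptional systems will later extend to pairs of perfect matchings, as claimed in~(a). Set $\ell':=e_{G^\diamond}(A',B')/2$ (an integer by~(iv)); by~(v), $\ell'\le N$. The set $\mathcal J$ will consist of $\ell'$ Hamilton exceptional systems, each with exactly two $A'B'$-edges, and $N-\ell'$ matching exceptional systems, all of whose edges lie in $G^\diamond[A_0,A]+G^\diamond[B_0,B]$.

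The core mechanism is that condition~(v), $\Delta(G^\diamond[A',B'])\le e_{G^\diamond}(A',B')/2=\ell'$, is precisely what lets us group the $A'B'$-edges into matchings of size exactly two: when $e_{G^\diamond}(A',B')$ is bounded we just do this by hand, and in general it follows from equitable edge-colouring results of the type behind Corollary~\ref{basic_matching_dec} (with a little extra care in the degenerate case $\Delta=\ell'$). Each such pair $\{a_1b_1,a_2b_2\}$ of independent $A'B'$-edges is a Hamilton exceptional system candidate with $b(F)=2$; Lemma~\ref{lma:ESextend} then extends it to a Hamilton exceptional system by adding suitable $A_0A$- and $B_0B$-edges, since its hypotheses hold (for $v\in A_0$ we have $d_{G^\diamond}(v,A)=2N-d_{G^\diamond[A',B']}(v)\ge 2N-N=N\ge\sqrt{\epszero}n$, and symmetrically for $B_0$, with plenty of room left after removing the edges used so far). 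The remaining $A_0A$- and $B_0B$-edges at each $v\in V_0$ are in even number and equal for all $v\in V_0$, so they can be grouped into matching exceptional system candidates and extended in the same way via Lemma~\ref{lma:ESextend}, giving $N-\ell'$ matching exceptional systems.

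To obtain the \emph{localization} demanded by~(b), I would first peel off a reservoir of $\lambda n$ exceptional systems $J^0_1,\dots,J^0_{\lambda n}$, built greedily exactly as in the proof of Lemma~\ref{lma:BESnash} (each a Hamilton exceptional system with two $A'B'$-edges while $A'B'$-edges remain, and a matching exceptional system thereafter), chosen so that the leftover $G^{\diamond\diamond}:=G^\diamond-\bigcup_s J^0_s$ — which still has $d_{G^{\diamond\diamond}}(v)=2(N-\lambda n)$ for all $v\in V_0$ and is still a subgraph of a $(K,m,\epszero,\eps)$-exceptional scheme — has its residual $A'B'$-edges and $V_0$-degrees laid out so that they split evenly over the $K^2$ cells. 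Then I apply Lemma~\ref{lma:randomslice} to $G^{\diamond\diamond}$ to decompose it into edge-disjoint localized slices $G^{\diamond\diamond}(i,i')=H(i,i')+H'(i,i')$; crucially, every $A'B'$-edge of $H'(i,i')$ joins $A_0\cup A_i$ to $B_0\cup B_{i'}$ and is therefore automatically compatible with $(i,i')$-localization. Running the construction of the previous paragraph inside each slice (using~(v) again, now through the inherited bound $d_{H'(i,i')}(v)\le(d_{G^{\diamond\diamond}[A',B']}(v)+2\eps n)/K^2$) produces $(N-\lambda n)/K^2$ exceptional systems per cell, each an $(i,i')$-HES with $e_J(A',B')=2$ or an $(i,i')$-MES; deleting vertices outside $V_0\cup A_i\cup B_{i'}$ (which changes no edges) confirms they are $(i,i')$-localized. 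Together with the reservoir this is the required $\mathcal J$.

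The main obstacle is exactly the reconciliation in that last paragraph: Lemma~\ref{lma:randomslice} only controls slice parameters up to additive errors of order $\eps n$, whereas~(b) requires \emph{exactly} $(N-\lambda n)/K^2$ systems in each cell, and~(a) requires \emph{every} $v\in V_0$ to have degree precisely two in \emph{each} of the $N$ systems. So the reservoir $J^0_1,\dots,J^0_{\lambda n}$ cannot really be chosen in isolation: it must be selected in interaction with the slicing so as to simultaneously correct the degree of every exceptional vertex in every cell, the parity of $e(H'(i,i'))$ in every cell, and the global counts — and this is where essentially all the technical work of the (deferred) proof lies, and where the generous size of $\lambda$ relative to $\eps$ and $1/K$ is used.
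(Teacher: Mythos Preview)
Your overall picture is right — decompose $G^\diamond[A',B']$ into matchings of size exactly two using the bound $\Delta\le e/2$ from~(v), pad with empty candidates to reach $N=(n/2-1-\phi n)/2$ candidates, and extend via Lemma~\ref{lma:ESextend} — but the paper organizes the argument in the reverse order, and this reversal is exactly what dissolves the ``reconciliation'' obstacle you flag at the end.

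The paper slices \emph{first} and builds the reservoir \emph{afterwards}. Concretely, it applies Lemma~\ref{lma:movecritical} (not just Lemma~\ref{lma:randomslice}) to $G^\diamond$ to obtain localized graphs $H(i,i')$ and $H''(i,i')$, where Lemma~\ref{lma:movecritical} already arranges that each $e(H''(i,i'))$ is even and that $\Delta(H''(i,i'))\le e(H''(i,i'))/2$ is inherited from~(v). (When $e_{G^\diamond}(A',B')$ is tiny one simply dumps all $A'B'$-edges into a single cell.) Then, inside each cell, $\chi'(H'')\le e(H'')/2$ and Proposition~\ref{prop:matchingdecomposition} with $m=e(H'')/2$ give a decomposition into exactly-size-two matchings; padding with empty matchings yields precisely $\alpha n=N/K^2$ candidates per cell. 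Only at this point does the localization/non-localization split appear: the (at most $\eps' n\ll\gamma' n$) candidates whose edges fall outside $G^\diamond[A_0\cup A_i,B_0\cup B_{i'}]$ are designated as the $F'_s(i,i')$, and Lemma~\ref{BEScons} (which packages Lemmas~\ref{lma:BESextend2} and~\ref{globalBES}) extends everything in one shot, handling the exact degree conditions on $V_0$ that you were worried about.

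So the technical work you anticipate — choosing a reservoir in interaction with the slicing to fix parities, degrees, and counts — is simply absent in the paper's ordering. Your route could presumably be made to work, but it is strictly harder; the paper's ``slice first, then decompose, then extend via Lemma~\ref{BEScons}'' avoids it entirely.
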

As in the other two cases, we will use the exceptional systems in (b) to construct the robustly decomposable graph $G^{\rm rob}$.
Unlike the critical case with $e_G(A',B') \ge D$, there is no need to `track' the degrees of the vertices $w_i$ of high degree in $G[A',B']$
this time. Indeed, let $G'':=G - G_0 - G^{\rm rob}$, where $G_0$ is the graph defined by Lemma~\ref{V_0elimination}.
Then $G''[A',B']$ is the union of all those $J$ in $\mathcal J$ (from the first application of Lemma~\ref{lma:PBESdecom}) not used in the construction of $G^{\rm rob}$.
So (a) implies that $G''[A',B']$ is a union of matchings of size two. So (v) will be trivially satisfied when we apply 
Lemma~\ref{lma:PBESdecom} for the second time (i.e. with $G_0 +G^{\rm rob}$ playing the role of $G_0$).



\section{Special Factors and Exceptional Factors}\label{sec:SFandEF}

As discussed in the proof sketch, the main proof proceeds as follows.
First we remove a sparse `robustly decomposable' graph $G^{\rm rob}$ from the original graph $G$.
Then we find an approximate decomposition of $G-G^{\rm rob}$.
Finally we find a decomposition of $G^{\rm rob}+G'$, where $G'$ is  the (very sparse)
leftover from the approximate decomposition.

Both the approximate decomposition as well as the actual decomposition step assume that we work with a graph with two components,
one on $A$ and the other on $B$. So in both steps, we would need $A_0 \cup B_0$ to be empty, which we clearly cannot assume. 
We build on the ideas of Section~\ref{sec:BES} to deal with this problem.%
\COMMENT{Deryk modified a bit.}
In both steps, one can choose `exceptional path systems' in $G$ with the following crucial property:
one can replace each such exceptional path system $EPS$ with a path system $EPS^*$ so that
\begin{itemize}
\item[($\alpha_1$)] $EPS^*$ can be partitioned into $EPS^*_A$ and $EPS^*_B$ with the vertex sets of $EPS^*_A$ and $EPS^*_B$
being contained in $A$ and $B$ respectively;
\item[($\alpha_2$)] the union of any Hamilton cycle $C^*_A$ in $G^*_A:=G[A]-EPS+ EPS^*_A$ containing $EPS^*_A$ and
any Hamilton cycle $C^*_B$ in $G^*_B :=G[B]-EPS + EPS^*_B$ containing $EPS^*_B$ corresponds to either a Hamilton cycle of $G$ containing $EPS$
or to the union of two edge-disjoint perfect matchings in $G$ containing $EPS$.
\end{itemize}
Each exceptional path system $EPS$ will contain one of the exceptional systems $J$ constructed in Section~\ref{sec:locES}.
$EPS^*$ will then be obtained from $EPS$%
   \COMMENT{Daniela: added from $EPS$}
by replacing $J$ by $J^*$. (Recall that $J^*$ was defined in Section~\ref{sec:BES} and that
we view the edges of $J^*$ as `fictive edges' which are different from the edges of $G$.)
So $G^*_A$ is obtained from $G[A]$ by adding $J^*_A=J^*[A]$.
Furthermore, $J$ determines which of the cases in~($\alpha_2$) holds: If $J$ is a Hamilton exceptional system,
then ($\alpha_2$) will give a Hamilton cycle of $G$, while in the case when $J$ is a matching exceptional system,
($\alpha_2$) will give the union of two edge-disjoint perfect matchings in $G$.

So, roughly speaking, this allows us to work with $G^*_A$ and $G^*_B$ rather than $G$ in the two steps.
A convenient way of handling these exceptional path systems is to combine many of them into an `exceptional factor' $EF$
(see Section~\ref{sec:BEPS} for the definition).

One complication is that the `robust decomposition lemma' (Lemma~\ref{rdeclemma}) we use from~\cite{Kelly}
deals with digraphs rather than undirected graphs. 
So in order to be able to apply it, we need to suitably orient the edges of $G$ and so we will actually consider a
directed path system $EPS^*_{\rm dir}$ instead of the $EPS^*$ above (the exceptional path system $EPS$ itself will
still be undirected). Moreover,  we have to apply the robust
decomposition lemma twice, once to $G^*_A$ and once to $G^*_B$. 

The formulation of the robust decomposition lemma is quite general and rather than guaranteeing ($\alpha_2$) directly,
it assumes the existence of certain directed `special paths systems' $SPS$ which are combined into `special factors' $SF$.
These are introduced in Section~\ref{sec:SF}. Each of the Hamilton cycles produced by the lemma then contains
exactly one of these special path systems. So to apply the lemma, it suffices to check that each of our
exceptional path systems $EPS$ corresponds to two path systems $EPS^*_{A,\rm dir}$ and $EPS^*_{B,\rm dir}$
which both satisfy the conditions required of a special path system.


\subsection{Special Path Systems and Special Factors}\label{sec:SF}
As mentioned above, the robust decomposition lemma requires `special path systems' and `special factors' as an input when constructing 
the robustly decomposable graph.
These are defined in this subsection.

Let $K,m \in \mathbb{N}$.
A \emph{$(K,m)$-equipartition} $\mathcal{Q}$ of a set $V$ of vertices is a partition of $V$ into sets $V_1, \dots, V_K$ such that $|V_i| = m$ for all $i \le K$.%
         \COMMENT{Note that there is no exceptional vertex set. Observe that $\mathcal{Q}_A:= \{A_1, \dots, A_K\}$ is a $(K,m)$-partition of $A$.
I added `equi' to increase the differences between $(K,m)$-equipartition to $(K,m, \epszero)$-partition.}
The $V_i$ are called \emph{clusters} of~$\mathcal{Q}$.
Suppose that $\mathcal{Q}=\{V_1,\dots,V_K\}$ is a $(K,m)$-equipartition of~$V$ and $L,m/L\in\mathbb{N}$.
We say that $(\mathcal{Q},\mathcal{Q'})$ is a \emph{$(K, L, m)$-equipartition of~$V$} if $\mathcal{Q'}$ is 
obtained from $\mathcal{Q}$ by partitioning each cluster $V_i$ of $\mathcal{Q}$
into $L$ sets $V_{i,1},\dots,V_{i,L}$ of size~$m/L$.
So $\mathcal{Q}'$ consists of the $KL$ clusters $V_{i,j}$.

Let $(\mathcal{Q},\mathcal{Q'})$ be a $(K,L,m)$-equipartition of~$V$.
Consider a spanning cycle $C = V_1 \dots V_K$ on the clusters of $\mathcal{Q}$.
Given an integer $f$ dividing $K$, the \emph{canonical interval partition} $\mathcal{I}$ of $C$ into $f$ intervals
consists of the intervals $$V_{(i-1)K/f+1}  V_{(i-1)K/f+2} \dots V_{iK/f+1}$$ for all $i\le f$ (with
addition modulo $K$).

Suppose that $G$ is a digraph on~$V$%
	\COMMENT{We will take $G = G^{*}_A$ with $V = A $ and $G = G^{*}_B$ with $V =B$.}
and $h\le L$. Let $I=V_jV_{j+1}\dots V_{j'}$ be an interval in~$\mathcal{I}$.
A \emph{special path system $SPS$ of style~$h$ in~$G$ 
spanning the interval~$I$} consists of $m/L$
vertex-disjoint directed paths $P_1,\dots,P_{m/L}$ such that the following conditions hold:
\begin{itemize}
\item[(SPS1)] Every $P_s$ has its initial vertex in $V_{j,h}$ and its final vertex in $V_{j',h}$.
\item[(SPS2)] $SPS$ contains a matching ${\rm Fict}(SPS)$ such that all the edges in ${\rm Fict}(SPS)$ avoid the
endclusters $V_j$ and $V_{j'}$ of $I$ and such that $E(P_s)\setminus {\rm Fict}(SPS)\subseteq E(G)$.
\item[(SPS3)] The vertex set of $SPS$ is $V_{j,h}\cup V_{j+1,h}\cup \dots \cup  V_{j',h}$.
\end{itemize}
The edges in ${\rm Fict}(SPS)$ are called \emph{fictive edges of} $SPS$.

Let $\mathcal{I}=\{I_1,\dots,I_f\}$.
A \emph{special factor $SF$ with parameters $(L,f)$ in $G$ (with respect to $C$, $\mathcal{Q}'$)} is a $1$-regular digraph on $V$
which is the union of $Lf$ digraphs $SPS_{j,h}$ (one for all $j\le f$ and $h\le L$) such that each $SPS_{j,h}$ is
a special path system of style $h$ in $G$ which spans~$I_j$.
We write ${\rm Fict}(SF)$ for the union of the sets ${\rm Fict}(SPS_{j,h})$ over all $j\le f$ and $h\le L$
and call the edges in ${\rm Fict}(SF)$ \emph{fictive edges of $SF$}.

We will always view fictive edges as being distinct from each other and from the edges in other digraphs.
So if we say that special factors $SF_1,\dots,SF_r$ are pairwise edge-disjoint from each other and from some digraph $Q$ on $V$,
then%
   \COMMENT{It's not enough to have this for all $Q\subseteq G$ since in the robust decomposition lemma $H$ need not be a subgraph of $G$.}
this means that $Q$ and all the $SF_i- {\rm Fict}(SF_i)$
are pairwise edge-disjoint, but for example there could be an edge from $x$ to $y$ in $Q$ as well as in ${\rm Fict}(SF_i)$ 
for several indices $i\le r$.
But these are the only instances of multiedges that we allow, i.e.~if there is more than one edge from $x$ to $y$, then all but
at most one of these edges are fictive edges.

\subsection{Exceptional Path Systems and Exceptional Factors}\label{sec:BEPS}
We now introduce `exceptional path systems' which will be combined into `exceptional factors'.
These will satisfy the requirements of special path systems and special factors respectively.
So they can be used as an `input' for the robust decomposition lemma.
Moreover, they will satisfy the properties ($\alpha_1$) and ($\alpha_2$) described at the beginning of Section~\ref{sec:SFandEF}
(see Proposition~\ref{prop:EPS}).
More precisely, suppose that 
$$
\mathcal{P}=\{A_0,A_1,\dots,A_K,B_0,B_1,\dots,B_K\}
$$ is a $(K,m,\eps_0)$-partition of a vertex set~$V$ and $L,m/L\in\mathbb{N}$.
We say that $(\mathcal{P},\mathcal{P'})$ is a \emph{$(K, L, m, \epszero)$-partition of~$V$} if $\mathcal{P'}$ is 
obtained from $\mathcal{P}$ by partitioning each cluster $A_i$ of $\mathcal{P}$
into $L$ sets $A_{i,1},\dots,A_{i,L}$ of size~$m/L$ and partitioning each cluster $B_i$ of $\mathcal{P}$ into $L$ sets $B_{i,1},\dots,B_{i,L}$ of size~$m/L$.
(So $\cP'$ consists of the exceptional sets $A_0$, $B_0$, the $KL$ clusters $A_{i,j}$ and the $KL$ clusters $B_{i,j}$.)
Set 
\begin{align}
\mathcal{Q}_A & := \{A_1, \dots, A_K\}, & 
\mathcal{Q}'_A &:= \{A_{1,1}, \dots, A_{K,L}\}, \label{defparts}\\ 
\mathcal{Q}_B &:= \{B_1, \dots, B_K\}, &
\mathcal{Q}'_B & := \{B_{1,1}, \dots, B_{K,L}\}\nonumber. 
\end{align}
Note that $(\mathcal{Q}_A, \mathcal{Q}_A')$ and $(\mathcal{Q}_B, \mathcal{Q}_B')$ are $(K,L,m)$-equipartitions of $A$ and $B$ respectively
(where we recall that $A = \bigcup_{i=1}^K A_i$ and $B = \bigcup_{i=1}^K B_i$).

Suppose that $J$ is a Hamilton exceptional system (for the partition $A,A_0,B,B_0$) with $e_J(A',B')=2$.
Thus $J$ contains precisely%
    \COMMENT{Daniela: had "two paths having one endvertex in $A$ and the other endvertex in $B$" instead of "$AB$-paths"}
two $AB$-paths. Let $P_1=a_1\dots b_1$ and $P_2=a_2\dots b_2$ be these two paths, where $a_1,a_2\in A$ and $b_1,b_2\in B$.
Recall from Section~\ref{sec:BES} that $J^*_A$ is the matching consisting of the edge $a_1a_2$ and an edge
between any two vertices $a,a'\in A$ for which $J$ contains a path $P_{aa'}$ whose endvertices are $a$ and $a'$. 
We also defined a matching $J^*_B$ in a similar way and set $J^*:=J^*_A\cup J^*_B$.
We say that an \emph{orientation of $J$ is good} if every path in $J$ is oriented consistently and
one of the paths $P_1$, $P_2$ is oriented towards $B$ while the other is oriented towards $A$.
Given a good orientation $J_{\rm dir}$ of $J$, the \emph{orientation $J^*_{\rm dir}$ of $J^*$ induced by $J_{\rm dir}$} is defined as follows:
\begin{itemize}
\item For every path $P_{aa'}$ in $J$ whose endvertices $a,a'$ both belong to $A$, we orient the edge $aa'$ of $J^*$
towards its endpoint of the (oriented) path $P_{aa'}$ in $J_{\rm dir}$.
\item If in $J_{\rm dir}$ the path $P_1$ is oriented towards $b_1$ (and thus $P_2$ is oriented towards $a_2$),
 then we orient the edge $a_1a_2$ of $J^*$ towards $a_2$ and the edge $b_1b_2$ of $J^*$ towards $b_1$.
The analogue holds if $P_1$ is oriented towards $a_1$ (and thus $P_2$ is oriented towards $b_2$).
\end{itemize}
If $J$ is a matching exceptional system, we define good orientations of $J$ and the corresponding
induced orientations of $J^*$ in a similar way.

We now define exceptional path systems. As mentioned at the beginning of Section~\ref{sec:SFandEF}, each such exceptional
path system $EPS$ will correspond to two directed path systems $EPS^*_{A,{\rm dir}}$ and $EPS^*_{B, {\rm dir}}$
satisfying the conditions of a special path system (for $(\mathcal{Q}_A, \mathcal{Q}_A')$ and
$(\mathcal{Q}_B, \mathcal{Q}_B')$ respectively).

Let $(\mathcal{P},\mathcal{P'})$ be a $(K,L,m, \epszero)$-partition of a vertex set $V$.
Suppose that $K/f\in\mathbb{N}$. The \emph{canonical interval partition} $\mathcal{I}(f,K)$ of $[K]:=\{1,\dots,K\}$ into $f$ intervals
consists of the intervals $$\{(i-1)K/f+1, (i-1)K/f+2, \dots, iK/f+1\}$$ for all $i\le f$ (with addition modulo $K$).

Suppose that $G$ is an oriented graph on $A \cup B$ such that $G = G[A] +G[B]$.
Let $h\le L$ and suppose that $I \in \mathcal{I}(f,K)$ is an interval with $I=\{j,j+1, \dots, j'\}$.%
   \COMMENT{Daniela: added brackets}
An \emph{exceptional path system $EPS$ of style~$h$ for $G$ spanning $I$}
consists of $2 m/L$ vertex-disjoint undirected paths $P_0, P'_0 , P_1^A, \dots,P_{m/L-1}^A, P_1^B, \dots,P_{m/L-1}^B,$
such that the following conditions hold:
\begin{enumerate}
\item[(EPS1)] $V(P_s^A)\subseteq A$ and $P_s^A$ has one endvertex in $A_{j,h}$ and its other endvertex in $A_{j',h}$ (for all $1 \le s < m/L$).
The analogue holds for every $P_s^B$.%
\COMMENT{So no edges of the exceptional system $J$ are contained in $P_s^A$ and $P_s^B$ for $ 1 \le s < m/L$.}
\item[(EPS2)] Each of $P_0$ and $P_0'$ has one endvertex in $A_{j,h} \cup B_{j,h}$ and its other endvertex in $A_{j',h} \cup B_{j',h}$.%
	\COMMENT{We can fix $P_0$ to have one endvertex in $A_{j,h}$ (and $P'_0$ to have one endvertex in $B_{j,h}$). However, the other endvertex of $P_0$ depends on the exceptional system.
	If $J$ is a Hamilton exceptional system, then the other endvertex of $P_0$ is in $B_{j',h}$.
	Otherwise (i.e.~$J$ is a matching exceptional system) the other endvertex of $P_0$ is in $A_{j',h}$.}
\item[(EPS3)] $J:= EPS - EPS[A] - EPS[B]$ is either a Hamilton exceptional system with $e_J ( A',B')=2$
or a matching exceptional system (with respect to the partition~$A,A_0,B,B_0$).
Moreover $E(J)\subseteq E(P_0) \cup E(P_{0}')$ and no edge of $J$ has an endvertex in $A_{j,h}\cup A_{j',h}\cup B_{j,h}\cup B_{j',h}$.
\item[(EPS4)] Let $P_{0,{\rm dir}}$ and $P'_{0,{\rm dir}}$ be the paths obtained by orienting $P_0$ and $P'_0$ towards their
endvertices in $A_{j',h} \cup  B_{j',h}$. Then the orientation $J_{\rm dir}$ of $J$ obtained in this way is good. 
Let $J^*_{\rm dir}$ be the orientation of $J^*$ induced by $J_{\rm dir}$.
Then $(P_{0,{\rm dir}} + P'_{0,{\rm dir}})-J_{\rm dir}+J^*_{\rm dir}$ consists of two vertex-disjoint paths $P^A_{0,{\rm dir}}$
and $P^B_{0,{\rm dir}}$ such that $V(P_{0,{\rm dir}}^A)\subseteq A$, $P_{0,{\rm dir}}^A$ has one endvertex in $A_{j,h}$ and its other endvertex in $A_{j',h}$ and
such that the analogue holds for $P^B_{0, {\rm dir} }$.
\item[(EPS5)] The vertex set of $EPS$ is $V_0\cup A_{j,h} \cup A_{j+1,h} \dots \cup A_{j',h} \cup B_{j,h} \cup B_{j+1,h} \dots \cup B_{j',h}$.
\item[(EPS6)] For each $1 \le s < m/L$, let $P^A_{s,{\rm dir}}$ be the path obtained by orienting $P^A_s$ towards its endvertex in $A_{j',h}$.
Define $P^B_{s,{\rm dir}}$ in a similar way. Then
$E(P^A_{0,{\rm dir}})\setminus E(J_{\rm dir}), E(P^B_{0,{\rm dir}})\setminus E(J_{\rm dir}) \subseteq E(G)$
and $E(P^A_{s,{\rm dir}}), E(P^B_{s,{\rm dir}})\subseteq E(G)$ for every $1\le s < m/L$.  
\end{enumerate}

We call $EPS$ \emph{a Hamilton exceptional path system} if $J$ (as defined in (EPS3)) is a Hamilton exceptional system,
and a \emph{matching exceptional path system} otherwise.
Let $EPS^*_{A,{\rm dir}}$ be the (directed) path system consisting of $P^A_{0,{\rm dir}},P^A_{1,{\rm dir}}, \break \dots,P^A_{m/L-1,{\rm dir}}$.
Then $EPS^*_{A,{\rm dir}}$ is a special path system of style $h$ in $G[A]$ which spans the
interval~$A_{j} A_{j+1} \dots A_{j'}$%
\COMMENT{Deryk: had $A_{j,h} A_{j+1,h} \dots A_{j',h}$}
of the cycle $A_1\dots A_K$ and satisfies
${\rm Fict}(EPS^*_{A,{\rm dir}})  \break = J^*_{\rm dir}[A]$.
Define $EPS^*_{B,{\rm dir}}$ similarly and let $EPS^*_{{\rm dir}} : = EPS^*_{A,{\rm dir}} + EPS^*_{B,{\rm dir}} $
and ${\rm Fict}(EPS^*_{\rm dir}) : = {\rm Fict}(EPS^*_{A,{\rm dir}}) \cup {\rm Fict}(EPS^*_{B,{\rm dir}})$
(see Figure~\ref{fig:eps}).
\begin{figure}
{
\begin{tikzpicture}[scale = 0.775]
\tikzstyle{every node}=[fill=black,draw,circle,minimum width=1pt,outer sep = 0, inner sep = 1]

\begin{scope}

\draw[line width=11mm,color=gray!25] (-105:1.5) -- (-90:1.5);
\draw[line width=11mm,color=gray!25] (105:1.5) -- (90:1.5);
\draw[line width=11mm,color=gray!25]
     \foreach \x in {-90,-54,-18,18,54}
    {
        (\x:1.5) -- (\x+36:1.5)
    };

\draw[fill=white,rotate=-90] (0:1.5) ellipse (20pt and 8pt);
\draw[fill=white,rotate=-54] (0:1.5) ellipse (20pt and 8pt);
\draw[fill=white,rotate=-18] (0:1.5) ellipse (20pt and 8pt);
\draw[fill=white,rotate=18] (0:1.5) ellipse (20pt and 8pt);
\draw[fill=white,rotate=54] (0:1.5) ellipse (20pt and 8pt);
\draw[fill=white,rotate=90] (0:1.5) ellipse (20pt and 8pt);

\draw[fill=black]
 \foreach \x in {-90,-54,-18,18,54,90}
    {
        (\x:1) node (\x l1) {}
        (\x:1.5) node (\x l2) {}
        (\x:2) node (\x l3) {}
    };

\draw (-90l3) -- (-54l3);
\draw (-90l2) -- (-54l2) -- (-18l2) -- (-54l1);
\draw (18l1) -- (54l1) -- (90l1);
\draw (-18l1) -- (18l2) -- (54l2) -- (90l2);
\draw (-18l3) -- (18l3);
\draw (54l3) -- (90l3);

\draw (-90l1) .. controls (-54:0.7) .. (-18l1);
\draw (-54l1) .. controls (-18:0.7) .. (18l1);


\node at (-0.7,-0.9) [fill=none, draw=none] {$P_2^A$};
\node at (-0.7,-1.5) [fill=none, draw=none] {$P_1^A$};
\node at (-0.7,-2.1) [fill=none, draw=none] {$P_0$};

\node at (90l3) [label=above:{{$A_{j',h}$}}] (){};
\node at (-90l3) [label=below:{{$A_{j,h}$}}] (){};
\node at (-54l3) [label=-54:{~~{$A_{j+1,h}$}}] (){};
\end{scope}

\begin{scope}[xshift=5.5cm,rotate=180]

\draw[line width=11mm,color=gray!25] (-105:1.5) -- (-90:1.5);
\draw[line width=11mm,color=gray!25] (105:1.5) -- (90:1.5);
\draw[line width=11mm,color=gray!25]
     \foreach \x in {-90,-54,-18,18,54}
    {
        (\x:1.5) -- (\x+36:1.5)
    };

\draw[fill=white,rotate=-90] (0:1.5) ellipse (20pt and 8pt);
\draw[fill=white,rotate=-54] (0:1.5) ellipse (20pt and 8pt);
\draw[fill=white,rotate=-18] (0:1.5) ellipse (20pt and 8pt);
\draw[fill=white,rotate=18] (0:1.5) ellipse (20pt and 8pt);
\draw[fill=white,rotate=54] (0:1.5) ellipse (20pt and 8pt);
\draw[fill=white,rotate=90] (0:1.5) ellipse (20pt and 8pt);

\draw[fill=black]
 \foreach \x in {-90,-54,-18,18,54,90}
    {
        (\x:1) node (\x r1) {}
        (\x:1.5) node (\x r2) {}
        (\x:2) node (\x r3) {}
    };

\draw (90r3) -- (54r2) -- (18r2) -- (-18r3);
\draw (90r2) -- (54r1) -- (-54r1) -- (-90r1);
\draw (18r1) -- (-18r1) -- (-54r2) -- (-90r2);
\draw (54r3) -- (18r3) -- (-18r2) -- (-54r3) -- (-90r3);

\draw (90r1) .. controls (54:0.7) .. (18r1);


\node at (-0.7,0.9) [fill=none, draw=none] {$P_2^B$};
\node at (-0.7,1.5) [fill=none, draw=none] {$P_1^B$};
\node at (-0.7,2.1) [fill=none, draw=none] {$P_0'$};

\node at (-90r3) [draw=none,
label=above:{{$B_{j',h}$}}] (){};
\node at (90r3) [draw=none,
label=below:{{$B_{j,h}$}}] (){};
\node at (54r3) [draw=none,
label=-120:{~~{$B_{j+1,h}$}}] (){};
\end{scope}

\node at (1.2,2.9) [draw=none,
label=right:{{$A_0$}}] (){};
\draw[fill=white] (1,2.75) circle (8pt);
\draw (0.9,2.65) node (a1) {};
\draw (1.1,2.85) node (a2) {};

\draw[very thick] (54l3) -- (a1) -- (-18r3);
\draw[very thick] (18l3) -- (a2);
\draw[very thick] (a2) .. controls (3,2) and (3,-1) .. (-54l3);
\draw[very thick] (-18l3) -- (54r3);

\tikzset{->-/.style={decoration={
  markings,
  mark=at position #1 with {\arrow[scale=1.5]{>}}},postaction={decorate}}}

\begin{scope}[xshift=8cm]

\draw[line width=11mm,color=gray!25] (-105:1.5) -- (-90:1.5);
\draw[line width=11mm,color=gray!25] (105:1.5) -- (90:1.5);
\draw[line width=11mm,color=gray!25]
     \foreach \x in {-90,-54,-18,18,54}
    {
        (\x:1.5) -- (\x+36:1.5)
    };

\draw[fill=white,rotate=-90] (0:1.5) ellipse (20pt and 8pt);
\draw[fill=white,rotate=-54] (0:1.5) ellipse (20pt and 8pt);
\draw[fill=white,rotate=-18] (0:1.5) ellipse (20pt and 8pt);
\draw[fill=white,rotate=18] (0:1.5) ellipse (20pt and 8pt);
\draw[fill=white,rotate=54] (0:1.5) ellipse (20pt and 8pt);
\draw[fill=white,rotate=90] (0:1.5) ellipse (20pt and 8pt);

\draw[fill=black]
 \foreach \x in {-90,-54,-18,18,54,90}
    {
        (\x:1) node (\x L1) {}
        (\x:1.5) node (\x L2) {}
        (\x:2) node (\x L3) {}
    };

\draw[->-=.5] (-90L3) -- (-54L3);
\draw[->-=.55] (-90L2) -- (-54L2);
\draw[->-=.55] (-54L2) -- (-18L2);
\draw[->-=.65] (-18L2) -- (-54L1);
\draw[->-=.55] (18L1) -- (54L1);
\draw[->-=.56] (54L1) -- (90L1);
\draw[->-=.45] (-18L1) -- (18L2);
\draw[->-=.55] (18L2) -- (54L2);
\draw[->-=.56] (54L2) -- (90L2);
\draw[->-=.5] (18L3) -- (-18L3);
\draw[->-=.5] (54L3) -- (90L3);

\draw[->-=.5] (-90L1) .. controls (-54:0.7) .. (-18L1);
\draw[->-=.5] (-54L1) .. controls (-18:0.7) .. (18L1);

\draw[very thick,->-=0.5] (-54L3) .. controls (-36:3) and (0:3) .. (18L3);
\draw[very thick,->-=0.5] (-18L3) .. controls (0:3) and (36:3) .. (54L3);


\node at (-0.8,-0.9) [fill=none, draw=none] {$P_{2,{\rm dir}}^A$};
\node at (-0.8,-1.5) [fill=none, draw=none] {$P_{1,{\rm dir}}^A$};
\node at (-0.8,-2.1) [fill=none, draw=none] {$P_{0,{\rm dir}}^A$};

\node at (90L3) [draw=none,
label=above:{{$A_{j',h}$}}] (){};
\node at (-90L3) [draw=none,
label=below:{{$A_{j,h}$}}] (){};
\node at (-54L3) [draw=none,
label=-54:{~~{$A_{j+1,h}$}}] (){};

\end{scope}

\begin{scope}[xshift=13.5cm,rotate=180]

\draw[line width=11mm,color=gray!25] (-105:1.5) -- (-90:1.5);
\draw[line width=11mm,color=gray!25] (105:1.5) -- (90:1.5);
\draw[line width=11mm,color=gray!25]
     \foreach \x in {-90,-54,-18,18,54}
    {
        (\x:1.5) -- (\x+36:1.5)
    };

\draw[fill=white,rotate=-90] (0:1.5) ellipse (20pt and 8pt);
\draw[fill=white,rotate=-54] (0:1.5) ellipse (20pt and 8pt);
\draw[fill=white,rotate=-18] (0:1.5) ellipse (20pt and 8pt);
\draw[fill=white,rotate=18] (0:1.5) ellipse (20pt and 8pt);
\draw[fill=white,rotate=54] (0:1.5) ellipse (20pt and 8pt);
\draw[fill=white,rotate=90] (0:1.5) ellipse (20pt and 8pt);

\draw[fill=black]
 \foreach \x in {-90,-54,-18,18,54,90}
    {
        (\x:1) node (\x R1) {}
        (\x:1.5) node (\x R2) {}
        (\x:2) node (\x R3) {}
    };

\draw[->-=.6] (90R3) -- (54R2);
\draw[->-=.56] (54R2) -- (18R2);
\draw[->-=.5] (18R2) -- (-18R3);
\draw[->-=.65] (90R2) -- (54R1);
\draw[->-=.5] (54R1) -- (-54R1);
\draw[->-=.6] (-54R1) -- (-90R1);
\draw[->-=.6] (18R1) -- (-18R1);
\draw[->-=.45] (-18R1) -- (-54R2);
\draw[->-=.6] (-54R2) -- (-90R2);
\draw[->-=.5] (54R3) -- (18R3);
\draw[->-=.5] (18R3) -- (-18R2);
\draw[->-=.5] (-18R2) -- (-54R3);
\draw[->-=.5] (-54R3) -- (-90R3);

\draw[->-=.5] (90R1) .. controls (54:0.7) .. (18R1);

\draw[very thick,->-=0.5] (-18R3) .. controls (0:3) and (36:3) .. (54R3);


\node at (-0.8,0.9) [fill=none, draw=none] {$P_{2,{\rm dir}}^B$};
\node at (-0.8,1.5) [fill=none, draw=none] {$P_{1,{\rm dir}}^B$};
\node at (-0.8,2.1) [fill=none, draw=none] {$P_{0,{\rm dir}}^B$};

\node at (-90R3) [draw=none,
label=above:{{$B_{j',h}$}}] (){};
\node at (90R3) [draw=none,
label=below:{{$B_{j,h}$}}] (){};
\node at (54R3) [draw=none,
label=-120:{~~{$B_{j+1,h}$}}] (){};
\end{scope}

\node at (2.75,-3) [draw=none, fill=none] {(i) $EPS$};
\node at (10.75,-3) [draw=none, fill=none] {(ii) $EPS^*_{\rm dir}$};
\end{tikzpicture}
}
\caption{An example of an exceptional path system $EPS$ and the corresponding directed version $EPS^*_{\rm dir}$ in the
case when $|A_0|=2$, $B_0=\emptyset$, $m/L=3$ and $|I|=6$. The thick edges indicate $J$ and $J^*_{\rm dir}$ respectively.}
\label{fig:eps}
\end{figure}

Let $\mathcal{I}(f,K)=\{I_1,\dots,I_f\}$.
An \emph{exceptional factor $EF$ with parameters $(L,f)$ for $G$ (with respect to $(\mathcal{P},\mathcal{P'})$)} is 
the union of $Lf$ edge-disjoint%
    \COMMENT{Daniela: added edge-disjoint, otherwise the $EPS_{j,h}$ could use the same edges inside $A_0\cup B_0$ and
then (\ref{EFdeg}) may not hold}
undirected graphs $EPS_{j,h}$ (one for all $j\le f$ and $h\le L$) such that each $EPS_{j,h}$ is
an exceptional path system of style $h$ for $G$ which spans~$I_j$.
We write $EF^*_{A,\rm dir}$ for the union of $EPS_{j,h,A,{\rm dir}}^*$ over all
$j\le f$ and $h\le L$.
Note that $EF^*_{A,{\rm dir}}$ is a special factor with parameters $(L,f)$ in $G[A]$ (with respect to $C=A_1 \dots A_K$, $\mathcal{Q}'_A$)
such that ${\rm Fict}(EF^*_{A,{\rm dir}})$ is the union of $J^*_{j,h,{\rm dir}}[A]$ over all $j\le f$ and $h\le L$, where $J_{j,h}$ is the exceptional
system contained in $EPS_{j,h}$ (see condition (EPS3)). 
Define $EF^*_{B,{\rm dir}}$ similarly and let $EF^*_{{\rm dir}} : = EF^*_{A,{\rm dir}} + EF^*_{B,{\rm dir}} $%
   \COMMENT{Daniela: $+$ instead of $\cup$ since we have $+$ for $EPS^*_{{\rm dir}}$}
and ${\rm Fict}(EF^*_{\rm dir}):={\rm Fict}(EF^*_{A,{\rm dir}}) \cup {\rm Fict}(EF^*_{B,{\rm dir}})$.
Note that $EF^*_{{\rm dir}}$ is a $1$-regular directed graph
    \COMMENT{Daniela: had multigraph before, but I don't see why we could get multiple edges}
on $A \cup B$ while in $EF$ is an undirected graph on $V$ with 
\begin{align}\label{EFdeg}
	d_{EF}(v) = 2 \ \ \text{for all } v \in V \setminus V_0 \ \ \ \text{ and } \ \ \ d_{EF}(v) = 2Lf \ \ \text{for all } v \in V_0.	
\end{align}

Given an exceptional path system $EPS$, let $J$ be as in (EPS3) and
let 
$$
EPS^*:=EPS-J+J^*,\ \ EPS^*_A:=EPS^*[A] \ \mbox{ and }  \ EPS^*_B:=EPS^*[B].
$$ 
(Hence $EPS^*$, $EPS^*_A$ and $EPS^*_B$ are the undirected graphs obtained
from $EPS^*_{\rm dir}$, $EPS^*_{A,\rm dir}$ and $EPS^*_{B,\rm dir}$ by ignoring the orientations of all edges.)
The following result is an immediate consequence of (EPS3), (EPS4) and Proposition~\ref{prop:ES}.
Roughly speaking, it implies that to find a Hamilton cycle in the `original' graph with vertex set $V$, 
it suffices to find a Hamilton cycle on $A$ and one on $B$, containing (the edges corresponding to) an exceptional path system.

\begin{prop} \label{prop:EPS}
Let $(\cP,\cP')$ be a $(K,L, m , \epszero)$-partition of a vertex set $V$.
Suppose that $G$ is a graph on~$V \setminus V_0$, that $G_{\rm dir}$ is an orientation of $G[A]+G[B]$ and that $EPS$ is an exceptional path system for~$G_{\rm dir}$.
Let $J$ be as in (EPS3) and $J_A^*$ as defined in Section~\ref{sec:BES}. Let $C_A$ and $C_B$ be two cycles such that%
   \COMMENT{Daniela: replaced Hamilton cycle of  $G[A] + J^*_A$ by  Hamilton cycle on $A$ as otherwise it is difficult to say which graph
plays the role of $G$ when we apply the prop in the proof of Corollary~\ref{rdeccor}.}
\begin{itemize}
	\item $C_A$ is a Hamilton cycle on $A$ which contains $EPS^*_A$;
	\item $C_B$ is a Hamilton cycle on $B$ which contains $EPS^*_B$.
\end{itemize}
Then the following assertions hold. 
\begin{itemize}
	\item[\rm (i)] If $EPS$ is a Hamilton exceptional path system, then $C_A+C_B - EPS^* +EPS$ is a Hamilton cycle on $V$.
	\item[\rm (ii)] If $EPS$ is a matching exceptional path system, then $C_A+C_B - EPS^* +EPS$ is the union of a Hamilton cycle on $A'$ and a Hamilton cycle on $B'$.
In particular, if both $|A'|$ and $|B'|$ are even, then $C_A+C_B - EPS^* +EPS$ is the union of two edge-disjoint perfect matchings on $V$.
\end{itemize}
\end{prop}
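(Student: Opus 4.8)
The plan is to deduce the statement directly from Proposition~\ref{prop:ES}, using (EPS3) and (EPS4) only to set up the reduction. The first step is to establish the graph identity
\[
C_A+C_B-EPS^*+EPS = C_A+C_B-J^*+J.
\]
To prove it I would recall that by (EPS3) we have $J=EPS-EPS[A]-EPS[B]$, so $EPS$ is the edge-disjoint sum $EPS[A]+EPS[B]+J$; hence $EPS^*=EPS-J+J^*=(EPS[A]+J^*_A)+(EPS[B]+J^*_B)$ contains no $AB$-edge and satisfies $EPS^*_A=EPS^*[A]=EPS[A]+J^*_A$ and $EPS^*_B=EPS^*[B]=EPS[B]+J^*_B$. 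Since $C_A\supseteq EPS^*_A$ and $C_B\supseteq EPS^*_B$, the multigraph $C_A+C_B$ contains $EPS^*$, so $C_A+C_B-EPS^*$ is legitimate; adding $EPS$ back and cancelling $EPS[A]+EPS[B]$ yields the displayed identity. Throughout this step I would keep in mind the convention that the fictive edges of $J^*$ are regarded as distinct from all genuine edges, so that no spurious cancellation occurs.

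Next I would check that $C_A$, $C_B$ and $J$ satisfy the hypotheses of Proposition~\ref{prop:ES}. The sets $A,A_0,B,B_0$ partition $V$ by the definition of a $(K,L,m,\epszero)$-partition, $J$ is an exceptional system by (EPS3), and $C_A$, $C_B$ are Hamilton cycles on $A$, $B$ by assumption. It remains to verify that $C_A$ is consistent with $J^*_A$ and $C_B$ with $J^*_B$. Both cycles contain the relevant matchings, as $C_A\supseteq EPS^*_A\supseteq J^*_A$ and $C_B\supseteq EPS^*_B\supseteq J^*_B$. For the vertex-ordering part I would note that (EPS3) forces $J$ to be either a matching exceptional system, in which case $\ell=0$ and the condition is vacuous, or a Hamilton exceptional system with $e_J(A',B')=2$; in the latter case $J$ has exactly two $AB$-paths (each $AB$-path uses an odd, hence at least one, of the two available $A'B'$-edges, and by (HES) the number of $AB$-paths is positive and even), so $\ell=1$ and consistency only requires that some orientation of $C_A$ meets $x_1$ before $x_2$ and some orientation of $C_B$ meets $y_2$ before $y_1$, which always holds. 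The role of (EPS4) here is conceptual: the directed paths $P^A_{0,{\rm dir}}$ and $P^B_{0,{\rm dir}}$ it produces exhibit precisely such orientations, traversing the fictive edges of $J^*_A$ (respectively $J^*_B$) in the prescribed order.

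With the hypotheses in place, Proposition~\ref{prop:ES} finishes the proof. If $EPS$ is a Hamilton exceptional path system then $J$ is a Hamilton exceptional system, and part~(i) of that proposition gives that $C_A+C_B-J^*+J$ is a Hamilton cycle on $V$; if $EPS$ is a matching exceptional path system then $J$ is a matching exceptional system, and part~(ii) gives a Hamilton cycle on $A'$ together with one on $B'$, which splits into two edge-disjoint perfect matchings on $V$ when $|A'|$ and $|B'|$ are even. Combining with the displayed identity gives the corresponding statements for $C_A+C_B-EPS^*+EPS$. I do not expect a genuine obstacle: once the identity of the first step and the harmless observation that $EPS^*$ has no $AB$-edge are in hand, the result is, as the text remarks, an immediate consequence of Proposition~\ref{prop:ES}; if anything, the only real decision is to phrase the argument as a reduction to $J$ rather than re-running the cycle-tracing argument behind Proposition~\ref{prop:ES}.
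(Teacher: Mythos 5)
Your argument is correct and follows essentially the same route as the paper: you establish the identity $C_A+C_B-EPS^*+EPS = C_A+C_B-J^*+J$, then observe that (EPS3) restricts $J$ so that the vertex-ordering requirement in the definition of consistency with $J^*_A$ (resp.\ $J^*_B$) is automatically satisfiable, and finally invoke Proposition~\ref{prop:ES}. The paper compresses the middle step into the single observation that $|E(J^*_A)\setminus E(J^*_{AB})|\le 1$, whereas you spell out the case split $\ell\in\{0,1\}$, but the content is the same, and you correctly note that (EPS4) plays no logical role in this deduction.
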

\proof
Note that $C_A+C_B - EPS^*+EPS = C_A+C_B-J^*+J $. Recall that $J^*_{AB}$ was defined in Section~\ref{sec:BES}.
(EPS3) implies that $|E(J^*_{A}) \setminus E(J^*_{AB})| \le 1$.
Recall from Section~\ref{sec:BES} that a path $P$ is said to consistent with $J_A^*$ if $P$
contains $J_A^*$ and (there is an orientation of $P$ which) visits the endvertices of the edges in $E(J^*_{A}) \setminus E(J^*_{AB})$ in a prescribed order.
Since $E(J^*_{A}) \setminus E(J^*_{AB})$ contains at most one edge, any path containing $J^*_A$ is also consistent with $J^*_A$.
Therefore, $C_A$ is consistent with $J^*_A$ and, by a similar argument, $C_B$ is consistent with $J^*_B$.
So the proposition follows immediately from Proposition~\ref{prop:ES}.
\endproof


\subsection{Finding Exceptional Factors in a Scheme}\label{sec:findBF}

The next lemma (Lem\-ma~\ref{lma:EPS}) will allow us to extend a suitable exceptional system  $J$ into an exceptional path system.
In particular, we assume that $J$ is `localized'. This allows us to choose the path system in such a way that it spans only a few clusters.
The structure within which we find the path system is called a `scheme'.
Roughly speaking, this is the structure we obtain from  $G[A]+G[B]$ (i.e.~the union of two almost complete graphs)
by considering a random equipartition of $A$ and $B$ and a random orientation of its edges.

We now define this `oriented' version of the (undirected) schemes which were introduced in Section~\ref{sec:schemes}.
Given an oriented graph $G$ and partitions $\mathcal P$ and $\cP'$ of a vertex set $V$, we call $(G, \mathcal{P},\mathcal{P}')$ a
\emph{$[K,L,m,\eps_0,\eps]$-scheme} if the following conditions hold:%
    \COMMENT{Daniela: had $G=G[A]+G[B]$ instead of $e_G(A,B)=0$ in (Sch$2'$). But using $e_G(A,B)=0$ makes it more similar to (Sch2).}
\begin{itemize}
\item[(Sch$1'$)] $(\mathcal{P},\mathcal{P}')$ is a $(K,L,m,\eps_0)$-partition of $V$.
\item[(Sch$2'$)] $V(G)=A\cup B$ and $e_G(A,B)=0$.
\item[(Sch$3'$)] $G[A_{i,j},A_{i',j'}]$ and $G[B_{i,j}, B_{i',j'}]$ are $[\eps, 1/2]$-superregular
for all $i, i'\le K$ and all $j, j' \leq L$ such that $(i,j)\not =(i',j')$. Moreover, $G[A_i,A_{i'}]$ and $G[B_i, B_{i'}]$ are $[\eps, 1/2]$-superregular
for all $i\not =i'\le K$.
\item[(Sch$4'$)] $|N_{G}^+(x)\cap N_{G}^-(y)\cap A_{i,j}|\ge (1/5 - \eps) m/L $ for all $x,y\in A$, all $i\le K$ and all $j\le L$.
Similarly, $|N_{G}^+(x)\cap N_{G}^-(y)\cap B_{i,j}|\ge (1/5 - \eps) m/L$ for all $x,y\in B$, all $i\le K$ and all $j\le L$.
\end{itemize}

Note that if $L=1$ (and so $\mathcal{P}=\mathcal{P}'$), then (Sch$1'$) just says that
$\mathcal{P}$ is a $(K,m,\eps_0)$-partition of $V$.%
\COMMENT{Deryk: eliminated the special notation for  $L=1$ as it occurred only once}

Suppose that $J$ is an $(i, i' )$-ES with respect to $\cP$.
Given $h\le L$, we say that $J$ has \emph{style $h$ (with respect to the $(K,L,m,\eps_0)$-partition
$(\mathcal{P},\mathcal{P}')$)} if all the edges of $J$
have their endvertices in $V_0\cup A_{i,h}\cup B_{i',h}$.

\begin{lemma} \label{lma:EPS}
Suppose that $K, L, n, m/L \in \mathbb{N}$, that $0 <1/n  \ll \eps, \eps_0\ll 1$ and $\eps_0\ll 1/K,1/L$.
Let $(G,\mathcal{P}, \mathcal{P}')$ be a $[K, L, m,\eps_0,\eps]$-scheme with $|V(G)\cup V_0|=n$.
Let $I  = \{j, j+1, \dots, j'\}\subseteq [K]$ be an integer interval with $|I| \ge 4$.
Let $J$ be either an $(i_1, i_2)$-HES of style $h\le L$ with $e_J(A',B')= 2$
or an $(i_1, i_2)$-MES of style $h\le L$ (with respect to $(\cP,\cP')$), for some $i_1, i_2 \in \{j+1, \dots, j'-1\}$.
Then there exists an exceptional path system of style $h$ for $G$
which spans the interval $I$ and contains all edges of~$J$.
\end{lemma}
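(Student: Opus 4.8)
The plan is to build the exceptional path system $EPS$ by first dealing with the edges of $J$ and the exceptional vertices $V_0$, and then greedily completing the picture with vertex-disjoint paths through the remaining clusters $A_{t,h}, B_{t,h}$ with $t \in I$. First I would record the good orientation: orient every path of $J$ consistently, with the two $AB$-paths $P_1, P_2$ (in the HES case) oriented in opposite senses, thereby obtaining $J_{\rm dir}$ and the induced orientation $J^*_{\rm dir}$ of $J^*$, as in Section~\ref{sec:BEPS}. By (ES1)/(EC1), $J$ has at most $\sqrt{\eps_0}n$ $AB$-paths and $|V_0| \le \eps_0 n$, so $J$ (and hence $J^*$) touches only $O(\sqrt{\eps_0}n)$ vertices of $A \cup B$; since $\eps_0 \ll 1/K, 1/L$ these fit comfortably inside the clusters $A_{i_1,h}, B_{i_2,h}$ (all edges of $J$ have endvertices in $V_0 \cup A_{i_1,h} \cup B_{i_2,h}$ because $J$ has style $h$). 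The key point is that $i_1, i_2 \in \{j+1, \dots, j'-1\}$, so the two endclusters $A_{j,h}, A_{j',h}, B_{j,h}, B_{j',h}$ are disjoint from all vertices incident to edges of $J$ — this is exactly what (EPS3) and (SPS2)/(EPS4) demand.

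Next I would construct the `core' paths $P^A_{0,{\rm dir}}$ and $P^B_{0,{\rm dir}}$. Consider $J^*_{\rm dir}$: by the definition of $J^*$ and the good orientation, $J^*_A = J^*[A]$ is a matching on $A$ whose vertices lie in $A_{i_1,h}$, and similarly $J^*_B$ on $B$ in $B_{i_2,h}$; moreover (by the construction of $J^*$ from an HES with $e_J(A',B')=2$, or from an MES) the directed graph $J^*_{\rm dir}[A]$ is a union of directed paths, and likewise on $B$. I would walk along the clusters $A_{j,h}, A_{j+1,h}, \dots, A_{j',h}$ in order, starting at a vertex of $A_{j,h}$, routing through $A_{i_1,h}$ so as to absorb all edges of $J^*_A$ (inserting them in the prescribed consistent order), and using (Sch$4'$) — which gives $|N^+_G(x) \cap N^-_G(y) \cap A_{t,h}| \ge (1/5 - \eps)m/L$ for all $x,y \in A$ — to make all the connecting edges inside $G[A]$, finishing in $A_{j',h}$. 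The same for $B$. Undoing the $J^* \leadsto J$ substitution via Proposition~\ref{prop:ES} then recombines $P^A_{0,{\rm dir}} + P^B_{0,{\rm dir}}$ into the two undirected paths $P_0, P'_0$ of (EPS2)/(EPS4) carrying $E(J)$, with no edge of $J$ touching the four endclusters. This gives (EPS1) for $s=0$ in spirit, (EPS2), (EPS3), (EPS4) and the $s=0$ part of (EPS6).

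Then I would fill in the remaining $2(m/L - 1)$ paths. After removing from each $A_{t,h}$ ($t \in I$) the $O(\sqrt{\eps_0}n) + O(1)$ vertices already used by $P^A_{0,{\rm dir}}$ and by $J$, each such cluster still has $\ge m/L - \eps_0 n - 1$ unused vertices; since all clusters $A_{t,h}$ have exactly $m/L$ vertices, a short counting argument (or a direct application of the superregularity in (Sch$3'$) together with (Sch$4'$)) lets me pick, one at a time, $m/L - 1$ vertex-disjoint directed paths $P^A_{s,{\rm dir}}$ each going from $A_{j,h}$ through $A_{j+1,h}, \dots$ to $A_{j',h}$ and together using up all remaining vertices of $\bigcup_{t \in I} A_{t,h}$; here at each of the $|I|-1$ steps along the interval I use (Sch$4'$) to find the next edge avoiding the $\le m/L$ forbidden vertices, which is possible since $(1/5 - \eps)m/L > m/L - (\text{used})$ fails — wait, rather: I use that the bipartite graphs $G[A_{t,h}, A_{t+1,h}]$ are $[\eps,1/2]$-superregular by (Sch$3'$), so one can extract a perfect matching between the appropriate leftover sets (Proposition~\ref{perfmatch} after Proposition~\ref{superslice}), and concatenating these matchings along $I$ yields the desired paths. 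Do the same on $B$. This gives (EPS1), (EPS5), (EPS6).

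The main obstacle is the bookkeeping at the interface between $J$ and the long paths: one must route $P^A_{0,{\rm dir}}$ through cluster $A_{i_1,h}$ so that it picks up the matching $J^*_A$ in the one prescribed consistent order (condition for $J^*_{\rm dir}$ to induce a good recombination via Proposition~\ref{prop:ES}), while simultaneously keeping $P^A_{0,{\rm dir}}$ vertex-disjoint from the yet-to-be-chosen $P^A_{s,{\rm dir}}$ and ensuring the leftover vertex counts in every cluster of $I$ stay balanced so that the final matchings exist. Because $J^*_A$ has at most $2\sqrt{\eps_0}n \ll m/L$ edges and $|N^+_G(x)\cap N^-_G(y)\cap A_{t,h}| \ge (1/5-\eps)m/L$ for every $t$, there is ample room to do this greedily; the orientation check in (EPS4) then follows formally from the definition of the induced orientation $J^*_{\rm dir}$ and Proposition~\ref{prop:ES}. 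All the parameter inequalities needed ($\eps_0 \ll \eps \ll 1/K, 1/L$ and $1/n \ll \eps_0$) are exactly those in the hypothesis, so no new hierarchy constraints arise.
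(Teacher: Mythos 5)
Your overall plan matches the paper's — orient $J$, build a core path picking up $J^*$ via (Sch$4'$), and then use superregularity (Proposition~\ref{superslice}, Proposition~\ref{perfmatch}) for the remaining paths — but there is one genuine gap at exactly the place you flag as the ``main obstacle'', and the resolution you sketch would not work.

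The core path $P^A_{0,{\rm dir}}$ must contain all $2s'$ vertices of $A_{i_1,h}$ that are endpoints of edges of $J^*_A$, and to string together consecutive matching edges it must step out of $A_{i_1,h}$ into an adjacent cluster (say $A_{i_1+1,h}$) roughly $s'$ times, while it needs only one vertex in each far cluster of $I$. So after you delete $V(P^A_{0,{\rm dir}})$, cluster $A_{i_1,h}$ has $m/L-2s'$ vertices left, $A_{i_1+1,h}$ has $m/L-s'$, and every other $A_{t,h}$ with $t\in I$ has $m/L-1$. These are unequal, so the perfect matchings you want to build at the end (to fill $\bigcup_{t\in I}A_{t,h}$ with $m/L-1$ further vertex-disjoint paths) do not exist. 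You say that $P^A_{0,{\rm dir}}$ should be routed ``ensuring the leftover vertex counts in every cluster of~$I$ stay balanced so that the final matchings exist'' and that ``there is ample room to do this greedily''; but this is impossible for a single path: a path carrying $J^*_A$ is forced to spend $2s'$ vertices in $A_{i_1,h}$ yet cannot spend more than one vertex in a cluster it merely passes through once. No amount of greediness fixes the discrepancy.

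What is actually needed, and what the paper does, is to insert $2s'-1$ additional short paths $P^A_{1,{\rm dir}},\dots,P^A_{2s'-1,{\rm dir}}$ before the matching step: $s'$ of them visit every cluster of $I$ except $A_{i_1,h}$, and $s'-1$ of them visit every cluster of $I$ except $A_{i_1,h}$ and $A_{i_1+1,h}$. Together with $P^A_{0,{\rm dir}}$, these $2s'$ paths then use exactly $2s'$ vertices in every cluster of~$I$, so the leftovers all have size $m/L-2s'$ and the superregular perfect-matching step goes through. This balancing construction is the one nontrivial idea of the proof, and it is absent from your proposal.
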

\proof
Let $J_{\rm dir}$ be a good orientation of $J$ and let $J^*_{\rm dir}$ be the induced orientation of $J^*$.
Let $x_1x_2, \dots, x_{2s'-1} x_{2s'}$ be the edges of $J^*_{A, {\rm dir}}: = J^*_{\rm dir}[A]$.
Since $J$ is an $(i_1,i_2)$-ES of style~$h$ with $e_J(A',B')\le 2$ it follows that
$s' = e(J^*_A) \le |V_0|+1\le 2\epszero n$ and $x_i \in A_{i_1,h}$ for all $i \le 2s'$.
Since $|I| \ge 4$ we have $i_1+1 \in \{j+1, \dots, j'-1\}$ or $i_1-1 \in \{j+1, \dots, j'-1\}$.
We will only consider the case when $i_1+1 \in \{j+1, \dots, j'-1\}$. (The argument for the other case
is similar.) 

Our assumption that $\eps_0\ll 1/K,1/L$ implies that
$\eps_0 n\le m/100L$ (say). 
Together with (Sch$4'$) this ensures that for every $1\le r < s'$, we can pick a vertex $w_r \in A_{i_1+1, h}$
such that $x_{2r}w_r$ and $w_r x_{2r+1}$ are (directed) edges in $G$ and such that $w_1,\dots,w_{s'-1}$ are distinct from each other.
We also pick a vertex $w_{s'} \in A_{i_1+1, h} \setminus \{ w_1, \dots, w_{s'-1}\} $ such that $x_{2s'}w_{s'}$ is a (directed) edge in $G$.
Let $Q_0$ be the path $x_1 x_2 w_1 x_3 x_4 w_2 \dots x_{2s' - 1 } x_{2s'} w_{s'}$.
Thus $Q_0$ is a directed path from $A_{i_1,h}$ to $A_{i_1 +1 ,h}$ in $G + J^*_{\rm dir}$ which contains all edges of $J^*_{A , { \rm dir }} $. 
Note that $|V(Q_0) \cap A_{i_1,h}| = 2s'$ and $|V(Q_0) \cap A_{i_1+1,h}| = s'$.
Moreover, $V(Q_0) \cap A_i = \emptyset$ for all $i \notin \{i_1, i_1+1\}$ and $V(Q_0) \cap B = \emptyset$.

Pick a vertex $w_0 \in A_{j,h}$ so that $w_{0} x_1$ is an edge of $G$.
Find a path $Q'_0$ from $w_{s'}$ to $A_{j',h}$ in $G$ such that the vertex set of $Q'_0$ consists of $w_{s'}$ and precisely one vertex in
each $A_{i,h}$ for all $i \in \{j+1, \dots, j' \} \setminus \{i_1, i_1+1 \}$ and no other vertices.
(Sch$4'$) ensures that this can be done greedily.
Define $P^A_{0,{\rm dir}}$ to be the concatenation of $w_0 x_1$, $Q_0$ and $Q'_0$.
Note that $P^A_{0,{\rm dir}}$ is a directed path from $A_{j,h}$ to $A_{j',h}$ in $G + J^*_{\rm dir}$ which contains $J^{*}_{A, {\rm dir}}$.
Moreover, 
\begin{align*}
	 |V(P^A_{0,{\rm dir}}) \cap A_{i,h}| &= 
	 \begin{cases}
	1	& \text{for $i \in \{j, \dots, j'\} \setminus \{i_1 , i_1+1 \}$,}\\
	2s'	& \text{for $i = i_1$,}\\
	s'	& \text{for $i = i_1+1$,}\\
	0	& \text{otherwise,}
	 \end{cases}
\end{align*}
while $V(P^A_{0,{\rm dir}}) \cap B = \emptyset$ and $V(P^A_{0,{\rm dir}}) \cap A_{i,h'}=\emptyset$ for all $i \le K$ and all $h' \ne h$.
(Sch$4'$) ensures that we can also choose $2s'-1$ (directed) paths $P^A_{1, {\rm dir} }, \dots, P^A_{2s'-1, {\rm dir} }$ in $G$ such that
the following conditions hold: 
\begin{itemize}
	\item For all $1 \le r < 2s'$, $P^A_{r, {\rm dir} }$ is a path from $A_{j,h}$ to $A_{j',h}$.
	\item For all $1 \le r \le s'$, $P^A_{r, {\rm dir} }$ contains precisely one vertex in $A_{i,h}$ for each $i\in \{j, \dots, j'\} \setminus \{i_1 \}$ and no other vertices.
	\item For all $s' <  r < 2s'$, $P^A_{r, {\rm dir} }$ contains precisely one vertex in $A_{i,h}$ for each $i\in \{j, \dots, j'\} \setminus \{i_1,i_1+1\}$ and no other vertices.
	\item $P^A_{0, {\rm dir} }, \dots, P^A_{2s'-1, {\rm dir} }$ are pairwise vertex-disjoint.
\end{itemize}

Let $Q$ be the union of $P^A_{0, {\rm dir} }, \dots, P^A_{2s'-1, {\rm dir} }$.
Thus $Q$ is a path system consisting of $2s'$ vertex-disjoint directed paths from $A_{j,h}$ to $A_{j',h}$.
Moreover, $V(Q)$ consists of precisely $2s'$ vertices in $A_{i,h}$ for every $j\le i\le j'$ and no other vertices.
Set $A'_{i,h}: = A_{i,h} \setminus V(Q)$ for all $i \le K$.
Note that
\begin{equation}\label{eq:sizeAih}
|A'_{i,h}| = \frac{m}{L}-2s'
\ge \frac{m}{L}-4 {\eps_0} n
\ge \frac{m}{L}-10{\eps_0} mK
\ge (1-\sqrt{\eps_0})\frac{m}{L}
\end{equation}
since $\eps_0\ll 1/K,1/L$.
Pick a new constant $\eps'$ such that $\eps,\eps_0 \ll \eps'\ll 1$.
Then Proposition~\ref{superslice}, (Sch$3'$) and \eqref{eq:sizeAih} together imply that 
$G[A'_{i,h}, A'_{i+1,h}]$ is still $[\eps',1/2]$-superregular
and so by Proposition~\ref{perfmatch} we can find a perfect matching in $G[A_{i,h}',A_{i+1,h}']$
for all $j \le i < j'$.
The union $Q'$ of all these matchings forms $m/L-2s'$ vertex-disjoint directed paths $P^A_{2s', {\rm dir}}, \dots, P^A_{m/L-1, {\rm dir}}$.
Note that $P^A_{0, {\rm dir}}, P^A_{1, {\rm dir}}, \dots, P^A_{m/L-1, {\rm dir}}$ are pairwise
vertex-disjoint and 
together cover precisely the vertices in $\bigcup_{i = j}^{j'} A_{i,h}$.%
    \COMMENT{Daniela: $\bigcup$ instead of $\cup$}
Moreover, $P^A_{0, {\rm dir}}$ contains $J^{*}_{A, {\rm dir}}$.

Similarly, we find $m/L$ vertex-disjoint directed paths $P^B_{0, {\rm dir}},P^B_{1, {\rm dir}}, \dots, \break P^B_{m/L-1, {\rm dir}}$
from $B_{j,h}$ to $B_{j',h}$ such that $P^B_{0, {\rm dir}}$ contains $J^{*}_{B, {\rm dir}}$ and
together the paths cover precisely the vertices in 
$\bigcup_{i = j}^{j'} B_{i,h}$.%
    \COMMENT{Daniela: $\bigcup$ instead of $\cup$}
For each $1 \le r < m/L$, let $P^A_r$ and $P^B_r$ be the
undirected paths obtained from $P^A_{r,{\rm dir}}$ and $P^B_{r,{\rm dir}}$ by ignoring the directions of all the edges.

Since $J^*_{A, {\rm dir}}\subseteq P^A_{0,{\rm dir}}$ and $J^*_{B, {\rm dir}}\subseteq P^B_{0,{\rm dir}}$
and since $J^*_{\rm dir}$ is the orientation of $J^*$ induced by $J_{\rm dir}$, it follows that
$P^A_{0,{\rm dir}} + P^B_{0,{\rm dir}} - J^*_{\rm dir } + J_{\rm dir}$ consists of two vertex-disjoint paths
$P_{0,{\rm dir}}$ and $P_{0,{\rm dir}}'$ from $A_{j,h} \cup B_{j,h}$ to $A_{j',h} \cup B_{j',h}$ with
$V(P_{0,{\rm dir}}) \cup V(P'_{0,{\rm dir}}) = V_0\cup V(P^A_{0,{\rm dir}}) \cup V(P^B_{0,{\rm dir}})$.
Let $P_{0}$ and $P'_{0}$ be the undirected paths obtained from $P_{0,{\rm dir}}$ and $P'_{0,{\rm dir}}$ by ignoring the directions of all the edges.
Let $EPS$ be the union of $P_{0}, P'_{0}, P^A_1, \dots, P^A_{m/L-1}, P^B_1, \dots, P^B_{m/L-1}$.
Then $EPS$ is an exceptional path system for $G$, as required. To see this, note that $J=EPS-EPS[A]-EPS[B]$ since
$e_J(A),e_J(B)=0$ by the definition of an exceptional system (see (EC3) in Section~\ref{sec:BES}).
\endproof

The next lemma uses the previous one to show that we can obtain many edge-disjoint exceptional factors by extending exceptional systems with suitable properties.

\begin{lemma} \label{lma:EF}
Suppose that $L,f,q,n,m/L,K/f \in \mathbb{N}$, that $K/f\ge 3$, that $0 <1/n \ll \eps,\eps_0  \ll 1$,
that $\eps_0\ll 1/K,1/L$ and $Lq/m\ll 1$.
Let $(G,\mathcal{P}, \mathcal{P}')$ be a $[K, L, m,\eps_0,\eps]$-scheme with $|V(G)\cup V_0|=n$.
Suppose that there exists a set $\mathcal{J}$ of $Lf q $ edge-disjoint exceptional systems
satisfying the following conditions: 
\begin{itemize}
    \item[{\rm (i)}] Each $J\in \mathcal{J}$ is either a Hamilton exceptional system with $e_J(A',B')=2$ or
     a matching exceptional system. 
	\item[{\rm (ii)}] For all $i \le f$ and all $h \le L$, $\mathcal{J}$ contains precisely $q$ $(i_1,i_2)$-ES of style $h$
(with respect to $(\cP,\cP')$) for which $i_1,i_2 \in \{ (i-1)K/f+2, \dots, iK/f \}$.
\end{itemize}
Then there exist $q$ edge-disjoint exceptional factors with parameters~$(L,f)$ for $G$ (with respect to  $(\cP,\cP')$)
covering all edges in~$\bigcup\mathcal{J}$.
\end{lemma}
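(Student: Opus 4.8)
The plan is to build the $q$ exceptional factors one at a time, greedily, using Lemma~\ref{lma:EPS} to convert each exceptional system in $\mathcal{J}$ into an exceptional path system of the appropriate style spanning the appropriate interval, while at each stage controlling the small number of edges used inside $G$ (outside the clusters $A_0, B_0$) so that the remaining graph is still a scheme with slightly worse parameters. First I would fix the canonical interval partition $\mathcal{I}(f,K)=\{I_1,\dots,I_f\}$ of $[K]$ into $f$ intervals; note that since $K/f\ge 3$ each interval $I_i$ has $|I_i|=K/f+1\ge 4$, as required by Lemma~\ref{lma:EPS}, and its `interior' indices are precisely $\{(i-1)K/f+2,\dots,iK/f\}$, matching condition~(ii). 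By~(ii), $\mathcal{J}$ partitions into $Lf$ groups indexed by pairs $(i,h)$ with $i\le f$, $h\le L$, each group consisting of exactly $q$ exceptional systems that are $(i_1,i_2)$-ES of style $h$ with $i_1,i_2$ interior to $I_i$; arbitrarily label the $q$ systems in each group as $J^{(i,h)}_1,\dots,J^{(i,h)}_q$.

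The main construction is an induction on $t=1,\dots,q$. Suppose we have already constructed edge-disjoint exceptional factors $EF_1,\dots,EF_{t-1}$ such that $EF_r$ consists, for each $(i,h)$, of an exceptional path system $EPS^{(i,h)}_r$ of style $h$ spanning $I_i$ and containing all edges of $J^{(i,h)}_r$. Set $G_t := G - \bigcup_{r<t}(EF_r - \text{(all edges inside }A_0\cup B_0\text{ and all fictive edges)})$; equivalently $G_t$ is $G$ with the edges of $EPS^{(i,h)}_r\cap E(G)$ removed for all $r<t$ and all $(i,h)$. Each $EPS^{(i,h)}_r$ uses at most $2$ edges of $G$ at each vertex of $A\cup B$ outside $A_0\cup B_0$ (the two paths $P^A_s, P^B_s$ through it) together with $O(|V_0|)$ further edges near $V_0$, so in total we remove at most $2Lfq = O(q)$ edges at each vertex of $A\cup B$; since $Lq/m\ll 1$ this is at most, say, $\sqrt{\eps}\, m/L$ edges at each vertex. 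Hence by Proposition~\ref{superslice} (applied to the superregular pairs $G[A_{i,j},A_{i',j'}]$ and $G[B_{i,j},B_{i',j'}]$) and by a trivial counting argument for (Sch$4'$), $(G_t,\mathcal{P},\mathcal{P}')$ is still a $[K,L,m,\eps_0,2\eps]$-scheme (say). Now for each pair $(i,h)$ in turn, apply Lemma~\ref{lma:EPS} to $G_t$ (with the current leftover of $G_t$ after processing the earlier pairs of stage $t$) with $I=I_i$, style $h$, and $J=J^{(i,h)}_t$ to obtain an exceptional path system $EPS^{(i,h)}_t$ of style $h$ for $G_t$ spanning $I_i$ and containing all edges of $J^{(i,h)}_t$; delete its $G$-edges before moving to the next pair. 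Condition~(i) guarantees that each $J^{(i,h)}_t$ is of the form required by Lemma~\ref{lma:EPS} (a Hamilton exceptional system with $e_J(A',B')=2$, or a matching exceptional system). The union $EF_t := \bigcup_{(i,h)} EPS^{(i,h)}_t$ is then an exceptional factor with parameters $(L,f)$ for $G$, edge-disjoint from $EF_1,\dots,EF_{t-1}$ by construction, and containing all of $\bigcup_{(i,h)} J^{(i,h)}_t$. After $t=q$ steps, the factors $EF_1,\dots,EF_q$ are edge-disjoint and between them cover every edge of $\bigcup\mathcal{J}$, as required.

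The one point that needs care — and which I expect to be the main (though minor) obstacle — is the bookkeeping that the scheme property survives all the deletions: one must verify that the total number of $G$-edges removed at any vertex over all $Lfq$ applications of Lemma~\ref{lma:EPS}, plus the intermediate deletions within a single stage, stays below the threshold needed to keep (Sch$3'$) (via Proposition~\ref{superslice}) and (Sch$4'$) intact with a controlled loss in the regularity parameter. This is exactly where the hypothesis $Lq/m\ll 1$ is used: since each exceptional path system spanning an interval contributes at most two $G$-edges at each vertex of $A\cup B$ not in $V_0$, and there are $Lfq$ of them in total (across all $q$ factors), the total degree loss at any such vertex is $\le 2Lfq \le 2Kq \ll m/L$, so Proposition~\ref{superslice} applies throughout with, say, $2\eps$ (or $\eps^{1/3}$) in place of $\eps$, which is still $\ll 1/K,1/L$ so Lemma~\ref{lma:EPS} remains applicable at every step. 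The edges incident to vertices of $V_0$ require no regularity control — they are handled greedily inside Lemma~\ref{lma:EPS} (via (Sch$4'$) and the bound $\eps_0 n\le m/(100L)$) — and since the exceptional systems in $\mathcal{J}$ are pairwise edge-disjoint to begin with, no edge of $G$ inside or incident to $V_0$ is reused across different factors.
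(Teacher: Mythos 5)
Your proposal is correct and takes essentially the same route as the paper's own proof: partition $\mathcal{J}$ into $q$ batches of $Lf$ exceptional systems (one per style/interval pair), then greedily build each exceptional factor by repeated application of Lemma~\ref{lma:EPS}, using Proposition~\ref{superslice} to verify that the scheme property survives the edge deletions, which is exactly where $Lq/m\ll 1$ enters. The only small difference is your degree-loss bound of $2Lfq$ is an overcount — each vertex in $A\cup B$ has degree exactly $2$ in each completed exceptional factor by~(\ref{EFdeg}), so the paper's tighter bound is $2j<3q$ after $j$ stages — but your looser bound is still $\ll m/L$ under the given hierarchy, so the argument goes through unchanged.
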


Recall that the canonical interval partition $\mathcal{I}(f,K)$ of $[K]$ into $f$ intervals consists of the intervals
$\{ (i-1)K/f+1, \dots, iK/f +1\}$ for all $i\le f$. So (ii) ensures
that for each interval $I\in \mathcal{I}(f,K)$ and each $h\le L$, the set $\mathcal{J}$ contains precisely $q$ exceptional systems of style $h$
whose edges are only incident to vertices in $V_0$ and vertices
belonging to clusters $A_{i_1}$ and $B_{i_2}$ for which both $i_1$ and $i_2$ lie in the interior of $I$.
We will use Lemma~\ref{lma:EPS} to extend each such exceptional system into an exceptional path system of style $h$ spanning~$I$.

\removelastskip\penalty55\medskip\noindent{\bf Proof of Lemma~\ref{lma:EF}. }
Choose a new constant $\eps'$ with $\eps,Lq/m\ll \eps'\ll 1$.
Let $\mathcal{J}_{1}, \dots, \mathcal{J}_{q }$ be a partition of $\mathcal{J}$ such that for all $j\le q$, $h \le L$ and $i \le f$, 
the set $\mathcal{J}_j$ contains precisely one $(i_1,i_2)$-ES of style $h$ with $i_1,i_2 \in \{ (i-1)K/f+2, \dots, iK/f \}$.
Thus each $\mathcal{J}_j$ consists of $Lf$ exceptional systems.
For each $j\le q$ in turn, we will choose an exceptional factor $EF_j$ with parameters $(L,f)$ for $G$ (with respect to $(\cP,\cP')$)
such that $EF_j$ and $EF_{j'}$ are edge-disjoint for all $j' < j$ and $EF_j$ contains all edges
of the exceptional systems in $\mathcal{J}_j$.
Assume that for some $1\le j \le q$ we have already constructed $EF_1, \dots, EF_{j-1}$. In order to construct $EF_j$, we will choose the $Lf$ exceptional path systems
forming $EF_j$ one by one, such that each of these exceptional path systems is edge-disjoint from $EF_1,\dots,EF_{j-1}$ and
contains precisely one of the exceptional systems in $\mathcal{J}_j$. Suppose that we have already chosen some of these exceptional
path systems and that next we wish to choose an exceptional path system of style $h$ which spans the interval $I$ of the canonical
interval partition $\mathcal{I}(f,K)$ and contains $J\in \mathcal{J}_j$.
Let $G'$ be the oriented graph obtained from $G$ by deleting all the edges in the path systems already chosen for $EF_j$ as well as deleting all the edges in $EF_1,\dots,EF_{j-1}$.
Recall that $V(G)=A\cup B$. Thus $\Delta(G - G') \le 2j < 3q $ by~\eqref{EFdeg}.
Together with Proposition~\ref{superslice} this implies that $(G',\mathcal{P},\mathcal{P}')$ is still a $[K, L, m, \epszero, \eps']$-scheme.
(Here we use that $\Delta(G - G') < 3 q=3Lq/m\cdot m/L$ and
$\eps,Lq/m\ll \eps'\ll 1$.) So we can apply Lemma~\ref{lma:EPS} with $\eps'$ playing the role of $\eps$ to obtain an exceptional path system of style $h$ for $G'$ (and thus for $G$) which spans $I$ and contains all edges of $J$.
This completes the proof of the lemma.
\endproof


\section{The Robust Decomposition Lemma} \label{sec:robdec}

The aim of this section is to state the robust decomposition lemma (Lem\-ma~\ref{rdeclemma}).
This is the key lemma proved in~\cite{Kelly} and guarantees
the existence of a `robustly decomposable' digraph $G^{\rm rob}_{\rm dir}$ within a `setup'.
For our purposes, we will then derive an undirected version in Corollary~\ref{rdeccor} to construct a robustly decomposable graph $G^{\rm rob}$.
Then $G^{\rm rob}+H$ will have a Hamilton decomposition for any sparse regular graph~$H$ which is edge-disjoint from $G^{\rm rob}$.
The crucial ingredient of a setup is a `universal walk', which we introduce in the next subsection.
The (proof of the) robust decomposition lemma then uses
edges guaranteed by this universal walk to `balance out' edges of the graph $H$ when constructing the Hamilton decomposition of
$G^{\rm rob}+H$.

\subsection{Chord Sequences and Universal Walks}\label{newlabel}
Let $R$ be a digraph whose vertices are $V_1,\dots,V_k$ and suppose that $C=V_1\dots V_k$ is a Hamilton cycle of $R$.
(Later on the vertices of $R$ will be clusters. So we denote them by capital letters.)

A \emph{chord sequence $CS(V_i,V_j)$} from $V_i$ to $V_j$ in $R$ is an ordered sequence of edges of the form
\[ CS(V_i,V_j) = (V_{i_1-1} V_{i_2}, V_{i_2-1} V_{i_3},\dots, V_{i_t-1} V_{i_{t+1}}),\]
where $V_{i_1}=V_i$, $V_{i_{t+1}} = V_j$ and the edge $V_{i_s-1} V_{i_{s+1}}$ belongs to $R$ for each $s\le t$.

If $i=j$ then we consider the empty set to be a chord sequence from $V_i$ to $V_j$. 
Without loss of generality, we may assume that $CS(V_i,V_j)$ does not contain any edges of $C$.
(Indeed, suppose that  $V_{i_s-1} V_{i_{s+1}}$ is an edge of $C$.
Then $i_s=i_{s+1}$ and so we can obtain a chord sequence from $V_i$ to $V_j$ with fewer edges.) 
For example, if $V_{i-1}V_{i+1}\in E(R)$, then the edge $V_{i-1}V_{i+1}$ is a chord sequence from $V_i$ to $V_{i+1}$.

The crucial property of chord sequences is that they satisfy a `local balance' condition. 
Suppose that $CS$ is obtained by concatenating several chord sequences 
$$CS(V_{i_1},V_{i_2}),CS(V_{i_2},V_{i_3}),\dots,CS(V_{i_{k-1}},V_{i_k})$$ so that $V_{i_1}=V_{i_k}$.
Then for every cluster $V_i$, the number of edges of $CS$ leaving $V_{i-1}$ equals the number of edges entering $V_i$.
We will not use this property explicitly, but it underlies the proof of the robust decomposition lemma (Lemma~\ref{rdeclemma}) that we apply
and appears implicitly e.g.~in~(U3).

A closed walk $U$ in $R$ is a \emph{universal walk for $C$
with parameter $\ell'$} if the following conditions hold:
\begin{itemize}
\item[(U1)] For every $i\le k$ there is a chord sequence $ECS (V_i,V_{i+1})$
from $V_i$ to $V_{i+1}$ such that $U$ contains all edges of all these chord sequences (counted with multiplicities) and all remaining edges of $U$ lie on $C$.
\item[(U2)] Each $ECS (V_i,V_{i+1})$ consists of at most $\sqrt{\ell'}/2$ edges.
\item[(U3)] $U$ enters each $V_i$ exactly $\ell'$ times and  leaves each $V_i$ exactly $\ell'$ times.
\end{itemize}
Note that condition~(U1) means that if an edge $V_iV_j\in E(R)\setminus E(C)$ occurs in total 5 times (say) in
$ECS(V_1,V_2),\dots,ECS(V_{k},V_1)$ then it occurs precisely 5 times in $U$. We will identify each occurrence of $V_iV_j$ in
$ECS(V_1,V_2),\dots,ECS(V_{k},V_1)$ with a (different) occurrence of $V_iV_j$ in $U$. 
Note that the edges of $ECS(V_i,V_{i+1})$ are allowed to appear in a different order within $ECS(V_i,V_{i+1})$ and within $U$.

\begin{lemma}\label{lem:univwalk}
Let $R$ be a digraph with vertices $V_1,\dots,V_k$. Suppose that $C=V_1\dots V_k$ is a Hamilton cycle of $R$
and that $V_{i}V_{i+2}\in E(R)$ for every $1\le i\le k$.
Let $\ell'\ge 4$ be an integer.%
\COMMENT{need 4 rather than 2 to ensure $1 \le \sqrt{\ell'}/2$.}
Let $U_{\ell'}$ the multiset obtained from $\ell'-1$ copies of $E(C)$ by adding
$V_{i}V_{i+2}\in E(R)$ for every $1\le i\le k$. Then the edges in $U_{\ell'}$ can be ordered so that the
resulting sequence forms a universal walk for $C$ with parameter~$\ell'$.
\end{lemma}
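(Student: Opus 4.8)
The idea is to explicitly describe the chord sequences $ECS(V_i,V_{i+1})$ and then check that the multiset $U_{\ell'}$ really decomposes into these chord sequences together with $\ell'-1$ copies of $E(C)$, ordered so as to form a single closed walk. First I would define, for each $1\le i\le k$, the chord sequence $ECS(V_i,V_{i+1}):=(V_{i-1}V_{i+1})$ consisting of the single edge $V_{i-1}V_{i+1}$; this is a legitimate chord sequence from $V_i$ to $V_{i+1}$ precisely because $V_{i-1}V_{i+1}\in E(R)$ by hypothesis (take $t=1$, $i_1=i$, $i_2=i+1$ in the definition, so the lone edge is $V_{i_1-1}V_{i_2}=V_{i-1}V_{i+1}$). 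Each such sequence has exactly one edge, so (U2) holds trivially since $\sqrt{\ell'}/2\ge 1$ as $\ell'\ge 4$.

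Next I would verify (U1) at the level of multisets: the union (with multiplicity) of the edge sets of $ECS(V_1,V_2),\dots,ECS(V_k,V_1)$ is exactly $\{V_{i-1}V_{i+1}:1\le i\le k\}=\{V_iV_{i+2}:1\le i\le k\}$ (reindexing $i\mapsto i+1$), and all these are non-$C$ edges (as $V_{i-1}V_{i+1}$ is a genuine chord, not an edge of $C$, assuming $k\ge 3$ which is implicit since $C$ is a Hamilton cycle of a digraph with $V_{i}V_{i+2}$ edges). The remaining edges of $U_{\ell'}$ are the $\ell'-1$ copies of $E(C)$, which lie on $C$. So as a multiset, $U_{\ell'}$ is exactly (all chord-sequence edges) $\cup$ (edges on $C$), matching the multiset requirement of (U1). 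Then (U3): each copy of $E(C)$ contributes exactly one entry into and one exit from each $V_i$, giving $\ell'-1$ each; the chord edges contribute, for each fixed $i$, exactly one edge leaving $V_{i}$ (namely $V_iV_{i+2}$) and one edge entering $V_i$ (namely $V_{i-2}V_i$); total $\ell'$ in and $\ell'$ out of every $V_i$.

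The main obstacle — and the only part requiring real care — is showing that the edges of $U_{\ell'}$ can be \emph{ordered} to form a single closed walk that simultaneously realizes (U1) in the strong sense that $U$ contains all edges of the chord sequences \emph{and} the traversal is consistent with the stated structure. Since every vertex has equal in- and out-degree $\ell'$ in the multigraph $U_{\ell'}$, by the standard Eulerian-circuit criterion for digraphs it suffices to check that $U_{\ell'}$ is connected (as a multigraph on the vertices it meets, which is all of $V_1,\dots,V_k$) — and this is immediate because a single copy of $E(C)$ already spans and connects all vertices. Hence $U_{\ell'}$ admits an Eulerian closed walk $U$. This $U$ automatically satisfies (U3) (Eulerian walks traverse each edge once, so the in/out counts computed above are exactly the number of entries/exits), and (U1)–(U2) hold because the chord sequences were fixed in advance and every one of their edges, with correct multiplicity, appears in $U_{\ell'}$ and hence in $U$. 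I would close by remarking that we may freely identify each occurrence of a chord edge in the concatenation of the $ECS(V_i,V_{i+1})$ with its occurrence in $U$, as permitted by the convention following the definition of universal walk, and that the order of edges within a given $ECS(V_i,V_{i+1})$ is irrelevant here since each consists of a single edge.
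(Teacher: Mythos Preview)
Your proof is correct and follows essentially the same approach as the paper: both take $ECS(V_i,V_{i+1}):=V_{i-1}V_{i+1}$ and verify (U1)--(U3) directly. The only difference is in establishing that $U_{\ell'}$ can be ordered as a closed walk: you invoke the Eulerian-circuit criterion (balanced in/out degrees plus connectivity via a copy of $E(C)$), whereas the paper deletes one copy of $E(C)$, decomposes the remaining $(\ell'-1)$-regular multidigraph into $1$-factors, and then explicitly threads the cycles of this decomposition together along $C$ --- but this is just an explicit proof of the Eulerian criterion in this special case, so the two arguments are equivalent.
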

In the remainder of this section, we will also write $U_{\ell'}$ for the universal walk guaranteed by Lemma~\ref{lem:univwalk}. 

\proof
Let us first show that the edges in $U_{\ell'}$ can be ordered so that the resulting sequence forms a closed walk in~$R$.
To see this, consider the multidigraph $U$ obtained from $U_{\ell'}$ by deleting one copy of~$E(C)$.
Then $U$ is $(\ell'-1)$-regular and thus has a decomposition into 1-factors. We order the edges of $U_{\ell'}$ as follows: 
We first traverse all cycles of the 1-factor decomposition of $U$ which contain the cluster $V_1$.
Next, we traverse the edge $V_1V_2$ of $C$. Next we traverse all those cycles of the 1-factor decomposition which contain
$V_2$ and which have not been traversed so far. Next we traverse the edge $V_2V_3$ of $C$ and so on
until we reach $V_1$ again. 

Recall that, for each $1\le i\le k$, the edge $V_{i-1}V_{i+1}$ is a chord sequence from $V_i$ to $V_{i+1}$. Thus we can take
$ECS(V_i,V_{i+1}):=V_{i-1}V_{i+1}$. Then $U_{\ell'}$ satisfies (U1)--(U3).
\endproof

\subsection{Setups and the Robust Decomposition Lemma}\label{newlabel2}

The aim of this subsection is to state the robust decomposition lemma (Lemma~\ref{rdeclemma}, proved in~\cite{Kelly})
and derive Corollary~\ref{rdeccor}, which we shall use later on in order to prove Theorem~\ref{1factstrong}.
The robust decomposition lemma guarantees the existence of a `robustly decomposable' digraph $G^{\rm rob}_{\rm dir}$
within a `setup'. Roughly speaking, a setup is a digraph $G$ together with its `reduced digraph' $R$,
which contains a Hamilton cycle $C$ and a universal walk $U$.
In our application, we will have two setups: $G[A]$ and $G[B]$ will play the role of $G$, and $R$ will be the complete digraph
in both cases. To define a setup formally, we first need to define certain `refinements' of partitions.%
   \COMMENT{osthus changed graph to digraph (3 times)}

Given a digraph $G$ and a partition $\cP$ of $V(G)$ into $k$ clusters $V_1,\dots,V_k$ of equal size,
we say that a partition $\cP'$ of $V$ is an \emph{$\ell'$-refinement of $\cP$} if $\cP'$ is obtained by splitting each $V_i$
into $\ell'$ subclusters of equal size. (So $\cP'$ consists of $\ell'k$ clusters.)
$\cP'$ is an \emph{$\eps$-uniform $\ell$-refinement}
of $\cP$ if it is an $\ell$-refinement of $\cP$ which satisfies the following condition:
Whenever $x$ is a vertex of $G$, $V$ is a cluster in $\cP$ and $|N^+_G(x)\cap V|\ge \eps |V|$
then $|N^+_G(x)\cap V'|=(1\pm \eps)|N^+_G(x)\cap V|/\ell$ for each cluster $V'\in \cP'$ with $V'\subseteq V$.
The inneighbourhoods of the vertices of $G$ satisfy an analogous condition.
We need the following simple observation from~\cite{Kelly}. The proof proceeds by considering a random partition
to obtain a uniform refinement.
\begin{lemma} \label{randompartition}
Suppose that $0<1/m \ll 1/k,\eps \ll \eps', d,1/\ell \le 1$ and that%
     \COMMENT{Daniela: previously only had $m/\ell\in\mathbb{N}$}
$n,k,\ell,m/\ell\in\mathbb{N}$.
Suppose that $G$ is a digraph on $n=km$ vertices and that $\cP$ is a partition of $V(G)$ into
$k$ clusters of size $m$. Then there exists an $\eps$-uniform $\ell$-refinement of $\cP$. Moreover, any
$\eps$-uniform $\ell$-refinement $\cP'$ of $\cP$ automatically satisfies
the following condition:
\begin{itemize}
\item Suppose that $V$, $W$ are clusters in $\cP$ and $V',W'$ are clusters in $\cP'$ with $V'\subseteq V$ and
$W'\subseteq W$. If $G[V,W]$ is $[\eps,d']$-superregular for some $d'\ge d$ then $G[V',W']$ is $[\eps',d']$-superregular.
\end{itemize}
\end{lemma}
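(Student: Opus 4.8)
\textbf{Proof plan for Lemma~\ref{randompartition}.}
The statement has two parts: first, the existence of an $\eps$-uniform $\ell$-refinement, and second, the automatic superregularity-inheritance property enjoyed by any such refinement. I would handle these separately, and in fact the second part is the easier of the two, so I might even present it first.

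For the existence part, the plan is the standard probabilistic construction. For each cluster $V\in\cP$ of size $m$, partition $V$ uniformly at random into $\ell$ subclusters $V^{(1)},\dots,V^{(\ell)}$ of size $m/\ell$ (this is possible since $m/\ell\in\mathbb{N}$), making these choices independently across the clusters of $\cP$. Fix a vertex $x\in V(G)$ and a cluster $V\in\cP$ with $|N^+_G(x)\cap V|\ge\eps|V|=\eps m$. For a subcluster $V'\subseteq V$ in the random refinement, the quantity $|N^+_G(x)\cap V'|$ is a hypergeometric random variable with parameters $(m,|N^+_G(x)\cap V|,m/\ell)$, so its mean is $|N^+_G(x)\cap V|/\ell\ge \eps m/\ell$. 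Apply the Chernoff–Hoeffding bound (Proposition~\ref{chernoff}) with $a:=\eps$ (note $\eps\ll\eps'$, so $\eps<3/2$): the probability that $|N^+_G(x)\cap V'|$ deviates from its mean by more than an $\eps$-fraction is at most $2\exp(-\eps^2\cdot\eps m/(3\ell))\le 2\exp(-\eps^3 m/(3\ell))$. Since $1/m\ll 1/k,\eps,1/\ell$, this is much smaller than $1/(n\cdot k\ell\cdot 2)$, say. The same estimate applies to inneighbourhoods. Taking a union bound over all $\le n$ choices of $x$, all $\le k$ choices of $V$, all $\le\ell$ choices of $V'\subseteq V$, and the two choices in-/out-neighbourhood, the total failure probability is below $1$. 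Hence some outcome gives a partition $\cP'$ satisfying the uniformity condition, i.e.\ an $\eps$-uniform $\ell$-refinement exists.

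For the second (deterministic) part, let $\cP'$ be \emph{any} $\eps$-uniform $\ell$-refinement, let $V,W$ be clusters of $\cP$ with $V'\subseteq V$, $W'\subseteq W$ clusters of $\cP'$, and suppose $G[V,W]$ is $[\eps,d']$-superregular for some $d'\ge d$. I would verify the two defining properties of $[\eps',d']$-superregularity for $G[V',W']$. First, the degree condition: for $x\in V'$, since $G[V,W]$ is $[\eps,d']$-superregular we have $|N^+_G(x)\cap W|=(d'\pm\eps)|W|\ge (d'-\eps)|W|\ge \eps|W|$ (using $\eps\ll d\le d'$), so the uniformity condition applies and $|N^+_G(x)\cap W'|=(1\pm\eps)|N^+_G(x)\cap W|/\ell = (1\pm\eps)(d'\pm\eps)|W|/\ell=(d'\pm 3\eps)|W'|$ since $|W'|=|W|/\ell$; as $\eps\ll\eps'$ this is $(d'\pm\eps')|W'|$, and symmetrically for the degrees into $V'$ of vertices in $W'$. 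Second, $\eps'$-regularity: for $X\subseteq V'$ with $|X|\ge\eps'|V'|$ and $Y\subseteq W'$ with $|Y|\ge\eps'|W'|$, I would observe that $|X|\ge\eps'|V'|=\eps'|V|/\ell\ge\eps|V|$ and similarly $|Y|\ge\eps|W|$ (using $\eps/\ell\le\eps\ll\eps'$, so $\eps'/\ell\ge\eps$ — actually one wants $\eps'|V|/\ell\ge\eps|V|$, i.e.\ $\eps'\ge\eps\ell$, which holds since $\eps\ll\eps',1/\ell$); hence $\eps$-regularity of $G[V,W]$ gives $|d(X,Y)-d(V,W)|<\eps$, and combining with $|d(V',W')-d(V,W)|<\eps$ (obtained the same way, taking $X=V'$, $Y=W'$) yields $|d(X,Y)-d(V',W')|<2\eps<\eps'$. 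This establishes $[\eps',d']$-superregularity of $G[V',W']$.

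The main obstacle, such as it is, is purely bookkeeping: making sure all the constant comparisons in the hierarchy $0<1/m\ll 1/k,\eps\ll\eps',d,1/\ell\le 1$ are used correctly — in particular that $\eps$ is small enough compared to $1/\ell$ that the rescaled subcluster sizes still exceed the $\eps$-thresholds, and that $m$ is large enough for the Chernoff bound to beat the union bound over $O(n k\ell)$ events. There is no genuine mathematical difficulty here; this is a routine concentration-plus-union-bound argument followed by a mechanical verification of the superregularity definition, which is exactly why the paper (following \cite{Kelly}) states it as a lemma to be used off the shelf.
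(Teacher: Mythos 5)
Your proposal is correct and follows the same approach the paper indicates (the paper does not reproduce a proof of this lemma but cites~\cite{Kelly} and remarks that ``the proof proceeds by considering a random partition to obtain a uniform refinement''). Both parts of your argument check out: the Chernoff--Hoeffding bound for the hypergeometric variable $|N^+_G(x)\cap V'|$ beats the union bound over $O(nk\ell)$ events because $m$ is large compared to $k$, $\ell$, $1/\eps$, and the deterministic verification of $[\eps',d']$-superregularity for $G[V',W']$ uses the hierarchy correctly — in particular the key inequalities $d'-\eps\ge\eps$ (so the uniformity condition is triggered) and $\eps'/\ell\ge\eps$ (so subsets of $V'$, $W'$ of relative size $\eps'$ have absolute size $\ge\eps|V|$, $\eps|W|$) both follow from $\eps\ll\eps',d,1/\ell$.
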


We will also need the following definition from~\cite{Kelly}.
$(G,\cP,\cP',R,C,U,U')$ is called an \emph{$(\ell',k,m,\eps,d)$-setup} if the following properties are satisfied:
\begin{itemize}
\item [(ST1)] $G$ and $R$ are digraphs. $\mathcal{P}$ is a partition of $V(G)$ into
$k$ clusters of size $m$. The vertex set of $R$ consists of these clusters.
\item[(ST2)] For every edge $VW$%
\COMMENT{NOTE: I have changed each $U$ to a $V$ in (ST2) and (ST3). Otherwise, confusing with
$U$ being a universal walk in (ST4).}
of $R$ the corresponding pair $G[V,W]$ is $(\eps,\ge d)$-regular.
\item[(ST3)] $C$ is a Hamilton cycle of $R$ and for every edge $VW$ of $C$ the corresponding pair $G[V,W]$ is $[\eps,\ge d]$-superregular.
\item[(ST4)] $U$ is a universal walk for $C$ with parameter~$\ell'$ and $\cP'$ is an $\eps$-uniform $\ell'$-refinement%
   \COMMENT{need $\eps$-uniformity here in order to be able to derive (ST6) from (Sch3') in the proof of Lemma~\ref{lem:setup}.}
of $\cP$.
\item[(ST5)] Suppose that $C=V_1\dots V_{k}$ and let $V_j^1,\dots,V_j^{\ell'}$ denote the clusters in $\cP'$ which are contained
in $V_j$ (for each $1\le j\le k$). Then $U'$ is a closed walk on the clusters in $\cP'$ which is obtained from $U$ as follows:
When $U$ visits $V_j$ for the $a$th time, we let $U'$ visit the subcluster $V_j^a$ (for all $1\le a\le \ell'$).
\item[(ST6)] Each edge of $U'$ corresponds to an $[\eps,\ge d]$-superregular pair in $G$.
\end{itemize}
In~\cite{Kelly}, in a setup, the digraph $G$ could also contain an exceptional set, but since we are only using the definition
in the case when there is no such exceptional set, we have only stated it in this special case.

Suppose that $(G,\mathcal{P},\mathcal{P}')$ is a $[K,L,m,\eps_0,\eps]$-scheme.
Recall that $A_1,\dots,A_K$ and $B_1,\dots,B_K$ denote the clusters of $\mathcal{P}$.
Let $\mathcal{Q}_{A}:=\{A_1,\dots,A_K\}$, $\mathcal{Q}_{B}:=\{B_1,\dots,B_K\}$ and let $C_A = A_1 \dots A_K$ and $C_B = B_1 \dots B_K$ be (directed)
cycles. Suppose that $\ell',m/\ell'\in\mathbb{N}$ with $\ell'\ge 4$.
Let $\mathcal{Q}'_{A}$ be an $\eps$-uniform $\ell'$-refinement of $\mathcal{Q}_A$.
Let $R_{A}$ be the complete digraph whose vertices are the clusters
in $\mathcal{Q}_A$. 
Let $U_{A,\ell'}$ be a universal walk for $C_A$ with parameter $\ell'$ as defined in Lemma~\ref{lem:univwalk}.
Let $U'_{A,\ell'}$ be the closed walk obtained from $U_{A,\ell'}$ as described in~(ST5).
 We will call
$$
(G[A], \mathcal{Q}_A , \mathcal{Q}'_A ,R_{A},C_{A},U_{A,\ell'},U'_{A,\ell'})$$ the \emph{$A$-setup
associated to~$(G, \mathcal{P},\mathcal{P}')$}. 
Define $\mathcal{Q}'_{B}$,  $R_B$, $U_{B,\ell'}$ and $U'_{B,\ell'}$ similarly. We will call
$$
(G[B], \mathcal{Q}_B , \mathcal{Q}'_B ,R_{B},C_{B},U_{B,\ell'},U'_{B,\ell'})$$ the \emph{$B$-setup
associated to~$(G, \mathcal{P},\mathcal{P}')$}. 
The following lemma shows that both the $A$-setup and the $B$-setup indeed satisfy all the conditions in the definition of a setup.%
    \COMMENT{Daniela: added $1/K$ into the hierarchy of Lemam~\ref{lem:setup} as we need it to apply Lemma~\ref{randompartition}}

\begin{lemma}\label{lem:setup}
Suppose that $1/m\ll 1/K,\eps_0,\eps\ll \eps',1/\ell'$ and $K,L,m/L, \ell',$ $m/\ell'\in\mathbb{N}$ with $\ell'\ge 4$.
Suppose that $(G,\mathcal{P},\mathcal{P}')$ is a $[K,L,m,\eps_0,\eps]$-scheme.
Then each of
$$(G[A], \mathcal{Q}_A , \mathcal{Q}'_A ,R_{A},C_{A},U_{A,\ell'},U'_{A,\ell'}) \ \ \ \text{and}\ \ \ (G[B], \mathcal{Q}_B , \mathcal{Q}'_B ,R_{B},C_{B},U_{B,\ell'},U'_{B,\ell'})$$
is an $(\ell',K,m,\eps',1/2)$-setup.
\end{lemma}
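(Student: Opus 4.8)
The plan is to verify the six conditions (ST1)--(ST6) directly for the $A$-setup; the argument for the $B$-setup is symmetric. Throughout we use the defining properties (Sch$1'$)--(Sch$4'$) of a $[K,L,m,\eps_0,\eps]$-scheme together with Lemma~\ref{lem:univwalk} and Lemma~\ref{randompartition}. First note that the hierarchy $1/m\ll 1/K,\eps_0,\eps\ll\eps',1/\ell'$ allows us to apply Lemma~\ref{randompartition} with $k:=K$, $d:=1/2$, $\ell:=\ell'$ and $\eps'$ in the role of $\eps'$ there, so that an $\eps$-uniform $\ell'$-refinement $\mathcal{Q}'_A$ of $\mathcal{Q}_A$ exists; this is used implicitly in the definition of the $A$-setup associated to $(G,\mathcal P,\mathcal P')$.

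Now I would check the conditions in order. (ST1) is immediate: $G[A]$ and $R_A$ are digraphs, $\mathcal{Q}_A$ partitions $A$ into $K$ clusters $A_1,\dots,A_K$ of size $m$ (using (Sch$1'$), since $(\mathcal P,\mathcal P')$ is a $(K,L,m,\eps_0)$-partition), and $V(R_A)=\mathcal{Q}_A$ by definition. (ST2): since $R_A$ is the complete digraph on $\mathcal{Q}_A$, every edge is $A_iA_{i'}$ with $i\ne i'$, and (Sch$3'$) says $G[A_i,A_{i'}]$ is $[\eps,1/2]$-superregular, hence $(\eps,\ge 1/2)$-regular, so in particular $(\eps,\ge d)$-regular with $d=1/2$. (ST3): $C_A=A_1\dots A_K$ is a Hamilton cycle of the complete digraph $R_A$, and for each edge $A_iA_{i+1}$ of $C_A$ the pair $G[A_i,A_{i+1}]$ is $[\eps,1/2]$-superregular by (Sch$3'$), i.e.\ $[\eps,\ge 1/2]$-superregular. (ST4): $U_{A,\ell'}$ is a universal walk for $C_A$ with parameter $\ell'$ by Lemma~\ref{lem:univwalk} --- here I must check its hypothesis that $A_iA_{i+2}\in E(R_A)$ for every $i$, which holds trivially because $R_A$ is complete and $\ell'\ge 4$; and $\mathcal{Q}'_A$ is an $\eps$-uniform $\ell'$-refinement of $\mathcal{Q}_A$ by construction. (ST5): $U'_{A,\ell'}$ is defined to be exactly the closed walk on the clusters of $\mathcal{Q}'_A$ obtained from $U_{A,\ell'}$ by the rule in (ST5), so this holds by definition.

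The only condition requiring genuine work is (ST6): every edge of $U'_{A,\ell'}$ must correspond to an $[\eps',\ge 1/2]$-superregular pair in $G[A]$. By Lemma~\ref{lem:univwalk}, the underlying edges of $U_{A,\ell'}$ are either edges of $C_A$ (appearing $\ell'-1$ times) or chords of the form $A_iA_{i+2}$. In $U'_{A,\ell'}$ these become edges between subclusters $A_i^a,A_{i+1}^b$ or $A_i^a,A_{i+2}^b$ of $\mathcal{Q}'_A$. For the $C_A$-type edges, $G[A_i,A_{i+1}]$ is $[\eps,\ge 1/2]$-superregular by (Sch$3'$); for the chord-type edges, $G[A_i,A_{i+2}]$ is $[\eps,\ge 1/2]$-superregular, again by (Sch$3'$) (which covers all pairs $A_iA_{i'}$ with $i\ne i'$, and in particular $i'=i+2$, noting $K\ge 3$ follows from the hierarchy so $i+2\not\equiv i$). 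In all cases the parent pair is $[\eps,\ge 1/2]$-superregular, so by the "moreover" part of Lemma~\ref{randompartition} (with $d:=1/2$), the corresponding pair $G[A_i^a,A_{i'}^b]$ between the refined subclusters is $[\eps',\ge 1/2]$-superregular. This establishes (ST6), completing the check that the $A$-setup is an $(\ell',K,m,\eps',1/2)$-setup. The $B$-setup is handled identically, swapping $A_i$ for $B_i$ and using the $B$-parts of (Sch$1'$)--(Sch$4'$). The main (mild) obstacle is simply bookkeeping in (ST6): one must be careful that $R_A$ being complete makes every chord $A_iA_{i+2}$ an actual edge of $R_A$ with a superregular pair, and that the parameter degrades from $\eps$ to $\eps'$ exactly once (via Lemma~\ref{randompartition}) and no further.
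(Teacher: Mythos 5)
Your proposal is correct and takes essentially the same route as the paper's own proof: (ST1) is definitional, (Sch$3'$) gives (ST2) and (ST3), Lemma~\ref{lem:univwalk} gives (ST4), (ST5) is by construction, and (ST6) follows from the $[\eps,\ge 1/2]$-superregularity of $G[A_i,A_{i'}]$ (the second clause of (Sch$3'$)) combined with the ``moreover'' part of Lemma~\ref{randompartition} applied to the $\eps$-uniform $\ell'$-refinement $\mathcal Q'_A$. You simply spell out the bookkeeping that the paper leaves implicit.
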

\proof
It suffices to show that $(G[A], \mathcal{Q}_A , \mathcal{Q}'_A ,R_{A},C_{A},U_{A,\ell'},U'_{A,\ell'})$ is an $(\ell',K,m,$ $\eps',1/2)$-setup.
Clearly, (ST1) holds.
(Sch$3'$) implies that (ST2) and~(ST3) hold.%
	\COMMENT{The only reason why (Sch3') also includes a superregularity condition for pairs of clusters in $\cP$
is enables us to get (ST2) and~(ST3) without thinking. Alternatively, we could have added the following prop.
PROPOSITION: Suppose that $L,m \in \mathbb{N}$ and $0 <\eps \ll d \le 1$. Let $G$ be a graph and suppose that
$A_j,B_{j'}\subseteq V(G)$ are sets of vertices with $|A_j|=|B_{j'}|=m$ for all $j,j' \le L$
such that all these $2L$ sets are pairwise disjoint. 
Suppose that $G[A_{j},B_{j'}]$  is $(\eps, d)$-superregular for all $j, j' \leq L$.
Let $A:= \bigcup_{j\le L} A_{j}$ and $B:= \bigcup_{j\le L} B_{j}$.
Then $G[A,B]$ is $(2\eps^{1/3}, d)$-superregular.
PROOF:
Let $n:=mL$.
Consider $X \subseteq A$ and $Y \subseteq B$ with $|X|,|Y| \ge \eps^{1/3} n$.
Let $X_j:=X \cap A_j$ and $Y_{j}:= Y \cap B_j$ for all $j \le m$.
Then%
    \COMMENT{Have $|X_j||Y_{j'}|-\eps m^2$ instead of just $|X_j||Y_{j'}|$ to account for
the possibility that at least one of $|X_j$, $Y_{j'}$ has size less than $\eps m$.}
\begin{align*}
e_G(X,Y) & \ge  \sum_{j,j'\le m}(d-\eps) (|X_j||Y_{j'}|-\eps m^2) =(d-\eps)(|X||Y|-\eps n^2) \\
& \ge (d-\eps)(|X||Y|-\eps^{1/3}|X||Y|) \ge (d-2\eps^{1/3})|X||Y|.
\end{align*}
An upper bound follows similarly. Moreover, the minimum degree requirement on $G[A,B]$ is trivially satisfied.
}
Lemma~\ref{lem:univwalk} implies~(ST4). (ST5) follows from the definition of $U'_{A,\ell'}$.
(ST6) follows from Lemma~\ref{randompartition} since $\mathcal{Q}'_{A}$ is an $\eps$-uniform $\ell'$-refinement of~$\mathcal{Q}_A$.
\endproof

We now state the robust decomposition lemma from~\cite{Kelly}.
Recall that this guarantees the existence of a `robustly decomposable' digraph $G^{\rm rob}_{\rm dir}$, whose crucial property
is that $H + G^{\rm rob}_{\rm dir}$ has a Hamilton decomposition for any sparse regular digraph~$H$ which is edge-disjoint
from $G^{\rm rob}_{\rm dir}$.

$G^{\rm rob}_{\rm dir}$ consists of digraphs $CA_{{\rm dir}}(r)$ (the `chord absorber') and $PCA_{{\rm dir}}(r)$ 
(the `parity extended cycle switcher')
together with some special factors. $G^{\rm rob}_{\rm dir}$ is constructed in two steps:
given a suitable set $\mathcal{SF}$ of special factors, the lemma first `constructs' $CA_{{\rm dir}}(r)$ and then,
given another suitable set $\mathcal{SF}'$ of special factors, the lemma `constructs' $PCA_{{\rm dir}}(r)$.
The reason for having two separate steps is that in~\cite{Kelly}, it is not clear how to construct $CA_{{\rm dir}}(r)$
after constructing $\mathcal{SF}'$ (rather than before), as the removal of $\mathcal{SF}'$ from the digraph under consideration 
affects its properties considerably.

\begin{lemma} \label{rdeclemma}
Suppose that $0<1/m\ll 1/k\ll \eps \ll 1/q \ll 1/f \ll r_1/m\ll d\ll 1/\ell',1/g\ll 1$ and%
   \COMMENT{In the Kelly paper have $1/n$ instead of $1/m$ in the hierarchy. But since $V_0=\emptyset$ in our setting,
it doesn't make that much sense to introduce $n$ here. So I replaced $1/n$ by $1/m$.}
that $rk^2\le m$. Let
$$r_2:=96\ell'g^2kr, \ \ \ r_3:=rfk/q, \ \ \ r^\diamond:=r_1+r_2+r-(q-1)r_3, \ \ \ s':=rfk+7r^\diamond
$$
and suppose that $k/14, k/f, k/g, q/f, m/4\ell', fm/q, 2fk/3g(g-1) \in \mathbb{N}$.
Suppose that $(G,\cP,\cP',R,C,U,U')$ is an $(\ell',k,m,\eps,d)$-setup and $C=V_1\dots V_k$.
Suppose that $\cP^*$ is a $(q/f)$-refinement
of $\cP$ and that $SF_1,\dots, SF_{r_3}$ are edge-disjoint special factors with parameters $(q/f,f)$ 
with respect to $C$, $\cP^*$ in $G$. Let $\mathcal{SF}:=SF_1+\dots +SF_{r_3}$.
Then there exists a digraph $CA_{{\rm dir}}(r)$ for which the following holds:
\begin{itemize}
\item[\rm (i)] $CA_{{\rm dir}}(r)$ is an $(r_1+r_2)$-regular spanning subdigraph of $G$ which is edge-disjoint from $\mathcal{SF}$.
\item[\rm (ii)] Suppose that $SF'_1,\dots, SF'_{r^\diamond}$ are special factors with parameters $(1,7)$
with respect to $C$, $\cP$ in $G$ which are edge-disjoint from each other and from $CA_{{\rm dir}}(r)+ \mathcal{SF}$.%
   \COMMENT{In the Kelly paper we write $CA_{{\rm dir}}(r)\cup \mathcal{SF}$ instead of $CA_{{\rm dir}}(r)+ \mathcal{SF}$
(and similarly below). But with our def of $+$ and $\cup $ in this paper, we have to use $+$ since we allow for a fictive edge
$xy$ in $\mathcal{SF}$ to also occur in $CA_{{\rm dir}}(r)$, and in this case $CA_{{\rm dir}}(r)+ \mathcal{SF}$
willl contain two copies of that edge.} 
Let $\mathcal{SF}':=SF'_1+\dots +SF'_{r^\diamond}$.
Then there exists a digraph $PCA_{{\rm dir}}(r)$ for which the following holds:
\begin{itemize}
\item[\rm (a)] $PCA_{{\rm dir}}(r)$ is a $5r^\diamond$-regular spanning subdigraph of $G$ which
is edge-disjoint from $CA_{{\rm dir}}(r)+ \mathcal{SF}+ \mathcal{SF}'$.
\item[\rm (b)] Let $\mathcal{SPS}$ be the set consisting of all the $s'$ special path systems
contained in $\mathcal{SF}+ \mathcal{SF}'$.
Suppose that $H$ is an $r$-regular digraph on $V(G)$ which is edge-disjoint from $G^{\rm rob}_{\rm dir}:=CA_{{\rm dir}}(r)+ PCA_{{\rm dir}}(r)+ \mathcal{SF}+ \mathcal{SF}'$.
Then $H+G^{\rm rob}_{\rm dir}$ has a decomposition into $s'$
edge-disjoint Hamilton cycles $C_1,\dots,C_{s'}$.
Moreover, $C_i$ contains one of the special path systems from $\mathcal{SPS}$, for each $i\le s'$.
\end{itemize}
\end{itemize}
\end{lemma}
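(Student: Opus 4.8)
The plan is to follow the absorbing strategy: construct $CA_{\rm dir}(r)$ and then $PCA_{\rm dir}(r)$ as sparse \emph{quasirandom} subdigraphs of $G$ adapted to the universal walk $U$ and its refinement $U'$, so that an arbitrary sparse $r$-regular digraph $H$ edge-disjoint from $G^{\rm rob}_{\rm dir}$ can first be ``straightened out'' to wind around the cycle $C$, and then completed, cluster by cluster along $C$, into Hamilton cycles — one through each special path system in $\mathcal{SPS}$. A short degree count (using $r_3=rfk/q$ and $r^\diamond=r_1+r_2+r-(q-1)r_3$) shows that $H+G^{\rm rob}_{\rm dir}$ is exactly $s'$-regular, and $|\mathcal{SPS}|=r_3 q+7r^\diamond=s'$; hence any family of $s'$ edge-disjoint Hamilton cycles, each through a distinct member of $\mathcal{SPS}$, is automatically a decomposition. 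The refinements $\cP'$, $\cP^*$ and the uniformity condition (Lemma~\ref{randompartition}) are used so that, after deleting the edges already committed, one still has a plentiful supply of fresh $[\eps,\ge d]$-superregular cluster-pairs to build the gadgets below; the existence of all these gadgets inside superregular pairs follows from Propositions~\ref{superslice}, \ref{perfmatch} and Chernoff-type estimates (Proposition~\ref{chernoff}).

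\textbf{Step 1: the chord absorber.} Choose $CA_{\rm dir}(r)$ by a randomized procedure inside $G$, edge-disjoint from $\mathcal{SF}$, as the union of $r_1$ ``winding'' layers of edges following $C$ together with $r_2=96\ell'g^2kr$ further edges distributed so that, for every chord $V_aV_b$ occurring in the chord sequences $ECS(V_i,V_{i+1})$ making up $U$ — there are few of these, by (U1)–(U2) — the relevant cluster-pairs carry a dense superregular ``ladder'' inside $CA_{\rm dir}(r)$. One verifies that a suitable choice exists with all the required uniformity, giving (i).

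\textbf{Step 2: straightening $H$.} Decompose $H$ into $r$ $1$-regular spanning subdigraphs (König/Hall). Every edge of $H$ that is a chord of $C$ (joins non-consecutive clusters) is replaced by a path winding around $C$: one uses a chord sequence, the local-balance property it satisfies, a ladder of $CA_{\rm dir}(r)$ to ``borrow'' the intermediate edges, and a swap of fictive edges inside the special path systems of $\mathcal{SF}$ to ``pay back'' — each such swap leaves the special factors still $1$-regular, with a different fictive edge. Carrying this out for all chords exhibits $H+CA_{\rm dir}(r)+\mathcal{SF}$ as a decomposition into $r_3$ digraphs, each the union of $q$ (modified) special path systems along $C$ together with a digraph all of whose edges wind around $C$.

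\textbf{Step 3: the parity extended cycle switcher and completion.} Build $PCA_{\rm dir}(r)$, a $5r^\diamond$-regular quasirandom subdigraph of $G$ edge-disjoint from $CA_{\rm dir}(r)+\mathcal{SF}+\mathcal{SF}'$, supplying (a) enough \emph{$2$-switches} — gadgets that, given a spanning union of $\ge 2$ vertex-disjoint cycles winding around $C$, merge two of them — and (b) a controlled number of \emph{parity switches} that change the number of such cycles by an amount unattainable by $2$-switches alone; the $r^\diamond$ extra special factors $\mathcal{SF}'$ with parameters $(1,7)$ feed into this. Then process the $s'$ pieces: for each $S\in\mathcal{SPS}$, greedily extend $S$ along $C$ using the now-winding edges of $H$ and of the absorbers to a spanning winding subdigraph, obtaining a union of cycles through $S$, and apply $2$-switches (and, where forced, a parity switch from $PCA_{\rm dir}(r)$) to merge everything into one Hamilton cycle containing $S$. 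Balancing the budgets so that the winding edges, $2$-switches and parity switches suffice for all $s'$ pieces at once, while keeping everything edge-disjoint, yields (ii).

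\textbf{Main obstacle.} The hard part is Step 3: completing a winding structure forces a merge of a disjoint union of cycles into one, but an ordinary switch changes the cycle count by $\pm1$ with a fixed parity invariant (essentially the sign of an associated permutation), so a naive scheme can get stuck exactly one cycle short of a Hamilton cycle. Resolving this is precisely the purpose of the ``parity extended'' part of the switcher, and arranging the exact numbers of switcher edges, special factors and winding edges so that the completion works simultaneously for every one of the $s'$ Hamilton cycles — which is what pins down the values of $r_2$, $r^\diamond$, $s'$ and the multiplicative constants — is the technically heaviest ingredient. (In the present paper this lemma is imported from~\cite{Kelly}, where all of these estimates are carried out in detail.)
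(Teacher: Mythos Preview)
The paper does not prove Lemma~\ref{rdeclemma}; it is quoted as a black box from~\cite{Kelly}, and the subsequent text immediately applies it (via Corollary~\ref{rdeccor}) without giving any argument. So there is no proof in the paper to compare your proposal against, and you correctly note this in your final parenthetical remark.

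That said, as a sketch of the argument in~\cite{Kelly}, your outline captures the large-scale architecture accurately: the chord absorber $CA_{\rm dir}(r)$ is built to neutralise the ``wrong-way'' edges of $H$ using the chord sequences underlying the universal walk $U$; the parity extended cycle switcher $PCA_{\rm dir}(r)$ then handles the merging of cycles (including the parity obstruction you identify); and your degree count verifying that $H+G^{\rm rob}_{\rm dir}$ is $s'$-regular with $|\mathcal{SPS}|=s'$ is correct. A couple of the mechanisms are described somewhat loosely, however. In Step~2, the straightening does not proceed by first decomposing $H$ into $1$-factors and then swapping fictive edges to ``pay back''; rather, the chord edges of $H$ are rerouted through the preassigned chord-sequence gadgets inside $CA_{\rm dir}(r)$, and the special factors $\mathcal{SF}$ are used to anchor the resulting paths to the refinement $\cP^*$ (the fictive edges are placeholders, not a currency to be traded). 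In Step~3, the parity gadgets are not quite ``$2$-switches'' in the sense you describe; the construction in~\cite{Kelly} uses carefully placed ``bridges'' between consecutive subclusters of $\cP'$ along $U'$ together with the seven intervals coming from the parameters $(1,7)$ of $\mathcal{SF}'$. These are refinements of detail rather than gaps in the overall plan.
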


Recall from Section~\ref{sec:SF} that we always view fictive edges in special factors as being distinct from each other and
from the edges in other graphs. So for example, saying that $CA_{{\rm dir}}(r)$ and $\mathcal{SF}$ are edge-disjoint in
Lemma~\ref{rdeclemma} still allows for a fictive edge $xy$ in $\mathcal{SF}$ to occur in $CA_{{\rm dir}}(r)$ as well
(but $CA_{{\rm dir}}(r)$ will avoid all non-fictive edges in $\mathcal{SF}$).

In the proof of Theorem~\ref{1factstrong} we will use the following `undirected' consequence of Lemma~\ref{rdeclemma}.

\begin{cor} \label{rdeccor}
Suppose that $0<1/m\ll \eps_0,1/K\ll \eps \ll 1/L \ll 1/f \ll r_1/m\ll 1/\ell',1/g\ll 1$ and
that $rK^2\le m$. Let
$$r_2:=96\ell'g^2Kr, \ \ \ r_3:=rK/L, \ \ \ r^\diamond:=r_1+r_2+r-(Lf-1)r_3, \ \ \ s':=rfK+7r^\diamond
$$
and suppose that $K/14, K/f, K/g, m/4\ell', m/L, 2fK/3g(g-1) \in \mathbb{N}$.
Suppose that $(G_{\rm dir},\cP,\cP')$ is a $[K,L,m,\eps_0,\eps]$-scheme and let $G'$ denote the underlying undirected graph
of $G_{\rm dir}$.
Suppose that $EF_1,\dots, EF_{r_3}$ are edge-disjoint exceptional factors with parameters $(L,f)$ for $G_{\rm dir}$ (with respect to $(\cP,\cP')$).
Let $\mathcal{EF}:=EF_1+\dots + EF_{r_3}$.
Then there exists a graph $CA(r)$ for which the following holds:
\begin{itemize}
\item[\rm (i)] $CA(r)$ is a $2(r_1+r_2)$-regular spanning subgraph of $G'$ which is edge-disjoint from $\mathcal{EF}$.
\item[\rm (ii)] Suppose that $EF'_1,\dots, EF'_{r^\diamond}$ are exceptional factors with parameters $(1,7)$ for $G_{\rm dir}$
(with respect to $(\cP,\cP)$) which are edge-disjoint from each other and from $CA(r)+ \mathcal{EF}$.
Let $\mathcal{EF}':=EF'_1+\dots + EF'_{r^\diamond}$.
Then there exists a graph $PCA(r)$ for which the following holds:
\begin{itemize}
\item[\rm (a)] $PCA(r)$ is a $10r^\diamond$-regular spanning subgraph of $G'$ which
is edge-disjoint from $CA(r)+ \mathcal{EF}+ \mathcal{EF}'$.
\item[\rm (b)] Let $\mathcal{EPS}$ be the set consisting of all the $s'$ exceptional path systems
contained in $\mathcal{EF}+ \mathcal{EF}'$.
Suppose that $H_A$ is a $2r$-regular graph on $A=\bigcup_{i=1}^K A_i$ and $H_B$ is a $2r$-regular graph on $B=\bigcup_{i=1}^K B_i$.
Suppose that $H := H_A + H_B$ is edge-disjoint from $G^{\rm rob}:=CA(r)+ PCA(r)+ \mathcal{EF}+ \mathcal{EF}'$.
Then $H+ G^{\rm rob}$ has a decomposition into $s'$
edge-disjoint $2$-factors $H_1, \dots, H_{s'}$ such that each $H_i$ contains one of the exceptional path systems from $\mathcal{EPS}$.
Moreover, for each $i \le s'$, the following assertions hold:%
   \COMMENT{Daniela: replaced $V(G_{\rm dir})$ by $V(G_{\rm dir})\cup V_0$ in (b$_1$) and (b$_2$). Also added new sentence after the statement of the cor. TO DO: probably have
to change this in the bip paper as well!!} 
\begin{itemize}
	\item[\rm (b$_1$)] If the exceptional path system contained in $H_i$ is a Hamilton exceptional path system, then $H_i$ is a Hamilton cycle on $V(G_{\rm dir})\cup V_0$.
	\item[\rm (b$_2$)] If the exceptional path system contained in $H_i$ is a matching exceptional path system, then $H_i$ is the union of a Hamilton cycle
on $A'=A\cup A_0$ and a Hamilton cycle on $B'=B\cup B_0$.
In particular, if both $|A'|$ and $|B'|$ are even, then $H_i$ is the union of two edge-disjoint perfect matchings on $V(G_{\rm dir})\cup V_0$.
\end{itemize}
\end{itemize}
\end{itemize}
\end{cor}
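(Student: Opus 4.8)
\textbf{Proof plan for Corollary~\ref{rdeccor}.}
The plan is to derive the undirected statement from Lemma~\ref{rdeclemma} by applying it twice, once to the $A$-setup and once to the $B$-setup associated to $(G_{\rm dir},\cP,\cP')$, and then to combine the two digraph-level conclusions. First I would fix the two setups: by Lemma~\ref{lem:setup}, both
$(G_{\rm dir}[A],\mathcal{Q}_A,\mathcal{Q}_A',R_A,C_A,U_{A,\ell'},U_{A,\ell'}')$ and $(G_{\rm dir}[B],\mathcal{Q}_B,\mathcal{Q}_B',R_B,C_B,U_{B,\ell'},U_{B,\ell'}')$ are $(\ell',K,m,\eps',1/2)$-setups for a suitable $\eps'$ with $\eps\ll\eps'\ll 1/L$. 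Here $R_A$ and $R_B$ are complete digraphs on $K$ vertices, so the condition $V_iV_{i+2}\in E(R)$ needed for Lemma~\ref{lem:univwalk} is automatic, and $U_{A,\ell'}$, $U_{B,\ell'}$ are the universal walks it provides. The numerical hierarchy and divisibility conditions in the corollary are exactly those required so that the parameters $r_1,\dots,r_3,r^\diamond,s'$ (with $k=K$, $q=Lf$, and the same $f,g,\ell',r$) satisfy the hypotheses of Lemma~\ref{rdeclemma} for each setup; I would check this translation explicitly (in particular $r_3=rfk/q=rK/L$ and $q/f=L$, so the $(q/f)$-refinement in Lemma~\ref{rdeclemma} is an $L$-refinement, which is precisely the passage from $\cP$ to $\cP'$).

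Next I would unpack the exceptional factors. Each exceptional factor $EF_i$ for $G_{\rm dir}$ (with respect to $(\cP,\cP')$) gives rise, as explained in Section~\ref{sec:BEPS}, to a directed special factor $EF^*_{i,A,\rm dir}$ with parameters $(L,f)=(q/f,f)$ in $G_{\rm dir}[A]$ with respect to $C_A$, $\mathcal{Q}_A'$, and similarly $EF^*_{i,B,\rm dir}$ in $G_{\rm dir}[B]$; the fictive edges of $EF^*_{i,A,\rm dir}$ are $J^*_{j,h,\rm dir}[A]$ and those of $EF^*_{i,B,\rm dir}$ are $J^*_{j,h,\rm dir}[B]$. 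Since the $EF_i$ are edge-disjoint (and fictive edges are always treated as distinct), the $EF^*_{i,A,\rm dir}$ are pairwise edge-disjoint special factors in $G_{\rm dir}[A]$, and likewise on the $B$ side. I would then set $\mathcal{SF}_A:=\sum_i EF^*_{i,A,\rm dir}$ and apply Lemma~\ref{rdeclemma}(i) to the $A$-setup with $\mathcal{SF}=\mathcal{SF}_A$ to obtain an $(r_1+r_2)$-regular spanning subdigraph $CA_{A,\rm dir}(r)$ of $G_{\rm dir}[A]$, edge-disjoint from $\mathcal{SF}_A$; analogously $CA_{B,\rm dir}(r)$ in $G_{\rm dir}[B]$. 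Define $CA(r)$ to be the underlying undirected graph of $CA_{A,\rm dir}(r)+CA_{B,\rm dir}(r)$. Since $V(G_{\rm dir})=A\cup B$ with no $AB$-edges (Sch$2'$), this is a $2(r_1+r_2)$-regular spanning subgraph of $G'$, and edge-disjointness from $\mathcal{EF}=\sum_i EF_i$ on the non-fictive part follows from the two digraph statements. This gives (i).

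For (ii), given exceptional factors $EF'_1,\dots,EF'_{r^\diamond}$ with parameters $(1,7)$ for $G_{\rm dir}$ (with respect to $(\cP,\cP)$), edge-disjoint from each other and from $CA(r)+\mathcal{EF}$, I would split each into $EF'^*_{j,A,\rm dir}$ (a special factor with parameters $(1,7)$ with respect to $C_A$, $\mathcal{Q}_A$ in $G_{\rm dir}[A]$) and $EF'^*_{j,B,\rm dir}$, and apply Lemma~\ref{rdeclemma}(ii) separately to each setup. This yields $5r^\diamond$-regular spanning subdigraphs $PCA_{A,\rm dir}(r)$, $PCA_{B,\rm dir}(r)$ of $G_{\rm dir}[A]$, $G_{\rm dir}[B]$ respectively; let $PCA(r)$ be the underlying undirected graph of their union, a $10r^\diamond$-regular spanning subgraph of $G'$ satisfying (a). For (b), given $H_A$ a $2r$-regular graph on $A$ and $H_B$ a $2r$-regular graph on $B$ with $H=H_A+H_B$ edge-disjoint from $G^{\rm rob}$, I would orient $H_A$ into an $r$-regular digraph $H_{A,\rm dir}$ on $A$ (possible since $H_A$ is $2r$-regular: take an Eulerian orientation within each component, or decompose into $2$-factors and orient each as a union of directed cycles) and similarly $H_{B,\rm dir}$. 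Applying Lemma~\ref{rdeclemma}(ii)(b) to the $A$-setup with $H=H_{A,\rm dir}$ gives a decomposition of $H_{A,\rm dir}+G^{\rm rob}_{A,\rm dir}$ into $s'$ edge-disjoint directed Hamilton cycles $C^A_1,\dots,C^A_{s'}$ on $A$, with $C^A_i\supseteq EPS^*_{i,A,\rm dir}$ for the $i$th special path system; similarly on the $B$ side, and by relabelling we may assume $C^A_i$ and $C^B_i$ correspond to the two halves $EPS^*_{i,A,\rm dir}$, $EPS^*_{i,B,\rm dir}$ of the same exceptional path system $EPS_i\in\mathcal{EPS}$. Ignoring orientations, $C^A_i$ is a Hamilton cycle on $A$ containing $EPS^*_{i,A}$ and $C^B_i$ a Hamilton cycle on $B$ containing $EPS^*_{i,B}$, so Proposition~\ref{prop:EPS} applied to $C^A_i$, $C^B_i$ and $EPS_i$ shows that $H_i:=C^A_i+C^B_i-EPS^*_i+EPS_i$ is a Hamilton cycle on $V(G_{\rm dir})\cup V_0$ if $EPS_i$ is a Hamilton exceptional path system, and the union of a Hamilton cycle on $A'$ and one on $B'$ (hence two perfect matchings when $|A'|,|B'|$ are even) if $EPS_i$ is a matching exceptional path system; this gives (b$_1$) and (b$_2$). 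Finally I would note that the $H_i$ are pairwise edge-disjoint $2$-factors and that $\bigcup_i H_i=H+G^{\rm rob}$, since passing from $C^A_i+C^B_i$ to $H_i$ exactly swaps the fictive edges $EPS^*_i$ for the genuine edges $EPS_i$, and across all $i$ the fictive edges partition into $\mathcal{EF}+\mathcal{EF}'$ while the $EPS_i$ cover the corresponding real edges.

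\textbf{Main obstacle.} The routine but genuinely fiddly part is the bookkeeping of the parameter hierarchies and divisibility conditions: one must verify that the hypotheses of the corollary translate, under $k=K$, $q=Lf$, into exactly the hypotheses of Lemma~\ref{rdeclemma} for both setups, and that the quantities $r_1,r_2,r_3,r^\diamond,s'$ match. The only genuinely mathematical point requiring care is ensuring that the special path systems produced on the $A$-side and on the $B$-side can be indexed compatibly so that $C^A_i$ and $C^B_i$ use the two halves of the \emph{same} $EPS_i$ — this is forced because $\mathcal{SF}_A$ and $\mathcal{SF}_B$ are built from the \emph{same} list of exceptional factors $EF_1,\dots,EF_{r_3}$ (and $\mathcal{SF}'_A$, $\mathcal{SF}'_B$ from the same $EF'_1,\dots,EF'_{r^\diamond}$), so $\mathcal{SPS}_A$ and $\mathcal{SPS}_B$ are in natural bijection via the exceptional path systems of $\mathcal{EF}+\mathcal{EF}'$, and one simply applies this relabelling to the Hamilton cycles returned by the two applications of Lemma~\ref{rdeclemma}.
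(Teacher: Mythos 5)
Your proposal is correct and follows essentially the same route as the paper's own proof: apply Lemma~\ref{rdeclemma} separately to the $A$-setup and $B$-setup furnished by Lemma~\ref{lem:setup}, translating exceptional factors into special factors via the $EF^*_{\bullet,A,\rm dir}$ and $EF^*_{\bullet,B,\rm dir}$ constructions, and in part (ii)(b) orienting $H_A$, $H_B$ to $r$-regular digraphs, matching up the two families of Hamilton cycles by relabelling, and invoking Proposition~\ref{prop:EPS}. The only cosmetic difference is that the paper justifies the $r$-regular orientation via Petersen's theorem (decompose into $2$-factors, orient each), while you also allow an Eulerian orientation; both are fine.
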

We remark that, as usual, in Corollary~\ref{rdeccor} we write $A_0$ and $B_0$ for the exceptional sets of $\cP$, $V_0$ for $A_0\cup B_0$, and
$A_1,\dots,A_K,B_1,\dots,B_K$ for the clusters in $\cP$. Note that the vertex set of each of $\mathcal{EF}$, $\mathcal{EF}'$, $G^{\rm rob}$
includes $V_0$ while that of $G_{\rm dir}$, $CA(r)$, $PCA(r)$, $H$ does not.

Moreover, note that matching exceptional systems are only constructed if both $|A'|$ and $|B'|$ are even.
Indeed, we only construct matching exceptional systems in the case when $e_G(A',B')  < D$.
But by Proposition~\ref{prp:e(A',B')}(ii), in this case we have that $n = 0 \pmod4$ and $|A'| = |B'| = n/2$.
Therefore, Corollary~\ref{rdeccor}(ii)(b) implies that $H+ G^{\rm rob}$ has a decomposition into Hamilton cycles and perfect matchings.
The proportion of Hamilton cycles (and perfect matchings) in this decomposition is determined by $\mathcal{EF}+ \mathcal{EF}'$, 
and does not depend on~$H$.

{\removelastskip\penalty55\medskip\noindent{\bf Proof of Corollary~\ref{rdeccor}. }
Choose  new constants $\eps', d$ such that $\eps\ll \eps'\ll 1/L$ and $r_1/m\ll d\ll 1/\ell',1/g$.
Consider the $A$-setup $(G_{\rm dir}[A], \mathcal{Q}_A , \mathcal{Q}'_A ,R_{A},C_{A},U_{A,\ell'},$ $U'_{A,\ell'})$ associated to $(G_{\rm dir},\cP,\cP')$.
By Lemma~\ref{lem:setup}, this
is an $(\ell',K,m,\eps',1/2)$-setup and thus also an $(\ell',K,m,\eps',d)$-setup.

Recall%
    \COMMENT{Daniela: new stuff}
that $\mathcal{P}'$ is obtained from $\mathcal{P}$ by partitioning each cluster $A_i$ of $\mathcal{P}$ into $L$ sets $A_{i,1},\dots,A_{i,L}$
of equal size and partitioning each cluster $B_i$ of $\mathcal{P}$ into $L$ sets $B_{i,1},\dots,B_{i,L}$ of equal size.
Let $\mathcal{Q}''_A:=\{A_{1,1},\dots,A_{K,L}\}$. (So $\mathcal{Q}''_A$ plays the role of $\mathcal{Q}'_A$ in~(\ref{defparts}).)
Let $EF^*_{i,A,{\rm dir}}$ be as defined in Section~\ref{sec:BEPS}.
Recall from there that, for each $i\le r_3$, $EF^*_{i,A,{\rm dir}}$ is a special factor with parameters $(L,f)$ with respect to
$C_A =A_1 \dots A_K$, $\mathcal{Q}''_A$ in%
    \COMMENT{Daniela: replaced $\mathcal{Q}'_A$ by $\mathcal{Q}''_A$}
$G_{\rm dir}[A]$ such that
${\rm Fict}(EF^*_{i,A,{\rm dir}})$ is the union of $J^*[A]$ over all the $Lf$ exceptional systems $J$ contained in $EF_{i}$.
Thus we can apply Lemma~\ref{rdeclemma} to $(G_{\rm dir}[A], \mathcal{Q}_A , \mathcal{Q}'_A ,R_{A},C_{A},U_{A,\ell'},U'_{A,\ell'})$
with $K$, $Lf$, $\eps'$ playing the roles of $k$, $q$, $\eps$ in order to obtain a spanning subdigraph $CA_{A,{\rm dir}}(r)$
of $G_{\rm dir}[A]$ which satisfies Lemma~\ref{rdeclemma}(i). 
Similarly, we obtain a spanning subdigraph $CA_{B,{\rm dir}}(r)$ of $G_{\rm dir}[B]$ which satisfies Lemma~\ref{rdeclemma}(i)
(with $G_{{\rm dir}}[B]$ playing the role of $G$). 
Thus the underlying undirected graph $CA(r)$ of $CA_{A,{\rm dir}}(r) + CA_{B,{\rm dir}}(r)$ satisfies Corollary~\ref{rdeccor}(i). 

Now let $EF'_1,\dots, EF'_{r^\diamond}$ be exceptional factors as described in Corollary~\ref{rdeccor}(ii).
Similarly as before, for each $i\le r^\diamond$, $(EF'_i)^*_{A,{\rm dir}}$ is a special factor
with parameters $(1,7)$ with respect to $C_A$, $\mathcal{Q}_A$ in $G_{\rm dir}[A]$ such that
${\rm Fict}((EF'_i)^*_{A,{\rm dir}})$ is the union of $J^*[A]$ over all the $7$ exceptional systems $J$ contained in $EF'_i$.
Thus we can apply Lemma~\ref{rdeclemma} (with $G_{{\rm dir}}[A]$ playing the role of $G$) to obtain a spanning subdigraph $PCA_{A,{\rm dir}}(r)$
of $G_{\rm dir}[A]$ which satisfies Lemma~\ref{rdeclemma}(ii)(a) and~(ii)(b).
Similarly, we obtain a spanning subdigraph $PCA_{B,{\rm dir}}(r)$ of $G_{\rm dir}[B]$ which satisfies Lemma~\ref{rdeclemma}(ii)(a) and~(ii)(b)
(with $G_{{\rm dir}}[B]$ playing the role of $G$).
Thus the underlying undirected graph $PCA(r)$ of $PCA_{A,{\rm dir}}(r) + PCA_{B,{\rm dir}}(r)$ satisfies Corollary~\ref{rdeccor}(ii)(a).

It remains to check that Corollary~\ref{rdeccor}(ii)(b) holds too. Thus let $H=H_A+H_B$ be as described in Corollary~\ref{rdeccor}(ii)(b).
Let $H_{A,{\rm dir}}$ be an $r$-regular orientation of $H_A$. (To see that such an orientation exists, apply Petersen's theorem, i.e.~Theorem~\ref{petersen}, to obtain
a decomposition of $H_A$ into $2$-factors and then orient each $2$-factor to obtain a (directed) $1$-factor.)%
     \COMMENT{However, even if $H\subseteq G'$, this orientation might not agree with $G_{\rm dir}$, i.e.~we might not have that
$H\subseteq G_{\rm dir}$. This is the reason why we cannot assume that $H\subseteq G$ in Lemma~\ref{rdeclemma}(ii)(b).
($H\subseteq G$ would fit better to the remainder of the Kelly paper.)}
Let $\mathcal{EF}^*_{A,{\rm dir}}  : =  EF^*_{1,A,{\rm dir}} + \dots +  EF^*_{r_3,A,{\rm dir}}$ and let $(\mathcal{EF}')^*_{A,{\rm dir}} : =  (EF'_1)^*_{A,{\rm dir}} + \dots +  (EF'_{r^{\diamond}})^*_{A,{\rm dir}}$.%
	\COMMENT{Previsouly, we have `Let $\mathcal{EF}^*_{A,{\rm dir}}$ be the union of the $EF^*_{i,A,{\rm dir}}$ over all $i\le r_3$
and let $(\mathcal{EF}')^*_{A,{\rm dir}}$ be the union of the $(EF'_i)^*_{A,{\rm dir}}$ over all $i\le r^\diamond$.'
But $\mathcal{EF}^*_{A,{\rm dir}}$ and $(\mathcal{EF}')^*_{A,{\rm dir}}$ are multigraphs, so we should use `sum' instead of `union'.
}
Then Lemma~\ref{rdeclemma}(ii)(b) implies that
$H_{A,{\rm dir}}+ CA_{A,{\rm dir}}(r)+ PCA_{A,{\rm dir}}(r)+ \mathcal{EF}^*_{A,{\rm dir}}+ (\mathcal{EF}')^*_{A,{\rm dir}}$
has a decomposition into $s'$ edge-disjoint (directed) Hamilton cycles $C'_{1,A},\dots,C'_{s',A}$ such that each $C'_{i,A} $ contains
$EPS^*_{i',A,{\rm dir}}$ for some exceptional path system $EPS_{i'}\in \mathcal{EPS}$.
Similarly, let $H_{B,{\rm dir}}$ be an $r$-regular orientation of $H_B$. Then
$H_{B,{\rm dir}}+ CA_{B,{\rm dir}}(r)+ PCA_{B,{\rm dir}}(r)+ \mathcal{EF}^*_{B,{\rm dir}}+ (\mathcal{EF}')^*_{B,{\rm dir}}$
has a decomposition into $s'$ edge-disjoint (directed) Hamilton cycles $C'_{1,B},\dots,C'_{s',B}$ such that each $C'_{i,B} $ contains $EPS^*_{i'',B,{\rm dir}}$
for%
   \COMMENT{Daniela: replaced $EPS^*_{i',B,{\rm dir}}$ by $EPS^*_{i'',B,{\rm dir}}$ to make it different from the statement for $A$}
some exceptional path system $EPS_{i''}\in \mathcal{EPS}$.
By relabeling the $C'_{i,A}$ and $C'_{i,B}$ if necessary, we may assume that $C'_{i,A} $ contains $EPS^*_{i,A,{\rm dir}}$ and
$C'_{i,B} $ contains $EPS^*_{i,B,{\rm dir}}$.
Let $C_{i,A}$ and $C_{i,B}$ be the undirected cycles obtained from $C'_{i,A}$ and $C'_{i,B}$ by ignoring the directions of all
the edges. So $C_{i,A}$ contains $EPS^*_{i,A}$ and $C_{i,B}$ contains $EPS^*_{i,B}$.
Let $H_i:=C_{i,A} + C_{i,B} - EPS^*_{i} + EPS_i$. Then Proposition~\ref{prop:EPS} (applied with $G'$ playing the role of $G$)
implies that $H_1,\dots,H_{s'}$ is a decomposition of $H+ G^{\rm rob}=H+ CA(r)+ PCA(r)+ \mathcal{EF}+ \mathcal{EF}'$
into edge-disjoint 2-factors satisfying Corollary~\ref{rdeccor}(ii)(b${_1}$) and~(b${_2}$).
\endproof 


\section{Proof of Theorem~$\text{\ref{1factstrong}}$} \label{sec:1factstrong}

Before we can prove Theorem~\ref{1factstrong}, we need the following two observations.
Recall that a $(K, m, \epszero, \eps)$-scheme was defined in Section~\ref{sec:schemes} and that a 
$[K,L,m,\eps_0,\eps']$-scheme was defined in Section~\ref{sec:findBF}.
\begin{prop}\label{lem:dirscheme}
Suppose that $0 <1/m  \ll \eps, \eps_0\ll \eps'\ll 1/K,1/L\ll 1$ and that $K, L, m/L \in \mathbb{N}$.
Suppose that $(G,\mathcal{P}')$ is a $(KL, m/L, \epszero, \eps)$-scheme. Suppose that $\cP$ is a $(K,m,\eps_0)$-partition
such that $\mathcal{P}'$ is an $L$-refinement of $\cP$.
Then there exists an orientation $G_{\rm dir}$ of $G$ such that $(G_{\rm dir}, \mathcal{P},\mathcal{P}')$ is a
$[K,L,m,\eps_0,\eps']$-scheme.
\end{prop}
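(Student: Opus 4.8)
The plan is to take $G_{\rm dir}$ to be a \emph{uniformly random orientation} of $G$, obtained by orienting each edge of $G$ independently, each of the two ways with probability $1/2$, and to show that with probability $1-o(1)$ this orientation is as required. Properties (Sch$1'$) and (Sch$2'$) hold for \emph{every} orientation: $(\mathcal P,\mathcal P')$ is a $(K,L,m,\eps_0)$-partition since $\mathcal P$ is a $(K,m,\eps_0)$-partition and $\mathcal P'$ is an $L$-refinement of it, and $V(G)=A\cup B$ with $e_G(A,B)=0$ by (Sch2) of the given $(KL,m/L,\eps_0,\eps)$-scheme (orienting the edges does not change the underlying graph). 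So it suffices to show that (Sch$3'$) and (Sch$4'$) each fail with probability $o(1)$, and then finish by a union bound.

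First I would record that every bipartite graph occurring in (Sch$3'$) is almost complete. Since $(G,\mathcal P')$ is a $(KL,m/L,\eps_0,\eps)$-scheme, $d_G(v,A_{i,j})\ge(1-\eps)m/L$ for all $v\in A$ and all clusters $A_{i,j}$ of $\mathcal P'$, and summing over $j\le L$ gives $d_G(v,A_{i'})\ge(1-\eps)m$ for all $v\in A$; the analogues hold in $B$. Thus each of $G[A_{i,j},A_{i',j'}]$ (for $(i,j)\neq(i',j')$), $G[B_{i,j},B_{i',j'}]$ (likewise), $G[A_i,A_{i'}]$ and $G[B_i,B_{i'}]$ (for $i\neq i'$) is a bipartite graph whose two vertex classes have a common size $n_0\in\{m/L,m\}$ and minimum degree at least $(1-\eps)n_0$. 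Fix one such graph $G[X,Y]$, and write $G_{\rm dir}[X,Y]$ for the digraph of edges directed from $X$ to $Y$; since each edge of $G[X,Y]$ points from $X$ to $Y$ independently with probability $1/2$, $G_{\rm dir}[X,Y]$ is distributed as a random subgraph of $G[X,Y]$ keeping each edge with probability $1/2$. For $x\in X$ the out-degree $d^+_{G_{\rm dir}}(x,Y)$ is binomial with parameters $(d_G(x,Y),1/2)$ and mean in $[(1-\eps)n_0/2,\,n_0/2]$, so by Proposition~\ref{chernoff} we have $d^+_{G_{\rm dir}}(x,Y)=(1/2\pm\eps')n_0$ with probability at least $1-2\eul^{-c n_0}$ for a suitable $c=c(\eps')>0$; the analogue holds for the in-degrees of vertices of $Y$. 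For $\eps'$-regularity, fix $X'\subseteq X$, $Y'\subseteq Y$ with $|X'|,|Y'|\ge\eps'n_0$; the minimum degree bound gives $e_G(X',Y')\ge(1-\eps/\eps')|X'||Y'|\ge(1-\eps')|X'||Y'|$, and the number of $X'Y'$-edges of $G_{\rm dir}[X,Y]$ is binomial with parameters $(e_G(X',Y'),1/2)$, so by Proposition~\ref{chernoff} its density lies within $3\eps'/4$ of $1/2$ with probability at least $1-2\eul^{-c'\eps'^4 n_0^2}$. A union bound over the at most $4^{n_0}$ pairs $(X',Y')$ and then over the $O(K^2L^2)$ graphs $G[X,Y]$ shows that, with probability $1-o(1)$, all of them are simultaneously $[\eps',1/2]$-superregular; here one uses $1/m\ll\eps_0,1/K,1/L$, which makes $n_0\ge m/L$ large enough that $\eps'^4 n_0^2$ dominates $n_0\ln 4$ together with the other union-bound factors.

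Next I would verify (Sch$4'$). Fix $x,y\in A$ and a cluster $A_{i,j}$ of $\mathcal P'$, and put $Z:=N_G(x)\cap N_G(y)\cap A_{i,j}$, where the neighbourhoods are in the undirected graph $G$. Since $d_G(x,A_{i,j}),d_G(y,A_{i,j})\ge(1-\eps)m/L$ and $|A_{i,j}|=m/L$, inclusion–exclusion gives $|Z|\ge(1-2\eps)m/L$. For $z\in Z$ the events $\{x\to z\}$ and $\{z\to y\}$ depend only on the orientations of the distinct edges $xz$ and $zy$, so over all $z\in Z$ the events $\{x\to z\text{ and }z\to y\}$ are mutually independent, each of probability $1/4$; hence $|N^+_{G_{\rm dir}}(x)\cap N^-_{G_{\rm dir}}(y)\cap A_{i,j}|\ge|\{z\in Z:x\to z,\ z\to y\}|$, which is binomial with parameters $(|Z|,1/4)$ and mean at least $(1-2\eps)m/4L\ge(1/4-\eps)m/L$. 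By Proposition~\ref{chernoff} this is at least $(1/5-\eps')m/L$ with probability at least $1-2\eul^{-c'' m/L}$ for an absolute constant $c''>0$, and a union bound over the $O(m^2)$ choices of $(x,y)$ and the $O(KL)$ clusters (and symmetrically for $B$) shows (Sch$4'$) fails with probability $o(1)$, again using $1/m\ll\eps',1/L$. Combining the last two paragraphs, with positive probability the random orientation $G_{\rm dir}$ satisfies (Sch$1'$)–(Sch$4'$), so such an orientation exists, as required.

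I expect the only mildly delicate step to be the $\eps'$-regularity part of (Sch$3'$), where the union bound ranges over exponentially many subsets $(X',Y')$: this is harmless because the relevant binomial has mean of order $n_0^2$, so its Chernoff bound decays like $\eul^{-c'\eps'^4 n_0^2}$ and easily beats the $4^{n_0}$ factor once $n_0$ is large compared with $1/\eps'$ — which is precisely what the hierarchy $1/m\ll\eps,\eps_0\ll\eps'\ll1/K,1/L$ provides.
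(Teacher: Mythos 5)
Your proposal is correct and follows exactly the paper's approach: randomly orient each edge with probability $1/2$ each way, note that (Sch$1'$) and (Sch$2'$) are deterministic, derive (Sch$3'$) from the near-completeness guaranteed by (Sch3) together with the Chernoff bound (Proposition~\ref{chernoff}), and derive (Sch$4'$) similarly. The paper's own proof states this in three sentences without giving the calculations; you have simply filled in the details, and they check out (the only tacit assumption is $x\neq y$ in (Sch$4'$), which the paper's definition omits but clearly intends, as the bipartite analogue (BSch$4'$) does say ``distinct'').
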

\proof
Randomly orient every edge in $G$ to obtain an oriented graph $G_{\rm dir}$. (So given any edge $xy$ in $G$
with probability $1/2$, $xy \in E(G_{\rm dir})$ and with probability $1/2$, $yx \in E(G_{\rm dir})$.)
(Sch$1'$) and (Sch$2'$) follow immediately from (Sch$1$) and (Sch$2$).

Note that 
(Sch$3$) imply that $G[A_{i,j},B_{i',j'}]$ is $[1, \sqrt{\eps}]$-superregular
with density at least $1-\eps$,
for all $i,i'\leq K$ and $j,j' \leq L$.
Using this, (Sch$3'$) follows easily from the large deviation bound in Proposition~\ref{chernoff}.
(Sch$4'$) follows from Proposition~\ref{chernoff} in a similar way.
\endproof

\begin{prop} \label{Dupper}
Suppose that $G$ is a $D$-regular graph on $n$ vertices which is $\eps$-close to the union of two disjoint copies of $K_{n/2}$.
Then $D \le (1/2 + 4 \eps ) n$.
\end{prop}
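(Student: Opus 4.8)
The plan is to show that the $\eps$-closeness hypothesis forces $G$ to contain a set of roughly $n/2$ vertices whose induced degrees are small, and then use regularity of $G$ to deduce that $D$ cannot be much larger than $n/2$. Concretely, since $G$ is $\eps$-close to the union of two disjoint copies of $K_{n/2}$, there is a partition $A'', B''$ of $V(G)$ with $|A''| = \lfloor n/2 \rfloor$, $|B''| = \lceil n/2 \rceil$ and $e(A'', B'') \le \eps n^2$. The idea is to pick the smaller side, say $A''$ (with $|A''| \le n/2$), and count edges leaving it.

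First I would observe that, by $D$-regularity, for every vertex $v \in A''$ we have $d(v, B'') = D - d(v, A'') \ge D - (|A''| - 1) \ge D - n/2 + 1$. Summing over all $v \in A''$ gives $e(A'', B'') \ge |A''|(D - n/2 + 1)$. If $D > n/2$ this lower bound is positive and of order $|A''| \cdot (D - n/2)$; since $|A''| = \lfloor n/2 \rfloor \ge n/2 - 1 \ge n/3$ (for $n \ge 6$, say), we obtain $e(A'', B'') \ge (n/3)(D - n/2 + 1)$. Combining this with the upper bound $e(A'', B'') \le \eps n^2$ yields $D - n/2 + 1 \le 3 \eps n$, hence $D \le n/2 + 3\eps n - 1 \le (1/2 + 4\eps)n$, which is the desired conclusion. (If $D \le n/2$ the claimed bound is trivial.)

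I expect the proof to be essentially a two-line calculation, so there is no serious obstacle; the only point requiring a little care is making sure the coefficient works out to $1/2 + 4\eps$ rather than something slightly larger — this is handled by using the crude bound $|A''| \ge n/3$ (or even $|A''| \ge n/2 - 1$, which gives a better constant) and absorbing lower-order terms. One could alternatively invoke Proposition~\ref{prp:e(A',B')}(i) directly with $A' := A''$, $B' := B''$, which states $e_G(A', B') \ge (D - |B'| + 1)|B'|$; applied with the roles arranged so that $|B'| \le n/2$, this immediately gives $e_G(A', B') \ge (D - n/2 + 1)|B'| \ge (D - n/2 + 1) \lfloor n/2 \rfloor$, and the same comparison with $\eps n^2$ finishes the argument. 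Either route is routine, so I would present whichever is shortest, likely the direct appeal to Proposition~\ref{prp:e(A',B')}(i).
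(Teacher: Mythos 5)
Your proof is correct and follows the same approach as the paper: pick the bipartition with $\lfloor n/2\rfloor$ vertices on one side and at most $\eps n^2$ crossing edges, and lower-bound the crossing edges via regularity; the paper simply invokes Proposition~\ref{prp:e(A',B')}(i) directly (your noted alternative), whereas you unroll that computation by hand.
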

\proof 
Let $B \subseteq V(G)$ with $|B| = \lfloor n/2 \rfloor$ be such that $e(B, V(G) \setminus B) \le \eps n^2$.
Note that $B$ exists since $G$ is $\eps$-close to the union of two disjoint copies of $K_{n/2}$.
Let $A = V(G) \setminus B$.
If $D >  (1/2 + 4 \eps ) n$, then Proposition~\ref{prp:e(A',B')}(i) implies that $e(A,B) > \eps n^2$, a contradiction.
\endproof

We can now put everything together and prove Theorem~\ref{1factstrong} in the following steps. We choose the (localized) exceptional systems needed as an `input'
for Corollary~\ref{rdeccor} to construct the robustly decomposable graph~$G^{\rm rob}$ in Step 3.
For this, we first  choose appropriate constants and a suitable vertex partition in Steps~1 and~2 respectively
(in Step~1, we also find some Hamilton cycles covering `bad' edges).
In Step~4, we then apply Corollary~\ref{rdeccor} to find~$G^{\rm rob}$.
Similarly, we then choose the (localized) exceptional systems needed as an `input'
for the `approximate decomposition lemma' (Lemma~\ref{almostthm}) in Step~6
(in this step, we also find some Hamilton cycles which extend those exceptional systems which are not localized).
For Step~6, we first choose a suitable vertex partition in Step~5.
In Step~7, we find an approximate decomposition using Lemma~\ref{almostthm}
and in Step~8, we decompose the union of  the `leftover' and $G^{\rm rob}$ via Corollary~\ref{rdeccor}.

\removelastskip\penalty55\medskip\noindent{\bf Proof of Theorem~\ref{1factstrong}. }

\noindent\textbf{Step 1: Choosing the constants and a framework.}
Choose $n_0\in\mathbb{N}$ to be sufficiently large compared to $1/\eps_{\rm ex}$.%
    \COMMENT{Daniela: new sentence}
Let $G$ and $D$ be as in Theorem~\ref{1factstrong}. 
By Proposition~\ref{Dupper}
\begin{align}
	n/2-1 \le D \le (1/2 + 4 \eps_{\rm ex} )n. \label{eq:Dupper}
\end{align}
Define new constants such that 
\begin{align*}
0 & < 1/n_0 \ll \eps_{\rm ex} \ll  \epszero \ll \phi_0  \ll \eps_* \ll \eps'_*   \ll  \eps_1' \ll \lambda_{K_2} \ll 1/K_2  \ll \gamma \ll  1/K_1  \\
 & \ll  \eps''_* \ll 1/L \ll 1/f \ll \gamma_1  \ll 1/g \ll \eps_2', \lambda_{K_1L} \ll \eps \ll 1,
\end{align*}
where $K_1, K_2, L, f, g \in  \mathbb{N}$ and $K_2$ is odd. 
Note that we can choose the constants such that
\begin{equation}\label{eq:div}
 \frac{D- \phi_0 n}{400 (K_1 L K_2)^2}, \phi_0 n  ,\frac{\lambda_{K_1L}n}{(K_1L)^2}, \frac{\lambda_{K_2}n}{K_2^2},
\frac{K_1}{14fg},  \frac{2fK_1}{3g(g-1)}  \in \mathbb{N}.
\end{equation}

Apply Proposition~\ref{prop:framework} to obtain a partition $A,A_0,B,B_0$ of $V(G)$ such that $(G,A,A_0,B,B_0)$ is an
$(\eps_0, 4g K_1 L K_2)$-framework with $ \Delta( G [A' , B' ]) \le D/2$ (where $A':=A\cup A_0$ and $B':=B\cup B_0$).
Let $w_1$ and $w_2$ be two vertices of $G$ such that $d_{G[A',B']}(w_1) \ge d_{G[A',B']}(w_2) \ge d_{G[A',B']}(v)$ for all $v \in V(G) \setminus \{w_1,w_2\}$.
Note that the partition $A,A_0,B,B_0$ of $V(G)$ and the two vertices $w_1$ and $w_2$ are fixed throughout the proof. 
Moreover, in the remainder of the proof, given a graph $H$ on $V(G)$, we will always write $H^{\diamond}$ for $H - H[A] - H[B]$.
  
Next we apply Lemma~\ref{V_0elimination} with $\phi_0$ and $4g K_1LK_2$ playing the roles of $\phi$ and $K$ to find a
spanning subgraph $\mathcal{H}'_1$ of $G$.
Let $G_1:=G-\mathcal{H}'_1$. Thus the following properties are satisfied:
\begin{itemize}
	\item[{\rm ($\alpha_1$)}] $G[A_0]+G[B_0] \subseteq \mathcal{H}'_1$ and $\mathcal{H}'_1$ is a $\phi _0 n $-regular spanning graph of $G$.
	\item[{\rm ($\alpha_2$)}] $e_{\mathcal{H}'_1}(A',B') \le \phi_0 n$ and $e_{G_1}(A',B')$ is even.
	\item[{\rm ($\alpha_3$)}] The edges of $\mathcal{H}'_1$ can be decomposed into $\lfloor e_{\mathcal{H}'_1}(A',B')/2 \rfloor$ Hamilton cycles and
$\phi_0 n - 2\lfloor e_{\mathcal{H}'_1}(A',B')/2 \rfloor$ perfect matchings. 
Moreover, if $e_G(A',B') \ge D$, then this decomposition consists of $\lfloor \phi_0 n/2 \rfloor$
Hamilton cycles and one perfect matching if $D$ is odd.%
    \COMMENT{Daniela: reworded}
	\item[{\rm ($\alpha_4$)}] $ d_{G_1[A',B']}(w_1) \le (D-\phi_0 n)/2$.
	Furthermore, if $D = n/2-1$ then \\ $ d_{ G_1[A',B'] }(w_2) \le (D-\phi_0 n)/2$.
	\item[{\rm ($\alpha_5$)}] If $e_G(A',B') < D$, then $ \Delta( G_1[A',B'] ) \le e(G_1[A',B'])/2 \le (D-\phi_0 n)/2$.
\end{itemize}
Let $\mathcal{H}_1$ be the collection of Hamilton cycles and perfect matchings guaranteed by ($\alpha_3$). (So $\mathcal{H}'_1=\bigcup \mathcal{H}_1$.)
Note that 
\begin{equation} \label{eqD1}
D_1 : = D - \phi_0 n
\end{equation}
is even (since (\ref{eq:div}) implies that $D$ and $\phi_0 n$ have the same parity)%
    \COMMENT{Daniela: added brackets}
and that $G_1$ is $D_1$-regular. 
Moreover, $(G_1,A,A_0,B,B_0)$ is an $(\eps_0, 4g K_1 L K_2)$-framework with $ \Delta( G_1 [A' , B' ]) \le D/2$.
Let     
\begin{align}
m_1 & :=\frac{|A|}{K_1}=\frac{|B|}{K_1}, &
r  & :=  \gamma m_1, \qquad
r_1 := \gamma_1 m_1, \qquad
r_2:= 96g^3K_1r, \nonumber\\
r_3 & := \frac{rK_1}{L}, &
r^\diamond & := r_1 +r_2 +r -(Lf -1)r_3, \nonumber\\
m_2 & := \frac{|A|}{K_2}=\frac{|B|}{K_2}, &
D_4 & :=D_1-2(Lfr_3+7r^\diamond). \label{D4eq}
\end{align}
Note that (FR3) implies $m_1/L\in \mathbb{N}$. Moreover, 
\begin{equation}\label{eq:rs}
r_2,r_3\le \gamma^{1/2}m_1\le \gamma^{1/3} r_1,  \qquad  r_1/2\le r^\diamond\le 2r_1.
\end{equation}
Furthermore, by changing $\gamma,\gamma_1$ slightly, we may assume that $r/400LK_2^2, r_1/400K^2_2 \break \in\mathbb{N}$.
This implies that $r_2/400K_2^2, r_3/400K_2^2, r^{\diamond}/400K_2^2 \in \mathbb{N}$.
Together with the fact that $D_1/400K^2_2 = (D - \phi_0 n ) /400K_2^2 \in\mathbb{N}$ by (\ref{eq:div}),%
   \COMMENT{Daniela: added back reference}
this in turn implies that
\begin{equation}\label{divD4}
D_4/400K^2_2\in\mathbb{N}.
\end{equation}

\smallskip

\noindent\textbf{Step 2: Choosing a $(K_1, L, m_1, \epszero)$-partition $(\mathcal{P}_1,\mathcal{P}'_1)$.}
We now prepare the ground for the construction of the robustly decomposable graph $G^{\rm rob}$, 
which we will obtain via the robust decomposition lemma (Corollary~\ref{rdeccor}) in Step~4.

Since $(G_1,A,A_0,B,B_0)$ is an $(\eps_0,  4g K_1 L K_2)$-framework, it is also an $(\eps_0, K_1 L)$-framework.
Recall that $G_1$ is $D_1$-regular and $D_1 = D - \phi_0 n  \ge (1- 3 \phi_0 )n/2$ (as $D \ge n/2 - 1$).
Apply Lemma~\ref{lma:partition} with $G_1$, $m_1/L$, $ 3\phi_0 $, $K_1L$, $\eps_*$, $\eps_*$ playing the roles of
$G$, $m$, $\mu$, $K$, $\eps_1$, $\eps_2$ to obtain partitions $A'_1,\dots,A'_{K_1L}$ of $A$ and $B'_1,\dots,B'_{K_1L}$ of $B$
into sets of size $m_1/L$ such that the following properties are satisfied:
\begin{itemize}
\item[(S$_1$a)] Together with $A_0$ and $B_0$ all
these sets $A'_i$ and $B'_i$ form a $(K_1L,m_1/L,\eps_0)$-partition $\cP'_1$ of $V(G_1)$.
\item[(S$_1$b)] $(G_1[A] +G_1[B], \cP'_1 )$ is a $(K_1L,m_1/L,\eps_0, \eps_*)$-scheme.
\item[(S$_1$c)] $(G^{\diamond}_1, \cP'_1)$ is a $(K_1L,m_1/L,\eps_0, \eps_*)$-exceptional scheme (where $G^{\diamond}_1 : = G_1 - G_1[A] - G_2[B]$).
\end{itemize}
Note that $(1-\eps_0)n\le n-|A_0\cup B_0|=2K_1m_1\le n$ by (FR3).
For all $i\le K_1$ and all $h\le L$, let $A_{i,h}:=A'_{(i-1)L+h}$. (So this is just a relabeling of the sets $A'_i$.)
Define $B_{i,h}$ similarly and let $A_i:= \bigcup_{h\le L} A_{i,h}$ and $B_i:= \bigcup_{h\le L} B_{i,h}$.
Let $\cP_1:=\{A_0,B_0,A_1,\dots,A_{K_1},B_1,\dots,B_{K_1}\}$ denote the
corresponding $(K_1,m_1,\eps_0)$-partition of $V(G)$. Thus $(\cP_1,\cP'_1)$ is a $(K_1,L,m_1,\eps_0)$-partition of $V(G)$,
as defined in Section~\ref{sec:BEPS}.

\smallskip

\noindent\textbf{Step 3: Exceptional systems for the robustly decomposable graph.}
In order to be able to apply Corollary~\ref{rdeccor} to obtain the robustly decomposable graph $G^{\rm rob}$, we first need to construct
suitable exceptional systems with parameter $\eps_0$. The construction of these exceptional systems depends on whether $G$ is critical and whether $e_G(A',B') \ge D$.
First we show that in each case, for all $1 \le i'_1, i'_2  \le K_1L$, we can always find sets $\cJ_{i'_1,i'_2}$ of $\lambda_{K_1L} n/(K_1 L)^2$ $(i'_1,i'_2)$-ES
with respect to $\mathcal{P}_1'$.

\noindent\textbf{Case 1: $e_G(A',B') \ge D$ and $G$ is not critical.}
Our aim is to apply Lemma~\ref{lma:BESdecom} to $G$ with $\mathcal{H}'_1$, $m_1/L$, $K_1L$, $\cP'_1$, $\eps_*$, $\phi_0$, $\lambda_{K_1L}$
playing the roles of $G_0$, $m$, $K$, $\cP$, $\eps$, $\phi$, $\lambda$.
First we verify that Lemma~\ref{lma:BESdecom}(i)--(iv) are satisfied. 
Lemma~\ref{lma:BESdecom}(i) holds trivially.
(FR2) implies that $e_G(A',B') \le \eps_0 n^2$.
Moreover, recall from (S$_1$a) that $\cP'_1$ is a $(K_1L,m_1/L,\eps_0)$-partition of $V(G)$ and that $A'$ and $B'$ were chosen
(by Proposition~\ref{prop:framework}) such that $\Delta(G[A',B']) \le D/2$. Altogether this shows that Lemma~\ref{lma:BESdecom}(ii) holds.
Lemma~\ref{lma:BESdecom}(iii) follows from ($\alpha_1$) and ($\alpha_2$). To verify Lemma~\ref{lma:BESdecom}(iv),
note that $G^{\diamond}_1$ plays the role of $G^{\diamond}$ in Lemma~\ref{lma:BESdecom} and $G^{\diamond}_1[A',B'] = G_1[A',B']$.
So $e_{G^{\diamond}_1}(A',B')$ is even by ($\alpha_2$).
Together with the fact that $(G^{\diamond}_1, \cP'_1)$ is a $(K_1L, m_1/L, \eps_0, \eps_*)$-exceptional scheme by (S$_1$c),%
    \COMMENT{Daniela: added ref to (S$_1$c)}
this implies Lemma~\ref{lma:BESdecom}(iv).

By Lemma~\ref{lma:BESdecom}, we obtain a set $\cJ$ of $\lambda_{K_1L} n$ edge-disjoint Hamilton exceptional systems $J$ in
$G_1^\diamond$ such that $e_J(A',B') =2 $ for each $J \in \cJ$ and such that
for all $1 \le i'_1, i'_2  \le K_1L$ the set $\cJ$ contains precisely $\lambda_{K_1L} n/(K_1 L)^2$ $(i'_1,i'_2)$-HES with respect to the partition $\cP'_1$.
For all $1 \le i'_1, i'_2  \le K_1L$, let $\cJ_{i'_1,i'_2}$ be the set of these $\lambda_{K_1L} n/(K_1 L)^2$ $(i'_1,i'_2)$-HES in $\cJ$.
So $\cJ$ is the union of all the sets $\cJ_{i'_1,i'_2}$.
(Note that the set $\cJ$ here is a subset of the set $\cJ$ in Lemma~\ref{lma:BESdecom}, i.e.~we do not use all the Hamilton
exceptional systems constructed by Lemma~\ref{lma:BESdecom}. So we do not need the full strength of Lemma~\ref{lma:BESdecom} at this point.)

\noindent\textbf{Case 2: $e_G(A',B') \ge D$ and $G$ is critical.}
Recall from Lemma~\ref{critical}(ii) that in this case we have $D = (n-1)/2$ or $D = n/2 -1$.
Our aim is to apply Lemma~\ref{lma:BESdecomcritical} to $G$ with $\mathcal{H}'_1$, $m_1/L$, $K_1L$, $\cP'_1$, $\eps_*$, $\phi_0$, $ \lambda_{K_1L}$
playing the roles of $G_0$, $m$, $K$, $\cP$, $\eps$, $\phi$, $\lambda$. Similar arguments as in Case~1 show that
Lemma~\ref{lma:BESdecomcritical}(i)--(iv) hold.
Recall that $w_1$ and $w_2$ are (fixed) vertices in $V(G)$ such that $d_{G[A',B']}(w_1) \ge d_{G[A',B']}(w_2) \ge d_{G[A',B']}(v)$
for all $v \in V(G) \setminus \{w_1,w_2\}$.
Since $G^{\diamond}_1[A',B'] = G_1[A',B']$, ($\alpha_4$) implies that $d_{G^{\diamond}_1[A',B']}(w_1) \le (D- \phi_0 n ) /2$.
Moreover, if $D = n/2 -1$, then $d_{G^{\diamond}_1[A',B']}(w_2) \le (D- \phi_0 n ) /2$.
Let $W$ be the set of vertices $w \in V(G)$ such that $d_{G[A',B']}(w) \ge 11D / 40$, as defined in Lemma~\ref{critical}.
If $D = (n-1)/2$, then $|W| = 1$ by Lemma~\ref{critical}(ii).
This means that $w_2 \notin W$ and so $d_{G^{\diamond}_1[A',B']}(w_2) \le d_{G[A',B']}(w_2) \le 11 D /40$.
Thus in both cases we have that
\begin{equation}\label{eq:degw1w2}
d_{G^{\diamond}_1[A',B']}(w_1), d_{G^{\diamond}_1[A',B']}(w_2) \le (D- \phi_0 n ) /2.
\end{equation}
Therefore, Lemma~\ref{lma:BESdecomcritical}(v) holds.

By Lemma~\ref{lma:BESdecomcritical}, we obtain a set $\cJ$ of $ \lambda_{K_1L} n$ edge-disjoint Hamilton exceptional systems
$J$ in $G_1^\diamond$ such that, for all $1 \le i'_1, i'_2  \le K_1L$, the set $\cJ$ contains precisely $\lambda_{K_1L} n/(K_1 L)^2$ $(i'_1,i'_2)$-HES
with respect to the partition $\cP'_1$. Moreover, each $J \in \cJ$ satisfies $e_J(A',B') =2 $ and $d_{J[A',B']}(w) =1 $ for all $w \in \{w_1,w_2\}$
with $d_{G[A',B']}(w) \ge 11D/40$. For all $1 \le i'_1, i'_2  \le K_1L$, let $\cJ_{i'_1 , i'_2}$ be the set of these
$\lambda_{K_1L} n/(K_1 L)^2$ $( i'_1, i'_2)$-HES. So $\cJ$ is the union of all the sets $\cJ_{ i'_1 , i'_2}$.
(So similarly as in Case~1, we do not use all the Hamilton exceptional systems constructed by Lemma~\ref{lma:BESdecomcritical} at this point.)

\noindent\textbf{Case 3: $e_G(A',B') < D$.}
Recall from Proposition~\ref{prp:e(A',B')}(ii) that in this case we have $D = n/2 - 1$, $n = 0 \pmod 4$ and $|A'| = |B'| = n/2$.
Our aim is to apply Lemma~\ref{lma:PBESdecom} to $G$ with $\mathcal{H}'_1$, $m_1/L$, $K_1L$, $\cP'_1$, $\eps_*$, $\phi_0$, $\lambda_{K_1L}$
playing the roles of $G_0$, $m$, $K$, $\cP$, $\eps$, $\phi$, $\lambda$. Similar arguments as in Case~1 show that
Lemma~\ref{lma:PBESdecom}(i)--(iv) hold.
Since $G^{\diamond}_1[A',B'] = G_1[A',B']$ and $D = n/2 -1$, Lemma~\ref{lma:PBESdecom}(v) follows from ($\alpha_5$).

By Lemma~\ref{lma:PBESdecom}, $G^{\diamond}_1$ can be decomposed into a set $\cJ'$ of $D_1/2$ edge-disjoint exceptional systems
such that each of these exceptional systems $J$ is either a Hamilton exceptional system with $e_{J}(A',B') = 2$ or a matching exceptional system.
(So $\cJ'$ plays the role of the set $\cJ$ in Lemma~\ref{lma:PBESdecom}.)
Lemma~\ref{lma:PBESdecom}(b) guarantees that we can choose a subset $\cJ$ of $\cJ'$ such that $\cJ$ consists of $\lambda_{K_1L} n$
edge-disjoint exceptional systems $J$ in $G^\diamond_1$ such that for all $1 \le i'_1, i'_2  \le K_1L$ the set $\cJ$ contains precisely
$\lambda_{K_1L} n/(K_1 L)^2$ $(i'_1,i'_2)$-ES with respect to the partition $\cP'_1$.
For all $1 \le i'_1, i'_2  \le K_1L$, let $\cJ_{i'_1,i'_2}$ be the set of these $\lambda_{K_1L} n/(K_1 L)^2$ $(i'_1,i'_2)$-ES.%
     \COMMENT{Note that when $e_G(A',B')< D$, we do not require $\cJ_{i'_1,i'_2}$ contains as many HES (or MES) as possible.}
So $\cJ$ is the union of all the sets $\cJ_{i'_1,i'_2}$. (Note that to construct the robustly decomposable graph we will only use the exceptional systems in $\cJ$.
However, in order to prove condition ($\beta_5$) below, we will also use the fact that $G^{\diamond}_1$ has a decomposition into edge-disjoint exceptional systems.)

\smallskip

Thus in each of the three cases, $\cJ$ is the union of all the sets $\cJ_{i'_1,i'_2}$, where for all $1 \le i'_1, i'_2  \le K_1L$,
the set $\cJ$ consists of precisely $\lambda_{K_1L} n/(K_1 L)^2$ $(i'_1,i'_2)$-ES with respect to the partition $\cP'_1$.
Moreover, all the $\lambda_{K_1L} n$ exceptional systems in $\cJ$ are edge-disjoint.

Our next aim is to choose two disjoint subsets $\mathcal{J}_{\rm CA}$ and $\mathcal{J}_{\rm PCA}$ of $\cJ$ with the
following properties:
\begin{itemize}	
\item[(a)] In total $\mathcal{J}_{\rm CA}$ contains $L f r_3$ exceptional systems. For each $i\le f$ and each $h \le L$, $\mathcal{J}_{\rm CA}$ contains precisely $r_3$ $(i_1,i_2)$-ES of style $h$ (with respect to the $(K_1,L,m_1,\eps_0)$-partition $(\cP_1,\cP'_1)$) such that $i_1,i_2 \in \{(i-1)K_1/f+2,\dots,iK_1/f\}$.
\item[(b)] In total $\mathcal{J}_{\rm PCA}$ contains $7r^\diamond$ exceptional systems. For each $i\le 7$,
$\mathcal{J}_{\rm PCA}$ contains precisely $r^\diamond$ $(i_1,i_2)$-ES (with respect to
the partition $\cP_1$) with $i_1,i_2 \in \{(i-1)K_1/7+2,\dots,iK_1/7\}$.
\item[(c)] Each exceptional system $J\in \mathcal{J}_{\rm CA}\cup \mathcal{J}_{\rm PCA}$ is either a Hamilton exceptional system with
$e_J(A',B')=2$ or a matching exceptional system.
\end{itemize}
(Recall that we defined in Section~\ref{sec:findBF} when an $(i_1,i_2)$-ES has style $h$ with respect to a $(K_1,L,m_1,\eps_0)$-partition $(\cP_1,\cP'_1)$.)
To see that it is possible to choose $\mathcal{J}_{\rm CA}$ and $\mathcal{J}_{\rm PCA}$, split $\cJ$ into two sets $\cJ_1$ and $\cJ_2$ such that both $\cJ_1$ and $\cJ_2$
contain at least $\lambda_{K_1L} n /3(K_1L)^2$ $(i'_1,i'_2)$-ES with respect to~$\cP'_1$, for all $1 \le i'_1 , i'_2   \le K_1L$.
Note that, for each $i \le f$, there are $(K_1/f-1)^2$ choices of pairs $(i_1,i_2)$ with $i_1,i_2 \in \{(i-1)K_1/f+2,\dots,iK_1/f\}$.
Moreover, for each such pair $(i_1,i_2)$ and each $h\le L$ there is precisely one pair $(i'_1,i'_2)$ with $1 \le i'_1,i'_2  \le K_1L$
and such that any $(i'_1,i'_2)$-ES with respect to~$\cP'_1$ is an $(i_1,i_2)$-ES of style $h$ with respect to $(\cP_1,\cP'_1)$.
Together with the fact that $\gamma\ll \lambda_{K_1L}, 1/L, 1/f$ and
\begin{align*}
\frac{(K_1/f-1)^2 \lambda_{K_1L} n}{3 (K_1 L)^2} \ge
 \frac{\gamma n}{L} \ge \frac{\gamma K_1 m_1}{L} = \frac{r K_1}{L } = r_3,
\end{align*}
this implies that we can choose a set $\mathcal{J}_{\rm CA}\subseteq \cJ_1$ satisfying~(a).

Similarly,  for each $i \le 7$, there are $(K_1/7-1)^2$ choices of pairs $(i_1,i_2)$ with $i_1,i_2\in \{(i-1)K_1/7+2,\dots,iK_1/7\}$.
Moreover, for each such pair $(i_1,i_2)$ there are $L^2$ distinct pairs $(i'_1,i'_2)$ with $1 \le i'_1,i'_2  \le K_1L$ and
such that any $(i'_1,i'_2)$-ES with respect to~$\cP'_1$ is an $(i_1,i_2)$-ES with respect to $\cP_1$.
Together with the fact that $\gamma_1 \ll \lambda_{K_1L}$ and
$$ \frac{(K_1/7-1)^2 L^2 \lambda_{K_1L} n }{3 (K_1 L)^2 }
\ge \gamma_1 n 
\ge 2 \gamma_1 m_1 = 2 r_1
\stackrel{\eqref{eq:rs}}{\ge} r^\diamond,$$
this implies that we can choose a set $\mathcal{J}_{\rm PCA}\subseteq \cJ_2$ satisfying~(b).
Our choice of $\cJ\supseteq \mathcal{J}_{\rm CA}\cup \mathcal{J}_{\rm PCA}$ guarantees that~(c) holds too.
Let 
\begin{equation} \label{robphieq}
\mathcal{J}^{\rm rob} := \mathcal{J}_{\rm CA} \cup \mathcal{J}_{\rm PCA}, \ \ 
\phi^{\rm rob}_0: = (Lfr_3 + 7 r^{\diamond})/n \ \ \mbox{and} \ \  G^{\diamond}_4 : = G_1^{\diamond} - \bigcup \mathcal{J}^{\rm rob}.
\end{equation}
(In Step~5 below we will define a graph $G_4$ which will satisfy%
    \COMMENT{Daniela: replaced $(G_4)^{\diamond}$ by $G_4^\diamond$}
$G_4^{\diamond} = G_4 - G_4[A] - G_4[B]$. So this
will fit with our definition of the operator $^\diamond$.)
Note that
\begin{equation}\label{eq:phi0rob}
\phi^{\rm rob}_0\ge \frac{7r^\diamond}{n}\stackrel{\eqref{eq:rs}}{\ge} \frac{3r_1}{n}=\frac{3\gamma_1m_1}{n}\ge \frac{\gamma_1}{K_1}\ge 2\phi_0
\ \ \ \text{and} \ \ \ 2 \phi^{\rm rob}_0 n \stackrel{\eqref{D4eq}}{=} D_1 - D_4.
\end{equation}
Moreover, we claim that $\bigcup \mathcal{J}^{\rm rob}$ is a subgraph of $G^\diamond_1\subseteq G$
satisfying the following properties:%
    \COMMENT{Daniela: replaced $D - (\phi_0  + 2 \phi^{\rm rob}_0n)/2$ by $(D - (\phi_0  + 2 \phi^{\rm rob}_0)n)/2$ in ($\beta_5$)}
\begin{itemize}
	\item[($\beta_1$)] $d_{\bigcup\mathcal{J}^{\rm rob}} (v) = 2(Lfr_3 + 7 r^{\diamond}) = 2 \phi^{\rm rob}_0 n $ for each $v \in V_0$. 
	\item[($\beta_2$)] $e_{\bigcup\mathcal{J}^{\rm rob}}(A',B')\le 2 \phi^{\rm rob}_0 n$ is even.
	\item[($\beta_3$)] $\mathcal{J}^{\rm rob}$ contains exactly $\phi^{\rm rob}_0 n$ exceptional systems, of which
precisely \\ $e_{\bigcup \mathcal{J}^{\rm rob}}(A',B')/2$ are Hamilton exceptional systems.
If $e_G(A',B') \ge D$, then $\mathcal{J}^{\rm rob}$ consists entirely of Hamilton exceptional systems.
 If $\mathcal{J}^{\rm rob}$ contains a matching exceptional system, then $|A'|=|B'|=n/2$ is even.
	\item[($\beta_4$)] If $e_G(A',B') \ge D$ and $G$ is critical, then $d_{\bigcup\mathcal{J}^{\rm rob}[A',B'] }(w) =\phi^{\rm rob}_0 n $ for
all $w \in \{w_1,w_2\}$ with $d_{G[A',B']}(w) \ge 11D/40$.
	Moreover, $d_{G^{\diamond}_4[A',B']}(w_1),$ $ d_{G^{\diamond}_4[A',B']}(w_2) \le  (D - (\phi_0  + 2 \phi^{\rm rob}_0)n)/2$.	
	\item[($\beta_5$)] If $e_G(A',B') < D$, then $\Delta( G^{\diamond}_4 [A',B']) \le e(G^{\diamond}_4  [A',B'])/2  \le  D_4/2 = (D - (\phi_0  + 2 \phi^{\rm rob}_0)n)/2$.
\end{itemize}
To verify the above, note that $ \mathcal{J}^{\rm rob}$ consists of precisely $\phi^{\rm rob}_0 n$ exceptional systems $J$ (each of which is an exceptional cover).
So ($\beta_1$) follows from~(EC2). Moreover, each such $J$ is either a Hamilton exceptional system with $e_{J}(A',B') = 2$ or a matching exceptional system
(with $e_{J}(A',B') = 0$ by (MES)), which implies ($\beta_2$) and the first part of ($\beta_3$).
If $e_G(A',B') \ge D$, then we are in Case~$1$ or~$2$ and so the second part of ($\beta_3$) follows from our construction of $\cJ\supseteq \mathcal{J}^{\rm rob}$.
The first part of ($\beta_4$) follows from our construction
of $\cJ\supseteq \mathcal{J}^{\rm rob}$ in Case~2. Since $11D/40 < (D - (\phi_0  + 2 \phi^{\rm rob}_0)n)/2$,%
\COMMENT{Daniela: reworded, previously we said that we need $11D/40 < (D - (\phi_0  + 2 \phi^{\rm rob}_0)n)/2$ to prove
the first part of ($\beta_4$)}
we can combine the first part of ($\beta_4$) with \eqref{eq:degw1w2} to obtain the `moreover part' of ($\beta_4$).
Thus it remains to verify ($\beta_5$). So suppose that $e_G(A',B') < D$.
Recall from Case~3 that $G^{\diamond}_1$ has a decomposition into a set $\cJ'$ of $D_1/2$ edge-disjoint exceptional systems $J$,
each of which is either a Hamilton exceptional system with $e_{J}(A',B') = 2$ or a matching exceptional system.
This means that $J[A',B']$ is either empty or a matching of size~$2$.
Note that $G^{\diamond}_4 [A',B']$ is precisely the union of $J[A',B']$ over all those $D_1/2 - \phi^{\rm rob}_0 n=D_4/2$%
   \COMMENT{Daniela: added $=D_4/2$}
exceptional systems $J\in \cJ'\setminus \mathcal{J}^{\rm rob}$. So ($\beta_5$) holds.

\smallskip

\noindent\textbf{Step 4: Finding the robustly decomposable graph.}
Let $G_2 : = G_1[A] + G_1[B]$.
Recall from (S$_1$b) that $(G_2,\cP'_1)$ is a $(K_1L, m_1/L, \epszero,\eps_*)$-scheme.
Apply Proposition~\ref{lem:dirscheme} with $G_2$, $\cP_1$, $\cP'_1$, $K_1$, $m_1$, $\eps_*$, $\eps'_*$ playing the roles of
$G$, $\cP$, $\cP'$, $K$, $m$, $\eps$, $\eps'$ to obtain an orientation $G_{2,{\rm dir}}$ of $G_2$ such that
$(G_{2,{\rm dir}}, \mathcal{P}_1,\cP'_1)$ is a $[K_1, L, m_1, \epszero,\eps'_*]$-scheme. 

Our next aim is to use Lemma~\ref{lma:EF} in order to extend the exceptional systems in $\mathcal{J}_{\rm CA}$ into $r_3$
edge-disjoint exceptional factors with parameters $(L,f)$ for $G_{2,{\rm dir}}$ (with respect to $(\cP_1,\cP'_1)$).
For this, note that (a) and (c) guarantee that $\cJ_{CA}$ satisfies Lemma~\ref{lma:EF}(i),(ii) with $r_3$ playing the role of $q$. 
Moreover, $Lr_3/m_1= rK_1/m_1=\gamma K_1\ll 1$. 
Thus we can indeed apply Lemma~\ref{lma:EF} to $(G_{2,{\rm dir}}, \mathcal{P}_1,\cP'_1)$
with $\cJ_{CA}$, $m_1$, $\eps'_*$, $K_1$, $r_3$ playing the roles of $\cJ$, $m$, $\eps$, $K$, $q$ in order to obtain $r_3$
edge-disjoint exceptional factors $EF_1,\dots,EF_{r_3}$ with parameters $(L,f)$
for $G_{2,{\rm dir}}$ (with respect to $(\cP_1,\cP'_1)$) such that together these exceptional factors
cover all edges in $\bigcup \mathcal{J}_{\rm CA}$. Let $\mathcal{EF}_{\rm CA}:=EF_1 + \dots + EF_{r_3}$.
Since $G_2 = G_1[A] + G_1[B]$, we have $(\mathcal{EF}_{\rm CA})^{\diamond} = \mathcal{J}_{\rm CA}$.
Moreover, each exceptional path system in $\mathcal{EF}_{\rm CA}$ contains a unique exceptional system in $\mathcal{J}_{\rm CA}$ (in particular, their numbers are equal).

Note that $m_1/4g,m_1/L\in\mathbb{N}$ since $m_1=|A|/K_1$ and $|A|$ is divisible by $4gK_1L$ as $(G,A,A_0,B,B_0)$
is an $(\eps_0, 4gK_1LK_2)$-framework. Furthermore, $rK_1^2=\gamma m_1K_1^2\le \gamma^{1/2}m_1\le m_1$.
Thus we can apply Corollary~\ref{rdeccor} to the $[K_1, L, m_1, \epszero, \eps_*'']$-scheme
$(G_{2,{\rm dir}}, \mathcal{P}_1,\cP'_1)$ with $K_1$, $m_1$, $\eps_*''$, $g$ playing the roles of $K$, $m$, $\eps$, $\ell'$ to obtain a spanning
subgraph $CA(r)$ of $G_2$ as described there. (Note that $G_2$ equals the graph $G'$ defined in Corollary~\ref{rdeccor}.)
In particular, $CA(r)$ is $2(r_1+r_2)$-regular and edge-disjoint from $\mathcal{EF}_{\rm CA}$.

Let $G_3$ be the graph obtained from $G_2$ by deleting all the edges of $CA(r)+ \mathcal{EF}_{\rm CA}$.
Thus $G_3$ is obtained from $G_2$ by deleting at most $2(r_1+r_2+r_3)\le 6r_1=6\gamma_1m_1$ edges at every vertex in $A\cup B$.
Let $G_{3,{\rm dir}}$ be the orientation of $G_3$ in which every edge is oriented in the same way as in $G_{2,{\rm dir}}$.
Since $(G_{2,{\rm dir}}, \mathcal{P}_1,\cP'_1)$ is a $[K_1, L, m_1, \epszero,\eps'_*]$-scheme, 
Proposition~\ref{superslice} and the fact that $\eps_*'',\gamma_1 \ll \eps$ imply that 
$(G_{3,{\rm dir}},\cP_1, \cP_1)$ is a $[K_1, 1, m_1, \epszero,\eps]$-scheme.%
    \COMMENT{Deryk inserted `1' here and replaced $(G_{3,{\rm dir}},\cP_1)$ by $(G_{3,{\rm dir}},\cP_1, \cP_1)$}
Moreover,
$$\frac{r^\diamond}{m_1}\stackrel{\eqref{eq:rs}}{\le} \frac{2r_1}{m_1}=2\gamma_1\ll 1.$$
Together with~(b) and~(c) this ensures that we can apply
Lemma~\ref{lma:EF} to $(G_{3,{\rm dir}},\cP_1,$ $ \cP_1)$ with $\cJ_{PCA}$, $m_1$, $K_1$, $1$, $7$, $r^\diamond$ playing the roles of
$\cJ$, $m$, $K$, $L$, $f$, $q$ in order to obtain $r^\diamond$ edge-disjoint exceptional factors $EF'_1,\dots,EF'_{r^\diamond}$ with parameters $(1,7)$
for $G_{3,{\rm dir}}$ (with respect to $(\cP_1,\cP_1)$)
such that together these exceptional factors
cover all edges in $\bigcup \mathcal{J}_{\rm PCA}$. Let $\mathcal{EF}_{\rm PCA}:=EF'_1+\dots + EF'_{r^\diamond}$.
Since $G_3 \subseteq G_1[A] + G_1[B]$ we have $(\mathcal{EF}_{\rm PCA})^{\diamond} = \bigcup \mathcal{J}_{\rm PCA}$.
Moreover, each exceptional path system in $\mathcal{EF}_{\rm PCA}$ 
contains a unique exceptional system in $\mathcal{J}_{\rm PCA}$.

Apply Corollary~\ref{rdeccor} to obtain a spanning
subgraph $PCA(r)$ of $G_2$ as described there. In particular, $PCA(r)$ is $10r^\diamond$-regular
and edge-disjoint from $CA(r)+ \mathcal{EF}_{\rm CA}+ \mathcal{EF}_{\rm PCA}$.

Let $G^{\rm rob}:=CA(r)+ PCA(r)+ \mathcal{EF}_{\rm CA}+ \mathcal{EF}_{\rm PCA}$.
Note that by~\eqref{EFdeg} all the vertices in $V_0:=A_0\cup B_0$ have the same degree  
$r_0^{\rm rob}:=2(Lfr_3+7r^\diamond) = 2 \phi^{\rm rob}_0 n $ in $G^{\rm rob}$.
So 
\begin{equation}\label{eq:rrob}
7r_1\stackrel{\eqref{eq:rs}}{\le} r_0^{\rm rob} \stackrel{\eqref{eq:rs}}{\le} 30r_1.
\end{equation}
Moreover,~\eqref{EFdeg} also implies that all the vertices in $A\cup B$ have the same degree $r^{\rm rob}$ in $G^{\rm rob}$,
where $r^{\rm rob} := 2(r_1+r_2)+10r^\diamond+2r_3+2r^\diamond=2(r_1+r_2+r_3+6r^\diamond)$. So 
$$
r_0^{\rm rob}-r^{\rm rob}=2 \left(Lfr_3+ r^\diamond- (r_1+r_2+r_3)\right)=2(Lfr_3+r-(Lf-1)r_3-r_3 )=2r.
$$
Note that $(G^{\rm rob})^{\diamond}=\bigcup(\mathcal{J}_{\rm CA} \cup \mathcal{J}_{\rm PCA})=\bigcup \cJ^{\rm rob}$.
Recall that the number of Hamilton exceptional path systems in $\mathcal{EF}_{\rm CA}$ equals the number of Hamilton exceptional
systems in $\mathcal{J}_{\rm CA}$, and that the analogue holds for $\mathcal{EF}_{\rm PCA}$.
Hence, ($\beta_1$), ($\beta_2$) and ($\beta_3$) imply the follow statements:%
   \COMMENT{Daniela: replaced $e_{\mathcal{J}^{\rm rob}}(A',B')$ by $e_{\bigcup\mathcal{J}^{\rm rob}}(A',B')$ in ($\beta_2'$)}
\begin{itemize}
    \item[($\beta_1'$)] $d_{G^{\rm rob}}(v) =r^{\rm rob}_0=2\phi_0^{\rm rob}n$ for all $v\in V_0$.
	\item[($\beta_2'$)] $e_{G^{\rm rob}}(A',B')=e_{\bigcup\mathcal{J}^{\rm rob}}(A',B')\le r^{\rm rob}_0 = 2\phi_0^{\rm rob}n$ is even.
	\item[($\beta_3'$)] $\mathcal{EF}_{\rm CA}+ \mathcal{EF}_{\rm PCA}$ contains exactly $\phi_0^{\rm rob}n$ 
exceptional path systems (and each such path system contains a unique exceptional system in 
$\mathcal{J}^{\rm rob}$, where $|\mathcal{J}^{\rm rob}|=\phi_0^{\rm rob}n$). Precisely $e_{\bigcup \mathcal{J}^{\rm rob}}(A',B')/2$ of these are
    Hamilton exceptional path systems. If $e_G(A',B') \ge D$, then every exceptional path system in $\mathcal{EF}_{\rm CA}+ \mathcal{EF}_{\rm PCA}$
    is a Hamilton exceptional path system.
    If $\mathcal{EF}_{\rm CA}+ \mathcal{EF}_{\rm PCA}$ contains a matching exceptional path system, then $|A'|=|B'|=n/2$ is even.
\end{itemize}

\smallskip

\noindent\textbf{Step 5: Choosing a $(K_2,m_2,\eps_0)$-partition $\cP_2$.}
We now prepare the ground for the approximate decomposition step (i.e.~to apply Lemma~\ref{almostthm}). 
For this, we need to work with a finer partition of $A \cup B$ than the previous one
(this will ensure that the leftover from the approximate decomposition step is sufficiently sparse compared to $G^{\rm rob}$).

So let $G_4:=G_1-G^{\rm rob}$ (where $G_1$ was defined in Step~1) and note that 
\begin{equation} \label{D4D1}
D_4 \stackrel{\eqref{D4eq}}{=} D_1-r_0^{\rm rob}=D_1-r^{\rm rob}-2r.
\end{equation}
So 
\begin{equation} \label{degrees4}
d_{G_4}(v)=D_4+2r \mbox{ for all } v \in A \cup B \qquad \mbox{and} \qquad d_{G_4}(v)=D_4 \mbox{ for all } v \in V_0.
\end{equation}
Hence
\begin{align*}
\delta(G_4) \ge D_4 \stackrel{(\ref{eq:phi0rob})}{=} D_1 - 2 \phi_0^{\rm rob}n \stackrel{(\ref{eqD1})}{=} D - (\phi_0 + 2 \phi^{\rm rob}_0 )n \ge (1 - 6 \phi^{\rm rob}_0) n/2
\end{align*}
as $\phi^{\rm rob}_0 \ge  2\phi_0$ by~\eqref{eq:phi0rob}.
Moreover, note that
$$2 \phi_0^{\rm rob} n = r^{\rm rob}_0 \stackrel{\eqref{eq:rrob}}{\le} 30 r_1 = 30 \gamma_1 m_1 \le 30 \gamma_1 n /K_1,$$ 
so $\phi_0^{\rm rob} \ll \eps'_2$.
Since $(G,A,A_0,B,B_0)$ is an $(\eps_0, 4g K_1 L K_2)$-framework, $(G_4,A,A_0, \break B,B_0)$ is an $(\eps_0, K_2)$-framework.
Now apply Lemma~\ref{lma:partition} to $(G_4,A,A_0,B,B_0)$ with%
    \COMMENT{Daniela: replaced $3\phi^{\rm rob}_0$ by $6 \phi^{\rm rob}_0$ and changed the RHS in the display after (\ref{degrees4}) to make
it clearer where the $6 \phi^{\rm rob}_0$ comes from}
$K_2$, $m_2$, $\eps'_1$, $\eps'_2$, $6 \phi^{\rm rob}_0$
playing the roles of $K$, $m$, $\eps_1$, $\eps_2$, $\mu$ in order to obtain partitions $A_1,\dots,A_{K_2}$ and
$B_1,\dots,B_{K_2}$ of $A$ and $B$ satisfying the following conditions:
\begin{itemize}
\item[(S$_2$a)] The vertex partition $\cP_2 : = \{A_0, B_0, A_1, \dots A_{K_2}, B_1, \dots, B_{K_2} \}$ is a $(K_2, \break m_2, \eps_0)$-partition of $V(G)$.
\item[(S$_2$b)] $(G_4[A] + G_4[B], \cP_2)$ is a $(K_2, m_2, \eps_0, \eps'_2)$-scheme.
\item[(S$_2$c)] $(G_4^{\diamond}, \cP_2)$ is a $(K_2, m_2, \eps_0, \eps'_1)$-exceptional scheme. 
\end{itemize}
(Recall that $G_4^{\diamond} : = G_1^{\diamond} - \bigcup\mathcal{J}^{\rm rob}$
was defined towards the end of  Step~3. Since $G_4 = G_1 - G^{\rm rob}$, we have $(G_4)^{\diamond} = G^{\diamond}_1 - (G^{\rm rob})^{\diamond} = G_1^{\diamond} - \bigcup\mathcal{J}^{\rm rob}$,
so $(G_4)^{\diamond}$ is indeed the same as $G_4^{\diamond}$.)
Moreover, by Lemma~\ref{lma:partition}(iv) we have
\begin{align}
	d_{G_4}(v,A_i)  = (d_{G_4}(v,A) \pm \eps_0 n)/ K_2 \qquad \text{and} \qquad
	d_{G_4}(v,B_i)  = (d_{G_4}(v,B) \pm \eps_0 n)/ K_2 \label{eq:D4}
\end{align}
for all $v \in V(G)$ and $1 \le i \le K_2$.
(Note that the previous partition of $A$ and $B$ plays no role in the subsequent argument, so
denoting the clusters in~$\cP_2$ by $A_i$ and $B_i$ again will cause no notational conflicts.)

Since $(G_4, A,A_0,B,B_0 )$ is an $(\eps_0, K_2)$-framework, (FR3) and (FR4) together imply that
each $v \in A$ satisfies $d_{G_4}(v, A_0) \le |V_0| \le \eps _ 0 n $ and $d_{G_4}(v, B') \le \eps _ 0 n $. So $d_{G_4}(v,A) = d_{G_4}(v) \pm 2 \eps_0 n  $.
Therefore, for all $v \in A$ and all $ 1 \le i \le K_2$ we have
\begin{align}
	d_{G_4}(v,A_i) &  \stackrel{\eqref{eq:D4}}{=} \frac{d_{G_4}(v,A) \pm \eps_0 n}{ K_2 }
	 = \frac{d_{G_4}(v) \pm 3 \eps_0 n}{ K_2 }
    = \frac{d_{G_4}(v) \pm 7\eps_0 K_2 m_2}{K_2}. 
 \label{eq:D2}
\end{align}
The analogue holds for $d_{G_4}(v,B_i)$ (where $v \in B$ and $ 1 \le i \le K_2$).

\smallskip

\noindent\textbf{Step 6: Exceptional systems for the approximate decomposition.}
In order to apply Lemma~\ref{almostthm}, we first need to construct suitable exceptional systems.
We will show that $G_4^{\diamond}$ can be decomposed completely into $D_4/2$ exceptional systems with parameter $\eps_0$.
Moreover, these exceptional systems can be partitioned into sets $\mathcal{J}'_0$ and $\mathcal{J}'_{i_1,i_2}$ (one set for each pair $1 \le i_1, i_2  \le K_2$)
such that the following conditions hold, where $\cJ''$ denotes the union of $\mathcal{J}'_{i_1, i_2}$ over all $1 \le i_1, i_2  \le K_2$: 
\begin{itemize}
	\item[($\gamma_1$)] Each $\mathcal{J}'_{i_1, i_2}$ consists of precisely $(D_4- 2 \lambda_{K_2} n)/2K^2_2$ $(i_1,i_2)$-ES with parameter $\eps _0$ with respect to the partition $\cP_2$.
	\item[($\gamma_2$)] $\mathcal{J}'_0$ contains precisely $\lambda_{K_2} n$ exceptional systems
	 with parameter $\eps _0$.
	\item[($\gamma_3$)] If $e_G(A',B') \ge D$, then all exceptional systems in $\mathcal{J}'_0 \cup \cJ''$ are Hamilton exceptional systems.
	\item[($\gamma_4$)] If $e_G(A',B') < D$, then each exceptional system $J \in \mathcal{J}'_0 \cup \cJ''$ is a Hamilton exceptional system
    with $e_J(A',B') = 2$ or a matching exceptional system.
	In particular, $\mathcal{J}'_0$  contains precisely
$e_{ \bigcup \mathcal{J}'_0 }(A',B')/2$ Hamilton exceptional systems and $\mathcal{J}''$  contains precisely
$e_{ \bigcup \mathcal{J}'' }(A',B')/2$ Hamilton exceptional systems.
\end{itemize}
As in Step~3, the construction of $\mathcal{J}'_0$ and the $\mathcal{J}'_{i_1,i_2}$ will depend on whether $G$ is critical and whether $e_G(A',B') \ge D$.
Recall that $G_4=G_1-G^{\rm rob}$ and note that
\begin{equation} \label{divD4b}
\frac{D- \phi_0n - 2 \phi^{\rm rob}_0 n}{400K^2_2}=\frac{D_4}{400K^2_2}\in \mathbb{N}
\end{equation}
by~\eqref{divD4}.

\noindent\textbf{Case 1: $e_G(A',B') \ge D$ and $G$ is not critical.}
Our aim is to apply Lemma~\ref{lma:BESdecom} to $G$ with $G - G_4$, $m_2$, $K_2$, $\cP_2$, $\eps'_1$, $\phi_0+2 \phi^{\rm rob}_0$, $ \lambda_{K_2}$
playing the roles of $G_0$, $m$, $K$, $\cP$, $\eps$, $\phi$, $\lambda$.
(So $G^{\diamond}_4$ will play the role of $G^{\diamond}$.) First we verify that the conditions in Lemma~\ref{lma:BESdecom}(i)--(iv) are satisfied. 
Clearly, Lemma~\ref{lma:BESdecom}(i) and ~(ii) hold.
Note that $G - G_4 = \mathcal{H}'_1 + G^{\rm rob}$, so ($\alpha_1$), ($\alpha_2$), ($\beta'_1$) and ($\beta'_2$) imply Lemma~\ref{lma:BESdecom}(iii).
By ($\alpha_2$) and ($\beta'_2$), $e_{G_4^{\diamond}}(A',B')$ is even. Together with the fact (S$_2$r) that
$(G^{\diamond}_4 , \cP_2)$ is a $(K_2, m_2, \eps_0, \eps'_1)$-exceptional scheme, this shows that Lemma~\ref{lma:BESdecom}(iv) holds.
Together with \eqref{divD4b} this ensures that we can indeed apply Lemma~\ref{lma:BESdecom} to obtain a set of
$(D- (\phi_0 + 2 \phi^{\rm rob}_0) n)/2 = D_4/2$ edge-disjoint Hamilton exceptional systems with parameter $\eps_0$ in $G_4$.
Moreover, these Hamilton exceptional systems can be partitioned into sets $\mathcal{J}'_0$ and
$\mathcal{J}'_{i_1,i_2}$ (for all $1 \le i_1, i_2  \le K_2$) such that ($\gamma_1$)--($\gamma_3$) hold.

\noindent\textbf{Case 2: $e_G(A',B') \ge D$ and $G$ is critical.}
Our aim is to apply Lemma~\ref{lma:BESdecomcritical} to $G$ with $G - G_4$, $m_2$, $K_2$, $\cP_2$, $\eps'_1$, $\phi_0+2 \phi^{\rm rob}_0$, $ \lambda_{K_2}$
playing the roles of $G_0$, $m$, $K$, $\cP$, $\eps$, $\phi$, $\lambda$.
(So as before, $G^{\diamond}_4$ will play the role of $G^{\diamond}$.) Similar arguments as in Case~1 show that
Lemma~\ref{lma:BESdecomcritical}(i)--(iv) hold. ($ \beta_4$) implies Lemma~\ref{lma:BESdecomcritical}(v).
Together with \eqref{divD4b} this ensures that we can indeed apply
Lemma~\ref{lma:BESdecomcritical} to obtain a set of $D_4/2$ edge-disjoint Hamilton exceptional
systems with parameter $\eps_0$ in $G_4$. Moreover, these Hamilton exceptional systems can be partitioned into sets $\mathcal{J}'_0$ and
$\mathcal{J}'_{i_1,i_2}$ (for $1 \le i_1, i_2  \le K_2$) such that ($\gamma_1$)--($\gamma_3$) hold.

\noindent\textbf{Case 3: $e_G(A',B') < D$.}
Recall from Proposition~\ref{prp:e(A',B')}(ii) that in this case we have $D = n/2 - 1$, $n = 0 \pmod 4$ and $|A'| = |B'| = n/2$.
Our aim is to apply Lemma~\ref{lma:PBESdecom} to $G$ with $G - G_4$, $m_2$, $K_2$, $\cP_2$, $\eps'_1$, $\phi_0+2 \phi^{\rm rob}$, $ \lambda_{K_2}$
playing the roles of $G_0$, $m$, $K$, $\cP$, $\eps$, $\phi$, $\lambda$.
(So as before, $G^{\diamond}_4$ will play the role of $G^{\diamond}$.) Similar arguments as in Case~1 show that
Lemma~\ref{lma:PBESdecom}(i)--(iv) hold. ($ \beta_5$) implies Lemma~\ref{lma:BESdecomcritical}(v).
Together with \eqref{divD4b} this ensures that we can indeed apply 
Lemma~\ref{lma:PBESdecom} to obtain a set of $D_4 /2 $ edge-disjoint exceptional systems in $G_4$.
Moreover, these exceptional systems can be partitioned into sets $\mathcal{J}'_0$ and $\mathcal{J}'_{i_1,i_2}$ (for all $1 \le i_1, i_2  \le K_2$)
such that ($\gamma_1$), ($\gamma_2$) and ($\gamma_4$) hold. (In particular, ($\gamma_4$) implies that each exceptional system
in these sets has parameter $\eps_0$.)

\smallskip

Therefore, in each of the three cases we have constructed sets $\mathcal{J}'_0$ and $\mathcal{J}'_{i_1,i_2}$ (for all $1 \le i_1, i_2  \le K_2$)
satisfying ($\gamma_1$)--($\gamma_4$).

We now find Hamilton cycles and perfect matchings covering the `non-localized' exceptional systems (i.e.~the ones in $\mathcal{J}'_0$).
Let $G'_4 = G_4 - G_4^{\diamond}$. So $G'_4$ is obtained from $G_4$ by keeping all edges inside $A$ as well as all edges inside $B$,
and deleting all other edges.%
   \COMMENT{Daniela: previously had "Let $G'_4:= G_4[A] + G_4[B]$. Note that $G'_4 = G_4 - G_4^{\diamond}$." But then $G'_4$ doesn't contain $V_0$
and so $(G'_4,A,A_0,B,B_0)$ wouldn't be an $(\eps_0,K_2)$-framework}
Note that $(G'_4,A,A_0,B,B_0)$ is an $(\eps_0,K_2)$-framework since $(G_4,A,A_0,B,B_0)$ is an $(\eps_0,K_2)$-framework. 
Apply Lemma~\ref{badBES} to $(G'_4,A,A_0,B,$ $B_0)$ with $K_2$, $ \lambda_{K_2}$, $\cJ'_0$ playing the roles of $K$, $\lambda$, $\{ J_1, \dots, J_{\lambda n} \}$.
(Recall from (S$_2$b) that $(G_4[A] +G_4[B], \cP_2)$ is a $(K_2, m_2, \eps_0, \eps'_2)$-scheme, so $\delta(G'_4[A])=\delta(G_4[A]) \ge 4|A|/5$
and $\delta(G'_4[B])=\delta(G_4[B]) \ge 4|B|/5$ by (Sch3).)%
   \COMMENT{Daniela: added $\delta(G'_4[A])=$ and similarly for $B$} 
We obtain edge-disjoint subgraphs $H_1, \dots, H_{|\mathcal{J}_0'|}$ of $G'_4 + \bigcup \mathcal{J}'_0$ such that,
writing $\mathcal{H}_2:=\{H_1, \dots, H_{|\mathcal{J}'_0|}\}$, the following conditions hold:
\begin{itemize}
\item[($\delta_1$)] For each $H_s\in \mathcal{H}_2$ there is some $J_s\in \mathcal{J}'_0$ such that $J_s\subseteq H_s$.
\item[($\delta_2$)] If $J_s$ is a Hamilton exceptional system, then $H_s$ is a Hamilton cycle on $V(G)$. If
$J_s$ is a matching exceptional system, then $H_s$ is the edge-disjoint union of two perfect matchings on $V(G)$.
\item[($\delta_3$)] Let $\mathcal{H}'_2:=H_1+ \dots +H_{|\mathcal{J}'_0|}$. If $e_G(A',B') < D$, then $\mathcal{H}_2$ contains precisely $e_{\mathcal{H}'_2}(A',B')/2$ Hamilton cycles on $V(G)$.
\end{itemize}
Indeed, ($\delta_1$) follows from Lemma~\ref{badBES}(i). 
($\delta_2$) follows from Lemma~\ref{badBES}(ii),(iii).
(For the second part, note that ($\gamma_3$) and ($\gamma_4$) imply that $\cJ'_0$ contains matching exceptional systems only in the case when
$e_G(A',B') < D$. But in this case, Proposition~\ref{prp:e(A',B')}(ii) implies that $n = 0 \pmod 4$ and $|A'| = |B'| = n/2$, i.e.~$|A'|$ and $|B'|$ are even.)
For ($\delta_3$), note that $G'_4$ has no $A'B'$-edges
and so $e_{ \bigcup\mathcal{J}'_0}(A',B')=e_{\mathcal{H}'_2}(A',B')$.
Together with ($\delta_2$) and ($\gamma_4$), this now implies ($\delta_3$).%
\COMMENT{Deryk rephrased this paragraph}

Recall that $\cJ''$ is the union of $\mathcal{J}'_{i_1, i_2}$ over all $1 \le i_1, i_2  \le K_2$.
Let $G_5:=G_4-\mathcal{H}'_2$ and $D_5:=D_4-2|\mathcal{H}_2| = D_4 - 2 \lambda_{K_2} n $.
So \eqref{degrees4} implies that
\begin{equation} \label{degrees5}
d_{G_5}(v)=D_5+2r \mbox{ for all } v \in A \cup B \qquad \mbox{and} \qquad d_{G_5}(v)=D_5 \mbox{ for all } v \in V_0.
\end{equation}
Note that 
\begin{equation} \label{G5diam}
G^{\diamond}_5 := G_5 - G_5[A] - G_5[B] = G_4^{\diamond} - \mathcal{H}'_2 = G_4^{\diamond} - \bigcup \mathcal{J}'_0=\bigcup \cJ''.
\end{equation}
Since $d_J(v) =2$ for all $v \in V_0$ and all $J \in \mathcal{J}''$, it follows that
\begin{align}
D_5 = 2 |\cJ''|. \label{eq:D5}
\end{align}
Moreover, since  $(G_4[A] +G_4[B], \mathcal{P}_2)$ is a $(K_2, m _2, \eps_0, \eps'_2)$-scheme and $\eps'_2 + 2\lambda_{K_2} \le \eps$,
Proposition~\ref{deleteBS} implies that $(G_5[A] +G_5[B], \mathcal{P}_2)$ is a $(K_2, m _2, \eps_0, \eps)$-scheme.

\smallskip

\noindent\textbf{Step 7: Approximate Hamilton cycle decomposition.}
Our next aim is to apply Lemma~\ref{almostthm} to obtain an approximate decomposition of $G_5$. 
Let 
\begin{align*}
\mu:=(r^{\rm rob}_0-2r)/(4K_2m_2) \qquad \text{and} \qquad \rho := \gamma/(4 K_1).
\end{align*}
We will apply the lemma with $G_5$, $\cP_2$, $K_2$, $m_2$, $\cJ''$, $\eps$ playing the roles of $G$, $\cP$, $K$, $m$, $\cJ$, $\eps$.
Clearly, conditions~(c) and~(d) of Lemma~\ref{almostthm} hold.

In order to see that condition (a) is satisfied, recall that
$m_1K_1=|A|=m_2K_2$. So%
\COMMENT{Deryk replaced lower bound on $\mu$ with 0 and replaced upper bound with $1$ rather than $\eps$ (as statement of
approx lemma changed slightly} 
$$
0\le \frac{7r_1-2r}{4K_2m_2}\stackrel{\eqref{eq:rrob}}{\le} \mu 
\stackrel{\eqref{eq:rrob}}{\le} \frac{30r_1}{4K_2m_2}
= \frac{30\gamma_1 }{4K_1}\ll 1.
$$
Therefore, every vertex $v\in A \cup B$ satisfies
\begin{eqnarray}
d_{G_4}(v) & \stackrel{\eqref{degrees4}}{=} & D_4+2r 
\stackrel{\eqref{D4D1}}{=} D_1-r^{\rm rob}_0+2r
 \stackrel{\eqref{eqD1}}{=}  D - \phi_0 n - 4 K_2 m _2 \mu \nonumber \\
 & \stackrel{\eqref{eq:Dupper}}{ = } & (1/2  \pm 4 \eps_{\rm ex} ) n - \phi_0 n - 4 K_2 m _2 \mu \nonumber \\
& = & \left( 1-4\mu \pm 3 \phi_0 \right)   K_2 m_2, 
 \label{eq:D6}
\end{eqnarray}
where in the last equality we recall that $(1-\eps_0) n /2 \le |A| = K_2 m_2 \le n/2$ and $\eps_0 ,  \eps_{\rm ex} \ll \phi_0$.
Recall that $G_5=G_4-\mathcal{H}'_2$ and note%
\COMMENT{Deryk replaced first inequality with $=$}
that
$$
\Delta(\mathcal{H}'_2) = 2 | \mathcal{H}_2 | =2 \lambda_{K_2} n \le 5 \lambda_{K_2}  K_2m_2.
$$
Altogether this implies that for each $v \in A$ and for all $1 \le i \le K_2$ we have
\begin{eqnarray*}
	d_{G_5} (v, A_i) & = & d_{G_4} (v,A_i) - d_{\mathcal{H}'_2} (v,A_i) 
= d_{G_4} (v,A_i) \pm 5 \lambda_{K_2}  K_2m_2 \\
	& \stackrel{ \eqref{eq:D2}}{=} &	( d_{G_4} (v) \pm 7 \eps_0 K_2 m_2  )/K_2 \pm 5 \lambda_{K_2}  K_2m_2 \\
	& \stackrel{\eqref{eq:D6}}{=} &	\left( 1-4\mu \pm (3 \phi_0 + 7 \eps_0 + 5 \lambda_{K_2}  K_2) \right)  m_2.
\end{eqnarray*}
Since $\phi_0, \eps_0, \lambda_{K_2}  \ll 1/K_2$, it follows that $d_{G_5}(v,A_i)=(1-4\mu\pm 4/K_2)m_2$. Similarly
one can show that $d_{G_5}(w,B_j)=(1-4\mu\pm 4/K_2)m_2$ for all $w\in B$. So Lemma~\ref{almostthm}(a) holds.

To check condition~(b), note that $r= \gamma |A|/K_1 \ge \gamma n/3K_1$. So 
\begin{eqnarray*}
|\cJ''| & \stackrel{\eqref{eq:D5}}{=} & \frac{D_5}{2} \le \frac{D_4}{2}
\stackrel{\eqref{D4D1}}{=}  \frac{D-r^{\rm rob}_0}{2} 
\stackrel{\eqref{eq:Dupper}}{\le} \frac{n}4 + 2 \eps_{\rm ex} n - \frac{r^{\rm rob}_0}{2} \\
& =  &\frac{n}4 + 2 \eps_{\rm ex} n - 2 K_2 m_2  \mu - r 
 \le  \left( \frac{1}{4} + 2 \eps_{\rm ex} -  (1- \eps_0)\mu - \frac{\gamma}{3 K_1}\right)n \\
& \le  & \left( \frac{1}{4}- \mu - \frac{\gamma}{4 K_1}  \right) n = \left( \frac{1}{4}- \mu - \rho  \right) n.
\end{eqnarray*}
Thus Lemma~\ref{almostthm}(b) holds.

So we can indeed apply Lemma~\ref{almostthm} to obtain a collection $\mathcal{H}_3$ of $|\cJ''|$ edge-disjoint
spanning subgraphs $H'_1,\dots,H'_{ |\cJ''|}$ of $G_5$ which satisfy the following properties:
\begin{itemize}
\item[($\eps_1$)] For each $H'_s\in \mathcal{H}_3$ there is some $J'_s\in \mathcal{J}''$ such that $J'_s\subseteq H'_s$.
\item[($\eps_2$)] If $J'_s$ is a Hamilton exceptional system then $H'_s$ is a Hamilton cycle on $V(G)$. If
$J'_s$ is a matching exceptional system then $H'_s$ is the edge-disjoint union of two perfect matchings on $V(G)$.
\item[($\eps_3$)] Let $\mathcal{H}'_3:=H'_1+\dots+H'_{|\cJ''|}$. If $e_G(A',B') < D$, then $\mathcal{H}_3$ contains precisely
$e_{\mathcal{H}'_3}(A',B')/2$ Hamilton cycles on $V(G)$.
\end{itemize}
For ($\eps_3$), note that~(\ref{G5diam}) implies $G^{\diamond}_5 =\bigcup \cJ''$ and thus we have
$e_{ \bigcup\mathcal{J}''}(A',B')=e_{\mathcal{H}'_3}(A',B')$.
Together with ($\eps_2$) and ($\gamma_4$), this now implies~($\eps_3$).%
\COMMENT{Deryk rephrased this}
\smallskip

\noindent\textbf{Step 8: Decomposing the leftover and the robustly decomposable graph.}
Finally, we can apply the `robust decomposition property' of $G^{\rm rob}$ guaranteed by Corollary~\ref{rdeccor}
to obtain a decomposition of the leftover from the previous step together with $G^{\rm rob}$ into Hamilton cycles 
(and perfect matchings if applicable).
 
To achieve this, let $H':=G_5-\mathcal{H}'_3$. Thus~\eqref{degrees5} and ~\eqref{eq:D5} imply that
every vertex in $V_0$ is isolated in $H'$ while every vertex $v\in A\cup B$ has degree $d_{G_5}(v)-2|\cJ''|=D_5+2r-2|\cJ''|=2r$ in~$H'$
(the last equality follows from~\eqref{eq:D5}). Moreover, $(H')^{\diamond}$ contains no edges. (This holds
since $\bigcup \cJ''\subseteq \mathcal{H}'_3$ and so $H'\subseteq G_5-\bigcup \cJ''= G_5- G_5^{\diamond}$ by~(\ref{G5diam}).)%
    \COMMENT{Daniela: added back reference}
Now let $H_A:=H'[A]$, $H_B:=H'[B]$, $H:=H_A+H_B$. 
Note that $H$ is the $2r$-regular subgraph of $H'$ obtained by removing all the vertices in $V_0$.
Let%
\COMMENT{Daniela: made extra back reference + added one more step}
$$
s':=rfK_1+7r^\diamond \stackrel{(\ref{D4eq})}{=}Lfr_3+7r^\diamond \stackrel{(\ref{robphieq})}{=}\phi_0^{\rm rob} n.
$$
Recall from~($\beta_3'$) that each of the $s'$ exceptional path systems in $\mathcal{EF}_{\rm CA}+\mathcal{EF}_{\rm PCA}$ contains
a unique exceptional system%
\COMMENT{Deryk strengthened ($\beta_3'$) to make reference to ($\beta_3$) superfluous}
and  $\cJ^{\rm rob}$ is the set of all these $s'$ exceptional systems.
Thus Corollary~\ref{rdeccor}(ii)(b) implies that $H + G^{\rm rob}$ has a decomposition into
edge-disjoint spanning subgraphs
$H''_1,\dots,H''_{s'}$ such that, writing $\mathcal{H}_4:=\{H''_1,\dots,H''_{s'}\}$,
we have:%
   \COMMENT{Daniela: deleted "If $e_G(A',B') < D$" in ($\zeta_3$) since it holds in general}
\begin{itemize}
\item[($\zeta_1$)] For each $H''_s\in \mathcal{H}_4$ there is some exceptional system $J''_s\in \cJ^{\rm rob}$ such that $J''_s\subseteq H''_s$.
\item[($\zeta_2$)] If $J''_s$ is a Hamilton exceptional system then $H''_s$ is a Hamilton cycle on $V(G)$. If
$J''_s$ is a matching exceptional system then $H''_s$ is the edge-disjoint union of two perfect matchings on $V(G)$.
\item[($\zeta_3$)] Let $\mathcal{H}'_4:=H''_1+\dots+H''_{s'}$. Then $\mathcal{H}_4$ contains precisely
$e_{\mathcal{H}'_4}(A',B')/2$ Hamilton cycles on $V(G)$.
\end{itemize}
Indeed, ($\zeta_1$) and ($\zeta_2$) follow from Corollary~\ref{rdeccor}(ii)(b)
(recall that if $\cJ^{\rm rob}$ contains a matching exceptional system, then $|A'|=|B'|=n/2$ is even by ($\beta_3'$)).
For ($\zeta_3$), note that $e_{\mathcal{H}'_4}(A',B')=  e_{G^{\rm rob}}(A',B')= e_{\bigcup\cJ^{\rm rob}}(A',B')$ by ($\beta_2'$).%
   \COMMENT{Daniela: replaced $e_{\cJ^{\rm rob}}(A',B')$ by $e_{\bigcup\cJ^{\rm rob}}(A',B')$}
Now ($\zeta_3$) follows from ($\beta'_3$) and ($\zeta_2$).

Note that $\mathcal{H}_1\cup \mathcal{H}_2\cup \mathcal{H}_3\cup \mathcal{H}_4$ corresponds to a decomposition of $G$ into Hamilton cycles
and perfect matchings. It remains to show that the proportion of Hamilton cycles in this decomposition is as desired.

First suppose that $e_G(A',B') \ge D$.
By ($\alpha_3$), $\mathcal{H}_1$ consists of Hamilton cycles and one perfect matching if $D$ is odd.
By ($\gamma_3$), ($\delta_2$) and ($\eps_2$), both $\mathcal{H}_2$ and $\mathcal{H}_3$ consist of Hamilton cycles.
By ($\beta'_3$) and ($\zeta_2$) this also holds for $\mathcal{H}_4$.
So $\mathcal{H}_1\cup \mathcal{H}_2\cup \mathcal{H}_3\cup \mathcal{H}_4$ consists of Hamilton cycles and one perfect matching if $D$ is odd. 

Next suppose that $e_G(A',B') < D$. Then by ($\alpha_3$), ($\delta_3$), ($\eps_3$) and ($\zeta_3$)
the numbers of Hamilton cycles in $\mathcal{H}_1$, $\mathcal{H}_2$, $\mathcal{H}_3$ and $\mathcal{H}_4$ are precisely
$\lfloor e_{\mathcal{H}'_1}(A',B')/2 \rfloor $, $ e_{\mathcal{H}'_2}(A',B')/2 $, $ e_{\mathcal{H}'_3}(A',B')/2 $ and $ e_{\mathcal{H}'_4}(A',B')/2 $.
Hence, $\mathcal{H}_1\cup \mathcal{H}_2\cup \mathcal{H}_3\cup \mathcal{H}_4$ contains precisely
\begin{align*}
\left\lfloor \frac{e_{\mathcal{H}'_1  \cup \mathcal{H}'_2  \cup \mathcal{H}'_3 \cup \mathcal{H}'_4}(A',B')}2 \right\rfloor 
= \left\lfloor \frac{e_{G}(A',B')}2 \right\rfloor\ge  \left\lfloor \frac{F}2 \right\rfloor
\end{align*}
edge-disjoint Hamilton cycles, where $F$ is the size of the minimum cut in $G$. Since clearly $G$ cannot have more than $\lfloor F/2 \rfloor$
edge-disjoint Hamilton cycles, it follows that we have equality in the final step, as required.%
\COMMENT{Deryk: we need not have $F=e_{G}(A',B')$, as previously claimed}
\endproof

\chapter{Exceptional systems for the two cliques case}\label{paper4}

In this chapter we prove all the results that were stated in Section~\ref{sec:locES}.
Recall that the exceptional edges are all those edges incident to $A_0$ and $B_0$ as well as all those edges joining $A'$ to $B'$.
The results stated in Section~\ref{sec:locES} generated a decomposition of 
these exceptional edges  into exceptional  systems: Each such exceptional system
was then  extended into a Hamilton cycle. (Recall that actually, the exceptional systems  may contain some non-exceptional edges as well.)
This is the most difficult part of the construction of the Hamilton cycle decomposition and so forms the heart of the argument for the two clique case.

Let $G$ be a $D$-regular graph and let $A',B'$ be a partition of $V(G)$. 
Recall that we say that
$G$ is \emph{critical} (with respect to $A',B'$ and $D$) if both of the following hold:
\begin{itemize}
\item $\Delta(G[A',B']) \ge 11 D/40$;
\item  $e(H) \le 41 D/40$ for all subgraphs $H$ of $G[A',B']$ with $\Delta(H) \le 11 D/40$.%
     \COMMENT{Note that there is no assumption on $e_G(A',B')$.}
\end{itemize}

Recall that Lemmas~\ref{lma:BESdecom}--\ref{lma:PBESdecom} guarantee our desired decomposition of the exceptional edges into exceptional systems.
Lemma~\ref{lma:BESdecom} covers the non-critical case when $G[A',B']$  contains many edges, Lemma~\ref{lma:BESdecomcritical} covers the critical case when $G[A',B']$  contains many edges
and Lemma~\ref{lma:PBESdecom} tackles the case when $G[A',B']$ contains
only a few edges.

\section{Proof of Lemma~\ref{critical}}\label{secnewzz}

The following lemma (which collects some basic properties of critical graphs) immediately implies Lemma~\ref{critical}.

\begin{lemma} \label{critical'}
Suppose that $0< 1/n \ll 1$ and that $D, n \in \mathbb N$ are such that
\begin{equation} \label{minexact'}
D\ge n - 2\lfloor n/4 \rfloor -1=
\begin{cases}
n/2-1 & \textrm{if $n = 0 \pmod 4$,}\\
(n-1)/2 & \textrm{if $n = 1 \pmod 4$,}\\
n/2 & \textrm{if $n = 2 \pmod 4$,}\\
(n+1)/2 & \textrm{if $n = 3 \pmod 4$.}
\end{cases}
\end{equation}
Let $G$ be a $D$-regular graph on $n$ vertices and let $A',B'$ be a partition of $V(G)$ with $|A'|,|B'| \ge D/2$ and $\Delta(G[A',B']) \le D/2$.
Suppose that $G$ is critical.
Let $W$ be the set of vertices $w \in V(G)$ such that $d_{G[A',B']}(w) \ge 11D/40$.
Then the following properties are satisfied:%
\begin{itemize}
	\item[$ \rm (i)$] $1 \le |W| \le 3$.
	\item[$ \rm (ii)$] Either $D = (n-1)/2$ and $n = 1 \pmod{4}$, or $D = n/2-1$ and $n = 0 \pmod{4}$.
	Furthermore, if $n =1 \pmod{4}$, then $|W|=1$.
	\item[$\rm (iii)$] $e_{G}(A',B') \le 17D/10+5 < n$.
	\item[$\rm (iv)$] \begin{align*}
e_{G- W}(A',B') \le 
\begin{cases}
3D/4+5	& \textrm{if $|W|=1$,} \\
19D/{40}+5	& \textrm{if $|W|=2$,} \\
D/5+5	& \textrm{if $|W|=3$.}
\end{cases}
\end{align*}
	\item[$ \rm (v)$] There exists a set $W'$ of vertices such that $W \subseteq W'$, $|W'| \le 3$ and for all $w' \in W'$ and $v \in V(G) \setminus W'$
	we have 
\begin{align*}
d_{G[A',B']}(w') & \ge \frac{21D}{80}, \, 
d_{G[A',B']}(v) & \le \frac{11D}{40} \
{\rm and} & \
d_{G[A',B']}(w') - d_{G[A',B']}(v) \ge \frac{D}{240}.
\end{align*}
	\end{itemize}
\end{lemma}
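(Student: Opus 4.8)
\textbf{Proof plan for Lemma~\ref{critical'}.}

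The plan is to exploit the regularity of $G$ together with the two defining properties of criticality, pushing each piece of information as far as it goes. First I would establish a clean structural dichotomy: since $G$ is critical, $\Delta(G[A',B'])\ge 11D/40$, so $W\neq\emptyset$, giving $|W|\ge 1$. For the upper bound on $|W|$ and for part~(iii), the key observation is that one can extract from $G[A',B']$ a subgraph $H$ of maximum degree at most $11D/40$ by deleting, for each vertex of large degree in $G[A',B']$, enough edges to bring its degree down to the threshold; the second criticality condition then caps $e(H)\le 41D/40$. Combining this with the trivial bound $\Delta(G[A',B'])\le D/2$ (so each large-degree vertex loses at most $D/2-11D/40=9D/40$ edges, and deleting all edges at a set $W$ costs at most $|W|\cdot D/2$), one gets $e_G(A',B')\le 41D/40 + |W|\cdot(9D/40)$ when $W$ is used to reduce degrees, but more carefully $e_{G-W}(A',B')\le 41D/40$; then adding back the at most $|W|\cdot D/2$ edges at $W$ and using $|W|\le 3$ yields $e_G(A',B')\le 41D/40 + 3\cdot(\text{something})$. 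To make $|W|\le 3$ precise: if $|W|\ge 4$, then $e_G(A',B')\ge \tfrac12\cdot 4\cdot\tfrac{11D}{40} = \tfrac{11D}{20}$ from the $W$-vertices alone, but one must show this forces a contradiction with the $41D/40$ cap — here I would argue that the edges not incident to $W$ form a subgraph of max degree $<11D/40$ (by definition of $W$), hence at most $41D/40$ edges, so $e_G(A',B')\le 41D/40 + |W|\cdot 11D/40$ is \emph{not} immediately contradictory, so instead one needs the finer count: delete from $G[A',B']$ only a few edges at each $w\in W$ to get below threshold, contradicting that $e(H)\le 41D/40$ when $|W|$ is too large. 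This counting is the first routine-but-delicate step.

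Next, for part~(ii), I would use Proposition~\ref{prp:e(A',B')}(ii): since $G$ is $D$-regular with $D\ge n-2\lfloor n/4\rfloor-1$, we have $e_G(A',B')\ge D$ unless $n=0\pmod 4$, $D=n/2-1$ and $|A'|=|B'|=n/2$. So there are two cases. If $e_G(A',B')\ge D$, then combining with (iii) ($e_G(A',B')\le 17D/10+5$) is consistent, but I must rule out $D\ge n/2$: if $D\ge n/2$ then by \eqref{minexact'} either $D=n/2$ ($n=2\pmod4$) or $D=(n-1)/2$ ($n=1,3\pmod4$) or larger; one shows that for $D$ that large, $e_G(A',B')$ is forced to be too large to be critical. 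The cleanest route is Proposition~\ref{prp:e(A',B')2}, which gives $e_{G-U}(A',B')\ge D-28$ for $|U|\le 3$ when $D\ge n/2$ (using $|A'|,|B'|\ge D/2$ and $\Delta(G[A',B'])\le D/2$) — but criticality via (iv) will force $e_{G-W}(A',B')$ to be noticeably below $D$, a contradiction once $n$ is large. Hence $D<n/2$, so $D\in\{n/2-1,(n-1)/2\}$ and correspondingly $n\in\{0,1\pmod4\}$. For the ``furthermore'' in (ii): when $n=1\pmod4$, $D=(n-1)/2$ is \emph{even} only if $n\equiv 1\pmod 8$; more importantly, parity of $e_G(A',B')$ via Proposition~\ref{prp:e(A',B')parity}(i) ($e_G(A',B')$ odd iff both $|A'|$ and $D$ odd) combined with the arithmetic $|A'|+|B'|=n$ being odd forces exactly one of $|A'|,|B'|$ even; working through, if $|W|\ge 2$ one gets $e_G(A',B')\ge \tfrac12\cdot 2\cdot\tfrac{11D}{40}=\tfrac{11D}{20}$, and then part~(iv)'s bound for $|W|=2$ (namely $e_{G-W}(A',B')\le 19D/40+5$) combined with $\Delta(G[A',B'])\le D/2$ bounds $e_G(A',B')\le 19D/40+5+2\cdot D/2 = 79D/40+5$; this does not immediately contradict, so the real argument for $|W|=1$ when $n\equiv1\pmod4$ must use that $|A'|=|B'|=n/2$ is impossible (since $n$ is odd), hence $|A'|\ne|B'|$, and Proposition~\ref{prp:e(A',B')parity}(ii) together with the critical cap pins things down.

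For parts~(iv) and~(v), the approach is an extremal/counting argument on the bipartite graph $G[A',B']$. Order the vertices by their $G[A',B']$-degree; let $w_1,w_2,w_3,\dots$ be the top vertices. By (iii), $e_G(A',B')\le 17D/10+5$. For (iv): if $|W|=1$, then every vertex outside $w_1$ has degree $<11D/40$ in $G[A',B']$; counting edges, $e_{G-w_1}(A',B')$ is the number of edges with neither endpoint $w_1$, and since removing the (at most $D/2$) edges at $w_1$ from a graph with $\le 17D/10+5$ edges... actually I would argue $e_{G-W}(A',B') = e_G(A',B') - (\text{edges incident to }W)$ and edges incident to $W$ has size $\ge$ (sum of $W$-degrees) $-$ (edges inside $W$), which for $|W|=1$ is exactly $d_{G[A',B']}(w_1)\ge 11D/40$; then $e_{G-W}(A',B')\le e_G(A',B') - 11D/40 \le 17D/10+5-11D/40 = 57D/40+5$... this is weaker than claimed, so the bound (iii) I am using must itself be sharpened in the $|W|=1$ case — indeed when $|W|=1$ the subgraph $G[A',B']-w_1$ has $\Delta\le 11D/40$ so $e_{G-w_1}(A',B')\le 41D/40$, \emph{but} the claim is $3D/4+5$ which is smaller; here one uses that $e_{G}(A',B')$ itself, being close to $D$ (between $D$ and $17D/10+5$), together with $d(w_1)\le D/2$, forces $e_{G-w_1}(A',B') = e_G(A',B')-d_{G[A',B']}(w_1)$, and then a \emph{lower} bound $d_{G[A',B']}(w_1)\ge e_G(A',B')-3D/4-5$ is what is really needed — obtained by noting $G[A',B']-w_1$ has max degree $<11D/40$ and applying criticality's cap $41D/40$ \emph{to a cleverly chosen} $H\subseteq G[A',B']-w_1$; iterating this for $|W|=2,3$ gives the three cases. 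This degree-concentration argument is what then yields~(v): set $W'$ to be the top $|W|\le|W'|\le 3$ vertices, and the gap $d(w')-d(v)\ge D/240$ for $w'\in W'$, $v\notin W'$ follows because if some vertex just outside $W'$ had degree within $D/240$ of a $W'$-vertex, one could add it to $W'$ (or swap), contradicting the extremality forced by the tight edge counts. The main obstacle I expect is exactly this last extremal analysis — squeezing the precise constants $21D/80$, $11D/40$, $D/240$ out of the interplay between $D$-regularity, the two criticality inequalities, and the degree bound $\Delta(G[A',B'])\le D/2$ — which requires choosing the test subgraphs $H$ and the sets $W'$ with some care rather than any single clean inequality.
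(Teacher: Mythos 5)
Your broad strategy is correct — exploit criticality by exhibiting carefully chosen test subgraphs $H$ with $\Delta(H)\le 11D/40$ and applying the cap $e(H)\le 41D/40$, then feed the resulting bounds into Propositions~\ref{prp:e(A',B')}(ii) and~\ref{prp:e(A',B')2} to pin down $D$ and $|W|$. Your outline for part~(ii) in particular is essentially the paper's: use~(iv) to cap $e_{G-W}(A',B')$ from above, and Proposition~\ref{prp:e(A',B')2} to bound it from below, forcing $D<n/2$ and then $|W|=1$ when $n\equiv 1\pmod 4$. But the two test-subgraph constructions you need are not found, and the approaches you sketch in their place would not close the argument.

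For $|W|\le 3$ in~(i), the finer count you hope for — trimming each $w\in W$ down to degree $11D/40$ and invoking the cap — does not give a contradiction, because the cap then only says $e_G(A',B')\le 41D/40+|W|\cdot 9D/40$, which is harmless for $|W|=4$. The correct construction works from below, not by trimming: list $w_1,\dots,w_4$ in decreasing order of $d_{G[A',B']}$ and suppose $d(w_4)\ge 21D/80$ (note: threshold $21D/80$, strictly below $11D/40$). Build $H$ from scratch with $d_H(w_i)=\lceil 21D/80\rceil$ for $i\le 4$ and $N_H(v)\subseteq\{w_1,\dots,w_4\}$ for every other vertex. Then $\Delta(H)=\lceil 21D/80\rceil<11D/40$ but $e(H)\ge 4\lceil 21D/80\rceil-4>41D/40$ (the $-4$ accounts for bipartite edges inside $\{w_1,\dots,w_4\}$), contradicting criticality. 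This simultaneously proves $d(w_4)<21D/80$, which is exactly the degree-separation you need for~(v): since $d(w_1)\ge 11D/40$ and $d(w_j)\le 21D/80$ for the first $j\le 4$ with $d(w_j)\le 21D/80$, at most three consecutive gaps span a total drop of at least $11D/40-21D/80=D/80$, so one gap past $W$ is $\ge D/240$; this gap is where $W'$ ends. Your ``swap/extremality'' heuristic for~(v) is not a proof; the pigeonhole on the ordered degree sequence is what does the work, and it requires knowing $d(w_4)<21D/80$ rather than merely $|W|\le 3$.

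For~(iv) your proposed test graph $H\subseteq G[A',B']-w_1$ cannot succeed: that $H$ has $\Delta<11D/40$, so the cap gives only $e_{G-w_1}(A',B')\le 41D/40$, which is weaker than the claimed $3D/4+5$, and no sub-subgraph of $G[A',B']-w_1$ can improve this. The trick is to put edges at $W$ back \emph{into} the test graph: take $H'$ with $G[A'\setminus W,B'\setminus W]\subseteq H'$ and $d_{H'}(w)=\lfloor 11D/40\rfloor$ for each $w\in W$. Then $\Delta(H')\le 11D/40$, so $e(H')\le 41D/40$; but $e(H')\ge e_{G-W}(A',B')+|W|\lfloor 11D/40\rfloor-2$ (the $-2$ handles at most two $H'$-edges inside $W$). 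This yields $e_{G-W}(A',B')\le(41-11|W|)D/40+5$, which is precisely the three cases of~(iv), and then~(iii) follows by adding back at most $|W|D/2$ edges at $W$. The essential point you missed in both~(i) and~(iv) is that the test subgraph must saturate the degree budget at the high-degree vertices so that the $41D/40$ cap bites hard on the remaining edges; merely deleting high-degree vertices or edges makes the cap too slack.
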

\begin{proof}
Let $w_1, \dots, w_4$ be vertices of $G$ such that 
\begin{align*}
d_{G[A',B']}(w_1) \ge \dots \ge d_{G[A',B']}(w_4) \ge d_{G[A',B']}(v)
\end{align*}
for all $v \in V(G) \setminus \{w_1, \dots, w_4\}$. Let $W_4:=\{w_1, \dots, w_4\}$. Suppose that $d_{G[A',B']}(w_4)$ $\ge 21D/80$.
Let $H$ be a spanning subgraph of $G[A',B']$ such that
$d_H(w_i)= \lceil 21D/80\rceil$ for all $i\le 4$ and such that every vertex $v\in V(G)\setminus W_4$ satisfies
$N_H(v)\subseteq W_4$.%
    \COMMENT{Such a $H$ can be obtained by starting with $G[A'\cap W_4,B'\cap W_4]$ and adding suitable further edges.}
Thus $\Delta(H)= \lceil 21D/80\rceil$ and so $e(H) \le 41D/40$ since $G$ is critical. On the other hand, $e(H)\ge 4\cdot \lceil 21D/80\rceil -4$,
a contradiction. (Here we subtract four to account for the edges of $H'$ between vertices in $W$.)
Hence, $d_{G[A',B']}(w_4) < 21 D /80$ and so $|W|\le 3$. But $|W|\ge 1$ since $G$ is critical. So (i) holds.

Let $j$ be minimal such that
$d_{G[A',B']}(w_j)\le 21D/80$. So $1<j\le 4$. Choose an index $i$ with $1\le i<j$ such that
$W\subseteq \{w_1,\dots,w_i\}$ and $d_{G[A',B']}(w_i) - d_{G[A',B']}(w_{i+1}) \ge D/240$.
Then the set $W':=\{w_1,\dots,w_i\}$ satisfies~(v).

Let $H'$ be a spanning subgraph of $G[A',B']$ such that $G[A'\setminus W,B'\setminus W]\subseteq H'$ and
$d_{H'}(w)= \lfloor 11D/40 \rfloor$ for all $w\in W$.%
    \COMMENT{Such a $H'$ can be obtained by starting with $G[A'\cap W,B'\cap W]+ G[A'\setminus W,B'\setminus W]$ and adding
suitable further edges between $W$ and $V(G)\setminus W$.}
Similarly as before, $e(H') \le 41D/40$ since $G$ is critical. Thus 
\begin{align*}
	41D/40 & \ge e(H') \ge e ( H'- W ) + \lfloor 11D / 40 \rfloor |W| - 2\\
	& = e_{G- W}(A',B') + \lfloor 11D / 40 \rfloor |W| - 2.
\end{align*}
This in turn implies that
\begin{align}
e_{G- W}(A',B') & \le (41 - 11 |W|)D / 40 + 5 \label{eq:e(HnotW)}.
\end{align}
Together with~(i) this implies~(iv).
If $D \ge n/2$, then by Proposition~\ref{prp:e(A',B')2} we have $e_{G- W}(A',B')  \ge D - 28$.
This contradicts~(iv). Thus~(\ref{minexact'}) implies that $D = (n-1)/2$ and $n = 1 \pmod{4}$, or $D = n/2-1$ and $n = 0 \pmod{4}$.
If $n = 1 \pmod 4$ and $D=(n-1)/2$, then Proposition~\ref{prp:e(A',B')2} implies that $e_{G-W}(A',B')\ge D/2-28$.
Hence, by (iv) we deduce that $|W| =1$ and so (ii) holds.
Since $|W| \le 3$ and $\Delta(G[A',B']) \le D/2$, we have
\begin{align*}
	e_G (A',B') & \le e_{G- W}(A',B') + \frac{|W| D}2
		 \stackrel{(\ref{eq:e(HnotW)})}{\le} \frac{(41+9|W|)D}{40}+5 \le \frac{17D}{10}+5 <n.
\end{align*}
(The last inequality follows from~(ii).) This implies~(iii).
\end{proof}

\section{Non-critical Case with $e(A',B') \ge D$.} \label{noncritical}
In this section we prove Lemma~\ref{lma:BESdecom}. Recall that Lemma~\ref{lma:BESdecom} gives a decomposition of the exceptional edges into exceptional systems in the non-critical case when $e(A',B') \ge D$.
The proof splits into the following four steps:
\begin{itemize}
	\item[\bf Step 1] We first decompose $G^{\diamond}$ into edge-disjoint `localized' subgraphs $H(i,i')$ and $H'(i,i')$ (where $1\le i,i' \le K$).
	More precisely, each $H(i,i')$ only contains $A_0A_i$-edges and $B_0B_{i'}$-edges of $G^\diamond$ while all edges of $H'(i,i')$
lie in $G^{\diamond}[A_0 \cup A_i,  B_0 \cup B_{i'}]$, and all the edges of $G^\diamond$ are distributed evenly amongst the $H(i,i')$ and $H'(i,i')$
(see Lemma~\ref{lma:randomslice}). 
We will then move a small number of $A'B'$-edges between the $H'(i,i')$ in order to obtain graphs $H''(i,i')$ such that $e(H''(i,i'))$ is even (see Lemma~\ref{lma:move}).
	\item[\bf Step 2] We decompose each $H''(i,i')$ into $(D - \phi n )/(2K^2)$ Hamilton exceptional system candidates (see Lemma~\ref{lma:BESdecomprelim}).
	\item[\bf Step 3] Most of the Hamilton exceptional system candidates constructed in Step~2 will be extended into an $(i,i')$-HES (see Lemma~\ref{lma:BESextend2}).
	\item[\bf Step 4] The remaining Hamilton exceptional system candidates will be extended into Hamilton exceptional systems, which need not be localized
	(see Lem\-ma~\ref{globalBES}).
	(Altogether, these will be the $\lambda n$ Hamilton exceptional systems in $\mathcal{J}$ which are not mentioned in Lemma~\ref{lma:BESdecom}(b).)
\end{itemize}

\subsection{Step $1$: Constructing the Graphs $H''(i,i')$}
Let $H(i,i')$ and $H'(i,i')$ be the graphs obtained by applying Lemma~\ref{lma:randomslice} to $G^{\diamond}$.
We would like to decompose each $H'(i,i')$ into Hamilton exceptional system candidates.
In order to do this, $e(H'(i,i'))$ must be even. The next lemma shows that we can ensure this property
without destroying the other properties of the $H'(i,i')$ too much by moving a small number of edges 
between the $H'(i,i')$.

\begin{lemma} \label{lma:move}
Suppose that $0 <  1/n  \ll \epszero \ll  \eps \ll   \eps' \ll \lambda, 1/K \ll 1$, that $D\ge n/3$, that
$0 \le \phi  \ll 1$ and that $D, n, K, m,  (D - \phi n)/(2K^2) \in \mathbb{N}$.
Define $\alpha$ by 
\begin{align} \label{alpha}
2 \alpha n := \frac{ D - \phi n }{K^2}  \ \  \ \ \ \ \text{and let} \ \ \ \  \  \ 
\gamma  : = \alpha -  \frac{2\lambda}{K^2}.
\end{align}
Suppose that the following conditions hold:
\begin{itemize}
	\item[$ \rm (i)$] $G$ is a $D$-regular graph on $n$ vertices.
	\item[$ \rm (ii)$] $\mathcal{P}$ is a $(K, m, \epszero)$-partition of $V(G)$ such that $D \le e_G(A',B') \le \epszero n^2$ and $\Delta(G[A',B']) \le D/2$.
Furthermore, $G$ is not critical.
	\item[$ \rm (iii)$] $G_0$ is a subgraph of $G$ such that $G[A_0]+G[B_0] \subseteq G_0$, $e_{G_0}(A',B') \leq \phi n$ and $d_{G_0}(v) = \phi n $ for all $v \in V_0$.
	\item[$ \rm (iv)$] Let $G^{\diamond} := G - G[A] - G[B] -G_0$.  $e_{G^\diamond} (A',B')$ is even and $(G^{\diamond}, \mathcal{P})$ is a $(K, m, \epszero,\eps)$-exceptional scheme.
\end{itemize}
Then $G^{\diamond}$ can be decomposed into edge-disjoint spanning subgraphs $H(i,i')$ and $H''(i,i')$ of $G^{\diamond}$ (for all $1 \le i,i' \le K$)%
	\COMMENT{AL: changed $i,i' \le K$ to  $1 \le i,i' \le K$.}
such that the following properties hold, where $G'(i,i'):=H(i,i')+H''(i,i')$:
\begin{itemize}
\item[\rm (b$_1$)] Each $H(i,i')$ contains only $A_0A_i$-edges and $B_0B_{i'}$-edges.
\item[\rm (b$_2$)] $H''(i,i')\subseteq G^{\diamond}[A',B']$. Moreover,
all but at most $\eps' n$ edges of $H''(i,i')$ lie in $G^{\diamond}[A_0 \cup A_i, B_0 \cup B_{i'}]$.
\item[\rm (b$_3$)] $e(H''(i,i'))$ is even and $ 2 \alpha n \le e(H''(i,i')) \le 11\epszero n^2/(10K^2)$.
\item[\rm (b$_4$)] $\Delta(H''(i,i')) \le 31 \alpha n/30 $.
\item[\rm (b$_5$)] $d_{G'(i,i')}(v )  =  \left( 2 \alpha \pm   \eps' \right) n $ for all $v \in V_0$.
\item[\rm (b$_6$)] Let $\widetilde{H}$ be any spanning subgraph of $H''(i,i')$ which maximises $e(\widetilde{H})$
under the constraints that $\Delta(\widetilde{H}) \le 3\gamma n /5$, $H''(i,i')[A_0,B_0] \subseteq \widetilde{H}$ and $e(\widetilde{H})$ is even.
Then $e(\widetilde{H}) \ge 2 \alpha n $.
\end{itemize}
\end{lemma}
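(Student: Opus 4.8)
\textbf{Proof plan for Lemma~\ref{lma:move}.}
The plan is to start from the decomposition provided by Lemma~\ref{lma:randomslice} applied to $G^{\diamond}$, which yields spanning subgraphs $H(i,i')$ and $H'(i,i')$ with $G(i,i'):=H(i,i')+H'(i,i')$ satisfying $(a_1)$--$(a_5)$ there. Properties (b$_1$) and (b$_5$) will then essentially come for free: (b$_1$) is $(a_1)$, and (b$_5$) follows from $(a_5)$ together with the bound $\alpha n = (D-\phi n)/(2K^2)$ and $D\le(1/2+o(1))n$ (so $2\alpha n \approx D/K^2 \le (1+o(1))n/(2K^2)$ and the error $4\eps n/K^2$ is absorbed into $\eps' n$). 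For (b$_3$) and (b$_4$), I would first record from $(a_3)$ that $e(H'(i,i')) = (e_{G^\diamond}(A',B') \pm 4\eps\max\{n,e_{G^\diamond}(A',B')\})/K^2$; since $D\le e_G(A',B')\le \eps_0 n^2$ and $e_{G^\diamond}(A',B') = e_G(A',B') \pm \phi n$, we get $e(H'(i,i'))$ is between roughly $(1-5\eps)D/K^2 \ge 2\alpha n + \eps n/K^2$ and $11\eps_0 n^2/(10K^2)$. For the maximum degree bound, I would combine $(a_4)$ (which bounds $d_{H'(i,i')}(v)$ for $v\in V_0$ by $(d_{G[A',B']}(v)+2\eps n)/K^2 \le (D/2+2\eps n)/K^2$, using $\Delta(G[A',B'])\le D/2$) with the observation that a vertex in $A\cup B$ has degree at most $\eps_0 n$ in $G^{\diamond}[A',B']$ by (FR4)/(ESch3); since $D/(2K^2) \le 31\alpha n/30$ has slack (as $\alpha n \approx D/(2K^2)$), this gives $\Delta(H'(i,i'))\le 31\alpha n/30$ with room to spare before any edge-moving.

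The heart of the argument is the parity correction: we must turn each $H'(i,i')$ into an $H''(i,i')$ with $e(H''(i,i'))$ \emph{even} while changing only $O(1)$ edges per pair (so that (b$_2$), i.e.\ all but $\le\eps' n$ edges remaining localized, and (b$_4$) are preserved). I would proceed by iterating over the (at most $K^2$) pairs $(i,i')$ with $e(H'(i,i'))$ odd, and for each such pair find a single $A'B'$-edge $e$ lying in $G^\diamond[A',B']$ that is currently in some $H'(j,j')$ and move it into $H'(i,i')$ (or, if needed, move one edge out). The key point that makes this consistent is that $e_{G^\diamond}(A',B')$ is \emph{even} (hypothesis (iv)), so the number of pairs $(i,i')$ with odd $e(H'(i,i'))$ is even; hence we can pair them up and, for each pair of ``odd'' slices, transfer one edge from one to the other, making both even simultaneously without creating new odd slices. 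To find such a transferable edge between two prescribed slices $H'(j,j')$ and $H'(i,i')$, I would use that $e(H'(j,j'))$ is large (order $D/K^2 \gg 1$) so it contains at least one $A'B'$-edge; moving it to $H'(i,i')$ changes $e$ of both by one, and since we moved at most one edge per pair, (b$_2$) holds (at most $K^2$ edges total moved, far less than $\eps' n$) and the degree bounds change by at most $K^2$ at any vertex, which is absorbed into the slack in (b$_4$). After this, set $H''(i,i') := H'(i,i')$ and $G'(i,i') := G(i,i')$ (up to the moved edges).

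Finally, (b$_6$) is the place where non-criticality of $G$ is used, and I expect it to be the main obstacle. We need: any even spanning subgraph $\widetilde H\subseteq H''(i,i')$ containing $H''(i,i')[A_0,B_0]$ and maximising $e(\widetilde H)$ subject to $\Delta(\widetilde H)\le 3\gamma n/5$ has $e(\widetilde H)\ge 2\alpha n$. The strategy is: start greedily including edges of $H''(i,i')$ until every vertex has degree exactly $\lfloor 3\gamma n/5\rfloor$ or no more edges can be added at it; if this greedy graph $\widetilde H_0$ already has $e(\widetilde H_0)\ge 2\alpha n$ we are done (adjusting parity costs one edge). Otherwise, the obstruction is that $H''(i,i')$ has too many edges concentrated at few high-degree vertices. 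But then, pulling those $\le 3$ high-degree vertices out, $H''(i,i')$ minus them has few edges; combined with $e(H''(i,i'))\ge 2\alpha n$ this forces $\Delta(G[A',B'])$ or a suitable subgraph to be large in a way that would make $G$ critical --- contradicting (ii). More precisely, I would argue that if $e(\widetilde H) < 2\alpha n \le e(H''(i,i'))/1$, then a large chunk of $E(H''(i,i'))$ (and hence, summing over a suitable family, of $E(G[A',B'])$) is incident to a bounded set $W$ of vertices, and one can then exhibit a subgraph $H$ of $G[A',B']$ with $\Delta(H)\le 11D/40$ but $e(H) > 41D/40$, violating the criticality definition. Making the bookkeeping here precise --- tracking exactly which edges are forced out, choosing the threshold $3\gamma n/5$ correctly against $11D/40$, and getting the constants $\gamma = \alpha - 2\lambda/K^2$ to line up --- is the delicate part; everything else is routine application of Lemma~\ref{lma:randomslice} and counting.
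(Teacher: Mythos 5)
Your proposal follows the paper's overall route (apply Lemma~\ref{lma:randomslice}, move some $A'B'$-edges between the local slices, then prove (b$_6$) by a criticality contradiction), but there is a concrete gap in the edge-moving step. You assert that the lower bound $e(H'(i,i'))\ge 2\alpha n$ is automatic, via the claim $(1-5\eps)D/K^2 \ge 2\alpha n + \eps n/K^2$, and conclude that only $O(1)$ edges per slice need to be moved, purely for parity. That claimed inequality is false in general: it is equivalent to $\phi n \ge 5\eps D + \eps n$, which fails whenever $\phi$ is small (the hypotheses allow $\phi=0$). More precisely, from (a$_3$) of Lemma~\ref{lma:randomslice} one only gets $e(H'(i,i'))=(1\pm 16\eps)\,e_{G^\diamond}(A',B')/K^2$, and $e_{G^\diamond}(A',B')$ can be as small as $D-\phi n=2K^2\alpha n$, so $e(H'(i,i'))$ can come out as low as $2(1-16\eps)\alpha n<2\alpha n$. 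Hence (b$_3$) is \emph{not} automatic: you must move roughly $\Theta(\eps\alpha n)$ edges into the deficient slices, not $O(1)$. (The paper moves at most $32K^2\eps\alpha n\le\sqrt\eps\,n$ edges per slice; since $\sqrt\eps\ll\eps'$ this is still harmless for (b$_2$), (b$_4$) and (b$_5$).) Your ``pair up the odd slices and swap one edge'' trick is fine for parity alone but doesn't supply the missing edges; the correct resource is that $e_{G^\diamond}(A',B')\ge 2K^2\alpha n$ and is even, so there are enough $A'B'$-edges overall to top up every slice to an even number $\ge 2\alpha n$.

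For (b$_6$) your plan is on the right track and matches the paper's in spirit: isolate the (at most three) vertices carrying almost all of $H''(i,i')$, argue that $H''(i,i')$ minus these vertices is sparse, lift this to a bound on $e_{G^\diamond}(A',B')$, and derive a subgraph of $G[A',B']$ with $\Delta\le 11D/40$ but at most roughly $D$ edges, contradicting non-criticality. As you note, the bookkeeping (precise threshold, the inequality chain from $e(\widetilde H)<2\alpha n$ back to $e_G(A',B')$, and the passage from $H''$ to $G^\diamond$ via (a$_4$)) is the substance of that step and is left undone here; in the paper this is carried out by taking $X:=\{x:d_{\widetilde H}(x)\ge 3\gamma n/5-2\}$, showing $1\le|X|\le 3$, and chaining inequalities through $(a_3')$, $(a_4')$ and (b$_4$). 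So the strategy is right, but you should not present it as ``routine''.
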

\begin{proof}
Since $\phi  \ll 1/3\le D/n$, we deduce that%
    \COMMENT{$\gamma=\alpha -2\lambda/K^2\ge \alpha -2\lambda \cdot 7\alpha=(1-14\lambda)\alpha$}
\begin{align} \label{alphahier}
	\alpha \ge 1/(7K^2), \ \ \ \ \ (1-14\lambda)\alpha\le \gamma<\alpha \ \ \ \ \text{and} \ \ \ \ \eps \ll \eps'  \ll \lambda, 1/K,\alpha, \gamma \ll 1.
\end{align}
Note that (ii) and (iii) together imply that
\begin{align}	\label{alpha1}
e_{G^{\diamond}}(A',B') \ge D - \phi n \stackrel{(\ref{alpha})}{=}  2 K^2 \alpha n \stackrel{(\ref{alphahier})}{\ge } n/4.
\end{align}
By (i) and (iii), each $v \in V_0$ satisfies
\begin{equation}\label{eq:degGdiam}
d_{G^{\diamond}}(v) = D - \phi n \stackrel{(\ref{alpha})}{=} 2 K^2\alpha n.
\end{equation}
Apply Lemma~\ref{lma:randomslice} to decompose $G^{\diamond}$ into subgraphs $H(i,i')$, $H'(i,i')$ (for all $1\le i,i' \le K$)
satisfying the following properties, where $G(i,i'):=H(i,i')+H'(i,i')$:
\begin{itemize}
\item[(a$_1'$)] Each $H(i,i')$ contains only $A_0A_i$-edges and $B_0B_{i'}$-edges.
\item[(a$_2'$)] All edges of $H'(i,i')$ lie in $G^{\diamond}[A_0 \cup A_i, B_0 \cup B_{i'}]$.
\item[(a$_3'$)] $e ( H'(i,i') )   =   (1 \pm 16\eps)  e_{G^\diamond}(A',B') /K^2$.
	In particular, 
\begin{align*}
	 2 (1- 16\eps) \alpha n \le e(H'(i,i')) \le (1+16 \eps) \epszero n^2/K^2.
\end{align*}
\item[(a$_4'$)] $d_{H'(i,i')}(v )  =  ( d_{G^\diamond[A',B']}(v)  \pm 2 \eps n)/K^2$ for all $v \in V_0$.
\item[(a$_5'$)] $d_{G(i,i')}(v )  =  ( 2 \alpha  \pm 4 \eps/K^2)n$ for all $v \in V_0$.
\end{itemize}
Indeed, (a$'_3$) follows from (\ref{alpha1}), Lemma~\ref{lma:randomslice}(a$_3$) and~(ii),
while (a$'_5$) follows from (\ref{eq:degGdiam}) and Lemma~\ref{lma:randomslice}(a$_5$).
We now move some $A'B'$-edges of $G^\diamond$ between the $H'(i,i')$ such that the graphs $H''(i,i')$
obtained in this way satisfy the following conditions:
\begin{itemize}
\item Each $H''(i,i')$ is obtained from $H'(i,i')$ by adding or removing at most $32K^2 \eps \alpha n\le \sqrt{\eps}n$ edges.
\item $e(H''(i,i'))\ge 2 \alpha n$ and $e(H''(i,i'))$ is even.
\end{itemize}
Note that this is possible by~(a$_3'$) and since $\alpha n\in\mathbb{N}$ and $e_{G^\diamond} (A',B') \geq 2K^2\alpha n$ is even by~(iv).

We will show that the graphs $H(i,i')$ and $H''(i,i')$ satisfy conditions (b$_1$)--(b$_6$).
Clearly both (b$_1$) and (b$_2$) hold.
(a$_3'$) implies that
\begin{equation} \label{eH'}
e(H''(i,i')) = (1\pm 16 \eps) e_{G^{\diamond}}(A',B')/K^2\pm \sqrt{\eps}n \overset{(\ref{alphahier}),(\ref{alpha1}) }{=}
(1\pm \eps') e_{G^{\diamond}}(A',B')/K^2.
\end{equation}
Together with (ii) and our choice of the $H''(i,i')$ this implies (b$_3$). (b$_5$) follows from (a$'_5$)
and the fact that $d_{G'(i,i')}(v )  =d_{G(i,i')}(v ) \pm \sqrt{\eps}n$.
Similarly, (a$_4'$) implies that for all $v \in V_0$ we have
\begin{equation}\label{dH'}
d_{H''(i,i')}(v )  =  ( d_{G^{\diamond} [A',B']}(v)  \pm \eps' n)/{K^2}.
\end{equation}
Recall that $\Delta(G[A',B']) \le D/2$ by~(ii). Thus
$$
	\Delta(H''(i,i'))  \overset{\eqref{dH'}}{\le}   
\frac{D/2 + \eps' n   }{K^2}   \overset{\eqref{alpha}}{=}  \left( \alpha +  \frac{\phi + 2\eps' }{2K^2} \right) n 
 \overset{\eqref{alphahier}}{\le}  \frac{31 \alpha n }{30},
$$
so (b$_4$) holds.

So it remains to verify (b$_6$). To do this, fix $1 \le i, i' \le K$%
\COMMENT{AL: changed $i,i' \le K$ to  $1 \le i,i' \le K$.}
 and set $H'' :=H''(i,i')$.
Let $\widetilde{H}$ be a subgraph of $H''$ as defined in (b$_6$). We need to show that $e(\widetilde{H}) \ge 2 \alpha n$.
Suppose the contrary that $e(\widetilde{H}) < 2 \alpha n$.
We will show that this contradicts the assumption that $G$ is not critical.
Roughly speaking, the argument will be that if $\widetilde{H}$ is sparse, then so is~$H''$.
This in turn implies that $G^\diamond$ is also sparse, and thus any subgraph of $G[A',B']$ of comparatively small maximum degree is also sparse, 
which leads to a contradiction.

Let $X$ be the set of all those vertices $x$ for which $d_{\widetilde{H}}(x) \geq  3 \gamma n/ 5 -2$. So $X\subseteq V_0$ by~(iv) and (ESch3).
Note that if $X = \emptyset$, then  $\widetilde{H} = H''$ and%
   \COMMENT{If $\widetilde{H} \neq H''$ then $e(H'')-e(\widetilde{H}) \ge 2$ (as it is even). But the maximality of $\widetilde{H}$ implies that
 adding any two edges in $H''-\widetilde{H}$ to $\widetilde{H}$ would create a vertex
of degree $>3 \gamma n/ 5$.}
so $e(\widetilde{H}) \ge  2 \alpha n$ by (b$_3$). If $|X| \ge 4$, then $e(\widetilde{H}) \ge 4 (  3 \gamma n/ 5  -2)  -4\ge 2\alpha n $
by~\eqref{alphahier}. Hence $1\le |X| \le 3$. Note that $\widetilde{H}-X$ contains all but at most one edge from $H''-X$.%
   \COMMENT{Might have deleted one edge in $H''-X$ in order to ensure that $e(\widetilde{H})$ is even.}
Together with the fact that $\widetilde{H}[X]$ contains at most two edges (since $|X|\le 3$ and $\widetilde{H}$ is bipartite) this implies
that
\begin{align}\label{eq:tildeH}
2\alpha n  > e(\widetilde{H}) & \ge e(\widetilde{H}-X)+ \left( \sum_{x \in X} d_{\widetilde{H}}(x) \right) -2 \nonumber \\
& \ge e(H''-X)-1 +|X|(3 \gamma n/ 5  -2)-2\nonumber \\
& \ge e(H'')-\sum_{x \in X} d_{H''}(x)+|X|(3 \gamma n/ 5  -2)-3\nonumber \\
& =e(H'')-\sum_{x \in X} (d_{H''}(x)-3 \gamma n/ 5  +2)-3
\end{align}
and so
\begin{align}\label{eq:eH''}
e(H'')& \overset{(\ref{dH'})}{< } 2 \alpha n + \sum_{x \in X} \left( \frac{d_{G^{\diamond}[A',B']}(x)+\eps' n}{K^2} - 3 \gamma n /5 +2 \right) +3.
\end{align}
Note that (b$_4$) and (\ref{eq:tildeH}) together imply that if $e(H'')\ge 4\alpha n$ then
$e(\widetilde{H})\ge e(H'')-|X|(31\alpha n/30- 3 \gamma n/ 5  +2)-3\ge 2\alpha n$. Thus $e(H'')< 4\alpha n$ and by (\ref{eH'})
we have $e_{G^{\diamond}}(A',B')\le 4K^2\alpha n/(1-\eps')\le 5K^2\alpha n\le 3n$. Hence
\begin{align}\label{alpha3}
	e_{G^{\diamond}}(A',B') & \stackrel{(\ref{eH'})}{\le}  K^2 e(H'')+\eps' e_{G^{\diamond}}(A',B')\le K^2 e(H'')+3\eps'n\nonumber \\
& \stackrel{(\ref{eq:eH''})}{\le}
  D- \phi n + 7 \eps' n + \sum_{x \in X } \left( d_{G^{\diamond}[A',B']}(x) -  K^2(3\gamma n/5) \right).
\end{align}
Let $G'$ be any subgraph of $G^{\diamond}[A',B']$ which maximises $e(G')$ under the constraint that $\Delta(G') \le K^2(3 \gamma / 5 +2\eps' ) n$.
Note that if $d_{G^{\diamond}[A',B']}(v) \ge  K^2(3 \gamma / 5 +2 \eps' ) n $, then $v\in V_0$ (by~(iv) and (ESch3)) and so
$d_{H''}(v) > 3 \gamma n/ 5$ by~\eqref{dH'}. This in turn implies that $v \in X$.
Hence%
   \COMMENT{The +2 in the next inequality accounts for the edges in $G^{\diamond}[A'\cap X,B'\cap X]$.}
\begin{eqnarray}
	e(G') & \le & e_{G^{\diamond}}(A',B') -  \sum_{x \in X } \left( d_{G^{\diamond}[A',B']}(x) -  K^2( 3\gamma /5+ 2\eps' )n  \right)+2 \nonumber\\
	& \overset{\eqref{alpha3}}{\le} & D- \phi n + 7K^2\eps' n.\label{eq:eG'}
\end{eqnarray}
Note that (\ref{dH'}) together with the fact that $X\neq \emptyset$ implies that
$$\Delta(G[A',B'])\ge \Delta (G^\diamond[A',B'])\ge K^2(3 \gamma n/ 5 -2) -\eps' n\stackrel{(\ref{alpha}), (\ref{alphahier})}{\ge} 11D/40.$$
Since $G$ is not critical this means that
there exists a subgraph $G''$ of $G[A',B']$ such that $\Delta(G'') \le 11 D /40 \le   K^2( 3\gamma /5+ 2 \eps' )n  $ and $e(G'') \ge 41 D/40$.
Thus
\begin{align*}
	D - \phi n + 7K^2 \eps' n \stackrel{(\ref{eq:eG'})}{\ge} e(G') \ge e(G'') - e_{G_0}(A',B') \ge 41 D/40 - \phi n,
\end{align*}
which is a contradiction.
Therefore, we must have $e(\widetilde{H}) \ge 2 \alpha n$.
Hence (b$_6$) is satisfied.
\end{proof}


\subsection{Step $2$: Decomposing $H''(i,i')$ into Hamilton Exceptional System Candidates}

Our next aim is to decompose each $H''(i,i')$ into $\alpha n$%
	\COMMENT{$\alpha n = \frac{D- \phi n }{2K^2}$ as defined in Lemma~\ref{lma:move}.}
 Hamilton exceptional system candidates (this will follow from Lemma~\ref{lma:BESdecomprelim}).
Before we can do this, we need the following result on decompositions of bipartite graphs into `even matchings'.
We say that a matching is \emph{even} if it contains an even number of edges, otherwise it is \emph{odd}.

\begin{prop}\label{prop:evenmatching}
Suppose that $0<1/n \ll \gamma \le 1$ and that $n, \gamma n \in \mathbb N$.
Let $H$ be a bipartite graph on $n$ vertices with $\Delta(H) \le 2\gamma n/3$ and where $ e(H) \geq 2 \gamma n$ is even.
Then $H$ can be decomposed into $\gamma n$ edge-disjoint non-empty even matchings, each of size at most $3e(H)/(\gamma n)$.
\end{prop}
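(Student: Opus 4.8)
\textbf{Proposed proof of Proposition~\ref{prop:evenmatching}.}

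The plan is to first apply Corollary~\ref{basic_matching_dec} to obtain a reasonably balanced matching decomposition of $H$, and then to repair parity by pairing up odd matchings and shifting a single edge between them. First I would apply Corollary~\ref{basic_matching_dec} to $H$ with $\Delta := \lfloor 2\gamma n/3\rfloor$, obtaining a decomposition of $E(H)$ into $\Delta+1$ edge-disjoint matchings $M_1,\dots,M_{\Delta+1}$ whose sizes differ by at most one; so each $M_i$ has size $e(H)/(\Delta+1)\pm 1 = (1\pm o(1))\cdot 3e(H)/(2\gamma n)$, which is comfortably below $3e(H)/(\gamma n)$ and is positive since $e(H)\ge 2\gamma n$. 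At this stage we have at most $\Delta+1 \le 2\gamma n/3 + 1$ matchings, which is fewer than the target $\gamma n$, and some of them may be odd.

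Next I would fix the parity. Since $e(H)$ is even, the number of odd matchings among $M_1,\dots,M_{\Delta+1}$ is even, say $M_{i_1},\dots,M_{i_{2t}}$. For each pair $(M_{i_{2j-1}},M_{i_{2j}})$ I would move one edge from the larger of the two to the other; this leaves both matchings non-empty (each had at least one edge, and the larger had at least two once we are in the regime $e(H)\ge 2\gamma n$, so after the move it still has at least one), changes each of the two sizes by $\pm 1$, and hence makes both sizes even. One must check that the moved edge can be added to the smaller matching while keeping it a matching: this is not automatic, so instead I would argue as follows. Rather than insisting on moving into the \emph{smaller} matching, note that between any two matchings $M$ and $M'$ with $M$ non-trivial, since $\Delta(H)\le 2\gamma n/3 < |V(M')|$ roughly, one can always find an edge $e\in M$ whose two endpoints are not both covered by $M'$ — indeed $M'$ covers at most $2|M'| \le 2e(H)/(\gamma n)\cdot$(const) vertices, which is a small fraction of $n$, while $e(M)$ is also large, so a counting argument gives such an $e$ unless both matchings are tiny, and in the tiny case the total $e(H)$ would be small, contradicting $e(H)\ge 2\gamma n$. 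Then $M'+e$ is still a matching.

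Finally I would pad the number of matchings up to exactly $\gamma n$. Since we currently have $k \le \Delta+1 \le \gamma n$ even matchings (the parity-fixing step did not change their number) and $\gamma n - k \ge 0$, I would split off single edges: repeatedly take a matching of size $\ge 4$ and remove two edges from it to form a new even matching of size $2$, decreasing nothing below non-emptiness and increasing the count by one each time. Since $e(H)\ge 2\gamma n$, the total number of edges is large enough that we can always find a matching of size $\ge 4$ to split until the count reaches $\gamma n$ (the matchings cannot all have size $\le 3$ while the count is below $\gamma n$, as that would give $e(H) < 3\gamma n$... here one has to be slightly careful, but the bound $e(H)\ge 2\gamma n$ together with $k < \gamma n$ forces some matching of size $\ge 3$, and evenness forces it to have size $\ge 4$). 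Throughout, the size bound $3e(H)/(\gamma n)$ is preserved since we only ever decrease matching sizes or move single edges between already-small matchings. The main obstacle is the bookkeeping in the parity-repair and padding steps — ensuring simultaneously that every matching stays non-empty, that sizes stay below $3e(H)/(\gamma n)$, and that an edge can always legitimately be moved between two chosen matchings; all of these reduce to the fact that $\Delta(H)$ is small compared to $n$ and $e(H)$ is large compared to $\gamma n$, but the cases where matchings become small need to be handled with a little care.
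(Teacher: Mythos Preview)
Your overall plan (König/equitable decomposition $\to$ parity repair $\to$ split down to exactly $\gamma n$ matchings) is the same as the paper's, and the first and last stages are fine. The gap is in the parity-repair step.

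You want to move a single edge from one odd matching $M$ into another odd matching $M'$, and you try to argue that some edge $e\in M$ has both endpoints outside $V(M')$. But this need not be the case: the equitable decomposition gives $|M|$ and $|M'|$ of the same order (both roughly $3e(H)/(2\gamma n)$), and it can happen that $V(M)\subseteq V(M')$. For instance, with $M=\{a_1b_1,a_2b_2,a_3b_3\}$ and $M'=\{a_1b_2,a_2b_3,a_3b_1\}$, every edge of $M$ meets $V(M')$ in both endpoints, so no edge can be moved. Your counting sketch does not help here: the bound $\Delta(H)\le 2\gamma n/3$ is irrelevant to whether $V(M)$ avoids $V(M')$, and the inequality ``$e(M)$ large compared to $2|M'|$'' is exactly what fails, since $|M|$ and $|M'|$ differ by at most one.

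The paper's fix avoids moving edges between matchings altogether. Given a pair of odd matchings $M_s,M_{s'}$, each of size at least $3$, pick any $e'\in M_{s'}$; since $e'$ has only two endpoints and $M_s$ is a matching of size $\ge 3$, some edge $e\in M_s$ is disjoint from $e'$. Then replace the pair $M_s,M_{s'}$ by the three even matchings $M_s-e$, $M_{s'}-e'$, and $\{e,e'\}$. This always works, costs nothing in the size bound, and increases the matching count by one per pair processed; starting from $\lfloor 2\gamma n/3\rfloor$ matchings, the total remains at most $\gamma n$, after which one splits as you described.
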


\begin{proof}
First note that since $e(H) \ge 2\gamma n $, it suffices to show that $H$ can be decomposed into at most $\gamma n $ edge-disjoint non-empty even matchings,
each of size at most $3e(H)/(\gamma n)$. Indeed, by splitting these matchings further if necessary, one can obtain precisely $\gamma n $
non-empty even matchings.

Set $n' := \lfloor 2\gamma n/3 \rfloor$.
K\"onig's theorem implies that $\chi '(H) \le n'$. So Proposition~\ref{prop:matchingdecomposition} implies that there
is a decomposition of $H$ into $n'$ edge-disjoint
matchings $M_1, \dots ,M_{n'}$ such that $|e(M_s) - e(M_{s'})| \le 1$ for all $s,s' \le n'$.
Hence we have 
\begin{align*}
2 \le \frac{e(H)}{n'} -1 \le e(M_s) \le \frac{e(H)}{n'}+1 \le \frac{3e(H)}{\gamma n}
\end{align*}
for all $s \le n'$.%
   \COMMENT{The final inequality holds since $ \frac{e(H)}{n'}+1\le  \frac{e(H)}{\gamma n/2}+1\le \frac{3e(H)}{\gamma n}$ since $\frac{e(H)}{\gamma n}\ge 2$.}
Since $e(H)$ is even, there are an even number of odd matchings.
Let $M_s$ and $M_{s'}$ be two odd matchings.
So $e(M_s),e(M_{s'}) \ge 3$ and thus there exist two disjoint edges $e \in M_s$ and $e' \in M_{s'}$.
Hence, $M_s - e$, $M_{s'} - e'$ and $\{e, e'\}$ are three even matchings.
Thus, by pairing off the odd matchings and repeating this process,  the proposition follows.
\end{proof}

\begin{lemma} \label{lma:BESdecomprelim}
Suppose that $0 < 1/n \ll \eps_0 \ll \gamma  < 1$, that $ \gamma + \gamma' < 1$ and that $n, \gamma n, \gamma' n \in \mathbb N$.
Let $H$ be a bipartite graph on $n$ vertices with vertex classes $A \dot\cup A_0$ and $ B  \dot\cup B_0$, where $|A_0|+|B_0| \le \eps_0 n$.
Suppose that
\begin{itemize}
	\item[{\rm (i)}] $e(H)$ is even, $\Delta(H) \le 16 \gamma n /15$ and $\Delta(H[A,B]) < (3 \gamma /5 - \epszero) n$. 
\end{itemize}
Let $H'$ be a spanning subgraph of $H$ which maximises $e(H')$ under the constraints  that $\Delta(H') \le 3 \gamma n /5$, $H[A_0,B_0] \subseteq H'$
and $e(H')$ is even. Suppose that
\begin{itemize}
\item[{\rm (ii)}] 
$2( \gamma +  \gamma' )n \le e(H') \le 10 \epszero  \gamma n^2$.	
\end{itemize}
Then there exists a decomposition of $H$ into edge-disjoint Hamilton exceptional system candidates $F_1, \dots ,F_{\gamma n},F'_1, \dots ,F'_{\gamma' n}$
with parameter $\eps_0$ such that $e(F'_s) = 2$ for all $s \le \gamma' n$.
\end{lemma}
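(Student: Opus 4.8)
The plan is to first reduce to decomposing the `core' graph $H'$ and then deal with the few leftover edges of $H-H'$ separately. Observe that by the maximality of $H'$, every edge $e\in E(H)\setminus E(H')$ must be incident to a vertex $v$ with $d_{H'}(v)\ge 3\gamma n/5-1$ (otherwise adding $e$ together with one more suitable edge would contradict maximality, using that $e(H')$ must stay even). Since $\Delta(H[A,B])<(3\gamma/5-\eps_0)n$, any such high-$H'$-degree vertex $v$ lies in $A_0\cup B_0$; moreover $|A_0|+|B_0|\le\eps_0 n$, so the number of `exceptional' vertices carrying the edges of $H-H'$ is at most $\eps_0 n$, and $e(H-H')\le\eps_0 n\cdot\Delta(H)\le 2\eps_0\gamma n^2$. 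These leftover edges all have at least one endpoint in $A_0\cup B_0$.

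Next I would apply Proposition~\ref{prop:evenmatching} to $H'$. We have $\Delta(H')\le 3\gamma n/5\le 2\gamma n/3$ and $e(H')\ge 2\gamma n$ is even, so Proposition~\ref{prop:evenmatching} decomposes $H'$ into $\gamma n$ edge-disjoint non-empty even matchings $M_1,\dots,M_{\gamma n}$, each of size at most $3e(H')/(\gamma n)\le 30\eps_0 n$. Each even matching $M_s$, viewed as a graph, is a path system in which every vertex has degree $\le1$, with no edges inside $A$ or inside $B$ (it is bipartite with classes $A\dot\cup A_0$, $B\dot\cup B_0$; here I need to check it has no $A_0A$ or $B_0B$-type issue — actually matchings automatically satisfy $d(v)\le1$ everywhere, and $e_{M_s}(A)=e_{M_s}(B)=0$ since $H$ is bipartite between $A\dot\cup A_0$ and $B\dot\cup B_0$, but note $H$ may have $A_0B$-edges etc.). The key point is that $M_s$ satisfies (ESC1), (ESC2) (with $d_{M_s}(v)\le 2$ trivially, in fact $\le 1$), (ESC3) (since $e_{M_s}(A',B')\le|M_s|\le 30\eps_0 n\le\sqrt{\eps_0}n/2$ for $\eps_0$ small), and the matching condition (MESC) or, if $M_s$ has a positive even number of $A'B'$-edges... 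Here the subtlety: $M_s$ is even but $b(M_s)$ (number of maximal $A'B'$-paths, i.e. edges of $M_s$ between $A'$ and $B'$) need not be positive. So I would add to each $M_s$ a pair of $A'B'$-edges taken from $H-H'$ (or rearrange) to ensure $b(F_s)$ is positive and even; since $e(M_s)$ is even, adding an even number of edges keeps parity, and one can always find two vertex-disjoint $A'B'$-edges among the leftover/available edges because $e_H(A',B')$ is large (it is $\ge e(H')\ge 2\gamma n$) while $|V(M_s)|$ is small.

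The cleanest route, and the one I would actually follow, is: (1) put aside $H-H'$; decompose $H'$ into $\gamma n + \gamma'n$ even matchings instead of $\gamma n$ (possible since $e(H')\ge 2(\gamma+\gamma')n$ and we may split matchings further, as in the proof of Proposition~\ref{prop:evenmatching}), reserving $\gamma'n$ of them, call them $N_1,\dots,N_{\gamma'n}$, to be small; (2) for each of the first $\gamma n$ matchings, greedily absorb the leftover edges of $H-H'$ so that each ends up with at least one, hence (adjusting parity with the reserved supply of $A'B'$-edges) a positive even number of $A'B'$-edges — this gives the $F_s$; (3) for the reserved matchings $N_s$, if necessary shrink each to contain exactly one $A'B'$-edge and add one more to make $b=2$, giving $F'_s$ with $e(F'_s)=2$; if $N_s$ already is a single $A'B'$-edge plus one more we are done. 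The main obstacle is the bookkeeping in step (2)–(3): ensuring simultaneously that (a) all leftover edges of $H-H'$ get covered, (b) every $F_s$ and $F'_s$ has $b$ positive and even so that (HESC) holds, (c) the size bound $e_{F_s}(A',B')\le\sqrt{\eps_0}n/2$ in (ESC3) survives the absorption (it does, since we only add $O(\eps_0 n)$ edges per piece), and (d) the $\gamma'n$ exceptional system candidates $F'_s$ are arranged to have exactly two edges — which forces them to consist of exactly two independent $A'B'$-edges, so one must verify $H$ contains enough such edges disjoint from everything else, again using $e_H(A',B')\gg\eps_0 n$. Once the distribution of leftover and $A'B'$-edges is organized, verifying (ESC1)–(ESC4) for each $F_s,F'_s$ is routine from the definitions, and the edge-disjointness is immediate since we started from a decomposition.
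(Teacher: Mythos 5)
Your high-level plan — first peel off $\gamma' n$ size-two matchings for the $F'_s$, then decompose the rest of $H'$ into even matchings via Proposition~\ref{prop:evenmatching}, and finally distribute $H - H'$ across the $F_s$ — is the same architecture as the paper's proof. However, there is a conceptual confusion and a genuine gap.

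First, the ``subtlety'' you raise about $b(M_s)$ possibly being zero is a non-issue. Since $H$ is bipartite with vertex classes $A' = A\cup A_0$ and $B' = B\cup B_0$, \emph{every} edge of $H$ is an $A'B'$-edge. In any matching $M_s\subseteq H$ each edge is itself a maximal path with one endpoint in $A'$ and the other in $B'$, so $b(M_s) = e(M_s)$ automatically. Because Proposition~\ref{prop:evenmatching} outputs \emph{non-empty even} matchings, $b(M_s)\ge 2$ is positive and even, and every $M_s$ is already a Hamilton exceptional system candidate with no modification. So there is no need to ``add a pair of $A'B'$-edges to ensure $b(F_s)$ is positive and even.''

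Second, the absorption step — ``greedily absorb the leftover edges of $H-H'$'' — is where the real work lives, and you treat it as routine bookkeeping when it is the crux of the lemma. The danger is exactly the opposite of the one you flagged: adding edges carelessly can \emph{change} $b$ by one and destroy its evenness. The correct mechanism (which you do not isolate) is to attach to $M_s$ only vertex-disjoint 2-paths $u w u'$ where $w\in A_0\cup B_0$ satisfies $d_{M_s}(w)=0$ and $u,u'\in N_{H-H'}(w)\subseteq A\cup B$ (this containment uses $H[A_0,B_0]\subseteq H'$). Such a 2-path has both endpoints in $A$ (if $w\in B_0$) or both in $B$ (if $w\in A_0$), hence contributes a maximal path with both ends on the same side and leaves $b(F_s)=b(M_s)$ unchanged. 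Conversely, if one instead attached a single edge $wu$ to an $M_s$ with $d_{M_s}(w)=1$, the existing edge $wz$ (one end in $A'$, one in $B'$) would be turned into a $B'B'$-path, decreasing $b$ by one and making it odd, violating (HESC). So the condition $d_{M_s}(w)=0$ is essential, and one then has to show that for each such $w$ the set $\{s : d_{M_s}(w)=0\}$ is large enough to accommodate about $d_{H-H'}(w)/2$ vertex-disjoint 2-paths, with the residual maximum degree dropping below $O(\eps_0 n)$ so that Proposition~\ref{prop:evenmatching} can be applied once more to mop up. The paper handles this counting via a random subset of the $M_s$'s and a Chernoff bound; you would need to supply some version of this argument, deterministic or otherwise, for the proof to go through. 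As it stands, the proposal does not constitute a proof of the decomposition.
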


Since we are in the non-critical case with many edges between $A'$ and $B'$, we will be able to assume that the subgraph $H'$
satisfies (ii).

Roughly speaking, the idea of the proof of Lemma~\ref{lma:BESdecomprelim} is to apply the previous proposition to decompose $H'$ into a suitable number of even matchings 
$M_i$ (using the fact that it has small maximum degree).%
\COMMENT{AL: rephrased the following sentences}
We then extend these matchings into Hamilton exceptional system candidates to cover all edges of $H$.
The additional edges added to each $M_i$ will be vertex-disjoint from $M_i$ and form vertex-disjoint 2-paths $uvw$ with $v \in V_0$.
So the number of connections from $A'$ to $B'$ remains the same (as $H$ is bipartite).
Each matching $M_i$ will already be a Hamilton exceptional system candidate, which means that $M_i$ and its extension will have the correct number of connections from $A'$ to $B'$ 
(which makes this part of the argument simpler than in the critical case).

\removelastskip\penalty55\medskip\noindent{\bf Proof of Lemma~\ref{lma:BESdecomprelim}. }
Set $A': = A_0 \cup A$ and $B':= B_0 \cup B$. We first construct the $F'_s$.
If $\gamma '=0$, there is nothing to do. So suppose that $\gamma ' >0$.
Note that each $F'_s$ has to be a matching of size~2 (this follows from the definition of a Hamilton exceptional system candidate and the
fact that $e(F'_s) = 2$).
Since $H'$ is bipartite and so
$$\frac{e(H')}{\chi '(H')}=\frac{e(H')}{\Delta(H')}\ge \frac{2( \gamma +  \gamma' )n}{3 \gamma n /5} > \frac{10}{3},$$
we can find a 2-matching $F'_1$ in $H'$. Delete the edges in $F'_1$ from $H'$ and choose another 2-matching $F'_2$.
We repeat this process until we have chosen $\gamma'n$ edge-disjoint $2$-matchings $F'_1, \dots ,F'_{\gamma'n}$.

We now construct  $F_1, \dots ,F_{\gamma n}$ in two steps: first we construct matchings $M_1,\break \dots,M_{\gamma n}$ in $H'$
and then extend each $M_i$ into the desired $F_i$.
Let $H_1$ and $H'_1$ be obtained from $H$ and $H'$ by removing all the edges in $F'_1, \dots ,F'_{\gamma'n}$.
So now $2\gamma n \le e(H'_1) \le 10 \epszero \gamma n^2 $ and both $e(H_1)$ and $e(H'_1)$ are even. Thus Proposition~\ref{prop:evenmatching}
implies that there is a decomposition of $H'_1$ into edge-disjoint non-empty even matchings $M_1, \dots ,M_{\gamma n }$,
each of size at most $30 \epszero  n$. 

Note that each $M_i$ is a Hamilton exceptional system candidate with parameter~$\eps_0$.
So if $H'_1 = H_1$, then we are done by setting $F_s: = M_s$ for each $s \le \gamma n$.
Hence, we may assume that $H'' := H_1 - H'_1=H-H'$ contains edges.
Let $X$ be the set of all those vertices $x \in A_0 \cup B_0$ for which $d_{H''}(x)> 0$.
Note that each $x\in X$ satisfies $N_{H''}(x)\subseteq A\cup B$ (since $H[A_0,B_0] \subseteq H'$).
This implies that
each $x \in X$ satisfies
$d_{H'}(x) \ge \lfloor  3\gamma n /5 \rfloor -1$ or $d_{H''}(x)=1$.
(Indeed, suppose that $d_{H'}(x) \le \lfloor  3\gamma n /5 \rfloor -2$ and $d_{H''}(x)\ge 2$.
Then we can move two edges incident to $x$ from $H''$ to $H'$.
The final assumption in (i) and the assumption on $d_{H'}(x)$ together imply that we would still have $\Delta(H') \le 3 \gamma n/5$, a contradiction.)
Since  $\Delta(H) \le 16 \gamma n /15$ by~(i) this in turn implies that
$d_{H''}(x) \le 7 \gamma n /15 +2$ for all $x \in X$.

Let $\mathcal{M}$ be a random subset of $\{M_1, \dots, M_{\gamma n}\}$ where each $M_i$ is chosen independently with probability $2/3$.
By Proposition~\ref{chernoff}, with high probability, the following assertions hold:
\begin{align}
	r : = |\mathcal{M}| & = (2/3 \pm \epszero){\gamma n} \nonumber \\
	|\{M_s \in \mathcal{M} : d_{M_s}(v) = 1 \}| & = 2 d_{H'_1}(v)/3  \pm \epszero{\gamma n}	& \text{for all } v \in V(H). \label{eqn:Mi}
\end{align}
By relabeling if necessary, we may assume that $\mathcal{M} = \{ M_1, M_2, \dots, M_r\}$.
For each $s \le r$, we will now extend $M_s$ to a Hamilton exceptional system candidate $F_s$ with parameter $\eps_0$ by adding edges from~$H''$.
Suppose that for some $1\le s\le r$ we have already constructed $F_1, \dots ,F_{s-1}$. Set $H''_s := H'' - \sum_{j < s} F_j$.
Let $W_s$ be the set of all those vertices $w \in X$ for which $d_{M_s}(w)=0$ and $d_{H''_s}(w) \ge 32\epszero n  \ge 2|A_0\cup B_0|+ e(M_s)$.
Recall that $X\subseteq A_0\cup B_0$ and $N_{H''_s}(w) \subseteq N_{H''}(w) \subseteq A \cup B$ for each $w\in X$ and thus also for
each $w \in W_s$. Thus there are $|W_s|$ vertex-disjoint 2-paths $uwu'$ with $w \in W_s$ and $u,u' \in N_{H''_s}(w) \setminus V(M_s)$.
Assign these 2-paths to $M_s$ and call the resulting graph $F_s$.
Observe that $F_s$ is a Hamilton exceptional system candidate with parameter $\eps_0$.
Therefore, we have constructed $F_1, \dots ,F_r$ by extending $M_1, \dots ,M_r$.

We now construct $F_{r+1}, \dots ,F_{\gamma n}$. For this, we first prove that the above construction implies that the current `leftover' $H_{r+1}''$
has small maximum degree. Indeed, 
note that if $w \in W_s$, then $d_{H''_{s+1}}(w) = d_{H''_{s}}(w) - 2$.
By~\eqref{eqn:Mi}, for each $ x \in X$, the number of $M_s \in \mathcal{M}$ with $d_{M_s}(x) = 0 $ is 
\begin{align}
r - |\{M_s \in \mathcal{M} : d_{M_s}(x) = 1 \}| 
& \ge 	(2/3-\epszero)\gamma n - (2d_{H'_1}(x)/3 + \epszero \gamma n)  \nonumber \\
& \ge 2\gamma n/3- 2d_{H'}(x)/3  -2 \epszero \gamma n  \nonumber \\
& \ge 	2\gamma n/3- 2/3\cdot \lfloor  3\gamma n /5 \rfloor   -2 \epszero \gamma n  \nonumber \\
& \ge  (4/15 - 2 \epszero)\gamma n > d_{H''}(x) /2 . \nonumber
\end{align}
Hence, we have $d_{H''_{r+1}}(x) <  32 \epszero n$ for all  $ x \in X$
(as we remove $2$ edges at $x$ each time we have $d_{M_s}(x)=0$ and $d_{H''_s}(x) \ge 32\epszero n$).
Note that by definition of $H'$, all but at most one edge in $H''$ must have an endpoint in $X$. 
So for $x \notin X$,  $d_{H''}(x) \le |X|+1 \leq |A_0\cup B_0|+1\le \epszero n+1$.
Therefore, $\Delta(H''_{r+1}) <  32 \epszero n$.

Let $H''':=H_1-(F_1+\dots +F_r) $.
So $H'''$ is the union of $H''_{r+1}$ and all the $M_s$ with $r < s \le \gamma n$.
Since each of $H_1$ and $F_1,\dots,F_r $ contains an even number of edges, $e(H''')$ is even.
In addition, $M_s \subseteq H'''$ for each $r< s \le \gamma n$, so $e(H''') \ge 2(\gamma n-r)$.
By~\eqref{eqn:Mi}, since $\Delta(H''_{r+1}) \le 32 \epszero n$, we deduce that for every vertex $v \in V(H''')$,
we have
\begin{align*}
	d_{H'''} (v) & \le \left( \frac{d_{H'_1}(v)}3+ \epszero\gamma n \right)  + \Delta(H''_{r+1})
\le \frac{3\gamma n/5}{3}+\eps_0 \gamma n + 32\epszero n 
\le \frac{2(\gamma n-r)}3
\end{align*}
In the second inequality, we used that $d_{H'_1}(v)\le d_{H'}(v)$.
Moreover, we have
$$e(H''')=e(H''_{r+1})+e(M_{r+1} + \dots +  M_{\gamma n})\le 32\epszero n^2+ 30\eps_0n (\gamma n-r)\le 62\epszero n^2.
$$
Thus, by Proposition~\ref{prop:evenmatching} applied with $H'''$ and $\gamma-r/n$ playing the roles of $H$ and~$\gamma$,
there exists a decomposition of $H'''$ into $\gamma n-r$ edge-disjoint non-empty even matchings $F_{r+1}$, $\dots$, $F_{\gamma n}$,
each of size at most $3e(H''')/(\gamma n-r)\le \sqrt{\eps_0}n/2$. Thus each such $F_s$ is 
a Hamilton exceptional system candidate with parameter $\eps_0$. This completes the proof.
\endproof

\subsection{Step $3$: Constructing the Localized Exceptional Systems}

The next lemma will be used to extend most of the exceptional system candidates guaranteed by Lemma~\ref{lma:BESdecomprelim}
into localized exceptional systems. These extensions are required to be `faithful' in the following sense.
Suppose that $F$ is an exceptional system candidate.%
	\COMMENT{Previsouly, $F$ is an $(i,i')$-ESC, but don't think we need it in the definition.}
Then $J$ is a \emph{faithful extension of} $F$ if the following holds:
\begin{itemize}
\item $J$ contains $F$ and $F[A',B']=J[A',B']$.
\item If $F$ is a Hamilton exceptional system candidate, then $J$ is a
Hamilton exceptional system and the analogue holds if $F$ is a matching exceptional system candidate.
\end{itemize}

\begin{lemma} \label{lma:BESextend2}
Suppose that $0 <  1/n  \ll \epszero \ll   1$, that $0 \le \gamma \le 1$ and that $n, K, m, \gamma n \in \mathbb{N}$.
Let $\mathcal{P}$ be a $(K, m , \epszero)$-partition of a set $V$ of $n$ vertices.
Let $1\le i,i'\le K$.
Suppose that $H$ and $F_1, \dots ,F_{\gamma n}$ are pairwise edge-disjoint graphs which satisfy the following conditions:
\begin{itemize}
\item[\rm (i)] $V(H)=V$ and $H$ contains only $A_0A_i$-edges and $B_0B_{i'}$-edges.
\item[\rm (ii)] Each $F_s$ is an $(i,i')$-ESC with parameter~$\eps_0$.
\item[\rm (iii)] Each $v \in V_0$ satisfies $d_{H+\sum F_s}(v )  \ge (2 \gamma  +\sqrt{\epszero} )n$.
\end{itemize}
Then there exist edge-disjoint $(i,i')$-ES $J_1, \dots ,J_{\gamma n }$ with parameter $\eps_0$ in $H +\sum F_s$ 
such that $J_s$ is a faithful extension of $F_s$
for all $s \le \gamma n$. 
\end{lemma}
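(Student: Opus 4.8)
\textbf{Proof plan for Lemma~\ref{lma:BESextend2}.}
The plan is to process $F_1,\dots,F_{\gamma n}$ one at a time, extending each $F_s$ into an $(i,i')$-ES $J_s$ by adding $A_0A_i$- and $B_0B_{i'}$-edges of $H$ (and only such edges), so that no edge of $H$ is used twice and every vertex of $V_0$ ends up with degree exactly~$2$ in $J_s$. Recall from Section~\ref{sec:BES} that an exceptional system candidate $F$ satisfies $d_F(v)\le 2$ for all $v\in V_0$ and $d_F(v)=1$ for all $v\in V(F)\setminus V_0$, while to be an exceptional system we need $d_{J}(v)=2$ for every $v\in V_0$; moreover by (ESC3) we have $|V(F_s)\cap A|,|V(F_s)\cap B|\le 2|V_0|+\sqrt{\eps_0}n/2$. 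Since (i) guarantees that every edge of $H$ has exactly one endpoint in $V_0$ (an $A_0A_i$-edge or a $B_0B_{i'}$-edge) and its other endpoint in $A_i\subseteq A$ or $B_{i'}\subseteq B$, adding such edges never creates an $A'B'$-edge and never changes $d_{J}(v)$ for $v\in A\cup B$ beyond raising it from $0$ to at most~$1$; so the conditions (EC1)--(EC3) and (ES3) are automatically preserved, $F_s[A',B']=J_s[A',B']$, and the number of $AB$-paths is unchanged, giving (HES)$\Leftrightarrow$(HES) and (MES)$\Leftrightarrow$(MES). This is exactly what `faithful extension' requires, and localization (all edges inside $V_0\cup A_i\cup B_{i'}$) is also immediate from (i) and (ii).

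First I would fix $s$ and suppose $J_1,\dots,J_{s-1}$ have already been constructed; set $H_s:=H-\sum_{j<s}J_j$. For each $v\in A_0$ with $d_{F_s}(v)<2$ we must pick $2-d_{F_s}(v)$ edges of $H_s$ from $v$ to $A_i$, avoiding vertices of $A_i$ already used (by $F_s$ or by the edges just chosen for other exceptional vertices at this stage); similarly for $v\in B_0$ using $B_0B_{i'}$-edges. Since at most $2j\le 2\gamma n$ edges at $v$ have been removed so far, (iii) gives $d_{H_s}(v,A_i)=d_{H_s}(v)\ge (2\gamma+\sqrt{\eps_0})n-2\gamma n=\sqrt{\eps_0}n$ for $v\in A_0$ (here I use that $H$, and hence $H_s$, only sends edges from $v\in A_0$ into $A_i$, so $d_{H_s}(v)=d_{H_s}(v,A_i)$); similarly $d_{H_s}(v,B_{i'})\ge\sqrt{\eps_0}n$ for $v\in B_0$. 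The number of vertices of $A_i$ we must avoid is at most $|V(F_s)\cap A|+2|V_0|\le 4|V_0|+\sqrt{\eps_0}n/2\le\sqrt{\eps_0}n/2 + 4\eps_0 n<\sqrt{\eps_0}n$, so the required edges can be chosen greedily, and the same argument works for $B_0$. Doing this for all $v\in V_0$ yields $J_s$ with $d_{J_s}(v)=2$ for all $v\in V_0$; together with the preservation remarks above, $J_s$ is an $(i,i')$-ES with parameter $\eps_0$ that faithfully extends $F_s$, and it is edge-disjoint from $J_1,\dots,J_{s-1}$ since all new edges come from $H_s$. Iterating over $s=1,\dots,\gamma n$ completes the construction.

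The only point needing genuine care is the degree bookkeeping in (iii): across all $\gamma n$ rounds the total number of $H$-edges incident to a fixed $v\in V_0$ that get used is at most $2\gamma n$ (at most $2$ per $J_s$, and then only the $2-d_{F_s}(v)$ needed ones), so (iii)'s slack of $\sqrt{\eps_0}n$ over $2\gamma n$ suffices at every stage; there is no accumulating error beyond this, which is why a single clean inequality handles the whole induction. I do not expect any real obstacle here—this lemma is the routine `extend candidates to systems' step, structurally identical to Lemma~\ref{lma:ESextend} except that the extending edges are constrained to a single pair of clusters, which is precisely what hypothesis~(i) encodes.
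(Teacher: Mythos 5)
Your proof is correct and follows essentially the same route as the paper's: iterate over $s$, maintain $H_s := H - \sum_{s'<s}J_{s'}$, verify $d_{H_s}(v)\ge\sqrt{\eps_0}n$ for $v\in V_0$ using hypothesis~(iii), and then greedily attach $A_0A_i$- and $B_0B_{i'}$-edges to complete $F_s$ to an exceptional system. The only cosmetic difference is that the paper delegates the greedy completion step to Lemma~\ref{lma:ESextend} (invoked with $H_s$ in place of $G$), whereas you re-derive its content inline; the degree bookkeeping and the verification that the result is localized and a faithful extension are the same in both.
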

\begin{proof}
For each $s \le \gamma n$ in turn, we extend $F_s$ into an $(i,i')$-ES $J_s$ with parameter $\eps_0$ in $H+\sum F_s$ such that
$J_s$ and $J_{s'}$ are edge-disjoint for all $s' < s$.
Since $H$ does not contain any $A'B'$-edges, the $J_s$ will automatically satisfy $J_s[A',B'] = F_s[A',B']$.
Suppose that for some $1\le s\le \gamma n$ we have already constructed $J_1, \dots ,J_{s-1}$.
Set $H_s := H - \sum_{s' < s} J_{s'}$. 
Consider any $v \in V_0$.
Since $v$ has degree at most 2 in an exceptional system and in an exceptional system candidate, (iii) implies that
\begin{align*}
	d_{H_s}(v) & \ge d_{H+ \sum F_s}(v) - 2 \gamma n \ge \sqrt{\epszero} n.
\end{align*}
Together with~(i) this shows that condition (ii) in Lemma~\ref{lma:ESextend} holds (with $H_s$ playing the role of $G$).
Since $\cP$ is a $(K,m,\eps_0)$-partition of $V$, Lemma~\ref{lma:ESextend}(i) holds too.
Hence we can apply Lemma~\ref{lma:ESextend} to obtain an exceptional system $J_s$ with parameter $\eps_0$ in $H_s + F_s$
such that $J_s$ is a faithful extension of $F_s$. (i) and (ii) ensure that $J_s$ is an $ (i,i')$-ES, as required.
\end{proof}

\subsection{Step $4$: Constructing the Remaining Exceptional Systems.}
Due to condition~(iii), Lemma~\ref{lma:BESextend2} cannot be used to extend \emph{all} the exceptional system candidates
returned by Lemma~\ref{lma:BESdecomprelim} into localized exceptional systems.
The next lemma will be used to deal with the remaining exceptional system candidates
(the resulting exceptional systems will not be localized).

\begin{lemma} \label{globalBES}
Suppose that $0 <  1/n  \ll \epszero \ll \eps' \ll \lambda \ll  1$ and that $n, \lambda n \in \mathbb{N}$.
Let $A,A_0,B,B_0$ be a partition of a set $V$ of $n$ vertices such that $|A_0| +|B_0| \le \epszero n$ and $|A| = |B|$.
Suppose that $H,F_1, \dots ,F_{\lambda n}$ are pairwise edge-disjoint graphs  which satisfy the following conditions:
\begin{itemize}
\item[{\rm (i)}] $V(H)=V$ and $H$ contains only $A_0A$-edges and $B_0B$-edges.
\item[{\rm (ii)}] Each $F_s$ is an exceptional system candidate with parameter $\eps_0$.
\item[{\rm (iii)}] For all but at most $\eps' n$ indices $s\le \lambda n$ the graph $F_s$ is either
a matching exceptional system candidate with $e(F_s) = 0$ or a Hamilton exceptional system candidate with $e(F_s) = 2$.
In particular, all but at most $\eps' n$ of the $F_s$ satisfy $d_{F_s}(v) \le 1$ for all $v \in V_0$.
\item[{\rm (iv)}] All $v \in V_0$ satisfy $d_{H+\sum F_s}(v )  = 2 \lambda n$.
\item[{\rm (v)}] All $v \in A \cup B $ satisfy $d_{H+\sum F_s}(v) \le 2 \epszero n$.
\end{itemize}
Then there exists a decomposition of $H+\sum F_s$ into edge-disjoint exceptional 
systems $J_1, \dots ,J_{\lambda n }$ with parameter $\eps_0$
such that $J_s$ is a faithful extension of $F_s$ for all $s \le \lambda n$. 
\end{lemma}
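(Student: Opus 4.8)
\textbf{Proof proposal for Lemma~\ref{globalBES}.}

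The plan is to extend each $F_s$ into an exceptional system $J_s$ by greedily assigning to each exceptional vertex $v\in V_0$ the appropriate number of edges of $H+\sum F_s$ incident to $v$, processing the vertices of $V_0$ one at a time, and for each fixed $v$ processing the indices $s$ one at a time. This mirrors the strategy used in the proof of Lemma~\ref{lma:BESnash}, but now we must \emph{decompose} all of $H+\sum F_s$ rather than merely find edge-disjoint exceptional systems inside a larger graph, so the bookkeeping has to be exact.

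First I would set up the degree accounting. By (iv), for each $v\in V_0$ we have $d_{H+\sum F_s}(v)=2\lambda n$, and $\sum_s d_{F_s}(v)$ counts exactly the edges of the $F_s$ at $v$; the remaining $2\lambda n-\sum_s d_{F_s}(v)$ edges at $v$ all lie in $H$ (by (i), (ii) and the fact that edges of the $F_s$ not at a vertex of $V_0$ are irrelevant here), and by (i) these $H$-edges go to $A$ if $v\in A_0$ and to $B$ if $v\in B_0$. Since we want each $J_s$ to have $d_{J_s}(v)=2$ for every $v\in V_0$ (property (EC2)), the number of edges of $H$ at $v$ we must assign to the $F_s$'s is precisely $\sum_s(2-d_{F_s}(v))=2\lambda n-\sum_s d_{F_s}(v)$, which matches the supply exactly. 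So the assignment is a question of feasibility, not of counting.

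Next, for a fixed $v\in V_0$ (say $v\in A_0$), I would order the indices $s$ so that those $F_s$ with $d_{F_s}(v)=2$ come first (nothing to do for these), then those with $d_{F_s}(v)=1$ (assign one $A_0A$-edge at $v$), then those with $d_{F_s}(v)=0$ (assign two edges at $v$; here we use (iii) to ensure that all but at most $\eps' n$ of these can be assigned two $A_0A$-edges, keeping $b(F_s)$ unchanged, while the at most $\eps' n$ exceptional indices where $F_s$ already has $d_{F_s}(v)=0$ but is not of the simple type can be assigned two edges into $A$ or two edges into $B$ depending on which residual degree is still positive, exactly as in Lemma~\ref{lma:BESnash}). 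At each step, to choose the assigned edge(s) avoiding conflicts with $V(F_s)\cap A$ (so that the endvertices in $A$ are distinct within each extended graph), I need a lower bound on the number of still-available $H$-edges at $v$ going to $A\setminus V(F_s)$: the total degree of $v$ into $A$ in the current residual graph is at least $2\lambda n-2\lambda n\cdot(\text{something}) $... more precisely, after processing earlier vertices of $V_0$ and earlier indices $s$, the number of edges of $H$ at $v$ still unassigned is at least $2\lambda n - 2\lambda n = $ we must track this carefully, but since each $F_s$ and each earlier-processed vertex removes a bounded number of edges at $v$, and $|V(F_s)\cap A|\le 2|V_0|+\sqrt{\eps_0}n/2\le 2\eps_0 n$ by (ESC3), while $|V_0|\le\eps_0 n$, the bound $d(v,A)\ge \text{(available)} > |V(F_s)\cap A| + (\text{small})$ will hold comfortably because $\eps_0\ll\eps'\ll\lambda$. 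The key point making this work is that (v) bounds $d_{H+\sum F_s}(v)\le 2\eps_0 n$ for $v\in A\cup B$, so no vertex of $A\cup B$ gets overloaded and Hall-type obstructions cannot arise; each assignment step is a simple greedy choice.

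The main obstacle I anticipate is verifying that the greedy choices never get stuck — i.e. maintaining the invariant that at every step there are enough available $H$-edges at the current $v\in V_0$ landing in $A\setminus V(F_s)$ (resp.\ $B\setminus V(F_s)$), \emph{and} that after all assignments the leftover of $H$ at $v$ is exactly empty so that $H$ is fully decomposed. The first is a counting inequality of the type $(\text{remaining degree of }v\text{ into }A) \ge 2\lambda n - (\text{edges already assigned at }v) \ge (\text{what we still need}) + |V(F_s)\cap A|$, which closes because all error terms are $O(\eps_0 n)$ or $O(\eps' n)$ while the main term is $\Omega(\lambda n)$; the second is forced by the exact degree match from (iv) noted above, together with the observation that an exceptional vertex $v$ of an exceptional system has degree exactly $2$, so summing over $s$ accounts for all $2\lambda n$ edges at $v$. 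Once all of $V_0$ is processed, each $J_s$ is an exceptional cover, it is a faithful extension of $F_s$ (we added only $A_0A$- and $B_0B$-edges, so $J_s[A',B']=F_s[A',B']$, and the number of $AB$-paths is preserved so (HES)/(MES) is inherited), and (ES3) holds since $e_{J_s}(A',B')=e_{F_s}(A',B')\le\sqrt{\eps_0}n$. This yields the desired decomposition $J_1,\dots,J_{\lambda n}$.
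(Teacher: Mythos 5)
Your high-level plan (process each $v\in V_0$ in turn, extending each $F_s$ by exactly $2-d_{F_s}(v)$ edges of $H$ at $v$) matches the paper's, and your degree accounting via (iv) correctly identifies that the number of $H$-edges at $v$ exactly equals the total demand $\sum_s(2-d_{F_s}(v))$. However, the proposal has a genuine gap: the purely greedy, one-index-at-a-time assignment you describe is \emph{not} guaranteed to succeed, and the claim that ``Hall-type obstructions cannot arise; each assignment step is a simple greedy choice'' is false.

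The issue is that the counting inequality you invoke --- ``remaining degree of $v$ into $A$ $\ge$ what we still need $+ |V(F_s)\cap A|$, which closes because the main term is $\Omega(\lambda n)$'' --- fails for the \emph{last} few indices $s$. Since you must consume all $2\lambda n - \sum_s d_{F_s}(v)$ edges of $H$ at $v$ exactly (there is no slack), when you reach the final index the residual degree at $v$ is exactly $2-d_{F_s}(v)\le 2$, whereas $|V(F_s+J'_s)\cap A|$ can be as large as $\sqrt{\eps_0}n$ by (ESC3). Thus the last two available $H$-edges at $v$ might both land inside $V(F_s+J'_s)\cap A$, and the greedy gets stuck. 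Concretely, even for a bipartite graph with both sides of size $N$ and minimum degree $(1-o(1))N$, a naive greedy matching can fail on the last $o(N)$ vertices; a perfect matching always exists by Hall's theorem, but finding it requires augmenting paths, not local choices. The paper resolves this exactly that way: for each $v_i\in V_0$ it constructs an auxiliary bipartite graph $Q_i$ with one side $N_H(v_i)$ and the other side a multiset of $2-d_{F_s}(v_i)$ copies of each $F_s$, verifies that every vertex in $Q_i$ has degree at least half the side-size (using (v), (iii) and (ESC3)), and then invokes Hall's theorem to obtain a \emph{perfect} matching, which simultaneously assigns all $H$-edges at $v_i$ in a feasible way. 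This global matching step is the content your greedy is missing. Also, a smaller inaccuracy: the fallback you import from Lemma~\ref{lma:BESnash} of ``assign two edges into $A$ or two into $B$'' is not available here, since by (i) the graph $H$ contains only $A_0A$- and $B_0B$-edges, so for $v\in A_0$ all $H$-edges at $v$ already go to $A$ and there is no choice of side.
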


\begin{proof}
Let $V_0:=A_0 \cup B_0$ and let $v_1, \dots ,v_{|V_0|}$ denote the vertices of $V_0$. 
We will decompose $H$ into graphs $J'_s$ in such a way that
the graphs $J_s:=J'_s + F_s$ satisfy $d_{J_s}(v_i) = 2$ for all $i\le |V_0|$ and $d_{J_s}(v) \le 1$ for all $v \in A \cup B$.
Hence each $J_s$ will be an exceptional system with parameter~$\eps_0$.%
   \COMMENT{Each $J_s$ must be a path system since a cycle would have to lie in $J_s[V_0]$ and so by (i) already in $F_s[V_0]$.}
Condition (i) guarantees that $J_s$ will be a faithful extension of~$F_s$.
Moreover, the $J_s$ will form a decomposition of $H+\sum F_s$.
We construct the decomposition of $H$ by considering each vertex $v_i$ of $A_0 \cup B_0$ in turn.

Initially, we set $V(J'_s)=E(J'_s) = \emptyset$ for all $s \le \lambda n$.
Suppose that for some $1\le i\le |V_0|$ we have already assigned (and added) all the edges of $H$
incident with each of $v_1, \dots, v_{i-1}$ to the $J'_s$.
Consider $v_i$. Without loss of generality assume that $v_i \in A_0$. Note that $N_H(v_i)\subseteq A$ by~(i).
Define an auxiliary bipartite graph $Q_i$ with vertex classes $V_1$ and $V_2$ as follows:
$V_1:=N_{H} (v_i)$ and $V_2$ consists of $2-d_{F_s}(v_i)$ copies of $F_s$ for each $ s\le \lambda n $. 
Moreover, $Q_i$ contains an edge between $v \in V_1$ and $F_s \in V_2$ if and only if $v\notin V(F_s + J'_s)$.%
    \COMMENT{We can write this in this way since by def an exceptional system candidate does not contain
isolated vertices in $A\cup B$. If we change the def, then we here have to write that $v$ is not incident to an
edge of $F_s$.}

We now show that $Q_i$ contains a perfect matching. For this, 
note that $|V_1|=2\lambda n - d_{\sum F_s} (v_i)= |V_2|$ by~(iv). 
(v) implies that for each $v \in V_1 \subseteq A$ we have  $d_{\sum (F_s+J'_s)}(v) \le d_{H+\sum F_s}(v) \le 2 \epszero n$.
So $v$ lies in at most $2 \epszero n$ of the graphs $F_s+J'_s$.
Therefore, $d_{Q_i} (v) \geq |V_2| -4 \eps _0n \geq |V_2|/2$ for all $v \in V_1$. (The final
inequality follows since (iii) and (iv) together imply that
$d_H(v_i)=2\lambda n-d_{\sum F_s} (v_i)\ge 2\lambda n -(\lambda n-\eps' n)-2\eps' n\ge \lambda n/2$%
    \COMMENT{The $2\eps' n$ comes from the fact that $d_{F_s}(v_i)\le 2$ for every exceptional system candidate $F_s$.} 
and so $|V_2|=|V_1|\geq \lambda n/2$.)
On the other hand, since each $F_s+J'_s$ is an exceptional system candidate with parameter $\eps_0$, (ESC3) implies that
$|V(F_s+J'_s)\cap A| \le (\sqrt{\epszero}/2  +2 \epszero) n \le \sqrt{\epszero} n$ for each $F_s \in V_2$.
Therefore $d_{Q_i} (F_s) \geq |V_1| -|V(F_s+J'_s)\cap A| \geq |V_1|/2$ for each $F_s\in V_2$.
Thus we can apply Hall's theorem to find a perfect matching $M$ in $Q_i$.
Whenever $M$ contains an edge between $v$ and $F_s$, we add the edge $v_iv$ to $J'_s$.
This completes the desired assignment of the edges of $H$ at $v_i$ to the $J'_s$.
\end{proof}

\subsection{Proof of Lemma~\ref{lma:BESdecom}}

In our proof of Lemma~\ref{lma:BESdecom} we will use the following result, which is a consequence of
Lemmas~\ref{lma:BESextend2} and~\ref{globalBES}. 
Given a suitable set of exceptional system candidates in an exceptional scheme, the lemma extends these into exceptional systems which form a
decomposition of the exceptional scheme.
We prove the lemma in a slightly more general form than needed for the current case, 
as we will also use it 
in the other two cases.

\begin{lemma} \label{BEScons}
Suppose that $0 <  1/n  \ll \epszero \ll \eps\ll \eps' \ll \lambda, 1/K \ll  1$, that $1/(7K^2)\le \alpha < 1/K^2$
and that $n, K,m,\alpha n,\lambda n/K^2  \in \mathbb{N}$.
Let $$\gamma:=\alpha -\frac{\lambda}{K^2}\ \ \ \ \ \ \ \ \text{and} \ \ \ \ \ \ \ \ \gamma':=\frac{\lambda}{K^2}.$$
Suppose that the following conditions hold:
\begin{itemize}
\item[{\rm (i)}] $(G^*,\cP)$ is a $(K,m,\eps_0,\eps)$-exceptional scheme with $|G^*|=n$.
\item[{\rm (ii)}] $G^*$ is the edge-disjoint union of $H(i,i')$, $F_1(i,i'),\dots,F_{\gamma n}(i,i')$ and\break $F'_1(i,i'),\dots,F'_{\gamma' n}(i,i')$
over all $1\le i,i'\le K$. 
\item[{\rm (iii)}] Each $H(i,i')$ contains only $A_0A_i$-edges and $B_0B_{i'}$-edges.
\item[{\rm (iv)}] Each $F_s(i,i')$ is an $(i,i')$-ESC with parameter $\eps_0$.
\item[{\rm (v)}] Each $F'_s(i,i')$ is an exceptional system candidate with parameter $\eps_0$.
Moreover, for all but at most $\eps' n$ indices $s\le \gamma' n$ the graph $F'_s(i,i')$ is either a
matching exceptional system candidate with $e(F'_s(i,i'))=0$ or a Hamilton exceptional system candidate with $e(F'_s(i,i'))=2$.
\item[{\rm (vi)}] $d_{G^*}(v)=2K^2\alpha n$ for all $v\in V_0$.
\item[{\rm (vii)}] For all $1\le i,i'\le K$ let $G^*(i,i'):=H(i,i')+\sum_{s\le \gamma n} F_s(i,i')+$ \\$\sum_{s\le \gamma' n} F'_s(i,i')$.
Then $d_{G^*(i,i')}(v)=(2\alpha \pm \eps')n$ for all $v\in V_0$.
\end{itemize}
Then $G^*$ has a decomposition into $K^2\alpha n$ edge-disjoint exceptional systems
$$J_1(i,i'),\dots,J_{\gamma n}(i,i') \ \ \ \ \ \ \text{and} \ \ \ \ \ \ J'_1(i,i'),\dots,J'_{\gamma' n}(i,i')$$
with parameter $\eps_0$, where $1\le i,i'\le K$, such that $J_s(i,i')$ is an $(i,i')$-ES which is a
faithful extension of $F_s(i,i')$ for all $s \le \gamma n$
and $J'_s(i,i')$ is a faithful extension of $F'_s(i,i')$ for all $s \le \gamma' n$.%
\COMMENT{Added $J'_s(i,i')$ is a faithful extension of $F'_s(i,i')$ instead of $F'_s(i,i') \subseteq J'_s(i,i')$.}
\end{lemma}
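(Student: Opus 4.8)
The statement combines the two previously developed extension tools, Lemma~\ref{lma:BESextend2} and Lemma~\ref{globalBES}, into a single clean decomposition result. The plan is to process the $K^2$ pairs $(i,i')$ essentially independently, using Lemma~\ref{lma:BESextend2} for the `bulk' of the candidates at each pair, and then pooling the few leftover candidates (the $F'_s(i,i')$) across all pairs into one application of Lemma~\ref{globalBES}. First I would fix the constants and check the arithmetic relating $\alpha$, $\gamma$ and $\gamma'$: from $1/(7K^2)\le\alpha<1/K^2$ and $\epszero\ll\eps'\ll\lambda\ll1/K$ one gets $\gamma=(1-\lambda/(K^2\alpha))\alpha\ge(1-7\lambda)\alpha$, so $\gamma$ and $\alpha$ are comparable, $\gamma n,\gamma'n\in\mathbb{N}$ by hypothesis, and $2\gamma+\sqrt{\epszero}<2\alpha-\eps'$ with room to spare (using $\gamma'=\lambda/K^2$ and $\epszero\ll\eps'\ll\lambda$).

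Next I would carry out the `localized' step. Fix $1\le i,i'\le K$ and set $H:=H(i,i')$, $F_s:=F_s(i,i')$ for $s\le\gamma n$. By~(iii), $H$ contains only $A_0A_i$- and $B_0B_{i'}$-edges, which is condition~(i) of Lemma~\ref{lma:BESextend2}; by~(iv) each $F_s$ is an $(i,i')$-ESC with parameter $\eps_0$, giving condition~(ii). For condition~(iii) of Lemma~\ref{lma:BESextend2} I need $d_{H+\sum_{s\le\gamma n}F_s}(v)\ge(2\gamma+\sqrt{\epszero})n$ for all $v\in V_0$. Here I would use~(vii): $d_{G^*(i,i')}(v)=(2\alpha\pm\eps')n$, and $G^*(i,i')=H+\sum_{s\le\gamma n}F_s+\sum_{s\le\gamma'n}F'_s$, so removing the contribution of the $F'_s$ costs at most $2\gamma'n=2\lambda n/K^2$ at each vertex (each exceptional system candidate has degree at most $2$ at each vertex of $V_0$ by (ESC2)); hence $d_{H+\sum_{s\le\gamma n}F_s}(v)\ge(2\alpha-\eps')n-2\lambda n/K^2\ge2\gamma n+\sqrt{\epszero}n$ since $\gamma=\alpha-\lambda/K^2$ and $\eps'\ll\lambda$. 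Thus Lemma~\ref{lma:BESextend2} applies and yields edge-disjoint $(i,i')$-ES $J_1(i,i'),\dots,J_{\gamma n}(i,i')$ in $H(i,i')+\sum_{s\le\gamma n}F_s(i,i')$ with $J_s(i,i')$ a faithful extension of $F_s(i,i')$. Doing this for all $(i,i')$ consumes precisely the edges of $H(i,i')$ and of the $F_s(i,i')$; the graphs for different pairs are edge-disjoint by~(ii).

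Now let $H'$ be the (currently edgeless apart from the $F'_s$) leftover: $H':=G^*-\sum_{i,i'}\big(H(i,i')+\sum_{s\le\gamma n}F_s(i,i')\big)$. By~(ii), $E(H')$ is exactly the (disjoint) union of the edge sets of all the $F'_s(i,i')$, which I would relabel as a single list $F'_1,\dots,F'_{\lambda n}$ of exceptional system candidates (there are $K^2\gamma'n=\lambda n$ of them). I would then apply Lemma~\ref{globalBES} with $H$ empty (so conditions~(i), (v) are immediate and (iv) reads $d_{\sum F'_s}(v)=2\lambda n$ for $v\in V_0$), $\eps'$ playing its own role, and the $F'_s$ as the candidates. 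Condition~(ii) is~(v) here; condition~(iii) follows from the `all but at most $\eps'n$' clause of~(v) summed over the $K^2$ pairs (so at most $K^2\eps'n$ exceptional indices — rename $\eps'$ slightly, or absorb the factor $K^2$ since $\eps'\ll\lambda$, to fit the `$\eps'n$' bound); condition~(iv) is what remains of~(vi) and~(vii) after the localized step: at each $v\in V_0$ we removed exactly $2\gamma n$ from the degree in each of $K^2$ pairs, i.e.\ $2K^2\gamma n$ in total, leaving $2K^2\alpha n-2K^2\gamma n=2K^2\gamma'n=2\lambda n$. The one point needing a line of care is condition~(v) of Lemma~\ref{globalBES}, $d_{\sum F'_s}(v)\le2\epszero n$ for $v\in A\cup B$: this holds because each vertex of $A\cup B$ lies in the edge set of at most $\sqrt{\epszero}n/2$ of the candidates across everything by (ESC3)-type bounds, or more directly because $d_{G^*[A',B']}(v)<\epszero n$ by (ESch3) and the $F'_s$ other than $A'B'$-edges contribute matchings; either way the bound is comfortable. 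Lemma~\ref{globalBES} then produces faithful extensions $J'_s$ of the $F'_s$, decomposing $H'$. Re-indexing the $J'_s$ back by the pair $(i,i')$ they came from, we obtain the asserted decomposition of $G^*$ into the $J_s(i,i')$ and $J'_s(i,i')$, with the faithfulness and locality properties as stated.

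\textbf{Main obstacle.} The only real subtlety is bookkeeping: verifying that the degree hypothesis~(iii) of Lemma~\ref{lma:BESextend2} survives at each pair $(i,i')$ (i.e.\ that subtracting the $F'_s$-contribution from~(vii) still leaves room above $2\gamma n+\sqrt{\epszero}n$), and that after the localized step the $V_0$-degrees in the leftover are \emph{exactly} $2\lambda n$ so that Lemma~\ref{globalBES}(iv) holds on the nose rather than approximately. Both reduce to the identity $\alpha=\gamma+\gamma'$ together with the fact that faithful extensions and exceptional system candidates have degree exactly $2$ at each exceptional vertex; I would state this explicitly rather than via the `$\pm$' estimates. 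A minor secondary point is tidying the `exceptional indices' count in Lemma~\ref{globalBES}(iii) after merging $K^2$ lists, which is harmless given the constant hierarchy.
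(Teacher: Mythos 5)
Your overall strategy is the same as the paper's (first Lemma~\ref{lma:BESextend2} for each pair $(i,i')$, then Lemma~\ref{globalBES} for the leftover), but there is a genuine gap stemming from one incorrect assertion in your ``main obstacle'' paragraph: you claim that ``faithful extensions and exceptional system candidates have degree exactly $2$ at each exceptional vertex.'' That is true for exceptional \emph{systems} (and hence for the $J_s$) but not for exceptional system \emph{candidates}: by (ESC2) a candidate only has degree $\le 2$ at each $v\in V_0$, and indeed condition~(v) tells you that for all but $\eps'n$ indices the graph $F'_s(i,i')$ is a matching of at most two $A'B'$-edges, which gives $d_{F'_s(i,i')}(v)\le 1$. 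This error bites twice.

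First, when checking Lemma~\ref{lma:BESextend2}(iii), you bound the contribution of $\sum_{s\le\gamma'n}F'_s(i,i')$ to $d_{G^*(i,i')}(v)$ by $2\gamma'n$. Since $2\alpha-2\gamma'=2\gamma$, this yields only $d_{H+\sum F_s}(v)\ge(2\gamma-\eps')n$, which is \emph{less} than the required $(2\gamma+\sqrt{\eps_0})n$, not more; the inequality you wrote down is false. The fix, as in the paper, is to use condition~(v): for all but $\eps'n$ indices $d_{F'_s(i,i')}(v)\le 1$, so the total contribution is at most $(\gamma'+\eps')n$, giving $d_{H+\sum F_s}(v)\ge(2\alpha-\gamma'-2\eps')n=(2\gamma+\gamma'-2\eps')n\ge(2\gamma+\sqrt{\eps_0})n$, using $\gamma'=\lambda/K^2\gg\eps',\sqrt{\eps_0}$.

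Second, and more seriously, the same inequality shows that $\sum F'_s$ alone \emph{cannot} have degree $2\lambda n$ at $v\in V_0$: it contributes at most $(\gamma'+\eps')n<2\gamma'n$ per pair $(i,i')$, so strictly less than $2\lambda n$ in total. Your claim that the localized step ``consumes precisely the edges of $H(i,i')$'' is the root of the trouble and is false: Lemma~\ref{lma:BESextend2} constructs $J_1(i,i'),\dots,J_{\gamma n}(i,i')$ \emph{inside} $H(i,i')+\sum F_s(i,i')$ but does not use all of $H(i,i')$; each $v\in V_0$ has degree exactly $2\gamma n$ in $\sum_s J_s(i,i')$ while $d_{H(i,i')+\sum F_s(i,i')}(v)\ge(2\gamma+\sqrt{\eps_0})n$. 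Thus a nontrivial portion of $H(i,i')$ remains. This leftover must be fed into Lemma~\ref{globalBES} as its input graph $H$ (the paper's $H_0:=\bigcup_{i,i'}\bigl(H(i,i')-\sum_{s\le\gamma n}J_s(i,i')\bigr)$); it is precisely what makes up the deficit, since then $d_{H_0+\sum F'_s}(v)=d_{G^*}(v)-2K^2\gamma n=2\lambda n$ exactly. Applying Lemma~\ref{globalBES} with $H=\emptyset$, as you propose, makes condition~(iv) of that lemma fail. Your remaining checks (bookkeeping of the exceptional-index count with factor $K^2$, and the small-degree bound for $v\in A\cup B$) are essentially correct.
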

\begin{proof}
Fix any $i,i'\le K$ and set $H:=H(i,i')$ and $F_s:=F_s(i,i')$ for all $s\le \gamma n$.
Our first aim is to apply Lemma~\ref{lma:BESextend2} in order to extend each of $F_1, \dots , F_{\gamma n}$
into a $(i,i')$-HES. (iii) and (iv) ensure that conditions~(i) and~(ii) of Lemma~\ref{lma:BESextend2} hold. To verify Lemma~\ref{lma:BESextend2}(iii),
note that by (v) and (vii) each $v\in V_0$ satisfies
\begin{align*}
d_{H + \sum F_s}(v) & = d_{G^*(i,i')}(v)-d_{\sum_s F'_s(i,i')}(v)\ge (2\alpha-\eps')n-(\gamma'-\eps')n-2\eps' n\\
& =(2\alpha-\gamma'-2\eps') n\ge (2\gamma+\sqrt{\eps_0})n.
\end{align*}
(Here the first inequality follows since (v) implies that $d_{F'_s(i,i')}(v)\le 1$ for all but at most $\eps' n$ indices $s\le \gamma' n$.)
Thus we can indeed apply Lemma~\ref{lma:BESextend2} to find edge-disjoint $(i,i')$-ES $J_1(i,i'), \dots ,J_{\gamma n}(i,i')$ with parameter $\eps_0$
in $H + \sum F_s$ such that $J_s(i,i')$ is a faithful extension of $F_s$ for all $s \le \gamma n$.
We repeat this procedure for all $1\le i,i' \le K$ to obtain $K^2\gamma n$
edge-disjoint (localized) exceptional systems. 

Our next aim is to apply Lemma~\ref{globalBES} in order to construct the $J'_s(i,i')$.
Let $H_0$ be the union of $H(i,i')-(J_1(i,i')+\dots+J_{\gamma n}(i,i'))$ over all $i,i'\le K$.
Relabel the $F'_s(i,i')$ (for all $s\le \gamma' n$ and all $i,i'\le K$) to obtain exceptional system candidates $F'_1,\dots,F'_{\lambda n}$.
Note that by~(vi) each $v\in V_0$ satisfies
\begin{equation}\label{eq:degH0}
d_{H_0+\sum F'_s}(v) =d_{G^*}(v)-2K^2 \gamma n= 2K^2\alpha n-2K^2 \gamma n=2\lambda n.
\end{equation}
Thus condition~(iv) of Lemma~\ref{globalBES} holds with $H_0, F'_s$ playing the roles of $H,F_s$.
(iii) and (v) imply that conditions (i)--(iii) of Lemma~\ref{globalBES} hold with $K^2 \eps'$ playing the role of $\eps'$.
To verify Lemma~\ref{globalBES}(v), note that
each $v\in A$ satisfies $d_{H_0+\sum F'_s}(v)\le d_{G^*}(v,A_0)+d_{G^*}(v,B')\le 2\eps_0 n$ by~(iii), (i) and (ESch3).
Similarly each $v\in B$ satisfies $d_{H_0+\sum F'_s}(v)\le 2\eps_0 n$. Thus we can apply
Lemma~\ref{globalBES} with $H_0, F'_s, K^2\eps'$ playing the roles of $H, F_s, \eps'$ to obtain
a decomposition of $H_0+\sum_s F'_s$ into
$\lambda n$ edge-disjoint exceptional systems $J'_1,\dots,J'_{\lambda n}$ with parameter $\eps_0$ such that
$J'_s$ is a faithful extension of $F'_s$ for all $s\le \lambda n$.
Recall that each $F'_s$ is a $F'_{s'}(i,i')$ for some $i,i'\le K$ and some $s'\le \gamma' n$.
Let $J'_{s'}(i,i'):=J'_s$. Then all the $J_s(i,i')$ and all the $J'_s(i,i')$ are as required in the lemma. 
\end{proof}

We now combine Lemmas~\ref{lma:move}, \ref{lma:BESdecomprelim} and~\ref{BEScons} in order
to prove Lemma~\ref{lma:BESdecom}.

\removelastskip\penalty55\medskip\noindent{\bf Proof of Lemma~\ref{lma:BESdecom}. }
Let $G^{\diamond}$ be as defined in Lemma~\ref{lma:BESdecom}(iv).
Choose a new constant $\eps'$ such that $\eps \ll \eps' \ll \lambda, 1/K$.
Set 
\begin{align} \label{alpha'}
2 \alpha n & := \frac{ D - \phi n }{K^2}, 
& \gamma_1 & : = \alpha -  \frac{2\lambda}{K^2}
& \text{and} &
& \gamma'_1 & : = \frac{2\lambda }{K^2}.
\end{align}
Similarly as in the proof of Lemma~\ref{lma:move}, since $\phi  \ll 1/3\le D/n$, we have
\begin{align} \label{alphahier2}
	\alpha \ge 1/(7K^2), \ \ \ \ \ (1-14\lambda)\alpha\le \gamma_1 <\alpha \ \ \ \ \  \text{and} \ \ \ \ \ \eps \ll \eps'  \ll \lambda, 1/K,\alpha, \gamma_1 \ll 1.
\end{align}
Apply Lemma~\ref{lma:move} with $\gamma_1$ playing the role of $\gamma$ in order to obtain a decomposition
of $G^{\diamond}$ into edge-disjoint spanning subgraphs $H(i,i')$ and $H''(i,i')$ (for all $1\le i,i' \le K$) which satisfy the following
properties, where $G'(i,i'):=H(i,i')+H''(i,i')$:
\begin{itemize}
\item[\rm (b$_1$)] Each $H(i,i')$ contains only $A_0A_i$-edges and $B_0B_{i'}$-edges.
\item[\rm (b$_2$)] $H''(i,i')\subseteq G^{\diamond}[A',B']$. Moreover,
all but at most $\eps' n$ edges of $H''(i,i')$ lie in $G^{\diamond}[A_0 \cup A_i, B_0 \cup B_{i'}]$.
\item[\rm (b$_3$)] $e(H''(i,i'))$ is even and $ 2 \alpha n \le e(H''(i,i')) \le 11\epszero n^2/(10K^2)$.
\item[\rm (b$_4$)] $\Delta(H''(i,i')) \le 31 \alpha n/30 $.
\item[\rm (b$_5$)] $d_{G'(i,i')}(v )  =  \left( 2 \alpha \pm   \eps' \right) n $ for all $v \in V_0$.
\item[\rm (b$_6$)] Let $\widetilde{H}$ any spanning subgraph of $H''(i,i')$ which maximises $e(\widetilde{H})$
under the constraints that $\Delta(\widetilde{H}) \le 3\gamma_1 n /5$, $H''(i,i')[A_0,B_0] \subseteq \widetilde{H}$ and $e(\widetilde{H})$ is even.
Then $e(\widetilde{H}) \ge 2 \alpha n $.
\end{itemize}
Fix any $1 \le i,i' \le K$.%
\COMMENT{AL: added $1 \le$}
 Set $H := H(i,i')$ and $H'' := H''(i,i')$. Our next aim is to 
decompose $H''$ into suitable `localized' Hamilton exceptional system candidates. For this, we will
apply 
Lemma~\ref{lma:BESdecomprelim} with $H'', \gamma_1,\gamma'_1$ playing the roles of $H, \gamma,\gamma'$.
Note that $\Delta(H'') \le 31 \alpha n/30 \le 16\gamma_1 n/15$ by (b$_4$) and (\ref{alphahier2}).
Moreover, $\Delta(H''[A,B])\le \Delta(G^\diamond[A,B])\le \eps_0 n$ by~(iv) and (ESch3). Since $e(H'')$ is even by (b$_3$), it follows that
condition~(i) of Lemma~\ref{lma:BESdecomprelim} holds. Condition~(ii) of Lemma~\ref{lma:BESdecomprelim}
follows from (b$_6$) and the fact that any $\widetilde{H}$ as in (b$_6$) satisfies
$e(\widetilde{H})\le e(H'') \le 11\eps_0 n^2/(10K^2)\le 10\eps_0\gamma_1 n^2$
(the last inequality follows from (\ref{alphahier2})). Thus we can indeed apply Lemma~\ref{lma:BESdecomprelim}
in order to decompose $H''$ into $ \alpha n $ edge-disjoint Hamilton exceptional system candidates $F_1, \dots , F_{\gamma_1 n}, F'_1, \dots , F'_{\gamma_1 ' n}$
with parameter $\eps_0$ such that $e(F'_s) =2$ for all $s \le \gamma_1' n$. 
Next we set
\begin{align*}
\gamma_2 & := \alpha - \frac{\lambda}{K^2} & 
& \text{and} &
\gamma'_2 & : = \frac{\lambda}{K^2}.
\end{align*}
Condition (b$_2$) ensures that by relabeling the $F_s$'s and $F'_s$'s we obtain $ \alpha n $ edge-disjoint Hamilton exceptional system
candidates $F_1(i,i'), \dots , F_{\gamma_2 n}(i,i'), F'_1(i,i'),$ $ \dots , F'_{\gamma_2 ' n}(i,i')$ with parameter $\eps_0$%
     \COMMENT{Note here that $\gamma_1$ and $\gamma_1'$ are replaced with $\gamma_2$ and $\gamma_2'$ respectively.}
such that properties~(a$'$) and (b$'$) hold:
\begin{itemize}
	\item[(a$'$)] $F_s(i,i')$ is an $(i,i')$-HESC for every $s \le \gamma_2 n$. Moreover, at least $\gamma_2 ' n$ of the $F_s(i,i')$ satisfy $e(F_s(i,i')) = 2$.
	\item[(b$'$)] $e(F'_s(i,i')) = 2$ for all but at most $ \eps' n$ of the $F'_s(i,i')$.
\end{itemize}
Indeed, we can achieve this by relabeling each $F_s$ which is a subgraph of $G^{\diamond}[A_0 \cup A_i, B_0 \cup B_{i'}]$
as one of the $F_{s'}(i,i')$ and each $F_s$ for which is not the case as one of the $F'_{s'}(i,i')$.

Our next aim is to apply Lemma~\ref{BEScons} with $G^\diamond,\gamma_2,\gamma_2'$ playing the roles of $G^*,\gamma,\gamma'$.
Clearly conditions (i) and (ii) of Lemma~\ref{BEScons} hold. (iii) follows from (b$_1$). (iv) and (v) follow from (a$'$) and (b$'$).
(vi) follows from Lemma~\ref{lma:BESdecom}(i),(iii). Finally, (vii) follows from (b$_5$) since $G'(i,i')$ plays the role of $G^*(i,i')$.
Thus we can indeed apply Lemma~\ref{BEScons} to obtain a decomposition of $G^\diamond$ into $K^2\alpha n$
edge-disjoint Hamilton exceptional systems $J_1(i,i'),\dots,J_{\gamma_2 n}(i,i')$ and $J'_1(i,i'),\dots,J'_{\gamma'_2 n}(i,i')$
with parameter $\eps_0$, where $1\le i,i'\le K$, such that $J_s(i,i')$ is an $(i,i')$-HES which is a faithful extension of  $F_s(i,i')$ for all $s \le \gamma_2 n$
and $J'_s(i,i')$ is a faithful extension of $F'_s(i,i')$ for all $s \le \gamma'_2 n$.
Then the set $\mathcal{J}$ of all these Hamilton exceptional systems is as required in Lemma~\ref{lma:BESdecom}.
\endproof


\section{Critical Case with $e(A',B') \ge D$} \label{sec:critical}

The aim of this section is to prove Lemma~\ref{lma:BESdecomcritical}. Recall that Lemma~\ref{lma:BESdecomcritical} gives a decomposition of the exceptional edges into exceptional systems in the critical case when $e(A',B') \ge D$.
The overall strategy for the proof  is similar to that of Lemma~\ref{lma:BESdecom}.
As before, it consists of four steps. 
In Step~1, we use Lemma~\ref{lma:movecritical} instead of Lemma~\ref{lma:move}. In Step~2, we use Lemma~\ref{lma:BESdecomprelim2} instead of
Lemma~\ref{lma:BESdecomprelim}. We still use Lemma~\ref{BEScons} which combines Steps~3 and~4.


\subsection{Step $1$: Constructing the Graphs $H''(i,i')$}
The next lemma is an analogue of Lemma~\ref{lma:move}. We will apply it with the graph $G^\diamond$ from Lemma~\ref{lma:BESdecomcritical}(iv)
playing the role of $G$. Note that instead of assuming that our graph $G$ given in Lemma~\ref{lma:BESdecomcritical} is critical, the
lemma assumes that $e_{G^{\diamond}}(A',B')\le 2n$. This is a weaker assumption, since if $G$ is critical, then
$e_{G^\diamond}(A',B')\le e_G(A',B') < n$ by Lemma~\ref{critical'}(iii). 
Using only this weaker assumption has the advantage that we can also apply the lemma in the proof of Lemma~\ref{lma:PBESdecom}, i.e.~the
case when $e_G(A',B') < D$. (b$_7$) is only used in the latter application.

\begin{lemma} \label{lma:movecritical}
Suppose that $0 <  1/n  \ll \epszero \ll  \eps \ll 1/K \ll 1$ and that $n, K,m\in \mathbb{N}$.
Let $(G, \mathcal{P})$ be a $(K, m, \epszero, \eps )$-exceptional scheme with $|G| = n$ and $e_G(A_0), e_G(B_0) $ $=0$.
Let $W_0$ be a subset of $V_0$ of size at most~$2$ such that for each $w \in W_0$, we have%
    \COMMENT{In general, $W_0 = \{ w_1, w_2 \}$, where $w_1$ and $w_2$ are the two vertices such that $d_{G[A',B']}(w_1) \ge  d_{G[A',B']}(w_2) \ge d_{G[A',B']}(v)$ for $v \in V(G)$.
By Lemma~\ref{lma:BESdecomcritical}(v), $d_{G^{\diamond}[A',B']}(w_1), d_{G^{\diamond}[A',B']}(w_2) \le (D- \phi n)/2$.
Recall that $e_G(A',B') \ge D$ and $e_{G_0}(A',B') \le \phi n$, so we have $d_{G^{\diamond}[A',B']}(w) \le e_{G^{\diamond}}(A',B')/2$.}
\begin{equation}\label{eq:degw}
K^2 \le d_{G[A',B']}(w) \le e_G(A',B')/2.
\end{equation}
Suppose that $e_G(A',B')\le 2n$ is even.
Then $G$ can be decomposed into edge-disjoint spanning subgraphs $H(i,i')$ and $H''(i,i')$ of $G$ (for all $1 \le i,i' \le K$)%
\COMMENT{AL: added $1 \le$}
such that the following properties hold, where $G'(i,i'):=H(i,i')+H''(i,i')$:
\begin{itemize}
\item[\rm (b$_1$)] Each $H(i,i')$ contains only $A_0A_i$-edges and $B_0B_{i'}$-edges.
\item[\rm (b$_2$)] $H''(i,i')\subseteq G[A',B']$. Moreover, all but at most  $20 \eps n/K^2$ edges of $H''(i,i')$ lie in $G[A_0 \cup A_i, B_0 \cup B_{i'}]$.
\item[\rm (b$_3$)] $e(H''(i,i')) =  2 \left\lceil e_G(A',B') / (2 K^2) \right\rceil$ or $e(H''(i,i')) =  2 \left\lfloor e_G(A',B') / (2 K^2) \right\rfloor$.
\item[\rm (b$_4$)] $d_{H''(i,i')}(v )  =  ( d_{G[A',B']}(v)  \pm 25 \eps n)/K^2$ for all $v \in V_0$.
\item[\rm (b$_5$)] $d_{G'(i,i')}(v )  =  \left( d_{G}(v) \pm   25 \eps n \right)  / K^2 $ for all $v \in V_0$.
\item[\rm (b$_6$)] Each $w \in W_0$ satisfies $d_{H''(i,i')}(w) = \lceil d_{G[A',B']}(w) / K^2 \rceil $ or 
$d_{H''(i,i')}(w) = \lfloor d_{G[A',B']}(w) / K^2 \rfloor $.
\item[\rm (b$_7$)] Each $w \in W_0$ satisfies $2 d_{H''(i,i')}(w) \le  e(H''(i,i'))$.%
    \COMMENT{This statement is used when $e_G(A',B') < D$.}
\end{itemize}
\end{lemma}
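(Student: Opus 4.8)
The plan is to obtain the decomposition $G = \sum_{i,i'} \bigl(H(i,i') + H''(i,i')\bigr)$ by first applying Lemma~\ref{lma:randomslice} to the exceptional scheme $(G,\mathcal{P})$ to produce edge-disjoint spanning subgraphs $H(i,i')$ and $H'(i,i')$ with $G(i,i'):=H(i,i')+H'(i,i')$, where each $H(i,i')$ contains only $A_0A_i$- and $B_0B_{i'}$-edges, each $H'(i,i')\subseteq G[A_0\cup A_i,B_0\cup B_{i'}]$, and properties $(a_1)$--$(a_5)$ hold. In particular $(a_3)$ gives $e(H'(i,i')) = (e_G(A',B') \pm 4\eps\max\{n,e_G(A',B')\})/K^2$ and, since $e_G(A',B')\le 2n$, this is $(e_G(A',B') \pm 8\eps n)/K^2$; similarly $(a_4)$ gives $d_{H'(i,i')}(v) = (d_{G[A',B']}(v) \pm 2\eps n)/K^2$ for $v\in V_0$, and $(a_5)$ controls $d_{G(i,i')}(v)$. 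The graphs $H(i,i')$ are kept unchanged, so $(b_1)$ holds immediately, and $(b_5)$ will follow from $(a_5)$ once we check the edge-moving below disturbs $V_0$-degrees by only $O(\eps n/K^2)$ per vertex.

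The main work is to move a small number of $A'B'$-edges between the $H'(i,i')$ to obtain graphs $H''(i,i')$ that have exactly the prescribed even size in $(b_3)$ while preserving the approximate degree conditions $(b_4)$, $(b_6)$ and the cut condition $(b_7)$. First I would redistribute edges among the $H'(i,i')$ so that their sizes are as equal as possible subject to each containing an even number of edges: since $\sum_{i,i'} e(H'(i,i')) = e_G(A',B')$ is even, a greedy balancing argument (move one edge at a time from the currently largest to the currently smallest graph, maintaining parity by moving pairs where needed) yields $e(H''(i,i')) \in \{2\lceil e_G(A',B')/(2K^2)\rceil, 2\lfloor e_G(A',B')/(2K^2)\rfloor\}$ as in $(b_3)$. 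Because $(a_3)$ already puts every $e(H'(i,i'))$ within $8\eps n/K^2$ of $e_G(A',B')/K^2$, this requires moving at most $O(\eps n/K^2)$ edges into or out of any single $H'(i,i')$, so at most $O(\eps n/K^2)$ of the edges of $H''(i,i')$ fail to lie in $G[A_0\cup A_i,B_0\cup B_{i'}]$; being slightly generous with constants gives the bound $20\eps n/K^2$ in $(b_2)$. Moving $O(\eps n/K^2)$ edges changes each $V_0$-degree by at most $O(\eps n/K^2)$, so combining with $(a_4)$ we get $d_{H''(i,i')}(v) = (d_{G[A',B']}(v) \pm 25\eps n)/K^2$, which is $(b_4)$; likewise $(b_5)$ follows from $(a_5)$.

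To get the sharper statements $(b_6)$ and $(b_7)$ for the (at most two) vertices $w\in W_0$, I would build these into the balancing procedure: treat the edges incident to each $w\in W_0$ separately, and distribute them among the $K^2$ graphs $H''(i,i')$ as evenly as possible so that $d_{H''(i,i')}(w) \in \{\lceil d_{G[A',B']}(w)/K^2\rceil, \lfloor d_{G[A',B']}(w)/K^2\rfloor\}$, which is exactly $(b_6)$; this is compatible with the size balancing since each $H''(i,i')$ receives roughly $1/K^2$ of $w$'s edges and $|W_0|\le 2$ means the two constraints barely interact. For $(b_7)$, I would use the hypothesis $d_{G[A',B']}(w) \le e_G(A',B')/2$ from \eqref{eq:degw}: then $d_{H''(i,i')}(w) \le \lceil d_{G[A',B']}(w)/K^2 \rceil \le d_{G[A',B']}(w)/K^2 + 1 \le e_G(A',B')/(2K^2) + 1$, while $e(H''(i,i')) \ge 2\lfloor e_G(A',B')/(2K^2)\rfloor \ge e_G(A',B')/K^2 - 2$; since the lower bound $K^2 \le d_{G[A',B']}(w)$ forces $e_G(A',B') \ge 2K^2$ and hence $e_G(A',B')/K^2$ is large compared to the additive error, a short calculation gives $2d_{H''(i,i')}(w) \le e(H''(i,i'))$, i.e.\ $(b_7)$. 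The main obstacle is bookkeeping: verifying that the size-balancing, the $W_0$-degree-balancing, and the parity constraint can all be satisfied simultaneously without any one of them forcing a large perturbation — this is where the slackness afforded by $(a_3)$, $(a_4)$ (error $O(\eps n/K^2)$, vastly smaller than the target values which are $\Theta(n/K^2)$ once $e_G(A',B')\ge 2K^2$) and by $|W_0|\le 2$ must be used carefully, but no genuinely new idea beyond a careful greedy redistribution is needed.
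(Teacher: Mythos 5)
Your high-level plan — apply Lemma~\ref{lma:randomslice}, then shift a small number of $A'B'$-edges between the $H'(i,i')$ to hit exact target sizes and exact target $W_0$-degrees — is the same as the paper's. Properties $(b_1)$--$(b_5)$ go through essentially as you describe, and your argument for $(b_6)$ (split each $w\in W_0$'s edges into parts of size $\lceil d_{G[A',B']}(w)/K^2\rceil$ or $\lfloor d_{G[A',B']}(w)/K^2\rfloor$) is fine in isolation.

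The genuine gap is $(b_7)$, and the calculation you sketch does not close. Write $d:=d_{G[A',B']}(w)$ and $e:=e_G(A',B')$, and let $d=K^2a+p$ with $0\le p<K^2$ and $e=2K^2b+2q$ with $0\le q<K^2$. Your bounds give $2d_{H''(i,i')}(w)\le 2\lceil d/K^2\rceil=2(a+1)$ when $p>0$, while $e(H''(i,i'))$ can equal $2\lfloor e/(2K^2)\rfloor=2b$. The hypothesis $2d\le e$ only forces $a\le b$; it does \emph{not} force $a<b$. Indeed, take $2d=e$ with $p>0$: then $a=b$ and $q=p$, so for any $(i,i')$ for which your degree-balancing puts the ceiling value $a+1$ at $w$ but your size-balancing puts the floor value $2b=2a$ on the edge count, you get $2d_{H''(i,i')}(w)=2a+2>2a=e(H''(i,i'))$, and $(b_7)$ fails. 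Your final sentence ("a short calculation gives $2d_{H''(i,i')}(w)\le e(H''(i,i'))$") glosses over exactly this; the $+2$ on one side and the $-2$ on the other do not cancel no matter how large $e/K^2$ is, because the two quantities are within an additive constant of each other in the tight case $2d\approx e$.

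What makes the paper's proof work is a separate combinatorial claim (its "Claim"): one can choose the target degrees $a_{i,i'}(w)\in\{a,a+1\}$ and target sizes $b_{i,i'}\in\{2b,2b+2\}$ \emph{in a correlated way}, so that whenever $a_{i,i'}(w)=a+1$ the corresponding $b_{i,i'}=2b+2$. This is possible precisely because $p\le q$ when $a=b$ (a consequence of $2d\le e$), and it gives $2a_{i,i'}(w)\le b_{i,i'}$ exactly, not just up to $O(1)$. The degree-balancing and size-balancing are then carried out in that order with the additional care that the size-adjusting edges avoid $W_0$ (the bound $\sum_{w\in W_0}d_{H''(i,i')}(w)\le b_{i,i'}$, again from the Claim, guarantees enough such edges exist) and the edge $w_1w_2$, if present, is never moved. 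Your "barely interact" heuristic is not a substitute for this correlated choice: without it, $(b_7)$ can genuinely fail, and this is not a bookkeeping issue but a missing idea.
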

\begin{proof}
Since $e_G(A',B')$ is even, there exist unique non-negative integers $b$ and $q$ such that $e_G(A',B') = 2K^2 b+ 2q$ and $q < K^2$.
Hence, for all $1 \le i,i' \le K$,%
\COMMENT{AL: changed to  $1 \le i,i' \le K$}
 there are integers $b_{i,i'}\in \{2b, 2b+2\}$ such that $\sum_{i,i' \le K} b_{i,i'} = e_G(A',B')$.
In particular, the number of pairs $i,i'$ for which $b_{i,i'}=b+2$ is precisely~$q$.
We will choose the graphs $H''(i,i')$ such that $e(H''(i,i'))=b_{i,i'}$. (In particular, this will ensure that (b$_3$) holds.)
The following claim will help to ensure~(b$_6$) and~(b$_7$). 

\medskip

\noindent
\textbf{Claim.} \emph{For each $w \in W_0$ and all $i,i' \le K$ there is an integer  $a_{i,i'}=a_{i,i'}(w)$ which satisfies the following properties:
\begin{itemize}
	\item $a_{i,i'} = \lceil d_{G[A',B']}(w) /K^2 \rceil $ or $a_{i,i'} = \lfloor d_{G[A',B']}(w) /K^2 \rfloor$.
	\item $2a_{i,i'} \le b_{i,i'}$.
	\item $\sum_{i,i' \le K} a_{i,i'} = d_{G[A',B']} (w)$.
	\end{itemize}}
	
\smallskip

\noindent To prove the claim,
note that there are unique non-negative integers $a$ and $p$ such that $d_{G[A',B']} (w) = K^2 a + p $ and $p<K^2$.
Note that $a\ge 1$ by (\ref{eq:degw}). Moreover,
\begin{align}\label{eq:apbq}
 2(K^2 a +p) =   2 d_{G[A',B']} (w) \stackrel{(\ref{eq:degw})}{\le} e_G(A',B') = 2 K^2 b+ 2q.
\end{align}
This implies that $a\le b$. Recall that $b_{i,i'} \in \{2b,2b+2\}$. So if $b > a$, then the claim holds by
choosing any $a_{i,i'}\in \{a,a+1\}$ such that $\sum_{i,i' \le K} a_{i,i'} = d_{G[A',B']} (w)$.
Hence we may assume that $a = b$. Then~(\ref{eq:apbq}) implies that $p \le q$.
Therefore, the claim holds by setting $a_{i,i'}:= a+1$ for exactly $p$ pairs $i,i'$ for which $b_{i,i'} = 2b+2$ and
setting $a_{i,i'}:= a$ otherwise. This completes the proof of the claim.

\medskip

\noindent 
Apply Lemma~\ref{lma:randomslice} to decompose $G$ into subgraphs $H(i,i')$, $H'(i,i')$ (for all $i,i' \le K$) satisfying the following properties,
where $G(i,i') = H(i,i') + H'(i,i')$:
\begin{itemize}
\item[\rm (a$'_1$)] Each $H(i,i')$ contains only $A_0A_i$-edges and $B_0B_{i'}$-edges.
\item[\rm (a$'_2$)] All edges of $H'(i,i')$ lie in $G[A_0 \cup A_i, B_0 \cup B_{i'}]$.
\item[\rm (a$'_3$)] $e ( H'(i,i') )   =  ( e_{G}(A',B')  \pm  8 \eps n ) /K^2$.
\item[\rm (a$'_4$)] $d_{H'(i,i')}(v )  =  ( d_{G[A',B']}(v)  \pm 2 \eps n)/K^2$ for all $v \in V_0$.
\item[\rm (a$'_5$)] $d_{G(i,i')}(v )  =  ( d_{G}(v)  \pm 4 \eps n)/K^2$ for all $v \in V_0$.
\end{itemize}
Indeed, (a$'_3$) follows from Lemma~\ref{lma:randomslice}(a$_3$) and our assumption that $e_G(A',B') \le 2n$.

Clearly, (a$'_1$) implies that the graphs $H(i,i')$ satisfy (b$_1$). We will now move some $A'B'$-edges of $G$ between
the $H'(i,i')$ such that the graphs $H''(i,i')$ obtained in this way satisfy the following conditions:
\begin{itemize}
\item Each $H''(i,i')$ is obtained from $H'(i,i')$ by adding or removing at most $20\eps n/K^2$ edges of $G$.
\item $e(H''(i,i'))=b_{i,i'}$.
\item $d_{H''(i,i')}(w)=a_{i,i'}(w)$ for each $w\in W_0$, where $a_{i,i'}(w)$ are integers satisfying the claim.
\end{itemize}
Write $W_0=:\{w_1\}$ if $|W_0|=1$ and $W_0=:\{w_1,w_2\}$ if $|W_0|=2$.
If $W_0 \ne \emptyset$, then (a$'_4$) implies that $d_{H'(i,i')}(w_1 )  =   a_{i,i'}(w_1)  \pm (2 \eps n/K^2+1)$.
For each $i,i'\le K$, we add or remove at most $2 \eps n /K^2+1$ edges incident to $w_1$
such that the graphs $H''(i,i')$ obtained in this way satisfy $d_{H''(i,i')}(w_1) =  a_{i,i'}(w_1)$.
Note that since $a_{i,i'}(w_1) \ge \lfloor d_{G[A',B']}(w_1) /K^2 \rfloor \ge 1$ by~(\ref{eq:degw}), we can do this in such a way that
we do not move the edge $w_1w_2$ (if it exists).%
\COMMENT{This is not really needed.}
Similarly, if $|W_0|=2$, then for each $i,i'\le K$ we add or remove at most $2 \eps n /K^2+1$ edges incident to $w_2$
such that the graphs $H''(i,i')$ obtained in this way satisfy $d_{H''(i,i')}(w_2) =  a_{i,i'}(w_2)$.
As before, we do this in such a way that we do not move the edge $w_1w_2$ (if it exists).

Thus $d_{H''(i,i')}(w_1)  =  a_{i,i'}(w_1)$ and $d_{H''(i,i')}(w_2) =  a_{i,i'}(w_2)$
for all $1 \le i,i' \le K$%
\COMMENT{AL: added $1 \le$}
 (if $w_1,w_2$ exist).
In particular, together with the claim, this implies that $d_{H''(i,i')}(w_1), d_{H''(i,i')}(w_2) \le b_{i,i'}/2$. Thus the number of edges
of $H''(i,i')$ incident to $W_0$ is at most
\begin{align}
	\sum_{w \in W_0}d_{H''(i,i')}(w) \le b_{i,i'}.  \label{H''1}
\end{align}
(This holds regardless of the size of $W_0$.) On the other hand, (a$'_3$) implies that for all $i, i' \le K$ we have
\begin{align}
e ( H''(i,i') )   =  ( e_{G}(A',B')  \pm  8 \eps n ) /K^2 \pm 2(2 \eps n /K^2 + 1) = b_{i,i'} \pm 13 \eps n/K^2. \nonumber
\end{align}
Together with~\eqref{H''1} this ensures that we can add or delete at most $13 \eps n/K^2$ edges which do not intersect $W_0$
to or from each $H''(i,i')$ in order to ensure that $e(H''(i,i')) = b_{i,i'}$ for all $i,i' \le K$.
Hence, (b$_3$), (b$_6$) and (b$_7$) hold. Moreover,
\begin{equation}\label{eq:edgediff}
e(H''(i,i')-H'(i,i'))\le |W_0| (2\eps n/K^2 +1) + 13 \eps n/K^2 \le 20 \eps n/K^2.
\end{equation}
So (b$_2$) follows from (a$'_2$). Finally, (b$_4$) and (b$_5$) follow from (\ref{eq:edgediff}), (a$'_4$) and (a$'_5$).
\end{proof}


\subsection{Step $2$: Decomposing $H''(i,i')$ into Hamilton Exceptional System Candidates}

Before we can prove an analogue of Lemma~\ref{lma:BESdecomprelim}, we need the following result.
It will allow us to distribute the edges incident to the (up to three) vertices $w_i$ of high degree in $G[A',B']$ in a suitable way
among the localized Hamilton exceptional system candidates $F_j$.
The degrees of these high degree vertices $w_i$ will play the role of the $a_i$.
The $c_j$ will account for edges (not incident to $w_i$) which have already been assigned to the $F_j$.
(b) and (c) will be used to ensure  (ESC4), i.e.~that the total number of `connections' between $A'$ and $B'$ is even and positive.

\begin{lemma} \label{matrix}
Let $1 \le q \le 3$ and $0 \le \eta < 1$ and $r, \eta r \in \mathbb{N}$.
Suppose that $a_1, \dots, a_q \in \mathbb{N}$ and $c_1, \dots, c_r \in \{0,1,2\}$ satisfy the following conditions:
\begin{itemize}
	\item[\rm (i)] $c_1 \ge \dots \ge c_r \ge c_1-1 $.
	\item[\rm (ii)] $\sum_{i \le q} a_i + \sum_{j \le r} c_j = 2(1+\eta)r$.
	\item[\rm (iii)] $31 r /60  \le a_1, a_2 \le r$ and $31 r /60  \le a_3 \le 31r/30$.
\end{itemize}
Then for all $i \le q$ and all $j \le r$ there are $a_{i,j} \in \{0,1,2\}$ such that the following properties hold:
\begin{itemize}
	\item[\rm (a)] $\sum_{j \le r} a_{i,j} = a_i$ for all $i \le q$.
	\item[\rm (b)] $c_j + \sum_{i \le q} a_{i,j} = 4$ for all $j \le \eta r$ and $c_j + \sum_{i \le q} a_{i,j} = 2$ for all $\eta r < j \le r$.
	\item[\rm (c)] For all $j \le r$ there are at least $2- c_j$ indices $i \le q$ with $a_{i,j}=1$.
\end{itemize}
\end{lemma}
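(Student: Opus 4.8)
The statement is a purely combinatorial distribution lemma about filling an $q\times r$ matrix of entries in $\{0,1,2\}$ subject to prescribed row sums $a_i$, prescribed column contributions (the $c_j$ are given, and we must hit total column sum $4$ on the first $\eta r$ columns and $2$ on the rest), and a `connectivity' condition (c). My plan is to build the matrix greedily, column by column, after first reducing to a single clean case via the parity/size constraints in (ii) and (iii).

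First I would record the immediate consequences of the hypotheses. Since $c_j \le 2$ and each $a_{i,j}\le 2$, condition (b) forces, on a column $j$ with target $t_j\in\{2,4\}$, that $\sum_i a_{i,j} = t_j - c_j$; so the row-contributions on each column are completely determined in total, and the only freedom is how to split $t_j-c_j$ among the $q\le 3$ rows (respecting the cap $a_{i,j}\le 2$) and respecting (c). Summing (b) over all $j$ gives $\sum_{i,j}a_{i,j} = 4\eta r + 2(r-\eta r) + \sum_j(- c_j)$ — wait, more carefully $\sum_j(t_j) - \sum_j c_j = (4\eta r + 2(1-\eta)r) - \sum_j c_j = (2+2\eta)r - \sum_j c_j$, which by (ii) equals $\sum_i a_i$; so (a) and (b) are globally consistent, exactly as one needs. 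The real content is to make the choices locally so that (a), (b), (c) all hold simultaneously.

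Next I would handle (c). On a column $j$ with $c_j=2$, (b) forces $\sum_i a_{i,j}=0$, i.e.\ all $a_{i,j}=0$, and (c) is vacuous ($2-c_j=0$). On a column with $c_j=1$, we need $\sum_i a_{i,j}=1$ or $3$ (depending on $t_j$) and (c) asks for at least one $i$ with $a_{i,j}=1$ — automatically satisfied since a nonzero sum of $\{0,1,2\}$-entries with at least one odd part, or any sum $\ge 1$, forces an entry equal to $1$ once we are a little careful (sum $=1$ means exactly one entry is $1$; sum $=3$ with three rows we can take $1,1,1$ or $1,2,0$, and we should take a representation containing a $1$). On a column with $c_j=0$, we need $\sum_i a_{i,j}\in\{2,4\}$ and at least two rows equal to $1$; this is where $q=3$ matters — with target $2$ take $(1,1,0)$, with target $4$ take $(1,1,2)$ (after permuting rows). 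So in every case (c) can be met provided we are free to put at least two ones in the appropriate columns, which in turn needs $a_1,a_2$ (and sometimes $a_3$) large enough — precisely what (iii) guarantees. Since $c_1\ge\cdots\ge c_r\ge c_1-1$, the multiset $\{c_j\}$ takes at most two consecutive values, so I would split the columns into the `$c_j = c$' block and the `$c_j = c-1$' block (one of which may be empty) and the `$j\le \eta r$ vs.\ $j>\eta r$' split, giving at most four column-types; I would then process the types in a fixed order, using a simple counting argument (a discrete intermediate-value / flow argument) to show the row totals $a_i$ can be exactly realized. Concretely: let $n_i$ denote, for each column-type, the number of $1$'s forced in row $i$ by (c), distribute those first, then top up the remaining $a_i - (\text{forced ones})$ using columns of type $c_j=0,\,t_j=4$ (which can absorb an extra $2$ in one chosen row) and columns of type $c_j=1$ (which can absorb an extra $2$); the inequalities in (iii) are exactly calibrated so that the `demand' $\sum_i a_i$ minus the forced amount equals the `supply' of these top-up slots, and one checks the per-row caps are not exceeded using $a_i \le 31r/30$ against the roughly $r$ available slots.

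The main obstacle I anticipate is the bookkeeping in the final top-up step: one must verify that after placing the forced $1$'s demanded by (c), the residual row demands $a_i - (\text{forced})$ can be met using only the `slack' columns without ever pushing a column sum above its target $t_j$ or a matrix entry above $2$, and this requires carefully counting how many columns of each of the four types occur. The size window $31r/60 \le a_i$ and $a_i \le 31r/30$ in (iii), together with $0\le\eta<1$, is what makes the count work: the lower bound prevents a row from being starved of the forced ones it must supply, and the upper bound (barely more than $r$) prevents a row from demanding more $2$'s than there are slack columns. I would organize this as: (1) reduce to $\le 4$ column types; (2) write down the exact count of each type in terms of $r$, $\eta r$, and the number of columns with $c_j = c_1$; (3) assign forced ones; (4) solve the resulting small integer `transportation' system for the remaining assignments, verifying feasibility via (iii). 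Steps (1)--(3) are routine; step (4) is the heart and where I would spend the care, but it is a finite linear feasibility check of bounded size, so no deep idea is needed beyond the correct setup.
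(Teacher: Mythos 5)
The paper's proof proceeds \emph{row by row}: it chooses $a_{i,1},\dots,a_{i,r}$ for $i=1,\dots,q$ in turn, maintaining invariants on the running column sums $c_j^{(i)} = c_j + \sum_{i'\le i}a_{i',j}$ (monotonicity, a near-uniformity condition below the threshold $2r$, and the crucial invariant that $a_{i,j}\le 1$ whenever $c_j^{(i-1)}<2$, which yields (c)). Your plan is \emph{column by column}: fix a column type, determine the allowed column vectors, and solve a transportation problem to hit the row sums. These are genuinely different decompositions of the same combinatorial object.

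The gap is in your step (4), and it is not a small one. You write that it is ``a finite linear feasibility check of bounded size,'' but this understates what remains. The number of column \emph{types} is bounded (by the ordering hypothesis (i) there are in fact at most three types present in any instance), but the \emph{counts} of each type, and the row demands $a_i$, all scale with $r$ and depend on $\eta$ and on the split point $r'$ between the two $c$-values. So what you need is a \emph{parametric} feasibility statement: for every admissible $(r,\eta,r',a_1,a_2,a_3)$ satisfying (i)--(iii), the transportation problem with the appropriate allowed column-vectors is solvable. That is a genuine Hall/Gale--Ryser-type verification in which the constants $31r/60$, $r$ and $31r/30$ enter nontrivially, and you have not carried it out. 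You anticipate the obstacle correctly (``the bookkeeping in the final top-up step''), but acknowledging the obstacle is not the same as surmounting it.

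A second, more structural concern: your ``forced ones first, then top up'' decomposition is not actually a clean two-phase process. Which rows receive the forced $1$'s in a given column is a choice, and that choice changes the residual capacity of the \emph{same} column for absorbing a $2$ (e.g.\ a column with $c_j=1$, residual sum $3$ can be $(1,1,1)$ or a permutation of $(2,1,0)$; the ``forced one'' and the ``top-up $2$'' are decided jointly, not sequentially). So the phases are entangled, and you would end up having to do the joint assignment anyway --- which is precisely the case analysis the paper performs, just reorganized. Until you exhibit the explicit feasibility argument (or at least reduce it to a small, checkable list of inequalities in $r$, $\eta$, $r'$, $a_1,a_2,a_3$ and verify each using (iii)), this is a plausible plan rather than a proof.
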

\begin{proof}
We will choose $a_{i,1}, \dots, a_{i,r}$ for each $i\le q$ in turn such that the following properties
($\alpha_i$)--($\rho_i$) hold, where we write $c_j^{(i)} : = c_j + \sum_{i' \le i} a_{i',j}$ for each $ 0 \le  i \le q$ (so $c_j^{(0)}=c_j$):
\begin{itemize}
\item[\rm ($\alpha_i$)] If $i \ge 1$ then $\sum_{j \le r} a_{i,j} = a_i$.
\item[\rm ($\beta_i$)] $4\ge c^{(i)}_1 \ge \dots \ge c^{(i)}_r$.
\item[\rm ($\gamma_i$)] If $\sum_{j\le r} c^{(i)}_j < 2r$, then $| c^{(i)}_j - c^{(i)}_{j'} | \le 1$ for all $j,j' \le r$.
\item[\rm ($\delta_i$)] If $\sum_{j\le r} c^{(i)}_j \ge 2r$, then $c^{(i)}_j \ge 2$ for all $j \le \eta r$ and $c^{(i)}_j = 2$ for all $\eta r< j \le r$.
\item[\rm ($\rho_i$)] If $1\le i\le q$ and $c^{(i-1)}_j < 2$ for some $j\le r$, then $a_{i,j}  \in \{ 0 , 1 \}$.
\end{itemize}
We will then show that the $a_{i,j}$ defined in this way are as required in the lemma.

Note that (i) and the fact that $c_1, \dots, c_r \in \{0,1,2\}$ together imply ($\beta_0$)--($\delta_0$).
Moreover, ($\alpha_0$) and ($\rho_0$) are vacuously true.
Suppose that for some $1\le i\le q$ we have already defined $a_{i',j}$ for all $i'<i$ and all $j\le r$ such that ($\alpha_{i'}$)--($\rho_{i'}$) hold.
In order to define $a_{i,j}$ for all $j\le r$, we distinguish the following cases.

\medskip 

\noindent\textbf{Case 1: $\sum_{j \le r} c^{(i-1)}_j \ge 2r$.}

\smallskip

\noindent Recall that in this case $c^{(i-1)}_j \ge 2$ for all $j \le r$ by ($\delta_{i-1}$).
For each $j\le r$ in turn we choose $a_{i,j}\in \{0,1,2\}$ as large as possible subject to the constraints
that
\begin{itemize}
\item $a_{i,j}+ c^{(i-1)}_j\le 4$ and
\item $\sum_{j'\le j} a_{i,j'}\le a_i$.
\end{itemize}
Since $c^{(i)}_j=a_{i,j}+ c^{(i-1)}_j$, ($\beta_i$) follows from ($\beta_{i-1}$) and our choice of the $a_{i,j}$.
($\gamma_i$) is vacuously true. To verify ($\delta_i$), note that $c_j^{(i)} \ge c_j^{(i-1)}\ge 2$ by ($\delta_{i-1}$).
Suppose that the second part of ($\delta_{i}$) does not hold, i.e.~that $c_{\eta n+1}^{(i)}>2$.
This means that $a_{i,\eta n+1}>0$.
Together with our choice of the $a_{i,j}$ this implies that $c_j^{(i)}=4$ for all $j\le \eta n$.
Thus%
   \COMMENT{We can't replace $\le$ by $=$ since at the moment we only know that $\sum_{j\le r} a_{i,j}\le a_i$.}
$$
2(1+\eta) r  =4\eta r+2(r-\eta r)< \sum_{j\le r} c^{(i)}_j=\sum_{j\le r} a_{i,j}+\sum_{i'<i} a_{i'}+ \sum_{j\le r} c_j
\le \sum_{i'\le i} a_{i'}+ \sum_{j\le r} c_j
$$
contradicting~(ii). Thus the second part of ($\delta_{i}$) holds too. Moreover, 
$c_{\eta n+1}^{(i)}=c_{\eta n+1}^{(i-1)}=2$ also means that $a_{i,\eta n+1}=0$.
So $\sum_{j'\le \eta n} a_{i,j'}= a_i$, i.e.~ ($\alpha_i$) holds. ($\rho_i$) is vacuously true since $c^{(i-1)}_j \ge 2$ by ($\delta_{i-1}$).

\medskip

\noindent\textbf{Case 2: $2r - a_i  \le \sum_{j \le r} c^{(i-1)}_j < 2r$.}

\smallskip

\noindent
If $i\in \{1,2\}$ then together with (iii) this implies that
\begin{equation}\label{eq:sum1st}
\sum_{j \le r} c^{(i-1)}_j\ge r\ge a_i.
\end{equation}
If $i=3$ then
\begin{equation}\label{eq:sum2nd}
\sum_{j \le r} c^{(i-1)}_j\ge \sum_{j\le r} \sum_{i' \le 2} a_{i',j}=a_1+a_2\ge \frac{31r}{30}\ge a_3
\end{equation}
 by~(iii). In particular, in both cases we have $\sum_{j \le r} c^{(i-1)}_j\ge r$.
Together with ($\gamma_{i-1}$) this implies that $c^{(i-1)}_j \in \{ 1 , 2\}$ for all $j \le r$.
Let $0\le r'\le r$ be the largest integer such that $c^{(i-1)}_{r'} = 2$. 
So $r' < r$ and $\sum_{j \le r} c^{(i-1)}_j  = r+r' $. Together with (\ref{eq:sum1st}) and (\ref{eq:sum2nd})
this in turn implies that $a_i \le r+r'$ (regardless of the value of~$i$). 

Set $a_{i,j}:=1$ for all $r'<j\le r$. Note that
$$\sum_{r'<j\le r} a_{i,j}=r-r'=2r-\sum_{j \le r} c^{(i-1)}_j \le a_i,$$
where the final inequality comes from the assumption of Case~2. Take $a_{i,1},\dots,a_{i,r'}$ to be a sequence of the form $2,\dots,2,0,\dots,0$
(in the case when $a_i-\sum_{r'<j\le r} a_{i,j}$ is even)
or $2,\dots,2,1,0,\dots,0$ (in the case when $a_i-\sum_{r'<j\le r} a_{i,j}$ is odd) which is chosen in such a way
that $\sum_{j\le r'} a_{i,j}=a_i-\sum_{r'<j\le r} a_{i,j} = a_i-r+r'$. This can be done since $a_i \le r+r'$ implies that the right hand side is at most $2r'$.

Clearly, ($\alpha_i$), ($\beta_i$) and ($\rho_i$) hold.
Since $\sum_{j\le r} c^{(i)}_j=a_i+\sum_{j\le r} c^{(i-1)}_j \ge 2r$ as we are in Case~2,
($\gamma_i$) is vacuously true. Clearly, our choice of the $a_{i,j}$ guarantees that $c^{(i)}_j\ge 2$ for all $j\le r$.
As in Case~1 one can show that%
    \COMMENT{If $c^{(i)}_{\eta r+1}> 2$ then we must have that $r'>\eta r$ and $c^{(i)}_j= 4$ for all $j\le \eta r$.
Now the same argument as in Case~1 gives a contradiction.}
$c^{(i)}_j= 2$ for all $\eta r<j\le r$. Thus ($\delta_i$) holds.

\medskip

\noindent\textbf{Case 3: $\sum_{j \le r} c^{(i-1)}_j < 2r - a_i$.}

\smallskip

\noindent
Note that in this case
$$
2r>\sum_{j \le r} c^{(i-1)}_j +a_i = \sum_{i' \le i} a_{i'} + \sum_{j \le r }c_j ,$$
and so $i < q$ by (ii). Together with (iii) this implies that $a_i \le r$. Thus for all $j \le r$
we can choose $a_{i,j} \in \{0,1\}$ such that ($\alpha_i$)--($\gamma_i$) and ($\rho_i$) are satisfied.
($\delta_i$) is vacuously true.

\medskip

This completes the proof of the existence of numbers $a_{i,j}$ (for all $i\le q$ and all $j\le r$)
satisfying ($\alpha_i$)--($\rho_i$). It remains to show that these $a_{i,j}$ are as required in the lemma.
Clearly, ($\alpha_1$)--($\alpha_q$) imply that (a) holds. Since
$c_j^{(q)}  = c_j + \sum_{i \le q} a_{i,j}$ the second part of (b) follows from ($\delta_q$).
Since $c_j^{(q)}\le 4$ for each $j\le \eta r$ by~($\beta_q$), together with (ii) this in turn implies that the
first part of (b) must hold too. If $c_j < 2$, then ($\rho_1$)--($\rho_q$) and (b) together imply that for at least
$2-c_j$ indices $i$ we have $a_{i,j} = 1$. Therefore, (c) holds.
\end{proof}


We can now use the previous lemma to decompose the bipartite graph induced by $A'$ and $B'$ into 
Hamilton exceptional system candidates. 

\begin{lemma} \label{lma:BESdecomprelim2}
Suppose that $0< 1/n \ll \eps_0 \ll \alpha < 1$, that $0\le  \eta < 199/200$ and
that%
   \COMMENT{Previously had $\eta \alpha n/200\in\mathbb{N}$. But in the proof of Lemma~\ref{lma:BESdecomcritical}
we cannot ensure that this holds, which is fine since we do not need it.}
$n, \alpha n/200, \eta \alpha n  \in \mathbb{N}$.
Let $H$ be a bipartite graph on $n$ vertices with vertex classes $A \dot\cup A_0$ and $ B  \dot\cup B_0$ where $|A_0|+|B_0| \le \eps_0 n$.
Furthermore, suppose that the following conditions hold:
\begin{itemize}
	\item[\rm (c$_1$)] $e(H) = 2(1+ \eta)\alpha n$.
	\item[\rm (c$_2$)] There is a set $W'\subseteq V(H)$ with $1 \le |W'| \le 3$%
\COMMENT{$W'$ will be the vertex set as defined in Lemma~\ref{critical'}.} 
	and such that 
\begin{align*}
\textrm{$e(H- W' ) \le 199\alpha n/100$ and $d_H(w) \ge 13 \alpha n / 25 $ for all $w \in W'$.}
\end{align*}
	\item[\rm (c$_3$)] There exists a set $W_0 \subseteq W'$ with $|W_0| = \min \{2, |W'| \}$ and such that $d_H(w)  \le \alpha n$ for all $w \in W_0$
and $d_H(w') \le 41 \alpha n / 40$ for all $w' \in W' \setminus W_0$.%
	\COMMENT{Later on, we will take $W_0 = \{w_1, w_2\}$, where $w_1$ and $w_2$ are the two vertices that we can bound the degree in $G^{\diamond}[A',B']$.}
	\item[\rm (c$_4$)] For all $w \in W'$ and all $v \in V(H) \setminus W'$ we have $d_H(w) - d_H(v) \ge \alpha n/150$.
	\item[\rm (c$_5$)] For all $v \in A \cup B$ we have $d_H(v) \le \epszero n$. 
\end{itemize}
Then there exists a decomposition of $H$ into edge-disjoint Hamilton exceptional system candidates $F_1, \dots, F_{\alpha n }$
such that $e(F_s) = 4$ for all $s \le \eta \alpha n$ and $e(F_s) = 2$ for all $\eta \alpha n < s \le \alpha n $.
Furthermore, at least $\alpha n/200$ of the $F_s$ satisfy the following two properties:
\begin{itemize}
\item $d_{F_s}(w) =1 $ for all $w \in W_0$,
\item $e(F_s) = 2$.
\end{itemize}
\end{lemma}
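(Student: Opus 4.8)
\textbf{Proof proposal for Lemma~\ref{lma:BESdecomprelim2}.}

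The plan is to build the Hamilton exceptional system candidates $F_s$ in two stages. In the first stage I would handle the bulk of the graph, namely the subgraph $H-W'$ together with most of the edges of $H$ not incident to $W'$, by applying König's edge-colouring theorem (via Proposition~\ref{prop:matchingdecomposition}) to $H-W'$ to obtain a decomposition into roughly $\Delta(H-W') \le \epszero n$ matchings of nearly equal size. Because $e(H-W') \le 199\alpha n/100$ by (c$_2$), after splitting and merging matchings I can produce exactly $\alpha n$ edge-disjoint (possibly empty) graphs $F_1^0,\dots,F^0_{\alpha n}$, each of which is a matching with a controlled number $c_s\in\{0,1,2\}$ of edges, so that $c_1\ge\dots\ge c_{\alpha n}\ge c_1-1$ and $\sum_s c_s = e(H-W')$. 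Each such $F_s^0$ is already (the edge set of) an exceptional system candidate in the sense of (ESC1)--(ESC3), since $H$ is bipartite between $A'$ and $B'$ and matchings contain no isolated vertices; the only remaining issue will be condition (ESC4) on the parity and positivity of the number of $A'B'$-connections.

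The second stage distributes the edges incident to $W'$ among the $F_s^0$. Write $W'=\{w_1,\dots,w_q\}$ with $1\le q\le 3$, ordered so that $W_0=\{w_1,\dots,w_{\min\{2,q\}}\}$, and set $a_i := d_{H'}(w_i)$ where $H'$ denotes the subgraph of edges incident to $W'$ but with the (at most three) edges internal to $W'$ handled separately; one checks from (c$_2$), (c$_3$), (c$_4$) that after this adjustment the $a_i$ satisfy the hypotheses (i)--(iii) of Lemma~\ref{matrix} with $r:=\alpha n$ and the given $\eta$ — in particular $31r/60\le a_1,a_2\le r$ and $31r/60\le a_3\le 31r/30$ follow from $d_H(w)\ge 13\alpha n/25 > 31\alpha n/60$ together with the upper bounds in (c$_3$), and $\sum_i a_i + \sum_j c_j = 2(1+\eta)\alpha n$ follows from (c$_1$). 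Lemma~\ref{matrix} then yields numbers $a_{i,s}\in\{0,1,2\}$; for each $i$ and each $s$ I add $a_{i,s}$ edges incident to $w_i$ (choosing distinct neighbours in $A\cup B$, which is possible since $d_H(w_i)$ is large while each $F_s^0$ meets only $O(\epszero n)$ vertices by (c$_5$) and the size bounds on matchings) to $F_s^0$, obtaining $F_s$. Part (b) of Lemma~\ref{matrix} gives $e(F_s)=4$ for $s\le\eta\alpha n$ and $e(F_s)=2$ for $\eta\alpha n<s\le\alpha n$, and part (c) ensures that for every $s$ at least $2-c_s$ of the vertices of $W_0$ (or $W'$) have degree exactly $1$ in $F_s$, which is exactly what is needed to arrange that the number of maximal $A'B'$-paths $b(F_s)$ is even and positive, i.e.\ (ESC4)/(HESC). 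The internal $W'$-edges can be absorbed into a bounded number of the $F_s$ without affecting these parities, after slightly perturbing the counts.

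For the "furthermore" clause I would reserve, before running Lemma~\ref{matrix}, a set of $\alpha n/200$ indices $s$ on which I force $c_s=0$ (possible since $e(H-W')\le 199\alpha n/100 = 2\alpha n - \alpha n/100 < 2(\alpha n - \alpha n/200)$, so enough of the $F_s^0$ can be taken empty) and then apply Lemma~\ref{matrix}(c): on each such index at least $2$ vertices of $W_0$ receive degree $1$, and together with the requirement $e(F_s)=2$ (which holds automatically on all but $\eta\alpha n<199\alpha n/200$ indices, so one can choose the reserved indices among the "$e(F_s)=2$" ones) this gives $d_{F_s}(w)=1$ for all $w\in W_0$ and $e(F_s)=2$, as required.

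The main obstacle is the interface between the combinatorial core (Lemma~\ref{matrix}) and the geometry of the graph: one must verify carefully that the $a_i$ and $c_s$ arising from $H$ genuinely satisfy hypotheses (i)--(iii) — this is where (c$_2$)--(c$_4$) are used in a fairly tight way, especially the gap condition (c$_4$) which guarantees that $W'$ really captures \emph{all} the high-degree vertices so that $\Delta(H-W')$ is small — and that the edge-assignment in stage two can always be carried out with distinct endpoints, i.e.\ that no $F_s$ ever runs out of available neighbours of a $w_i$. The bookkeeping to get the \emph{exact} counts $\eta\alpha n$ and $\alpha n/200$ (rather than approximate ones) also requires care, but is routine given the divisibility hypotheses $\alpha n/200,\ \eta\alpha n\in\mathbb N$.
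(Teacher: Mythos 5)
Your high-level plan (decompose $H-W'$ into small matchings, then use Lemma~\ref{matrix} to distribute the edges at $W'$) is the same core idea the paper uses, but your treatment of the ``furthermore'' clause contains two genuine gaps that are not merely bookkeeping.

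First, reserving $\alpha n/200$ indices with $c_s=0$ before applying Lemma~\ref{matrix} is incompatible with hypothesis~(i) of that lemma, which requires $c_1\ge\dots\ge c_r\ge c_1-1$, i.e.\ all $c_j$ within $1$ of each other. With $r=\alpha n$ and $e(H-W')$ near its allowed upper bound $199\alpha n/100$, forcing $\alpha n/200$ of the $c_j$ to equal $0$ forces the remaining $199\alpha n/200$ values to average essentially $2$, so some $c_j=2$ while others equal $0$; this violates~(i) and Lemma~\ref{matrix} cannot be invoked. The paper sidesteps this by constructing the $\alpha n/200$ special 2-matchings (and absorbing $H[W']$) \emph{first}, deleting those edges, and only then running Proposition~\ref{prop:matchingdecomposition} on the residual graph with $r=199\alpha n/200$; after this removal the $m_j$ really are within $1$ of each other.

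Second, even if you could arrange $c_s=0$ legitimately, Lemma~\ref{matrix}(c) only guarantees that for such $s$ \emph{at least two} indices $i\le q$ satisfy $a_{i,s}=1$; it does not say which ones. When $|W'|=3$ (which Lemma~\ref{critical'}(i),(v) allows) we have $W_0=\{w_1,w_2\}$ and $W'\setminus W_0=\{w_3\}$, and the two unit entries could be $a_{1,s}=a_{3,s}=1$ with $a_{2,s}=0$, so $d_{F_s}(w_2)=0$. Thus the claim ``on each such index at least $2$ vertices of $W_0$ receive degree $1$'' does not follow from Lemma~\ref{matrix}; the lemma has no control over \emph{which} rows are the ones equal to $1$. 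The paper avoids relying on~(c) for this purpose and instead builds the $\alpha n/200$ candidates covering $W_0$ greedily and explicitly, exploiting only the degree lower bound $d_H(w)\ge 13\alpha n/25$ from~(c$_2$); Lemma~\ref{matrix}(c) is then used only to guarantee positivity and evenness of $b(F_j)$ for the remaining candidates, where the precise identity of the $i$'s with $a_{i,j}=1$ is irrelevant. You would need to import this reordering of the construction to make your argument go through.
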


Roughly speaking, the idea of the proof is first to find the $F_s$ which satisfy the final two properties.
Let $H_1$ be the graph obtained from $H$ by removing the edges in all these $F_s$. We will
decompose $H_1-W'$ into matchings $M_j$ of size at most two.
Next, we extend these matchings into Hamilton exceptional system candidates $F_j$ using Lemma~\ref{matrix}.
In particular, if $e(M_j)<2$, then we will use one or more edges incident to $W'$ to ensure that
the number of $A'B'$-connections is positive and even, as required by (ESC4).
(Note that it does not suffice to ensure that the number of $A'B'$-edges is positive and even for this.)

\begin{proof}
Set $H':=H - W'$, $W_0 =: \{w_1, w_{|W_0|} \}$ and $W' =: \{ w_1, \dots, w_{|W'|} \}$.
Hence, if $|W'| = 3$, then $W' \setminus W_0 = \{ w_3 \}$. Otherwise $W'=W_0$.

We will first construct $e_{H}(W')$ Hamilton exceptional system candidates $F_s$, such that each of them is a matching
of size two and together they cover all edges in $H[W']$. So suppose that $e_{H}(W')>0$. Thus $|W'| =2$ or $|W'| = 3$. 
If $|W'| = 2$, let $f$ denote the unique edge in $H[W']$. Note that
$$e(H')  \ge e(H) - (d_H(w_1) + d_H(w_2) -1) \ge 2(1+\eta)\alpha n-(2\alpha n-1)\ge 1$$
by (c$_1$) and~(c$_3$). So there exists an edge $f'$ in $H'$. Therefore, $M'_1 : = \{f,f'\}$ is a matching.
If $|W'| = 3$, then $e_{H}(W') \le 2$ as $H$ is bipartite. Since by (c$_2$) each $w \in W'$ satisfies $d_H(w) \ge 13\alpha n / 25$, it
is easy to construct $e_H(W')$ $2$-matchings $M'_1,M'_{e_{H}(W')}$ such that $d_{M_s'}(w) = 1$ for all $w \in W'$ and all $s \le e_H(W')$
and such that $H[W'] \subseteq M'_1 \cup M'_{e_{H}(W')}$. Set $F_{ \alpha  n - s+1} := M'_s$ for all $s \le e_H(W')$ (regardless of the size of $W'$).

We now greedily choose $\alpha n/200-e_{H}(W')$ additional $2$-matchings $F_{199\alpha n/200+1},$ $\dots,F_{ \alpha n - e_{H}(W')}$ in $H$ which are
edge-disjoint from each other and from $F_{ \alpha n },$\\ $F_{ \alpha n - e_{H}(W') +1}$ and such that $d_{F_s}(w)=1$
for all $w\in W_0$ and all $199\alpha n/200<s\le  \alpha n - e_{H}(W')$. To see that this can be done, recall that by (c$_2$) we have
$d_H(w)\ge 13 \alpha n / 25$ for all $w\in W'$ (and thus for all $w\in W_0$) and that (c$_1$) and (c$_3$) together imply that
$e(H-W_0)\ge 2(1+ \eta)\alpha n-\alpha n>\alpha n$ if $|W_0|=1$.

Thus $F_{199\alpha n/200+1},\dots,F_{\alpha n}$ are Hamilton exceptional system candidates
satisfying the two properties in the `furthermore part' of the lemma.
Let $H_1$ and $H'_1$ be the graphs obtained from $H$ and $H'$ by deleting all the $\alpha n/100$ edges in these Hamilton exceptional system candidates.
Set
\begin{align}\label{eq:eta'}
r : = 199\alpha n/200 \ \ \ \ \ \  \text{and} \ \ \ \ \ \ \eta' := \eta \alpha n/r=200\eta/199.
\end{align}
Thus $0\le \eta'<1$ and we now have%
   \COMMENT{Note that $e(H)-\alpha n/100=2 ( 1 + \eta )\alpha n -\alpha n/100=2(199/200+\eta )\alpha n=2 ( 1 + \eta' )199\alpha n/200$.}
\begin{align}
	H_1[W'] & = \emptyset, 
	& e(H_1) & = e(H)-\alpha n/100=2 ( 1 + \eta' )r 
	& \text{and} &
	& e(H'_1) & \le 2 r.
	\label{eqn:H}
\end{align}
(To verify the last inequality note that $e(H'_1) \le e(H-W')\le 2 r$ by~(c$_2$).)
Also, (c$_2$) and (c$_4$) together imply that for all $w \in W'$ and all $v \in V(H) \setminus W'$ we have
\begin{align}
d_{H_1}(w) & \ge \alpha n/2\ge 4 \epszero n  & \text{and} & &d_{H_1}(w) - d_{H_1}(v) &\ge 2 \epszero n . \label{eqn:H2}
\end{align}
Moreover, by (c$_2$) and (c$_3$), each $w \in W_0$ satisfies
\begin{align}\label{eq:W'H1}
	31r/60 & \le 13\alpha n/25- \alpha n/200\le d_{H}(w)-d_{H-H_1}(w)=  d_{H_1}(w)\nonumber \\
 & \le \alpha n-\alpha n/200= r .
\end{align}
Similarly, if $|W'|=3$ and so $w_3$ exists, then
\begin{align}\label{eq:W'H1w3}
	31r/60 & \le 13\alpha n/25- \alpha n/200 \le d_{H}(w_3)-d_{H-H_1}(w_3)=d_{H_1}(w_3)\nonumber \\
& \le 41\alpha n/40\le 31r/30.
\end{align}
(\ref{eqn:H2}) and~(\ref{eq:W'H1}) together imply that $d_{H'_1}(v) \le d_{H_1}(v)< d_{H_1}(w_1)\le r $ for all $v\in V(H)\setminus W'$.
Thus $\chi'(H'_1)\le \Delta(H'_1)\le r$.
Together with Proposition~\ref{prop:matchingdecomposition} this implies that $H'_1$ can be decomposed into $r$ edge-disjoint matchings 
$M_1, \dots ,M_{r}$
such that $| m_j- m_{j'}| \le 1$ for all $1 \le j , j' \le r$, where we set $m_j := e(M_j)$.

Our next aim is to apply Lemma~\ref{matrix} with $|W'|$, $d_{H_1}(w_i)$, $m_j$, $\eta'$ playing the roles of $q$, $a_i$, $c_j$, $\eta$
(for all $i\le |W'|$ and all $j \le r$). Since $\sum_{j\le r} m_j=e(H'_1)\le 2r$ by (\ref{eqn:H}) and since
$| m_j- m_{j'}| \le 1$, it follows that $m_j\in \{0,1,2\}$ for all $j\le r$.
Moreover, by relabeling the matchings $M_j$ if necessary, we may assume that $m_1 \ge m_2 \ge \dots \ge m_r$. Thus
condition~(i) of Lemma~\ref{matrix} holds. (ii) holds too since $\sum_{i\le |W'|} d_{H_1}(w_i)+\sum_{j\le r} m_j=e(H_1)=2(1+\eta')r$
by~(\ref{eqn:H}). Finally, (iii) follows from (\ref{eq:W'H1}) and~(\ref{eq:W'H1w3}).
Thus we can indeed apply Lemma~\ref{matrix} in order to obtain numbers $a_{i,j}  \in \{0,1,2\}$ (for all $i \le |W'|$ and $ j \le r $)
which satisfy the following properties:
\begin{itemize}
\item[(a$'$)]	$\sum_{j \le r } a_{i,j} = d_{H_1}(w_i)$ for all $i\le |W'|$.
	\item[(b$'$)]	$ m_j + \sum_{i \le |W'|} a_{i,j} = 4 $ for all $j \le \eta' r$ and
	         $m_j + \sum_{i \le |W'|} a_{i,j} = 2$ for all $\eta' r < j \le  r$.
\item[(c$'$)] 	If $m_j < 2$ then there exist at least $2 - m_j$ indices $i$ such that $a_{i,j} = 1$.
\end{itemize}
For all $j\le r$, our Hamilton exceptional system candidate $F_j$ will consist of the edges in $M_j$
as well as of $a_{i,j}$ edges of $H_1$ incident to $w_i$ (for each $i\le |W'|$).   
So let $F_j^0: = M_j$ for all $j \le r$. For each $i =1,\dots, |W'|$ in turn, we will now assign the edges of $H_1$ incident with $w_i$
to $F_1^{i-1},\dots,F_r^{i-1}$ such that the resulting graphs $F_1^{i},\dots,F_r^{i}$ satisfy the following properties: 
\begin{itemize}
	\item[($\alpha_i$)] If $i \ge 1$, then $e(F_j^i) - e(F_j^{i-1}) = a_{i,j}$.
	\item[($\beta_i$)] $F^i_j$ is a path system. Every vertex $v \in A \cup B$ is incident to at most one edge of $F_j^i$.
For every $v \in V_0 \setminus W'$ we have $d_{F_j^i}(v) \le 2$. If $e(F_j^i) \le 2$, we even have $d_{F_j^i}(v) \le 1$.
    \item[($\gamma_i$)] Let $b_{j}^i$ be the number of vertex-disjoint maximal paths in $F_j^i$ with one endpoint in $A'$ and the other in $B'$.
If $a_{i,j}=1$ and $i \ge 1$, then $b_j^i = b_j^{i-1}+1$. Otherwise $b_j^i = b_j^{i-1}$. 
\end{itemize}
We assign the edges of $H_1$ incident with $w_i$ to $F_1^{i-1},\dots,F_r^{i-1}$ in two steps.
In the first step, for each index $j\le r$ with $a_{i,j}=2$ in turn, we assign an edge of $H_1$ between $w_i$ and $V_0$ to
$F_j^{i-1}$ whenever there is such an edge left. More formally, to do this, we set $N_0 := N_{H_1}(w_i)$.
For each $j \le r$ in turn, if $a_{i,j} = 2$ and $N_{j-1} \cap V_0 \ne \emptyset$, then we choose a vertex
$v \in N_{j-1} \cap V_0$ and set $F_j':= F_j^{i-1} + w_i v$, $N_j: = N_{j-1} \setminus \{v\}$ and $a'_{i,j} := 1$.
Otherwise, we set $F_j':= F_j^{i-1}$, $N_j:= N_{j-1}$ and $a'_{i,j} := a_{i,j}$.

Therefore, after having dealt with all indices $j\le r$ in this way, we have that
\begin{align}
\textrm{either $a'_{i,j} \le 1$ for all $j \le r$ or $ N_r \cap V_0 = \emptyset$ (or both).} \label{V0cond}
\end{align}
Note that by (b$'$) we have $e(F_j') \le m_j + \sum_{i'\le i} a_{i',j} \le 4$ for all $j\le r$.
Moreover, (a$'$) implies that $|N_r| = \sum_{j \le r}  a'_{i,j}$.
Also, $N_r  \setminus V_0=N_{H_1}(w_i) \setminus V_0$, and so $N_{H_1}(w_i) \setminus N_r  \subseteq V_0$.
Hence
\begin{align}
|N_r| & = 
| N_{H_1}(w_i) | - | N_{H_1}(w_i) \setminus N_r | 
\ge d_{H_1}(w_i) - |V_0| \ge d_{H_1}(w_i) - \epszero n. 
\label{v'1}
\end{align}

In the second step, we assign the remaining edges of $H_1$ incident with $w_i$ to $F'_1,\dots,F'_r$.
We achieve this by finding a perfect matching $M$ in a suitable auxiliary graph.

\medskip

\noindent
{\bf Claim.} \emph{Define a graph $Q$ with vertex classes $N_r$ and $V'$ as follows:
$V'$ consists of $a'_{i,j}$ copies of $F_j'$ for each $ j \le r$.
$Q$ contains an edge between $v \in N_r$ and $F'_j \in V'$ if and only $v$ is not an endpoint of an edge in $F'_j$.
Then $Q$ has a perfect matching~$M$.}

\smallskip

\noindent
To prove the claim, note that 
\begin{align}\label{eqn:V1=V2}
	|V'| = \sum_{j \le r} a'_{i,j} = |N_r|  \overset{\eqref{v'1}}{\ge} d_{H_1}(w_i) - \epszero n.  
\end{align}
Moreover, since $F'_j\subseteq H$ is bipartite and so every edge of $F'_j$ has at most one endpoint in $N_r$,
it follows that
\begin{align}
d_{Q} (F'_j) \geq |N_r| - e(F_j')  \ge |N_r| -4 \label{eqn:dQF_j}
\end{align}
for each $F_j' \in V'$.
Consider any $v \in N_r$. Clearly, there are at most $d_{H_1}(v)$ indices $j\le r$ such that $v$ is an endpoint of an
edge of $F_j'$. If $v \in N_r \setminus V_0 \subseteq A \cup B$, then by (c$_5$), $v$ lies in at most
$2 d_{H_1}(v) \le 2 d_{H}(v)\le 2 \epszero n$ elements of $V'$. (The factor~2 accounts for the fact that each $F'_j$ occurs in
$V'$ precisely $a'_{i,j}\le 2$ times.) So
$$
d_{Q} (v) \geq |V'| - 2\epszero n  \overset{\eqref{eqn:V1=V2}}{\geq} d_{H_1}(w_i)- 3\epszero  n  \overset{\eqref{eqn:H2}}{\geq} \epszero n.
$$
If $v \in N_r \cap V_0$, then \eqref{V0cond} implies that $a'_{i,j} \le  1$ for all $j\le r$.
Thus
\begin{align*}
d_{Q} (v) \geq |V'| - d_{H_1}(v) \overset{\eqref{eqn:V1=V2}}{\geq} ( d_{H_1}(w_i) - d_{H_1}(v) )- \epszero n
\overset{\eqref{eqn:H2}}{\geq} 2 \epszero n  - \epszero n = \epszero n .
\end{align*}
To summarize, for all $v \in N_r$ we have $d_{Q} (v) \geq \epszero n $.
Together with \eqref{eqn:dQF_j} and the fact that $|N_r| = |V'|$ by~\eqref{eqn:V1=V2}
this implies that $Q$ contains a perfect matching $M$ by Hall's theorem.
This proves the claim.

\medskip

For each $j\le r$, let $F^i_j$ be the graph obtained from $F'_j$ by adding the edge $w_iv$ whenever the perfect matching~$M$ 
(as guaranteed by the claim) contains
an edge between $v$ and $F'_j$.

Let us now verify ($\alpha_i$)--($\gamma_i$) for all $i \le |W'|$. Clearly, ($\alpha_0$)--($\gamma_0$) hold and $b_j^0=m_j$.
Now suppose that $i \ge 1$ and that ($\alpha_{i-1}$)--($\gamma_{i-1}$) hold.
Clearly, ($\alpha_i$) holds by our construction of $F^i_1,\dots,F^i_r$. 
Now consider any $j\le r$. If $a_{i,j} = 0$, then ($\beta_i$) and ($\gamma_i$) follow from ($\beta_{i-1}$) and ($\gamma_{i-1}$).
If $a_{i,j} = 1$, then the unique edge in $F^i_j-F_j^{i-1}$ is vertex-disjoint from any edge of $F_j^{i-1}$ (by the definition of $Q$)
and so ($\beta_i$) holds.
Moreover, $b_j^i = b_j^{i-1}+1$ and so ($\gamma_i$) holds. 
So suppose that $a_{i,j} = 2$. Then the unique two edges in $F_j^{i} - F_j^{i-1}$ form a path $P=v'w_iv''$ of length two with internal vertex~$w_i$.
Moreover, at least one of the edges of $P$, $w_iv''$ say, was added to $F_j^{i-1}$ in the second step of our construction of $F^i_j$.
Thus $d_{F^i_j}(v'')=1$. The other edge $w_iv'$ of $P$ was either added in the first or in the second step.
If $w_iv'$ was added in the second step, then $d_{F^i_j}(v')=1$. Altogether this shows that in this case
($\gamma_i$) holds and ($\beta_i$) follows from ($\beta_{i-1}$).
So suppose that $w_iv'$ was added to $F_j^{i-1}$ in the first step of our construction of $F^i_j$.
Thus $v'\in V_0\setminus W'$. But since $a_{i,j} = 2$, (b$'$) implies that $e(F_{j}^{i-1}) = m_j + \sum_{i' < i } a_{i',j} \le 2$.
Together with ($\beta_{i-1}$) this shows that $d_{F_{j}^{i-1}}(v) \le 1$ for all $v \in V_0 \setminus W'$.
Hence $d_{F_{j}^{i-1}}(v') \le 1$ and so $d_{F_{j}^{i}}(v') \le 2$. Together with ($\beta_{i-1}$) this implies ($\beta_i$).
(Note that if $e(F_j^{i-1})=0$, then the above argument actually shows that $d_{F_j^i(v')} \le 1$, as required.)
Moreover, the above observations also guarantee that ($\gamma_i$) holds. Thus $F^i_1,\dots,F^i_r$ satisfy ($\alpha_i$)--($\gamma_i$).

After having assigned the edges of $H_1$ incident with $w_i$ for all $i\le |W'|$, we have obtained graphs
$F^{|W'|}_1,\dots,F^{|W'|}_r$. Let $F_j:=F^{|W'|}_j$ for all $j\le r$. Note that by ($\gamma_{|W'|}$) for all $j\le r$
the number of vertex-disjoint maximal $A'B'$-paths in $F_j$ is precisely $b^{|W'|}_j$.

We now claim that $b_j^{|W'|}$ is positive and even.
To verify this,  recall that $b_j^0=m_j$. Let ${\rm odd}_j$ be the number of $a_{i,j}$ with $a_{i,j}=1$ and $i \le |W'|$.
So $b_j^{|W'|}=m_j+{\rm odd}_j$.
Together with (c$'$) this immediately implies that $b_j^{|W'|} \ge 2$.
Moreover,  since $a_{i,j} \in \{0,1,2 \}$ we have
$$
b_j^{|W'|}=m_j+{\rm odd}_j=m_j+\sum_{i \le |W'|,\ a_{i,j} {\rm \ is\ odd}} a_{i,j}.
$$
Together with (b$'$) this now implies that $b_j^{|W'|}$ is even. This proves the claim.

Together with (a$'$), (b$'$) and ($\alpha_i$), ($\beta_i$)
for all $i\le |W'|$ this in turn shows that $F_1,\dots,F_r$ form a decomposition of $H_1$
into edge-disjoint Hamilton exceptional system candidates with
$e(F_j)=4$ for all $j\le \eta' r$ and $e(F_j)=2$ for all $\eta' r<j\le r$. Recall that $\eta' r=\eta \alpha n$
by~(\ref{eq:eta'}) and that we have already constructed Hamilton exceptional system candidates $F_{199\alpha n/200+1},\dots,F_{ \alpha n}$
which satisfy the `furthermore statement' of the lemma, and thus in particular consist of precisely two edges.
This completes the proof of the lemma.
\end{proof}

\subsection{Proof of Lemma~\ref{lma:BESdecomcritical}}

We will now combine Lemmas~\ref{lma:movecritical},~\ref{lma:BESdecomprelim2} and~\ref{BEScons} in order
to prove Lemma~\ref{lma:BESdecomcritical}.
This will complete the construction of the required exceptional sequences in the case when $G$ is both critical and $e(G[A',B'])\ge D$.

\removelastskip\penalty55\medskip\noindent{\bf Proof of Lemma~\ref{lma:BESdecomcritical}. }
Let $G^{\diamond}$ be as defined in Lemma~\ref{lma:BESdecomcritical}(iv).
Our first aim is to decompose $G^{\diamond}$ into suitable `localized' subgraphs via Lemma~\ref{lma:movecritical}.
Choose a new constant $\eps'$ such that $\eps \ll \eps' \ll \lambda, 1/K$ and
define $\alpha$ by
\begin{equation} \label{alphaeqD}
2 \alpha n  :=\frac{ D - \phi n}{K^2}.
\end{equation}
Recall from Lemma~\ref{lma:BESdecomcritical}(ii) that $D = (n-1)/2$ or $D = n/2-1$.
Together with our assumption that $\phi \ll 1$ this implies that%
   \COMMENT{$\alpha= \frac{ D/n - \phi}{2K^2}\ge \frac{ 1/2-1/n - \phi}{2K^2}=\frac{ 1-2/n - 2\phi}{4K^2}$.}
\begin{equation}\label{alphahier3}
\frac{ 1-2/n - 2\phi}{4K^2}\le \alpha \le  \frac{ 1 - 2\phi}{4K^2} \ \ \ \  \ \ \ \ \text{and} \ \ \ \  \ \ \ \ \eps \ll \eps'  \ll \lambda, 1/K, \alpha \ll 1.
\end{equation}
Note that by Lemma~\ref{lma:BESdecomcritical}(ii) and (iii) we have $ e_{G^{\diamond}}(A',B') \ge D - \phi n =  2 K^2 \alpha n$.
Together with Lemma~\ref{critical'}(iii) this implies that
\begin{align}
	2 K^2 \alpha n \le  e_{G^{\diamond}}(A',B') \le e_G(A',B') \le 17D/10+5
\stackrel{(\ref{alphaeqD})}{\le} {18 K^2 \alpha n}/{5} \stackrel{(\ref{alphahier3})}{<} n . \label{eW'}
\end{align}
Moreover, recall that by Lemma~\ref{lma:BESdecomcritical}(i) and (iii) we have
\begin{equation}\label{eq:degvV0}
d_{G^{\diamond}}(v) = 2K^2 \alpha n \ \ \ \  \ \ \text{for all } v \in V_0.
\end{equation}
Let $W$ be the set of all those vertices $w\in V(G)$ with $d_{G[A',B']}(w)\ge 11D/40$. So $W$ is as defined in Lemma~\ref{critical'}
and $1\le |W|\le 3$ by Lemma~\ref{critical'}(i). Let $W'\subseteq V(G)$ be as guaranteed by Lemma~\ref{critical'}(v). Thus $W\subseteq W'$, $|W'|\le 3$,
\begin{align}\label{eq:degrees}
d_{G[A',B']}(w')  \ge \frac{21D}{80}, \ 
d_{G[A',B']}(v) \le \frac{11D}{40} \ \
{\rm and} \ \
d_{G[A',B']}(w') - d_{G[A',B']}(v) \ge \frac{D}{240}.
\end{align}
for all $w'\in W'$ and all $v\in V(G)\setminus W'$. In particular, $W'\subseteq V_0$. (This follows since
Lemma~\ref{lma:BESdecomcritical}(iii),(iv) and (ESch3) together imply that
$d_{G[A',B']}(v) = d_{G^\diamond [A',B']}(v)+d_{G_0[A',B']}(v)\le \eps_0 n+e_{G_0}(A',B')\le \eps_0 n+\phi n$ for all $v\in A\cup B$.)
Let $w_1, w_2, w_3$ be vertices of $G$ such that
$$d_{G[A',B']}(w_1) \ge d_{G[A',B']}(w_{2}) \ge d_{G[A',B']}(w_{3}) \ge d_{G[A',B']}(v)$$ for
all $v \in V(G) \setminus \{w_1,w_2,w_3\}$, where $w_1$ and $w_2$ are as in Lemma~\ref{lma:BESdecomcritical}(v).
Hence $W$ consists of $w_1, \dots, w_{|W|}$ and $W'$ consists of $w_1, \dots, w_{|W'|}$.
Set $W_0 : = \{w_1,w_2\} \cap W'$. Since $d_{G_0}(v)=\phi n$ for each $v\in V_0$ (and thus for each $v\in W_0$), each $w \in W_0$ satisfies%
\COMMENT{Previously, we had $15 K^2 \alpha n /38 \stackrel{(\ref{alphaeqD})}{\le} 21D/80-\phi n $, but the first inequality can be replaced by $K^2$,
as this is what we actually use in Lemma~\ref{lma:movecritical}}
\begin{align}
\label{degW0} 
K^2 {\le} 21D/80-\phi n \stackrel{(\ref{eq:degrees})}{\le}d_{G^{\diamond}[A',B']}(w) \le K^2 \alpha n \stackrel{(\ref{eW'})}{\le} e_{G^{\diamond}}(A',B')/2.
\end{align}
(Here the third inequality follows from Lemma~\ref{lma:BESdecomcritical}(v).)
Apply Lemma~\ref{lma:movecritical} to $G^{\diamond}$ in order to obtain a decomposition
of $G^{\diamond}$ into edge-disjoint spanning subgraphs $H(i,i')$ and $H''(i,i')$ (for all $1\le i,i' \le K$) which satisfy the following
properties, where $G'(i,i'):=H(i,i')+H''(i,i')$:
\begin{itemize}
\item[\rm (b$'_1$)] Each $H(i,i')$ contains only $A_0A_i$-edges and $B_0B_{i'}$-edges.
\item[\rm (b$'_2$)] $H''(i,i')\subseteq G^{\diamond}[A',B']$.
Moreover, all but at most  $20 \eps n/K^2$ edges of $H''(i,i')$ lie in $G^{\diamond}[A_0 \cup A_i, B_0 \cup B_{i'}]$.
\item[\rm (b$'_3$)] $e(H''(i,i')) =  2 \left\lceil e_{G^{\diamond}}(A',B') / (2 K^2) \right\rceil$ or $e(H''(i,i')) =  2 \lfloor e_{G^{\diamond}}(A',B') / \break (2 K^2) \rfloor$.
In particular, $2\alpha n \le e(H''(i,i')) \le 19 \alpha n /5$ by~\eqref{eW'}.
\item[\rm (b$'_4$)] $d_{H''(i,i')}(v )  =  ( d_{G^{\diamond}[A',B']}(v)  \pm 25 \eps n)/K^2$ for all $v \in V_0$.
\item[\rm (b$'_5$)] $d_{G'(i,i')}(v )  =  ( d_{G^\diamond}(v)  \pm 25 \eps n)/K^2=\left( 2 \alpha  \pm   25 \eps/K^2 \right) n$ for all $v \in V_0$ by~\eqref{eq:degvV0}.
\item[\rm (b$'_6$)] Each $w \in W_0$ satisfies $d_{H''(i,i')}(w) \le \lceil d_{G{^{\diamond}}[A',B']}(w) / K^2 \rceil \le \alpha n $ by~\eqref{degW0}.
\end{itemize}
Our next aim is to apply Lemma~\ref{lma:BESdecomprelim2} to each $H''(i,i')$ to obtain suitable Hamilton exceptional system candidates
(in particular almost all of them will be `localized').
So consider any $ 1 \le i,i' \le K$%
\COMMENT{AL: added $1 \le$}
 and let $H'':=H''(i,i')$. We claim that there exists $0 \le \eta \le  9/10$
such that $H''$ satisfies the following conditions (which in turn imply conditions (c$_1$)--(c$_5$) of Lemma~\ref{lma:BESdecomprelim2}): 
\begin{itemize}
	\item[\rm (c$_1'$)] $e(H'') = 2(1+ \eta)\alpha n$ and $\eta \alpha n\in \mathbb{N}$.
	\item[\rm (c$_2'$)] $e(H''- W' ) \le 199 \alpha n/100$ and $d_{H''}(w) \ge 13 \alpha n / 25 $ for all $w \in W'$.
	\item[\rm (c$_3'$)] $d_{H''}(w) \le \alpha n$ for all $w \in W_0$ and $d_{H''}(w') \le 41 \alpha n /40$ for all $w'\in W'\setminus W_0$.%
	\COMMENT{Recall that if $W_0\neq W'$ then $W_0=\{w_,w_2\}$ and $W'=\{w_,w_2,w_3\}$.}
	\item[\rm (c$_4'$)] For all $w \in W'$ and all $v \in V(G) \setminus W'$ we have $d_{H''}(w) - d_{H''}(v) \ge \alpha n/150$.
	\item[\rm (c$_5'$)] For all $v \in A \cup B$ we have $d_{H''}(v) \le \epszero n$.
\end{itemize}
Clearly, (b$_3'$) implies the first part of (c$_1'$). Since $e(H'')$ is even by~(b$_3'$) and $\alpha n\in \mathbb{N}$,
it follows that $\eta \alpha n\in \mathbb{N}$.
To verify the first part of (c$_2'$), note that (b$'_3$) and (b$'_4$) together imply that
\begin{align*}
e(H''- W' ) & = e(H'')-\sum_{w\in W'} d_{H''}(w)+e(H''[W']) \\ & \le 
2 \left\lceil e_{G^{\diamond}}(A',B') / (2 K^2) \right\rceil- \sum_{w\in W'} (d_{G^\diamond[A',B']}(w) - 25 \eps n)/K^2 +3\\
& \le (e_{G^\diamond-W'}(A',B') +80\eps n)/K^2.
\end{align*}
Together with Lemma~\ref{critical'}(iv) this implies that
$$e(H''- W' )\le (e_{G-W'}(A',B') +80\eps n)/K^2\le ((3D/4+5)+80\eps n)/K^2\le 199 \alpha n/100.$$
To verify the second part of (c$_2'$), note that by (\ref{eq:degrees}) and Lemma~\ref{lma:BESdecomcritical}(iii) each $w\in W'$
satisfies $d_{G^\diamond [A',B']}(w)\ge d_{G[A',B']}(w)-\phi n \ge 21D/80-\phi n$. Together with (b$_4'$)
this implies $d_{H''}(w) \ge 26\alpha n/50$. Thus (c$_2'$) holds. By (b$_6'$) we have $d_{H''}(w) \le \alpha n $ for all $w \in W_0$.
If $w' \in W' \setminus W_0$, then Lemma~\ref{lma:BESdecomcritical}(ii) implies $d_{G[A',B']}(w') \le D/2 \le 51 K^2 \alpha n /50$.
Thus, $d_{H''}(w') \le 41 \alpha n /40$ by~(b$_4'$). Altogether this shows that~(c$_3'$) holds.
(c$_4'$) follows from (\ref{eq:degrees}), (b$'_4$) and the fact that $d_{G^\diamond [A',B']}(v)\ge d_{G[A',B']}(v)-\phi n$ for all
$v\in V(G)$ by Lemma~\ref{lma:BESdecomcritical}(iii).
(c$_5'$) holds since $d_{H''}(v) \le d_{G^\diamond[A',B']}(v) \le \eps_0 n$ for all $v\in A\cup B$ by (ESch3).

Now we apply Lemma~\ref{lma:BESdecomprelim2} in order to decompose $H''$ into $\alpha n$ edge-disjoint Hamilton exceptional system candidates $F_1,\dots,F_{\alpha n}$ such that $e(F_s) \in \{2,4\}$
for all $s\le \alpha n$ and such that at least $\alpha n /200$ of $F_s$ satisfy $e(F_s)=2$ and $d_{F_s}(w)=1$ for all $w \in W_0$. 
Let
\begin{align*}
\gamma & := \alpha - \frac{\lambda}{K^2} & 
& \text{and} &
\gamma' & : = \frac{\lambda}{K^2}.
\end{align*}   
Recall that by (b$_2'$) all but at most $20 \eps n /K^2 \le \eps' n $ edges of $H''$ lie in $G^{\diamond}[A_0 \cup A_i, B_0 \cup B_{i'}]$.
Together with (\ref{alphahier3}) this ensures that we can relabel the $F_s$ if necessary to obtain $\alpha n $ edge-disjoint Hamilton exceptional system
candidates
$F_1(i,i'), \dots , F_{\gamma n}(i,i')$ and $F'_1(i,i'), \dots , F'_{\gamma ' n}(i,i')$ such that the following properties hold:
\begin{itemize}
	\item[(a$'$)] $F_s(i,i')$ is an $(i,i')$-HESC for every $s \le \gamma n$.
	Moreover, $\gamma ' n$ of the $F_s(i,i')$ satisfy $e(F_s(i,i')) = 2$ and $d_{F_s(i,i')}(w) = 1$ for all $w \in W_0$.%
	\item[(b$'$)] $e(F'_s(i,i')) = 2$ for all but at most $\eps' n$ of the $F'_s(i,i')$.
	\item[(c$'$)] $e(F_s(i,i')), e(F'_s(i,i'))\in \{2,4\}$.
\end{itemize}
For (b$'$) and the `moreover' part of (a$'$), we use  that $ \alpha n /200 - \eps' n \ge 2\lambda n/K^2 = 2\gamma'n$.
Our next aim is to apply Lemma~\ref{BEScons} with $G^\diamond$ playing the role of $G^*$ to extend the above exceptional system candidates into 
exceptional systems.
Clearly conditions (i) and (ii) of Lemma~\ref{BEScons} hold. (iii) follows from (b$'_1$). (iv) and (v) follow from (a$'$)--(c$'$).
(vi) follows from Lemma~\ref{lma:BESdecomcritical}(i),(iii). Finally, (vii) follows from (b$'_5$) since $G'(i,i')$ plays the role of $G^*(i,i')$.
Thus we can indeed apply Lemma~\ref{BEScons} to obtain a decomposition of $G^\diamond$ into $K^2\alpha n$
edge-disjoint Hamilton exceptional systems $J_1(i,i'),\dots,J_{\gamma n}(i,i')$ and $J'_1(i,i'),\dots,J'_{\gamma' n}(i,i')$
with parameter $\eps_0$, where $1\le i,i'\le K$, such that $J_s(i,i')$ is an $(i,i')$-HES which is a faithful extension of  $F_s(i,i')$ for all $s \le \gamma n$
and $J'_s(i,i')$ is a faithful extension of $F'_s(i,i')$ for all $s \le \gamma' n$.
Then the set $\mathcal{J}$ of all these exceptional systems is as required in Lemma~\ref{lma:BESdecomcritical}.
(Since $W_0$ contains $\{w_1,w_2\} \cap W$, the `moreover part' of (a$'$) implies the `moreover part' of Lemma~\ref{lma:BESdecomcritical}(b).)
\endproof


\section{The Case when $e(A',B') < D$}\label{1factsec}

The aim of this section is to prove Lemma~\ref{lma:PBESdecom}. This lemma provides a decomposition of the exceptional edges into exceptional systems in the  case when $e(A',B') < D$.
In this case, we do not need to prove any auxiliary lemmas first, as we can apply those proved in the other two cases
(Lemmas~\ref{BEScons} and~\ref{lma:movecritical}).

\removelastskip\penalty55\medskip\noindent{\bf Proof of Lemma~\ref{lma:PBESdecom}.}
Let $\eps'$ be a new constant such that $\eps \ll \eps' \ll \lambda, 1/K$ and
set 
\begin{equation} \label{alphaD}
2\alpha n:= \frac{n/2 - 1 - \phi n }{K^2}.
\end{equation}
Similarly as in the proof of Lemma~\ref{lma:BESdecomcritical} we have
\begin{equation}\label{alphahier4}
\eps \ll \eps'  \ll \lambda, 1/K, \alpha \ll 1.
\end{equation}
We claim that $G^{\diamond}$ can be decomposed into edge-disjoint spanning subgraphs $H(i,i')$ and $H''(i,i')$ (for all $1\le i,i' \le K$)
which satisfy the following properties, where $G'(i,i'):=H(i,i')+H''(i,i')$:
\begin{itemize}
\item[\rm (b$_1'$)] Each $H(i,i')$ contains only $A_0A_i$-edges and $B_0B_{i'}$-edges.
 \item[\rm (b$_2'$)] $H''(i,i')\subseteq G^{\diamond}[A',B']$.
Moreover, all but at most  $\eps' n$ edges of $H''(i,i')$ lie in $G^{\diamond}[A_0 \cup A_i, B_0 \cup B_{i'}]$.
\item[\rm (b$_3'$)] $e ( H''(i,i') )$ is even and $e(H''(i,i')) \le 2 \alpha n $.
\item[\rm (b$_4'$)] $\Delta(H''(i,i')) \le e ( H''(i,i') )/2$.
\item[\rm  (b$_5'$)] $d_{G'(i,i')}(v )  =  (2\alpha   \pm  \eps' )n$ for all $v\in V_0$.
\end{itemize}
To see this, let us first consider the case when $ e_{G^{\diamond}}(A',B') \le 300 \eps n $.
Apply Lemma~\ref{lma:randomslice} to $G^{\diamond}$ in order to obtain a decomposition
of $G^{\diamond}$ into edge-disjoint spanning subgraphs $H(i,i')$ and $H'(i,i')$ (for all $1\le i,i' \le K$) which satisfy
Lemma~\ref{lma:randomslice}(a$_1$)--(a$_5$). Set $H''(1,1) := \bigcup_{i,i' \le K} H'(i,i') = G^{\diamond}[A',B']$ and
$H''(i,i') $ $:= \emptyset$ for all other pairs $1 \le i,i' \le K$.%
\COMMENT{AL: added $1 \le$}
 Then (b$_1'$) follows from (a$_1$).
(b$_2'$) follows from our definition of the $H''(i,i')$ and our assumption that $ e_{G^{\diamond}}(A',B') \le 300 \eps n< \eps' n <\alpha n$.
Together with Lemma~\ref{lma:PBESdecom}(iv) this also implies (b$_3'$). (b$_4'$) follows from Lemma~\ref{lma:PBESdecom}(v).
Note that by Lemma~\ref{lma:PBESdecom}(i) and~(iii),
every $v\in V_0$ satisfies $d_{G^\diamond}(v)=n/2-1-\phi n=2K^2\alpha n$.
So, writing $G(i,i'):=H(i,i')+H'(i,i')$, (a$_5$) implies that
$$
d_{G'(i,i')}(v )=d_{G(i,i')}(v)\pm 300 \eps n =(2\alpha \pm 4\eps/K^2)n\pm 300 \eps n=(2\alpha   \pm  \eps' )n.
$$
Thus (b$_5'$) holds too.

So let us next consider the case when $ e_{G^{\diamond}}(A',B') > 300 \eps n $.
Let $W_0$ be the set of all those vertices $v \in V(G)$ for which $d_{G^{\diamond}[A',B']}(v) \ge 3e_{G^{\diamond}}(A',B')/8$.
Then clearly $|W_0| \le 2$. Moreover, each $v \in V(G) \setminus W_0$ satisfies
\begin{align}\label{eq:degnonW0}
d_{G^{\diamond}[A',B']}(v) + 26 \eps n < 3e_{G^{\diamond}}(A',B')/8 + e_{G^{\diamond}}(A',B')/8 = e_{G^{\diamond}}(A',B')/2.
\end{align}
Recall from Lemma~\ref{lma:PBESdecom}(v) that $d_{G^{\diamond}[A',B']}(w) \le e_{G^{\diamond}}(A',B')/2$ for each $w\in W_0$. So we can
apply Lemma~\ref{lma:movecritical} to $G^{\diamond}$ in order to obtain a decomposition
of $G^{\diamond}$ into edge-disjoint spanning subgraphs $H(i,i')$ and $H''(i,i')$ (for all $1\le i,i' \le K$) which satisfy
Lemma~\ref{lma:movecritical}(b$_1$)--(b$_7$). Then (b$_1$) and (b$_2$) imply (b$'_1$) and (b$'_2$).
(b$_3'$) follows from (b$_3$),~(\ref{alphaD}) and Lemma~\ref{lma:PBESdecom}(v). Note that (b$_3$), (b$_4$) and~(\ref{eq:degnonW0})
together imply that
\begin{equation}\label{eq:degH''v}
d_{H''(i,i')}(v) \le \frac{e_{G^{\diamond}}(A',B')/2-\eps n}{K^2}\le \frac{e ( H''(i,i') )}{2}
\end{equation}
for all $v \in V_0 \setminus W_0$. Note that each $v\in A\cup B$ satisfies $d_{H''(i,i')}(v) \le d_{G^\diamond[A',B']}(v)\le \eps_0 n$ by
Lemma~\ref{lma:PBESdecom}(iv) and (ESch3). Together with the fact that $e ( H''(i,i'))\ge 2\lfloor 300\eps n/(2K^2)\rfloor \ge 2\eps_0 n$
by~(b$_3$), this implies that~(\ref{eq:degH''v}) also holds for all $v\in A\cup B$. 
 Together with (b$_7$) this implies (b$'_4$).
(b$'_5$) follows from (b$_5$) and the fact that by Lemma~\ref{lma:PBESdecom}(i) and~(iii)
every $v\in V_0$ satisfies $d_{G^\diamond}(v)=n/2-1-\phi n=2K^2\alpha n$.
So (b$'_1$)--(b$'_5$) hold in all cases.

We now decompose the localized subgraphs $H''(i,i')$ into exceptional system candidates. For this, 
fix $i,i' \le K$ and write $H''$ for $H''(i,i')$. By (b$_4'$) we have $\Delta(H'') \le e ( H'')/2$ and so $\chi'(H'') \le e ( H'')/2$.
Apply Proposition~\ref{prop:matchingdecomposition} with $e ( H'')/2$ playing the role of $m$ to decompose $H''$ into $e ( H'')/2$ edge-disjoint
matchings, each of size~$2$. Note that $\alpha n-e ( H'')/2\ge 0$ by (b$_3'$). So we can add some empty matchings to obtain a
decomposition of $H''$ into $\alpha n$ edge-disjoint $M_1, \dots ,M_{\alpha n}$ such that each $M_s$ is either empty or has size~2.
Let
\begin{align*}
\gamma & := \alpha - \frac{\lambda}{K^2} & 
& \text{and} &
\gamma' & : = \frac{\lambda}{K^2}.
\end{align*} 
Recall from (b$_2'$) that all but at most $\eps' n \le \gamma' n$ edges of $H''$ lie in $G^{\diamond}[A_0 \cup A_i, B_0 \cup B_{i'}]$.
Hence by relabeling if necessary, we may assume that  $M_s \subseteq G^{\diamond}[A_0 \cup A_i ,  B_0 \cup B_{i'}]$ for every $s \le \gamma n$.
So by setting $F_s(i,i') := M_s$ for all $s \le \gamma n$ and $F_s'(i,i') := M_{ \gamma n + s }$ for all $s \le \gamma' n$
we obtain a decomposition of $H''$ into edge-disjoint exceptional system candidates $F_1(i,i'), \dots , F_{\gamma n}(i,i')$
and $ F'_1(i,i'), \dots , F'_{\gamma ' n}(i,i')$ such that
the following properties hold:
\begin{itemize}
	\item[(a$'$)] $F_s(i,i')$ is an $(i,i')$-ESC for every $s \le \gamma n$.
	\item[(b$'$)] Each $F_s(i,i')$ is either a Hamilton exceptional system candidate with $e(F_s(i,i')) = 2$or a matching exceptional system candidate with $e(F_s(i,i'))$  $= 0$. The analogue holds for each $F'_{s'}(i,i')$.
\end{itemize}
Our next aim is to apply Lemma~\ref{BEScons} with $G^\diamond$ playing the role of $G^*$, to extend the above 
exceptional system candidates into exceptional systems.
Clearly conditions (i) and (ii) of Lemma~\ref{BEScons} hold. (iii) follows from (b$'_1$). (iv) and (v) follow from (a$'$) and (b$'$).
(vi) follows from Lemma~\ref{lma:PBESdecom}(i),(iii). Finally, (vii) follows from (b$'_5$) since $G'(i,i')$ plays the role of $G^*(i,i')$ in Lemma~\ref{BEScons}.
Thus we can indeed apply Lemma~\ref{BEScons} to obtain a decomposition of $G^\diamond$ into $K^2\alpha n$
edge-disjoint exceptional systems $J_1(i,i'),\dots,J_{\gamma n}(i,i')$ and $J'_1(i,i'),\dots,J'_{\gamma' n}(i,i')$,
where $1\le i,i'\le K$, such that $J_s(i,i')$ is an $(i,i')$-ES which is a faithful extension of  $F_s(i,i')$ for all $s \le \gamma n$
and $J'_s(i,i')$ is a faithful extension of $F'_s(i,i')$ for all $s \le \gamma' n$.
Then the set $\mathcal{J}$ of all these exceptional systems is as required in Lemma~\ref{lma:PBESdecom}.
\endproof

\chapter{The bipartite case}\label{paper2}
The aim of this chapter is to prove Theorems~\ref{1factbip} and~\ref{NWmindegbip}. Recall that Theorem~\ref{NWmindegbip} guarantees many edge-disjoint Hamilton cycles in a graph $G$ when $G$ has large minimum degree and is
close to bipartite, whilst Theorem~\ref{1factbip} guarantees a Hamilton decomposition of $G$ when $G$ has sufficiently large minimum degree, is regular and is
close to bipartite.
In Section~\ref{sec:sketch} we give an outline of the proofs.
The results from Sections~\ref{eliminating} and~\ref{findBES} are used in both the proofs of Theorems~\ref{1factbip} and~\ref{NWmindegbip}.
In Sections~\ref{sec:spec} and~\ref{sec:robust} we build up machinery for the proof of
Theorem~\ref{1factbip}. We then prove Theorem~\ref{NWmindegbip} in Section~\ref{sec:proof1} and Theorem~\ref{1factbip} in Section~\ref{sec:proof2}.

Unlike in the previous chapters,
in this chapter we view a matching $M$ as a set of edges. (So $|M|$ for example, denotes the number of edges in $M$.)

\section{Overview of the Proofs of Theorems~$\text{\ref{1factbip}}$ and~$\text{\ref{NWmindegbip}}$}\label{sec:sketch}
Note%
    \COMMENT{Deryk changed this section - so read again}
that, unlike in Theorem~\ref{1factbip}, in Theorem~\ref{NWmindegbip} we do not require a complete
decomposition of our graph $F$ into edge-disjoint Hamilton cycles. Therefore, the proof of Theorem~\ref{1factbip} is considerably more involved than the proof of Theorem~\ref{NWmindegbip}.
Moreover, the ideas in the proof of Theorem~\ref{NWmindegbip} are all used in the proof of Theorem~\ref{1factbip} too.

\subsection{Proof Overview for Theorem~\ref{NWmindegbip}} \label{sec:sketch1}
Let $F$ be a graph on $n$ vertices with $\delta (F) \geq (1/2-o(1))n$ which is close to the balanced
bipartite graph $K_{n/2, n/2}$. Further, suppose that $G$ is a $D$-regular spanning subgraph of $F$
as in Theorem~\ref{NWmindegbip}.
Then there is a partition $A$, $B$ of $V(F)$ such that $A$
and $B$ are of roughly equal size and most edges in $F$ go between $A$ and $B$.
 Our ultimate aim is to construct $D/2$ edge-disjoint Hamilton
cycles in $F$.

Suppose first that, in the graph $F$, both $A$ and $B$ are independent sets of equal size. So
$F$ is an almost complete balanced bipartite graph.
In this case, the densest spanning even-regular subgraph $G$ of $F$ is also almost complete bipartite.
This means that one can extend existing techniques (developed e.g. in \cite{CKO, FKS,fk, HartkeHCs,OS})
to find an approximate Hamilton decomposition.
(In Chapter~\ref{paper3}, using such techniques, we prove an approximate decomposition result (Lemma~\ref{almostthmbip}) which is suitable for our purposes. In particular, Lemma~\ref{almostthmbip} is sufficient to prove Theorem~\ref{NWmindegbip} in this special case.)
The real difficulties arise when
\begin{itemize}
	\item[{\rm (i)}] $F$ is unbalanced (i.e. $|A|\not = |B|$);
	\item[{\rm (ii)}] $F$ has vertices having high degree in both $A$ and $B$ (these are called exceptional vertices).
\end{itemize}

To illustrate (i), recall the following example due to Babai%
\COMMENT{osthus added Babai} 
(which is the extremal construction for Corollary~\ref{NWmindegcor}).
Consider the graph $F$ on $n = 8k+2$ vertices consisting of one vertex class $A$ of size $4k+2$ containing
a perfect matching and no other edges, one empty
vertex class $B$ of size $4k$,  and all possible edges between $A$ and $B$.
Thus the minimum degree of $F$ is $4k+1=n/2$. 
Then one can use Tutte's factor theorem to show that the largest even-regular spanning subgraph $G$ of $F$ has degree $D = 2k=(n-2)/4$.
Note that to prove Theorem~\ref{NWmindegbip} in this case, each of the $D/2=k$ Hamilton cycles we find must contain exactly two of the $2k+1$ edges in $A$.
In this way, we can `balance out' the difference in the vertex class sizes.

More generally we will construct our Hamilton cycles in two steps.
In the first step, we find a path system $J$ which balances out the vertex class sizes (so in the above example, $J$ would contain two edges in $A$).
Then we extend $J$ into a Hamilton cycle using only $AB$-edges in $F$.
It turns out that the first step is the difficult one.
It is easy to see that a path system $J$%
\COMMENT{Andy: added $J$}
will balance out the sizes of $A$ and $B$ (in the sense that the number of uncovered vertices in $A$ and $B$ is the same) if 
and only if
\begin{align}
e_J(A)- e_J(B) = |A| - |B|. \label{eJA}
\end{align}
Note that any Hamilton cycle also satisfies this identity.
So we need to find a set of $D/2$ path systems $J$ satisfying \eqref{eJA} (where $D$ is the degree of $G$).
This is achieved (amongst other things) in Sections~\ref{sec:slice} and~\ref{sec:global}.

As indicated above, our aim is to use Lemma~\ref{almostthmbip} (our approximate decomposition result for the bipartite case) in order to extend each such $J$ into a Hamilton cycle.
To apply Lemma~\ref{almostthmbip} we also need to extend the balancing path systems $J$ into `balanced exceptional (path)
systems' which contain all the exceptional vertices from (ii).
This is achieved in Section~\ref{besconstruct}.
Lemma~\ref{almostthmbip} also assumes that the path systems are `localized' with respect to a given subpartition of $A,B$ (i.e.~they are induced by a small number of partition classes).
Section~\ref{partition} prepares the ground for this. The balanced exceptional systems are the analogues of the exceptional systems which we use in the two cliques case (i.e. in Chapter~\ref{paper1}).

Finding the balanced exceptional systems is extremely difficult if $G$ contains edges between the set $A_0$ of exceptional vertices in $A$ and the
set $B_0$ of exceptional vertices in $B$.
So in a preliminary step, we find and remove a small number of edge-disjoint Hamilton cycles covering all $A_0B_0$-edges in Section~\ref{eliminating}.
We put all these steps together in Section~\ref{sec:proof1}.
(Sections~\ref{sec:spec}, \ref{sec:robust} and~\ref{sec:proof2}%
\COMMENT{osthus added sec 9} 
are only relevant for the proof of Theorem~\ref{1factbip}.)

\subsection{Proof Overview for Theorem~\ref{1factbip}}
The main result of this chapter is Theorem~\ref{1factbip}.
Suppose that $G$ is a $D$-regular graph satisfying the conditions of that theorem.
Using the approach of the previous subsection, one can obtain an approximate decomposition of $G$,
i.e.~a set of edge-disjoint Hamilton cycles covering almost all edges of~$G$.
However, one does not have any control over the `leftover' graph~$H$, which makes a complete decomposition seem infeasible.
As in the proof of Theorem~\ref{1factstrong}, we use the following strategy to overcome this issue and
 obtain a decomposition of $G$:
\begin{itemize}
\item[(1)] find a (sparse) robustly decomposable graph~$G^{\rm rob}$ in $G$ and let $G'$ denote the leftover;
\item[(2)] find an approximate Hamilton decomposition of $G'$ and let $H$ denote the (very sparse) leftover;
\item[(3)] find a Hamilton decomposition of~$G^{\rm rob} \cup H$.
\end{itemize}
As before, it is of course far from obvious that such a graph $G^{\rm rob}$ exists.
By assumption our graph $G$ can be partitioned into two classes $A$ and $B$ of almost equal size such that almost all the edges in $G$ go between $A$ and $B$.
If both $A$ and $B$ are independent sets of equal size then the `bipartite' version of the robust decomposition lemma of~\cite{Kelly} guarantees our desired subgraph $G^{\rm rob}$ of $G$.
Of course, in general our graph $G$  will contain edges in $A$ and $B$. 
Our aim is therefore to replace such edges with `fictive  edges' between $A$ and $B$, so that we can apply this version of the
robust decomposition lemma (Lemma~\ref{rdeclemma'}). (We note here that  Lemma~\ref{rdeclemma'} is designed to deal with bipartite graphs. So its statement is slightly different to the  robust decomposition lemma (Lemma~\ref{rdeclemma}) that was applied in the proof of  Theorem~\ref{1factstrong}.)

More precisely, similarly as in the proof of Theorem~\ref{NWmindegbip}, we construct a collection of localized balanced exceptional systems.
Together these path systems contain all the edges in $G[A]$ and $G[B]$. 
Again,%
\COMMENT{osthus added `again'} 
each balanced exceptional system balances out the sizes of $A$ and $B$ and covers the exceptional
vertices in $G$ (i.e.~those vertices having high degree into both $A$ and $B$).

Similarly as in the two cliques case, we now introduce fictive edges. This time,
by replacing  edges of the balanced exceptional systems with fictive edges,
we obtain from $G$ an auxiliary (multi)graph $G^*$ which only contains edges between $A$ and $B$ and 
which does not contain the exceptional vertices of $G$. 
This will allow us to apply
 the robust decomposition lemma. In particular this ensures that each Hamilton cycle obtained in $G^*$ contains
a collection of fictive edges corresponding to a single balanced exceptional system (as before the set-up of
the robust decomposition lemma does allow for this). Each such Hamilton cycle in $G^*$ then corresponds
to a Hamilton cycle in $G$.

We now give an example of how we introduce fictive edges.
Let $m$ be an integer so that $(m-1)/2$ is even. Set $m':= (m-1)/2$ and $m'':= (m+1)/2$. 
Define the graph $G$ as follows:
Let $A$ and $B$ be disjoint vertex sets of size $m$. Let $A_1, A_2$ be a partition of $A$ and
$B_1,B_2$ be a partition of $B$ such that $|A_1|=|B_1|=m''$. Add all edges between $A$ and $B$. Add
a matching $M_1=\{e_1, \dots, e_{m'/2}\}$ covering precisely the
vertices of $A_2$ and add a matching $M_2=\{e'_1, \dots, e'_{m'/2}\}$ covering precisely the vertices of $B_2$. Finally add a vertex
$v$ which sends an edge to every vertex in $A_1 \cup B_1$. So $G$ is $(m+1)$-regular (and $v$ would be regarded as an exceptional vertex).

Now pair up each edge $e_i$ with the edge $e'_i$. Write $e_i=x_{2i-1} x_{2i}$ and
$e'_i=y_{2i-1} y_{2i}$ for each $1\leq i \leq m'/2$.
Let $A_1 =\{a_1, \dots , a_{m''}\}$ and $B_1=\{b_1, \dots , b_{m''}\}$ and write
$f_i: =a_ib_i$ for all $1 \leq i \leq m''$. Obtain $G^*$ from $G$ by deleting  $v$ together with the
edges in $M_1 \cup M_2$ and by adding the following fictive edges: 
add $f_i$ for each $1\leq i \leq m''$ and add $x_jy_j$ for each $1\leq j \leq m'$.
Then $G^*$ is a balanced bipartite $(m+1)$-regular multigraph containing only edges between $A$ and $B$.

First, note that any Hamilton cycle $C^*$ in $G^*$ that contains precisely one fictive edge $f_i$ for some
$1\le i \leq m''$ corresponds to a Hamilton cycle $C$ in $G$, where we replace the fictive edge $f_i$ with
$a_iv$ and $b_iv$.
Next, consider any Hamilton cycle $C^*$ in $G^*$ that contains precisely three fictive edges; $f_i$ for some
$1\le i \leq m''$ together with $x_{2j-1}y_{2j-1}$ and $x_{2j}y_{2j}$ for some $1\leq j \leq m'/2$.
Further suppose $C^*$ traverses the vertices $a_i,b_i,x_{2j-1},y_{2j-1}, x_{2j},y_{2j}$ in this order. Then $C^*$ corresponds
to a Hamilton cycle $C$ in $G$, where we replace the fictive edges with $a_iv, b_i v, e_j$ and $e'_j$
(see Figure~\ref{fig:bridges}). Here the path system $J$ formed by the edges $a_iv, b_iv, e_j$ and $e'_j$ is an example
of a balanced exceptional system.
The above ideas are formalized in Section~\ref{sec:spec}.

\begin{figure}[htb!]
\begin{center}
\includegraphics[width=0.34\columnwidth]{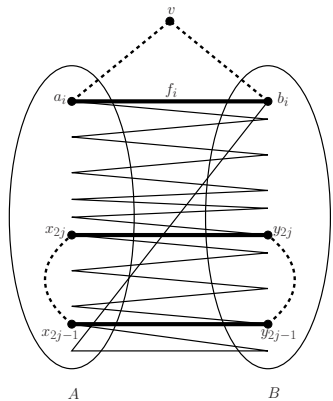}  
\caption{Transforming the problem of finding a Hamilton cycle in $G$ into finding a Hamilton cycle in the balanced bipartite
graph $G^*$}
\label{fig:bridges}
\end{center}
\end{figure}

We can now summarize the steps leading
to proof of Theorem~\ref{1factbip}.
In Section~\ref{eliminating}, we find and remove a set of edge-disjoint Hamilton cycles covering all edges in $G[A_0,B_0]$.
We can then find the localized balanced exceptional systems in Section~\ref{findBES}. After this, we need to extend and combine them into 
certain path systems and factors (which contain fictive edges) in Section~\ref{sec:spec}, before we can use them as an `input' for the robust decomposition lemma in~Section~\ref{sec:robust}.
Finally, all these steps are combined in Section~\ref{sec:proof2} to prove Theorem~\ref{1factbip}.

\section{Eliminating Edges between the Exceptional Sets}\label{eliminating}

Suppose that $G$ is a $D$-regular graph as in Theorem~\ref{1factbip}. 
The purpose of this section is to prove Corollary~\ref{coverA0B02c}.
Roughly speaking, given $K\in \mathbb{N}$, this corollary states that one can delete a small number of edge-disjoint Hamilton cycles from $G$
to obtain a spanning subgraph $G'$ of $G$ and a partition $A,A_0,B,B_0$ of $V(G)$ such that (amongst others) the following properties hold:
\begin{itemize}
\item almost all edges of $G'$ join $A\cup A_0$ to $B\cup B_0$;
\item $|A|=|B|$ is divisible by $K$;
\item every vertex in $A$ has almost all its neighbours in $B\cup B_0$ and every vertex in $B$ has almost all its neighbours in $A\cup A_0$;
\item $A_0\cup B_0$ is small and there are no edges between $A_0$ and
$B_0$ in $G'$.
\end{itemize}
We will call $(G',A,A_0,B,B_0)$ a bi-framework.
(The formal definition of a bi-framework is stated before Lemma~\ref{coverA0B02}.)%
\COMMENT{AL: added this sentense}
Both $A$ and $B$ will then be split into $K$ clusters of equal size.
Our assumption that $G$ is $\eps_{\rm ex}$-bipartite easily implies that there is such a partition $A,A_0,B,B_0$
which satisfies all these properties apart from the property that there are no edges
between $A_0$ and $B_0$.
So the main part of this section shows that we can cover the collection of all  edges 
between $A_0$ and $B_0$ by a small number of edge-disjoint
Hamilton cycles.

Since Corollary~\ref{coverA0B02c} will also be used in the proof of Theorem~\ref{NWmindegbip}, instead of working with regular
graphs we need to consider so-called balanced graphs. We also need to find the above Hamilton cycles in the graph $F\supseteq G$ rather than
in $G$ itself (in the proof of Theorem~\ref{1factbip} we will take $F$ to be equal to~$G$).
 
More precisely, suppose that $G$ is a graph and that $A'$, $B'$ is a partition of $V(G)$, 
where $A'=A_0\cup A$, $B'=B_0\cup B$ and $A,A_0,B,B_0$ are disjoint.
Then we say that $G$ is \emph{$D$-balanced (with respect to $(A,A_0,B,B_0)$)} if 
\begin{itemize}
\item[(B1)] $e_G(A')-e_G(B')=(|A'| -|B'|)D/2$;
\item[(B2)] all vertices in $A_0 \cup B_0$ have degree exactly $D$.%
    \COMMENT{Previously (B2) also included that all vertices in $A \cup B$ have degree at least $D$. But then
$A_0B_0$-path systems are not $2$-balanced since some vertices in $A\cup B$ have degree zero. Check whether we made the necessary changes
and worked with the new def throughout the paper...}
\end{itemize}
Proposition~\ref{edge_number} below implies that whenever $A,A_0,B,B_0$ is a partition of the vertex set of a $D$-regular graph $H$,
then $H$ is $D$-balanced with respect to $(A,A_0,B,B_0)$.
Moreover, note that if $G$ is $D_G$-balanced with respect to $(A,A_0,B,B_0)$ and
$H$ is a spanning subgraph of $G$ which is $D_H$-balanced with respect to $(A,A_0,B,B_0)$, then $G-H$ is
$(D_G-D_H)$-balanced with respect to $(A,A_0,B,B_0)$. Furthermore, a graph $G$ is $D$-balanced with respect to
$(A,A_0,B,B_0)$ if and only if $G$ is $D$-balanced with respect to $(B,B_0,A,A_0)$.
\begin{prop}\label{edge_number}
Let $H$ be a graph and let $A'$, $B'$ be a partition of $V(H)$. Suppose that $A_0$, $A$ is a partition of $A'$
and that $B_0$, $B$ is a partition of $B'$ such that $|A|=|B|$. Suppose that $d_H(v)=D$ for every $v\in A_0\cup B_0$
and $d_H(v)=D'$ for every $v\in A\cup B$. Then $e_H(A')-e_H(B')=(|A'| -|B'|)D/2.$ 
\end{prop}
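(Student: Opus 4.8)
Proposition~\ref{edge_number} is an elementary counting identity, so the plan is simply to compute $e_H(A')-e_H(B')$ by summing degrees over $A'$ and over $B'$ and comparing. First I would write, for the vertex set $A'$, the standard identity $\sum_{v\in A'}d_H(v)=2e_H(A')+e_H(A',B')$, since every edge inside $A'$ is counted twice and every edge between $A'$ and $B'$ is counted once. Similarly $\sum_{v\in B'}d_H(v)=2e_H(B')+e_H(A',B')$. Subtracting these two equations, the cross term $e_H(A',B')$ cancels, giving
\begin{equation*}
2\bigl(e_H(A')-e_H(B')\bigr)=\sum_{v\in A'}d_H(v)-\sum_{v\in B'}d_H(v).
\end{equation*}

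Next I would evaluate the two degree sums using the hypotheses. Since $A'=A_0\cup A$ with $d_H(v)=D$ for $v\in A_0$ and $d_H(v)=D'$ for $v\in A$, we get $\sum_{v\in A'}d_H(v)=|A_0|D+|A|D'$, and likewise $\sum_{v\in B'}d_H(v)=|B_0|D+|B|D'$. Hence the right-hand side above equals $(|A_0|-|B_0|)D+(|A|-|B|)D'$. Now I invoke the assumption $|A|=|B|$, which kills the $D'$ term entirely, leaving $(|A_0|-|B_0|)D$. Finally, since $|A|=|B|$ we also have $|A'|-|B'|=(|A_0|+|A|)-(|B_0|+|B|)=|A_0|-|B_0|$, so the right-hand side is exactly $(|A'|-|B'|)D$. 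Dividing by $2$ yields $e_H(A')-e_H(B')=(|A'|-|B'|)D/2$, as required.

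There is no real obstacle here — the only thing to be careful about is bookkeeping: making sure the handshake-type identity is applied with the correct multiplicities (edges within a part counted twice, crossing edges once) and that the cancellation of both $e_H(A',B')$ and the $D'$-term is used in the right order. The role of the hypothesis $|A|=|B|$ is the crucial point: it is what allows the degree $D'$ of the non-exceptional vertices to drop out, so that the final expression depends only on $D$. I would present the argument in three short displayed lines as above, with a sentence of justification for the handshake identity and a sentence noting $|A'|-|B'|=|A_0|-|B_0|$.
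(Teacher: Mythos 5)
Your proof is correct and is essentially the same argument as the paper's: both use the handshake identity $\sum_{v\in A'}d_H(v)=2e_H(A')+e_H(A',B')$ (the paper writes it in the rearranged form $2e_H(A')=D|A_0|+D'|A|-\sum_{x\in A'}d_H(x,B')$), subtract to cancel the crossing term, and invoke $|A|=|B|$ to eliminate $D'$ and to identify $|A_0|-|B_0|$ with $|A'|-|B'|$.
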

\proof
Note that $$\sum_{x\in A'}d_H(x, B')=e_H(A',B')=\sum_{y\in B'} d_H(y, A').$$ Moreover,
\begin{align*}
2e_H(A')& =\sum_{x\in A_0}(D-d_H(x, B'))+\sum_{x\in A}(D'-d_H(x, B'))\\
&=D|A_0|+D'|A|-\sum_{x\in A'}d_H(x, B')
\end{align*}
and
\begin{align*}
2e_H(B')& =\sum_{y\in B_0}(D-d_H(y, A'))+\sum_{y\in B}(D'-d_H(y, A'))\\
&=D|B_0|+D'|B|-\sum_{y\in B'}d_H(y, A').
\end{align*}
Therefore $$2e_H(A')-2e_H(B')=D(|A_0|-|B_0|)+D'(|A|-|B|)=D(|A_0|-|B_0|)=D(|A'|-|B'|),$$
as desired.
\endproof

The following observation states that balancedness is preserved under
suitable modifications of the partition.
\begin{prop} \label{keepbalance}
Let $H$ be $D$-balanced with respect to $(A,A_0,B,B_0)$. 
 Suppose that $A'_0,B'_0$ is a partition of $A_0 \cup B_0$.
Then $H$ is $D$-balanced with respect to $(A,A'_0,B,B'_0)$.
\end{prop}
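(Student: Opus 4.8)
The plan is to verify the two defining conditions (B1) and (B2) directly from the hypothesis, observing that the only thing that changes when we pass from the partition $(A,A_0,B,B_0)$ to $(A,A'_0,B,B'_0)$ is how the "exceptional" vertices $A_0\cup B_0$ are distributed between the two sides; crucially, $A_0\cup B_0 = A'_0\cup B'_0$ as a set, so $A'=A_0\cup A$ and $B'=B_0\cup B$ are replaced by $\widetilde{A}:=A'_0\cup A$ and $\widetilde{B}:=B'_0\cup B$, and $V(H)=\widetilde A\cup\widetilde B$ is the same vertex partition coarsening. Also $|A|=|B|$ still holds (we have not touched $A$ or $B$), which is what made the original partition balanced in a clean way.

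For (B2): every vertex of $A'_0\cup B'_0 = A_0\cup B_0$ has degree exactly $D$ in $H$ by the assumption that $H$ is $D$-balanced with respect to $(A,A_0,B,B_0)$. So (B2) holds verbatim for the new partition.

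For (B1): I would first record that $(A,A_0,B,B_0)$ being $D$-balanced means, by definition, $e_H(A')-e_H(B') = (|A'|-|B'|)\,D/2$. Now I want to show $e_H(\widetilde A)-e_H(\widetilde B) = (|\widetilde A|-|\widetilde B|)\,D/2$. The cleanest route is Proposition~\ref{edge_number}: it suffices to check that $H$, together with the partition $\widetilde A,\widetilde B$ of $V(H)$ and the sub-partitions $A'_0,A$ of $\widetilde A$ and $B'_0,B$ of $\widetilde B$, satisfies the hypotheses of that proposition. Those hypotheses are exactly that $|A|=|B|$ (true, unchanged) and that every vertex of $A'_0\cup B'_0$ has degree $D$ in $H$ (true, since $A'_0\cup B'_0 = A_0\cup B_0$ and we use (B2) for the original partition). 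Note that Proposition~\ref{edge_number} does not require the vertices of $A\cup B$ to have a \emph{common} degree $D'$ in a strong sense beyond what its statement asks — and here there is no such issue anyway, since we never assumed a common degree on $A\cup B$; wait, Proposition~\ref{edge_number} as stated does assume $d_H(v)=D'$ for all $v\in A\cup B$. If $H$ does not have this property, I would instead argue more directly: write $e_H(\widetilde A) = e_H(A) + e_H(A'_0) + e_H(A'_0,A)$ and similarly for $\widetilde B$, and compare with the analogous decompositions of $e_H(A')$ and $e_H(B')$; the difference $\big(e_H(\widetilde A)-e_H(\widetilde B)\big) - \big(e_H(A')-e_H(B')\big)$ counts, with appropriate signs, the edges incident to vertices moved between the two sides, and a short degree-counting argument (using only that each moved vertex has degree exactly $D$) shows this difference equals $(|\widetilde A|-|\widetilde B|)D/2 - (|A'|-|B'|)D/2$, giving the claim.

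I do not expect any genuine obstacle here: the statement is essentially a bookkeeping lemma, and the substantive content is entirely contained in Proposition~\ref{edge_number} (or a one-line degree count). The only mild subtlety is making sure the hypotheses of Proposition~\ref{edge_number} are met by the new partition, which they are because $|A|=|B|$ is untouched and the degree-$D$ condition is on the set $A_0\cup B_0$ which is partition-invariant; so the main "step" is simply to invoke that proposition with the relabelled partition. I would write the proof in three short sentences: (B2) is immediate; $|A|=|B|$ is unchanged; apply Proposition~\ref{edge_number} to the partition $(A,A'_0,B,B'_0)$ to get (B1).
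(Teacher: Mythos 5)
Your proposal is essentially correct and, in its final form, arrives at the same idea as the paper: (B2) is invariant because the set $A_0\cup B_0$ is unchanged, $|A|=|B|$ is untouched, and (B1) follows from a degree count using only that each moved vertex has degree exactly $D$. Your first instinct to invoke Proposition~\ref{edge_number} is indeed a dead end, as you catch yourself: that proposition assumes a common degree $D'$ on all of $A\cup B$, which $D$-balancedness does not provide — so you were right to abandon it.

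The difference is in presentation. You propose to compare $e_H(\widetilde A)-e_H(\widetilde B)$ with $e_H(A')-e_H(B')$ all at once, accounting for all moved vertices simultaneously. This works but requires some care: edges between two moved vertices can be double-counted or cancel in nonobvious ways, and you leave the bookkeeping as ``a short degree-counting argument'' without carrying it out. The paper instead reduces to the case of moving a single vertex $v\in B_0$ to the $A$-side (the general case then follows by iterating), and for one vertex the computation is a clean one-liner: $e_H(\widetilde A)-e_H(\widetilde B)=e_H(A')+d_H(v,A')-\bigl(e_H(B')-d_H(v,B')\bigr)=(|A'|-|B'|)D/2+D=(|\widetilde A|-|\widetilde B|)D/2$. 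The single-vertex reduction buys you freedom from any double-counting subtleties; the all-at-once version you sketch buys nothing extra and costs a more delicate argument that you would still need to write out. I would recommend adopting the one-vertex-at-a-time reduction when you write this up.
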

\proof
Observe that the general result follows if we can show that $H$ is $D$-balanced with respect to $(A,A'_0,B,B'_0)$, where
$A'_0=A_0 \cup \{ v\}$, $B'_0=B_0 \setminus \{v\}$ and $v \in B_0$.
(B2) is trivially satisfied in this case, so we only need to check (B1) for the new partition.
For this, let $A':=A_0 \cup A$ and $B':=B_0 \cup B$. Now note that (B1) for the original partition implies that
\begin{align*}
e_H(A'_0 \cup A)-e_H(B'_0 \cup B) & = e_H(A') +d_H(v,A') -(e_H(B')-d_H(v,B'))  \\
& = (|A'|-|B'|)D/2 +D =(|A'_0 \cup A|-|B'_0 \cup B|)D/2.
\end{align*}
Thus (B1) holds for the new partition.
\endproof

Suppose that $G$ is a graph and  $A',B'$ is a partition of $V(G)$.
For every vertex $v\in A'$ we call $d_G(v,A')$ the \emph{internal degree
of~$v$} in~$G$. Similarly, for every vertex $v\in B'$ we call $d_G(v,B')$ the \emph{internal degree
of~$v$} in~$G$.

Given a graph $F$ and a spanning subgraph $G$ of $F$ , 
we say that $(F,G,A,A_0,B,$ $B_0)$ is an \emph{$(\eps,\eps',K,D)$-weak framework} if the following holds,
where $A':=A_0\cup A$, $B':=B_0\cup B$ and $n:=|G|=|F|$: 
\begin{itemize}
\item[{\rm (WF1)}] $A,A_0,B,B_0$ forms a partition of $V(G)=V(F)$;
\item[{\rm (WF2)}] $G$ is $D$-balanced with respect to $(A,A_0,B,B_0)$;
\item[{\rm (WF3)}] $e_G(A'), e_G(B')\le \eps n^2$;
\item[{\rm (WF4)}] $|A|=|B|$ is divisible by $K$. Moreover,
 $a+b\le \eps n$, where $a:=|A_0|$ and $b:=|B_0|$;
\item[{\rm (WF5)}] all vertices in $A \cup B$  have internal degree at most $\eps' n$ in $F$;
\item[{\rm (WF6)}] any vertex $v$ has internal degree at most $d_G(v)/2$ in $G$.%
\COMMENT{NEW: Got rid of error term here.}
\end{itemize}
 Throughout the chapter, when referring to internal degrees without mentioning the partition, we always mean with
respect to the partition $A'$, $B'$, where $A'=A_0\cup A$ and $B'=B_0\cup B$. Moreover, $a$ and $b$ will always denote $|A_0|$ and $|B_0|$.

We say that $(F,G,A,A_0,B,B_0)$ is an \emph{$(\eps,\eps',K,D)$-pre-framework} if it satisfies (WF1)--(WF5).
The following observation states that pre-frameworks are preserved if we remove suitable balanced subgraphs.

\begin{prop} \label{WFpreserve}
Let $\eps, \eps ' >0$ and $K,D_G,D_{H} \in \mathbb N$.%
	\COMMENT{AL:changed $D_{G'}$ to $D_{H}$}
Let $(F,G,A,A_0,B,B_0)$ be an $(\eps,\eps',K,D_G)$-pre framework.
Suppose that $H$ is a $D_H$-regular spanning subgraph of $F$ such that
 $G\cap H$ is $D_H$-balanced with respect to $(A,A_0,B,B_0)$. 
Let $F':=F-H$ and $G':=G-H$. Then $(F',G',A,A_0,B,B_0)$ is an $(\eps,\eps',K,D_G-D_H)$-pre framework.
\end{prop}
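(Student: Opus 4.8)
\textbf{Proof plan for Proposition~\ref{WFpreserve}.}
The plan is to verify the conditions (WF1)--(WF5) directly for the pair $(F',G')$ with respect to the partition $A,A_0,B,B_0$ and the parameter $D_G-D_H$. Since $F':=F-H$ and $G':=G-H$ have the same vertex set as $F$ and $G$, and removing edges does not change the vertex partition, (WF1) and (WF4) are immediate from the corresponding properties of the pre-framework $(F,G,A,A_0,B,B_0)$ (note that $|A|=|B|$ and its divisibility by $K$, as well as the bound $a+b\le\eps n$, involve only the partition). So the substantive points are (WF2), (WF3) and (WF5).

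For (WF2) I would argue as follows. Write $A':=A_0\cup A$ and $B':=B_0\cup B$. Since $(F,G,A,A_0,B,B_0)$ is a pre-framework, $G$ is $D_G$-balanced with respect to $(A,A_0,B,B_0)$, i.e.\ $e_G(A')-e_G(B')=(|A'|-|B'|)D_G/2$ and every vertex of $A_0\cup B_0$ has degree exactly $D_G$ in $G$. By hypothesis $G\cap H$ is $D_H$-balanced with respect to the same partition, so $e_{G\cap H}(A')-e_{G\cap H}(B')=(|A'|-|B'|)D_H/2$ and every vertex of $A_0\cup B_0$ has degree exactly $D_H$ in $G\cap H$. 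Now $G'=G-H=G-(G\cap H)$, so $e_{G'}(A')=e_G(A')-e_{G\cap H}(A')$ and similarly for $B'$; subtracting the two balance identities gives $e_{G'}(A')-e_{G'}(B')=(|A'|-|B'|)(D_G-D_H)/2$, which is (B1). For (B2), each $v\in A_0\cup B_0$ satisfies $d_{G'}(v)=d_G(v)-d_{G\cap H}(v)=D_G-D_H$ (using that every edge of $H$ at such a vertex $v$ that lies in $G$ lies in $G\cap H$, and $d_G(v)=D_G$ forces all $H$-edges at $v$ that survive in $G$ to be counted; more simply, $d_{G-H}(v)=d_G(v)-|E(G)\cap E(H)\cap\{\text{edges at }v\}|=D_G-D_H$). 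Hence $G'$ is $(D_G-D_H)$-balanced with respect to $(A,A_0,B,B_0)$, giving (WF2).

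Condition (WF3) is monotone under edge deletion: $e_{G'}(A')\le e_G(A')\le\eps n^2$ and $e_{G'}(B')\le e_G(B')\le\eps n^2$ since $G'\subseteq G$, so (WF3) holds with the same $\eps$. Similarly (WF5): every vertex $v\in A\cup B$ has internal degree at most $\eps'n$ in $F$, and $F'\subseteq F$, so its internal degree in $F'$ is at most $\eps'n$ as well. This verifies all of (WF1)--(WF5) for $(F',G',A,A_0,B,B_0)$ with parameters $(\eps,\eps',K,D_G-D_H)$, which is exactly the assertion that it is an $(\eps,\eps',K,D_G-D_H)$-pre-framework. I do not expect any genuine obstacle here: the only mildly delicate point is bookkeeping the degree/edge subtractions correctly using $G-H=G-(G\cap H)$ and the hypothesis that $G\cap H$ (rather than $H$ itself) is $D_H$-balanced, which is precisely what makes the balance identity subtract cleanly.
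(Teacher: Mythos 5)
Your argument is correct and matches the paper's proof, which observes that (WF1), (WF3), (WF4), (WF5) are preserved under edge deletion (or depend only on the partition), and that (WF2) for $G'$ follows from the $D_H$-balancedness of $G\cap H$. You have simply spelled out the bookkeeping that the paper leaves implicit.
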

\proof
Note that all required properties except possibly (WF2) are not affected by removing edges.
But $G'$ satisfies (WF2) since $G\cap H$ is $D_H$-balanced with respect to $(A,A_0,B,B_0)$.
\endproof

\begin{lemma}\label{bip_decomp}
Let $0<1/n\ll \varepsilon \ll  \varepsilon', 1/K\ll 1$ and let $D\ge n/200$. 
Suppose that $F$ is a  graph on $n$ vertices which is $\eps$-bipartite and that $G$ is a $D$-regular spanning subgraph of $F$.
Then there is a partition $A,A_0,B,B_0$ of $V(G)=V(F)$ so that 
$(F,G,A,A_0,B,B_0)$ is an $(\eps^{1/3},\eps',K,D)$-weak framework.
\end{lemma}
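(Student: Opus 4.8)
\textbf{Proof proposal for Lemma~\ref{bip_decomp}.}

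The plan is to build the partition directly from the definition of $\eps$-bipartiteness and then verify (WF1)--(WF6) in turn, with the only delicate points being (WF2) (balancedness, which we can force by choosing $|A|=|B|$ divisible by $K$ and moving a bounded number of vertices) and (WF6) (the internal degree bound, which requires a cleaning step). First I would use the hypothesis that $F$ is $\eps$-bipartite: there is a partition $X,Y$ of $V(F)$ with $|X|=\lfloor n/2\rfloor$ and $e_F(X),e_F(Y)\le \eps n^2$. Since $G\subseteq F$, the same bound holds for $G$. Now I would repeatedly move a vertex from its current class to the other class whenever its internal degree (in $G$, say) exceeds half its $G$-degree; as in the proof of Proposition~\ref{prop:framework}, each such move strictly decreases $e_G(X)+e_G(Y)$, so the process terminates, and when it does every vertex $v$ has internal degree at most $d_G(v)/2$ in $G$, giving (WF6). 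The number of moves is at most $e_G(X)+e_G(Y)\le \eps n^2$, and each move changes $e_F(X)+e_F(Y)$ by at most $n$, so after the process we still have $e_F(X'),e_F(Y')\le \eps n^2+\eps n^2\cdot n \le \eps^{1/2}n^2/2$ (say) for the new partition $X',Y'$; relabel so that $|X'|\ge |Y'|$, and note $|X'|,|Y'| = n/2 \pm \eps^{1/2}n$ since moving few vertices cannot unbalance the classes much (a class of size $< (1/2-\eps^{1/2})n$ would, by a minimum-degree / counting argument as in Proposition~\ref{prp:e(A',B')}(i), force too many cross edges, contradicting the bound just obtained).

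Next I would identify the exceptional sets. Let $A_0'$ be the set of $v\in X'$ with internal degree (in $F$) at least $\sqrt{\eps}\,n$, and similarly $B_0'\subseteq Y'$. Since $\sqrt{\eps}\,n\cdot |A_0'|\le 2e_F(X')\le \eps^{1/2}n^2$ we get $|A_0'|\le \sqrt{\eps}\,n$, and likewise $|B_0'|\le \sqrt{\eps}\,n$. Then $X'\setminus A_0'$ and $Y'\setminus B_0'$ both have size $n/2 \pm 2\sqrt{\eps}\,n$, and every vertex in them has internal degree $<\sqrt{\eps}\,n \le \eps' n$ in $F$, which will give (WF5). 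Now let $m$ be the largest integer with $Km\le \min\{|X'\setminus A_0'|,\,|Y'\setminus B_0'|\}$, pick $A\subseteq X'\setminus A_0'$ and $B\subseteq Y'\setminus B_0'$ with $|A|=|B|=Km$, and set $A_0:=V(F)\setminus A$ restricted to... more precisely $A_0:=X'\setminus A$ does not work directly since we want $A_0\cup B_0$ small; instead put the leftover vertices of $X'\setminus A_0'$ into $A_0$ and of $Y'\setminus B_0'$ into $B_0$, i.e.\ $A_0:=A_0'\cup((X'\setminus A_0')\setminus A)$ and $B_0:=B_0'\cup((Y'\setminus B_0')\setminus B)$. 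Then $|A_0|+|B_0|\le 4\sqrt{\eps}\,n + 2K \le \eps^{1/3}n$, giving the bound in (WF4); and $|A|=|B|$ is divisible by $K$ by construction.

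It remains to check (WF2), (WF3) and (WF6) for the final partition. For (WF3): $e_G(A_0\cup A)\le e_G(X')\le \eps^{1/2}n^2\le \eps^{1/3}n^2$ and similarly for $B_0\cup B$. For (WF6): the cleaning step above already arranged that every vertex has internal degree at most $d_G(v)/2$ with respect to $X',Y'$; moving vertices only \emph{into} $A_0$ from $X'$ (and into $B_0$ from $Y'$) does not change which side of the bipartition a vertex lies on, so internal degrees with respect to $(A\cup A_0, B\cup B_0)$ equal those with respect to $(X',Y')$, and (WF6) is preserved. Finally, for (WF2): since $G$ is $D$-regular, $A_0,A,B,B_0$ is a partition of $V(G)$ with $|A|=|B|$, and every vertex has the same degree $D$, Proposition~\ref{edge_number} (applied with $D=D'$) immediately gives $e_G(A_0\cup A)-e_G(B_0\cup B)=(|A_0\cup A|-|B_0\cup B|)D/2$, which is (B1), while (B2) is automatic from $D$-regularity. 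Hence $(F,G,A,A_0,B,B_0)$ is an $(\eps^{1/3},\eps',K,D)$-weak framework.

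\textbf{Main obstacle.} The genuinely delicate step is making the cleaning process (to secure (WF6)) compatible with all the size and degree bounds simultaneously: one must ensure that the total number of vertex-moves is small enough that the cross-edge bounds degrade only to $\eps^{1/3}n^2$ rather than something larger, and that the class sizes remain within $\eps^{1/3}n$ of $n/2$ throughout — this is exactly the kind of bookkeeping done in Proposition~\ref{prop:framework}, and here one has to be slightly careful because we only know $e_G(X),e_G(Y)\le \eps n^2$ (not $\eps n$ moves), so the hierarchy $\eps\ll\eps',1/K$ must be used to absorb the resulting $\eps^{1/2}$-type losses into the $\eps^{1/3}$ in the conclusion.
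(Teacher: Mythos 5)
Your overall outline (clean the partition for (WF6), identify high-internal-degree vertices for (WF5), pick $A,B$ of size $Km$, invoke Proposition~\ref{edge_number} for (WF2)) is the same as the paper's. But the cleaning step, which you correctly flag as the delicate point, contains a genuine error that the paper's proof is specifically designed to avoid.

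You move \emph{any} vertex whose $G$-internal degree exceeds half its $G$-degree and bound the damage to $e_F(X)+e_F(Y)$ by (number of moves)$\,\times\, n$. The number of moves is at most $2\eps n^2$, so the damage can be $\Theta(\eps n^3)$; your written inequality $\eps n^2+\eps n^2\cdot n\le \eps^{1/2}n^2/2$ is false whenever $n\gtrsim\eps^{-1/2}$, i.e.\ always in the regime $1/n\ll\eps$. The consequence is that you cannot control $e_F(X')$ (and hence $|A_0'|$) at all: from $\sqrt{\eps}\,n\cdot|A_0'|\le 2e_F(X')$ and $e_F(X')=\Theta(\eps n^3)$ you only get $|A_0'|\lesssim\sqrt{\eps}\,n^2$, which exceeds $n$, so the whole verification of (WF4) and (WF5) collapses. (Your (WF3) is fine, because $e_G(X)+e_G(Y)$ is \emph{monotone decreasing} under the moves; the problem is only the uncontrolled $F$-internal edge count.)

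The paper's way around this is to restrict the moving step in advance. Before moving anything, fix the set $S$ of vertices with $F$-internal degree at least $\sqrt{\eps}\,n$ in the original partition $S_1,S_2$; this has $|S|\le 4\sqrt{\eps}\,n$. Then only ever move vertices of $S$, and still use $e_G(X,Y)$ as the monotone quantity to show termination. Two things now come for free. First, the final partition $A',B'$ differs from $S_1,S_2$ only on $S$, so for any $v\in A'\setminus S$ one has $d_G(v,A')\le d_F(v,A')\le d_F(v,S_1)+|S|<5\sqrt{\eps}\,n<D/2=d_G(v)/2$ — in other words, no vertex outside $S$ was ever bad, so restricting the moves to $S$ loses nothing and (WF6) holds at termination. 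Second, the exceptional set $A_0'\cup B_0'$ (vertices with $F$-internal degree $\ge\eps'n$ in $A'$ or $B'$) is automatically contained in $S$ by the same inequality, so $|A_0'|+|B_0'|\le 4\sqrt{\eps}\,n$ without needing any bound on $e_F(A')$ at all. Your argument needs this pre-selection of $S$ to make the bookkeeping close; as written it does not.
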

\proof
Let $S_1,S_2$ be a partition of $V(F)$ which is guaranteed by the assumption that $F$ is $\eps$-bipartite.
Let $S$ be the set of all those vertices $x \in S_1$ with $d_F(x,S_1) \ge \sqrt{\eps} n$
together with all those vertices $x \in S_2$ with $d_F(x,S_2) \ge \sqrt{\eps} n$.
Since $F$ is $\eps$-bipartite, it follows that $|S| \le 4\sqrt{\eps} n$.%
    \COMMENT{Have $4\sqrt{\eps} n$ instead of $2\sqrt{\eps} n$ since we get $|S\cap S_i|\sqrt{\eps} n\le 2e(S_i)\le 2\eps n^2$.}

Given a partition $X,Y$ of $V(F)$, we say that $v \in X$ is \emph{bad for $X,Y$} if $d_G(v,X)>d_G(v,Y)$
and similarly that  $v \in Y$ is \emph{bad for $X,Y$} if $d_G(v,Y)>d_G(v,X)$.
Suppose that there is a vertex $v \in S$ which is bad for $S_1$, $S_2$. 
Then we move $v$ into the class which does not currently contain $v$ to obtain a new partition $S'_1$, $S'_2$.
We do not change the set $S$.
If there is a vertex $v' \in S$ which is bad for $S'_1$, $S'_2$, then again we move it into the other class.

We repeat this process. After each step, the number of edges in $G$ between the two classes increases, so this process has to terminate with some partition $A'$, $B'$
such that $A' \bigtriangleup S_1 \subseteq  S$ and $B' \bigtriangleup S_2 \subseteq  S$.
Clearly, no vertex in $S$ is now bad for $A'$, $B'$. Also, for any $v \in A' \setminus S$ we have 
\begin{align}\label{eq:dvA'}
d_G(v,A') & \le d_F (v,A') \le d_F(v,S_1)+|S|\le \sqrt{\eps} n + 4\sqrt{\eps} n< \eps'n \\
& < D/2= d_G(v)/2. \nonumber
\end{align}
 Similarly, $d_G(v,B') < \eps' n< d_G(v)/2$ for all $v \in B' \setminus S$.
Altogether this implies that no vertex is bad for $A'$, $B'$ and thus (WF6) holds.
 Also note that
$e_G(A',B')\ge e_G(S_1,S_2)\ge e(G)-2\eps n^2$.%
	\COMMENT{AL: replaced $2\eps$ with $4\eps$ same with equation below, DO changed these back again...}
So 
\begin{equation}\label{A'B'edge}
e_G(A'),e_G(B') \le 2\eps n^2.
\end{equation}
This implies~(WF3).

Without loss of generality we may assume that $|A'|\ge |B'|$. 
Let $A'_0$ denote the set of all those vertices $v\in A'$ for which $d_F(v, A')\ge \varepsilon' n$.
Define $B'_0\subseteq B'$ similarly.
We will choose sets $A \subseteq A' \setminus A'_0$ and $A_0\supseteq A'_0$
and sets $B\subseteq B'\setminus B'_0$ and $B_0\supseteq B'_0$ such that $|A|=|B|$ is 
divisible by $K$ and so that $A,A_0$ and $B,B_0$ are partitions of $A'$ and $B'$ respectively.
We obtain such sets by moving at most 
$\left| |A'\setminus A'_0|-|B'\setminus B'_0|\right| +K$ vertices from 
$A'\setminus A'_0$ to $A'_0$ and at most $\left| |A'\setminus A'_0|-|B'\setminus B'_0|\right| +K$ vertices from $B'\setminus B'_0$ to $B'_0$.
The choice of $A,A_0,B,B_0$ is such that (WF1) and  (WF5) hold.  Further, since $|A|=|B|$, Proposition~\ref{edge_number} implies~(WF2).

In order to verify (WF4), it remains to show that $a+b=|A_0\cup B_0|\le  \eps^{1/3} n$.
But (\ref{eq:dvA'}) together with its analogue for the vertices in $B'\setminus S$ implies that $A'_0\cup B'_0\subseteq S$.
Thus $|A'_0|+ |B'_0|\le |S|\le 4\sqrt{\eps} n$. 
Moreover, (WF2), (\ref{A'B'edge}) and our assumption that $D\ge n/200$ together imply that
$$|A'|-|B'| =(e_G(A')-e_G(B'))/(D/2)\le 2\eps n^2/(D/2) \le 800 \eps n.$$
So altogether, we have 
\begin{align*}
a+b & \le |A'_0\cup B'_0|+2\left| |A'\setminus A'_0|-|B'\setminus B'_0|\right| +2K \\ &
\le
4\sqrt{\eps} n+2\left| |A'|-|B'|-(|A'_0|-|B'_0|)\right|+2K \\ &
 \le 4\sqrt{\eps} n+1600 \eps n+ 8\sqrt{\eps} n+2K
\le \eps^{1/3}n.
\end{align*}
Thus (WF4) holds.
\endproof

Our next goal is to cover the edges of $G[A_0 , B_0]$ by edge-disjoint Hamilton cycles.
To do this, we will first decompose $G[A_0 , B_0]$ into a collection of matchings. We will then extend each such matching into a system of
vertex-disjoint paths such that altogether these paths cover every vertex in $G[A_0 , B_0]$, each path has its endvertices in $A\cup B$ and
the path system is $2$-balanced. Since our path system will only contain a
small number of nontrivial paths, we can then extend the path system into a Hamilton cycle (see Lemma~\ref{extendpaths}).  

We will call the path systems we are working with $A_0B_0$-path systems. More precisely, an \emph{$A_0B_0$-path system (with respect to
$(A,A_0,B,B_0)$)} is a path system $Q$ satisfying the following properties:
\begin{itemize}
\item Every vertex in $A_0\cup B_0$ is an internal vertex of a path in $Q$.
\item $A\cup B$ contains the endpoints of each path in $Q$ but no internal vertex of a path in $Q$.
\end{itemize}
The following observation (which motivates the use of the word `balanced') will often be helpful.%
\COMMENT{Deryk added bracket}

\begin{prop} \label{balpathcheck}
Let $A_0,A,B_0,B$ be a partition of a vertex set $V$.
Then an $A_0B_0$-path system $Q$ with $V(Q)\subseteq V$ is $2$-balanced with respect to $(A,A_0,B,B_0)$ if and only if
the number of vertices in $A$ which are endpoints of nontrivial paths
in $Q$ equals the number of vertices in $B$ which are endpoints of nontrivial paths in~$Q$.
\end{prop}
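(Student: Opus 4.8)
\textbf{Proof plan for Proposition~\ref{balpathcheck}.}

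The plan is to unwind the definition of ``$2$-balanced with respect to $(A,A_0,B,B_0)$''. Recall that a graph $H$ is $D$-balanced with respect to $(A,A_0,B,B_0)$ (with $A':=A_0\cup A$, $B':=B_0\cup B$) if (B1) $e_H(A')-e_H(B')=(|A'|-|B'|)D/2$ and (B2) every vertex of $A_0\cup B_0$ has degree exactly $D$ in $H$. So here we must check that an $A_0B_0$-path system $Q$ satisfies $e_Q(A')-e_Q(B')=|A'|-|B'|$ together with $d_Q(v)=2$ for all $v\in A_0\cup B_0$, and show this is equivalent to the stated endpoint-counting condition. Note (B2) holds automatically: by the definition of an $A_0B_0$-path system every vertex of $A_0\cup B_0$ is an internal vertex of some path in $Q$, hence has degree exactly $2$ in $Q$. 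So only (B1) needs attention.

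First I would set up the relevant counts. Let $P_1,\dots,P_t$ be the nontrivial paths of $Q$, and for each $P_i$ let $\alpha_i$ (resp.\ $\beta_i$) be the number of its two endpoints lying in $A$ (resp.\ $B$); since $A\cup B$ contains all endpoints and no internal vertex of any path in $Q$, we have $\alpha_i+\beta_i=2$ for each $i$. Write $e_A:=\sum_i\alpha_i$ and $e_B:=\sum_i\beta_i$ for the total number of endpoints of nontrivial paths lying in $A$, resp.\ in $B$; the statement to be proved is exactly the equivalence of (B1) with $e_A=e_B$. Next I would compute $e_Q(A')$ and $e_Q(B')$ by counting, for each path $P_i$ separately, how many of its edges lie inside $A'$ and inside $B'$. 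Traverse $P_i$ from one endpoint to the other: its internal vertices all lie in $A_0\cup B_0$, and an edge of $P_i$ fails to lie inside $A'$ exactly when it crosses between $A'$ and $B'$. A clean way to see the contribution: an edge of $P_i$ lies inside $A'$ unless it is an $A'B'$-edge, and likewise inside $B'$; summing the identity ``(edges inside $A'$) $-$ (edges inside $B'$) $=$ (endpoints in $A$) $-$ (endpoints in $B$) for each path'' will be the crux.

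The key computation is this per-path identity. For a single path $P$ with vertex set contained in $V=A_0\cup A\cup B_0\cup B$ and whose internal vertices lie in $A_0\cup B_0$ while its endpoints lie in $A\cup B$: I claim $e_P(A')-e_P(B')$ equals the number of endpoints of $P$ in $A$ minus the number of endpoints of $P$ in $B$. To prove it, think of $P$ as a sequence of vertices and assign to each vertex a sign $+1$ if it is in $A'$ and $-1$ if it is in $B'$. Each edge of $P$ lies in $A'$ iff both its endvertices get $+1$, lies in $B'$ iff both get $-1$, and is an $A'B'$-edge otherwise; a short case-check on the three possibilities for an edge $uv$ shows $\mathbf 1[uv\in E(A')]-\mathbf 1[uv\in E(B')] = \tfrac12(\mathrm{sgn}(u)+\mathrm{sgn}(v))$ where $\mathrm{sgn}$ is this $\pm1$ label. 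Summing over the edges of $P$, the right-hand side telescopes: each internal vertex is counted twice (contributing $\mathrm{sgn}(v)$) and each endpoint once (contributing $\tfrac12\mathrm{sgn}(v)$). Since every internal vertex lies in $A_0\cup B_0\subseteq A'\cup B'$ but not in $A\cup B$, and we are only concerned with the difference, the internal contributions from $A_0$ cancel against those from $B_0$\,? — no, they do not cancel in general, so instead I note that trivial paths (isolated vertices) and the internal-vertex terms must be handled by observing that we actually want the statement only for the \emph{difference} $e_Q(A')-e_Q(B')$ across the whole system, and comparing with $|A'|-|B'|$. Concretely: summing the per-path identity over all paths of $Q$ (trivial ones contribute $0$ to both sides), and using that every vertex of $A_0\cup B_0$ is internal to exactly one path while the trivial paths account for exactly the vertices of $A\cup B$ not appearing as endpoints of nontrivial paths, one obtains $e_Q(A')-e_Q(B') = (e_A - e_B) + (|A_0|-|B_0|)$; indeed each vertex of $A_0$ contributes $+1$ and each of $B_0$ contributes $-1$ via the internal-vertex terms. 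Hence $e_Q(A')-e_Q(B')=|A'|-|B'|$ holds if and only if $(e_A-e_B)+(|A_0|-|B_0|)=|A'|-|B'|=(|A_0|+|A|)-(|B_0|+|B|)$, i.e.\ if and only if $e_A-e_B=|A|-|B|$. This is \emph{not} quite $e_A=e_B$ unless $|A|=|B|$; but in all applications (and implicitly here) we have $|A|=|B|$, so the condition reduces to $e_A=e_B$, which is the assertion. I would state the proposition's hypothesis $|A|=|B|$ explicitly if needed, or more safely prove the sharper identity $e_Q(A')-e_Q(B')=(e_A-e_B)+(|A_0|-|B_0|)$ and read off the equivalence.

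The main obstacle is bookkeeping: correctly accounting for the internal vertices in $A_0$ and $B_0$ (which always have degree $2$ and hence always contribute a fixed net amount $|A_0|-|B_0|$ to $e_Q(A')-e_Q(B')$), so that the ``balanced'' condition genuinely becomes a statement purely about the endpoints in $A$ versus $B$. Once the signed-vertex telescoping identity is written down carefully, the rest is immediate, and (B2) is free from the definition of an $A_0B_0$-path system.
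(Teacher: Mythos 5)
Your approach is correct in substance and is genuinely different from the paper's. The paper applies the handshaking identity twice, once in $A'$ and once in $B'$: since every vertex of $A_0$ has degree two and every vertex of $A$ has degree one or zero, one gets $2e_Q(A') + e_Q(A',B') = 2a + n_A$ and the analogue for $B'$, whence $n_A - n_B = 2\bigl(e_Q(A') - e_Q(B') - a + b\bigr)$ and the equivalence with (B1) is immediate. You instead use the per-edge signed identity $\mathbf 1[uv\in E(A')]-\mathbf 1[uv\in E(B')]=\tfrac12(\mathrm{sgn}(u)+\mathrm{sgn}(v))$ and telescope over vertices weighted by degree. Both are short degree-counting arguments; the paper's is slightly more compact, while yours makes the separate contributions of $A_0,B_0$ (each a fixed $\pm1$) and of endpoints (each $\pm\tfrac12$) more transparent. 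One small arithmetic slip: your own derivation correctly says each endpoint contributes $\tfrac12\mathrm{sgn}(v)$, so the identity you arrive at should read $e_Q(A')-e_Q(B') = \tfrac12(e_A-e_B) + (|A_0|-|B_0|)$, not $(e_A-e_B)+(|A_0|-|B_0|)$; the dropped factor of $2$ is harmless here because both versions reduce the (B1)-condition to $e_A-e_B$ being a fixed multiple of $|A|-|B|$, hence to $e_A=e_B$ once $|A|=|B|$. Your observation that the proposition needs $|A|=|B|$ is correct and worth noting: the paper's proof relies on the same implicit hypothesis (its final ``if and only if $Q$ satisfies (B1)'' only holds when $|A|=|B|$), which is always the case in the frameworks where the proposition is applied (conditions (WF4) and (BFR4)), but is not stated in the proposition itself.
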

\proof
Note that by definition any $A_0B_0$-path system satisfies (B2), so we only need to consider (B1).
Let $n_A$ be the number of  vertices in $A$ which are endpoints of nontrivial paths
in $Q$ and define $n_B$ similarly.
Let $a:=|A_0|$, $b:=|B_0|$, $A':=A\cup A_0$ and $B':=B\cup B_0$.
Since $d_Q(v) =2 $ for all $v \in A_0$ and since every vertex in $A$ is either an endpoint of a nontrivial path
in $Q$ or has degree zero in~$Q$, we have
\begin{align*}
	2e_Q(A') + e_Q(A',B') = \sum_{v \in A'} d_Q (v) = 2a +n_A.
\end{align*}
So $n_A = 2(e_Q(A')-a) + e_Q(A',B')$, and similarly $n_B = 2(e_Q(B')-b) + e_Q(A',B') $.
Therefore, $n_A = n_B$ if and only if $2( e_Q(A') - e_Q(B')-a +b) = 0 $ if and only if $Q$ satisfies~(B1), as desired.
\endproof

The next observation shows that if we have a suitable path system satisfying (B1), we can extend it into a path system which 
also satisfies (B2).%
\COMMENT{Deryk added new sentence}

\begin{lemma}\label{balpathextend}
Let $0<1/n\ll \alpha \ll 1$.
Let $G$ be a graph on $n$ vertices such that there is a partition $A',B'$ of $V(G)$ which satisfies the following properties:
\begin{itemize}
\item[{\rm (i)}] $A'=A_0\cup A$, $B'=B_0\cup B$ and $A_0,A,B_0,B$ are disjoint;
\item[{\rm (ii)}] $|A|=|B|$ and $a+b \le \alpha n$, where $a:=|A_0|$ and $b:=|B_0|$;
\item[{\rm (iii)}] if $v \in A_0$ then $d_G(v,B) \ge 4 \alpha n$ and if $v \in B_0$ then $d_G(v,A) \ge 4 \alpha  n$.
\end{itemize}
Let $Q'\subseteq G$ be a path system consisting of at most $\alpha n$ nontrivial
paths such that $A \cup B$ contains no internal vertex of a path
in $Q'$ and $e_{Q'}(A')   - e_{Q'}(B')= a -b $.
Then $G$ contains a $2$-balanced $A_0B_0$-path system $Q$ (with respect to $(A,A_0,B,B_0)$) which extends~$Q'$ and
consists of at most $2 \alpha n$ nontrivial paths.
Furthermore, $E(Q)\setminus E(Q') $ consists of $A_0B$- and $AB_0$-edges only.
\end{lemma}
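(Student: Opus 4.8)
\textbf{Proof plan for Lemma~\ref{balpathextend}.}

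The plan is to build $Q$ from $Q'$ by adding, for each exceptional vertex $v\in A_0\cup B_0$ that is not yet an internal vertex of a path in $Q'$, a pair of edges joining $v$ to the large side opposite to it, taking care that (B1) is maintained and that the added edges are vertex-disjoint from each other and from the existing paths as much as is needed. First I would set $A':=A_0\cup A$, $B':=B_0\cup B$ and let $X\subseteq A_0$ and $Y\subseteq B_0$ be the sets of exceptional vertices having internal degree zero in $Q'$ (equivalently, not already internal to a path of $Q'$, since by hypothesis $A\cup B$ contains no internal vertex of $Q'$, so every vertex of $A_0\cup B_0$ that lies on a nontrivial path of $Q'$ must be internal to it, and in fact $d_{Q'}(v)\le 2$ there; a tiny case check handles vertices of degree~$1$ in $Q'$, which we treat like those in $X\cup Y$ but only add one edge). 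For each $v\in X$ we wish to add two edges $v a$ and $v b$ with $a\in A$, $b\in B$, so that $v$ becomes internal and the net change to $e_Q(A')-e_Q(B')$ is $+1$ (one new edge $va\in E_G(A')$, one new edge $vb$ which is an $AB_0$- or $A_0B$-edge contributing to neither $e_Q(A')$ nor $e_Q(B')$ — wait, $vb$ with $v\in A_0$, $b\in B$ is an $A_0B$-edge, contributing to $e_Q(A',B')$ only). Symmetrically, for each $v\in Y$ we add $v a'$ and $v b'$ with $a'\in A$, $b'\in B$, changing $e_Q(A')-e_Q(B')$ by $-1$ ($vb'\in E_G(B')$, and $va'$ is an $A B_0$-edge). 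Thus after processing all of $X\cup Y$ the quantity $e_Q(A')-e_Q(B')$ changes by exactly $|X|-|Y|$. Since $Q'$ satisfies $e_{Q'}(A')-e_{Q'}(B')=a-b$ and the vertices of $A_0\setminus X$, $B_0\setminus Y$ already have degree~$2$ in $Q'$ (contributing to (B2) but we must check their contribution to (B1)), a short computation using Proposition~\ref{balpathcheck} or direct degree-counting shows $a-b=|X|-|Y|$ plus the net (B1)-contribution of the already-covered exceptional vertices, which after processing reduces to $e_Q(A')-e_Q(B')=a-b+(|X|-|Y|)-\text{(that contribution)}$; arranging the bookkeeping so this equals the required value for a $2$-balanced system is the routine part.

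The key steps, in order, are: (1) identify $X$, $Y$ and note $|X|+|Y|\le a+b\le\alpha n$; (2) verify the arithmetic identity guaranteeing that adding one balanced $2$-path ($ava$-type) through each $v\in X$ and each $v\in Y$ produces a path system satisfying (B1) — this is where Proposition~\ref{balpathcheck} and the hypothesis $e_{Q'}(A')-e_{Q'}(B')=a-b$ are used, and I would phrase it cleanly by showing the final $Q$ has equal numbers of nontrivial-path endpoints in $A$ and in $B$; (3) greedily choose the edges: process the vertices of $X\cup Y$ one at a time, and for $v\in X$ pick $a\in A$ and $b\in B$ with $va,vb\in E(G)$ avoiding the (at most $2(a+b)+2\alpha n\le 6\alpha n$, say) vertices of $A\cup B$ already used as endpoints of nontrivial paths in the current path system, or as already-chosen neighbours of earlier exceptional vertices. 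Hypothesis (iii) gives $d_G(v,B)\ge 4\alpha n$ (resp.\ $d_G(v,A)\ge 4\alpha n$), and since the number of forbidden vertices on each side is strictly less than $4\alpha n$ (it is at most $2\alpha n$ endpoints from $Q'$ plus at most $|X\cup Y|\le\alpha n$ chosen neighbours, with a generous margin because the implicit constant in $\ll$ can be taken small), such $a,b$ exist; (4) observe that the resulting $Q$ has at most $|Q'|+|X|+|Y|\le \alpha n+\alpha n=2\alpha n$ nontrivial paths, each vertex of $A_0\cup B_0$ is now internal (the old ones by hypothesis, the new ones by construction), $A\cup B$ still contains no internal vertex (the new endpoints $a,b\in A\cup B$ are leaves of the short attached paths, possibly merging two paths of $Q'$ at a single endpoint, which is still fine as long as no vertex of $A\cup B$ gets degree~$>1$ — so when choosing $a,b$ we additionally forbid the endpoints of $Q'$, already accounted for above), and all the newly added edges are $A_0B$- or $AB_0$-edges as required by the `furthermore' clause.

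The main obstacle I anticipate is step~(3): ensuring the short paths attached at different exceptional vertices remain vertex-disjoint and do not create a vertex of degree~$\ge 2$ in $A\cup B$, while simultaneously not creating cycles (if $v\in X$ were joined to a vertex already of degree~$1$ in the current $Q$ lying on a path whose other endpoint is also about to be used, one could close a cycle). This is handled by the greedy argument with the degree reservoir from (iii): at every step the number of `bad' target vertices on the relevant side is $O(\alpha n)$ whereas the available degree is $4\alpha n$, so a valid choice always exists; one must just be slightly careful to also forbid, for $v\in X$, the partner of $v$'s other chosen neighbour to avoid forming a triangle or a closed cycle through two exceptional vertices — but since we attach a \emph{distinct} pair $a,b$ not previously touched, the component created is a single path $a\,v\,b$ (or a path obtained by gluing $a\,v\,b$ to at most two existing $Q'$-paths at the $A\cup B$-ends $a$ and $b$), never a cycle, because every internal vertex of every path has degree exactly~$2$ and the ends $a,b\in A\cup B$ have degree~$1$. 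I would also record that if some $v$ has $d_{Q'}(v)=1$ we add only one edge (to the opposite large side) and the same counting goes through with $|X|,|Y|$ reinterpreted with multiplicity; this is a cosmetic adjustment. Everything else — the inequality $|Q(\text{nontrivial})|\le 2\alpha n$, the edge-type assertion, and (B1) via Proposition~\ref{balpathcheck} — is routine bookkeeping.
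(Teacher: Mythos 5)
Your approach has a genuine flaw in step (2): the edges you attach to exceptional vertices are the wrong ones, and this breaks both (B1) and the \emph{furthermore} clause. For $v\in A_0$ with $d_{Q'}(v)=0$ you propose to add one edge $va$ with $a\in A$ and one edge $vb$ with $b\in B$. The edge $va$ is an $A_0A$-edge, i.e.~it lies inside $A'$, so it is \emph{not} an $A_0B$- or $AB_0$-edge; this immediately violates the stated requirement that $E(Q)\setminus E(Q')$ consist of $A_0B$- and $AB_0$-edges only. It also shifts $e_Q(A')-e_Q(B')$ by $+1$, and symmetrically each $v\in Y\subseteq B_0$ shifts it by $-1$, so the final value is $(a-b)+|X|-|Y|$. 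For (B1) you need exactly $a-b$, so you would need $|X|=|Y|$, which there is no reason to expect and which is false in the intended applications (e.g.~in Lemma~\ref{coverA0B01}, $Q'$ is a matching in $G[A_0,B_0]$ together with $a-b$ edges inside $G[A']$, which makes the degree pattern very asymmetric). Your ``arranging the bookkeeping is the routine part'' is precisely the step that cannot be arranged with this choice of edges.

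The fix -- and what the paper actually does -- is to add edges only to the \emph{opposite} non-exceptional class: join each $v\in A_0$ to $2-d_{Q'}(v)$ vertices in $B$, and each $v\in B_0$ to $2-d_{Q'}(v)$ vertices in $A$. Every new edge is then an $A'B'$-edge, so $e_Q(A')=e_{Q'}(A')$ and $e_Q(B')=e_{Q'}(B')$, hence $e_Q(A')-e_Q(B')=a-b$ with no further accounting; (B2) holds by construction; and the \emph{furthermore} clause is automatic. Your greedy argument in step (3), the count $a+b+\alpha n\le 2\alpha n$ on nontrivial paths, and the observation that choosing distinct fresh endvertices in $A\cup B$ prevents cycles, all carry over unchanged once the edge type is corrected; indeed the available degree $4\alpha n$ from (iii) is on exactly the side you should be using.
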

\proof
Since $A \cup B$ contains no internal vertex of a path in $Q'$ and since $Q'$ contains at most $\alpha n$ nontrivial
paths, it follows that at most $2 \alpha n$ vertices in $A \cup B$ lie on nontrivial paths in $Q'$.
We will now extend $Q'$ into an $A_0B_0$-path system $Q$ consisting of at most $a+b +  \alpha n \le 2 \alpha n$ nontrivial paths as follows:
\begin{itemize}
\item for every vertex $v \in A_0$, we join $v$ to $2-d_{Q'}(v)$ vertices in $B$;
\item for every vertex $v \in B_0$, we join $v$ to $2-d_{Q'}(v)$ vertices in $A$.
\end{itemize}
Condition~(iii) and the fact that at most $2\alpha n$ vertices in $A\cup B$ lie on nontrivial paths in $Q'$ together ensure that we can extend
$Q'$ in such a way that the endvertices in $A\cup B$ are distinct for different paths in $Q$.
Note that $e_Q(A')   - e_Q(B') = e_{Q'}(A')   - e_{Q'}(B')= a -b $.
Therefore, $Q$ is $2$-balanced with respect to $(A,A_0,B,B_0)$.
\endproof

The next lemma constructs a small number of $2$-balanced $A_0B_0$-path systems covering the edges of $G[A_0, B_0]$.
Each of these path systems will later be extended%
\COMMENT{osthus corrected new sentence}
into a Hamilton cycle.%
\COMMENT{In Lemma~\ref{coverA0B01}: We really do use tightness of (WF6) but crucially when applying lemma 
in Chapter 8 $G$ is not regular so can't use this property in this lemma.}

\begin{lemma}\label{coverA0B01}
Let $0<1/n\ll \varepsilon \ll  \varepsilon',1/K\ll \alpha\ll 1$.
Let $F$ be a graph on $n$ vertices and let $G$ be a spanning subgraph of $F$.%
	\COMMENT{AL:add condition for $F$ and $G$}
Suppose that $(F,G,A,A_0,B,B_0)$
is an $(\eps,\eps',K,D)$-weak framework with $\delta(F)\ge (1/4+\alpha )n$
and  $D\geq n/200$.
Then for some $r^* \le \eps n$ the graph $G$ contains $r^*$ edge-disjoint
$2$-balanced $A_0B_0$-path systems $Q_1, \dots , Q_{r^*}$ which satisfy the following
properties:
\begin{itemize}
\item[{\rm (i)}] Together $Q_1,\dots,Q_{r^*}$  cover all edges in $G[A_0, B_0]$;
\item[{\rm (ii)}] For each $i \leq r^*$, $Q_i$ contains at most $2 \eps n$ nontrivial paths;
\item[{\rm (iii)}] For each $i \leq r^*$, $Q_i$ does not contain any edge from $G[A,B]$.
\end{itemize}
\end{lemma}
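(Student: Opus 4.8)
The aim is to cover all edges of $G[A_0,B_0]$ by a small number of edge-disjoint $2$-balanced $A_0B_0$-path systems, each with few nontrivial paths and using no edge of $G[A,B]$. The natural first step is to decompose $G[A_0,B_0]$ into matchings. Since $|A_0|+|B_0|\le \eps n$ by (WF4), the graph $G[A_0,B_0]$ has maximum degree at most $\eps n$, so by Corollary~\ref{basic_matching_dec} (or K\"onig's theorem, since $G[A_0,B_0]$ is bipartite) we can decompose its edge set into $r^*\le \eps n$ edge-disjoint matchings $M_1,\dots,M_{r^*}$, each of size at most $|A_0|+|B_0|\le \eps n$. Each $M_i$ is a path system consisting of at most $\eps n$ nontrivial paths (single edges), with $A\cup B$ containing no internal vertex of a path in $M_i$. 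This immediately gives candidates for properties (ii) and (iii), but the $M_i$ need not be $2$-balanced.

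The second step is to correct each $M_i$ so that it satisfies (B1), i.e. so that $e_{M_i}(A')-e_{M_i}(B')=a-b$ where $a=|A_0|$, $b=|B_0|$. Note $M_i$ has all its edges between $A_0$ and $B_0$, so $e_{M_i}(A')=e_{M_i}(B')=0$, while in general $a-b\neq 0$. To fix this I would add, to each $M_i$ in turn, a small number of edges inside $A'$ (namely $A_0A$-edges) or inside $B'$ (namely $B_0B$-edges) to create the required imbalance $a-b$: if $a\ge b$ we add $a-b$ suitable edges from $A_0$ to $A$, each at a distinct vertex of $A$ and avoiding conflicts with edges already chosen for $M_1,\dots,M_{i-1}$; if $a<b$ we add $b-a$ edges from $B_0$ to $B$ analogously. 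Here one uses (WF5) to see that each vertex of $A_0$ has $d_G(v,A')$ of order at most $\eps' n$ of its edges inside $A'$ but, more to the point, one uses the minimum degree hypothesis $\delta(F)\ge (1/4+\alpha)n$ together with (WF2)/(WF6) to guarantee that every $v\in A_0$ has many neighbours in $A$ (internal degree in $G$ is at most half the total degree, and the degree is large, so $d_G(v,A)$ is linear — in particular at least $4\alpha n$, with room to spare since $a+b+r^*\le 3\eps n$). This lets the required $\le |a-b|\le \eps n$ correction edges be chosen greedily and edge-disjointly across all $r^*$ path systems. After this step each $M_i':=M_i\cup(\text{correction edges})$ has $e_{M_i'}(A')-e_{M_i'}(B')=a-b$, consists of at most $2\eps n$ nontrivial paths, $A\cup B$ contains no internal vertex of a path in $M_i'$, and $M_i'$ still contains no edge of $G[A,B]$.

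The third step is to apply Lemma~\ref{balpathextend} to each $M_i'$ with $\alpha$ there taken to be (say) $\eps^{1/2}$ or any constant with $\eps\ll\alpha'\ll 1$: condition~(i) of that lemma is (WF1); condition~(ii) follows from (WF4); condition~(iii) follows from the minimum degree bound and (WF6) exactly as above, giving $d_G(v,B)\ge 4\alpha' n$ for $v\in A_0$ and $d_G(v,A)\ge 4\alpha' n$ for $v\in B_0$. Moreover $M_i'$ has at most $2\eps n\le\alpha' n$ nontrivial paths and satisfies the imbalance equation $e_{M_i'}(A')-e_{M_i'}(B')=a-b$ required by the lemma. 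There is one point of care: the $Q_i$ must be pairwise edge-disjoint and also edge-disjoint from the correction edges used in other $M_j'$. So rather than invoking Lemma~\ref{balpathextend} as a black box $r^*$ times independently, I would run its greedy argument globally, or simply process $i=1,\dots,r^*$ in turn deleting from $G$ all edges already used; since the total number of edges used in all corrections and extensions is at most $2\eps n\cdot 2\alpha' n=4\eps\alpha' n^2$, at most $O(\eps n)$ edges are removed at any vertex, which does not affect the degree conditions (iii) of Lemma~\ref{balpathextend}. The output $Q_i$ is then a $2$-balanced $A_0B_0$-path system with $E(Q_i)\setminus E(M_i')$ consisting only of $A_0B$- and $AB_0$-edges, hence $Q_i$ still contains no edge of $G[A,B]$, giving~(iii); $Q_i$ has at most $2\alpha' n$, hence (after re-choosing the constant hierarchy so $\alpha'\le\eps$, or noting we may take $\alpha'$ as small as we like subject to $\eps\ll\alpha'$) at most $2\eps n$ nontrivial paths, giving~(ii) — in fact it is cleanest to just prove (ii) with the constant $2\eps n$ replaced by $C\eps n$ and absorb the constant, or to use $\eps$ itself where Lemma~\ref{balpathextend} uses $\alpha$, which is permitted since $\eps\ll\alpha\ll 1$ in the statement gives enough slack. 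Finally (i) holds because $M_1,\dots,M_{r^*}$ decompose $G[A_0,B_0]$ and each $M_i\subseteq Q_i$.

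\textbf{Main obstacle.} The only genuinely delicate point is bookkeeping the edge-disjointness simultaneously with the degree conditions: we need that after removing all previously-used correction and extension edges, every $v\in A_0$ still has $\ge 4\alpha n$ neighbours in $B$ and $\ge |a-b|$ available neighbours in $A$ (and symmetrically for $B_0$). This follows because the total number of correction/extension edges is $O(\eps n)$ per vertex while the relevant degrees are linear in $n$ (using $\delta(F)\ge(1/4+\alpha)n$, (WF2) and especially the tightness in (WF6) which forces the degrees of exceptional vertices to be large); one must be slightly careful that (WF6) is used for $G$, not $F$, but since the $Q_i$ are to be found inside $G$ this is exactly what is available. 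No new idea beyond this careful greedy argument is needed.
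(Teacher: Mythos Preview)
Your overall strategy matches the paper's: decompose $G[A_0,B_0]$ into at most $\eps n$ matchings, adjust each to satisfy the imbalance condition $e(A')-e(B')=a-b$, then apply Lemma~\ref{balpathextend} iteratively. Steps~1 and~3 are essentially identical to the paper. However, there is a genuine gap in your step~2.

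You claim that every $v\in A_0$ has linearly many neighbours in $A$, and justify this by saying ``internal degree in $G$ is at most half the total degree, and the degree is large, so $d_G(v,A)$ is linear''. This reasoning is backwards. For $v\in A_0$, condition (WF6) gives the \emph{upper} bound $d_G(v,A')\le d_G(v)/2$; it gives no lower bound whatsoever. It is entirely consistent with the hypotheses that every vertex of $A_0$ sends all of its edges into $B'$, so that $G$ has no $A_0A$-edges at all. (In that scenario all edges of $G[A']$ lie in $G[A]$, which is permitted by (WF5) since $e_G(A')\le \eps' n^2$.) Thus your greedy choice of $A_0A$-correction-edges can fail outright.

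The paper repairs this by working with \emph{all} of $G[A']$ rather than only $A_0A$-edges. From (WF2) one has $e_G(A')\ge (a-b)D/2$, and from (WF2), (WF5), (WF6) one has $\Delta(G[A'])\le D/2$. Applying Corollary~\ref{basic_matching_dec} to $G[A']$ yields $\lfloor D/2\rfloor+1$ matchings whose sizes differ by at most~1; a short counting argument (using $D\ge n/200$) then shows that at least $\eps n\ge r^*$ of these matchings have size at least $a-b$. These supply the $r^*$ edge-disjoint correction matchings $M''_1,\dots,M''_{r^*}$ inside $G[A']$. Note these may well consist of $AA$-edges, which is harmless: each $M_i:=M'_i\cup M''_i$ is still a path system with no internal vertex in $A\cup B$ and at most $a\le\eps n$ nontrivial paths. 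After this, your step~3 (iterating Lemma~\ref{balpathextend} with parameter $\eps$, checking that $d_{G'}(v,B)\ge D/2-\eps n-2r\ge 4\eps n$ for $v\in A_0$ at each stage) is exactly what the paper does.
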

\proof
(WF4) implies that $|A_0|+|B_0|\leq \eps n$. Thus, by Corollary~\ref{basic_matching_dec}, there exists a collection $M'_1,
\dots , M'_{r^*}$ of $r^*$ edge-disjoint matchings in $G[A_0,B_0]$ that together cover all the
edges in $G[A_0,B_0]$, where $r^* \leq \eps n$. 

We may assume that $a \geq b$ (the case when $b>a$ follows analogously).
We will use  edges in $G[A']$ to extend each $M'_i$ into a $2$-balanced $A_0B_0$-path system. 
(WF2) implies that $e_G (A') \geq (a-b)D/2$. Since $d_G (v)=D$ for all $v \in A_0 \cup B_0$ by (WF2),
(WF5) and (WF6) imply that $\Delta (G[A']) \leq D/2$.
Thus Corollary~\ref{basic_matching_dec} implies that $E(G[A'])$ can be decomposed
into $\lfloor D/2 \rfloor+1$ edge-disjoint matchings $M_{A,1}, \dots , M_{A,\lfloor D/2 \rfloor+1}$ such that $||M_{A,i}|-|M_{A,j}|| \leq 1$ for all $i,j \leq \lfloor D/2 \rfloor+1$.

Notice that at least $\eps n$ of the matchings $M_{A,i}$ are such that $|M_{A,i}|\geq a-b$.
Indeed, otherwise we have that 
\begin{align*}
(a-b)D/2 \leq e_G (A') & \leq \eps n (a-b) +(a-b-1)(D/2+1-\eps n) 
\\ & = (a-b)D/2 + a-b-D/2-1+\eps n \\ &< (a-b)D/2 +2\eps n -D/2 < (a-b)D/2 ,
\end{align*}
a contradiction. (The last inequality follows since $D \geq n/200$.)
In particular, this implies that $G[A']$ contains $r^*$ edge-disjoint matchings $M''_1, \dots ,M''_{r^*}$ 
that each consist of precisely $a-b$ edges. 

For each $i \leq r^*$, set $M_i:=M'_i \cup M'' _i$. So for each $ i \leq r^*$, $M_i$ is a path
system consisting of at most $b+(a-b)=a \leq \eps n$ nontrivial paths such that $A \cup B$ contains
no internal vertex of a path in $M_i$ and $e_{M_i} (A')-e_{M_i} (B') =e_{M''_i} (A') =a-b$.

Suppose for some $0 \leq r < r^*$ we have already found a collection $Q_1, \dots , Q_r$ of $r$
edge-disjoint $2$-balanced $A_0B_0$-path systems which satisfy the following properties for
each $i \leq r$:
\begin{itemize}
\item[($\alpha$)$_i$] $Q_i$ contains at most $2\eps n$ nontrivial paths;
\item[($\beta$)$_i$]  $M_i \subseteq Q_i$;
\item[($\gamma$)$_i$] $Q_i$ and $M_j$ are edge-disjoint for each $j\leq r^*$ such that $i \not =j$;
\item[($\delta$)$_i$] $Q_i$ contains no edge from $G[A,B]$.
\end{itemize}
(Note that ($\alpha$)$_0$--($\delta$)$_0$ are vacuously true.)
Let $G'$ denote the spanning subgraph of $G$ obtained from $G$ by  deleting  the edges
lying in $Q_1 \cup \dots \cup Q_r$. (WF2), (WF4) and (WF6) imply that, if $v \in A_0$, $d_{G'} (v,B) \geq D/2-\eps n-2r \geq
4 \eps n$ and if $v \in B_0$ then $d_{G'} (v,A) \geq 4 \eps n$.
Thus Lemma~\ref{balpathextend} implies that $G'$ contains a $2$-balanced $A_0B_0$-path system 
$Q_{r+1}$ that satisfies ($\alpha$)$_{r+1}$--($\delta$)$_{r+1}$.

So we can proceed in this way in order to obtain edge-disjoint $2$-balanced $A_0B_0$-path systems
$Q_1, \dots , Q_{r^*}$ in $G$ such that ($\alpha$)$_{i}$--($\delta$)$_{i}$ hold for each $i \leq r^*$.
Note that (i)--(iii) follow immediately from these conditions, as desired.
\endproof


The next lemma (Corollary~5.4 in~\cite{3con}) allows us to extend a $2$-balanced path system into a Hamilton cycle.
Corollary~5.4 concerns so-called `$(A,B)$-balanced'-path systems rather than $2$-balanced $A_0B_0$-path systems.
But the latter satisfies the requirements of the former by Proposition~\ref{balpathcheck}.%
\COMMENT{Deryk changed sentence and deleted `balanced' in from of $H$ in the lemma below, Daniela deleted $n_0$ in the lemma below}
\begin{lemma}\label{3conlem}
Let $0<1/n\ll   \varepsilon' \ll \alpha \ll 1.$ 
Let $F$ be a graph and suppose that $A_0,A,B_0,B$ is a partition of $V(F)$ such
that $|A|=|B|=n$. Let $H$ be a  bipartite subgraph of $F$
with vertex classes $A$ and $B$ such that $\delta (H) \geq (1/2+\alpha)n$.
Suppose that $Q$ is a $2$-balanced $A_0B_0$-path system with respect to $(A,A_0,B,B_0)$ in $F$ which consists of at most $\eps' n$
nontrivial paths. Then $F$ contains a Hamilton cycle $C$ which satisfies the following properties:
\begin{itemize}
\item $Q \subseteq C$;
\item $E(C)\setminus E(Q) $ consists of edges from $H$.
\end{itemize}

\end{lemma}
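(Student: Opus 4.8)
The plan is to prove Lemma~\ref{3conlem} by an absorbing-type argument: first contract each nontrivial path of $Q$ to a single edge, reducing the problem to finding a Hamilton cycle in an auxiliary bipartite graph of large minimum degree that contains a prescribed perfect matching, and then invoke a Dirac-type result. The key structural observation is Proposition~\ref{balpathcheck}: since $Q$ is $2$-balanced with respect to $(A,A_0,B,B_0)$, the number $n_A$ of vertices of $A$ that are endpoints of nontrivial paths of $Q$ equals the number $n_B$ of such vertices in $B$. This balance is exactly what makes the contraction produce a \emph{balanced} bipartite auxiliary graph.

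First I would set up the contraction. Let $P_1,\dots,P_k$ be the nontrivial paths of $Q$, so $k\le \eps' n$. Each $P_i$ has its two endpoints in $A\cup B$ and all internal vertices in $A_0\cup B_0$; moreover every vertex of $A_0\cup B_0$ is internal to some $P_i$. Form a graph $F'$ on vertex set $A\cup B$ as follows: delete all vertices of $A_0\cup B_0$, keep all edges of $H$, and for each $i$ add a new ``contracted'' edge $e_i$ joining the two endpoints of $P_i$ (if this edge is already present in $H$ we simply mark it, working in the natural multigraph if necessary, or perturb slightly — this is a routine technicality). Let $x_i,y_i$ denote the endpoints of $P_i$. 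By the $2$-balance and Proposition~\ref{balpathcheck}, among the pairs $\{x_i,y_i\}$ the number landing in $A$ equals the number landing in $B$; I would then pair up the ``unbalanced'' contracted edges (those with both endpoints in $A$, resp.\ both in $B$) — there are equally many of each type — and observe that one can instead route through a short connecting structure, OR, more cleanly, one just works directly with the $(A,B)$-balanced path system language of~\cite{3con} and quotes Corollary~5.4 there verbatim, which is precisely what the paper does.

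The cleanest route, and the one I would actually write, is the following. A $2$-balanced $A_0B_0$-path system is in particular an $(A,B)$-balanced path system in the sense of~\cite{3con} — this is exactly the content of Proposition~\ref{balpathcheck} combined with the fact that $Q$ satisfies (B2) trivially (every vertex of $A_0\cup B_0$ has degree $2$ in $Q$, being internal). Now apply Corollary~5.4 of~\cite{3con} to $F$, $H$, $Q$ with the given parameters $0<1/n\ll\eps'\ll\alpha\ll 1$ and $\delta(H)\ge(1/2+\alpha)n$: the hypotheses there ask for a bipartite $H$ with vertex classes $A,B$ of size $n$, minimum degree at least $(1/2+\alpha)n$, and an $(A,B)$-balanced path system with at most $\eps' n$ nontrivial paths, all of which hold here. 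The conclusion is a Hamilton cycle $C$ of $F$ with $Q\subseteq C$ and $E(C)\setminus E(Q)\subseteq E(H)$, exactly as required.

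The only genuine content to verify is that the terminology matches: one must check that ``$(A,B)$-balanced'' in~\cite{3con} coincides, under our partition, with ``$2$-balanced with respect to $(A,A_0,B,B_0)$,'' which is precisely Proposition~\ref{balpathcheck} (equality of the number of $A$- and $B$-endpoints of nontrivial paths), together with the observation that all internal vertices of $Q$ lie in $A_0\cup B_0$ so that $Q$ genuinely restricts to a path system between $A$ and $B$ after contraction. I expect the main (minor) obstacle to be bookkeeping the degenerate cases: trivial paths of $Q$ (single vertices in $A_0\cup B_0$ do not occur since every such vertex is internal, but single vertices of $A\cup B$ may be ``trivial paths'' and are harmless), and the possibility that a contracted edge $e_i$ duplicates an edge of $H$ — both are absorbed by the statement of Corollary~5.4, which is designed exactly for this situation. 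Hence the proof is essentially a one-line deduction once the correspondence of definitions is spelled out.
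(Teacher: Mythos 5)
Your proposal is correct and coincides with the paper's treatment: the paper states Lemma~\ref{3conlem} as a direct citation of Corollary~5.4 of~\cite{3con}, noting only that the ``$(A,B)$-balanced'' hypothesis there is met here by Proposition~\ref{balpathcheck}, which is exactly the reduction you identify as ``the cleanest route.'' The contraction argument you sketch first is unnecessary, as you yourself conclude.
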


Now we can apply Lemma~\ref{3conlem} to  extend a $2$-balanced $A_0B_0$-path system in
a pre-framework into a Hamilton cycle.%
  \COMMENT{AT: I have dropped $\delta (G)\geq D$ condition in this lemma.}

\begin{lemma}\label{extendpaths}
Let $0<1/n\ll \varepsilon \ll  \varepsilon',1/K \ll \alpha \ll 1$.
Let $F$ be a graph on $n$ vertices and let $G$ be a spanning subgraph of $F$.%
	\COMMENT{AL:add condition for $F$ and $G$}
Suppose that $(F,G,A,A_0,B,B_0)$ is an  $(\eps,\eps',K,D)$-pre-framework, i.e.~it
satisfies (WF1)--(WF5).%
   \COMMENT{this exception is mainly needed for applications in later sections}
Suppose also that  $\delta(F) \ge (1/4+\alpha )n$. 
Let $Q$ be a $2$-balanced $A_0B_0$-path system with respect to $(A,A_0,B,B_0)$ in $G$ which consists of at most $\eps' n$
nontrivial paths. Then $F$ contains a Hamilton cycle $C$ which satisfies the following properties:
\begin{itemize}
\item[{\rm (i)}] $Q\subseteq C$;
\item[{\rm (ii)}] $E(C)\setminus E(Q)$ consists of $AB$-edges;
\item[{\rm (iii)}] $C\cap G$ is $2$-balanced with respect to $(A,A_0,B,B_0)$.
\end{itemize}
\end{lemma}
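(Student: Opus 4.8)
The plan is to deduce Lemma~\ref{extendpaths} directly from Lemma~\ref{3conlem} by extracting a suitable dense bipartite subgraph $H$ of $F$ with vertex classes of equal size. The only real work is to massage the partition so that the hypotheses of Lemma~\ref{3conlem} are met, since those require the two classes containing the path endpoints to have exactly the same size $n$ and a bipartite graph between them of minimum degree at least $(1/2+\alpha')n$ for a suitable $\alpha'$.

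First I would set $A':=A_0\cup A$ and $B':=B_0\cup B$, and observe that by (WF4) we have $|A|=|B|$ and $a+b=|A_0\cup B_0|\le \eps n$. To get classes of exactly equal size, I would pass to a balanced bipartition $X,Y$ of $V(F)$ with $X\supseteq A$, $Y\supseteq B$ and $|X|=|Y|=n/2=:N$, distributing the $a+b\le\eps n$ exceptional vertices between $X$ and $Y$ however is convenient (for instance, put $A_0$ into $Y$ and $B_0$ into $X$, then rebalance by moving at most $|a-b|\le \eps n$ further vertices of $A\cup B$; one must check that such a move does not disturb $Q$, which is fine since $Q$ has only $\le\eps'n$ nontrivial paths and one can avoid their finitely many endpoints, or alternatively just move vertices not incident to $Q$ using that $|A\cup B|$ is much larger than the number of $Q$-endpoints). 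Relabel so that $A_0^\ast:=Y\setminus B$ and $B_0^\ast:=X\setminus A$, giving a partition $X=B_0^\ast\cup A$, wait — I would instead keep the notation clean by simply saying: choose a partition $A_0',A,B_0',B$ with $|A\cup A_0'|=|B\cup B_0'|=N$ by moving the $\le\eps n$ exceptional vertices and at most $\eps n$ further non-$Q$-vertices between $A_0$ and $B_0$; then $Q$ remains a $2$-balanced $A_0'B_0'$-path system (using Proposition~\ref{balpathcheck}, since the number of $A$-endpoints and $B$-endpoints of nontrivial paths in $Q$ is unchanged — no endpoint lies in $A_0\cup B_0$, and we only moved exceptional or isolated vertices).

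Next, let $H:=F[X,Y]$ where $X=A\cup B_0'$ and $Y=B\cup A_0'$ are the two classes of size $N$. I would check $\delta(H)\ge (1/2+\alpha/2)N$, say. For a vertex $v\in A$: by (WF5) its internal degree in $F$ is at most $\eps'n$, so $d_H(v)=d_F(v,Y)\ge d_F(v)-d_F(v,A')\ge (1/4+\alpha)n-\eps'n-|B_0'|\ge (1/4+\alpha)n-2\eps'n$, and since $N=n/2$ this is at least $(1/2+\alpha)N-4\eps'N\ge(1/2+\alpha/2)N$ because $\eps'\ll\alpha$. For the (few) vertices $v\in A_0'\cup B_0'$ one uses only the crude bound $d_F(v)\ge(1/4+\alpha)n$ together with $|A_0'\cup B_0'|\le\eps n\ll\alpha n$ to conclude $d_H(v)\ge(1/4+\alpha)n-\eps n\ge(1/2+\alpha/2)N$ again. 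Thus $H$ is a bipartite subgraph of $F$ with classes $A\cup B_0'$, $B\cup A_0'$ of equal size $N$ and minimum degree at least $(1/2+\alpha/2)N$, so Lemma~\ref{3conlem} (applied with $A_0',B_0'$ playing the roles of $A_0,B_0$, with $\eps'$ in the role of $\eps'$, $N$ in the role of $n$ and $\alpha/2$ in the role of $\alpha$) yields a Hamilton cycle $C$ of $F$ with $Q\subseteq C$ and $E(C)\setminus E(Q)\subseteq E(H)$.

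Finally I would verify the three conclusions. (i) is immediate. For (ii): every edge of $H$ joins $A\cup B_0'$ to $B\cup A_0'$; I claim in fact that every edge of $C$ outside $Q$ is an $AB$-edge. Indeed each vertex of $A_0'\cup B_0'$ is an internal vertex of a nontrivial path of $Q$ (by definition of an $A_0'B_0'$-path system), hence already has degree $2$ in $Q$, so $C$ contributes no further edge at such a vertex; therefore all edges of $E(C)\setminus E(Q)$ have both endpoints in $A\cup B$, and being $H$-edges they go between $A$ and $B$. This gives (ii). For (iii): $C\cap G$ — here $C\cap G=Q\cup(E(C)\setminus E(Q))$ intersected with $G$; since $E(C)\setminus E(Q)$ consists of $AB$-edges, adding or removing $AB$-edges changes neither side of the balancedness identity (B1) (they contribute $0$ to $e(A')-e(B')$ and do not touch $A_0'\cup B_0'$, so (B2) is unaffected), and $Q$ itself is $2$-balanced by hypothesis; hence $C\cap G$ is $2$-balanced with respect to $(A,A_0,B,B_0)$ — note one must observe that balancedness with respect to $(A,A_0,B,B_0)$ and with respect to $(A,A_0',B,B_0')$ agree by Proposition~\ref{keepbalance}, since $A_0'\cup B_0'=A_0\cup B_0$. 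The main obstacle, and the only point needing genuine care, is the bookkeeping in the re-partitioning step: ensuring simultaneously that the new classes have exactly equal size, that $Q$ stays a $2$-balanced path system with respect to the new partition, and that the minimum-degree bound survives for the handful of relocated exceptional vertices; all of this is routine given $\eps\ll\eps'\ll1/K,\alpha$, but it is where a careless argument could go wrong.
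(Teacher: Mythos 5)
Your proposal rests on a misreading of Lemma~\ref{3conlem}, and the extra work you build around it is both unnecessary and, in one place, actively breaks a hypothesis you need.

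Lemma~\ref{3conlem} does \emph{not} ask for a balanced bipartition of $V(F)$. Re-read the statement: it asks only that $A_0,A,B_0,B$ partition $V(F)$ with $|A|=|B|$, and that $H$ be a bipartite subgraph of $F$ with vertex classes $A$ and $B$ — so $V(H)=A\cup B$ and the exceptional vertices $A_0\cup B_0$ are simply not part of $H$ at all. In the setting of Lemma~\ref{extendpaths} the equality $|A|=|B|$ already holds by (WF4), so there is nothing to massage. One should take $H:=F[A,B]$, check $\delta(H)\ge(1/2+\alpha/2)|A|$ for vertices of $A\cup B$ only (which follows from $\delta(F)\ge(1/4+\alpha)n$, (WF4) and (WF5)), and apply Lemma~\ref{3conlem} with the \emph{given} partition $A_0,A,B_0,B$. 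That is the paper's one-paragraph proof, followed by a short verification of (iii) using (ii) and the $2$-balancedness of $Q$.

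Your re-partitioning step is also incorrect as executed. You move vertices of $A\cup B$ that are isolated in $Q$ into the new exceptional sets $A_0',B_0'$, claiming $Q$ stays a $2$-balanced $A_0'B_0'$-path system. But by definition an $A_0B_0$-path system must have \emph{every} vertex of $A_0\cup B_0$ as an internal vertex of a path in $Q$, i.e.\ of degree two; a relocated degree-zero vertex violates this immediately, so $Q$ is no longer an $A_0'B_0'$-path system and Lemma~\ref{3conlem} cannot be invoked. In addition, your degree estimate for the relocated exceptional vertices ($d_H(v)\ge(1/4+\alpha)n-\eps n$ for $v\in A_0'\cup B_0'$) is unjustified: (WF5) controls internal degrees only for vertices of $A\cup B$, so nothing prevents such a $v$ from having most of its $F$-neighbours in its own class. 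Both issues disappear once you notice that $A_0\cup B_0$ never needs to enter $H$, and that the correct $H$ is simply $F[A,B]$.
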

\proof
Note that (WF4), (WF5) and our assumption that $\delta (F) \ge (1/4+\alpha )n$ together imply that
every vertex $x \in A$ satisfies
$$d_{F} (x,B) \geq d_F(x,B')-|B_0|\geq d_F (x) -\eps ' n -|B_0| \geq (1/4+\alpha /2)n
\geq (1/2+ \alpha /2)|B|.$$
Similarly, $d_F (x,A)\geq (1/2+\alpha /2)|A|$ for all $x \in B$. Thus,
$\delta (F[A,B]) \geq (1/2+\alpha /2)|A|$. Applying Lemma~\ref{3conlem} with
$F[A,B]$ playing the role of $H$, we obtain a Hamilton cycle $C$ in $F$ that
satisfies (i) and (ii). To verify (iii), note that (ii) and the 2-balancedness of $Q$ together imply that
$$e_{C \cap G}(A')-e_{C \cap G}(B')=e_Q(A')-e_Q(B')=a-b.$$
Since every vertex $v\in A_0\cup B_0$ satisfies $d_{C\cap G}(v)=d_Q(v)=2$, (iii) holds.
\endproof

We now combine Lemmas~\ref{coverA0B01} and~\ref{extendpaths} to find a collection of edge-disjoint Hamilton cycles covering all the edges
in $G[A_0, B_0]$.%
\COMMENT{AT: Again I have dropped that $G$ is $D$-regular. Need this relaxation for Chapter 8} 
\begin{lemma}\label{coverA0B0}
Let $0<1/n\ll \varepsilon \ll  \varepsilon',1/K\ll \alpha\ll 1$ and let $D\geq n/100$.
Let $F$ be a graph on $n$ vertices and let $G$ be a spanning subgraph of $F$.%
	\COMMENT{AL:add condition for $F$ and $G$}
Suppose that $(F,G,A,A_0,B,B_0)$ is an $(\eps,\eps',K,D)$-weak framework with $\delta(F)\ge (1/4+\alpha )n$. 
Then for some $r^* \le \eps n$ the graph $F$ contains edge-disjoint Hamilton cycles $C_1,\dots,C_{r^*}$ which
satisfy the following properties:
\begin{itemize}
\item[{\rm (i)}] Together $C_1,\dots,C_{r^*}$  cover all edges in $G[A_0, B_0]$;
\item[{\rm (ii)}] $(C_1\cup \dots \cup C_{r^*})\cap G$ is $2r^*$-balanced with respect to $(A,A_0,B,B_0)$.
\end{itemize}
\end{lemma}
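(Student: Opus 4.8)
\textbf{Proof proposal for Lemma~\ref{coverA0B0}.}

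The plan is to combine Lemma~\ref{coverA0B01} with Lemma~\ref{extendpaths} in a straightforward greedy/inductive fashion. First I would apply Lemma~\ref{coverA0B01} to $(F,G,A,A_0,B,B_0)$ (its hypotheses are exactly those we have, noting $D\ge n/100\ge n/200$) to obtain, for some $r^*\le\eps n$, edge-disjoint $2$-balanced $A_0B_0$-path systems $Q_1,\dots,Q_{r^*}$ in $G$ such that together they cover all edges of $G[A_0,B_0]$, each $Q_i$ has at most $2\eps n$ nontrivial paths, and each $Q_i$ contains no edge of $G[A,B]$. This $r^*$ will be the $r^*$ in the statement.

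Next I would extend each $Q_i$ into a Hamilton cycle $C_i$ of $F$, one at a time, maintaining edge-disjointness. Suppose for some $0\le i<r^*$ we have already constructed edge-disjoint Hamilton cycles $C_1,\dots,C_i$ of $F$ with $Q_j\subseteq C_j$ and $E(C_j)\setminus E(Q_j)$ consisting of $AB$-edges for all $j\le i$, and such that each $C_j\cap G$ is $2$-balanced with respect to $(A,A_0,B,B_0)$. Set $F_i:=F-\bigcup_{j\le i}(E(C_j)\setminus E(Q_j))$ and $G_i:=G-\bigcup_{j\le i}(E(C_j)\setminus E(Q_j))$; since we only removed $AB$-edges (and each $C_j$ removes exactly $n$ of them, so at most $r^*n\le\eps n^2$ edges in total, all incident to $A\cup B$ and each removing at most $2r^*\le 2\eps n$ edges at any vertex), $(F_i,G_i,A,A_0,B,B_0)$ is still an $(\eps,2\eps',K,D)$-pre-framework: properties (WF1), (WF4) are untouched, (WF3) only improves, (WF5) still holds with a slightly worse constant, and (WF2) is not required of a pre-framework. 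Moreover $\delta(F_i)\ge\delta(F)-2\eps n\ge(1/4+\alpha/2)n$. Crucially $Q_{i+1}\subseteq G\subseteq F_i$ still lies in $F_i$: indeed $Q_{i+1}$ contains no $AB$-edge of $G$ by Lemma~\ref{coverA0B01}(iii), whereas the only edges we deleted are $AB$-edges, so $Q_{i+1}$ is untouched and is still a $2$-balanced $A_0B_0$-path system with at most $2\eps n\le\eps' n$ nontrivial paths. Hence Lemma~\ref{extendpaths}, applied to $(F_i,G_i,A,A_0,B,B_0)$ with $Q_{i+1}$, yields a Hamilton cycle $C_{i+1}$ of $F_i\subseteq F$ with $Q_{i+1}\subseteq C_{i+1}$, with $E(C_{i+1})\setminus E(Q_{i+1})$ consisting of $AB$-edges (so $C_{i+1}$ is edge-disjoint from $C_1,\dots,C_i$, since those used edges of $F_i$ that are now removed, and $Q_{i+1}$ is edge-disjoint from $Q_1,\dots,Q_i$ and from all previously used $AB$-edges as it contains no $AB$-edge at all), and with $C_{i+1}\cap G$ being $2$-balanced with respect to $(A,A_0,B,B_0)$.

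Iterating this for $i=0,1,\dots,r^*-1$ produces edge-disjoint Hamilton cycles $C_1,\dots,C_{r^*}$ of $F$. Property (i) holds because $Q_1,\dots,Q_{r^*}$ together cover $G[A_0,B_0]$ and $Q_j\subseteq C_j$. For property (ii), each $C_j\cap G$ is $2$-balanced with respect to $(A,A_0,B,B_0)$, and since balancedness is additive over edge-disjoint subgraphs (as remarked after the definition of balanced), $(C_1\cup\dots\cup C_{r^*})\cap G=\bigcup_j(C_j\cap G)$ is $2r^*$-balanced with respect to $(A,A_0,B,B_0)$, as required.

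The only mildly delicate point — and the step I would check most carefully — is verifying that removing the $AB$-edges used by $C_1,\dots,C_i$ does not destroy the pre-framework hypotheses needed to apply Lemma~\ref{extendpaths} again, in particular that the minimum degree of $F_i$ stays above $(1/4+\alpha')n$ for a suitable $\alpha'$; this is immediate since at most $2r^*\le 2\eps n$ edges are removed at any vertex and $\eps\ll\alpha$, so one simply absorbs the loss into the hierarchy by shrinking $\alpha$ slightly at the outset. Everything else is bookkeeping: the no-$AB$-edge property of the $Q_i$ (Lemma~\ref{coverA0B01}(iii)) is exactly what guarantees the $Q_i$ survive the earlier deletions and that edge-disjointness of the resulting Hamilton cycles is automatic.
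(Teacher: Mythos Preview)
Your approach is essentially the same as the paper's: apply Lemma~\ref{coverA0B01} to obtain the path systems $Q_1,\dots,Q_{r^*}$, then iterate Lemma~\ref{extendpaths} to extend each into a Hamilton cycle. The paper removes the entire Hamilton cycles $C_1,\dots,C_i$ at each stage (invoking Proposition~\ref{WFpreserve} to obtain an $(\eps,\eps',K,D-2i)$-pre-framework), whereas you remove only the new $AB$-edges; both bookkeeping choices work.

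One slip to fix: you write ``(WF2) is not required of a pre-framework'', but the definition of an $(\eps,\eps',K,D)$-pre-framework is precisely (WF1)--(WF5), so (WF2) \emph{is} required. Fortunately your $G_i$ does satisfy (WF2) with the same parameter $D$: you only delete $AB$-edges, which touch neither $A_0\cup B_0$ (so (B2) is preserved) nor $E(A')\cup E(B')$ (so (B1) is preserved). Similarly, (WF5) concerns internal degrees in $F$, and removing $AB$-edges leaves those untouched, so (WF5) holds with the original $\eps'$ rather than $2\eps'$. With this correction your argument goes through; the edge-disjointness check and the survival of $Q_{i+1}$ via Lemma~\ref{coverA0B01}(iii) are exactly as in the paper.
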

\proof
Apply Lemma~\ref{coverA0B01} to obtain a collection of $r^* \leq \eps n$ edge-disjoint $2$-balanced
$A_0B_0$-path systems $Q_1, \dots , Q_{r^*}$ in $G$ which satisfy Lemma~\ref{coverA0B01}(i)--(iii).
We will extend each $Q_i$ to a Hamilton cycle $C_i$.

Suppose that for some $0 \le r < r^*$ we have found a collection $C_1,\dots,C_r$ of $r$ edge-disjoint Hamilton
cycles in $F$ such that the following holds for each $0 \leq i \leq r$:
\begin{itemize}
\item[($\alpha$)$_i$] $Q_i \subseteq C_i$;
\item[($\beta$)$_i$] $E(C_i) \setminus E(Q_i)$ consists of $AB$-edges;
\item[($\gamma$)$_i$] $G \cap C_i $ is $2$-balanced with respect to $(A,A_0,B,B_0)$.
\end{itemize}
(Note that ($\alpha$)$_0$--($\gamma$)$_0$ are vacuously true.)
Let $H_r:=C_1\cup \dots \cup C_r$ (where $H_0:=(V(G),\emptyset)$). So $H_r$ is $2r$-regular. 
Further, since  $G \cap C_i$ is $2$-balanced for each $i \leq r$, $G \cap H_r$ is $2r$-balanced. Let
$G_r:=G-H_r$ and $F_r:=F-H_r$. Since $(F,G,A,A_0,B,B_0)$ is an $(\eps,\eps',K,D)$-pre-framework,
Proposition~\ref{WFpreserve} implies that $(F_r,G_r,A,A_0,B,B_0)$ is an $(\eps,\eps',K,D-2r)$-pre-framework. 
Moreover, $\delta (F_r) \geq \delta (F)-2r \geq (1/4+\alpha/2)n$.
Lemma~\ref{coverA0B01}(iii) and ($\beta$)$_1$--($\beta$)$_r$ together%
    \COMMENT{Daniela added more detail}
imply that $Q_{r+1}$ lies in $G_r$.
Therefore, Lemma~\ref{extendpaths} implies that $F_r$ contains a Hamilton cycle
$C_{r+1}$ which satisfies ($\alpha$)$_{r+1}$--($\gamma$)$_{r+1}$.

So we can proceed in this way in order to obtain $r^*$ edge-disjoint Hamilton cycles 
$C_1, \dots , C_{r^*}$ in $F$ such that for each $i\leq r^*$, ($\alpha$)$_{i}$--($\gamma$)$_{i}$
hold. Note that this implies that (ii) is satisfied. Further, the choice of $Q_1, \dots, Q_{r^*}$
ensures that (i) holds.
\endproof

Given a graph $G$, we say that $(G,A,A_0,B,B_0)$ is an \emph{$(\eps,\eps',K,D)$-bi-framework} if the following holds,
where $A':=A_0\cup A$, $B':=B_0\cup B$ and $n:=|G|$:
\begin{itemize}
\item[{\rm (BFR1)}]  $A,A_0,B,B_0$ forms a partition of $V(G)$;
\item[(BFR2)] $G$ is $D$-balanced with respect to $(A,A_0,B,B_0)$;
\item[{\rm (BFR3)}] $e_G(A'), e_G(B')\le \varepsilon n^2$;
\item[{\rm (BFR4)}] $|A|=|B|$ is divisible by $K$. Moreover, $b\le a$ and $a+b\le \eps n$, where $a:=|A_0|$ and $b:=|B_0|$;
\item[{\rm (BFR5)}] all vertices in $A \cup B$ have internal degree at most $\varepsilon' n$ in $G$;
\item[{\rm (BFR6)}] $e(G[A_0, B_0])=0$;
\item[{\rm (BFR7)}] all vertices $v\in V(G)$ have internal degree at most $d_G(v)/2+\eps n$ in $G$.
\end{itemize}
Note that the main differences to a weak framework are (BFR6) and the fact that a weak framework involves an additional graph $F$.
In particular (BFR1)--(BFR4) imply (WF1)--(WF4).%
\COMMENT{Deryk: additional 2 sentences}
Suppose that $\eps_1\ge \eps$, $\eps'_1\ge \eps'$ and that $K_1$ divides $K$.
Then note that every $(\eps,\eps',K,D)$-bi-framework is also an $(\eps_1,\eps'_1,K_1,D)$-bi-framework.%
\COMMENT{AL:removed `$(F,G,A,A_0,B,B_0)$ is an $(\eps,\eps',K,D)$-weak framework, thus $F$ satisfies (WF5)
with respect to the partition $A,A_0,B,B_0$.' in the next lemma}

\begin{lemma}\label{coverA0B02}
Let $0<1/n\ll \varepsilon \ll \varepsilon',1/K\ll \alpha \ll 1$ and let
$ D \ge n/100$.
Let $F$ be a graph on $n$ vertices and let $G$ be a spanning subgraph of $F$.%
	\COMMENT{AL:add condition for $F$ and $G$}
Suppose that $(F,G,A,A_0,B,B_0)$ is an $(\eps,\eps',K,D)$-weak framework. Suppose also that $\delta(F) \ge (1/4+\alpha)n$ and $|A_0|\geq |B_0|$.
 Then the following properties hold:
\begin{itemize}
\item[{\rm (i)}] there is an $(\eps,\eps',K,D_{G'})$-bi-framework
$(G',A,A_0,B,B_0)$ such that $G'$ is a spanning subgraph of $G$ with
$D_{G'} \ge D -2\eps n$;%
\COMMENT{AL: removed. `and such that $F$ satisfies (WF5)
(with respect to the partition $A,A_0,B,B_0$)'.}
\item[{\rm (ii)}] there is a set of $(D-D_{G'})/2 \leq  \eps  n$
edge-disjoint Hamilton cycles in $F-G'$ containing all edges of $G-G'$.
In particular, if $D$ is even then $D_{G'}$ is even.%
\COMMENT{we do need the explicit statement of the number of ham cycles here.}
\end{itemize}
\end{lemma}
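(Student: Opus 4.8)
The statement to prove is Lemma~\ref{coverA0B02}, which converts an $(\eps,\eps',K,D)$-weak framework into a bi-framework at the cost of removing a bounded number of edge-disjoint Hamilton cycles. The entire preceding portion of this section has been building towards exactly this, so the proof is really an assembly of Lemma~\ref{coverA0B0} together with a routine check that the remaining properties (BFR1)--(BFR7) hold. First I would apply Lemma~\ref{coverA0B0} to $(F,G,A,A_0,B,B_0)$: its hypotheses hold since $\delta(F)\ge(1/4+\alpha)n$, $D\ge n/100$, and we have a weak framework. This yields $r^*\le\eps n$ edge-disjoint Hamilton cycles $C_1,\dots,C_{r^*}$ in $F$ covering all edges of $G[A_0,B_0]$, such that $H:=(C_1\cup\dots\cup C_{r^*})\cap G$ is $2r^*$-balanced with respect to $(A,A_0,B,B_0)$.

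Then I would set $G':=G-H$ and $D_{G'}:=D-2r^*$, and verify that $(G',A,A_0,B,B_0)$ is an $(\eps,\eps',K,D_{G'})$-bi-framework. Property (BFR1) is immediate since the vertex partition is unchanged. For (BFR2): $G$ is $D$-balanced (this is (WF2)), $H$ is $2r^*$-balanced by construction, and the difference of a $D$-balanced graph and an edge-disjoint $2r^*$-balanced spanning subgraph is $(D-2r^*)$-balanced --- this is one of the basic observations recorded right after the definition of balancedness. Properties (BFR3) and (BFR5) are inherited from (WF3) and (WF5) since deleting edges cannot increase $e_{G'}(A'),e_{G'}(B')$ or internal degrees; indeed (WF5) gives internal degree at most $\eps' n$ in $F\supseteq G'$. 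Property (BFR4) holds because $|A|=|B|$ is divisible by $K$ by (WF4), $a+b\le\eps n$ by (WF4), and we are given $|A_0|\ge|B_0|$, i.e. $b\le a$. Property (BFR6), $e(G'[A_0,B_0])=0$, holds precisely because the $C_i$ together cover all edges of $G[A_0,B_0]$ (Lemma~\ref{coverA0B0}(i)) and these have been deleted.

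The one property needing a small computation is (BFR7): every $v\in V(G')$ should have internal degree at most $d_{G'}(v)/2+\eps n$ in $G'$. Deleting the $C_i$ removes exactly $2r^*$ from $d_G(v)$ at every vertex, and it removes at most $2r^*\le 2\eps n$ from the internal degree of $v$; so if $v$ had internal degree $\mathrm{int}_G(v)$ in $G$, then $v$ has internal degree at least $\mathrm{int}_G(v)-2\eps n$ and at most $\mathrm{int}_G(v)$ in $G'$, while $d_{G'}(v)=d_G(v)-2r^*$. By (WF6), $\mathrm{int}_G(v)\le d_G(v)/2$, hence the internal degree of $v$ in $G'$ is at most $d_G(v)/2 = (d_{G'}(v)+2r^*)/2 \le d_{G'}(v)/2+\eps n$, as required. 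This gives (i), with $D_{G'}=D-2r^*\ge D-2\eps n$.

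For (ii), the Hamilton cycles $C_1,\dots,C_{r^*}$ produced by Lemma~\ref{coverA0B0} are edge-disjoint, lie in $F$, and cover all edges of $G[A_0,B_0]$; moreover every edge of $G-G'=H$ lies in some $C_i$ by definition of $H$. Since each $C_i$ is $2$-regular and edge-disjoint, they lie in $F-G'$ (their edges are disjoint from $E(G')=E(G)\setminus E(H)$, and an edge of $C_i$ outside $G$ is trivially outside $G'$). There are $r^*=(D-D_{G'})/2\le\eps n$ of them. Finally, if $D$ is even, then $D_{G'}=D-2r^*$ is even as well. The main (and only) obstacle is really just keeping track of exactly which inequality in the weak-framework hierarchy is used where, and confirming that the tightness in (WF6) is what delivers the $+\eps n$ slack in (BFR7); none of this is deep, it is bookkeeping built on Lemma~\ref{coverA0B0}, which itself was the substantive step.
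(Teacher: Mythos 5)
Your proposal is correct and takes essentially the same route as the paper: apply Lemma~\ref{coverA0B0}, delete the resulting Hamilton cycles, and check (BFR1)--(BFR7) directly, with (BFR7) being the only one needing (WF6) plus the bound $r^*\le\eps n$. One small slip: you write $d_{G'}(v)=d_G(v)-2r^*$, but since the $C_i$ live in $F$ and may use edges outside $G$, this should be $d_{G'}(v)\ge d_G(v)-2r^*$; the inequality is all you use, so the conclusion $d_G(v)/2\le(d_{G'}(v)+2r^*)/2$ still follows and the argument stands.
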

\proof
Lemma~\ref{coverA0B0} implies that there exists some $r^* \leq \eps  n$ such that
$F$ contains a spanning subgraph $H$ satisfying the following properties:
\begin{itemize}
\item[(a)] $H$ is $2r^*$-regular;
\item[(b)] $H$ contains all the edges in $G[A_0,B_0]$;
\item[(c)] $G \cap H$ is $2r^*$-balanced with respect to $( A,A_0,B,B_0)$;
\item[(d)] $H$ has a decomposition into $r^*$ edge-disjoint Hamilton cycles.
\end{itemize}

Set $G':=G-H$. Then $(G',A,A_0,B,B_0)$ is an $(\eps,\eps',K,D_{G'})$-bi-framework
where $D_{G'} :=D-2r^* \geq D- 2\eps n$. Indeed, since $(F,G,A,A_0,B,B_0)$ is an $(\eps ,\eps',K,$ $D)$-weak framework,
(BFR1) and (BFR3)--(BFR5) follow from (WF1) and (WF3)--(WF5). Further, (BFR2) follows
from (WF2) and (c) while (BFR6) follows from (b). (WF6) implies that all
vertices $v \in V(G)$ have internal degree at most $d_G (v)/2$ in $G$.
Thus  all vertices $v \in V(G')$ have internal degree at most $d_G (v)/2\leq (d_{G'} (v)+2r^*)/2
\leq d_{G'} (v)/2 +\eps n$ in $G'$. So (BFR7) is satisfied. Hence, (i) is satisfied.

Note that by definition of $G'$, $H$ contains all edges of $G-G'$. So since 
$r^* =(D-D_{G'})/2 \leq \eps n$, (d) implies (ii).
\endproof

The following result follows immediately from Lemmas~\ref{bip_decomp} and~\ref{coverA0B02}.%
\COMMENT{AT: It is convenient to state both Lemma~\ref{coverA0B02} and Corollary~\ref{coverA0B02c}.
We apply Lemma~\ref{coverA0B02} in a situation where $G$ is not regular. But elsewhere it is handy
to apply Corollary~\ref{coverA0B02c} instead of first applying Lemma~\ref{bip_decomp} and then
Lemma~\ref{coverA0B02}.}
\begin{cor}\label{coverA0B02c}
Let $0<1/n\ll \varepsilon \ll  \eps^*\ll \varepsilon',1/K\ll \alpha \ll 1$ and let
$ D \ge n/100$. Suppose that $F$ is an $\eps$-bipartite graph on $n$ vertices with $\delta(F) \ge (1/4+\alpha)n$.%
	\COMMENT{AL: added $n$ vertices.}
Suppose that $G$ is a $D$-regular spanning subgraph of $F$. Then the following properties hold:
\begin{itemize}
\item[{\rm (i)}] there is an $(\eps^*,\eps',K,D_{G'})$-bi-framework%
\COMMENT{Deryk: it seems odd to have $\eps^*$ here and $\eps ^{1/3}$ in the other two bounds? Daniela: yes, but in this way
we don't have to say that a $(\eps^{1/3},\eps',K,D_{G'})$-bi-framework is a $(\eps^*,\eps',K,D_{G'})$-bi-framework later one.}
$(G',A,A_0,B,B_0)$ such that $G'$ is a spanning subgraph of $G$,
$D_{G'} \ge D -2\eps^{1/3}n$ and such that $F$ satisfies (WF5)
(with respect to the partition $A,A_0,B,B_0$);
\item[{\rm (ii)}] there is a set of $(D-D_{G'})/2 \leq  \eps ^{1/3} n$
edge-disjoint Hamilton cycles in $F-G'$ containing all edges of $G-G'$.
In particular, if $D$ is even then $D_{G'}$ is even.%
\COMMENT{we do need
the explicit statement of the number of ham cycles here. Further, note that this implies that $\delta (G') \geq D_{G'}$}
\end{itemize}
\end{cor}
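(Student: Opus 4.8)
The plan is to obtain Corollary~\ref{coverA0B02c} by simply concatenating Lemma~\ref{bip_decomp} and Lemma~\ref{coverA0B02}, the only work being to track the constant hierarchy and to record the (trivial) symmetry of weak frameworks. First I would fix the constants so that $0<1/n\ll\eps\ll\eps^*\ll\eps',1/K\ll\alpha\ll 1$, choosing in addition $\eps$ small enough that $\eps^{1/3}\le\eps^*$ (so that $\eps^{1/3}\le\eps^*\ll\eps'$). Since $D\ge n/100\ge n/200$, $F$ is $\eps$-bipartite on $n$ vertices and $G$ is a $D$-regular spanning subgraph of $F$, Lemma~\ref{bip_decomp} (applied with $\eps$ as the bipartiteness parameter, and with the given $\eps',K$) yields a partition $A,A_0,B,B_0$ of $V(F)=V(G)$ for which $(F,G,A,A_0,B,B_0)$ is an $(\eps^{1/3},\eps',K,D)$-weak framework. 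In particular $F$ satisfies (WF5) with respect to this partition.

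Next I would note that properties (WF1)--(WF6) are symmetric under interchanging the roles of $(A,A_0)$ and $(B,B_0)$: indeed $G$ is $D$-balanced with respect to $(A,A_0,B,B_0)$ if and only if it is $D$-balanced with respect to $(B,B_0,A,A_0)$ (as observed before Proposition~\ref{edge_number}), while (WF3)--(WF6) are manifestly symmetric and (WF4) already gives $|A|=|B|$. Hence, after relabelling the two sides if necessary, I may assume without loss of generality that $|A_0|\ge|B_0|$.

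Then I would apply Lemma~\ref{coverA0B02} with $\eps^{1/3}$ playing the role of its parameter $\eps$: its hypotheses all hold, since we have an $(\eps^{1/3},\eps',K,D)$-weak framework with $\eps^{1/3}\ll\eps'$, $\delta(F)\ge(1/4+\alpha)n$, $D\ge n/100$ and $|A_0|\ge|B_0|$. Part~(i) of that lemma produces a spanning subgraph $G'$ of $G$ and an $(\eps^{1/3},\eps',K,D_{G'})$-bi-framework $(G',A,A_0,B,B_0)$ with $D_{G'}\ge D-2\eps^{1/3}n$; since $\eps^{1/3}\le\eps^*$, enlarging the error parameter shows that $(G',A,A_0,B,B_0)$ is also an $(\eps^*,\eps',K,D_{G'})$-bi-framework, and $F$ still satisfies (WF5) with respect to $A,A_0,B,B_0$ because that statement concerns only $F$ and the (unchanged) partition. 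This gives Corollary~\ref{coverA0B02c}(i). Part~(ii) of Lemma~\ref{coverA0B02} directly supplies the set of $(D-D_{G'})/2\le\eps^{1/3}n$ edge-disjoint Hamilton cycles in $F-G'$ covering all edges of $G-G'$, together with the parity conclusion, which is exactly Corollary~\ref{coverA0B02c}(ii).

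There is no real obstacle here: the corollary is purely a repackaging of the two lemmas. The only points requiring a moment's care are the choice of $\eps$ small enough relative to $\eps^*$ (so that $\eps^{1/3}\le\eps^*$ and the $\eps^{1/3}$-bi-framework is a fortiori an $\eps^*$-bi-framework) and the remark that weak/bi-framework conditions only get easier to satisfy when the error constant is increased and are unaffected by swapping the two vertex classes.
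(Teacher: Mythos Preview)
Your proposal is correct and follows exactly the approach indicated by the paper, which simply states that the corollary follows immediately from Lemmas~\ref{bip_decomp} and~\ref{coverA0B02}. You have carefully supplied the details the paper omits: the parameter tracking (in particular ensuring $\eps^{1/3}\le\eps^*$ so that the $(\eps^{1/3},\eps',K,D_{G'})$-bi-framework is also an $(\eps^*,\eps',K,D_{G'})$-bi-framework), the symmetry observation allowing $|A_0|\ge|B_0|$, and the fact that (WF5) for $F$ is inherited from the weak framework obtained via Lemma~\ref{bip_decomp}.
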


\section[Finding Path Systems which Cover All the Edges within Classes]{Finding Path Systems which Cover All the Edges within the Classes}\label{findBES}

The purpose of this section is to prove Corollary~\ref{BEScor} which, given a bi-framework $(G,A,A_0,B,B_0)$,
guarantees a set $\cC$ of edge-disjoint Hamilton cycles and a set $\cJ$ of suitable edge-disjoint
$2$-balanced $A_0B_0$-path systems such that the graph $G^*$ obtained from $G$ by deleting the edges in
all these Hamilton cycles and path systems is bipartite with vertex classes $A'$ and $B'$ and
$A_0\cup B_0$ is isolated in $G^*$. Each of the path systems in $\cJ$ will later be extended into
a Hamilton cycle by adding suitable edges between $A$ and~$B$.
The path systems in $\cJ$ will need to be `localized' with respect to a given partition.
We prepare the ground for this in the next subsection.

We will call the path systems in $\cJ$ balanced exceptional systems (see Section~\ref{besconstruct} for the definition). These will play a similar role as the
exceptional systems in the two cliques case (i.e. in Chapter~\ref{paper1}).

Throughout this section, given sets $S,S'\subseteq V(G)$ we often write $E(S)$, $E(S,S')$, $e(S)$ and $e(S,S')$ for
$E_G(S)$, $E_G(S,S')$, $e_G(S)$ and $e_G(S,S')$ respectively.

\subsection{Choosing the Partition and the Localized Slices}\label{partition}  

Let $K,m\in\mathbb{N}$ and $\eps>0$.
Recall that a \emph{$(K,m,\eps)$-partition} of a set $V$ of vertices is a partition of $V$ into sets $A_0,A_1,\dots,A_K$
and $B_0,B_1,\dots,B_K$ such that $|A_i|=|B_i|=m$ for all $1 \le i \le K $%
\COMMENT{AL: replaced $i \ge 1$ with $1 \le i \le K $}
 and $|A_0\cup B_0|\le \eps |V|$.
We often write $V_0$ for $A_0\cup B_0$ and think of the vertices in $V_0$ as `exceptional vertices'.
The sets $A_1,\dots,A_K$ and $B_1,\dots,B_K$ are called \emph{clusters} of the $(K,m,\eps_0)$-partition
and $A_0$, $B_0$ are called \emph{exceptional sets}. Unless stated otherwise,
when considering a $(K,m,\eps)$-partition $\mathcal P$ we denote the elements
of $\mathcal P$ by $A_0,A_1,\dots,A_K$ and $B_0,B_1,\dots,B_K$ as above.
Further, we will often write $A$ for $A_1 \cup \dots \cup A_K$ and 
$B$ for $B_1 \cup \dots \cup B_K$.

Suppose that $(G,A,A_0,B,B_0)$ is an $(\eps,\eps',K,D)$-bi-framework with $|G|=n$ and that
$\eps _1, \eps _2 >0$.
We say that $\cP$ is a \emph{$(K,m,\eps,\eps_1,\eps_2)$-partition for $G$} if 
$\cP$ satisfies the following properties:
\begin{itemize}
\item[(P1)] $\cP$ is a $(K,m,\eps)$-partition of $V(G)$
such that the exceptional sets $A_0$ and $B_0$ in the partition $\cP$ are the same as the
sets $A_0$, $B_0$ which are part of the bi-framework $(G,A,A_0,B,B_0)$. In particular, $m=|A|/K=|B|/K$;
\item[(P2)] $d(v, A_i)=(d(v, A)\pm \eps_1 n)/K$  for all $1 \le i\le K$ and $v\in V(G)$;
\item[(P3)] $e(A_i, A_j)=2(e(A) \pm  \eps_2 \max\{n, e(A)\} )/K^2$ for all $1\le i < j\le K$;
\item[(P4)] $e(A_i)=(e(A) \pm \eps_2\max\{n, e(A)\})/K^2$ for all $1 \le i\le K$;
\item[(P5)] $e(A_0, A_i)=(e(A_0, A)\pm \eps_2\max\{n, e(A_0, A)\})/K$ for all $1 \le i\le K$;
\item[(P6)] $e(A_i, B_j)=(e(A, B) \pm 3\eps_2 e(A, B))/K^2$ for all $1 \le i, j \le K$;%
   \COMMENT{Previously had $\max\{n, e(A, B)\}$ instead of just $e(A, B)$. But since we are always assuming that $D$ is
linear, we don't need the max here.}
\end{itemize}
and the analogous assertions hold if we replace $A$ by $B$ (as well as $A_i$ by $B_i$ etc.) in (P2)--(P5).\COMMENT{Check we never need e.g. a mixed condition
like $e(A_0^s, B_i)=...$ in (P5)}

Our first aim is to show that for every bi-framework we can find such a partition with suitable parameters.

\begin{lemma}\label{part} Let
$0<1/n\ll \eps\ll \eps'\ll \eps_1\ll \eps_2\ll 1/K\ll 1$.
Suppose that $(G,A,A_0,B,B_0)$ is an $(\eps,\eps',K,D)$-bi-framework with $|G|=n$ and 
$\delta (G) \geq D\ge n/200$. 
Suppose that $F$ is a graph with $V(F)=V(G)$. Then there exists a partition 
$\cP=\{A_0,A_1, \dots , A_K, B_0, B_1, \dots , B_K \}$ of $V(G)$ so that 
\begin{itemize}
\item[{\rm (i)}] $\cP$ is a $(K,m,\eps,\eps_1,\eps_2)$-partition for $G$.
\item[{\rm (ii)}] $d_F(v, A_i)=(d_F(v, A)\pm \eps_1 n)/K$ and $d_F(v, B_i)=(d_F(v, B)\pm \eps_1 n)/K$ for all $1 \le i\le K$ and $v\in V(G)$.%
\COMMENT{Deryk rephrased this}
\end{itemize}
\end{lemma}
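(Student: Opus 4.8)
The plan is to obtain the partition by a single random experiment and then verify the required quasi-randomness properties via concentration inequalities, exactly in the spirit of Lemma~\ref{lma:partition2} (which handles the non-exceptional part of the graph in the two-cliques chapter). Concretely, I would take uniformly random equipartitions $A_1,\dots,A_K$ of $A$ and $B_1,\dots,B_K$ of $B$ into sets of size $m=|A|/K=|B|/K$ (these sizes are integers by (BFR4)), keeping $A_0,B_0$ as in the bi-framework so that (P1) holds by construction. All of (P2)--(P6), their $B$-analogues, and the two degree statements in~(ii) are then `on average correct' by linearity of expectation, and the point is that each of them holds with high probability.

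The execution would run as follows. First, the degree conditions (P2) and~(ii): for a fixed vertex $v$, the quantity $d_G(v,A_i)$ has a hypergeometric distribution with mean $d_G(v,A)/K$, so Proposition~\ref{chernoff} gives a failure probability below $1/n^2$ whenever $d_G(v,A)\ge \eps_1 n/K$ (and when $d_G(v,A)<\eps_1 n/K$ the bound $d_G(v,A_i)\le \eps_1 n/K$ is automatic); the same argument applies verbatim with $F$ in place of $G$ and with $B$ in place of $A$. A union bound over the $O(n)$ vertices and $O(K)$ clusters shows (P2) and~(ii) hold with probability, say, at least $3/4$. Next, the edge-count conditions (P3)--(P6): fix $i\ne i'$ and let $X$ count the relevant edges; as in the proof of Lemma~\ref{lma:partition2}, one computes $\mathbb E(X)$ exactly (up to a $(1\pm 2/|A|)$ factor coming from sampling without replacement) and bounds $\mathrm{Var}(X)\le e_G(A)\Delta(G[A])$ using the negative-correlation estimate for pairs of disjoint edges. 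Chebyshev's inequality then yields the required deviation bound; here the case distinction on whether $e_G(A)\le n$ (so $\Delta(G[A])$ is small relative to $\max\{n,e_G(A)\}$) is handled exactly as in Lemma~\ref{lma:partition2}, noting that by (BFR5) we have $\Delta(G[A])\le\eps' n$, which plays the role of the hypothesis `$\Delta(G[U])\le\eps n$' there. Conditions (P5) and (P6) (edges incident to $A_0$, and $A_iB_j$-edges) are even simpler since they involve only a one-sided random partition relative to a fixed vertex set or a product of two independent partitions; for (P6) one uses $e_G(A,B)\ge e(G)-2\eps n^2\ge n$ (from (BFR3) and $D\ge n/200$) so that the `$\max\{n,e(A,B)\}$' reduces to $e(A,B)$ as stated. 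A union bound over all of (P3)--(P6) and their $B$-analogues again gives probability at least $3/4$, so with positive probability all properties hold simultaneously, proving the lemma.

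The only genuinely delicate point — and the step I would flag as the main obstacle — is not any single concentration estimate but making the variance bound in (P3)/(P4) strong enough in the regime where $G[A]$ is sparse. If $e_G(A)$ is much smaller than $n$, then the target error $\eps_2 n/K^2$ is what we need, and the naive bound $\sqrt{\mathrm{Var}(X)}\le\sqrt{e_G(A)\Delta(G[A])}$ must be compared against $n$ rather than against $e_G(A)$; this is precisely why one needs the dichotomy `$\Delta(G[A])\le\eps n$ or $\delta(G[A])\ge\eps n$' in Lemma~\ref{lma:partition2}. Here (BFR5) guarantees the first alternative ($\Delta(G[A])\le\eps' n$) outright, so in fact the sparse case is automatic and the argument is cleaner than in the two-cliques setting; I would simply quote the relevant computation from the proof of Lemma~\ref{lma:partition2} with $U:=A$, $G[U]:=G[A]$, the bound $\eps'$ playing the role of $\eps$, and $\eps_1,\eps_2$ in their usual roles, and likewise with $U:=B$. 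Everything else is a routine application of Propositions~\ref{chernoff} and Chebyshev together with a union bound.
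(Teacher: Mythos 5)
Your proposal and the paper's proof use the same underlying idea — random equipartitions verified by Chernoff for the degree conditions and by a variance/Chebyshev estimate for the edge counts, with (BFR5) supplying the sparseness alternative $\Delta(G[A])\le\eps' n$ that the dichotomy in Lemma~\ref{lma:partition2} needs. However, the paper does not run a single joint experiment: it applies Lemma~\ref{lma:partition2} as a black box \emph{twice in sequence}. The first application partitions $A$ with $R_1:=A_0$, $R_2:=B_0$, $R_3:=B$; this yields (P2)--(P5) for the $A_i$ together with the key byproduct $e_G(A_i,B)=(1\pm\eps_2)e_G(A,B)/K$. The second application partitions $B$ with the \emph{already fixed} clusters $A_1,\dots,A_K$ now playing the roles of $R_3,\dots,R_{K+2}$, so that each $e_G(A_i,B_j)$ is again a one-step hypergeometric quantity governed directly by Lemma~\ref{lma:partition2}(v), and (P6) drops out by chaining the two error bounds.

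This chaining is exactly where your proposal leaves a small gap. You note that (P6) concerns ``a product of two independent partitions'' and then wave at the concentration, but a joint random experiment makes $e(A_i,B_j)$ a two-sided random quantity whose mean and variance computation is not literally covered by any single application of the Chebyshev estimate inside Lemma~\ref{lma:partition2}; one would have to redo it (or condition on one side first and argue in two stages, which is precisely what the paper's sequential application accomplishes for free). You also do not verify the hypothesis $d_G(x,A)\ge\eps' n$ for $x\in B$ (and symmetrically for $A$), which the paper derives from (BFR4) and (BFR7) and which is needed to invoke Lemma~\ref{lma:partition2} with $R_3=B$; your (BFR3)/$D\ge n/200$ observation handles the $\max$ in (P6) but is a different point. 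Neither issue is hard to repair, but the paper's route — reuse the existing lemma twice rather than reopening its proof — is cleaner and closes these gaps automatically.
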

\proof In order to find the required partitions $A_1,\dots,A_K$ of $A$ and $B_1,\dots,B_K$ of $B$
we will apply Lemma~\ref{lma:partition2} twice, as follows. 
In the first application we let $U:=A,$ $R_1:=A_0$,   $R_2:=B_0$ and
$R_3:= B$. 
 Note that
$\Delta(G[U]) \le  \eps' n $ by (BFR5) and $d_G(u,R_j)\le |R_j|\le \eps n\le \eps' n$ for all $u\in U$ and $j=1,2$ by (BFR4).
Moreover, (BFR4) and (BFR7) together imply that $d_G(x,U)\ge D/3\ge \eps' n$ for each $x\in R_3=B$.
Thus we can apply Lemma~\ref{lma:partition2} with $\eps'$ playing the role of $\eps$ to obtain a partition 
$U_1, \dots, U_K$ of $U$. We let $A_i:=U_i$  for all $i\le K.$
Then the $A_i$ satisfy (P2)--(P5) and
\begin{equation}\label{eq:eAiBz}
e_G(A_i, B)=(e_G(A, B) \pm \eps_2 \max\{n, e_G(A, B)\})/K=(1\pm \eps_2)e_G(A,B)/K.
\end{equation}
Further, Lemma~\ref{lma:partition2}(vi) implies that
$$d_F(v, A_i)=(d_F(v, A)\pm \eps_1 n)/K$$  for all $1 \le i\le K$ and $v\in V(G)$.

For the second application of Lemma~\ref{lma:partition2} we let $U:=B,$ $R_1:=B_0$, $R_2:=A_0$ 
and $R_j:=A_{j-2}$ for all $3 \le j \le K+2$. As before, $\Delta(G[U]) \le  \eps' n $ by (BFR5)
and $d_G(u,R_j)\le \eps n\le \eps' n$ for all $u\in U$ and  $j=1,2$ by (BFR4).
Moreover, (BFR4) and (BFR7) together imply that $d_G(x,U)\ge D/3\ge \eps' n$ for all $3 \le j \le K+2$ and each $x\in R_j=A_{j-2}$.
Thus we can apply Lemma~\ref{lma:partition2} with $\eps'$ playing the role of $\eps$ to obtain a partition 
$U_1, \dots, U_K$ of $U$. Let $B_i:=U_i$  for all $i \le K.$ Then the $B_i$ satisfy (P2)--(P5)
with $A$ replaced by~$B$, $A_i$ replaced by $B_i$, and so on. Moreover, for all $1 \leq i,j \leq K$,
\begin{eqnarray*}
e_G(A_i, B_j) & = & (e_G(A_i, B) \pm \eps_2 \max\{n, e_G(A_i, B)\})/K\\
& \stackrel{(\ref{eq:eAiBz})}{=} & ((1\pm \eps_2)e_G(A,B) \pm \eps_2 (1+ \eps_2)e_G(A,B))/K^2\\
& = & (e_G(A, B) \pm 3\eps_2 e_G(A, B))/K^2,
\end{eqnarray*}
i.e.~(P6) holds. Since clearly (P1) holds as well, $A_0,A_1,\dots,A_K$ and $B_0,B_1,\dots,B_K$
together form a $(K,m,\eps,\eps_1,\eps_2)$-partition for $G$.
Further, Lemma~\ref{lma:partition2}(vi) implies that
$$d_F(v, B_i)=(d_F(v, B)\pm \eps_1 n)/K$$  for all $1 \le i\le K$ and $v\in V(G)$.
\endproof

The next lemma gives a decomposition of $G[A']$ and $G[B']$ into suitable smaller edge-disjoint subgraphs $H_{ij}^A$ and $H_{ij}^B$.
We say that the graphs $H_{ij}^A$ and $H_{ij}^B$ guaranteed by Lemma~\ref{rnd_slice} are \emph{localized slices} of~$G$.
Note that the order of the indices $i$ and $j$ matters here, i.e.~$H^A_{ij}\neq H^A_{ji}$.
Also, we allow $i=j$. %
\COMMENT{Deryk added this and osthus added lower bound $1 \le i,j\le$}

\begin{lemma}\label{rnd_slice}
Let $0<1/n\ll \eps\ll \eps'\ll \eps_1\ll \eps_2\ll 1/K\ll 1$.
Suppose that $(G,A,A_0,B,B_0)$ is an $(\eps,\eps',K,D)$-bi-framework with $|G|=n$ and $D\ge n/200$.
Let $A_0,A_1,\dots,A_K$ and $B_0,B_1,\dots,B_K$ be a $(K,m,\eps,\eps_1,\eps_2)$-partition for $G$. 
Then for all $1 \le i,j\le K$ there are graphs $H_{ij}^A$ and $H_{ij}^B$ with the following properties:
\begin{enumerate}
\item[{\rm (i)}] $H_{ij}^A$ is a spanning subgraph of  $G[A_0, A_i\cup A_j]\cup G[A_i,A_j]\cup G[A_0]$; 
\item[{\rm (ii)}] The sets $E(H_{ij}^A)$ over all $1 \le i,j\le K$ form a partition of the edges of $G[A']$;
\item[{\rm (iii)}] $e(H_{ij}^A)=(e(A') \pm 9\eps_2\max\{n, e(A')\})/K^2$ for all $1 \le i, j \le K$;
\item[{\rm (iv)}] $e_{H_{ij}^A}(A_0, A_i\cup A_j)=(e(A_0, A)\pm 2\eps_2\max\{n, e(A_0, A)\})/K^2$ for all $1 \le i, j \le K$;
\item[{\rm (v)}] $e_{H_{ij}^A}(A_i, A_j)=(e(A)\pm  2\eps_2 \max\{n,e(A)\})/K^2$ for all $1 \le i, j \le K$;
\item[{\rm (vi)}] 
For all $1 \le i, j \le K$ and all $v\in A_0$ we have $d_{H_{ij}^A}(v)=d_{H_{ij}^A}(v, A_i\cup A_j)+d_{H_{ij}^A}(v, A_0) =(d(v, A)\pm 4\eps_1 n)/K^2$.
\end{enumerate} 
The analogous assertions hold if we replace $A$ by $B$, $A_i$ by $B_i$, and so on.
\end{lemma}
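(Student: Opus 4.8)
The plan is to obtain the graphs $H_{ij}^A$ by distributing the edges of $G[A']$ among the $K^2$ slices at random and then verifying (iii)--(vi) by a concentration argument. First I would classify each edge of $G[A']$ into one of four types according to its endpoints: an $A_0A_0$-edge (both endpoints in $A_0$); an $A_0A_i$-edge (one endpoint in $A_0$, the other in a cluster $A_i$); an $A_iA_i$-edge (both endpoints in a single cluster $A_i$); or an $A_iA_j$-edge with $i\neq j$. Since $H_{ij}^A$ is to be a subgraph of $G[A_0,A_i\cup A_j]\cup G[A_i,A_j]\cup G[A_0]$ (recall $G[A_i,A_i]=G[A_i]$), an $A_0A_0$-edge may legally be placed in any of the $K^2$ slices, an $A_0A_i$-edge may be placed in $H_{ij}^A$ or $H_{ji}^A$ for any $j\le K$, an $A_iA_i$-edge may only be placed in $H_{ii}^A$, and an $A_iA_j$-edge (with $i\neq j$) may be placed in $H_{ij}^A$ or $H_{ji}^A$. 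I would then define the assignment by the following independent random choices: each $A_0A_0$-edge goes into a uniformly random one of the $K^2$ slices; each $A_iA_i$-edge goes into $H_{ii}^A$; each $A_iA_j$-edge ($i\neq j$) goes into $H_{ij}^A$ or $H_{ji}^A$, each with probability $1/2$; and each $A_0A_i$-edge goes, with probability $1/2$, into $H_{ij}^A$ for a uniformly random $j\in[K]$, and with probability $1/2$ into $H_{ji}^A$ for a uniformly random $j\in[K]$. By construction each $H_{ij}^A$ is a subgraph of the required graph, so (i) holds, and every edge of $G[A']$ is assigned to exactly one slice, so (ii) holds; both hold deterministically.

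It remains to check (iii)--(vi). The key observation is that under this rule an $A_0A_i$-edge lands in $H_{ij}^A$ (for $i\neq j$) with probability $1/(2K)$ and lands in $H_{ii}^A$ with probability $1/K$; symmetrically an $A_0A_j$-edge lands in $H_{ij}^A$ with probability $1/(2K)$. Hence, using (P5) and (P2), for $i\neq j$ the expected number of $A_0(A_i\cup A_j)$-edges of $H_{ij}^A$ is $\tfrac{1}{2K}(e(A_0,A_i)+e(A_0,A_j))=(e(A_0,A)\pm\eps_2\max\{n,e(A_0,A)\})/K^2$ and, at the level of a single vertex $v\in A_0$, the expectation of $d_{H_{ij}^A}(v,A_i\cup A_j)$ is $\tfrac{1}{2K}(d(v,A_i)+d(v,A_j))=(d(v,A)\pm\eps_1 n)/K^2$; for $i=j$ the corresponding expectations are $\tfrac1K e(A_0,A_i)$ and $\tfrac1K d(v,A_i)$, which have the same form. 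Similarly, by (P3) and (P4), the expected number of $A_iA_j$-edges of $H_{ij}^A$ is $\tfrac12 e(A_i,A_j)=(e(A)\pm\eps_2\max\{n,e(A)\})/K^2$ for $i\neq j$ and exactly $e(A_i)=(e(A)\pm\eps_2\max\{n,e(A)\})/K^2$ for $i=j$, while the expected number of $A_0A_0$-edges of $H_{ij}^A$ is $e(A_0)/K^2\le e(A')/K^2$ and the number of $A_0A_0$-edges at a fixed $v\in A_0$ in $H_{ij}^A$ has expectation at most $|A_0|/K^2\le\eps n/K^2$. Summing the three contributions and using $e(A')=e(A_0)+e(A_0,A)+e(A)$ gives expectation $e(A')/K^2$ for (iii); combining the $A_0(A_i\cup A_j)$- and $A_0A_0$-contributions at $v$ (and bounding the latter by $\eps n$) gives (vi); and the $A_0(A_i\cup A_j)$- and $A_iA_j$-counts give (iv) and (v). For each of these the random variable is a sum of independent indicators, so Proposition~\ref{chernoff} (together with a routine upper-tail estimate, or Markov's inequality, to cover the cases where the relevant expectation is smaller than the permitted error) shows that each of (iii)--(vi) holds for all $1\le i,j\le K$ and all $v\in A_0$ with probability $1-o(1)$; here one uses that the error terms scale with $\max\{n,\cdot\}/K^2$, which always dominates the fluctuations $\sqrt{(\text{expectation})\log n}$ since $1/n\ll\eps\ll\eps_1\ll\eps_2\ll1/K$. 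The generous constants ($9\eps_2$, $4\eps_1$, $2\eps_2$) comfortably absorb both the random fluctuations and the error terms inherited from (P2)--(P6), as well as the mild asymmetry between the diagonal and off-diagonal slices (for instance $\tfrac1K e(A_0,A_i)$ versus $\tfrac1{2K}(e(A_0,A_i)+e(A_0,A_j))$, which agree up to $O(\eps_2\max\{n,e(A_0,A)\})/K^2$). Since all these events have probability $1-o(1)$, a union bound over the $O(K^2n)$ of them gives a single outcome for which all of (i)--(vi) hold, proving the existence of the desired $H_{ij}^A$.

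Finally, the statement for $B$ follows by the identical argument applied to $G[B']$ with the clusters $B_1,\dots,B_K$: the defining properties (BFR2) and (P2)--(P6) of an $(\eps,\eps',K,D)$-bi-framework and of a $(K,m,\eps,\eps_1,\eps_2)$-partition for $G$ are symmetric in the roles of $(A,A_0)$ and $(B,B_0)$ (the only asymmetry, $b\le a$ in (BFR4), plays no role here), so the same construction yields graphs $H_{ij}^B$ with the analogous properties; since $A'$ and $B'$ are disjoint, the $H_{ij}^A$ and $H_{ij}^B$ automatically live on disjoint edge sets. I expect the main (though still routine) obstacle to be the bookkeeping around the diagonal slices $H_{ii}^A$, where $E(A_i)$ is forced in and the placement probabilities for $A_0A_i$-edges differ from the off-diagonal case, and checking in each instance that the $\max\{n,\cdot\}$ form of the error terms is large enough to swallow all the accumulated slack.
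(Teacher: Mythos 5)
Your proposal is correct, but the route differs from the paper's in a way worth noting. The paper's actual construction randomizes much less: it places $E(A_i)$ in $H_{ii}^A$ deterministically, splits $E(A_i,A_j)$ (for $i\neq j$) exactly in half deterministically between $H_{ij}^A$ and $H_{ji}^A$, and distributes $E(A_0)$ exactly equally (up to $\pm 1$) among all $K^2$ slices; only $E(A_0,A_i)$ is treated randomly, and even there the paper takes a uniformly random partition of $E(A_0,A_i)$ into $K$ \emph{exactly equal} pieces, assigning the $j$-th piece to $H_{ij}^A$. The payoff is that properties (iii)--(v) then follow directly from (P3)--(P5) and the exact splits, with no concentration argument needed at all; the random partition (and the accompanying Chernoff--Hoeffding bound, this time for the hypergeometric distribution) is invoked only to get the per-vertex degree property (vi). By contrast, your scheme randomizes every edge independently and must invoke concentration for all of (iii)--(vi). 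Your scheme also differs structurally: your off-diagonal slice $H_{ij}^A$ ($i\neq j$) contains both $A_0A_i$- and $A_0A_j$-edges (each landing there with probability $1/(2K)$), whereas the paper's $H_{ij}^A$ contains only $A_0A_i$-edges (each landing there with probability $1/K$). Both are consistent with (i) and (iv), since (iv) only constrains the total $e_{H_{ij}^A}(A_0,A_i\cup A_j)$; the paper even flags this asymmetry in a remark after the lemma. Your calculations (in particular matching the $\max\{n,\cdot\}$-scaled error to the binomial fluctuations, and the need for a crude upper-tail or Markov bound when the expectation falls below the permitted error) all check out, so this is a legitimate alternative proof; the paper's version simply avoids the bulk of the concentration bookkeeping by being deterministic wherever it can be.
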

\proof In order to construct the graphs $H^A_{ij}$
we perform the following procedure:
\begin{itemize}
\item Initially each $H_{ij}^A$ is an empty graph with vertex set $A_0\cup A_i\cup A_j$.
\item For all $1 \le i\le K$ choose a random partition $E(A_0, A_i)$ into $K$ sets $U_j$ of equal size
and let $E(H_{ij}^A):=U_j$. 
(If $E(A_0, A_i)$ is not divisible by $K$, first distribute up to $K-1$ edges arbitrarily among the $U_j$ to achieve divisibility.)%
\COMMENT{We need to mention divisibility as the number of edges may be small.}
\item For all $i\le K$, we add all the edges in $E(A_i)$ to $H_{ii}^A$. 
\item For all $i, j\le K$ with $i\neq j$, half of the edges in $E(A_i, A_j)$ are added to $H_{ij}^A$ and
the other half is added to  $H_{ji}^A$ (the choice of the edges is arbitrary). 
\item The edges in $G[A_0]$ are distributed equally amongst the $H^A _{ij}$.
(So $e_{H^A _{ij}} (A_0)$ $= e(A_0)/K^2 \pm 1$.)
\end{itemize}
Clearly, the above procedure ensures that properties~(i) and (ii) hold.
(P5) implies (iv) and (P3) and (P4) imply (v).%
\COMMENT{The `2' could actually be omitted as the partition is now exact (if divisibility holds) all the numbers are large, so divisibility doesn't affect the numbers.
But it's probably easier to see as it is.}

Consider any $v \in A_0$.
To prove (vi), note that we may assume that $d(v,A) \ge  \eps_1 n/K^2$.
Let $X:=d_{H_{ij}^A}(v, A_i \cup A_j)$. Note that (P2) implies that%
    \COMMENT{Daniela replaced lots of $\ex [X]$ by $\ex (X)$}
$\ex (X)=(d(v, A)\pm 2\eps_1 n)/K^2$ and note that $\ex(X) \le n$. 
So the Chernoff-Hoeffding bound for the hypergeometric distribution in Proposition~\ref{chernoff} implies that
$$
\prob (|X-\ex (X)| > \eps_1 n/K^2) \le \prob (|X- \ex (X)| > \eps_1 \ex (X)/K^2 ) \le 2 e^{-\eps_1^2 \ex (X)/3K^4} \le 1/n^2. 
$$
Since $d_{H_{ij}^A}(v, A_0)\leq |A_0|\leq \eps _1 n/K^2$,
 a union bound implies the desired result.
Finally, observe that for any $a,b_1,\dots,b_4>0$, we have
$$
\sum_{i=1}^4 \max\{a,b_i\}\le 4\max\{a,b_1,\dots,b_4\} \le 4\max\{a,b_1+\dots +b_4\}.
$$
So (iii) follows from~(iv),~(v) and the fact that $e_{H^A _{ij}} (A_0) = e(A_0)/K^2 \pm 1$.
\endproof
Note that the construction implies that if $i\neq j$, then $H^A_{ij}$ will  contain edges between $A_0$ and $A_i$
but not between $A_0$ and $A_j$. However, this additional information is not needed in the subsequent argument.

\subsection{Decomposing the Localized Slices}\label{sec:slice} 
Suppose that $(G,A,A_0,B,B_0)$ is an $(\eps,\eps',K,D)$-bi-framework. 
Recall that $a=|A_0|$, $b=|B_0|$ and $a\ge b$. Since $G$ is $D$-balanced by~(BFR2),
we have $e(A')-e(B')=(a-b)D/2.$ 
So there are an integer $q \ge -b$ and a constant $0\le c<1$ such that
\begin{equation} \label{aqc}
e(A')=(a+q+c)D/2 \ \ \mbox{ and } \ \ e(B')=(b+q+c)D/2.
\end{equation} 
The aim of this subsection is to prove Lemma~\ref{matchingdec}, 
which guarantees a decomposition of each localized slice $H_{ij}^A$ into path systems
(which will be extended into $A_0B_0$-path systems in Section~\ref{besconstruct})%
\COMMENT{osthus modified this. Andy: deleted full stop}
and a sparse (but not too sparse) leftover graph $G_{ij}^A$.%
\COMMENT{AL: added `localized $A_0B_0$-'}

The following two results will be used in the proof of Lemma~\ref{matchingdec}.
\begin{lemma}\label{simple}
Let $0<1/n \ll \alpha, \beta , \gamma $ so that $\gamma<1/2$.  Suppose that $G$ is a graph on $n$ vertices such that
$\Delta (G) \leq \alpha n$ and $e(G) \geq \beta n$. Then $G$ contains a spanning subgraph
$H$ such that $e(H)= \lceil (1- \gamma) e(G)\rceil$ and $\Delta (G-H) \leq 6 \gamma \alpha n/5$.
\end{lemma}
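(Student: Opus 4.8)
\textbf{Proof proposal for Lemma~\ref{simple}.}

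The plan is to obtain $H$ from $G$ by deleting a $\gamma$-fraction of the edges in a controlled way, so that no vertex loses too many edges. First I would invoke Corollary~\ref{basic_matching_dec}: since $\Delta(G)\le \alpha n$, the edge set $E(G)$ decomposes into $\alpha n+1$ edge-disjoint matchings $M_1,\dots,M_{\alpha n+1}$ with $|e(M_i)-e(M_j)|\le 1$ for all $i,j$. Hence each matching has size roughly $e(G)/(\alpha n+1)$, which lies between $e(G)/(\alpha n+1)-1$ and $e(G)/(\alpha n+1)+1$.

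The idea is then to form the graph $G-H$ (the edges we delete) as a union of a suitable number $t$ of these matchings, possibly together with part of one more matching, chosen so that the total number of deleted edges is exactly $e(G)-\lceil(1-\gamma)e(G)\rceil=\lfloor \gamma e(G)\rfloor$. Concretely I would take $t:=\lfloor \gamma e(G)/s\rfloor$ where $s$ is the common (approximate) matching size, delete the matchings $M_1,\dots,M_t$ entirely, and then delete an arbitrary subset of the appropriate size from $M_{t+1}$ to make up the exact deficit $\lfloor\gamma e(G)\rfloor - \sum_{i\le t}e(M_i)$; this remaining subset has size less than $s$. Set $H$ to be the spanning subgraph on the undeleted edges, so $e(H)=\lceil(1-\gamma)e(G)\rceil$ as required.

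It remains to bound $\Delta(G-H)$. Every vertex $v$ has degree at most $1$ in each matching $M_i$, so $d_{G-H}(v)\le t+1$. Now
\[
t+1 \le \frac{\gamma e(G)}{s}+1 \le \frac{\gamma e(G)}{e(G)/(\alpha n+1)-1}+1.
\]
Since $e(G)\ge \beta n$ and $1/n\ll \alpha,\beta,\gamma$, the denominator $e(G)/(\alpha n+1)-1$ is at least $(1-o(1))e(G)/(\alpha n+1)\ge (1-o(1))e(G)/(2\alpha n)$ (using $\alpha n\ge 1$ so $\alpha n+1\le 2\alpha n$). Hence
\[
t+1 \le (1+o(1))\,2\gamma \alpha n + 1 \le \frac{6\gamma\alpha n}{5},
\]
where in the last step we absorb the lower-order terms using $\beta n\le e(G)$ and $1/n$ sufficiently small (so that $2\gamma\alpha n(1+o(1))+1\le \tfrac{6}{5}\gamma\alpha n$; note $\gamma\alpha n=\Omega(n^0)\cdot n$ is large compared with the additive $1$). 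This gives $\Delta(G-H)\le 6\gamma\alpha n/5$, completing the proof.

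The only mildly delicate point is the arithmetic turning $t+1\le 2\gamma\alpha n(1+o(1))+1$ into the clean bound $6\gamma\alpha n/5$; this needs $e(G)$ to be genuinely linear in $n$ (guaranteed by $e(G)\ge\beta n$) so that the $+1$ and the rounding errors in the matching sizes are negligible, and it needs the slack between $2$ and $6/5$ — but $2>6/5$ fails, so in fact one must be more careful: the correct estimate uses $\alpha n+1$ rather than $2\alpha n$, giving $t+1\le \gamma e(G)/(e(G)/(\alpha n+1)-1)+1 = \gamma(\alpha n+1)/(1-(\alpha n+1)/e(G))+1 \le \gamma\alpha n(1+o(1))(1+o(1))+o(\gamma\alpha n)\le 6\gamma\alpha n/5$, where the $o(1)$ terms are controlled by $1/n\ll\alpha,\beta,\gamma$. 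I expect this final estimate to be the main (though routine) obstacle, and I would write it out carefully with the hierarchy $1/n\ll\alpha,\beta,\gamma$ in hand.
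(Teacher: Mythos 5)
Your matching-decomposition approach is a genuinely different route from the paper's (the paper includes each edge of $G$ in a random subgraph $H'$ with probability $11\gamma/10$ and applies Chernoff to get $\Delta(H')\le 6\gamma\alpha n/5$ and $e(H')\ge\gamma e(G)$ simultaneously, then takes $H\supseteq G-H'$ of the right size). Your idea is in principle sound and has the minor advantage of being deterministic, but as written there is a real gap.

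The gap is in the final estimate. You bound $t+1$ by $\gamma e(G)/s+1$ with $s:=e(G)/(\alpha n+1)-1$ a lower bound on the matching sizes, and then assert that $(\alpha n+1)/e(G)=o(1)$. But the hierarchy $1/n\ll\alpha,\beta,\gamma$ imposes no relation between $\alpha$ and $\beta$: it only says $n$ is large given $\alpha,\beta,\gamma$. Since $e(G)$ may be as small as $\beta n$ with $\beta\le\alpha$ (and in the application in Lemma~\ref{matchingdec} one indeed has $\beta\ll\alpha$), the ratio $(\alpha n+1)/e(G)$ need not be $o(1)$ at all. Concretely, if $e(G)=\tfrac32(\alpha n+1)$ then $s=\tfrac12$ and your bound gives $t+1\lesssim 3\gamma\alpha n$, which exceeds $6\gamma\alpha n/5$; even replacing $s$ by the correct integer lower bound $\lfloor e(G)/(\alpha n+1)\rfloor=1$ still gives $t\lesssim \tfrac32\gamma\alpha n>\tfrac65\gamma\alpha n$. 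And if $e(G)<\alpha n+1$ then $s\le 0$ and the formula breaks down entirely. So the argument does not close without an additional idea.

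The additional idea you need is to choose \emph{which} matchings to delete: order $M_1,\dots,M_{\alpha n+1}$ by decreasing size and delete the largest ones first. Then the $t$ matchings you delete wholly have average size at least the overall average $e(G)/(\alpha n+1)$, so
\[
t\cdot\frac{e(G)}{\alpha n+1}\le \sum_{i\le t}e(M_i)\le \lfloor\gamma e(G)\rfloor\le\gamma e(G),
\]
hence $t\le\gamma(\alpha n+1)$ and $\Delta(G-H)\le t+1\le\gamma\alpha n+\gamma+1\le 6\gamma\alpha n/5$ for $n$ large (using $1/n\ll\alpha,\gamma$ so that $\gamma\alpha n\to\infty$). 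This uses the average size rather than the minimum size and so is immune to the regime $e(G)\approx\alpha n$; it also subsumes the case $e(G)<\alpha n+1$, since then every matching has size at most one and $t+1\le\lfloor\gamma e(G)\rfloor+1<\gamma(\alpha n+1)+1$ directly. With this modification your proof is correct and avoids probability, at the cost of being slightly more case-sensitive than the paper's one-line Chernoff argument.
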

\proof
Let $H'$ be a spanning subgraph of $G$ such that
\begin{itemize}
\item $\Delta (H') \leq 6\gamma \alpha n/5$;
\item $e(H') \geq \gamma e(G)$.
\end{itemize}
To see that such a graph $H'$ exists, consider a random subgraph of $G$ obtained by including each edge of
$G$ with probability $11 \gamma /10$.
Then $\mathbb E (\Delta (H')) \leq 11 \gamma \alpha n/10$ and  
$\mathbb E (e (H')) = 11 \gamma e(G)/10$. Thus applying Proposition~\ref{chernoff} we have that,
with high probability, $H'$ is as desired.%
   \COMMENT{We apply Chernoff in the usual way here.
For example, consider any $v \in V(G)$. If $d_G (v) \leq 6\gamma \alpha n/5$ then certainly $d_{H'} (v) \leq 6 \gamma \alpha n/5$.
Otherwise,
\begin{align*}
\mathbb P (d_{H'} (v) \geq 6\gamma \alpha n/5) & \leq \mathbb P \left(|d_{H'} (v) -\frac{11\gamma}{10} d_G (v)|\geq \frac{1}{11} 
\left ( \frac{11\gamma}{10} d_G (v) \right ) \right ) \\ &
\leq 2 \exp{\{-\frac{1}{363} \times \frac{11\gamma}{10} d_G (v)\} } \leq 2 
\exp{\{\frac{-11\gamma}{3630} \times 6\gamma \alpha n/5\} } \ll 1/n.
\end{align*}}

Define $H$ to be a spanning subgraph of $G$ such that $H\supseteq G-H'$%
	\COMMENT{AL: replaced $\subseteq$ with $\supseteq$.}
 and $e(H)= \lceil (1- \gamma) e(G)\rceil$.
Then $\Delta (G-H)\leq \Delta (H') \leq 6 \gamma \alpha n/5$, as required.
\endproof

\begin{lemma}\label{splittrick}
Suppose that $G$ is a graph such that $\Delta (G) \leq D-2$ where $D \in \mathbb N$ is even.
Suppose $A_0,A$ is a partition of $V(G)$ such that $d_G(x) \leq D/2-1$ for all $x \in A$ and
$\Delta (G[A_0]) \leq D/2-1$. 
Then $G$ has a decomposition into $D/2$ edge-disjoint path
systems $P_1, \dots , P_{D/2}$ such that the following conditions hold:
\begin{itemize}
\item[{\rm (i)}] For each $i \leq D/2$, any internal vertex on a path in $P_i$ lies in $A_0$;
\item[{\rm (ii)}] $|e(P_i)-e(P_j)|\leq 1 $ for all $i,j \leq D/2$.
\end{itemize}
\end{lemma}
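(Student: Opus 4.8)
\textbf{Proof plan for Lemma~\ref{splittrick}.}
The statement is a bipartite-flavoured analogue of the standard fact (Corollary~\ref{basic_matching_dec}) that a graph of maximum degree at most $\Delta$ decomposes into $\Delta+1$ near-equal matchings; here we want the stronger conclusion that all internal vertices of the paths lie in $A_0$, at the cost of using $D/2$ path systems (rather than roughly $D/2$ matchings) and of the degree hypotheses $d_G(x)\le D/2-1$ for $x\in A$, $\Delta(G[A_0])\le D/2-1$ and $\Delta(G)\le D-2$. The natural approach is to reduce to an edge-colouring problem by passing to an auxiliary multigraph $H$ on vertex set $A_0$ in which vertices of $A$ have been `suppressed': contract away the $A$-vertices so that each $A$-vertex of degree $2$ in $G$ becomes an edge of $H$ joining its two (necessarily distinct, since paths with internal vertex in $A$ are forbidden) neighbours in $A_0$, while each $A$-vertex of degree $\le 1$ contributes a pendant half-edge or nothing. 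Concretely, first I would split $G$ as $G[A_0]\cup G[A_0,A]$ (note $E(G[A])=\emptyset$ is \emph{not} assumed, but any edge inside $A$ joins two vertices each of degree $\le D/2-1$, so it can simply be placed into the path systems at the very end as an isolated edge after all else is done — more carefully, such edges can be treated like tiny paths; I will handle them by a direct greedy insertion into the $P_i$ of smallest size).

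The main step: build a multigraph $H$ with $V(H)=A_0$ as follows. Keep all edges of $G[A_0]$. For each vertex $x\in A$ with $d_G(x,A_0)=2$, say $N_G(x,A_0)=\{u,v\}$ (with $u\ne v$, else $x$ would be forced to be an internal vertex with a neighbour in $A$ or a loop — but $d_G(x)\le D/2-1$ just bounds degree, so actually we must allow $u=v$ is impossible since $G$ is simple; if one of $x$'s two neighbours lies in $A$ we again defer that edge), add a new edge $uv$ to $H$; label this edge with $x$. Vertices $x\in A$ with $d_G(x,A_0)\le 1$ need no contraction. Then $\Delta(H)\le \Delta(G[A_0]) + (\text{number of }A\text{-edges through a vertex of }A_0)\le D-2$, since for $w\in A_0$ the edges of $H$ at $w$ are exactly the $G[A_0]$-edges at $w$ plus the $A$-vertices adjacent to $w$, totalling $d_G(w)\le D-2$. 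By Vizing's theorem (in the form of Corollary~\ref{basic_matching_dec} applied with $\Delta=D-2$), $E(H)$ decomposes into $D-1$ near-equal matchings; but we want only $D/2$ colour classes, so instead I would apply König-type / Vizing balancing directly: since $\Delta(H)\le D-2\le 2(D/2-1)$, pair up the $D-1$ matchings (discarding/absorbing at most one) into $D/2$ classes $C_1,\dots,C_{D/2}$, each of which is a union of two matchings, hence a graph of maximum degree $\le 2$, i.e.\ a disjoint union of paths and cycles in $H$. Crucially, each $C_i$ can be chosen with $|e(C_i)|$ within $1$ of the average by the balancing in Proposition~\ref{prop:matchingdecomposition}/Corollary~\ref{basic_matching_dec}, and by breaking one edge of each cycle we may assume each $C_i$ is a union of paths in $H$.

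Now expand back: replacing each $H$-edge labelled $x\in A$ by the two $G$-edges $ux,xv$ turns a path in $H$ into a path in $G$ whose only new internal vertex is $x\in A_0^c$ — wait, $x\in A$, which is exactly what (i) forbids; so I must be careful. The correct reading: a path of $H$ that uses an $x$-labelled edge becomes, in $G$, a walk through $x$, and $x$ is internal to it — violating (i). Hence the contraction must go the \emph{other} way: contract the $A_0$-side is wrong. Instead, the roles reverse: we want $A_0$-vertices to be the only allowed internal vertices, so we should build an auxiliary multigraph on $A$ by suppressing degree-$2$ vertices of $A_0$. But $|A|$ may be large and $A_0$ small; the cleaner route, and the one I would actually pursue, is a direct greedy/Eulerian argument: orient and decompose $G[A_0,A]\cup G[A_0]$ by repeatedly extracting, greedily, a path system that uses each vertex of $A$ at most once as an endpoint (degree $\le 1$ there) and balances load; each vertex $w\in A_0$ can afford to be internal on up to $\lfloor d_G(w)/?\rfloor$ paths. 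The key inequalities $d_G(x)\le D/2-1$ on $A$ and $\Delta(G[A_0])\le D/2-1$, $\Delta(G)\le D-2$ are exactly what make a greedy extraction of $D/2$ balanced path systems succeed by a Vizing-type exchange argument (this is essentially the argument behind Corollary~\ref{basic_matching_dec}, adapted so that colour classes are path systems). \textbf{The main obstacle} is precisely this bookkeeping: ensuring simultaneously that (a) $A$-vertices never become internal, which forces each colour class restricted to $G[A_0,A]$ to be a `star forest seen from $A$' (each $A$-vertex in $\le 1$ edge per class), (b) the classes have sizes differing by at most $1$, and (c) all $D-2$-bounded degrees at $A_0$-vertices get covered within $D/2$ classes — this last needs $\Delta(G[A_0])\le D/2-1$ so that $G[A_0]$ alone decomposes into $D/2$ matchings (Vizing, since $\chi'(G[A_0])\le \Delta(G[A_0])+1\le D/2$), onto which the $G[A_0,A]$-edges (each $A$-vertex contributing $\le D/2-1\le D/2$ edges, spreadable one-per-class) can be superimposed without exceeding degree $2$ at any $A_0$-vertex. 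I would finish by verifying the degree arithmetic: at $w\in A_0$, across the $D/2$ classes we place $d_{G[A_0]}(w)\le D/2-1$ edges from $G[A_0]$ (one per class into distinct classes) plus $d_G(w,A)\le D-2-d_{G[A_0]}(w)$ edges to $A$; total $\le D-2$, so average $<2$ per class and a balanced assignment keeps each class-degree $\le 2$, making each class a path system with internal vertices only in $A_0$. Balancing to within one edge is then the final application of the McDiarmid–de Werra balancing lemma (Proposition~\ref{prop:matchingdecomposition}).
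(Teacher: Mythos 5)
Your proposal has a genuine gap, and it is not the approach the paper takes. You correctly recognize that the contraction ideas (suppressing $A$-vertices, or $A_0$-vertices) do not work here, and you abandon both. But the final sketch you settle on — decompose $G[A_0]$ into $D/2$ matchings first, then distribute the $A_0A$-edges across classes, then balance — never materializes into a proof. The averaging argument (``total $\le D-2$, so average $< 2$ per class'') only shows that the degree budget at each $A_0$-vertex is not obviously violated; it does not exhibit a valid simultaneous assignment respecting the ``$\le 1$ per class at $A$-vertices'' constraint and the already-committed placement of the $G[A_0]$-matchings. Invoking ``a Vizing-type exchange argument, adapted so that colour classes are path systems'' is exactly the part that needs a construction, and you supply none. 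Moreover, even if the distribution were done, your two-phase plan gives no control over $|e(P_i)-e(P_j)|$: balancing the $G[A_0]$-decomposition and then separately balancing the $G[A_0,A]$-distribution does not balance the union, since a class could happen to receive many edges from both phases. Finally, you never use $\Delta(G)\le D-2$ structurally, only in a degree count; in the correct argument it plays a precise role.

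What you are missing is a simple vertex-splitting device that converts both constraints into ordinary matching constraints, so that a single application of the balanced decomposition (Corollary~\ref{basic_matching_dec}) does all the work at once. The paper first takes $G_1$ to be a \emph{maximal} spanning subgraph of $G$ with $G[A_0]\subseteq G_1$ and $\Delta(G_1)\le D/2-1$; since $d_G(x)\le D/2-1$ for $x\in A$ one gets $G[A]\subseteq G_1$ for free, and the maximality together with $\Delta(G)\le D-2$ forces $\Delta(G_2)\le D/2-1$ for $G_2:=G-G_1$ (which consists only of $A_0A$-edges). One then forms an auxiliary graph $G'$ by adding a disjoint copy $A_0'$ of $A_0$, keeping $G_1$ on $A_0\cup A$ and re-attaching $G_2$ along $A_0'$. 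Then $\Delta(G')\le D/2-1$, so $G'$ has a decomposition into $D/2$ matchings with sizes differing by at most one. Merging $a_i'$ back with $a_i$ turns each matching into a subgraph of $G$ with degree $\le 1$ at $A$-vertices and $\le 2$ at $A_0$-vertices; because the $A_0$-$A_0$ edges within each class form a matching (they all lie in the $G_1$-side, where $a_i$ still has degree $\le 1$), no cycles can arise, and the class sizes are automatically balanced because you balanced $G'$ in one go. It is this doubling of $A_0$ — not a contraction, and not a two-phase assignment — that makes the degree-$\le 2$ constraint, the no-cycle requirement, and the size balance all drop out of a single application of the standard matching decomposition.
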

\proof
Let $G_1$ be a maximal spanning subgraph of $G$ under the constraints that $G[A_0] \subseteq G_1$
and $\Delta (G_1) \leq D/2-1$.
Note that $G[A_0]\cup G[A] \subseteq G_1$. Set $G_2 := G-G_1$. So $G_2$ only contains $A_0A$-edges.
Further, since $\Delta (G) \leq D-2$, the maximality of $G_1$ implies that $\Delta (G_2) \leq
D/2-1$.

Define an auxiliary graph $G'$, obtained from $G_1$ as follows:
let $A_0=\{a_1, \dots , $ $a_m\}$. Add a new vertex set $A'_0=\{ a'_1, \dots , a'_m\}$ to $G_1$.
For each $i \leq m$ and $x \in A$, we add an edge between $a'_i$ and $x$ if and only if $a_i x$
is an edge in $G_2$.

Thus $G'[A_0 \cup A]$ is isomorphic to $G_1$ and $G'[A'_0 , A]$ is isomorphic to $G_2$.
By construction and since $d_G (x) \leq D/2-1$ for all $x \in A$,  we have that $\Delta (G') \leq D/2-1$. Hence, Corollary~\ref{basic_matching_dec} implies
that $E(G')$ can be decomposed into $D/2$ edge-disjoint matchings $M_1, \dots , M_{D/2}$ such that
$||M_i|-|M_j||\leq 1$ for all $i,j \leq D/2$.

By identifying each vertex $a'_i \in A'_0$ with the corresponding vertex $a_i \in A_0$,
$M_1, \dots , M_{D/2}$ correspond to edge-disjoint subgraphs $P_1, \dots, P_{D/2}$ of $G$ such that
\begin{itemize}
\item $P_1, \dots, P_{D/2}$ together cover all the edges in $G$;
\item  $|e(P_i)-e(P_j)|\leq 1 $ for all $i,j \leq D/2$.
\end{itemize}
Note that $d_{M_i} (x) \leq 1$ for each $x \in V(G')$. Thus $d_{P_i} (x) \leq 1$ for each $x \in A$ and $d_{P_i} (x) \leq 2$ for each $x \in A_0$. This implies that any cycle in $P_i$ must
lie in $G[A_0]$. However, $M_i$ is a matching and $G'[A'_0]\cup G'[A_0,A'_0]$ contains no edges.%
   \COMMENT{Daniela replaced $G'[A'_0]$ by $G'[A'_0]\cup G'[A_0,A'_0]$}
Therefore, $P_i$ contains no cycle,
and so $P_i$ is a path system such that any internal vertex on a path in $P_i$ lies in $A_0$.
Hence $P_1, \dots , P_{D/2}$ satisfy (i) and (ii).
\endproof

\begin{lemma} \label{matchingdec}
Let $0<1/n\ll \eps\ll \eps'\ll \eps_1\ll \eps_2\ll \eps_3\ll \eps_4\ll 1/K\ll 1$.
Suppose that $(G,A,A_0,B,B_0)$ is an $(\eps,\eps',K,D)$-bi-framework with $|G|=n$ and $D\ge n/200$.
Let $A_0,A_1,\dots,A_K$ and $B_0,B_1,\dots,B_K$ be a $(K,m,\eps,\eps_1,\eps_2)$-partition for $G$. 
Let $H_{ij}^A$ be a localized slice of $G$ as guaranteed by Lemma~\ref{rnd_slice}.
Define $c$ and $q$ as in~(\ref{aqc}).
Suppose that $t:=(1-20\eps_4)D/2K^2\in \mathbb{N}$.%
   \COMMENT{Later one we will also require that $t$ is divisible by $K^2$.}
If $e(B')\geq \eps _3 n$, set $t^*$ to be the largest integer which is at most $ct$ and is divisible by $K^2$. 
Otherwise, set $t^*:=0$.
Define
$$
\ell _a :=
\left\{
	\begin{array}{ll}
		0 & \mbox{if } e(A') < \eps _3 n ;\\
        a-b & \mbox{if } e(A') \geq \eps _3 n \mbox{ but } e(B') < \eps_ 3 n;\\
		a+q+c & \mbox{otherwise}  
	\end{array}
\right.
$$
and 
$$
\ell _b :=
\left\{
	\begin{array}{ll}
		0 & \mbox{if } e(B') < \eps _3 n ;\\
		b+q+c & \mbox{otherwise.}  
	\end{array}
\right.
$$
Then 
$H_{ij}^A$ has a decomposition into $t$ edge-disjoint path systems $P_1,\dots,P_{t}$ and a spanning subgraph $G_{ij}^A$
with the following properties:
\begin{itemize}
\item[{\rm (i)}] For each $s \leq t$, any internal vertex on a path in $P_s$ lies in $A_0$;
\item[{\rm (ii)}] $e(P_1)=\dots = e(P_{t^*}) =\lceil \ell_a \rceil$ and $e(P_{t^*+1}) = \dots =e(P_{t})= \lfloor \ell_a \rfloor$; 
\item[{\rm (iii)}] $e(P_s)\le \sqrt{\eps} n$ for every $s\le t$;
\item[{\rm (iv)}] $\Delta(G_{ij}^A)\le 13\eps_4 D/K^2$. 
\end{itemize}
The analogous assertion (with $\ell_a$ replaced by $\ell_b$ and $A_0$ replaced by $B_0$) holds for each localized slice~$H_{ij}^B$ of~$G$.
Furthermore, $\lceil \ell _a \rceil -\lceil \ell _b \rceil= \lfloor \ell _a \rfloor -\lfloor \ell _b \rfloor =a-b$.
\end{lemma}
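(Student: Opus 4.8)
The plan is to derive the decomposition from Lemma~\ref{splittrick}, after first trimming $H_{ij}^A$ by a sparse subgraph of exactly the right size. I would begin with the \emph{furthermore} assertion, which is purely arithmetic: with $e(A')=(a+q+c)D/2$, $e(B')=(b+q+c)D/2$ as in \eqref{aqc} ($q\in\mathbb Z$, $0\le c<1$, so $\lfloor\ell_a\rfloor=a+q$ etc.), one checks in each case defining $\ell_a,\ell_b$ that $\lfloor\ell_a\rfloor-\lfloor\ell_b\rfloor=\lceil\ell_a\rceil-\lceil\ell_b\rceil=a-b$. Here one uses $b\le a$ from (BFR4), that $e(A')<\eps_3 n$ forces $a=b$ and $e(B')<\eps_3 n$ (since $(a-b)D/2=e(A')-e(B')\le e(A')$ and $D\ge n/200$), and that $e(A')\ge\eps_3 n>e(B')$ forces $a>b$. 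If $e(A')<\eps_3 n$ (so also $e(B')<\eps_3 n$ and $\ell_a=0$), I take $G_{ij}^A:=H_{ij}^A$ and all $P_s$ empty: (i)--(iii) are trivial, and (iv) holds because $\sum_{v\in A_0}d_G(v,A')=2e_G(A_0)+e_G(A_0,A)\le 2e(A')<2\eps_3 n$ bounds every internal degree in $A_0$, (BFR5) bounds internal degrees in $A$ by $\eps'n$, and both $2\eps_3 n$ and $\eps'n$ are $\le 13\eps_4 D/K^2$ by the hierarchy. The slices $H_{ij}^B$ with $e(B')<\eps_3 n$ are identical.

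For the main case $e(A')\ge\eps_3 n$, put $N_A:=t^*\lceil\ell_a\rceil+(t-t^*)\lfloor\ell_a\rfloor$, the target number of edges for $P_1,\dots,P_t$; note $N_A\in\mathbb N$, $N_A=t\lfloor\ell_a\rfloor+t^*$ with $0\le t^*<t$, and $N_A=t\ell_a\pm K^2$. I would estimate $e(H_{ij}^A)=(e(A')\pm 9\eps_2\max\{n,e(A')\})/K^2\ge\eps_3 n/(2K^2)$ from Lemma~\ref{rnd_slice}(iii) and compare with $N_A$. Using $t=(1-20\eps_4)D/(2K^2)$, and $e(A')=\ell_a D/2$ when both classes are dense while $t\ell_a=(1-20\eps_4)(e(A')-e(B'))/K^2$ when only $A'$ is dense (in which case $a>b$ gives $e(A')-e(B')\ge D/2\ge n/400$, hence $e(B')\le 400\eps_3 e(A')$), one gets that $e(H_{ij}^A)-N_A$ is positive and equals $\gamma\,e(H_{ij}^A)$ for some $\gamma$ slightly above $20\eps_4$ (and well below $1/2$), provided $\eps_2,\eps_3$ were chosen small enough in terms of $\eps_4$. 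I then apply Lemma~\ref{simple} to $H_{ij}^A$ with this $\gamma$ and $\alpha n:=\Delta(H_{ij}^A)$, obtaining $H:=H_{ij}^A-G_{ij}^A$ with $e(H)=\lceil(1-\gamma)e(H_{ij}^A)\rceil=N_A$ and $\Delta(G_{ij}^A)\le 6\gamma\alpha n/5$. Since Lemma~\ref{rnd_slice}(vi) together with (BFR2) and (BFR7) gives $\Delta(H_{ij}^A)\le(D/2+5\eps_1 n)/K^2$, this yields $\Delta(G_{ij}^A)\le 13\eps_4 D/K^2$, i.e.\ (iv) — and this is exactly where the constants $20$ (in $t$) and $13$ (in (iv)) are matched.

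Finally I apply Lemma~\ref{splittrick} to $H$, with $2t$ playing the role of $D$ and the partition $A_0$, $A_i\cup A_j$ (or $A_0$, $A_i$ when $i=j$) playing the role of $A_0$, $A$. The hypotheses hold: $d_H(x)\le d_{H_{ij}^A}(x)\le\eps' n\le t-1$ for $x\in A_i\cup A_j$ by (BFR5); $\Delta(H[A_0])\le|A_0|-1<\eps n\le t-1$ — the key point being that $A_0$ is tiny, so its \emph{internal} degrees are harmless even though degrees of $A_0$-vertices into $A_i\cup A_j$ may be close to $D/(2K^2)$; and $\Delta(H)\le\Delta(H_{ij}^A)\le(D/2+5\eps_1 n)/K^2\le 2t-2$. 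Lemma~\ref{splittrick} then produces $t$ edge-disjoint path systems $P_1,\dots,P_t$ whose internal vertices lie in $A_0$ (this is (i)) with sizes differing by at most $1$; since $e(H)=N_A=t\lfloor\ell_a\rfloor+t^*$ with $0\le t^*<t$, exactly $t^*$ of them have $\lceil\ell_a\rceil$ edges and the rest have $\lfloor\ell_a\rfloor$ edges, so after relabelling (ii) holds, and $e(P_s)\le\lceil\ell_a\rceil\le 400\eps n+1\le\sqrt\eps n$ by \eqref{aqc}, (BFR3), (BFR4), giving (iii). The argument for $H_{ij}^B$ with $e(B')\ge\eps_3 n$ (which forces $e(A')\ge\eps_3 n$, so $\ell_b=b+q+c$ and $e(B')=\ell_b D/2$) is the same with $B$ in place of $A$; no subcase distinction is needed there, so it is in fact simpler. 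I expect the main obstacle to be precisely the constant bookkeeping in the trimming step: verifying $0<\gamma<1/2$ in the subcase where only $A'$ is dense (which rests on the observation $a>b\Rightarrow e(A')-e(B')\ge D/2\gg e(B')$), and checking that trimming the $\approx 20\eps_4$ fraction via Lemma~\ref{simple} leaves the $13\eps_4 D/K^2$ degree bound intact — this works because $\Delta(H_{ij}^A)$ is essentially $D/(2K^2)$ rather than $D/K^2$.
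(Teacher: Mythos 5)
Your proposal is correct and follows essentially the same route as the paper: handle the sparse case $e(A')<\eps_3 n$ trivially, then in the dense case(s) trim $H_{ij}^A$ to the exact target size via Lemma~\ref{simple} (with $\gamma$ pinned between roughly $19\eps_4$ and $21\eps_4$, which delivers the $13\eps_4 D/K^2$ degree bound) and split the remainder into $t$ balanced path systems via Lemma~\ref{splittrick}. The only cosmetic difference is that you merge the paper's Cases 2 and 3 into a single treatment by introducing $N_A=t^*\lceil\ell_a\rceil+(t-t^*)\lfloor\ell_a\rfloor$, which is a mild streamlining of the same argument.
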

\proof Note that~(\ref{aqc}) and  (BFR3) together imply that $\ell_a D/2\leq (a+q+c)D/2= e(A') \le \eps n^2$ and so
$\lceil \ell_a\rceil \le \sqrt{\eps} n$. Thus~(iii) will follow from~(ii). So it remains to prove~(i), (ii) and~(iv). We split the proof into three cases.

\medskip

\noindent
{\bf Case 1. $e(A') < \eps _3 n$}

(BFR2) and (BFR4) imply that $e(A')-e(B') =(a-b)D/2 \geq 0$. So $e(B') \leq e(A') < \eps _3 n$.
Thus $\ell _a =\ell _b =0$. Set $G^A _{ij}:=H^A _{ij}$ and $G^B _{ij}:=H^B _{ij}$.
Therefore, (iv) is satisfied as $\Delta(H_{ij}^A) \leq e(A') < \eps _3 n \leq 13\eps_4 D/K^2$.
Further, (i) and (ii) are vacuous (i.e. we set each $P_s$ to be the empty graph on $V(G)$). 

Note that $a=b$ since otherwise $a>b$ and therefore (BFR2) implies that $e(A') \geq (a-b)D/2 \geq D/2 > \eps_3 n$, a contradiction.
Hence, $\lceil \ell _a \rceil -\lceil \ell _b \rceil= \lfloor \ell _a \rfloor -\lfloor \ell _b \rfloor =0=a-b$.

\medskip

\noindent
{\bf Case 2.} $e(A') \geq \eps _3 n$ and $e(B') < \eps _3 n$

Since $\ell _b=0$ in this case, we set $G^B _{ij}:=H^B _{ij}$ and each $P_s$ to be the empty graph on $V(G)$. Then as in Case~1, 
(i), (ii) and (iv) are satisfied with respect to $H^B _{ij}$. Further, clearly 
$\lceil \ell _a \rceil -\lceil \ell _b \rceil= \lfloor \ell _a \rfloor -\lfloor \ell _b \rfloor =a-b$.

Note that $a>b$ since otherwise $a=b$ and thus $e(A')=e(B')$ by (BFR2), a contradiction
to the case assumptions. Since $e(A') -e(B') =(a-b)D/2$ by (BFR2), Lemma~\ref{rnd_slice}(iii)
implies that 
\begin{align}
\nonumber
e (H^A _{ij})& \geq (1-9 \eps _2) {e(A')}/{K^2}-9 \eps _2 n /K^2  \geq 
(1-9 \eps _2) (a-b)D/(2K^2) - 9 \eps _2 n /K^2 \\ 
& \geq (1- \eps _3)(a-b) D/(2K^2) > (a-b)t. \label{gambound}
\end{align}
Similarly, Lemma~\ref{rnd_slice}(iii) implies that 
\begin{align}\label{gamup}
e (H^A _{ij}) \leq (1+\eps _4)(a-b)D/(2K^2). 
\end{align}
Therefore, (\ref{gambound}) implies that there exists a constant $\gamma >0$ such that
$$(1-\gamma)e (H^A _{ij}) =(a-b)t.$$
Since $(1-19\eps _4 )(1-\eps _3) >(1-20\eps _4)$, (\ref{gambound}) implies that
$\gamma > 19 \eps _4 \gg 1/n$. Further, since $(1+\eps _4)(1-21 \eps _4) < (1-20 \eps _4)$,
(\ref{gamup}) implies that $\gamma < 21 \eps _4$.

Note that (BFR5), (BFR7) and Lemma~\ref{rnd_slice}(vi) imply that 
\begin{align}\label{boundup}
\Delta (H^A _{ij}) \leq (D/2+5 \eps _1 n)/K^2.
\end{align}
Thus Lemma~\ref{simple} implies that $H^A _{ij}$ contains a spanning subgraph $H$ such that
$e(H)=(1- \gamma ) e(H^A _{ij})=(a-b)t$ and 
$$\Delta (H^A _{ij} -H) \leq 6 \gamma (D/2+5 \eps _1 n)/(5K^2)
\leq 13 \eps _4 D/K^2,$$
where the last inequality follows since $ \gamma <21 \eps _4$ and $\eps _1 \ll 1$.
Setting $G^A _{ij}:=H^A _{ij} -H$ implies that (iv) is satisfied.

Our next task is to decompose $H$ into $t$ edge-disjoint path systems so that (i) and (ii) 
are satisfied. Note that (\ref{boundup}) implies that
$$\Delta (H) \leq \Delta (H^A _{ij}) \leq (D/2+5 \eps _1 n)/K^2 <2t-2.$$
Further, (BFR4) implies that $\Delta (H[A_0]) \leq |A_0| \leq \eps n < t-1$ and 
(BFR5) implies that $d_H (x ) \leq \eps ' n <t-1$ for all $x \in A$. 
Since $e(H)=(a-b)t$,  Lemma~\ref{splittrick} implies that $H$ has a decomposition into $t$ edge-disjoint path systems $P_1,\dots,P_{t}$ satisfying (i) and so that $e(P_s)=a-b=\ell _a$ for all
$ s\leq t$. In particular, (ii) is satisfied.

\medskip

\noindent
{\bf Case 3.} $e(A'), e(B') \geq \eps _3 n$ 

By definition of $\ell _a $ and $\ell _b$, we have that
$\lceil \ell _a \rceil -\lceil \ell _b \rceil= \lfloor \ell _a \rfloor -\lfloor \ell _b \rfloor =a-b$.
Notice that since $e(A') \geq \eps _3 n$ and $\eps _2 \ll \eps _3$, certainly $\eps _3 e(A')/(2K^2) > 9 \eps _2 n/K^2$.
Therefore, Lemma~\ref{rnd_slice}(iii) implies that 
\begin{align}
\nonumber
e (H^A _{ij})& \geq (1-9 \eps _2) {e(A')}/{K^2}-9 \eps _2 n /K^2 \\ & \geq 
(1- \eps _3) e(A')/K^2 \label{gambound2} \\ 
& \geq \eps _3 n/(2K^2). \nonumber
\end{align}
Note that $1/n \ll \eps _3/(2K^2)$.\COMMENT{split calculation up into 2 parts here since it is important to show that
there are many edges in $H^A_{ij}$ so that we can apply Lemma~\ref{simple}.}
Further, (\ref{aqc}) and (\ref{gambound2}) imply that
\begin{align}
\nonumber
e (H^A _{ij})& \geq 
(1- \eps _3) e(A')/K^2  \\ 
& = (1- \eps _3) (a+q+c)D/(2K^2) > (a+q)t+t^* \label{gambound3} . 
\end{align}
Similarly, Lemma~\ref{rnd_slice}(iii) implies that 
\begin{align}\label{gambound4}
e (H^A _{ij}) \leq (1+\eps _3) (a+q+c) D/(2K^2).
\end{align}
By (\ref{gambound3}) there exists a constant $\gamma >0$ such that
$$(1-\gamma)e (H^A _{ij}) =(a+q)t+t^*.$$
Note that  (\ref{gambound3}) implies that $1/n \ll 19 \eps _4 < \gamma$ and (\ref{gambound4}) implies that $\gamma < 21 \eps _4$.%
\COMMENT{the upper bound is not completely obvious this time around since $t^*$ is a `floor': Indeed, since $e(A') \geq \eps _3n$,
(\ref{gambound4}) implies that $(a+q+c)D/2K^2 \geq \eps _3 n/K^2$. Thus,
$$(1-21 \eps _4)(1+\eps _3) (a+q+c) D/(2K^2) < (1-20 \eps _4) (a+q+c) D/(2K^2)-K^2 \leq (a+q)t+t^*.$$ }
Moreover, as in Case~2, (BFR5), (BFR7) and Lemma~\ref{rnd_slice}(vi) together show that 
\begin{align}\label{boundup1}
\Delta (H^A _{ij}) \leq (D/2+5 \eps _1 n)/K^2.
\end{align}
Thus (as in Case~2 again), Lemma~\ref{simple} implies that $H^A _{ij}$ contains a spanning subgraph $H$ such that
$e(H)=(1- \gamma ) e(H^A _{ij})=(a+q)t+t^* $ and%
\COMMENT{Deryk shortened calculation and referred to case 2:
where the last inequality follows since $ \gamma <21 \eps _4$ and $\eps _1 \ll 1$.}
$$\Delta (H^A _{ij} -H) \leq 6 \gamma (D/2+5 \eps _1 n)/(5K^2)
\leq 13 \eps _4 D/K^2.$$
Setting $G^A _{ij}:=H^A _{ij} -H$ implies that (iv) is satisfied.
Next we decompose $H$ into $t$ edge-disjoint path systems so that (i) and (ii) 
are satisfied. Note that (\ref{boundup1}) implies that
$$\Delta (H) \leq \Delta (H^A _{ij}) \leq (D/2+5 \eps _1 n)/K^2 <2t-2.$$
Further, (BFR4) implies that $\Delta (H[A_0]) \leq |A_0| \leq \eps n < t-1$ and 
(BFR5) implies that $d_H (x ) \leq \eps ' n <t-1$ for all $x \in A$. 
Since $e(H)=(a+q)t+t^*$,  Lemma~\ref{splittrick} implies that $H$ has a decomposition into $t$ edge-disjoint path systems $P_1,\dots,P_{t}$ satisfying (i) and (ii). 
An identical argument implies that (i), (ii) and (iv) are satisfied with respect to $H^B _{ij}$ also. 
\endproof


\subsection{Decomposing the Global Graph}\label{sec:global} 
Let $G_{glob}^A$  be the union of the graphs $G_{ij}^A$ guaranteed by Lemma~\ref{matchingdec} over all $1\le i,j\le K$.  
Define $G_{glob}^B$ similarly.
The next lemma gives a decomposition of both $G_{glob}^A$ and $G_{glob}^B$ into suitable path systems.
Properties~(iii) and~(iv) of the lemma guarantee that one can pair up each such path system $Q_A\subseteq G_{glob}^A$ with
a different path system $Q_B\subseteq G_{glob}^B$ such that $Q_A \cup Q_B$ is $2$-balanced 
(in particular $e(Q_A)-e(Q_B)=a-b$). This
property will then enable us to apply Lemma~\ref{extendpaths} to 
extend $Q_A\cup Q_B$ into a Hamilton cycle using only edges between $A'$ and $B'$.

\begin{lemma}\label{G-glob} 
Let $0<1/n\ll \eps\ll \eps'\ll \eps_1\ll \eps_2\ll \eps_3\ll \eps_4\ll 1/K\ll 1$.
Suppose that $(G,A,A_0,B,B_0)$ is an $(\eps,\eps',K,D)$-bi-framework with $|G|=n$ and such that $D\ge n/200$
and $D$ is even.
Let $A_0,A_1,\dots,A_K$ and $B_0,B_1,\dots,B_K$ be a $(K,m,\eps,\eps_1,\eps_2)$-partition for $G$. 
Let $G_{glob}^A$  be the union of the graphs $G_{ij}^A$ guaranteed by Lemma~\ref{matchingdec} over all $1\le i,j\le K$.  
Define $G_{glob}^B$ similarly. Suppose that $k := 10 \eps_4 D\in\mathbb{N}$. Then the following properties hold:
\begin{itemize} 
\item[{\rm (i)}] There is an integer $q'$ and a real number  $0\le c'<1$ so that
$e(G_{glob}^A)=(a+q'+c')k$ and  $e(G_{glob}^B)=(b+q'+c')k$.
\item[{\rm (ii)}] $\Delta(G_{glob}^A), \Delta(G_{glob}^B)< 3k/2$.
\item[\rm{(iii)}] Let $k^*:=c'k$. Then $G_{glob}^A$ has a decomposition into $k^*$ path systems, each containing $a+q'+1$ edges, and $k-k^*$ path systems,
each containing $a+q'$ edges. Moreover, each of these $k$ path systems~$Q$ satisfies $d_Q(x)\le 1$ for all $x\in A$.
\item[\rm{(iv)}] $G_{glob}^B$ has a decomposition into $k^*$ path systems, each containing $b+q'+1$ edges, and $k-k^*$ path systems,
each containing $b+q'$ edges. Moreover, each of these $k$ path systems~$Q$ satisfies $d_Q(x)\le 1$ for all $x\in B$.
\item[\rm{(v)}] Each of the path systems guaranteed in~(iii) and~(iv) contains at most $\sqrt{\eps}n$ edges.
\end{itemize}
\end{lemma}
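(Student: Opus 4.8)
The idea is to analyse the graph $G^A_{glob}=\bigcup_{i,j\le K}G^A_{ij}$ (and symmetrically $G^B_{glob}$) using the properties established in Lemmas~\ref{rnd_slice} and~\ref{matchingdec}, and then to apply Lemma~\ref{splittrick} (or rather the matching-decomposition machinery underlying it) to decompose each of these two graphs into path systems of almost equal sizes which, crucially, can be paired up between $A$ and $B$ so that the edge-counts differ by exactly $a-b$.

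\textbf{Step 1: Counting edges and establishing~(i).} First I would compute $e(G^A_{glob})$. Each $G^A_{ij}$ is $H^A_{ij}$ minus the union of the $t$ path systems $P_1,\dots,P_t$ produced by Lemma~\ref{matchingdec}. By Lemma~\ref{matchingdec}(ii), the total number of edges removed from $H^A_{ij}$ is $t^*\lceil\ell_a\rceil+(t-t^*)\lfloor\ell_a\rfloor$, which, using the definitions of $t$, $t^*$ and $\ell_a$, is very close to $(1-20\eps_4)e(A')/K^2$ in the case $e(A'),e(B')\ge\eps_3 n$ (and to the analogous quantities in the other two cases of Lemma~\ref{matchingdec}). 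Summing over all $1\le i,j\le K$ and using Lemma~\ref{rnd_slice}(ii) (the $E(H^A_{ij})$ partition $E(G[A'])$) gives $e(G^A_{glob})=e(A')-\big(t^*\lceil\ell_a\rceil+(t-t^*)\lfloor\ell_a\rfloor\big)K^2$. A parallel computation for $B$ gives $e(G^B_{glob})$. Using~\eqref{aqc} (so $e(A')=(a+q+c)D/2$ and $e(B')=(b+q+c)D/2$) and $k=10\eps_4 D$, one checks that both $e(G^A_{glob})$ and $e(G^B_{glob})$ can be written in the form $(a+q'+c')k$ and $(b+q'+c')k$ for a common integer $q'$ and common real $0\le c'<1$ — the common value comes precisely from the fact that $e(A')-e(B')=(a-b)D/2$ by~(BFR2) and the matching removal was balanced so that the difference $e(G^A_{glob})-e(G^B_{glob})=(a-b)k/2\cdot 2/k\cdot\dots$ still equals $(a-b)k$ up to the floor/ceiling bookkeeping; this requires care with the $t^*$ versus $t-t^*$ split and the relation $\lceil\ell_a\rceil-\lceil\ell_b\rceil=\lfloor\ell_a\rfloor-\lfloor\ell_b\rfloor=a-b$ from the last sentence of Lemma~\ref{matchingdec}. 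I expect this bookkeeping to be the main obstacle — one has to be scrupulous about which path systems have the ceiling and which the floor, and about the case split ($e(A')<\eps_3 n$, etc.).

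\textbf{Step 2: Maximum degree bound~(ii) and edge bound~(v).} Property~(ii) follows directly: by Lemma~\ref{matchingdec}(iv) each $G^A_{ij}$ has $\Delta(G^A_{ij})\le 13\eps_4 D/K^2$, and a vertex lies in at most $K$ of the $G^A_{ij}$ for a fixed first index and $K$ for a fixed second index, but more carefully each vertex of $A'$ has edges in $G^A_{glob}$ only from those $G^A_{ij}$ containing it, and summing the $\Delta$ bounds over the at most $2K-1$ relevant slices gives $\Delta(G^A_{glob})\le 13\eps_4 D(2K-1)/K^2<3k/2=15\eps_4 D$ since $\eps_4\ll 1/K$ forces $26/K<15/10$; actually one should sum over the $\le K^2$ slices but use that a fixed vertex $v\in A_0$ (resp.\ $v\in A_i$) appears in at most $K^2$ (resp.\ $2K-1$) of them — for $v\in A_i$ we get $\le(2K-1)\cdot 13\eps_4 D/K^2<3k/2$, and for $v\in A_0$ we use Lemma~\ref{rnd_slice}(vi) together with~(BFR4) to see $d_{G^A_{glob}}(v)\le d_G(v,A)\le\Delta(G[A'])\le\eps' n<3k/2$. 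Property~(v) is immediate from~(iii) and~(iv): each path system has at most $a+q'+1$ edges, and $e(G^A_{glob})\le e(A')\le\eps n^2$ by~(BFR3) divided by $k\asymp\eps_4 D\gg 1$ gives $a+q'+1\le\sqrt{\eps}n$ with room to spare.

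\textbf{Step 3: The path-system decompositions~(iii) and~(iv).} This is where I apply Lemma~\ref{splittrick}. For $G^A_{glob}$: its maximum degree is $<3k/2<2k-2$ (valid since $k=10\eps_4 D$ is large); moreover for $x\in A$ we have $d_{G^A_{glob}}(x)\le\eps' n< k/2-1$ by~(BFR5), and $\Delta(G^A_{glob}[A_0])\le|A_0|\le\eps n< k/2-1$ by~(BFR4); note $k$ is even since $k=10\eps_4 D\in\mathbb{N}$ and $D$ is even, so $k/2\in\mathbb{N}$. Hence Lemma~\ref{splittrick} (applied with $k$ playing the role of $D$, $A_0$ and $A$ as the partition of $V(G^A_{glob})=A'$) gives a decomposition of $G^A_{glob}$ into $k/2$... wait — Lemma~\ref{splittrick} produces $D/2$ path systems, so I instead want to apply it with $2k$ playing the role of $D$, giving $k$ path systems $P_1,\dots,P_k$ with $d_{P_i}(x)\le 1$ for all $x\in A$, any internal vertex in $A_0$, and $|e(P_i)-e(P_j)|\le 1$; by Step~1, $e(G^A_{glob})=(a+q'+c')k$, so exactly $k^*=c'k$ of these systems have $a+q'+1$ edges and $k-k^*$ have $a+q'$ edges. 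To check the hypotheses of Lemma~\ref{splittrick} with $2k$ in place of $D$: we need $\Delta(G^A_{glob})\le 2k-2$ (true, $<3k/2$), $d_{G^A_{glob}}(x)\le k-1$ for $x\in A$ (true by~(BFR5) since $\eps' n\ll k$), and $\Delta(G^A_{glob}[A_0])\le k-1$ (true by~(BFR4)). The same argument with $B$, $B_0$ in place of $A$, $A_0$ and using that $e(G^B_{glob})=(b+q'+c')k$ with the \emph{same} $c'$ gives~(iv). Finally I would remark that the common value of $k^*=c'k$ across~(iii) and~(iv) is exactly what makes the later pairing of path systems between $G^A_{glob}$ and $G^B_{glob}$ into $2$-balanced systems possible, but that is not part of this lemma's statement. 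The only genuinely delicate point remains the edge-count bookkeeping in Step~1 showing that $e(G^A_{glob})$ and $e(G^B_{glob})$ share the same fractional part $c'$; everything else is a routine verification of the hypotheses of Lemma~\ref{splittrick}.
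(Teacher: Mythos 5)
Your plan follows the same route as the paper: count the edges of $G^A_{glob}$ and $G^B_{glob}$ carefully to get~(i), bound the maximum degree via Lemma~\ref{matchingdec}(iv), and then apply Lemma~\ref{splittrick} with $2k$ in the role of $D$ to get the $k$ path systems in~(iii) and~(iv). That is essentially the paper's argument. However, there are two issues worth flagging.

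First, your degree bound in Step~2 for vertices $v\in A_0$ is wrong. You claim $d_{G^A_{glob}}(v)\le d_G(v,A)\le\Delta(G[A'])\le\eps' n$, but $\Delta(G[A'])\le\eps' n$ is false: condition~(BFR5) only constrains vertices in $A\cup B$, and by~(BFR7) a vertex of $A_0$ can have internal degree as large as roughly $D/2$ in $G[A']$, which is far larger than $3k/2 = 15\eps_4 D$. (Lemma~\ref{rnd_slice}(vi) is about $H^A_{ij}$, not $G^A_{ij}$, and does not rescue the chain of inequalities.) The fix is that no separate argument for $A_0$ is needed at all: since the slices $G^A_{ij}$ are edge-disjoint and partition the edges of $G^A_{glob}$, every vertex (whether in $A$ or $A_0$) satisfies $d_{G^A_{glob}}(v) = \sum_{i,j} d_{G^A_{ij}}(v) \le K^2\cdot 13\eps_4 D/K^2 = 13\eps_4 D < 3k/2$ directly from Lemma~\ref{matchingdec}(iv). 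This is the bound the paper uses, and it handles all vertices uniformly.

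Second, in Step~1 you correctly identify the key formula $e(G^A_{glob})=e(A')-\bigl(t^*\lceil\ell_a\rceil+(t-t^*)\lfloor\ell_a\rfloor\bigr)K^2$, but you then worry that the bookkeeping showing $e(G^A_{glob})$ and $e(G^B_{glob})$ share the same fractional part is the main obstacle, and your intermediate expression \textquotedblleft$(a-b)k/2\cdot 2/k\cdot\dots$\textquotedblright{} does not parse. The argument is in fact cleaner than you expect. One does \emph{not} need to compute $e(G^A_{glob})$ explicitly; it suffices to check, case by case according to Lemma~\ref{matchingdec}, that the removal of the path systems deletes exactly $(a-b)$ more edges per removed pair $(P_s^A,P_s^B)$ from $A'$ than from $B'$ (this is exactly the content of $\lceil\ell_a\rceil-\lceil\ell_b\rceil=\lfloor\ell_a\rfloor-\lfloor\ell_b\rfloor=a-b$), so that $e(G^A_{glob})-e(G^B_{glob})=(e(A')-e(B'))-(a-b)tK^2=(a-b)(D/2-tK^2)=(a-b)k$ by~(BFR2). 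One then \emph{defines} $q'$ and $c'$ by $e(G^A_{glob})=(a+q'+c')k$, and the equation $e(G^B_{glob})=(b+q'+c')k$ follows automatically. You do not need to separately verify that the two fractional parts agree; it is built in once the difference is an integer multiple of $k$ with the right coefficient.

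Your treatment of~(iii), (iv) and~(v) is correct and matches the paper, apart from the harmless digression about $k$ being even (irrelevant, since Lemma~\ref{splittrick} is applied with $2k$, which is always even).
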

Note%
\COMMENT{osthus added sentence. Andy: changed A to B}
that in Lemma~\ref{G-glob} and several later statements the parameter $\eps_3$ is implicitly defined by the application
of Lemma~\ref{matchingdec} which constructs the graphs $G_{glob}^A$ and $G_{glob}^B$.
\proof
Let $t^*$ and $t$ be as defined in Lemma~\ref{matchingdec}. Our first task is to show that (i) is satisfied.
If $e(A'),e(B') < \eps _3 n$ then $G^A _{glob}=G[A']$ and $G^B _{glob} =G[B']$. Further, $a=b$ in this case since 
otherwise (BFR4) implies that $a>b$ and so (BFR2) yields that $e(A') \geq (a-b)D/2\geq D/2 > \eps _3 n$, a contradiction.
Therefore, (BFR2) implies that
\begin{align*}
e(G_{glob}^A)  -e(G_{glob}^B) & =e(A')-e(B') {=} (a-b)D/2=0=(a-b)k.
\end{align*}

If $e(A')\ge \eps _3 n$ and $e(B') < \eps _3 n$ then $G^B _{glob} =G[B']$. Further, $G^A _{glob}$ is obtained from
$G[A']$ by removing $tK^2$ edge-disjoint path systems, each of which contains precisely $a-b$ edges.
Thus (BFR2) implies that 
$$e(G_{glob}^A)  -e(G_{glob}^B)  =e(A')-e(B')-tK^2(a-b) =(a-b)(D/2-tK^2)=(a-b)k.$$

Finally, consider the case when $e(A'), e(B') > \eps _3 n$.
Then $G^A _{glob}$ is obtained from $G[A']$ by removing $t^*K^2$ edge-disjoint path systems, each of which contain exactly $a+q+1$ edges,
and by removing $(t-t^*)K^2$ edge-disjoint  path systems, each of which contain exactly $a+q$ edges.
Similarly, $G^B _{glob}$ is obtained from $G[B']$ by removing $t^*K^2$ edge-disjoint path systems, each of which contain exactly $b+q+1$ edges,
and by removing $(t-t^*)K^2$ edge-disjoint path systems, each of which contain exactly $b+q$ edges. So (BFR2) implies that
\begin{align*}
e(G_{glob}^A)  -e(G_{glob}^B)  =e(A')-e(B') -(a-b)tK^2= (a-b)k.
\end{align*}
Therefore, in every case,
\begin{align}\label{abk}
e(G_{glob}^A)  -e(G_{glob}^B)  =(a-b)k.
\end{align}
Define the integer $q'$ and $0 \le c' <1$ by $e(G_{glob}^A)=(a+q'+c')k$. 
Then~(\ref{abk}) implies that $e(G_{glob}^B)=(b+q'+c')k$. This proves (i).
To prove (ii), note that Lemma~\ref{matchingdec}(iv) implies that
$\Delta(G_{glob}^A)\le 13\eps_4 D<3k/2$ and similarly $\Delta(G_{glob}^B)<3k/2$. 

Note that (BFR5) implies that $d_{G_{glob}^A} (x) \leq \eps 'n < k-1$ for all $x \in A$ 
and $\Delta (G_{glob}^A[A_0]) \leq |A_0| \leq \eps n < k-1$.
Thus Lemma~\ref{splittrick} together with (i) implies that (iii) is satisfied.
(iv) follows from Lemma~\ref{splittrick} analogously.

(BFR3) implies that $e(G^A_{glob}) \leq e_G (A') \leq \eps n^2$ and $e(G^B_{glob}) \leq e_G (B') \leq \eps n^2$.
Therefore, each path system from (iii) and (iv) contains at most $\lceil \eps n^2 /k \rceil \leq \sqrt{\eps} n$ edges.%
   \COMMENT{Daniela replaced "has size" by "contains ... edges"}
So (v) is satisfied.
\endproof

We say that a path system $P\subseteq G[A']$ is \emph{$(i,j,A)$-localized} if%
\COMMENT{ANDREW changed the definition here. This is needed because, in proof of
Lemma~\ref{balmatchextend} we need the fact that edges e.g. don't lie in $G[A_i]$ so that
condition (BES2) is satisfied.}
\begin{itemize}
\item[(i)] $E(P) \subseteq E(G[A_0, A_i \cup A_j]) \cup E(G[A_i,A_j])\cup E(G[A_0])$;
\item[(ii)] Any internal vertex on a path in $P$ lies in $A_0$.
\end{itemize}
We%
 \COMMENT{Previously had the stronger condition that $M\subseteq H^A_{ij}$ instead.}
introduce an analogous notion of \emph{$(i,j,B)$-localized} for path systems $P\subseteq G[B']$.

The following result is a straightforward consequence of Lemmas~\ref{rnd_slice}, \ref{matchingdec} and \ref{G-glob}.
It gives a decomposition of $G[A'] \cup G[B']$ into pairs of paths systems so that most of these are localized and so that each pair
can be extended into a Hamilton cycle by adding $A'B'$-edges. %
\COMMENT{Deryk added sentnce}
\begin{cor} \label{finalcor}
Let $0<1/n\ll \eps\ll \eps'\ll \eps_1\ll \eps_2\ll \eps_3\ll \eps_4\ll 1/K\ll 1$.
Suppose that $(G,A,A_0,B,B_0)$ is an $(\eps,\eps',K,D)$-bi-framework with $|G|=n$ and such that $D\ge n/200$
and $D$ is even.
Let $A_0,A_1,\dots,A_K$ and $B_0,B_1,\dots,B_K$ be a $(K,m,\eps,\eps_1,\eps_2)$-partition for $G$. 
Let $t_K:=(1-20\eps_4)D/2K^4$ and $k:=10\eps_4 D$. Suppose that $t_K\in\mathbb{N}$.
Then there are $K^4$ sets $\cM_{i_1i_2i_3i_4}$, one for each $1 \le i_1,i_2,i_3,i_4 \le K$, such that each $\cM_{i_1i_2i_3i_4}$
consists of $t_K$ pairs of path systems and satisfies the following properties:
\begin{itemize} 
\item[{\rm (a)}]  Let $(P,P')$ be a pair of path systems which forms an element of $\cM_{i_1i_2i_3i_4}$. Then
\begin{itemize}
\item[{\rm (i)}] $P$ is an $(i_1,i_2,A)$-localized path system and $P'$ is an $(i_3$,$i_4$,$B)$-localized path system;
\item[{\rm (ii)}] $e(P)-e(P')=a-b$;
\item[{\rm (iii)}]  $e(P),e(P') \le \sqrt{\eps}n$.
\end{itemize}
\item[{\rm (b)}] The $2t_K$ path systems in the pairs belonging to $\cM_{i_1i_2i_3i_4}$ are all pairwise edge-disjoint.
\item[{\rm (c)}] Let $G(\cM_{i_1i_2i_3i_4})$ denote the spanning subgraph of $G$ whose edge set is the union of all the
path systems in the pairs belonging to $\cM_{i_1i_2i_3i_4}$. 
Then the $K^4$ graphs $G(\cM_{i_1i_2i_3i_4})$ are edge-disjoint.
Further, each $x \in A_0$ satisfies
$d_{G(\cM_{i_1i_2i_3i_4})}(x)\ge (d_G(x,A)-15\eps_4 D)/K^4$ while each $y \in B_0$ satisfies
$d_{G(\cM_{i_1i_2i_3i_4})}(y)\ge (d_G(y,B)-15\eps_4 D)/K^4$.%
    \COMMENT{Daniela: previously had "The analogous assertion holds for each $x \in B_0$ with $A$ replaced by $B$."}
\item[{\rm (d)}] Let $G_{glob}$ be the subgraph of $G[A'] \cup G[B']$ obtained by removing all edges contained in $G(\cM_{i_1i_2i_3i_4})$ for all 
$1\le i_1,i_2,i_3,i_4 \le K$. Then $\Delta(G_{glob})\le 3k/2$. Moreover, $G_{glob}$ has a decomposition into
$k$ pairs of path systems $(Q_{1,A},Q_{1,B}),\dots,(Q_{k,A},Q_{k,B})$ so that
\begin{itemize}
\item[{\rm (i$'$)}] $Q_{i,A}\subseteq G_{glob}[A']$ and $Q_{i,B}\subseteq G_{glob}[B']$ for all $i \le k$;
\item[\rm{(ii$'$)}] $d_{Q_{i,A}}(x)\le 1$ for all $x\in A$ and $d_{Q_{i,B}}(x)\le 1$ for all $x\in B$;
\item[\rm{(iii$'$)}] $e(Q_{i,A})-e(Q_{i,B})=a-b$ for all $i \le k$;
\item[\rm{(iv$'$)}] $e(Q_{i,A}),e(Q_{i,B}) \le \sqrt{\eps}n$ for all $i \le k$.
\end{itemize}
\end{itemize}
\end{cor}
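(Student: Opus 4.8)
The statement is essentially a repackaging of Lemmas~\ref{rnd_slice}, \ref{matchingdec} and~\ref{G-glob}: the first gives a decomposition of $G[A']$ and $G[B']$ into localized slices, the second decomposes each localized slice into $t$ path systems plus a sparse leftover $G_{ij}^A$ (resp.\ $G_{ij}^B$), and the third decomposes the union $G_{glob}^A$ (resp.\ $G_{glob}^B$) of these leftovers into path systems with controlled sizes. So the plan is to run these three lemmas in turn and then \emph{pair up} the resulting path systems so that each pair $(P,P')$ satisfies the balancedness identity $e(P)-e(P')=a-b$.

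First I would fix the $(K,m,\eps,\eps_1,\eps_2)$-partition for $G$ (as given) and apply Lemma~\ref{rnd_slice} to obtain the localized slices $H_{ij}^A$ and $H_{ij}^B$ for all $1\le i,j\le K$. Next, for each ordered pair $(i,j)$ I would apply Lemma~\ref{matchingdec} to $H_{ij}^A$ (and its analogue to $H_{ij}^B$), using the same parameters $t$, $t^*$, $\ell_a$, $\ell_b$ there; this produces $t$ edge-disjoint path systems $P^{ij}_1,\dots,P^{ij}_t$ in $H_{ij}^A$ with $e(P^{ij}_s)\in\{\lceil\ell_a\rceil,\lfloor\ell_a\rfloor\}$ (the first $t^*$ of them having the larger size), all internal vertices in $A_0$, each of size at most $\sqrt\eps n$, plus a leftover $G_{ij}^A$ with $\Delta(G_{ij}^A)\le 13\eps_4 D/K^2$, and symmetrically for $B$. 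Crucially, Lemma~\ref{matchingdec} tells us $\lceil\ell_a\rceil-\lceil\ell_b\rceil=\lfloor\ell_a\rfloor-\lfloor\ell_b\rfloor=a-b$, so if I pair $P^{ij}_s$ (from the $A$-side) with $P^{i'j'}_s$ (the $s$-th path system from the $B$-decomposition of $H^B_{i'j'}$), matching the ``ceiling'' ones with ``ceiling'' ones and ``floor'' with ``floor'', then $e(P)-e(P')=a-b$ automatically. Writing $t=K^2 t_K$ (this is where the hypothesis $t_K=(1-20\eps_4)D/2K^4\in\mathbb N$ enters, since $t=(1-20\eps_4)D/2K^2=K^2 t_K$), I would group these $t$-indices into $K^2$ blocks of size $t_K$ and, for each choice $(i_1,i_2)$ of slice-index pair on the $A$-side and $(i_3,i_4)$ on the $B$-side together with one of the $K^2$ blocks, distribute the pairs so that each of the $K^4$ sets $\cM_{i_1i_2i_3i_4}$ receives exactly $t_K$ pairs $(P,P')$ with $P$ an $(i_1,i_2,A)$-localized path system coming from $H^A_{i_1i_2}$ and $P'$ an $(i_3,i_4,B)$-localized path system coming from $H^B_{i_3i_4}$. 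This gives (a)(i)--(iii) and (b). For (c): the edge sets of distinct $\cM_{i_1i_2i_3i_4}$ are disjoint because distinct slices $H^A_{ij}$ are edge-disjoint by Lemma~\ref{rnd_slice}(ii) and within a slice the $P^{ij}_s$ are edge-disjoint by Lemma~\ref{matchingdec}; the degree bound for $x\in A_0$ follows by subtracting from $d_G(x,A)$ the contributions of $G_{glob}^A$ and of the $k$ global path systems (using $\Delta(G_{glob}^A)\le 13\eps_4 D$ from Lemma~\ref{matchingdec}(iv) summed over slices, say $\le 15\eps_4 D$ with room to spare) and dividing by $K^4$; symmetrically for $B_0$.

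Finally, for (d): let $G_{glob}$ be what remains of $G[A']\cup G[B']$ after removing all the $\cM$-path systems; then $G_{glob}[A']=G_{glob}^A$ and $G_{glob}[B']=G_{glob}^B$ in the notation of Lemma~\ref{G-glob}. I would apply Lemma~\ref{G-glob} (whose hypothesis $k=10\eps_4 D\in\mathbb N$ is exactly what is assumed here) to obtain the decomposition of $G_{glob}^A$ into $k$ path systems, $k^*=c'k$ of size $a+q'+1$ and $k-k^*$ of size $a+q'$, each with $d_Q(x)\le 1$ for $x\in A$ and each of size at most $\sqrt\eps n$, and symmetrically for $G_{glob}^B$ with $b$ in place of $a$. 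Pairing the $A$-side system of size $a+q'+1$ with a $B$-side system of size $b+q'+1$, and those of size $a+q'$ with those of size $b+q'$, yields pairs $(Q_{i,A},Q_{i,B})$ with $e(Q_{i,A})-e(Q_{i,B})=a-b$; properties (i$'$)--(iv$'$) are then immediate from Lemma~\ref{G-glob}(ii)--(v), and $\Delta(G_{glob})\le 3k/2$ is Lemma~\ref{G-glob}(ii) (noting $G_{glob}=G_{glob}^A\cup G_{glob}^B$ is a disjoint union on $A'$ and $B'$).

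\textbf{Main obstacle.} None of the three input lemmas is hard to apply; the only genuinely fiddly point is the \emph{bookkeeping of the pairing}, i.e.\ verifying that the $t=K^2 t_K$ path systems produced per slice and the $k$ global path systems can be distributed among the $K^4$ sets $\cM_{i_1i_2i_3i_4}$ and the $k$ global pairs so that simultaneously (1) every $\cM$ gets exactly $t_K$ pairs, (2) ceilings are matched with ceilings and floors with floors so that $e(P)-e(P')=a-b$ holds on the nose, and (3) the degree lower bounds in (c) come out with the claimed slack $15\eps_4 D$. This is purely combinatorial arithmetic with the counts $t^*$, $t$, $k^*$, $k$, and the identities $\lceil\ell_a\rceil-\lceil\ell_b\rceil=\lfloor\ell_a\rfloor-\lfloor\ell_b\rfloor=a-b$ and $e(G_{glob}^A)-e(G_{glob}^B)=(a-b)k$; I would just need to check the ceiling/floor counts on the two sides agree (which they do, since the number of ceiling path systems is $t^*$ on both the $A$ and $B$ decomposition of each slice and $k^*$ on both global decompositions) and then invoke an arbitrary bijection respecting these counts.
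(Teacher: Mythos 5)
Your overall strategy matches the paper's: apply Lemma~\ref{rnd_slice} to get localized slices, decompose each slice via Lemma~\ref{matchingdec} into $t=K^2 t_K$ path systems (large ones of size $\lceil\ell_a\rceil$ and small ones of size $\lfloor\ell_a\rfloor$), pair large-with-large and small-with-small across the $A$- and $B$-sides to get the identity $e(P)-e(P')=a-b$, distribute the pairs into the $K^4$ sets $\cM_{i_1i_2i_3i_4}$, and handle the global leftover via Lemma~\ref{G-glob}. Parts (a), (b) and (d) are fine.

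However, there is a genuine gap in your argument for the degree bound in (c). You write that the bound follows by ``subtracting \dots and dividing by $K^4$'' and, in your obstacle paragraph, that one can just ``invoke an arbitrary bijection respecting these counts.'' This gives only an \emph{average} bound: it is true that $\sum_{i_1,\dots,i_4} d_{G(\cM_{i_1i_2i_3i_4})}(x)\ge d_G(x,A)-O(\eps_4 D)$, but an arbitrary assignment of path systems to the $K^4$ sets could concentrate all edges at $x$ in just a few sets, leaving others with degree $0$ at $x$. (Recall that a vertex $x\in A_0$ has degree up to $2$ in each path system, so there is no automatic spreading.) What is needed is a guarantee that, for each fixed slice $(i_1,i_2)$, the $t$ path systems from $\cM^A_{i_1i_2}$ are distributed to the $K^2$ choices of $(i_3,i_4)$ so that each receives roughly a $1/K^2$ proportion of $d_{G(\cM^A_{i_1i_2})}(x)$. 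The paper achieves this by choosing the partition of large (and small) path systems \emph{uniformly at random} into $K^2$ equal parts and then applying a Chernoff--Hoeffding concentration estimate (Proposition~\ref{chernoff} for the hypergeometric distribution) to show that $d_{G(\cM_{i_1i_2i_3i_4})}(x)\ge \frac{1}{K^2}\bigl(\frac{d_G(x,A)-14\eps_4 D}{K^2}\bigr)-\eps n\ge \frac{d_G(x,A)-15\eps_4 D}{K^4}$ holds for all $x$ and all index tuples simultaneously with positive probability. Without this random choice and concentration step (or some equally careful deterministic balancing), property (c) is not established.
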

\proof
Apply Lemma~\ref{rnd_slice} to obtain localized slices $H^A_{ij}$ and $H^B_{ij}$ (for all $i,j\le K$).
Let $t:=K^2 t_K$ and let $t^*$ be as defined in Lemma~\ref{matchingdec}. Since $t/K^2, t^*/K^2\in\mathbb{N}$ we have $(t-t^*)/K^2\in\mathbb{N}$.
For all $i_1,i_2\le K$, let $\cM_{i_1i_2}^A$ be the set of $t$ path systems  in $H_{i_1i_2}^A$ guaranteed by Lemma~\ref{matchingdec}.
We call the $t^*$ path systems in $\cM_{i_1i_2}^A$ of size $\lceil \ell_a \rceil$ \emph{large} and the others \emph{small}.
We define $\cM_{i_3i_4}^B$ as well as large and small path systems in $\cM_{i_3i_4}^B$ analogously (for all $i_3,i_4 \leq K$).

We now construct the sets $\cM_{i_1i_2i_3i_4}$ as follows:
For all $i_1,i_2 \le K$, 
consider a random partition of the set of all large path systems in $\cM_{i_1i_2}^A$ into $K^2$ sets of equal size $t^*/K^2$ and assign
(all the path systems in) each of these sets to 
one of the $\cM_{i_1i_2i_3i_4}$ with $i_3,i_4 \le K$.
Similarly, randomly partition the set of small path systems in $\cM_{i_1i_2}^A$ into $K^2$ sets, each containing $(t-t^*)/K^2$ path systems.
Assign each of these $K^2$ sets to one of the $\cM_{i_1i_2i_3i_4}$ with $i_3,i_4 \le K$.
Proceed similarly for each $\cM_{i_3i_4}^B$ in order to assign each of its path systems randomly to some $\cM_{i_1i_2i_3i_4}$.
Then to each $\cM_{i_1i_2i_3i_4}$ we have assigned exactly $t^*/K^2$ large path systems from both $\cM_{i_1i_2}^A$ and $\cM_{i_3i_4}^B$.
Pair these off arbitrarily. Similarly, pair off the small path systems assigned to $\cM_{i_1i_2i_3i_4}$ arbitrarily.
Clearly, the sets $\cM_{i_1i_2i_3i_4}$ obtained in this way satisfy (a) and (b).

We now verify (c). By construction, the $K^4$ graphs $G(\cM_{i_1i_2i_3i_4})$ are edge-disjoint.%
   \COMMENT{Daniela: new sentence}
So consider any vertex $x\in A_0$ and
write $d:=d_G(x,A)$. Note that $d_{H_{i_1i_2}^A}(x)\ge (d-4 \eps_1 n)/K^2$
by Lemma~\ref{rnd_slice}(vi). Let $G(\cM_{i_1i_2}^A)$ be the spanning subgraph of $G$ whose edge set is the union of all the path systems
in $\cM_{i_1i_2}^A$. Then Lemma~\ref{matchingdec}(iv) implies that
$$
d_{G(\cM_{i_1i_2}^A)}(x)\ge d_{H_{i_1 i_2}^A}(x)-  \Delta (G^A _{i_1 i_2} )  \ge \frac{d-4\eps_1 n}{K^2} -\frac{13\eps_4D}{K^2} 
\ge \frac{d-14\eps_4 D}{K^2}.
$$
So a Chernoff-Hoeffding estimate for the hypergeometric distribution (Proposition~\ref{chernoff}) implies that%
\COMMENT{This equation follows by up to \emph{four} applications of Chernoff.
Consider the set of large path systems $\mathcal M^L$ in $\mathcal M ^A _{i_1,i_2}$.
Let $\mathcal M ^L _x $ denote the set of path systems in $\mathcal M^L$ that contain at least one edge at $x$. 
Let $\mathcal M ^{L2} _x $ denote the set of path systems in $\mathcal M^L$ that contain two edges at $x$. 
Let $X$ be (the random variable) the number of large path systems in $\mathcal M^L_x$ that are assigned to $\mathcal M_{\I}$.
So $X$ has hypergeometric distribution. In particular, $\mathbb E (X)=
|\mathcal M^L _x|/K^2$. 
Let $X_2$ be (the random variable) the number of large path systems in $\mathcal M^{L2}_x$ that are assigned to $\mathcal M_{\I}$.
So $X_2$ has hypergeometric distribution. In particular, $\mathbb E (X_2)=
|\mathcal M^{L2} _x|/K^2$. 
\\
Define $\mathcal M^S$, $\mathcal M^S _x$ and $\mathcal M^{S2} _x$ analogously for small path systems. Let $Y$ be (the random variable)
the number of small path systems in $\mathcal M^S _x$ that are assigned to $\mathcal M_{\I}$. So $Y$ has hypergeometric distribution. In particular, $\mathbb E (Y)=
|\mathcal M^S _x|/K^2$.
Let $Y_2$ be (the random variable) the number of small path systems in $\mathcal M^{S2} _x$ that are assigned to $\mathcal M_{\I}$.
So $Y_2$ has hypergeometric distribution. In particular, $\mathbb E (Y_2)=
|\mathcal M^{S2} _x|/K^2$.
\\
Further, note that $X+Y+X_2+Y_2=d_{G(\cM_{i_1i_2i_3i_4})}(x)$ and 
$|\mathcal M^S_x|+|\mathcal M^L_x|+|\mathcal M^{S2}_x|+|\mathcal M^{L2}_x| = d_{G(\cM_{i_1i_2}^A)}(x) \geq \frac{d-14\eps_4 D}{K^2}.$
The result now follows by applying Chernoff-Hoeffding at most four times. (We don't need to apply it e.g. if
$|\mathcal M^L_x| \leq \eps n/4 $ say.)}
$$
d_{G(\cM_{i_1i_2i_3i_4})}(x)\ge \frac{1}{K^2} \left( \frac{d-14\eps_4 D}{K^2} \right)  -\eps n
\ge \frac{d-15\eps_4 D}{K^4}.
$$
(Note that we only need to apply the Chernoff-Hoeffding bound if $d \ge \eps n$ say, as (c) is vacuous otherwise.)

It remains to check condition~(d). First note that $k\in\mathbb{N}$ since $t_K, D/2\in\mathbb{N}$.
Thus we can apply Lemma~\ref{G-glob} to obtain a decomposition of both $G^A_{glob}$ and $G^B_{glob}$
into path systems.
Since $G_{glob}=G_{glob}^A\cup G_{glob}^B$, (d) is an immediate consequence of Lemma~\ref{G-glob}(ii)--(v).
\endproof

\subsection{Constructing Localized Balanced Exceptional Systems} \label{besconstruct}

The localized path systems obtained from Corollary~\ref{finalcor} do not yet cover all of the exceptional vertices.
This is achieved via the following lemma: we extend the path systems to achieve this additional property, while maintaining the property of being balanced.%
   \COMMENT{Deryk added 2 sentences}
More precisely, let
$$\cP:=\{A_0,A_1,\dots,A_K,B_0,B_1,\dots,B_K\}$$
be a $(K,m,\eps)$-partition%
    \COMMENT{Daniela replaced $(K,m,\eps_0)$-partition by $(K,m,\eps)$-partition to make clear that the $\eps$ need
not be the same as the parameter $\eps_0$ of the BES}
of a set $V$ of $n$ vertices.
Given $1\leq \I \leq K$ and $\eps_0>0$, an \emph{$\i$-balanced exceptional system with respect to $\cP$ and parameter~$\eps_0$}
is a path system $J$ with $V(J)\subseteq A_0\cup B_0\cup A_{i_1} \cup A_{i_2} \cup B_{i_3} \cup B_{i_4}$
such that the following conditions hold:
\begin{itemize}
\item[(BES$1$)] Every vertex in $A_0\cup B_0$ is an internal vertex of a path in~$J$.
Every vertex $v\in A_{i_1} \cup A_{i_2} \cup B_{i_3} \cup B_{i_4}$ satisfies $d_J(v)\le 1$.%
    \COMMENT{Can write that $v$ is an endpoint of a (possible trivial) path in~$J$ if we allow
$V(J)\subseteq A_0\cup B_0\cup A_{i_1} \cup A_{i_2} \cup B_{i_3} \cup B_{i_4}$ (instead of
$V(J)=A_0\cup B_0\cup A_{i_1} \cup A_{i_2} \cup B_{i_3} \cup B_{i_4}$). Also,
the BES we construct in Lemma~\ref{balmatchextend} have the property that none of these paths has both endpoints
in $A_{i_1} \cup A_{i_2}$ and its midoint in $A_0$ (and the analogue holds with for $B$).}   
\item[(BES$2$)]
Every edge of $J[A \cup B]$ is either an $A_{i_1}A_{i_2}$-edge or a $B_{i_3}B_{i_4}$-edge.%
	\COMMENT{AL: had `Every (maximal) path of length one in~$J$ has either one endpoint in $A_{i_1}$ and the other endpoint in $A_{i_2}$
or one endpoint in $B_{i_3}$ and the other endpoint in $B_{i_4}$.'
}
\item[(BES$3$)] The edges in $J$ cover precisely the same number of vertices in $A$ as in $B$.%
   \COMMENT{Cannot write $J$ instead of "edges in $J$" here since we don't want to count the vertices of degree 0 which lie in $V(J)$.}
\item[(BES$4$)] $e(J)\le \eps_0 n$.
\end{itemize}
To shorten the notation, we will often refer to $J$ as an \emph{$(i_1,i_2, i_3,i_4)$-BES}.
If $V$ is the vertex set of a graph $G$ and $J\subseteq G$, we also say that $J$ is an \emph{$(i_1,i_2, i_3,i_4)$-BES in~$G$}. 
Note that (BES2) implies that an $(i_1,i_2, i_3,i_4)$-BES does not contain edges between $A$ and $B$.
Furthermore, an $\i$-BES is also, for example, an $(i_2,i_1, i_4,i_3)$-BES.
We will sometimes omit the indices $i_1,i_2, i_3,i_4$ and just refer to a balanced exceptional system (or a BES for short).%
   \COMMENT{Daniela deleted: Throughout the paper, usually our balanced exceptional systems will have parameter $\eps_0$,
so in this case we will often omit mentioning the parameter.}
We will sometimes also omit the partition $\cP$, if it is clear from the context. As mentioned before, balanced exceptional systems will
play a similar role as the exceptional systems that we used in the two cliques case (i.e. in Chapter~\ref{paper1}).

(BES1) implies that each balanced exceptional system is an $A_0B_0$-path system as defined before Proposition~\ref{balpathcheck}.
(However, the converse is not true since, for example, a 2-balanced $A_0B_0$-path system need not satisfy~(BES4).)
So (BES3) and Proposition~\ref{balpathcheck} imply that each balanced exceptional system is also $2$-balanced.

We now extend each set $\cM_{i_1i_2i_3i_4}$ obtained from Corollary~\ref{finalcor} into a set
$\mathcal{J}_{i_1i_2i_3i_4}$ of $(i_1,i_2,i_3,i_4)$-BES.

\begin{lemma}\label{balmatchextend}
Let $0<1/n\ll \eps\ll \eps_0\ll \eps'\ll \eps_1\ll \eps_2\ll \eps_3\ll \eps_4\ll 1/K\ll 1$.
Suppose that $(G,A,A_0,B,B_0)$ is an $(\eps,\eps',K,D)$-bi-framework with $|G|=n$ and such that $D\ge n/200$
and $D$ is even. Let $\cP:=\{A_0,A_1,\dots,A_K,B_0,B_1,\dots,B_K\}$ be a $(K,m,\eps,\eps_1,\eps_2)$-partition for $G$. 
Suppose that $t_K:=(1-20\eps_4)D/2K^4\in \mathbb{N}$. Let
$\cM_{i_1i_2i_3i_4}$ be the sets returned by Corollary~\ref{finalcor}.
Then for all $1\leq \I \leq K$ there is a set $\mathcal{J}_{i_1i_2i_3i_4}$ which satisfies the
following properties:
\begin{itemize}
\item[{\rm (i)}] $\mathcal{J}_{i_1i_2i_3i_4}$ consists of $t_K$ edge-disjoint $(i_1,i_2,i_3,i_4)$-BES in $G$ with respect to $\cP$
and with parameter $\eps_0$.
\item[{\rm (ii)}] For each of the $t_K$ pairs of path systems $(P,P')\in \cM_{i_1i_2i_3i_4}$, 
there is a unique $J\in \mathcal{J}_{i_1i_2i_3i_4}$ which contains all the edges in $P\cup P'$. Moreover, all edges in
$E(J)\setminus E(P\cup P')$ lie in $G[A_0,B_{i_3}]\cup G[B_0,A_{i_1}]$.
\item[{\rm (iii)}] Whenever $(i_1,i_2,i_3,i_4)\neq (i'_1,i'_2,i'_3,i'_4)$, $J \in \mathcal{J}_{i_1i_2i_3i_4}$ and $J' \in \mathcal{J}_{i'_1i'_2i'_3i'_4}$,
then $J$ and $J'$ are edge-disjoint.
\end{itemize}
\end{lemma}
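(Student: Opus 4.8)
The plan is to extend each pair $(P,P')\in\cM_{i_1i_2i_3i_4}$ into an $(i_1,i_2,i_3,i_4)$-BES by adding a small number of edges incident to the exceptional vertices $A_0\cup B_0$, so that every vertex in $A_0\cup B_0$ becomes an internal vertex of some path. We process the $K^4$ sets $\cM_{i_1i_2i_3i_4}$ one at a time, and within each set we process the $t_K$ pairs $(P,P')$ one at a time, each time working in the subgraph of $G$ obtained by deleting all edges used so far. The key structural point is that, by construction, $P$ is $(i_1,i_2,A)$-localized and $P'$ is $(i_3,i_4,B)$-localized, so $P\cup P'$ only uses vertices in $A_0\cup B_0\cup A_{i_1}\cup A_{i_2}\cup B_{i_3}\cup B_{i_4}$, each vertex of $A\cup B$ has degree at most one in $P\cup P'$ (by Corollary~\ref{finalcor}(a)(ii') and the localization definition), and $e(P)-e(P')=a-b$.

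First I would set up the extension for a single pair. Each exceptional vertex $v\in A_0$ currently has $d_{P\cup P'}(v)\le 2$; to make $v$ internal we must add $2-d_{P\cup P'}(v)$ further edges at $v$. We add these edges from $v$ into $B_{i_3}$ (and symmetrically, for $v\in B_0$, into $A_{i_1}$), choosing the new endpoints to be distinct from each other, from $A_0\cup B_0$, and from the (few) vertices of $A_{i_1}\cup A_{i_2}\cup B_{i_3}\cup B_{i_4}$ already touched by $P\cup P'$ or by previously-constructed BES in the current set. This is possible because of the degree bound in Corollary~\ref{finalcor}(c): each $x\in A_0$ satisfies $d_{G(\cM_{i_1i_2i_3i_4})}(x)\ge(d_G(x,A)-15\eps_4D)/K^4$, and combined with (BFR7) (so $d_G(x,B)\ge d_G(x)/2-\eps n$, hence $d_G(x,B_{i_3})$ is linear in $n$ by (P2)) and the fact that at most $O(\eps_0 n)$ vertices of $B_{i_3}$ need to be avoided, there is always room to choose the required edges greedily. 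Adding only $A_0B_{i_3}$- and $A_{i_1}B_0$-edges ensures (BES2) is not violated (these are $AB$-type edges? no — careful: $A_0B_{i_3}$-edges are not $A_{i_1}A_{i_2}$- or $B_{i_3}B_{i_4}$-edges, but (BES2) only restricts edges of $J[A\cup B]$, and edges incident to $A_0\cup B_0$ are not in $J[A\cup B]$, so they are unconstrained). We also need to check (BES3): the number of $A$-vertices covered equals the number of $B$-vertices covered. Since $P\cup P'$ already has $e(P)-e(P')=a-b$, and Proposition~\ref{balpathcheck} together with the observation that adding the extension edges preserves $e_J(A')-e_J(B')=a-b$ (each added edge is an $A_0B$- or $AB_0$-edge, contributing equally), the resulting $J$ is $2$-balanced, hence satisfies (BES3). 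Finally (BES4) holds because $e(P),e(P')\le\sqrt\eps n$ by Corollary~\ref{finalcor}(a)(iii) and we add at most $2(a+b)\le 2\eps n$ further edges, so $e(J)\le \eps_0 n$ with room to spare.

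The main obstacle — and the step requiring the most care — is ensuring that across \emph{all} pairs within a fixed set $\cM_{i_1i_2i_3i_4}$, and the edges we add, everything stays edge-disjoint, so that (i) and (iii) hold. For (iii) this is automatic since distinct sets $\cM_{i_1i_2i_3i_4}$ use edge-disjoint subgraphs $G(\cM_{i_1i_2i_3i_4})$ by Corollary~\ref{finalcor}(c), and the extension edges at $v\in A_0$ go into $B_{i_3}$, which depends on the index tuple; one must check that the pool of available exceptional edges $G[A_0,B_{i_3}]\cup G[B_0,A_{i_1}]$ used for extensions in the tuple $(i_1,i_2,i_3,i_4)$ is disjoint from that used for a different tuple — this is where it matters that the edges of $G[A_0,B']$ and $G[B_0,A']$ at a given exceptional vertex $v$ get distributed among the relevant BES, and one tracks that in total we only ever need $2(2-d_{P\cup P'}(v))\le 4$ edges at $v$ per pair times $t_K$ pairs, summed over all tuples sharing the relevant $B_{i_3}$ or $A_{i_1}$ index, which is at most $d_G(v)$ by the degree count. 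So I would argue that at each step of the greedy process, the set of forbidden endpoints at the current exceptional vertex has size $o(n)$ while its available degree into the relevant cluster is $\Omega(n)$, which makes the greedy choice always possible; this is a routine but slightly fiddly bookkeeping argument, and stating it cleanly (perhaps by first fixing $v$ and bounding the total demand on edges at $v$ before doing the greedy extension) is the crux of the proof. Property (ii) then follows by construction: $J$ contains $P\cup P'$ and the only extra edges lie in $G[A_0,B_{i_3}]\cup G[B_0,A_{i_1}]$.
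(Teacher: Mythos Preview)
Your overall strategy is the same as the paper's: process the $K^4$ tuples one by one, and within each tuple greedily extend each pair $(P,P')$ to a BES by adding a few $A_0B_{i_3}$- and $B_0A_{i_1}$-edges. The structural checks (BES1)--(BES4) you outline are correct.

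The gap is in your degree bookkeeping for the greedy step. You lower-bound the supply of edges at $x\in A_0$ into $B_{i_3}$ using (BFR7), which only gives $d_G(x,B_{i_3})\ge (D/2-O(\eps_1 n))/K$. On the demand side, you observe that there are $K^3$ tuples sharing a fixed value of $i_3$, each containing $t_K$ pairs and each pair requiring at most two new edges at $x$; this naive count gives a total demand of $2t_K\cdot K^3=(1-20\eps_4)D/K$. But this exceeds the supply whenever $d_G(x,A')$ is close to $D/2$, which (BFR7) certainly allows. Saying the total demand is ``at most $d_G(v)$'' is not enough: you need it bounded by $d_G(v,B_{i_3})$, which is much smaller.

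The paper closes this gap by a cancellation argument. Set $d_j:=d_G(x_j,A')$. On the supply side, (BFR2), (BFR6) and (P2) give the sharper bound
\[
d_G(x_j,B_{i_3})\ge (D-d_j-\eps_1 n)/K.
\]
On the demand side, the number of \emph{new} edges added at $x_j$ when building $\mathcal{J}_{i'_1i'_2i'_3i'_4}$ is exactly $2t_K-d_{G(\cM_{i'_1i'_2i'_3i'_4})}(x_j)$, and now Corollary~\ref{finalcor}(c) bounds this by $2t_K-(d_j-15\eps_4 D)/K^4$. Summing over the $K^3$ tuples with $i'_3=i_3$ gives total demand at most $(D-d_j-5\eps_4 D)/K$. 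The $d_j$'s cancel, leaving a surplus of $(5\eps_4 D-\eps_1 n)/K\ge \eps_4 n/(50K)$, which is comfortably more than the $O(\sqrt{\eps}n)$ vertices you have to avoid within a single BES. So Corollary~\ref{finalcor}(c) is indeed the key, but it must be played off against the supply bound that also involves $d_j$, not against (BFR7).
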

We let $\mathfrak{J}$ denote the union of the sets $\mathcal{J}_{i_1i_2i_3i_4}$ over all $1\leq \I \leq K$.

\proof
We will construct the sets $\mathcal{J}_{i_1i_2i_3i_4}$ greedily by extending each pair of path systems $(P,P')\in \cM_{i_1i_2i_3i_4}$
in turn into an $(i_1,i_2,i_3,i_4)$-BES containing $P\cup P'$. For this, 
consider some arbitrary ordering of the $K^4$ $4$-tuples $(i_1,i_2,i_3,i_4)$.  
Suppose that we have already constructed the sets ${\mathcal J}_{i'_1i'_2i'_3i'_4}$ for all 
$(i'_1,i'_2,i'_3,i'_4)$ preceding $(i_1,i_2,i_3,i_4)$ so that (i)--(iii) are satisfied. So our aim now is to construct ${\mathcal J}_{i_1i_2i_3i_4}$.
Consider an enumeration $(P_1,P'_1),\dots,(P_{t_K},P'_{t_K})$ of the pairs of path systems%
    \COMMENT{Daniela replaced matchings by paths systems}
in $\cM_{i_1i_2i_3i_4}$.
Suppose that for some $i\le t_K$ we%
   \COMMENT{Daniela added for some $i\le t_K$}
have already constructed edge-disjoint $(i_1,i_2,i_3,i_4)$-BES $J_1,\dots,J_{i-1}$, so that for each $i'<i$
the following conditions hold:
\begin{itemize}
\item $J_{i'}$ contains the edges in $P_{i'}\cup P'_{i'}$;
\item all edges in $E(J_{i'})\setminus E(P_{i'}\cup P'_{i'})$ lie in $G[A_0,B_{i_3}]\cup G[B_0,A_{i_1}]$;
\item $J_{i'}$ is edge-disjoint from all the balanced exceptional systems in \\ $\bigcup_{(i'_1,i'_2,i'_3,i'_4)} {\mathcal J}_{i'_1i'_2i'_3i'_4}$,
where the union is over all $(i'_1,i'_2,i'_3,i'_4)$ preceding $(i_1,i_2,i_3,i_4)$.
\end{itemize}
We will now construct $J:=J_i$. For this, we need to add suitable edges to $P_i\cup P'_i$ to ensure that all vertices  of $A_0 \cup B_0$ 
have degree two. We start with $A_0$. Recall that $a=|A_0|$ and write  $A_0=\{x_1,\dots,x_a\}$. 
Let $G'$ denote the subgraph of $G[A',B']$ obtained by removing all the edges lying in $J_1,\dots,J_{i-1}$ as well as all
those edges lying in the balanced exceptional systems belonging to $\bigcup_{(i'_1,i'_2,i'_3,i'_4)} {\mathcal J}_{i'_1i'_2i'_3i'_4}$ (where as before the union
is over all $(i'_1,i'_2,i'_3,i'_4)$ preceding $(i_1,i_2,i_3,i_4)$). 
We will choose the new edges incident to $A_0$ in $J$ inside $G'[A_0,B_{i_3}]$.

Suppose we have already found suitable edges for $x_1,\dots,x_{j-1}$ and let $J(j)$ be the set of all these edges.
We will first show that the degree of $x_j$ inside $G'[A_0,B_{i_3}]$ is still large.
Let $d_j:=d_G(x_j,A')$. Consider any $(i'_1,i'_2,i'_3,i'_4)$ preceding $(i_1,i_2,i_3,i_4)$.
Let $G({\mathcal J}_{i'_1i'_2i'_3i'_4})$ denote the union of the $t_K$ balanced exceptional systems belonging to ${\mathcal J}_{i'_1i'_2i'_3i'_4}$.
Thus $d_{G({\mathcal J}_{i'_1i'_2i'_3i'_4})}(x_j)=2t_K$. However, Corollary~\ref{finalcor}(c) implies that
$d_{G({\mathcal M}_{i'_1i'_2i'_3i'_4})}(x_j)\ge (d_j-15\eps_4 D)/K^4$. So altogether, when constructing (the balanced exceptional systems in)
${\mathcal J}_{i'_1i'_2i'_3i'_4}$, we have added at most $2t_K-(d_j-15\eps_4 D)/K^4$ new edges
at $x_j$, and all these edges join $x_j$ to vertices in $B_{i'_3}$. Similarly, when constructing
$J_1,\dots,J_{i-1}$, we have added at most $2t_K-(d_j-15\eps_4 D)/K^4$ new edges at $x_j$.
Since the number of $4$-tuples $(i'_1,i'_2,i'_3,i'_4)$ with $i'_3=i_3$ is $K^3$, it follows that%
   \COMMENT{Daniela: previously had $\le $ instead of the first $=$}
\begin{align*}
d_G(x_j,B_{i_3})-d_{G'}(x_j,B_{i_3})  
& \le K^3 \left(2t_K - \frac{d_j-15\eps_4 D}{K^4}\right) \\
& = \frac{1}{K} \left( (1-20\eps_4)D -d_j+15\eps_4D \right) \\
& = \frac{1}{K} \left( D-d_j -5 \eps_4D \right). 
\end{align*}
Also, (P2) with $A$ replaced by~$B$ implies that%
  \COMMENT{AL:changed $d_G(x_j,B_{i_3})=\frac{d_G(x_j,B)\pm \eps_1 n}{K}$ to $d_G(x_j,B_{i_3})\ge\frac{d_G(x_j,B)- \eps_1 n}{K}$}
$$d_G(x_j,B_{i_3}) \ge \frac{d_G(x_j,B) -  \eps_1 n}{K}\ge \frac{d_G(x_j)-d_G(x_j,A')-\eps_1 n}{K} = \frac{D-d_j -\eps_1 n}{K},
$$
where here we use (BFR2) and (BFR6).
So altogether, we have%
\COMMENT{AL: changed $\eps_4 n/25K$ with $\eps_4 n/50K$}
$$
d_{G'}(x_j,B_{i_3})  \ge (5\eps_4 D- \eps_1n)/K\ge \eps_4 n/50K.
$$
Let $B'_{i_3}$ be the set of vertices in $B_{i_3}$ not covered by the edges of $J(j)\cup P'_i$.%
\COMMENT{AL: added edges of}
Note that $|B'_{i_3}| \ge |B_{i_3}|- 2|A_0|-2e(P'_i)\ge |B_{i_3}|-3\sqrt{\eps}n$ since 
$a=|A_0|\le \eps n$ by (BFR4)
and $e(P'_i)\le \sqrt{\eps}n$ by Corollary~\ref{finalcor}(a)(iii).
So $d_{G'}(x_j,B'_{i_3}) \ge \eps_4 n/51K$. We can add up to two of these edges to~$J$ in order to ensure that $x_j$ has degree two in~$J$.
This completes the construction of the edges of $J$ incident to $A_0$. The edges incident to $B_0$ are found similarly.

Let $J$ be the graph on $A_0\cup B_0 \cup A_{i_1} \cup A_{i_2} \cup B_{i_3} \cup B_{i_4}$ whose edge set is constructed in this way.
By construction, $J$ satisfies (BES1) and (BES2) since $P_j$ and $P'_j$ are
$(i_1,i_2,A)$-localized and $(i_3,i_4,B)$-localized respectively.%
	\COMMENT{AL: deleted follows}
We now verify (BES3). As mentioned before the statement of the lemma, 
(BES1) implies that $J$ is an $A_0B_0$-path system (as defined before Proposition~\ref{balpathcheck}).
Moreover, Corollary~\ref{finalcor}(a)(ii) implies that $P_i \cup P'_i$ is a path system which satisfies (B1)
in the definition of $2$-balanced.
Since $J$ was obtained by adding only $A'B'$-edges, (B1) is preserved in $J$.
Since by construction $J$ satisfies (B2), it follows that $J$ is $2$-balanced.
So Proposition~\ref{balpathcheck} implies (BES3). 

 Finally, we verify (BES4). For this, note that Corollary~\ref{finalcor}(a)(iii) implies
that $e(P_i),e(P'_i)\le \sqrt{\eps}n$. Moreover, the number of edges added to $P_i\cup P'_i$ when constructing $J$
is at most $2(|A_0|+|B_0|)$, which is at most $2\eps n$ by (BFR4).
Thus $e(J)\le 2\sqrt{\eps}n+2\eps n\le \eps_0 n$.
\endproof

\subsection{Covering $G_{glob}$ by Edge-disjoint Hamilton Cycles}

We now find a set of edge-disjoint Hamilton cycles covering the edges of the `leftover' graph obtained from $G-G[A,B]$ by deleting all
those edges lying in balanced exceptional systems belonging to $\mathfrak{J}$.

\begin{lemma}\label{lem:HCglob}
Let $0<1/n\ll \eps\ll \eps_0\ll \eps'\ll \eps_1\ll \eps_2\ll \eps_3\ll \eps_4\ll 1/K\ll 1$.
Suppose that $(G,A,A_0,B,B_0)$ is an $(\eps,\eps',K,D)$-bi-framework with $|G|=n$ and such that $D\ge n/200$ and%
   \COMMENT{Daniela: had $\delta (G) \geq D\ge n/200$}
$D$ is even. Let $\cP:=\{A_0,A_1,\dots,A_K,B_0,B_1,\dots,B_K\}$ be a $(K,m,\eps,\eps_1,\eps_2)$-partition for $G$. 
Suppose that $t_K:=(1-20\eps_4)D/2K^4\in \mathbb{N}$. Let $\mathfrak{J}$ be as defined after Lemma~\ref{balmatchextend}
and let $G(\mathfrak{J})\subseteq G$ be the union of all the balanced exceptional systems lying in $\mathfrak{J}$.
Let $G^*:=G-G(\mathfrak{J})$, let $k:=10\eps_4 D$ and let $(Q_{1,A},Q_{1,B}),\dots,(Q_{k,A},Q_{k,B})$ be as in Corollary~\ref{finalcor}(d).
\begin{itemize}
\item[{\rm (a)}] The graph $G^*-G^*[A,B]$ can be decomposed into $k$ $A_0B_0$-path systems $Q_1,\dots,Q_k$ which are $2$-balanced and satisfy the
following properties:
\begin{itemize}
\item[{\rm (i)}] $Q_i$ contains all edges of $Q_{i,A} \cup Q_{i,B}$; 
\item[{\rm (ii)}]  $Q_1,\dots,Q_k$ are pairwise edge-disjoint;
\item[\rm{(iii)}] $e(Q_i) \le 3\sqrt{\eps}n$.
\end{itemize}
\item[{\rm (b)}] Let $Q_1,\dots,Q_k$ be as in~(a). Suppose that $F$ is a graph on $V(G)$ such that $G\subseteq F$, $\delta(F)\ge 2n/5$ and such that $F$ satisfies
(WF5) with respect to $\eps'$. Then there are edge-disjoint Hamilton cycles $C_1,\dots,C_k$ in $F-G(\mathfrak{J})$
such that $Q_i \subseteq C_i$ and  $C_i \cap G$ is $2$-balanced for each $i\le k$. 
\end{itemize}
\end{lemma}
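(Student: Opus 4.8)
The plan is to prove (a) and (b) in turn, reducing each to a tool already established in this chapter.

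For part (a), recall from Corollary~\ref{finalcor}(d) that $G_{glob}$ — the subgraph of $G[A']\cup G[B']$ left after removing all the edges in the graphs $G(\cM_{i_1i_2i_3i_4})$ — decomposes into pairs $(Q_{i,A},Q_{i,B})$ with $Q_{i,A}\subseteq G_{glob}[A']$, $Q_{i,B}\subseteq G_{glob}[B']$, $d_{Q_{i,A}}(x)\le1$ for $x\in A$, $d_{Q_{i,B}}(x)\le1$ for $x\in B$, $e(Q_{i,A})-e(Q_{i,B})=a-b$ and $e(Q_{i,A}),e(Q_{i,B})\le\sqrt{\eps}n$. Note that $G^*-G^*[A,B]=(G-G(\mathfrak J))-(G[A,B]\setminus G(\mathfrak J))$; since every balanced exceptional system in $\mathfrak J$ contains no $AB$-edges (by (BES2)), removing $G(\mathfrak J)$ deletes exactly the edges of $G[A']\cup G[B']$ that lie in the path systems $P\cup P'$ together with the extra $A_0B_{i_3}$- and $B_0A_{i_1}$-edges added in Lemma~\ref{balmatchextend}. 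By construction $\mathfrak J$ covers precisely the edges in $\bigcup_{i_1i_2i_3i_4}G(\cM_{i_1i_2i_3i_4})$ plus those extra exceptional edges, and every edge of $G[A_0,B_0]$ has already been removed (this is (BFR6), so there are none). Hence the edges of $G^*-G^*[A,B]$ that are not already covered by some $Q_{i,A}\cup Q_{i,B}$ are exactly the edges of $G^*$ incident to $A_0\cup B_0$ that are not $AB$-edges and not in $\mathfrak J$. I would first argue this residual graph, call it $R$, has maximum degree $o(n)$ at each vertex of $A_0\cup B_0$ and is empty elsewhere inside $A',B'$ — more precisely each $v\in A_0$ satisfies $d_R(v,A)\le \eps' n$-ish and $d_R(v,B)$ is small, using (BFR5), (BFR7) together with the fact that $\mathfrak J$ plus the $G(\cM)$'s already account for almost all of $d_G(v,A')$ by Corollary~\ref{finalcor}(c). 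Then, exactly as in the proof of Lemma~\ref{balpathextend}, for each $i\le k$ I extend $Q_{i,A}\cup Q_{i,B}$ by distributing the (few) leftover edges at $A_0\cup B_0$ among the $k$ pairs — there is enough room because each $v\in A_0\cup B_0$ has degree $2n/5$-ish into the opposite class and only $O(\sqrt\eps n)$ vertices there are already used — so that every vertex of $A_0\cup B_0$ ends with degree $2$ in $Q_i$, no vertex of $A\cup B$ becomes internal, and (B1) is preserved since only $A'B'$-edges are added. This gives $2$-balanced $A_0B_0$-path systems $Q_1,\dots,Q_k$ that are pairwise edge-disjoint, decompose $G^*-G^*[A,B]$, satisfy $Q_{i,A}\cup Q_{i,B}\subseteq Q_i$, and have $e(Q_i)\le e(Q_{i,A})+e(Q_{i,B})+2(a+b)\le 3\sqrt\eps n$ by Corollary~\ref{finalcor}(d)(iv) and (BFR4). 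The slightly delicate point is the bookkeeping showing $R$ really is this small and that distributing its edges among the $k=10\eps_4 D$ path systems keeps each $e(Q_i)$ below $3\sqrt\eps n$; this is the main obstacle in (a), but it is a routine counting argument of the type carried out in Lemmas~\ref{coverA0B01} and~\ref{balmatchextend}.

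For part (b), I would proceed greedily, one Hamilton cycle at a time, using Lemma~\ref{extendpaths}. Suppose $C_1,\dots,C_{r}$ have been found for some $0\le r<k$, each containing $Q_i$, each with $E(C_i)\setminus E(Q_i)$ consisting of $AB$-edges, each edge-disjoint from $G(\mathfrak J)$, and each with $C_i\cap G$ being $2$-balanced. Set $F_r:=F-G(\mathfrak J)-(C_1\cup\dots\cup C_r)$ and $G_r:=G-G(\mathfrak J)-(C_1\cup\dots\cup C_r)$. Since each $C_i\cap G$ is $2$-balanced and $G$ is $D$-balanced (BFR2), $G_r$ is $D'$-balanced for the appropriate $D'$; together with (BFR3)--(BFR5) (which are inherited) and (WF5) for $F$ (which is inherited since we only delete edges), the tuple $(F_r,G_r,A,A_0,B,B_0)$ is an $(\eps,\eps',K,D')$-pre-framework in the sense required by Lemma~\ref{extendpaths} — here I use that (BFR1)--(BFR4) imply (WF1)--(WF4). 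Moreover $\delta(F_r)\ge\delta(F)-2r\ge 2n/5-2k\ge (1/4+\alpha)n$ for a suitable $\alpha$, since $k=10\eps_4 D\le 10\eps_4 n$ and $\eps_4$ is small. The path system $Q_{r+1}$ lies in $G^*-G^*[A,B]\subseteq G-G(\mathfrak J)$, and it is edge-disjoint from $C_1,\dots,C_r$ because the $Q_i$ are pairwise edge-disjoint by (a) and each $C_i\setminus Q_i$ consists only of $AB$-edges whereas $Q_{r+1}$ contains no $AB$-edge (it is an $A_0B_0$-path system of the form given by (a), whose edges lie in $G[A']\cup G[B']$). Hence $Q_{r+1}\subseteq G_r$, and it has at most $3\sqrt\eps n\le\eps' n$ nontrivial paths by (a)(iii). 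Applying Lemma~\ref{extendpaths} to $F_r$, $G_r$, $Q_{r+1}$ yields a Hamilton cycle $C_{r+1}$ in $F_r$ with $Q_{r+1}\subseteq C_{r+1}$, with $E(C_{r+1})\setminus E(Q_{r+1})$ consisting of $AB$-edges, and with $C_{r+1}\cap G_r$ (hence $C_{r+1}\cap G$, since $C_{r+1}$ avoids $C_1,\dots,C_r$ and $G(\mathfrak J)$) being $2$-balanced. Iterating for $r=0,\dots,k-1$ produces the desired edge-disjoint Hamilton cycles $C_1,\dots,C_k$ in $F-G(\mathfrak J)$ with $Q_i\subseteq C_i$ and $C_i\cap G$ $2$-balanced, completing (b). The only thing to be careful about here is checking that the degree and balancedness hypotheses of Lemma~\ref{extendpaths} survive all $k$ deletions, which they do since $k/n$ is tiny compared with $\alpha$; this is straightforward once (a) is in place, so the real work of the lemma is in part (a).
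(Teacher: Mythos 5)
Your approach to part (b) is essentially the paper's: iterate $\text{Lemma}~\ref{extendpaths}$, using $\text{Proposition}~\ref{WFpreserve}$ to preserve the pre-framework, and observe that each $C_i\setminus Q_i$ consists only of $AB$-edges so it automatically avoids $G(\mathfrak J)$, the other $Q_j$'s, and earlier cycles. That part is fine (your choice to delete $G(\mathfrak J)$ from the framework too is an unnecessary but harmless variant).

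Part (a), however, has a genuine gap. You treat the $A_0B$- and $B_0A$-edges of $G^*$ as a ``small residual graph $R$'' to be sprinkled among the $k$ pairs ``because there is enough room,'' citing $\text{Lemma}~\ref{balpathextend}$ as a model. But $\text{Lemma}~\ref{balpathextend}$ only extends a \emph{single} path system by greedily adding a handful of edges; here you must \emph{decompose} all of $G^*-G^*[A,B]$ into exactly $k$ path systems, each of which gives every vertex of $A_0\cup B_0$ degree exactly $2$. That is a much tighter constraint: for each $x\in A_0$, the number of $A_0B'$-edges at $x$ in $G^*$ must be \emph{exactly} $2k-d_{glob}(x)$, where $d_{glob}(x)=\sum_i d_{Q_{i,A}}(x)$ — not ``small,'' not ``at most something,'' but precisely this quantity, or the decomposition is impossible (too few edges and you cannot reach degree $2$ everywhere; too many and some cannot be covered). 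This identity is the heart of the argument, and you never establish it. The paper proves it by the calculation
\begin{align*}
d_{G^*}(x,B') = d_G(x,B') - \bigl(d_{G(\mathfrak J)}(x)-d_{G(\mathfrak J)}(x,A')\bigr) = 2k - d_{glob}(x),
\end{align*}
using $d_{G(\mathfrak J)}(x)=2K^4 t_K=D-2k$ together with $d_G(x)=D$ from (BFR2). Once the count is exact, assigning these edges to the $Q_i$ still requires care (each added edge must avoid the vertices already used by that $Q_i$), which the paper handles via a perfect matching in an auxiliary bipartite graph $H_{j+1}$ whose existence follows from Hall's theorem and the degree bounds (BFR4), (BFR5), $\text{Corollary}~\ref{finalcor}(d)$. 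Your sketch gestures at the avoidance issue but provides neither the exact counting that makes a decomposition possible nor the matching argument that realizes it, so the claim that the decomposition exists is not justified.
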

\proof
We first prove (a). The argument is similar to that of Lemma~\ref{G-glob}. Roughly speaking, we will extend
each $Q_{i,A}$ into a path system $Q_{i,A}'$ by adding suitable $A_0B$-edges which ensure that 
every vertex in $A_0$ has degree exactly two in $Q_{i,A}'$. 
Similarly, we will extend each $Q_{i,B}$ into $Q'_{i,B}$ by adding suitable $AB_0$-edges. 
We will ensure that no vertex is an endvertex of both an edge in $Q'_{i,A}$ and an edge in $Q'_{i,B}$ and take $Q_i$ to
be the union of these two path systems. We first construct all the $Q_{i,A}'$.

\medskip

\noindent
{\bf Claim~1.} \emph{$G^*[A']\cup G^*[A_0,B]$ has a decomposition into edge-disjoint path systems $Q'_{1,A},\dots,Q'_{k,A}$ such that
\begin{itemize}
\item $Q_{i,A}\subseteq Q'_{i,A}$ and $E(Q'_{i,A})\setminus E(Q_{i,A})$ consists of $A_0B$-edges in $G^*$ (for each $i\le k$);
\item $d_{Q'_{i,A}}(x)=2$ for every $x\in A_0$ and $d_{Q'_{i,A}}(x)\le 1$ for every $x\notin A_0$;
\item no vertex is an endvertex of both an edge in $Q'_{i,A}$ and an edge in $Q_{i,B}$ (for each $i\le k$).%
    \COMMENT{Cannot simply say that $Q'_{i,A}$ and $Q_{i,B}$ are vertex-disjoint since we allow trivial paths in a path system
and so we might e.g. have that $V(Q_{i,B})=B'$.}
\end{itemize}
}

\medskip

\noindent
To prove Claim~1, let $G_{glob}$ be as defined in Corollary~\ref{finalcor}(d). Thus $G_{glob}[A']=Q_{1,A}\cup \dots \cup Q_{k,A}$.
On the other hand, Lemma~\ref{balmatchextend}(ii) implies that $G^*[A']=G_{glob}[A']$.
Hence,
\begin{equation}\label{eq:G*A'}
G^*[A']=G_{glob}[A']=Q_{1,A}\cup \dots \cup Q_{k,A}.
\end{equation}
Similarly, $G^*[B']=G_{glob}[B']=Q_{1,B}\cup \dots \cup Q_{k,B}$. Moreover, $G_{glob}=G^*[A']\cup G^*[B']$.
Consider any vertex $x \in A_0$. Let $d_{glob}(x)$ denote the degree of $x$ in $Q_{1,A}\cup \dots \cup Q_{k,A}$.
So $d_{glob}(x)=d_{G^*}(x,A')$ by~(\ref{eq:G*A'}). Let
\begin{align}
d_{loc}(x) & :=d_G(x,A')-d_{glob}(x)\label{eq:dloc1}\\
& =d_G(x,A')-d_{G^*}(x,A')=d_{G(\mathfrak{J})}(x,A')\label{eq:dloc2}.
\end{align}
Then
\begin{equation}\label{eq:dsum}
d_{loc}(x)+d_G(x,B')+d_{glob}(x)\stackrel{(\ref{eq:dloc1})}{=}d_G(x)=D,
\end{equation}
where the final equality follows from (BFR2).
Recall that $\mathfrak{J}$ consists of $K^4t_K$ edge-disjoint balanced exceptional systems.
Since $x$ has two neighbours in each of these balanced exceptional systems, 
the degree of $x$ in $G(\mathfrak{J})$ is $2K^4 t_K=D-2k$.
Altogether this implies that
\begin{eqnarray} \label{dB'x}
d_{G^*}(x,B') & = & d_{G}(x,B')-d_{G(\mathfrak{J})}(x,B')\nonumber \\
& = & d_{G}(x,B')-(d_{G(\mathfrak{J})}(x)-d_{G(\mathfrak{J})}(x,A'))\nonumber\\
& \stackrel{(\ref{eq:dloc2})}{=} & d_{G}(x,B')-(D-2k-d_{loc}(x))\stackrel{(\ref{eq:dsum})}{=} 2k-d_{glob}(x).
\end{eqnarray}
Note that this is precisely the total number of edges at $x$ which we need to add to $Q_{1,A},\dots,Q_{k,A}$
in order to obtain $Q'_{1,A},\dots,Q'_{k,A}$ as in Claim~1.

We can now construct the path systems $Q'_{i,A}$.  For each $x \in A_0$, let $n_i(x)=2-d_{Q_{i,A}}(x)$.
So $0\le n_i(x)\le 2$ for all $i\le k$.
Recall that $a:=|A_0|$ and consider an ordering $x_1,\dots,x_{a}$ of the vertices in $A_0$.
Let $G^*_j:=G^*[\{x_1,\dots,x_j\},B]$. Assume that for some 
$0 \le j < a$, we have already found a decomposition of $G^*_j$ into edge-disjoint path systems $Q_{1,j},\dots,Q_{k,j}$
satisfying the following properties (for all $i\le k$): 
\begin{itemize}
\item[(i$'$)] no vertex is an endvertex of both an edge in $Q_{i,j}$ and an edge in $Q_{i,B}$;
\item[(ii$'$)] $x_{j'}$ has degree $n_{i}(x_{j'})$ in $Q_{i,j}$ for all $j' \le j$
and all other vertices have degree at most one in $Q_{i,j}$.
\end{itemize}
We call this assertion ${\mathcal A}_j$. We will show that ${\mathcal A}_{j+1}$ holds 
(i.e.~the above assertion also holds with $j$ replaced by $j+1$).
This in turn implies Claim~1 if we let $Q'_{i,A}:=Q_{i,a}\cup Q_{i,A}$ for all $i\le k$.

To prove ${\mathcal A}_{j+1}$, consider the following bipartite auxiliary graph $H_{j+1}$.
The vertex classes of $H_{j+1}$ are $N_{j+1}:=N_{G^*}(x_{j+1})\cap B$ and $Z_{j+1}$, where
$Z_{j+1}$ is a multiset whose elements are chosen from $Q_{1,B},\dots,Q_{k,B}$.%
\COMMENT{AL: joined the two sentences together.}
Each $Q_{i,B}$ is included exactly $n_i(x_{j+1})$ times in $Z_{j+1}$. Note that
$N_{j+1}=N_{G^*}(x_{j+1})\cap B'$ since $e(G[A_0,B_0])=0$ by (BFR6).
Altogether this implies that
\begin{eqnarray}
\ \ \ \ \ \ |Z_{j+1}| & = & \sum_{i=1}^k n_i(x_{j+1})=2k-\sum_{i=1}^k d_{Q_{i,A}}(x_{j+1})=2k-d_{glob}(x_{j+1}) \label{eq:Zjplus1}\\
&  \stackrel{(\ref{dB'x})}{=} & d_{G^*}(x_{j+1},B') =|N_{j+1}| \ge k/2\nonumber.
\end{eqnarray}
The final inequality follows from (\ref{dB'x}) since
$$d_{glob}(x_{j+1})\stackrel{(\ref{eq:G*A'})}{\le} \Delta(G_{glob}[A']) \le 3k/2$$
by Corollary~\ref{finalcor}(d).
We include an edge in $H_{j+1}$ between $v \in N_{j+1}$ and $Q_{i,B} \in Z_{j+1}$ if $v$ is not an endvertex of an edge in 
$Q_{i,B}\cup Q_{i,j}$.

\medskip

\noindent
{\bf Claim~2.} \emph{$H_{j+1}$ has a perfect matching $M'_{j+1}$.}

\medskip

\noindent
Given the perfect matching guaranteed by the claim, we construct $Q_{i,j+1}$ 
from $Q_{i,j}$ as follows:
the edges of $Q_{i,j+1}$ incident to $x_{j+1}$ are precisely the edges $x_{j+1}v$ where $vQ_{i,B}$ is an edge of $M'_{j+1}$
(note that there are up to two of these). Thus Claim~2 implies that ${\mathcal A}_{j+1}$ holds. 
(Indeed, (i$'$)--(ii$'$) are immediate from the definition of~$H_{j+1}$.)
 
\medskip

\noindent
To prove Claim~2, consider any vertex $v \in N_{j+1}$. Since $v \in B$, the number of path systems $Q_{i,B}$ containing an edge at $v$ is 
at most $d_{G}(v,B')$. The number of indices $i$ for which $Q_{i,j}$ contains an edge at $v$ is at most $d_{G}(v,A_0) \le |A_0|$. 
Since each path system $Q_{i,B}$ occurs at most twice
in the multiset $Z_{j+1}$, it follows that the degree of $v$ in $H_{j+1}$ is at least
$|Z_{j+1}|-2d_{G}(v,B')-2|A_0|$. Moreover, $d_G(v,B') \le \eps' n \le k/16$ (say) by (BFR5).
Also, $|A_0| \le \eps n\le k/16$ by (BFR4).
So $v$ has degree at least $|Z_{j+1}|-k/4\ge |Z_{j+1}|/2$ in $H_{j+1}$.

Now consider any path system $Q_{i,B} \in Z_{j+1}$. Recall that $e(Q_{i,B})\le \sqrt{\eps}n\le k/16$ (say),
where the first inequality follows from Corollary~\ref{finalcor}(d)(iv$'$).
Moreover, $e(Q_{i,j}) \le 2|A_0|\le 2 \eps n \le k/8$,
where the second inequality follows from (BFR4). 
Thus the degree of $Q_{i,B}$ in $H_{j+1}$ is at least
$$
|N_{j+1}|-2e(Q_{i,B})-e(Q_{i,j})\ge |N_{j+1}|-k/4\ge |N_{j+1}|/2.
$$ 
Altogether this implies that $H_{j+1}$ has a perfect matching $M'_{j+1}$, as required.

\medskip
This completes the construction of $Q'_{1,A},\dots,Q'_{k,A}$. Next we construct $Q'_{1,B},\dots,$ $Q'_{k,B}$ using the same approach.%
\COMMENT{Deryk+AL: added sentences}

\medskip 

\noindent
{\bf Claim~3.} \emph{$G^*[B']\cup G^*[B_0,A]$ has a decomposition into edge-disjoint path systems $Q'_{1,B},\dots,Q'_{k,B}$ such that
\begin{itemize}
\item $Q_{i,B}\subseteq Q'_{i,B}$ and $E(Q'_{i,B})\setminus E(Q_{i,B})$ consists of $B_0A$-edges in $G^*$ (for each $i\le k$);
\item $d_{Q'_{i,B}}(x)=2$ for every $x\in B_0$ and $d_{Q'_{i,B}}(x)\le 1$ for every $x\notin B_0$;
\item no vertex is an endvertex of both an edge in $Q'_{i,A}$ and an edge in $Q'_{i,B}$ (for each $i\le k$).
\end{itemize}
}
\medskip

\noindent
The proof of Claim~3 is similar to that of Claim~1.%
\COMMENT{osthus changed 2 to 1}
The only difference is that when constructing $Q'_{i,B}$, we need to avoid the endvertices of all the edges in $Q'_{i,A}$
(not just the edges in $Q_{i,A}$). However, $e(Q'_{i,A}-Q_{i,A})\le 2|A_0|$, so this does not affect the calculations
significantly. 

\medskip

\noindent
We now take $Q_i:=Q'_{i,A}\cup Q'_{i,B}$ for all $i\le k$. Then the $Q_i$ are pairwise edge-disjoint
and $$e(Q_i)\le e(Q_{i,A})+e(Q_{i,B})+2|A_0\cup B_0|\le 2\sqrt{\eps}n+2\eps n\le 3\sqrt{\eps}n$$
by Corollary~\ref{finalcor}(d)(iv$'$) and (BFR4).
Moreover, Corollary~\ref{finalcor}(d)(iii$'$) implies that
\begin{equation*}
e_{Q_i}(A')-e_{Q_i}(B')=e(Q_{i,A})-e(Q_{i,B})=a-b.
\end{equation*}
Thus each $Q_i$ is a 2-balanced $A_0B_0$-path system. Further, $Q_1,\dots,Q_k$ form a decomposition of
$$G^*[A']\cup G^*[A_0,B]\cup G^*[B']\cup G^*[B_0,A]=G^*-G^*[A,B].$$
(The last equality follows since $e(G[A_0,B_0])=0$ by (BFR6).)
This completes the proof of~(a).

To prove (b), note that $(F,G,A,A_0,B,B_0)$ is an $(\eps,\eps',D)$-pre-framework, i.e.~it satisfies (WF1)--(WF5).
Indeed, recall that (BFR1)--(BFR4) imply (WF1)--(WF4) and that (WF5) holds by assumption.
So we can apply Lemma~\ref{extendpaths} (with $Q_1$ playing the role of $Q$) 
to extend $Q_1$ into a Hamilton cycle $C_1$.
Moreover, Lemma~\ref{extendpaths}(iii) implies that
$C_1 \cap G$ is $2$-balanced, as required. (Lemma~\ref{extendpaths}(ii) guarantees%
   \COMMENT{Daniela added ref to Lemma~\ref{extendpaths}(ii)}
that $C_1$ is edge-disjoint from $Q_2, \dots , Q_k$ and $G(\mathfrak{J})$.)

Let $G_1:=G-C_1$ and $F_1:=F-C_1$.%
\COMMENT{Daniela: previously had "Let $G_1$ be obtained from $G$ by removing the edges of $C_1$ and $F_1$ be obtained by the removing the edges of $C_1$."}
Proposition~\ref{WFpreserve} (with $C_1$ playing the role of $H$)
implies that $(F_1,G_1,A,A_0,B,B_0)$ is an $(\eps,\eps',D-2)$-pre-framework.
So we can now apply Lemma~\ref{extendpaths}%
    \COMMENT{Daniela replaced Lemma~\ref{coverA0B0} by Lemma~\ref{extendpaths}}
to $(F_1,G_1,A,A_0,B,B_0)$ to extend $Q_2$ into a Hamilton cycle $C_2$,
where $C_2 \cap G$ is also $2$-balanced.

We can continue this way to find $C_3,\dots, C_k$.
Indeed, suppose that we have found $C_1,\dots,C_i$ for $i<k$.
Then we can still apply Lemma~\ref{extendpaths}
since $\delta(F)-2i \ge \delta(F)-2k\ge n/3$.%
   \COMMENT{Daniela deleted " and since $D-2k\ge n/300$" - don't see why we need this} 
Moreover, $C_j \cap G$ is $2$-balanced for all $j \le i$, so $(C_1 \cup \dots \cup C_i) \cap G$ is $2i$-balanced.
This in turn means that 
Proposition~\ref{WFpreserve} (applied with $C_1 \cup \dots \cup C_i$ playing the role of $H$)
implies that after removing $C_1,\dots,C_i$, we still have an $(\eps,\eps',D-2i)$-pre-framework and can find $C_{i+1}$.
\endproof

 We can now put everything together to find a set of localized balanced exceptional systems
 and a set of Hamilton cycles which altogether cover all edges of $G$ outside $G[A,B]$.
 The localized balanced exceptional systems will be extended to Hamilton cycles later on.%
 \COMMENT{Deryk added sentences}
\begin{cor}\label{BEScor}
Let $0<1/n\ll \eps\ll \eps_0\ll \eps'\ll \eps_1\ll \eps_2\ll \eps_3\ll \eps_4 \ll 1/K\ll 1$.
Suppose that $(G,A,A_0,B,B_0)$ is an $(\eps,\eps',K,D)$-bi-framework with $|G|=n$ and so that $D\ge n/200$ and%
   \COMMENT{Daniela: had $\delta (G) \geq D\ge n/200$}
$D$ is even. Let $\cP:=\{A_0,A_1,\dots,A_K,B_0,B_1,\dots,$ $B_K\}$ be a $(K,m,\eps,\eps_1,\eps_2)$-partition for $G$. 
Suppose that $t_K:=(1-20\eps_4)D/2K^4\in \mathbb{N}$ and let $k:=10\eps_4 D$.
Suppose that $F$ is a graph on $V(G)$ such that $G\subseteq F$, $\delta(F)\ge 2n/5$ and such that $F$ satisfies
(WF5) with respect to $\eps '$. Then there are $k$ edge-disjoint Hamilton cycles $C_1,\dots,C_k$ in $F$
and for all $1\leq \I \leq K$ there is a set $\mathcal{J}_{i_1i_2i_3i_4}$ such that the
following properties are satisfied:
\begin{itemize}
\item[{\rm (i)}] $\mathcal{J}_{i_1i_2i_3i_4}$ consists of $t_K$ $(i_1,i_2,i_3,i_4)$-BES in $G$ with respect to $\cP$ and
with parameter $\eps_0$ which are edge-disjoint from each other
and from $C_1\cup\dots\cup C_k$.
\item[{\rm (ii)}] Whenever $(i_1,i_2,i_3,i_4)\neq (i'_1,i'_2,i'_3,i'_4)$, $J \in \mathcal{J}_{i_1i_2i_3i_4}$ and $J' \in \mathcal{J}_{i'_1i'_2i'_3i'_4}$,
then $J$ and $J'$ are edge-disjoint.
\item[{\rm (iii)}] Given any $i \leq k$ and $v \in A_0 \cup B_0$, the two edges incident 
to $v$ in $C_i$ lie in $G$.
\item[{\rm (iv)}] Let $G^\diamond$ be the subgraph of $G$ obtained by deleting the edges of all the $C_i$ and all
the balanced exceptional systems in $\mathcal{J}_{i_1i_2i_3i_4}$ (for all $1\leq \I \leq K$).
Then $G^\diamond$ is bipartite with vertex classes $A'$, $B'$ and $V_0=A_0 \cup B_0$%
\COMMENT{osthus added $A_0 cup B_0$} 
is an isolated set in $G^\diamond$.%
\COMMENT{Originally it was written that:
Moreover, $G^\diamond$ is $(D-2k-2K^4t_K)$-balanced with respect to $(A,A_0,B,B_0)$.
But note this is vacuous (i.e. $0$-balanced).}
\end{itemize}
\end{cor}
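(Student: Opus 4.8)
\textbf{Proof plan for Corollary~\ref{BEScor}.}
The strategy is to combine Lemma~\ref{balmatchextend} (which produces the families $\mathcal{J}_{i_1i_2i_3i_4}$ of balanced exceptional systems together with the path-system decomposition machinery of Corollary~\ref{finalcor}) with Lemma~\ref{lem:HCglob} (which produces the Hamilton cycles $C_1,\dots,C_k$ covering the leftover of $G-G[A,B]$). The point is that all the relevant objects have already been manufactured to be mutually edge-disjoint, so the bulk of the work is simply checking that the hypotheses of these two lemmas are met and then reading off (i)--(iv) from their conclusions.

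First I would apply Lemma~\ref{part} to obtain a $(K,m,\eps,\eps_1,\eps_2)$-partition for $G$; but in fact the statement already hands us such a partition $\cP$, so this step is unnecessary and I would directly invoke Corollary~\ref{finalcor} to get the sets $\cM_{i_1i_2i_3i_4}$ of pairs of localized path systems (using $t_K\in\mathbb{N}$ and $D$ even, so that $k=10\eps_4 D\in\mathbb{N}$). Next apply Lemma~\ref{balmatchextend} to extend each pair $(P,P')\in\cM_{i_1i_2i_3i_4}$ into an $(i_1,i_2,i_3,i_4)$-BES with parameter $\eps_0$; this yields the families $\mathcal{J}_{i_1i_2i_3i_4}$ (and their union $\mathfrak{J}$) satisfying Lemma~\ref{balmatchextend}(i)--(iii). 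In particular (ii) of the corollary is immediate from Lemma~\ref{balmatchextend}(iii), and the fact that $\mathcal{J}_{i_1i_2i_3i_4}$ consists of $t_K$ edge-disjoint BES gives the first half of (i).

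Now set $G^*:=G-G(\mathfrak{J})$ as in Lemma~\ref{lem:HCglob}, and apply Lemma~\ref{lem:HCglob}(a) to obtain the $2$-balanced $A_0B_0$-path systems $Q_1,\dots,Q_k$ decomposing $G^*-G^*[A,B]$, then Lemma~\ref{lem:HCglob}(b) — whose hypotheses $G\subseteq F$, $\delta(F)\ge 2n/5$ and (WF5) for $F$ with respect to $\eps'$ are exactly those assumed in the corollary — to obtain edge-disjoint Hamilton cycles $C_1,\dots,C_k$ in $F-G(\mathfrak{J})$ with $Q_i\subseteq C_i$. Since the $C_i$ avoid $G(\mathfrak{J})$, they are edge-disjoint from every BES in $\mathfrak{J}$, completing (i). For (iii): each $v\in A_0\cup B_0$ is an internal vertex of a path in $Q_i$ by the definition of an $A_0B_0$-path system, so both edges of $C_i$ at $v$ already lie in $Q_i\subseteq G^*\subseteq G$. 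For (iv): the edges of $G$ deleted in forming $G^\diamond$ are exactly the edges of $G$ lying in the $C_i$ (equivalently, since $C_i\cap(G-G(\mathfrak{J}))=Q_i$, the edges of $Q_1,\dots,Q_k$) together with the edges of all the BES in $\mathfrak{J}$; by Lemma~\ref{lem:HCglob}(a) the former decompose $G^*-G^*[A,B]$ and the latter constitute $G(\mathfrak{J})$, so together they cover $G-G[A,B]=G^*-G^*[A,B]+G(\mathfrak{J})$ (using (BFR6) to note $e(G[A_0,B_0])=0$). Hence $G^\diamond\subseteq G[A,B]$, which is bipartite with classes $A',B'$ and has $V_0=A_0\cup B_0$ isolated, giving (iv).

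The only genuine points requiring care, rather than the main obstacle, are bookkeeping ones: verifying that the constant hierarchy in the corollary is consistent with those demanded by Corollary~\ref{finalcor}, Lemma~\ref{balmatchextend} and Lemma~\ref{lem:HCglob} (they are, since all three use the same chain $\eps\ll\eps_0\ll\eps'\ll\eps_1\ll\eps_2\ll\eps_3\ll\eps_4\ll 1/K$), and checking the divisibility/integrality conditions $t_K\in\mathbb{N}$ and $k=10\eps_4 D\in\mathbb{N}$ propagate correctly (the latter follows from $t_K\in\mathbb{N}$ and $D$ even via $k=D-2K^4 t_K$, or can be arranged directly). No step is expected to present a real difficulty, since the heavy lifting — the existence of the localized BES and of the Hamilton cycles covering the global leftover — is exactly what the preceding lemmas were designed to supply; the corollary is essentially their conjunction repackaged.
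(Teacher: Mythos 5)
Your proposal is correct and takes essentially the same route as the paper: the paper's proof of Corollary~\ref{BEScor} also reads it off as an immediate consequence of Lemma~\ref{balmatchextend} (which already packages Corollary~\ref{finalcor}, so you need not invoke that separately) together with Lemma~\ref{lem:HCglob}(b), with (i)--(iii) following directly and (iv) verified exactly as you do by noting that $Q_1,\dots,Q_k$ cover $G^*-G^*[A,B]$. The only difference is cosmetic: the paper leaves (i)--(iii) as ``clearly satisfied'' whereas you spell out the short observations (in particular, for (iii), that the two $C_i$-edges at each $v\in V_0$ lie in $Q_i\subseteq G$ because $E(C_i)\setminus E(Q_i)$ consists of $AB$-edges only).
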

\proof
This follows immediately from Lemmas~\ref{balmatchextend} and~\ref{lem:HCglob}(b). Indeed, clearly (i)--(iii) are satisfied.
To check~(iv), note that $G^\diamond$ is obtained from the graph $G^*$ defined in Lemma~\ref{lem:HCglob}
by deleting all the edges of the Hamilton cycles $C_i$. But Lemma~\ref{lem:HCglob} implies that the $C_i$ together cover all the edges in
$G^*-G^*[A,B]$. Thus this implies that $G^\diamond$ is bipartite with vertex classes $A'$, $B'$
and $V_0$ is an isolated set in $G^\diamond$. 
\endproof


\section{Special Factors and Balanced Exceptional Factors}\label{sec:spec}

As discussed in the proof sketch, the proof of Theorem~\ref{1factbip} proceeds as follows.
First we find an approximate decomposition of the given graph $G$ and finally we find a decomposition of the (sparse) leftover from the approximate decomposition
(with the aid of a `robustly decomposable' graph we removed earlier).
Both the approximate decomposition as well as the actual decomposition steps assume that we work with a bipartite graph
on $A \cup B$ (with $|A|=|B|$). So in both steps,
we would need $A_0 \cup B_0$ to be empty, which we clearly cannot assume. On the other hand, in both steps, one can specify 
`balanced exceptional path systems' (BEPS) in $G$ with the following crucial property:
one can replace each BEPS with a path system BEPS$^*$ so that
\begin{itemize}
\item[($\alpha_1$)] BEPS$^*$ is bipartite with vertex classes $A$ and $B$;
\item[($\alpha_2$)] a Hamilton cycle $C^*$ in $G^*:=G[A,B] + {\rm BEPS}^*$ which%
    \COMMENT{Daniela replaced $G^*:=G[A,B] \cup {\rm BEPS}^*$ by $G^*:=G[A,B] + {\rm BEPS}^*$}
contains BEPS$^*$ corresponds to a Hamilton cycle $C$ in $G$
which contains BEPS (see Section~\ref{BESstar}).
\end{itemize}
Each BEPS will contain one of the balanced exceptional sequences BES constructed in Section~\ref{findBES}.
BEPS$^*$ will then be obtained by replacing the edges in BES by suitable `fictive' edges (i.e.~which are not necessarily contained in $G$). 

So, roughly speaking, this allows us to work with 
$G^*$ rather than $G$ in the two steps. Similarly as in the two clique case,
a convenient way of specifying and handling these balanced exceptional path systems is to combine them into `balanced exceptional factors' BF
(see Section~\ref{sec:BEPSq} for the definition). (The balanced exceptional path systems and balanced exceptional factors are analogues of 
the exceptional path systems and exceptional factors considered in Chapter~\ref{paper1}.)

As before, one complication is that the `robust decomposition lemma' (Lem\-ma~\ref{rdeclemma'}) we use from~\cite{Kelly}
deals with digraphs rather than undirected graphs. 
So to be able to apply it, we again need a suitable orientation of  the edges of $G$ and so we will actually consider directed path systems
BEPS$^*_{\rm dir}$ instead of BEPS$^*$ above (whereas the path systems BEPS are undirected).

Rather than guaranteeing ($\alpha_2$) directly, the (bipartite) robust decomposition lemma
 assumes the existence of certain directed `special paths systems' SPS which are combined into `special factors' SF.
(Recall that these notions were used in the proof of Theorem~\ref{1factstrong}; see Section~\ref{sec:SF}. In this chapter, we use slight variants of these definitions which
are introduced in Section~\ref{sec:SFq}.)
Each of the Hamilton cycles produced by the lemma then contains exactly one of these special path systems.
So to apply the lemma, it suffices to check separately that each BEPS$^*_{\rm dir}$ satisfies the conditions required of a special path system 
and that it also satisfies ($\alpha_2$).%
\COMMENT{Andy: changed title of section 6.1}

\subsection{Constructing the Graphs $J^*$ from the Balanced Exceptional Systems~$J$} \label{BESstar}

Suppose that $J$ is a balanced exceptional system in a graph~$G$ with respect to a $(K,m,\eps_0)$-partition
$\cP=\{A_0,A_1,\dots,A_K,B_0,B_1,\dots,B_K\}$ of $V(G)$. We will now use $J$ to define an auxiliary matching $J^*$.
Every edge of $J^*$ will have one endvertex in $A$ and its other endvertex in $B$.
We will regard $J^*$ as being edge-disjoint from the original graph~$G$. So 
even if both $J^*$ and $G$ have an edge between the same pair of endvertices, we will regard these as different edges.
The edges of such a $J^*$ will be called \emph{fictive edges}. Proposition~\ref{CES-H}(ii) below shows that a Hamilton cycle in $G[A\cup B]+ J^*$ containing
all edges of $J^*$ in a suitable order will correspond to a Hamilton cycle in~$G$ which contains~$J$.
So when finding our Hamilton cycles, this property will enable us to ignore all the vertices in $V_0=A_0\cup B_0$ and to consider
a bipartite (multi-)graph between $A$ and $B$ instead.

We construct $J^{*}$ in two steps.
First we will construct a matching $J^*_{AB}$ on $A \cup B$
and then $J^{*}.$%
	   \COMMENT{AL: I have rephrased the definition/construction of $J^*$. $J^*$ is still the same as before. The construction is similar to the two cliques case.  This is from paper 4.}
Since each maximal path in $J$ has endpoints in $A \cup B$ and internal vertices in $V_0$ by (BES1), a balanced exceptional system $J$ naturally induces a matching $J^*_{AB}$ on $A \cup B$.
More precisely, if $P_1, \dots ,P_{\ell'}$ are the non-trivial paths in~$J$ and $x_i, y_i$ are the endpoints of $P_i$, then we define $J^*_{AB} := \{x_iy_i : i  \le \ell'\}$. 
Thus $J^*_{AB}$ is a matching by~(BES1) and $e(J^*_{AB}) \le e(J)$.%
    \COMMENT{Daniela deleted " by~(BES4)"}
Moreover, $J^*_{AB}$ and $E(J)$ cover exactly the same vertices in $A$. 
Similarly, they cover exactly the same vertices in $B$. 
So (BES3) implies that $e(J^*_{AB}[A])=e(J^*_{AB}[B])$.
We can write $E(J^*_{AB}[A])=\{x_1x_2, \dots, x_{2s-1}x_{2s}\}$,
$E(J^*_{AB}[B])=\{y_1y_2, \dots, y_{2s-1}y_{2s}\}$ and $E(J^*_{AB}[A,B])=\{x_{2s+1}y_{2s+1}, \dots, x_{s'}y_{s'}\}$, where $x_i \in A$ and $y_i \in B$.
Define $J^*:= \{ x_iy_i : 1 \le i \le s' \}$.
Note that $	e(J^*) =  e(J^*_{AB}) \le e(J)$.
All edges of $J^*$ are called \emph{fictive edges}.

As mentioned before, we regard $J^*$ as being edge-disjoint from the original graph~$G$. 
Suppose that $P$ is an orientation of a subpath of (the multigraph) $G[A\cup B]+J^*$. 
We say that $P$ is \emph{consistent with $J^{*}$} if $P$ contains all the edges of $J^{*}$
and $P$ traverses the vertices $x_1,y_1,x_2,\dots,y_{s'-1},x_{s'},y_{s'}$ in this order.%
     \COMMENT{we need to prescribe a vertex rather than an edge ordering as it is important in the next lemma}
(This ordering will be crucial for the vertices $x_1,y_1,\dots,x_{2s},y_{2s}$,%
   \COMMENT{Daniela replaced "added in step (BES$^*$1)" by "$x_1y_1,\dots,x_{2s}y_{2s}$". Also defined when a cycle is consistent}
but it is also convenient to have an ordering
involving all vertices of~$J^*$.) 
Similarly, we say that a cycle $D$ in $G[A\cup B]+J^*$ is \emph{consistent with $J^{*}$} if $D$ contains all the edges of $J^{*}$
and there exists some orientation of $D$ which
traverses the vertices $x_1,y_1,x_2,\dots,y_{s'-1},x_{s'},y_{s'}$ in this order.

The next result shows that if $J$ is a balanced exceptional system and $C$ is a Hamilton cycle on
$A \cup B$ which is consistent with $J^*$, then the graph obtained from $C$ by replacing $J^*$
with $J$ is a Hamilton cycle on $V(G)$ which contains~$J$, see Figure~\ref{fig3}.
When choosing our Hamilton cycles, this property will enable us ignore all the vertices in $V_0$ and edges in $A$ and $B$ and to consider
the (almost complete) bipartite graph with vertex classes $A$ and $B$ instead.%
   \COMMENT{Daniela rephrased (ii) and also adjusted the proof to bring it in line with Allan's changed def of BES}

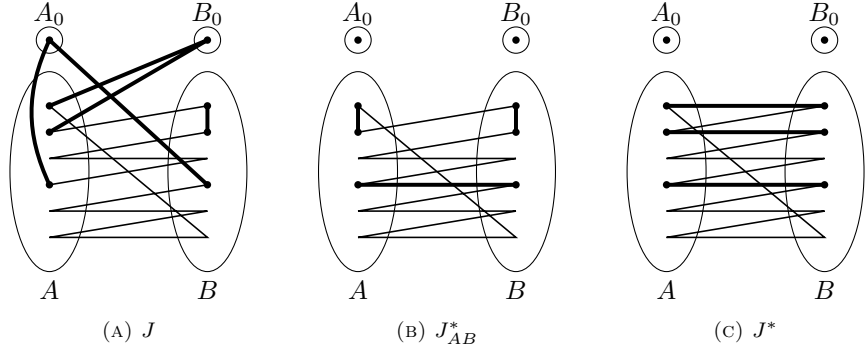
\begin{figure}[tbp]
\centering
\subfloat[$J$]{
\begin{tikzpicture}[scale=0.35]
			\draw  (-3,0) ellipse (1.5  and 3.8 );
			\draw (3,0) ellipse (1.5  and 3.8 );
			\node at (-3,-4.5) {$A$};
			\node at (3,-4.5) {$B$}; 			
			\draw (-3,5) circle(0.5);
			\draw (3,5) circle (0.5);
			\node at (-3,6) {$A_0$};
			\node at (3,6) {$B_0$};
			\fill (-3,5) circle (4pt);
			\fill (3,5) circle (4pt);
			\begin{scope}[start chain]
			\foreach \i in {2.5,1.5,-0.5}
			\fill (-3,\i) circle (4pt);
			\end{scope}
			\begin{scope}[start chain]
			\foreach \i in {2.5,1.5,-0.5}
			\fill (3,\i) circle (4pt);
			\end{scope}

			\begin{scope}[line width=1.5pt]
			\draw (-3,-0.5) to [out=115, in=-115] (-3,5)--(3,-0.5);
			\draw (-3,1.5)--(3,5)--(-3,2.5);
			\draw (3,2.5)--(3,1.5);
			\end{scope}
			
			\begin{scope}[line width=0.6pt]
			\draw (3,2.5)-- (-3,1.5);
			\draw (3,1.5)-- (-3,0.5)--(3,0.5)  -- (-3,-0.5);
			\draw (3,-0.5)--(-3,-1.5) -- (3,-1.5) --(-3,-2.5) -- (3,-2.5) -- (-3,2.5);
			\end{scope}
\end{tikzpicture}
}
\qquad
\subfloat[$J_{AB}^*$]{
\begin{tikzpicture}[scale=0.35]
			\draw  (-3,0) ellipse (1.5  and 3.8 );
			\draw (3,0) ellipse (1.5  and 3.8 );
			\node at (-3,-4.5) {$A$};
			\node at (3,-4.5) {$B$}; 			
			\draw (-3,5) circle(0.5);
			\draw (3,5) circle (0.5);
			\node at (-3,6) {$A_0$};
			\node at (3,6) {$B_0$};
			\fill (-3,5) circle (4pt);
			\fill (3,5) circle (4pt);
			\begin{scope}[start chain]
			\foreach \i in {2.5,1.5,-0.5}
			\fill (-3,\i) circle (4pt);
			\end{scope}
			\begin{scope}[start chain]
			\foreach \i in {2.5,1.5,-0.5}
			\fill (3,\i) circle (4pt);
			\end{scope}

			\begin{scope}[line width=1.5pt]
			\draw (-3,-0.5) --(3,-0.5);
			\draw (-3,1.5)--(-3,2.5);
			\draw (3,2.5)--(3,1.5);
			\end{scope}
			
			\begin{scope}[line width=0.6pt]
			\draw (3,2.5)-- (-3,1.5);
			\draw (3,1.5)-- (-3,0.5)--(3,0.5)  -- (-3,-0.5);
			\draw (3,-0.5)--(-3,-1.5) -- (3,-1.5) --(-3,-2.5) -- (3,-2.5) -- (-3,2.5);
			\end{scope}
\end{tikzpicture}
}
\qquad
\subfloat[$J^*$]{
\begin{tikzpicture}[scale=0.35]
			\draw  (-3,0) ellipse (1.5  and 3.8 );
			\draw (3,0) ellipse (1.5  and 3.8 );
			\node at (-3,-4.5) {$A$};
			\node at (3,-4.5) {$B$}; 			
			\draw (-3,5) circle(0.5);
			\draw (3,5) circle (0.5);
			\node at (-3,6) {$A_0$};
			\node at (3,6) {$B_0$};
			\fill (-3,5) circle (4pt);
			\fill (3,5) circle (4pt);
			\begin{scope}[start chain]
			\foreach \i in {2.5,1.5,-0.5}
			\fill (-3,\i) circle (4pt);
			\end{scope}
			\begin{scope}[start chain]
			\foreach \i in {2.5,1.5,-0.5}
			\fill (3,\i) circle (4pt);
			\end{scope}

			\begin{scope}[line width=1.5pt]
			\draw (-3,-0.5)--(3,-0.5);
			\draw (-3,1.5)--(3,1.5);
			\draw (3,2.5)--(-3,2.5);
			\end{scope}

			\begin{scope}[line width=0.6pt]
			\draw (3,2.5)-- (-3,1.5);
			\draw (3,1.5)-- (-3,0.5)--(3,0.5)  -- (-3,-0.5);
			\draw (3,-0.5)--(-3,-1.5) -- (3,-1.5) --(-3,-2.5) -- (3,-2.5) -- (-3,2.5);
			\end{scope}
\end{tikzpicture}
}
\caption{The thick lines illustrate the edges of $J$, $J_{AB}^*$ and $J^*$ respectively.}
\label{fig3}
\end{figure}

\begin{prop}\label{CES-H}
Let $\cP=\{A_0,A_1,\dots,A_K,B_0,B_1,\dots,B_K\}$ be a $(K,m,\eps)$-partition of a vertex set $V$.
Let $G$ be a graph on $V$ and let $J$ be a balanced exceptional system with respect to~$\cP$.
\begin{itemize}
\item[{\rm (i)}] Assume that $P$ is an orientation of a subpath of $G[A\cup B]+ J^*$ such that $P$ is consistent with~$J^{*}.$ 
Then the graph obtained from $P-J^{*}+J$ by ignoring the orientations of the edges is a path on
$V(P) \cup V_0$ whose endvertices are the same as those of~$P$.
\item[{\rm (ii)}] If $J\subseteq G$ and
$D$ is a Hamilton cycle of $G[A\cup B]+ J^*$ which is consistent with~$J^{*},$ 
then $D-J^*+J$ is a Hamilton cycle of $G$.
\end{itemize}
\end{prop}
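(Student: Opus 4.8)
The statement is essentially a bookkeeping exercise: we need to understand exactly how the operation $-J^*+J$ acts on a path (or cycle) that is consistent with $J^*$. The key structural fact, established in the discussion preceding the statement, is that $E(J^*)$ decomposes as $E(J^*_{AB}[A])\cup E(J^*_{AB}[B])\cup E(J^*_{AB}[A,B])$, i.e.~$J^*=\{x_1y_1,\dots,x_{s'}y_{s'}\}$ where $x_i\in A$, $y_i\in B$, and $x_1y_1,\dots,x_{2s-1}y_{2s-1},x_{2s}y_{2s}$ are the "doubled-up" edges coming from $E(J^*_{AB}[A])=\{x_1x_2,\dots,x_{2s-1}x_{2s}\}$ and $E(J^*_{AB}[B])=\{y_1y_2,\dots,y_{2s-1}y_{2s}\}$, while $x_{2s+1}y_{2s+1},\dots,x_{s'}y_{s'}$ are the genuine $AB$-edges of $J^*_{AB}$. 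Being consistent with $J^*$ means $P$ (suitably oriented) traverses $x_1,y_1,x_2,y_2,\dots,x_{s'},y_{s'}$ in this cyclic/linear order, using all edges $x_iy_i$.

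\textbf{Step 1 (the local surgery).} First I would show that $P-J^*+J$ is again a path system, and identify its components. For the edges $x_{2i-1}y_{2i-1}$ and $x_{2i}y_{2i}$ of $J^*$ (for $i\le s$): in $P$, the vertex $y_{2i-1}$ is followed by $x_{2i}$ along $P$ (since $P$ visits $\dots,y_{2i-1},x_{2i},\dots$ consecutively), so $P$ contains the subpath $x_{2i-1}\,y_{2i-1}\,x_{2i}\,y_{2i}$, using two $J^*$-edges and one edge of $P$ between $y_{2i-1}$ and $x_{2i}$. Now, the consistency ordering was set up precisely so that $J^*_{AB}[A]$ has the edge $x_{2i-1}x_{2i}$ and $J^*_{AB}[B]$ has $y_{2i-1}y_{2i}$; and $J$ (when its $V_0$-internal vertices are re-inserted) contains a maximal path from $x_{2i-1}$ to $x_{2i}$ through $A_0$-vertices and a maximal path from $y_{2i-1}$ to $y_{2i}$ through $B_0$-vertices, by (BES1)–(BES2) and the construction of $J^*_{AB}$. (Strictly: each non-trivial component of $J$ is a path with endpoints in $A\cup B$ and interior in $V_0$; these endpoint-pairs are exactly the pairs matched by $J^*_{AB}$, and after forming $J^*$ by re-pairing, the edge $x_iy_i$ stands in for the $J$-path with endpoints $x_i$ and the $J$-path with endpoint $y_i$ appropriately — I would spell this out by tracking which $J$-path each $x_i$ and each $y_i$ belongs to.) Replacing the two $J^*$-edges $x_{2i-1}y_{2i-1}$, $x_{2i}y_{2i}$ and keeping the $P$-edge $y_{2i-1}x_{2i}$ together with the $J$-paths through $A_0$ from $x_{2i-1}$ to $x_{2i}$ and through $B_0$ from $y_{2i-1}$ to $y_{2i}$ yields a path from $x_{2i-1}$ to $y_{2i}$ covering the same $A\cup B$ vertices plus the relevant $V_0$ vertices. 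For the genuine $AB$-edges $x_jy_j$ of $J^*$ with $j>2s$: these correspond to maximal $AB$-paths of $J$ with endpoints $x_j,y_j$, so we simply substitute that $J$-path (through $V_0$) for the edge $x_jy_j$.

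\textbf{Step 2 (assembling the global object).} Having done all local substitutions, I would argue that the result is connected and spanning. Every vertex of $V_0$ lies on exactly one $J$-path and hence gets inserted exactly once (this uses (BES1): every vertex of $A_0\cup B_0$ is an internal vertex of a path of $J$, and $d_J=2$ there). Every vertex of $A\cup B$ that $P$ covered is still covered, and no new $A\cup B$ vertices are introduced (since $J$ and $J^*$ cover the same vertices of $A$ and the same vertices of $B$). Connectivity: the substitutions are performed at internally disjoint subpaths of $P$, each replacing a subpath by another path with the same endpoints, so the cyclic order of "surviving" edges of $P$ is preserved and the pieces glue up into a single path with the same two endpoints as $P$ — giving (i). For (ii), a Hamilton cycle $D$ of $G[A\cup B]+J^*$ consistent with $J^*$ is a closed walk visiting $x_1,y_1,\dots,x_{s'},y_{s'}$ in order; applying the same surgery (now cyclically) gives a single cycle through all of $A\cup B$ and all of $V_0=V(G)$, i.e.~a Hamilton cycle of $G$; and all edges used lie in $G$, since $D-J^*\subseteq G[A\cup B]\subseteq G$ and $J\subseteq G$ by hypothesis.

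\textbf{Main obstacle.} The only real subtlety — and where I would be most careful — is Step 1: matching up, for each index $i$, the edge $x_iy_i\in J^*$ with the correct $J$-path(s), and checking that after the re-pairing $E(J^*_{AB}[A])\to\{x_1x_2,\dots\}$, $E(J^*_{AB}[B])\to\{y_1y_2,\dots\}$ the pieces recombine into paths rather than into shorter cycles. The consistency ordering $x_1,y_1,x_2,y_2,\dots$ is exactly the device that prevents spurious cycles from forming (a different ordering could close up a subpath of $P$ into a cycle disjoint from the rest), so the proof must invoke it at precisely this point. Everything else — that $J^*$ is a matching between $A$ and $B$, that $e(J^*)\le e(J)$, that $V_0$ is covered exactly once — is immediate from (BES1)–(BES4) and the construction, and I would treat it briefly. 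This mirrors Proposition~\ref{prop:ES} from the two-cliques chapter, so the write-up can follow the same template.
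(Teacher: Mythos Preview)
Your overall strategy coincides with the paper's (local surgery $-J^* + J^*_{AB}$, then substitute each $J^*_{AB}$-edge by the corresponding $J$-path through $V_0$), but there is a concrete error in Step~1. You write that ``$y_{2i-1}$ is followed by $x_{2i}$ along $P$'' and speak of ``one edge of $P$ between $y_{2i-1}$ and $x_{2i}$''. This is false: consistency only guarantees that $x_1,y_1,x_2,y_2,\dots$ appear in this \emph{order} along $P$, not that they are consecutive. In general there is an entire segment $y_{2i-1}Px_{2i}$ of $P$ between them.

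The fix is exactly the reversal trick the paper makes explicit. After deleting $x_{2i-1}y_{2i-1}$ and $x_{2i}y_{2i}$ and inserting $x_{2i-1}x_{2i}$ and $y_{2i-1}y_{2i}$ (the corresponding edges of $J^*_{AB}$), the segment $y_{2i-1}Px_{2i}$ is traversed in reverse: one obtains $\cdots x_{2i-1}\,x_{2i}\,P^c\,y_{2i-1}\,y_{2i}\cdots$, where $P^c$ denotes $P$ with orientation reversed. The paper writes down the resulting path $P'$ in one line,
\[
P' := z_1Px_1x_2P^cy_1y_2Px_3x_4P^c\cdots x_{2s-1}x_{2s}P^cy_{2s-1}y_{2s}Pz_2,
\]
and observes that the underlying undirected graph of $P'$ equals $P - J^* + J^*_{AB}$. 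With this correction your Step~2 goes through verbatim: each substitution now genuinely replaces a subpath of $P$ (from $x_{2i-1}$ to $y_{2i}$, including all intermediate $P$-vertices) by another path with the same endpoints and the same $A\cup B$-vertices plus some $V_0$-vertices, so connectivity and the endpoint claim follow. (A minor side point: the internal vertices of a $J$-path with both endpoints in $A$ lie in $V_0$, not necessarily in $A_0$; (BES2) does not restrict which exceptional set they come from.)
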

\proof
We first prove~(i). 
Let $s:=e(J^*_{AB}[A]) = e(J^*_{AB}[B])$ and $J^\diamond:=\{x_1y_1,\dots,$ $x_{2s}y_{2s}\}$ (where the $x_i$ and $y_i$ are
as in the definition of $J^*$). So
$J^*:=J^\diamond\cup \{x_{2s+1}y_{2s+1},\dots,x_{s'}y_{s'}\}$, where $s':=e(J^*)$.
Let $P^c$ denote the path obtained from $P=z_1\dots z_2$ by reversing its direction. (So $P^c=z_2\dots z_1$ traverses
the vertices $y_{s'}, x_{s'}, y_{2s'-1}, \dots, x_2, y_1,x_1$ in this order.) 
First note
$$
P':=z_1Px_1x_2P^cy_1y_2Px_3x_4P^cy_3y_4\dots x_{2s-1}x_{2s}P^cy_{2s-1}y_{2s}Pz_2
$$
is a path on $V(P)$.
Moreover, the underlying undirected graph of $P'$ is%
\COMMENT{osthus added $=P-J^*+J^*_{AB}.$} 
precisely 
$$P-J^\diamond+(J^*_{AB}[A]\cup J^*_{AB}[B])=P-J^*+J^*_{AB}.
$$
In particular, $P'$ contains $J^*_{AB}$.
Now recall that if $w_1w_2$ is an edge in $J^*_{AB}$, then the vertices $w_1$ and $w_2$ are the endpoints of some path $P^*$ in $J$
(where the internal vertices on $P^*$ lie in $V_0$).
Clearly, $P'-w_1w_2+P^*$ is also a path. Repeating this step for every edge $w_1w_2$ of $J^*_{AB}$
gives a path $P''$ on $V(P)\cup V_0$. Moreover, $P''=P-J^{*}+J$. This completes the proof of~(i).

 (ii) now follows immediately from~(i).
\endproof


\subsection{Special Path Systems and Special Factors}\label{sec:SFq}
As mentioned earlier, in order to apply Lemma~\ref{rdeclemma'}, we first need  to prove the existence of certain `special path systems'.
These are defined below. Note that the definitions given in this section are slight variants of the corresponding definitions used in Chapter~\ref{paper1}. 

Suppose that 
$$
\mathcal{P}=\{A_0,A_1,\dots,A_K,B_0,B_1,\dots,B_K\}
$$ is a $(K,m,\eps_0)$-partition of a vertex set~$V$ and $L,m/L\in\mathbb{N}$.
Recall that we say that $(\mathcal{P},\mathcal{P'})$%
\COMMENT{the roles of $\mathcal{P},\mathcal{P'}$ were interchanged in a recent version. There may be places where this was overlooked.}
 is a \emph{$(K, L, m, \epszero)$-partition of~$V$} if $\mathcal{P'}$ is 
obtained from $\mathcal{P}$ by partitioning  $A_i$ 
into $L$ sets $A_{i,1},\dots,A_{i,L}$ of size~$m/L$ for all $1 \leq i \leq K$ and partitioning  $B_i$  into $L$ sets $B_{i,1},\dots,B_{i,L}$ of size~$m/L$ for all $1 \leq i \leq K$.
(So $\cP'$ consists of the exceptional sets $A_0$, $B_0$, the $KL$ clusters $A_{i,j}$ and the $KL$ clusters $B_{i,j}$.)
Unless stated otherwise, whenever considering a $(K, L, m, \epszero)$-partition $(\mathcal{P},\mathcal{P'})$ of a vertex set $V$
we use the above notation to denote the elements of $\mathcal P$ and $\mathcal P'$.

Let $(\mathcal{P},\mathcal{P'})$ be a $(K,L,m, \epszero)$-partition of~$V$.
Consider a spanning cycle $C = A_1 B_1 \dots A_K B_K$ on the clusters of $\mathcal{P}$.
Given an integer $f$ dividing $K$, the \emph{canonical interval partition} $\mathcal{I}$ of $C$ into $f$ intervals
consists of the intervals $$A_{(i-1)K/f+1} B_{(i-1)K/f+1} A_{(i-1)K/f+2} \dots B_{iK/f} A_{iK/f+1}$$ for all $i\le f$.
(Here $A_{K+1}:=A_1$.)

Suppose that $G$ is a digraph on~$V\setminus V_0$ and%
    \COMMENT{Have $V\setminus V_0$ instead of $V$ here since this is what we use later.}
$h\le L$. Let $I=A_jB_jA_{j+1}\dots A_{j'}$ be an interval in~$\mathcal{I}$.
A \emph{special path system $SPS$ of style~$h$ in~$G$ 
spanning the interval $I$} consists of precisely $m/L$ (non-trivial)
vertex-disjoint directed paths $P_1,\dots,P_{m/L}$ such that the following conditions hold:
\begin{itemize}
\item[(SPS1)] Every $P_s$ has its initial vertex in $A_{j,h}$ and its final vertex in $A_{j',h}$.
\item[(SPS2)] $SPS$ contains a matching ${\rm Fict}(SPS)$ such that all the edges in ${\rm Fict}(SPS)$ avoid the
endclusters $A_j$ and $A_{j'}$ of $I$ and such that $E(P_s)\setminus {\rm Fict}(SPS)\subseteq E(G)$.
\item[(SPS3)] The vertex set of $SPS$ is $A_{j,h}\cup B_{j,h}\cup A_{j+1,h}\cup \dots \cup B_{j'-1,h}\cup A_{j',h}$.
\end{itemize}
The edges in ${\rm Fict}(SPS)$ are called \emph{fictive edges of} $SPS$.

Let $\mathcal{I}=\{I_1,\dots,I_f\}$ be the canonical interval partition of $C$ into $f$ intervals.%
	\COMMENT{AL: added `the canonical interval partition of $C$ into $f$ intervals'}
A \emph{special factor $SF$ with parameters $(L,f)$ in $G$ (with respect to $C$, $\mathcal P'$)}
is a $1$-regular digraph on $V\setminus V_0$
which is the union of $Lf$ digraphs $SPS_{j,h}$ (one for all $j\le f$ and $h\le L$) such that each $SPS_{j,h}$ is
a special path system of style $h$ in $G$ which spans~$I_j$.
We write ${\rm Fict}(SF)$ for the union of the sets ${\rm Fict}(SPS_{j,h})$ over all $j\le f$ and $h\le L$
and call the edges in ${\rm Fict}(SF)$ \emph{fictive edges of $SF$}.

We will always view fictive edges as being distinct from each other and from the edges in other digraphs.
So if we say that special factors $SF_1,\dots,SF_r$ are pairwise edge-disjoint from each other and from some digraph $Q$ on $V\setminus V_0$,
then%
   \COMMENT{It's not enough to have this for all $Q\subseteq G$ since in the robust decomposition lemma $H$ need not be a subgraph of $G$.}
this means that $Q$ and all the $SF_i- {\rm Fict}(SF_i)$
are pairwise edge-disjoint, but for example there could be an edge from $x$ to $y$ in $Q$ as well as in ${\rm Fict}(SF_i)$ 
for several indices $i\le r$.
But these are the only instances of multiedges that we allow, i.e.~if there is more than one edge from $x$ to $y$, then all but
at most one of these edges are fictive edges.

\subsection{Balanced Exceptional Path Systems and Balanced Exceptional Factors}\label{sec:BEPSq}
We now define balanced exceptional path systems BEPS. 
It will turn out that they (or rather their bipartite directed versions BEPS$^*_{\rm dir}$ involving fictive edges)
will satisfy the conditions of the special path systems defined above. 
Moreover,  Hamilton cycles that respect the partition $A,B$ and which contain BEPS$^*_{\rm dir}$ correspond to Hamilton cycles in the `original' graph $G$
(see Proposition~\ref{prop:CEPSbiparite}).

Let $(\mathcal{P},\mathcal{P'})$ be a $(K,L,m, \epszero)$-partition of a vertex set $V$.
Suppose that $K/f\in\mathbb{N}$ and $h\le L$.
Consider a spanning cycle $C = A_1 B_1 \dots A_K B_K$ on the clusters of $\mathcal{P}$.
Let $\mathcal{I}$ be the canonical interval partition of $C$ into~$f$ intervals of equal size.
Suppose that $G$ is an oriented bipartite graph with vertex classes $A$ and $B$.
Suppose that $I=A_jB_j\dots A_{j'}$ is an interval in~$\mathcal{I}$.%
	\COMMENT{AL: changed line, Daniela added in~$\mathcal{I}$}
A \emph{balanced exceptional path system $BEPS$ of style~$h$ for $G$ spanning  $I$}
consists of precisely $m/L$ (non-trivial) vertex-disjoint undirected paths $P_1,\dots,P_{m/L}$ such that the following conditions hold:
\begin{enumerate}
\item[(BEPS1)] Every $P_s$ has one endvertex in $A_{j,h}$ and its other endvertex in $A_{j',h}$.
\item[(BEPS2)] $J := BEPS - BEPS[A,B]$ is a balanced exceptional system with respect to~$\cP$%
\COMMENT{AL: we cannot write 'There is a balanced exceptional system $J$\dots'. Because we want ALL balanced exceptional systems in BEPS to avoid $A_{j,h}$ and $A_{j',h}$.}
such that $P_1$ contains all edges of~$J$ and
so that the edge set of $J$ is disjoint from $A_{j,h}$ and $A_{j',h}$.%
    \COMMENT{Cannot just write $J\subseteq P_1$ since $J$ might contain vertices of degree 0 which don't belong to $P_1$.}
Let $P_{1,{\rm dir}}$ be the path obtained by orienting $P_1$ towards its endvertex in $A_{j',h}$
and let $J_{\rm dir}$ be the orientation of $J$ obtained in this way. Moreover, let $J^*_{\rm dir}$ be obtained from $J^*$ by orienting every edge in $J^*$
towards its endvertex in $B$. Then $P^*_{1,{\rm dir}}:=P_{1,{\rm dir}}-J_{\rm dir}+J^*_{\rm dir}$ is a directed path from $A_{j,h}$ to $A_{j',h}$ which is consistent with $J^*$.
\item[(BEPS3)] The vertex set of $BEPS$ is $V_0\cup A_{j,h}\cup B_{j,h}\cup A_{j+1,h}\cup \dots \cup B_{j'-1,h}\cup A_{j',h}$.
\item[(BEPS4)] For each $2\le s\le m/L$, define $P_{s,{\rm dir}}$ similarly as $P_{1,{\rm dir}}$.
Then $E(P_{s,{\rm dir}})\setminus E(J_{\rm dir}) \subseteq E(G)$ for every $1\le s\le m/L$.  
\end{enumerate}

Let%
    \COMMENT{Daniela deleted: Recall (BES2) that a balanced exceptional system does not contain $AB$-edges.
Since $G$ is bipartite, (BEPS2) and (BEPS4) implies that $J := BEPS - BEPS[A,B]$.} 
$BEPS^*_{\rm dir}$ be the path system consisting of $P^*_{1,{\rm dir}},P_{2,{\rm dir}},\dots,P_{m/L,{\rm dir}}$.
Then $BEPS^*_{\rm dir}$ is a special path system of style $h$ in $G$ which spans the interval~$I$ and such that ${\rm Fict}(BEPS^*_{\rm dir})=J^*_{\rm dir}$.

Let $\mathcal{I}=\{I_1,\dots,I_f\}$ be the canonical interval partition of $C$ into $f$ intervals.%
	\COMMENT{AL: added `the canonical interval partition of $C$ into $f$ intervals'}
A \emph{balanced exceptional factor $BF$ with parameters $(L,f)$ for $G$ (with respect to $C$, $\mathcal P'$)} is 
the union of $Lf$ undirected graphs $BEPS_{j,h}$ (one for all $j\le f$ and $h\le L$) such that each $BEPS_{j,h}$ is
a balanced exceptional path system of style $h$ for $G$ which spans~$I_j$. We write $BF^*_{\rm dir}$ for the union of $BEPS_{j,h,{\rm dir}}^*$ over all
$j\le f$ and $h\le L$. Note that $BF^*_{\rm dir}$ is a special factor with parameters $(L,f)$ in $G$ (with respect to $C$, $\mathcal P'$)
such that ${\rm Fict}(BF^*_{\rm dir})$ is the union of $J^*_{j,h,{\rm dir}}$ over all $j\le f$ and $h\le L$, where
$J_{j,h}=BEPS_{j,h}-BEPS_{j,h}[A,B]$ is%
    \COMMENT{Daniela added $=BEPS_{j,h}-BEPS_{j,h}[A,B]$}
the balanced exceptional system contained in $BEPS_{j,h}$ (see condition (BEPS2)). In particular, $BF^*_{\rm dir}$ is a $1$-regular digraph on $V\setminus V_0$ 
while  $BF$ is an undirected graph on $V$ with 
\begin{align}\label{EFdegq}
	d_{BF}(v) = 2 \ \ \text{for all } v \in V \setminus V_0 \ \ \ \textrm{ and } \ \ \ d_{BF}(v) = 2Lf \ \ \text{for all } v \in V_0.	
\end{align}

Given a balanced exceptional path system $BEPS$, let $J$ be as in (BEPS2) and
let $BEPS^*:=BEPS-J+J^*$. So $BEPS^*$ consists of $P^*_1:=P_1-J+J^*$ as well as $P_2,\dots,P_{m/L}$.
The following is an immediate consequence of (BEPS2) and Proposition~\ref{CES-H}.

\begin{prop} \label{prop:CEPSbiparite}
Let $(\cP,\cP')$ be a $(K,L, m , \epszero)$-partition of a vertex set $V$.
Suppose that $G$ is a graph on~$V\setminus V_0$, that $G_{\rm dir}$ is an orientation of $G[A,B]$ and that $BEPS$ is a balanced exceptional
path system for~$G_{\rm dir}$. Let $J$ be as in (BEPS2). Let $C$ be a Hamilton cycle of $G+J^*$ which
contains $BEPS^*$. Then $C - BEPS^*+BEPS$ is a Hamilton cycle of $G\cup J$.
\end{prop}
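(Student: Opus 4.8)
\textbf{Plan of proof for Proposition~\ref{prop:CEPSbiparite}.}
The idea is that this proposition is essentially a direct corollary of Proposition~\ref{CES-H}(ii), once one checks that the hypotheses of that proposition are met. First I would unpack the definitions: by (BEPS2), $J=BEPS-BEPS[A,B]$ is a balanced exceptional system with respect to $\cP$, and $BEPS^*=BEPS-J+J^*$, so that $BEPS^*$ is an undirected graph on $V\setminus V_0$ obtained from $BEPS$ by replacing the edges of $J$ (which are incident to $V_0$) by the fictive matching $J^*$ (which lies between $A$ and $B$). Since $G$ is a graph on $V\setminus V_0$ and $C$ is a Hamilton cycle of $G+J^*$ containing $BEPS^*$, the cycle $C$ is in particular a Hamilton cycle of $(G\cup J)[A\cup B]+J^*$ (note $J\subseteq G\cup J$ and all edges of $(G\cup J)[A\cup B]$ other than those of $J$ already lie in $G$, since $J$ has no $AB$-edges by (BES2) and $G$ is bipartite between $A$ and $B$, so $(G\cup J)[A\cup B]=G[A,B]=G$; here I am using that $V(G)=A\cup B$).

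The key step is to verify that $C$ is \emph{consistent with} $J^*$ in the sense of Section~\ref{BESstar}, i.e.\ that $C$ contains all edges of $J^*$ and, for some orientation, traverses the vertices $x_1,y_1,x_2,\dots,y_{s'-1},x_{s'},y_{s'}$ in this order. That $C\supseteq J^*$ is immediate since $C\supseteq BEPS^*\supseteq J^*$. For the ordering, I would invoke the last sentence of (BEPS2): with $P_{1,\mathrm{dir}}$ the orientation of $P_1$ towards its endvertex in $A_{j',h}$ and $J_{\mathrm{dir}}$, $J^*_{\mathrm{dir}}$ the induced orientations, the directed path $P^*_{1,\mathrm{dir}}=P_{1,\mathrm{dir}}-J_{\mathrm{dir}}+J^*_{\mathrm{dir}}$ is consistent with $J^*$. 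Now $BEPS^*$ contains $P^*_1$ (the underlying undirected path of $P^*_{1,\mathrm{dir}}$), and all edges of $J^*$ lie on $P^*_1$; hence the Hamilton cycle $C\supseteq BEPS^*$ contains the subpath $P^*_1$, and therefore $C$ traverses the vertices $x_1,y_1,\dots,x_{s'},y_{s'}$ in the prescribed order (with the orientation of $C$ extending that of $P^*_{1,\mathrm{dir}}$). Thus $C$ is consistent with $J^*$.

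Finally, I would apply Proposition~\ref{CES-H}(ii) with $G\cup J$ in place of $G$ (legitimate because $J\subseteq G\cup J$ and $J$ is a balanced exceptional system with respect to $\cP$, and $(G\cup J)[A\cup B]+J^*=G+J^*$ by the observation above) to the Hamilton cycle $C$ of $G+J^*$, which is consistent with $J^*$. This yields that $C-J^*+J$ is a Hamilton cycle of $G\cup J$. It remains to note that $C-J^*+J=C-J^*+J=(C-BEPS^*+BEPS^*)-J^*+J$; since $BEPS^*-J^*+J=BEPS$ and $BEPS^*\subseteq C$, removing $BEPS^*$ from $C$ and adding back $BEPS$ has exactly the effect of removing $J^*$ and adding $J$, so $C-BEPS^*+BEPS=C-J^*+J$. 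Hence $C-BEPS^*+BEPS$ is a Hamilton cycle of $G\cup J$, as required. I do not anticipate a genuine obstacle here; the only point requiring a little care is the bookkeeping identifying $(G\cup J)[A\cup B]+J^*$ with $G+J^*$ and checking consistency of $C$ with $J^*$ via the single path $P^*_1$ — everything else is a direct citation of Proposition~\ref{CES-H}.
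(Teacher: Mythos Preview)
Your approach is correct and is exactly the paper's: observe $C-BEPS^*+BEPS=C-J^*+J$, use (BEPS2) to verify that $C$ is consistent with $J^*$, and apply Proposition~\ref{CES-H}(ii) with $G\cup J$ playing the role of $G$. One small quibble: you assert that $G$ is bipartite and that $(G\cup J)[A\cup B]=G$, but the proposition does not assume $G$ is bipartite, and $J$ may have edges inside $A$ or $B$ by (BES2); fortunately you only need the trivial containment $G\subseteq (G\cup J)[A\cup B]$ to conclude that $C$ is a Hamilton cycle of $(G\cup J)[A\cup B]+J^*$, so the argument goes through.
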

\proof
Note that $C - BEPS^*+BEPS=C-J^*+J$. Moreover, (BEPS2) implies that $C$ contains all edges of $J^*$ and is
consistent with $J^*$.%
   \COMMENT{Daniela: had "traverses the
vertices $x_1,y_1,x_2,\dots,y_{s'-1},x_{s'},y_{s'}$ in this order" instead of "is consistent with $J^*$"}
So the proposition follows from Proposition~\ref{CES-H}(ii)
applied with $G\cup J$ playing the role of~$G$.
\endproof


\subsection{Finding Balanced Exceptional Factors in a Bi-scheme}\label{sec:findBFq}
The following definition of a `bi-scheme' captures the `non-exceptional' part of the graphs we are working with.%
\COMMENT{osthus added para}
For example, this will be the structure within which we find the edges needed to extend a balanced exceptional system into a balanced exceptional path system.

Given an oriented graph $G$ and partitions $\mathcal P$ and $\cP'$ of a vertex set $V$, we call $(G, \mathcal{P},\mathcal{P}')$ a
\emph{$[K,L,m,\eps_0,\eps]$-bi-scheme} if the following properties hold:
\begin{itemize}
\item[(BSch$1'$)] $(\mathcal{P},\mathcal{P}')$ is a $(K,L,m,\eps_0)$-partition of $V$. Moreover, $V(G)=A\cup B$.
\item[(BSch$2'$)] Every edge of $G$ has one endvertex in $A$ and its other endvertex in~$B$.
\item[(BSch$3'$)] $G[A_{i,j},B_{i',j'}]$ and $G[B_{i',j'}, A_{i,j}]$ are $[\eps, 1/2]$-superregular
for all $i, i'\le K$ and all $j, j' \leq L$. Further, $G[A_i,B_j]$ and $G[B_j,A_i]$ are $[\eps, 1/2]$-superregular
for all $i,j \leq K$.
\item[(BSch$4'$)] $|N_{G}^+(x)\cap N_{G}^-(y)\cap B_{i,j}|\ge (1-\eps) m/5L$ for all distinct $x,y\in A$, all $i\le K$ and all $j\le L$.
Similarly, $|N_{G}^+(x)\cap N_{G}^-(y)\cap A_{i,j}|\ge(1-\eps)  m/5L$ for all distinct $x,y\in B$, all $i\le K$ and all $j\le L$.\COMMENT{NOTE! added error term here!}
\end{itemize}
If $L=1$ (and so $\cP=\cP'$), then (BSch$1'$) just says that
$\mathcal{P}$ is a $(K,m,\eps_0)$-partition of $V(G)$.%
    \COMMENT{AL:removed notation for $[K,m, \eps_0, \eps]$}

The next lemma allows us to extend a suitable balanced exceptional system into a balanced exceptional path system.%
   \COMMENT{It does not seem to be possible to unify this in a simple way with the balancing lemma of the approximate cover chapter}
Given $h\le L$, we say that an $(i_1, i_2 , i_3 , i_4)$-BES $J$ has \emph{style $h$ (with respect to the $(K,L,m,\eps_0)$-partition
$(\mathcal{P},\mathcal{P}')$)} if all the edges of $J$
have their endvertices in $V_0\cup A_{i_1,h}\cup A_{i_2,h}\cup B_{i_3,h}\cup B_{i_4,h}$.

\begin{lemma} \label{lma:bipartite:CEPS}
Suppose that $K, L, n, m/L \in \mathbb{N}$, that $0 <1/n  \ll \eps, \eps_0\ll 1$ and $\eps_0\ll 1/K,1/L$.
Let $(G,\mathcal{P}, \mathcal{P}')$ be a $[K, L, m,\eps_0,\eps]$-bi-scheme with $|V(G)\cup V_0|=n$.
Consider a spanning cycle $C = A_1 B_1 \dots A_K B_K$ on the clusters of $\mathcal{P}$ and let 
$I  = A_j B_j A_{j+1} \dots  A_{j'}$ be an interval on~$C$ of length at least~$10$.
Let $J$ be an $(i_1, i_2 , i_3 , i_4)$-BES of style $h\le L$ with parameter $\eps_0$%
    \COMMENT{We use $\eps_0$ for both the $(K,m, \epszero)$-partition $\cP'$ and the bound on $e(J)$. But this seems
to be ok for our applications. Also, can probably replace the 10 for the interval length by something smaller, but 10 seems safe.}
(with respect to $(\cP,\cP')$), for some $i_1, i_2, i_3 , i_4 \in \{j+1, \dots, j'-1\}$.
Then there exists a balanced exceptional path system of style $h$ for $G$
which spans the interval $I$ and contains all edges in~$J$.
\end{lemma}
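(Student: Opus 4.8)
The strategy is essentially the same as for Lemma~\ref{lma:EPS} in the two-cliques case, adapted to the bipartite setting. We must build $m/L$ vertex-disjoint paths spanning exactly the clusters $A_{j,h}, B_{j,h}, A_{j+1,h},\dots, A_{j',h}$, with one distinguished path $P_1$ carrying all of $J$, and with the remaining paths consisting entirely of $G$-edges. The key idea is to work first with the bipartite directed graph $G$ together with the auxiliary matching $J^*$ (which lives between $A$ and $B$), find a suitable directed path $P^*_{1,\mathrm{dir}}$ from $A_{j,h}$ to $A_{j',h}$ that is consistent with $J^*$, and then convert it back via Proposition~\ref{CES-H}(i) into a path $P_1$ on the correct vertex set containing $J$.

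\textbf{Step 1: Construct the distinguished path.} First I would pass to the directed picture. Let $J^*$ be the auxiliary matching associated to $J$ as in Section~\ref{BESstar}, oriented towards $B$, and write its edges in the canonical order $x_1y_1,\dots,x_{s'}y_{s'}$ with $x_i\in A$, $y_i\in B$ and $s'=e(J^*)\le e(J)\le\eps_0 n$. Since $J$ has style $h$, all the $x_i$ lie in $A_{i_1,h}\cup A_{i_2,h}$ and all the $y_i$ lie in $B_{i_3,h}\cup B_{i_4,h}$, where $i_1,i_2,i_3,i_4$ are interior indices of $I$. Using (BSch$4'$), I would greedily pick vertices in the intermediate clusters to link consecutive fictive edges: between $y_i$ (in some $B$-cluster) and $x_{i+1}$ (in some $A$-cluster) we need a short directed path through $G$ whose internal vertices lie in clusters of $I$ of style $h$ avoiding the already-used vertices; since $s'\le\eps_0 n\ll m/L$ and (BSch$4'$) guarantees $(1-\eps)m/5L$ common in/out-neighbours, such links exist. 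This yields a directed path $P^*_{1,\mathrm{dir}}$ from $A_{j,h}$ to $A_{j',h}$ in $G+J^*_{\mathrm{dir}}$ which is consistent with $J^*$, whose vertex set meets each cluster of $I$ (of style $h$) in a small number of vertices, and which avoids $A_{j,h}$ and $A_{j',h}$ with its fictive edges (this is automatic since $i_1,\dots,i_4$ are interior indices). Applying Proposition~\ref{CES-H}(i), $P_1:=P^*_{1,\mathrm{dir}}-J^*_{\mathrm{dir}}+J_{\mathrm{dir}}$ (with orientations forgotten) is a path on $V(P^*_{1,\mathrm{dir}})\cup V_0$ with the same endvertices, hence covering all of $V_0$ and containing $J$, satisfying (BEPS2).

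\textbf{Step 2: Fill in the remaining paths.} After deleting $V(P_1)\cap(A\cup B)$ from each cluster of $I$, each cluster $A_{i,h}$, $B_{i,h}$ with $i\in\{j,\dots,j'\}$ still has size $(1-o(1))m/L$ by the bound $s'\le\eps_0 n\ll 1/K,1/L$ on the number of deleted vertices per cluster. I would then first choose a few more vertex-disjoint directed paths $P_{2,\mathrm{dir}},\dots$ through $G$ greedily using (BSch$4'$), to mop up the unequal numbers of leftover vertices in the clusters (caused by $P_1$ covering different amounts of $A_{i,h}$ versus $B_{i,h}$), reducing to the situation where all remaining subclusters have equal size $m/L-O(\eps_0 n)$. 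Then, picking a new constant $\eps'$ with $\eps,\eps_0\ll\eps'\ll 1$, Proposition~\ref{superslice} and (BSch$3'$) ensure that the bipartite pairs between consecutive trimmed subclusters along $I$ are still $[\eps',1/2]$-superregular, so Proposition~\ref{perfmatch} (a perfect matching in a superregular pair) lets me build perfect matchings between consecutive subclusters; concatenating them along $I$ yields the remaining vertex-disjoint directed paths $P_{s,\mathrm{dir}}$ covering exactly the leftover vertices, each from $A_{j,h}$ to $A_{j',h}$. Forgetting orientations gives paths $P_2,\dots,P_{m/L}$ satisfying (BEPS1), (BEPS3), (BEPS4). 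Since $J=BEPS-BEPS[A,B]$ (because $J$ contains no $AB$-edges by (BES2) and $G$ is bipartite), the union is the required balanced exceptional path system of style $h$ spanning $I$.

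\textbf{Main obstacle.} The delicate point is the bookkeeping in Step~2: because $P_1$ need not cover the same number of vertices in $A_{i,h}$ as in $B_{i,h}$ for each interior $i$ (indeed $J$ itself contributes the `balance' via (BES3), but only in aggregate over all clusters), one cannot immediately finish with perfect matchings between trimmed subclusters. One has to argue carefully — using the length of $I$ being at least $10$, the abundance of common neighbours from (BSch$4'$), and the fact that the total imbalance is $O(\eps_0 n)$ — that a bounded number of extra correction paths suffices to equalize the leftover cluster sizes before invoking superregularity. The rest is routine and parallels the proof of Lemma~\ref{lma:EPS} closely.
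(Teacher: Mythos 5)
Your overall plan is the paper's: build a distinguished directed path $P^*_{1,\mathrm{dir}}$ through $G+J^*_{\mathrm{dir}}$ consistent with $J^*$, convert it to $P_1$ via Proposition~\ref{CES-H}(i), add correction paths to equalise cluster occupancies, and then finish with perfect matchings between trimmed subclusters using (BSch$3'$), Proposition~\ref{superslice} and Proposition~\ref{perfmatch}. That is the right structure.

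However, there is a genuine gap exactly at the point you flag as ``the main obstacle'', and the gap is created by the vagueness in your Step~1. You describe linking $y_r$ to $x_{r+1}$ by ``a short directed path through $G$ whose internal vertices lie in clusters of $I$ of style $h$'' with no control over \emph{which} clusters. The paper's proof depends on a very specific choice here: for each fictive edge $x_ry_r$ it picks $w_r\in B(x_r)$ (the $B$-cluster with the \emph{same} index as $x_r$'s $A$-cluster) and $z_r\in A(y_r)$ (similarly paired with $y_r$), so the link is the $2$-step path $y_rz_rw_{r+1}x_{r+1}$. This gives, after routing through the remaining clusters, $|V(P^*_{1,\mathrm{dir}})\cap A_{i_k,h}|=m_k=|V(P^*_{1,\mathrm{dir}})\cap B_{i_k,h}|$ for each $k\le 4$ and exactly one vertex in every other cluster of $I$ — in other words $a_i=b_i$ for every interior $i$. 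The only imbalance is then between ``heavy'' and ``light'' clusters, and it is repaired by $m_k-1$ correction paths of type $P_r^k$, each of which skips the matched pair $(A_{i_k,h},B_{i_k,h})$ and hits every other cluster of $I$ exactly once.

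Your Step~2, by contrast, is premised on an imbalance ``caused by $P_1$ covering different amounts of $A_{i,h}$ versus $B_{i,h}$''. That is precisely the situation one must avoid. Any correction path is a path in the bipartite digraph $G$ from $A_{j,h}$ to $A_{j',h}$, so it visits exactly one more $A$-cluster than $B$-cluster; if it visits each cluster at most once, it skips equally many $A$-clusters as $B$-clusters. Consequently a naive collection of correction paths cannot selectively lift $A_{i,h}$ without also lifting $B$-clusters the same amount in aggregate, and for an arbitrary per-cluster imbalance $a_i\ne b_i$ the bookkeeping does not close up with a clean family of ``skip a cluster'' paths. (One can set up a more elaborate combinatorial design, but you would still have to solve it, and the paper sidesteps it entirely.) So the right move is to go back and fix Step~1, placing the auxiliary vertices in the paired clusters $B(x_r),A(y_r)$ — then the correction paths $P_r^k$ are easy to build greedily from (BSch$4'$) (using that (BSch$3'$) gives superregularity between \emph{all} $A$/$B$-cluster pairs, not just consecutive ones, so paths can jump over the $i_k$-th pair). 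Two smaller points: the number of correction paths is $m_1+\cdots+m_4-4=O(\eps_0 n)$, not bounded as you write; and one should also note (as the paper does) that the argument is carried out in the directed bi-scheme $(G_{\mathrm{dir}},\cP,\cP')$ throughout, with $J^*_{\mathrm{dir}}$ oriented towards $B$, before forgetting orientations at the very end.
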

\begin{proof}
For each $k\le 4$, let $m_k$ denote the number of vertices in $A_{i_k,h} \cup B_{i_k,h}$ which are incident to edges of~$J$.%
     \COMMENT{Cannot just write $m_k := |V(J) \cap (A_{i_k,h} \cup B_{i_k,h})|$ since $V(J)$ contains vertices of degree zero.}
We only consider the case when $i_1$, $i_2$, $i_3$ and $i_4$ are distinct and $m_k>0$ for each $k \le 4$, as the other cases can be
proved by similar arguments.%
\COMMENT{The argument is essentially identical in the other cases. Only now, we don't consider 
$m_1,\dots, m_4$ but a subset of them. For example, consider the case when
$i_1,\dots, i_4$ are all distinct except $i_3=i_4$ and each $m_i >0$. Then note that
$m_3=m_4$. So we now follow the argument as before but only use $m_1,m_2,m_3$.
So for example, we now have
\begin{align*}
	 |V(P^*_{1,{\rm dir}}) \cap A_{i,h}| &= 
	 \begin{cases}
	1	& \textrm{for $i \in \{j, \dots, j'\} \setminus \{i_1, i_2, i_3 \}$,}\\
	m_k	& \textrm{for $i = i_k$ and $k \le 3$,}\\
	0	& \textrm{otherwise.}
	 \end{cases}
\end{align*}
Further, now for each $k \le 3$, we choose $m_k-1$ paths $P_1^{k}, \dots, P_{m_{k}-1}^{k}$ in $G$. So now $Q$ is a path system consisting of $m_1+m_2+m_3 - 2 $ vertex-disjoint directed paths from $A_{j,h}$ to $A_{j',h}$.
\\
In cases where one of the $m_i=0$, we similarly `ignore' $m_i$ and follow the corresponding
calculations. } 
Clearly $m_1+\dots+m_4 \le  2\epszero n$ by (BES4).
For every vertex $x \in A$, we define $B(x)$ to be the cluster $B_{i,h}\in\mathcal{P}'$ such that $A_i$ contains $x$.
Similarly, for every $y \in B$, we define $A(y)$ to be the cluster $A_{i,h}\in\mathcal{P}'$ such that $B_i$ contains~$y$.%
     \COMMENT{Daniela replaced $\mathcal{P}$ by $\mathcal{P}'$ (twice)}

Let $x_1y_1, \dots, x_{s'} y_{s'}$ be the edges of $J^*$, with $x_i \in A$ and $y_i \in B$ for all $i \le s'$.
(Recall that the ordering of these edges is fixed in the definition of $J^*$.)
Thus $s' = (m_1+\dots+m_4)/2 \le  \epszero n $. Moreover, our assumption that $\eps_0\ll 1/K,1/L$ implies that
$\eps_0 n\le m/100L$ (say). Together with (BSch$4'$) this in turn ensures that 
for every $r \le s'$, we can pick vertices $w_r \in B (x_r) $ and $z_r \in A(y_r)$ such that $w_rx_r$, $y_rz_r$ and $z_r w_{r+1}$ are
(directed) edges in $G$
and such that all the $4s'$ vertices $x_r,y_r,w_r,z_r$ (for $r\le s'$) are distinct from each other.
Let $P_1'$ be the path $w_1 x_1 y_1 z_1 w_2 x_2 y_2 z_2 w_3 \dots y_{s'} z_{s'}$.
Thus $P_1'$ is a directed path from $B$ to $A$ in $G + J^*_{\rm dir}$ which is consistent with~$J^*$. 
(Here $J^*_{\rm dir}$ is obtained from $J^*$ by orienting every edge towards~$B$.%
\COMMENT{Deryk})
Note that $|V(P'_1) \cap A_{i_k,h}| = m_k = |V(P'_1) \cap B_{i_k,h}|$ for all $k \le 4$.
(This follows from our assumption that $i_1$, $i_2$, $i_3$ and $i_4$ are distinct.) Moreover,
$V(P'_1) \cap ( A_i \cup B_i ) = \emptyset$ for all $i \notin \{i_1, i_2, i_3, i_4 \}$.

Pick a vertex $z'$ in $A_{j,h}$ so that $z' w_1 $ is an edge of $G$.
Find a path $P''_1$ from $z_{s'}$ to $A_{j',h}$ in $G$ such that the vertex set of $P''_1$ consists of $z_{s'}$ and precisely one vertex in
each $A_{i,h}$ for all $i \in \{j+1, \dots, j' \} \setminus \{i_1, i_2, i_3, i_4 \}$ 
and one vertex in each $B_{i,h}$ for all $i \in \{j, \dots, j'-1 \} \setminus \{i_1, i_2, i_3, i_4 \}$ and no other vertices.
(BSch$4'$) ensures that this can be done greedily.%
    \COMMENT{Daniela replaced (BSch$3'$) by (BSch$4'$)}
Define $P^*_{1,{\rm dir}}$ to be the concatenation of $z'w_1$, $P'_1$ and $P''_1$.
Note that $P^*_{1,{\rm dir}}$ is a directed path from $A_{j,h}$ to $A_{j',h}$ in $G + J^*_{\rm dir}$ which is consistent with $J^*$.
Moreover, $V(P^*_{1,{\rm dir}}) \subseteq \bigcup_{i \le K} A_{i,h} \cup B_{i,h}$,%
\COMMENT{AL: added $V(P^*_{1,{\rm dir}}) \subseteq \bigcup_{i \le K} A_{i,h} \cup B_{i,h}$}
\begin{align*}
	 |V(P^*_{1,{\rm dir}}) \cap A_{i,h}| &= 
	 \begin{cases}
	1	& \textrm{for $i \in \{j, \dots, j'\} \setminus \{i_1, i_2, i_3, i_4 \}$,}\\
	m_k	& \textrm{for $i = i_k$ and $k \le 4$,}\\
	0	& \textrm{otherwise,}
	 \end{cases}
\end{align*}
while	 
\begin{align*}
	  	 |V(P^*_{1,{\rm dir}}) \cap B_{i,h}| & = 
	 \begin{cases}
	1	& \textrm{for $i \in \{j, \dots, j'-1\} \setminus \{i_1, i_2, i_3, i_4 \}$},\\
	m_k	& \textrm{for $i = i_k$ and $k \le 4$},\\
	0	& \textrm{otherwise.}
	 \end{cases}
\end{align*}
(BSch$4'$) ensures that for each $k \le 4$, there exist $m_k-1$ (directed) paths $P_1^{k}, \dots,$ $ P_{m_{k}-1}^{k}$ in $G$ such that 
\begin{itemize}
	\item $P_r^{k}$ is a path from $A_{j,h}$ to $A_{j',h}$ for each $r\le m_k-1$ and $k \le 4$;
	\item each $P_r^{k}$ contains precisely one vertex in $A_{i,h}$ for each $i\in \{j, \dots, j'\} \setminus \{i_k\}$,
one vertex in $B_{i,h}$ for each $i\in \{j, \dots, j'-1\} \setminus \{i_k\}$ and no other vertices;
	\item $P^*_{1,{\rm dir}},P_1^{1},\dots, P_{ m_{1}-1}^{1},P_1^2, \dots, P_{m_4-1}^4$  are vertex-disjoint.
\end{itemize}
Let $Q$ be the union of $P^*_{1,{\rm dir}}$ and all the $P_r^{k}$ over all $k \le 4$ and $r \le m_k-1$.
Thus $Q$ is a path system consisting of $m_1+\dots+m_4 - 3 $ vertex-disjoint directed paths from $A_{j,h}$ to $A_{j',h}$.
Moreover, $V(Q)$ consists of precisely $m_1+\dots+m_4 - 3 \le  2\epszero n$ vertices in $A_{i,h}$
for every $j\le i\le j'$ and precisely $m_1+\dots+m_4-3$ vertices in $B_{i,h}$ for every $j\le i<j'$. 
Set $A'_{i,h}: = A_{i,h} \setminus V(Q)$ and $B'_{i,h}:= B_{i,h} \setminus V(Q)$ for all $i \le K$.
Note that, for all $j \leq i \leq j'$,
\begin{equation}\label{eq:sizeAihq}
|  A'_{i,h}|= \frac{m}{L}-(m_1+\dots+m_4-3)\ge \frac{m}{L}-2\eps_0 n\ge \frac{m}{L}-5\eps_0 mK\ge (1-\sqrt{\eps_0})\frac{m}{L}
\end{equation}
since $\eps_0\ll 1/K,1/L$. Similarly, $|  B'_{i,h}| \geq (1-\sqrt{\eps_0}){m}/{L}$ for all
 $j \leq i < j'$.
Pick a new constant $\eps'$ such that $\eps,\eps_0 \ll \eps'\ll 1$.
Then (BSch$3'$) and (\ref{eq:sizeAihq}) together with Proposition~\ref{superslice} imply that 
$G[A'_{i,h}, B'_{i,h}]$ is still $[\eps',1/2]$-superregular 
and so we can find a perfect matching in $G[A_{i,h}',B_{i,h}']$
for all $j \le i < j'$. Similarly, we can find a perfect matching in $G[B_{i,h}',A_{i+1,h}']$ for all $j \le i < j'$.
The union $Q'$ of all these matchings forms $m/L-(m_1+\dots+m_4) +3$ vertex-disjoint directed paths.

Let $P_1$ be the undirected graph obtained from $P^*_{1,{\rm dir}}-J^*_{\rm dir}+J$ by ignoring the
directions of all the edges. Proposition~\ref{CES-H}(i) implies that $P_1$
is a path on $V(P^*_{1,{\rm dir}})\cup V_0$ with the same endvertices as $P^*_{1,{\rm dir}}$.
Consider the path system obtained from $(Q\cup Q') \setminus \{P^*_{1,{\rm dir}}\}$ by ignoring the directions of the edges on all
the paths. Let $BEPS$ be the union of this path system and $P_1$. Then $BEPS$ is a balanced exceptional path system for $G$, as required.
\end{proof}

The next lemma shows that we can obtain many edge-disjoint balanced exceptional factors by extending balanced exceptional systems with suitable properties.

\begin{lemma} \label{lma:EF-bipartite}
Suppose that $L,f,q,n,m/L,K/f \in \mathbb{N}$, that $K/f\ge 10$, that $0 <1/n \ll \eps,\eps_0  \ll 1$,
that $\eps_0\ll 1/K,1/L$ and $Lq/m\ll 1$.
Let $(G,\mathcal{P}, \mathcal{P}')$ be a $[K, L, m,\eps_0,\eps]$-bi-scheme with $|V(G)\cup V_0|=n$.
Consider a spanning cycle $C = A_1 B_1 \dots A_K B_K$ on the clusters of $\mathcal{P}$.
Suppose that there exists a set $\mathcal{J}$ of $Lf q $ edge-disjoint balanced exceptional systems
with parameter $\eps_0$ such that 
\begin{itemize}
	\item for all $i \le f$ and all $h \le L$, $\mathcal{J}$ contains precisely $q$ $(i_1,i_2,i_3,i_4)$-BES of style $h$
(with respect to $(\cP,\cP')$) for which $i_1,i_2,i_3,i_4 \in \{ (i-1)K/f+2, \dots, iK/f \}$.
\end{itemize}
Then there exist $q$ edge-disjoint balanced exceptional factors with parameters~$(L,f)$ for $G$ (with respect to $C$, $\mathcal P'$) covering all
edges in~$\bigcup\mathcal{J}$.
\end{lemma}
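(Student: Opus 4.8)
\textbf{Proof plan for Lemma~\ref{lma:EF-bipartite}.}
The strategy mirrors the proof of Lemma~\ref{lma:EF} in Chapter~\ref{paper1}: partition $\mathcal{J}$ into $q$ sub-collections, each of which will be used to build a single balanced exceptional factor, and construct these factors one at a time so that each new factor is edge-disjoint from all the previously constructed ones. The only extra ingredient compared to Lemma~\ref{lma:EF} is that we invoke Lemma~\ref{lma:bipartite:CEPS} (rather than Lemma~\ref{lma:EPS}) to extend each individual balanced exceptional system into a balanced exceptional path system, and we must track that the bi-scheme property (BSch1$'$)--(BSch4$'$) survives after removing a few factors.

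First I would choose a new constant $\eps'$ with $\eps, Lq/m \ll \eps' \ll 1$. Then partition $\mathcal{J}$ into sets $\mathcal{J}_1,\dots,\mathcal{J}_q$ so that, for every $j\le q$, every $h\le L$ and every $i\le f$, the set $\mathcal{J}_j$ contains exactly one $(i_1,i_2,i_3,i_4)$-BES of style $h$ with $i_1,i_2,i_3,i_4\in\{(i-1)K/f+2,\dots,iK/f\}$; this is possible since for each such triple $(i,h)$ the collection $\mathcal{J}$ contains exactly $q$ such balanced exceptional systems by hypothesis. Thus each $\mathcal{J}_j$ consists of exactly $Lf$ balanced exceptional systems. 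For each $j\le q$ in turn I would construct a balanced exceptional factor $BF_j$ with parameters $(L,f)$ (with respect to $C$, $\mathcal{P}'$) such that $BF_j$ is edge-disjoint from $BF_1,\dots,BF_{j-1}$ and $BF_j$ contains all edges of the balanced exceptional systems in $\mathcal{J}_j$.

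To construct $BF_j$, having already built $BF_1,\dots,BF_{j-1}$, I would choose its $Lf$ constituent balanced exceptional path systems one by one. Suppose we have already chosen some of them and next wish to choose one of style $h$ spanning the interval $I_i = A_{(i-1)K/f+1}B_{(i-1)K/f+1}\dots A_{iK/f+1}$ of the canonical interval partition $\mathcal{I}(f,K)$, and containing the balanced exceptional system $J\in\mathcal{J}_j$ of style $h$ with $i_1,i_2,i_3,i_4\in\{(i-1)K/f+2,\dots,iK/f\}$. Let $G'$ be the oriented graph obtained from $G$ by deleting all edges used by the balanced exceptional path systems already selected for $BF_j$ together with all edges of $BF_1,\dots,BF_{j-1}$. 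Since $V(G)=A\cup B$, by~\eqref{EFdegq} we have $\Delta(G-G')\le 2j<3q$, and since $3q = 3(Lq/m)(m/L)$ with $\eps, Lq/m\ll\eps'\ll 1$, Proposition~\ref{superslice} implies that $(G',\mathcal{P},\mathcal{P}')$ is still a $[K,L,m,\eps_0,\eps']$-bi-scheme. The interval $I_i$ has length $K/f\ge 10$ and $i_1,i_2,i_3,i_4$ lie strictly between its endpoints, so Lemma~\ref{lma:bipartite:CEPS} (applied to $G'$ with $\eps'$ playing the role of $\eps$) yields a balanced exceptional path system of style $h$ for $G'$, hence for $G$, which spans $I_i$ and contains all edges of $J$. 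Iterating over all $Lf$ pairs $(i,h)$ produces $BF_j$, and iterating over $j\le q$ produces the desired $q$ edge-disjoint balanced exceptional factors. By construction these factors together cover all edges in $\bigcup\mathcal{J}$.

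The only mild subtlety — and the place one must be slightly careful — is the bookkeeping in the bi-scheme degradation step: one needs the bound $\Delta(G-G')<3q$ to hold throughout, which relies on the fact that each balanced exceptional factor contributes degree exactly $2$ at each vertex of $A\cup B$ (the exceptional vertices in $V_0$ are irrelevant here since Lemma~\ref{lma:bipartite:CEPS} operates within the bi-scheme on $A\cup B$ and handles $V_0$ via the fictive-edge mechanism), together with $q\ll m/L$ so that Proposition~\ref{superslice} applies with the slightly enlarged parameter $\eps'$. There is no genuine obstacle beyond this routine verification; the essential content has been packaged into Lemma~\ref{lma:bipartite:CEPS} and Proposition~\ref{superslice}.
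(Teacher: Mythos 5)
Your proposal matches the paper's own proof essentially verbatim: same choice of auxiliary constant $\eps'$, same partition of $\mathcal{J}$ into $q$ sets of $Lf$ balanced exceptional systems, same greedy one-by-one construction of the factors using Proposition~\ref{superslice} to maintain the bi-scheme property and Lemma~\ref{lma:bipartite:CEPS} to extend each BES into a balanced exceptional path system. There is nothing to add.
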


Recall that the canonical interval partition $\mathcal{I}$ of $C$ into $f$ intervals consists of the intervals
$$A_{(i-1)K/f+1} B_{(i-1)K/f+1} A_{(i-1)K/f+2} \dots A_{iK/f+1}$$ for all $i\le f$. So the condition on~$\mathcal{J}$ ensures
that for each interval $I\in \mathcal{I}$ and each $h\le L$, the set $\mathcal{J}$ contains precisely $q$
balanced exceptional systems of style $h$ whose edges are only incident to vertices in $V_0$ and vertices
belonging to clusters in the interior of $I$. We will use Lemma~\ref{lma:bipartite:CEPS} to extend each such balanced exceptional system into
a balanced exceptional path system of style $h$ spanning~$I$.

\removelastskip\penalty55\medskip\noindent{\bf Proof of Lemma~\ref{lma:EF-bipartite}. }
Choose a new constant $\eps'$ with $\eps,Lq/m\ll \eps'\ll 1$.
Let $\mathcal{J}_{1}, \dots, \mathcal{J}_{q }$ be a partition of $\mathcal{J}$ such that for all $j\le q$, $h \le L$ and $i \le f$, 
the set $\mathcal{J}_j$ contains precisely one $(i_1,i_2,i_3,i_4)$-BES of style $h$ with $i_1,i_2,i_3,i_4 \in \{ (i-1)K/f+2, \dots, iK/f \}$.
Thus each $\mathcal{J}_j$ consists of $Lf$ balanced exceptional systems.
For each $j\le q$ in turn, we will choose a balanced exceptional factor $EF_j$ with parameters $(L,f)$ for $G$
such that $BF_j$ and $BF_{j'}$ are edge-disjoint for all $j' < j$ and $BF_j$ contains all edges
of the balanced exceptional systems in $\mathcal{J}_j$.
Assume that we have already constructed $BF_1, \dots, BF_{j-1}$. In order to construct $BF_j$, we will choose the $Lf$ balanced exceptional path systems
forming $BF_j$ one by one, such that each of these balanced exceptional path systems is edge-disjoint from $BF_1,\dots,BF_{j-1}$ and
contains precisely one of the balanced exceptional systems in $\mathcal{J}_j$. Suppose that we have already chosen some of these
balanced exceptional path systems and that next we wish to choose a balanced exceptional path system of style $h$ which spans the interval $I\in \mathcal{I}$
of $C$ and contains $J\in \mathcal{J}_j$.
Let $G'$ be the oriented graph obtained from $G$ by deleting all the edges in the balanced path systems already chosen for $BF_j$
as well as deleting all the edges in $BF_1,\dots,BF_{j-1}$.
Recall from (BSch1$'$) that $V(G)=A\cup B$. Thus $\Delta(G - G') \le 2j < 3q $ by~\eqref{EFdegq}.%
\COMMENT{Deryk added ref to BSch1'}
Together with Proposition~\ref{superslice} this implies that $(G',\mathcal{P},\mathcal{P}')$ is still a $[K, L, m, \epszero, \eps']$-bi-scheme.
(Here we use that $\Delta(G - G') < 3 q=3Lq/m\cdot m/L$ and
$\eps,Lq/m\ll \eps'\ll 1$.) So we can apply Lemma~\ref{lma:bipartite:CEPS} with $\eps'$ playing the role of $\eps$ to obtain a
balanced exceptional path system of style $h$ for $G'$ (and thus for $G$) which spans $I$ and contains all edges of $J$.
This completes the proof of the lemma.
\endproof


\section{The Robust Decomposition Lemma}\label{sec:robust}
The purpose of this section is to derive the version of the robust decomposition lemma (Corollary~\ref{rdeccorz}) that we will use
in this chapter to prove Theorem~\ref{1factbip}. (Recall from Section~\ref{sec:sketch} that we will not use it in the proof of Theorem~\ref{NWmindegbip}.)
Similarly as in the two cliques case, 
Corollary~\ref{rdeccorz} allows us to transform an approximate Hamilton decomposition into an exact one.
In the next subsection, we introduce the necessary concepts. In particular, Corollary~\ref{rdeccorz} relies on the existence of a
so-called bi-universal walk (which is a `bipartite version' of the universal walk introduced in Section~\ref{newlabel}).
The (proof of the) robust decomposition lemma then uses
edges guaranteed by this bi-universal walk to `balance out' edges of the graph $H$ when constructing the Hamilton decomposition of
$G^{\rm rob}+H$.
\subsection{Chord Sequences and Bi-universal Walks}
Let $R$ be a digraph whose vertices are $V_1,\dots,V_k$ and suppose that $C=V_1\dots V_k$ is a Hamilton cycle of $R$.
(Later on the vertices of $R$ will be clusters. So we denote them by capital letters.)

Recall from Section~\ref{newlabel} that a  \emph{chord sequence $CS(V_i,V_j)$} from $V_i$ to $V_j$ in $R$ is an ordered sequence of edges of the form
\[ CS(V_i,V_j) = (V_{i_1-1} V_{i_2}, V_{i_2-1} V_{i_3},\dots, V_{i_t-1} V_{i_{t+1}}),\]
where $V_{i_1}=V_i$, $V_{i_{t+1}} = V_j$ and the edge $V_{i_s-1} V_{i_{s+1}}$ belongs to $R$ for each $s\le t$.

As before, if $i=j$ then we consider the empty set to be a chord sequence from $V_i$ to $V_j$
and  we may assume that $CS(V_i,V_j)$ does not contain any edges of $C$.


A closed walk $U$ in $R$ is a \emph{bi-universal walk for $C$
with parameter $\ell'$} if the following conditions hold:
\begin{itemize}
\item[(BU1)] The edge set of $U$ has a partition into $U_{\rm odd}$ and $U_{\rm even}$.
For every $1\le i\le k$ there is a chord sequence $ECS^{\rm bi}(V_i,V_{i+2})$
from $V_i$ to $V_{i+2}$ such that $U_{\rm even}$ contains all edges of all these chord sequences for even $i$ (counted with multiplicities)
and $U_{\rm odd}$ contains all edges of these chord sequences for odd $i$.
All remaining edges of $U$ lie on $C$.
\item[(BU2)] Each $ECS^{\rm bi}(V_i,V_{i+2})$ consists of at most $\sqrt{\ell'}/2$ edges.
\item[(BU3)] $U_{\rm even}$ enters every cluster
$V_i$ exactly $\ell'/2$ times and it leaves every cluster $V_i$ exactly $\ell'/2$ times.
The same assertion holds for $U_{\rm odd}$.%
\COMMENT{So need to make sure $\ell'$ is even when we apply this.}
\end{itemize} 
Note that condition~(BU1) means that if an edge $V_iV_j\in E(R)\setminus E(C)$ occurs in total 5 times (say) in
$ECS^{{\rm bi}}(V_1,V_3),\dots,ECS^{{\rm bi}}(V_{k},V_2)$ then it occurs precisely 5 times in $U$. We will identify each occurrence of $V_iV_j$ in
$ECS^{{\rm bi}}(V_1,V_3),\dots,ECS^{{\rm bi}}(V_{k},V_2)$ with a (different) occurrence of $V_iV_j$ in $U$. 
Note that the edges of $ECS^{{\rm bi}}(V_i,V_{i+2})$ are allowed to appear in a different order 
within $U$.

\begin{lemma}\label{lem:univwalkzz}
Let $R$ be a digraph with vertices $V_1,\dots,V_k$ where $k \geq 4$ is even.
 Suppose that $C=V_1\dots V_k$ is a Hamilton cycle of $R$
and that $V_{i-1}V_{i+2}\in E(R)$ for every $1\le i\le k$. Let $\ell'\ge 4$ be an even integer.%
\COMMENT{need 4 rather than 2 to ensure $1 \le \sqrt{\ell'}/2$.}
 Let $U_{{\rm bi},\ell'}$ denote the multiset obtained from
$\ell'-1$ copies of $E(C)$ by adding
$V_{i-1}V_{i+2}\in E(R)$ for every $1\le i\le k$. Then the edges in $U_{{\rm bi},\ell'}$ can be ordered so that the
resulting sequence forms a bi-universal walk for $C$ with parameter~$\ell'$.
\end{lemma}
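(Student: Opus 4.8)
\textbf{Proof plan for Lemma~\ref{lem:univwalkzz}.} The statement is the bipartite analogue of Lemma~\ref{lem:univwalk}, and I would follow the same strategy, adapted to keep track of the partition of the chord edges into the `odd' and `even' parts required by (BU1) and (BU3). The plan is as follows. First I would record the obvious candidates for the chord sequences: for each $1\le i\le k$, since $V_{i-1}V_{i+2}\in E(R)$, the single edge $V_{i-1}V_{i+2}$ is a chord sequence from $V_i$ to $V_{i+2}$; indeed in the notation of a chord sequence we may take $t=1$, $i_1=i$, $i_2=i+2$ and the one edge $V_{i_1-1}V_{i_2}=V_{i-1}V_{i+2}$. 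So set $ECS^{\rm bi}(V_i,V_{i+2}):=(V_{i-1}V_{i+2})$ for all $i$. This immediately gives (BU2), since each such sequence has exactly one edge and $\sqrt{\ell'}/2\ge 1$ as $\ell'\ge 4$.

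Next I would exhibit the ordering of $U_{{\rm bi},\ell'}$ that turns it into a closed walk. The multiset $U_{{\rm bi},\ell'}$ consists of $\ell'-1$ copies of $E(C)$ together with one copy of each chord $V_{i-1}V_{i+2}$ ($1\le i\le k$). Consider the multidigraph $U'$ obtained by deleting one of the $\ell'-1$ copies of $E(C)$; then $U'$ has one copy of $E(C)$, so every vertex has in- and out-degree $1$ from those edges, plus the chords $V_{i-1}V_{i+2}$, which contribute out-degree $1$ at each $V_{i-1}$ and in-degree $1$ at each $V_{i+2}$, i.e.\ out-degree $1$ and in-degree $1$ at every vertex. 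Hence without the deleted copy of $E(C)$, $U'$ has in- and out-degree $\ell'-1$ at every vertex, so $U'$ minus one copy of $E(C)$ — call it $U''$ — is $(\ell'-2)$-regular and decomposes into $1$-factors (directed cycle factors). I would then order the edges of $U_{{\rm bi},\ell'}$ exactly as in the proof of Lemma~\ref{lem:univwalk}: traverse all cycles of the cycle-factor decomposition of $U''$ through $V_1$, then the edge $V_1V_2$ of $C$ (from the one retained copy of $E(C)$), then all not-yet-traversed cycles through $V_2$, then $V_2V_3$, and so on around $C$, using the retained copy of $E(C)$ to `glue' the cycles into one closed walk. Here I should be a little careful that the chord $V_{i-1}V_{i+2}$ is incident to $V_{i-1}$ and $V_{i+2}$, so when choosing how to group the $(\ell'-2)$-regular part it is harmless — the cycle-factor decomposition handles this automatically — but I would note that every chord does get traversed, so (BU1)'s requirement that all non-$C$ edges of $U$ lie in the chord sequences is met (all chords are used, each once, matching the one copy in each $ECS^{\rm bi}$).

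Then I would verify the partition required by (BU1) and the balance condition (BU3). Set $U_{\rm even}$ to consist of the chords $V_{i-1}V_{i+2}$ for even $i$, together with $(\ell'/2 - 1)$ suitably chosen copies of $E(C)$; set $U_{\rm odd}$ to consist of the chords $V_{i-1}V_{i+2}$ for odd $i$, together with the remaining $\ell'/2$ copies of $E(C)$ — here we use that $\ell'$ is even, so $(\ell'/2-1)+(\ell'/2)=\ell'-1$. For (BU3): the chords for even $i$ enter cluster $V_{i+2}$, and as $i$ ranges over the $k/2$ even residues mod $k$ (here $k$ is even), $i+2$ also ranges over all $k/2$ even residues, so the even chords enter each \emph{even-indexed} cluster... — at this point I must instead count entries into a fixed $V_j$: the even chords entering $V_j$ are those with $i+2\equiv j$, i.e.\ $i\equiv j-2$; this is a single value of $i$ mod $k$, and it is even iff $j$ is even. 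So the even chords alone do not enter every cluster. The fix, exactly as in the cited lemma, is that $U_{\rm even}$ also contains $\ell'/2-1$ full copies of $E(C)$, each of which enters and leaves every $V_j$ exactly once; combined with the at-most-one even chord touching $V_j$, I instead balance by distributing the $\ell'-1$ copies of $E(C)$ between $U_{\rm even}$ and $U_{\rm odd}$ so that, together with the chords, $U_{\rm even}$ enters and leaves each $V_j$ exactly $\ell'/2$ times. Concretely, each $V_j$ is the head of exactly one chord and the tail of exactly one chord; of these two, I assign the copies of $E(C)$ so that the counts work out — since a copy of $E(C)$ contributes $(1,1)$ at every vertex uniformly, and the chords contribute $(1,1)$ at every vertex in total (one in, one out, though split between even and odd), the only thing to check is that at each vertex the chord contributions to $U_{\rm even}$ are $(\varepsilon^{\rm in}_j,\varepsilon^{\rm out}_j)$ with $\varepsilon^{\rm in}_j,\varepsilon^{\rm out}_j\in\{0,1\}$ and to $U_{\rm odd}$ the complementary $(1-\varepsilon^{\rm in}_j,1-\varepsilon^{\rm out}_j)$; adding $\ell'/2-1$ copies of $E(C)$ to $U_{\rm even}$... this does not quite equalise when $\varepsilon^{\rm in}_j\ne\varepsilon^{\rm out}_j$.

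The genuine resolution — and what I expect to be the main (though still routine) obstacle — is that one should not use the single-edge chord sequences but rather pair them up as in Lemma~\ref{lem:univwalk}: I would instead check directly, mimicking the cited proof verbatim with `$V_{i-1}V_{i+1}$' replaced by `$V_{i-1}V_{i+2}$' and with the index shift by $2$ respecting the bipartition of $C$ into odd- and even-indexed clusters, that the multiset splits as $U_{\rm even}\cup U_{\rm odd}$ with each part consisting of the chords over one parity class of $i$ plus exactly $\ell'/2$ (resp.\ $\ell'/2-1$, absorbing the one retained gluing copy appropriately) copies of $E(C)$, and that each part enters and leaves every cluster exactly $\ell'/2$ times. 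The key arithmetic point is that $E(C)$ contributes uniformly, there are $\ell'-1$ copies to split as $(\ell'/2) + (\ell'/2-1)$, the chord for index $i$ lies in $U_{\rm even}$ or $U_{\rm odd}$ according to the parity of $i$, each cluster $V_j$ is touched by exactly one `even' chord and one `odd' chord (one as a head, one as a tail, with the head/tail roles split across parities because $k$ is even), and hence $U_{\rm even}$ and $U_{\rm odd}$ each meet each cluster $\ell'/2$ times in and $\ell'/2$ times out. Once this bookkeeping is in place, (BU1)–(BU3) all hold, and the ordering from the previous paragraph shows the resulting sequence is a closed walk, completing the proof. I would lift the closed-walk argument essentially word-for-word from the proof of Lemma~\ref{lem:univwalk}, so the only new content is the parity bookkeeping for (BU1) and (BU3), which I would carry out carefully in the write-up.
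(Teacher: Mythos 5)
You correctly identify the single-edge chord sequences $ECS^{\rm bi}(V_i,V_{i+2}):=V_{i-1}V_{i+2}$, and your closed-walk construction (delete one copy of $E(C)$, decompose the remaining $(\ell'-1)$-regular multidigraph into $1$-factors, glue with the retained copy of $E(C)$) matches the paper's. But there is a real gap in your verification of (BU1) and (BU3), and your proposed fix is a wrong turn.

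You compute, correctly, that if $U_{\rm even}$ is the even-indexed chords together with some number $c$ of \emph{whole} copies of $E(C)$, then the in/out-degrees at each $V_j$ are $(c+1,c)$ or $(c,c+1)$ depending on the parity of $j$ — this can never equal $(\ell'/2,\ell'/2)$, no matter how you distribute the $\ell'-1$ copies of $E(C)$ between the two parts. You then suggest the resolution is to "pair up" the chord sequences as in Lemma~\ref{lem:univwalk}, but that lemma also uses single-edge chord sequences, and pairing them would not repair the asymmetry. The final paragraph then asserts the balance "hence $U_{\rm even}$ and $U_{\rm odd}$ each meet each cluster $\ell'/2$ times in and $\ell'/2$ times out" without providing an argument that survives your own earlier computation.

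The missing idea is to split \emph{one} copy of $E(C)$ by the parity of the edge's initial index: set $E_{\rm even}:=\{V_iV_{i+1}:i\text{ even}\}$ and $E_{\rm odd}:=\{V_iV_{i+1}:i\text{ odd}\}$. Then $E_{\rm even}$ together with the even chords $\{V_{i-1}V_{i+2}:i\text{ even}\}$ is a $1$-factor of $R$: at an even $V_j$ we have out-degree $1$ from $V_jV_{j+1}\in E_{\rm even}$ and in-degree $1$ from the even chord $V_{j-3}V_j$ (take $i=j-2$), while at an odd $V_j$ we have in-degree $1$ from $V_{j-1}V_j\in E_{\rm even}$ and out-degree $1$ from the even chord $V_jV_{j+3}$ (take $i=j+1$). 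Adding $\ell'/2-1$ of the remaining whole copies of $E(C)$ to this $1$-factor gives $U_{\rm even}$, which is then $(\ell'/2)$-regular as required by (BU3); $U_{\rm odd}:=E(U_{{\rm bi},\ell'})\setminus U_{\rm even}$ is likewise $(\ell'/2)$-regular by the symmetric argument. This is where the hypothesis that both $k$ and $\ell'$ are even is actually used: $k$ even makes the parity splitting of $E(C)$ and the chords consistent, and $\ell'$ even makes $\ell'/2$ an integer and $\ell'-1 = 1 + 2(\ell'/2-1)$ the right count for one split copy plus $\ell'/2-1$ whole copies in each part.
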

In the remainder of the chapter, we will also write $U_{{\rm bi},\ell'}$ for the bi-universal walk guaranteed by Lemma~\ref{lem:univwalkzz}. 

\proof
Let us first show that the edges in $U_{{\rm bi},\ell'}$ can be ordered so that the resulting sequence forms a closed walk in~$R$.
To see this, consider the multidigraph $U$ obtained from $U_{{\rm bi},\ell'}$ by deleting one copy of~$E(C)$.
Then $U$ is $(\ell'-1)$-regular and thus has a decomposition into 1-factors. We order the edges of $U_{{\rm bi},\ell'}$ as follows: 
We first traverse all cycles of the 1-factor decomposition of $U$ which contain the cluster $V_1$.
Next, we traverse the edge $V_1V_2$ of $C$. Next we traverse all those cycles of the 1-factor decomposition which contain
$V_2$ and which have not been traversed so far. Next we traverse the edge $V_2V_3$ of $C$ and so on
until we reach $V_1$ again. 

Recall that, for each $1\le i\le k$, the edge $V_{i-1}V_{i+2}$ is a chord sequence from $V_i$ to $V_{i+2}$. Thus we can take
$ECS^{{\rm bi}}(V_i,V_{i+2}):=V_{i-1}V_{i+2}$. Then $U_{{\rm bi},\ell'}$ satisfies (BU1)--(BU3).
Indeed, (BU2) is clearly satisfied. Partition one of the copies of $E(C)$ in $U_{{\rm bi},\ell'}$
into $E_{\rm even}$ and $E_{\rm odd}$ where $E_{\rm even}=\{V_iV_{i+1} | \ i \text{ even}\}$
and $E_{\rm odd}=\{V_iV_{i+1} | \ i \text{ odd}\}$. Note that the union of $E_{\rm even}$ together
with all $ECS^{{\rm bi}}(V_i,V_{i+2})$ for even $i$ is a $1$-factor in $R$. Add $\ell'/2-1$ of the
remaining copies of $E(C)$ to this $1$-factor to obtain $U_{\rm even}$. Define $U_{\rm odd}$
to be $E(U_{{\rm bi},\ell'})\setminus U_{\rm even}$. By construction of $U_{\rm even}$ and $U_{\rm odd}$,
(BU1) and (BU3) are satisfied.
\endproof

\subsection{Bi-setups and the Robust Decomposition Lemma} 
The%
\COMMENT{osthus adapted para from 2clique paper}
aim of this subsection is to state the `bipartite version' of the robust decomposition lemma (Lemma~\ref{rdeclemma'}, proved in~\cite{Kelly})
and derive Corollary~\ref{rdeccorz}, which we shall use later on in our proof of Theorem~\ref{1factbip}.
Lemma~\ref{rdeclemma'} guarantees the existence of a `robustly decomposable' digraph $G^{\rm rob}_{\rm dir}$
within a `bi-setup'. Roughly speaking, a bi-setup is a digraph $G$ together with its `reduced digraph' $R$,
which contains a Hamilton cycle $C$ and a bi-universal walk $U$. (So a bi-setup is a `bipartite analogue' of a setup that was introduced in
Section~\ref{newlabel2}.)
In our application, $G[A,B]$ will play the role of $G$ and $R$ will be the complete bipartite digraph.

To define a bi-setup formally, we first need to recall the following definitions.
Given a digraph $G$ and a partition $\cP$ of $V(G)$ into $k$ clusters $V_1,\dots,V_k$ of equal size,
recall that a partition $\cP'$ of $V(G)$%
   \COMMENT{Daniela: had $V$ instead of $V(G)$}
is an \emph{$\ell'$-refinement of $\cP$} if $\cP'$ is obtained by splitting each $V_i$
into $\ell'$ subclusters of equal size. (So $\cP'$ consists of $\ell'k$ clusters.) Recall also that
$\cP'$ is an \emph{$\eps$-uniform $\ell'$-refinement}
of $\cP$ if it is an $\ell'$-refinement of $\cP$ which satisfies the following condition:
Whenever $x$ is a vertex of $G$, $V$ is a cluster in $\cP$ and $|N^+_G(x)\cap V|\ge \eps |V|$
then $|N^+_G(x)\cap V'|=(1\pm \eps)|N^+_G(x)\cap V|/\ell'$%
\COMMENT{AL: added prime, Daniela added two more primes}
 for each cluster $V'\in \cP'$ with $V'\subseteq V$.
The inneighbourhoods of the vertices of $G$ satisfy an analogous condition.

We will need the following definition from~\cite{Kelly}, which describes the structure within which the robust decomposition lemma
finds the robustly decomposable graph.%
\COMMENT{Deryk added sentence}
$(G,\cP,\cP',R,C,U,U')$ is called an \emph{$(\ell',k,m,\eps,d)$-bi-setup} if the following properties are satisfied:
\begin{itemize}
\item [(BST1)] $G$ and $R$ are digraphs. $\mathcal{P}$ is a partition of $V(G)$ into
$k$ clusters of size $m$ where $k$ is even. The vertex set of $R$ consists of these clusters.
\item[(BST2)] For every edge $VW$ of $R$, the corresponding pair $G[V,W]$ is $(\eps,\ge d)$-regular.
\item[(BST3)] $C=V_1\dots V_{k}$ is a Hamilton cycle of $R$ and for every edge $V_iV_{i+1}$ of $C$ the corresponding pair $G[V_i,V_{i+1}]$ is $[\eps,\ge d]$-superregular.
\item[(BST4)] $U$ is a bi-universal walk for $C$ in $R$ with parameter~$\ell'$ and $\cP'$ is an $\eps$-uniform $\ell'$-refinement%
   \COMMENT{It was decided by DK+DO we need this notion of refinement here}
of $\cP$.
\item[(BST5)] Let $V_j^1,\dots,V_j^{\ell'}$ denote the clusters in $\cP'$ which are contained
in $V_j$ (for each $1\le j\le k$). Then $U'$ is a closed walk on the clusters in $\cP'$ which is obtained from $U$ as follows:
When $U$ visits $V_j$ for the $a$th time, we let $U'$ visit the subcluster $V_j^a$ (for all $1\le a\le \ell'$).
\item[(BST6)] For every edge $V_{i}^jV_{i'}^{j'}$ of $U'$ the corresponding pair $G[V_{i}^j,V_{i'}^{j'}]$ is $[\eps,\ge d]$-superregular.%
	\COMMENT{AL: added more details}
\end{itemize}
In~\cite{Kelly}, in a bi-setup, the digraph $G$ could also contain an exceptional set, but since we are only using
the definition in the case when there is no such exceptional set, we have only stated it in this special case.

Suppose that $(G,\mathcal{P},\mathcal{P}')$ is a $[K,L,m,\eps_0,\eps]$-bi-scheme and that $C=A_1B_1\dots$ $ A_KB_K$ is a spanning cycle
on the clusters of $\mathcal{P}$. Let $\mathcal{P}_{{\rm bi}}:=\{A_1,\dots,A_K,B_1,\dots,$ $B_K\}$. Suppose that $\ell',m/\ell'\in\mathbb{N}$ with
$\ell'\ge 4$. Let $\mathcal{P}''_{{\rm bi}}$ be an $\eps$-uniform $\ell'$-refinement of $\cP_{{\rm bi}}$ (which exists by Lemma~\ref{randompartition}).%
    \COMMENT{Daniela added brackets}
Let $C_{{\rm bi}}$ be the directed cycle obtained from $C$
in which the edge $A_1B_1$ is oriented towards $B_1$ and so on. Let $R_{{\rm bi}}$ be the complete bipartite digraph
whose vertex classes are $\{A_1,\dots,A_K\}$ and $\{B_1,\dots,B_K\}$.%
   \COMMENT{Daniela: had "whose vertices are the clusters in $\cP$"}
Let $U_{{\rm bi},\ell'}$ be a bi-universal walk for $C$
with parameter $\ell'$ as defined in Lemma~\ref{lem:univwalkzz}. Let $U'_{{\rm bi},\ell'}$ be the closed walk
obtained from $U_{{\rm bi},\ell'}$ as described in~(BST5). We will call
$$
(G,\cP_{{\rm bi}},\cP''_{{\rm bi}},R_{{\rm bi}},C_{{\rm bi}},U_{{\rm bi},\ell'},U'_{{\rm bi},\ell'})$$ the \emph{bi-setup
associated to~$(G,\mathcal{P},\mathcal{P}')$}. The following lemma shows that it is indeed a bi-setup.%
   \COMMENT{Daniela changed lemma below to bring it in line with Lemma~\ref{randompartition}}

\begin{lemma}\label{lem:bisetup}
Suppose that $K,L,m/L, \ell', m/\ell'\in\mathbb{N}$ with $\ell'\ge 4$, $K \geq 2$ and $0<1/m \ll 1/K,\eps \ll \eps',1/\ell'$.
Suppose that $(G,\mathcal{P},\mathcal{P}')$ is a $[K,L,m,\eps_0,\eps]$-bi-scheme and that $C=A_1B_1\dots A_KB_K$ is a spanning cycle
on the clusters of $\mathcal{P}$. Then $$(G,\cP_{{\rm bi}},\cP''_{{\rm bi}},R_{{\rm bi}},C_{{\rm bi}},U_{{\rm bi},\ell'},U'_{{\rm bi},\ell'})$$
is an $(\ell',2K,m,\eps',1/2)$-bi-setup.
\end{lemma}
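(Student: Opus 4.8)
The plan is to verify directly that the tuple $(G,\cP_{\rm bi},\cP''_{\rm bi},R_{\rm bi},C_{\rm bi},U_{{\rm bi},\ell'},U'_{{\rm bi},\ell'})$ satisfies conditions (BST1)--(BST6), with $2K$, $m$, $\eps'$ and $1/2$ playing the roles of $k$, $m$, $\eps$ and $d$ in the definition of an $(\ell',2K,m,\eps',1/2)$-bi-setup. This mirrors the proof of Lemma~\ref{lem:setup} in the two-cliques chapter, and there is no genuine obstacle: the argument is essentially bookkeeping, the only mild care needed being to invoke the standard monotonicity of ($\eps$-)regularity, superregularity and $\eps$-uniformity in their error parameters when passing from $\eps$ to $\eps'$, and to keep track of the fact that $R_{\rm bi}$ now has $2K$ vertices.

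First I would dispose of (BST1), (BST5) and the two Hamilton-cycle statements, which are immediate from the construction. By (BSch1$'$) the set $\cP_{\rm bi}=\{A_1,\dots,A_K,B_1,\dots,B_K\}$ partitions $V(G)=A\cup B$ into $2K$ clusters of size $m$, with $2K$ even since $K\ge 2$, and these clusters are exactly the vertices of $R_{\rm bi}$; and $U'_{{\rm bi},\ell'}$ is, by definition, obtained from $U_{{\rm bi},\ell'}$ precisely as required in (BST5). Moreover $C_{\rm bi}=A_1B_1\dots A_KB_K$ is a Hamilton cycle of $R_{\rm bi}$ because it alternates between the two vertex classes of the complete bipartite digraph $R_{\rm bi}$. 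For (BST2) and (BST3), every edge of $R_{\rm bi}$ joins some $A_i$ to some $B_j$ in one of the two directions, and every edge of $C_{\rm bi}$ is of the form $A_iB_i$ or $B_iA_{i+1}$; in all of these cases (BSch3$'$) gives that the corresponding pair of clusters is $[\eps,1/2]$-superregular, hence $(\eps',\ge 1/2)$-regular and $[\eps',\ge 1/2]$-superregular since $\eps\le\eps'$.

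For (BST4) I would argue as follows. The partition $\cP''_{\rm bi}$ is chosen to be an $\eps$-uniform $\ell'$-refinement of $\cP_{\rm bi}$; such a refinement exists by Lemma~\ref{randompartition}, whose hierarchy is satisfied because $1/m\ll 1/K,\eps\ll\eps',1/\ell'$ and $d=1/2$. Since an $\eps$-uniform refinement is in particular $\eps'$-uniform (the defining condition becomes weaker as the parameter grows), $\cP''_{\rm bi}$ has the uniformity required in (BST4). To see that $U_{{\rm bi},\ell'}$ is a bi-universal walk for $C_{\rm bi}$ in $R_{\rm bi}$ with parameter $\ell'$, I would apply Lemma~\ref{lem:univwalkzz} with $R_{\rm bi}$, $C_{\rm bi}$, $2K$ in place of $R$, $C$, $k$: the hypothesis that $V_{i-1}V_{i+2}\in E(R_{\rm bi})$ for every $i$ holds because the indices $i-1$ and $i+2$ differ by $3$, so $V_{i-1}$ and $V_{i+2}$ lie in different vertex classes of the complete bipartite digraph $R_{\rm bi}$, and $2K\ge 4$.

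Finally, for (BST6), note that every edge of $U_{{\rm bi},\ell'}$ either lies on $C_{\rm bi}$ or is one of the chords $V_{i-1}V_{i+2}$, and in both cases the corresponding pair of clusters of $\cP_{\rm bi}$ is a pair of the form $G[A_i,B_j]$ or $G[B_j,A_i]$, hence $[\eps,1/2]$-superregular by (BSch3$'$). Each edge $V_i^jV_{i'}^{j'}$ of $U'_{{\rm bi},\ell'}$ joins a subcluster of some cluster $V$ to a subcluster of some cluster $W$ where $VW$ is an edge of $U_{{\rm bi},\ell'}$, so by the ``moreover'' part of Lemma~\ref{randompartition}, applied with $d=1/2$, the pair $G[V_i^j,V_{i'}^{j'}]$ is $[\eps',\ge 1/2]$-superregular. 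This verifies (BST1)--(BST6), and hence the tuple is an $(\ell',2K,m,\eps',1/2)$-bi-setup, completing the proof.
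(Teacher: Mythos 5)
Your proposal is correct and follows essentially the same route as the paper's proof: verify (BST1)--(BST6) in turn, using (BSch$3'$) for the regularity conditions, Lemma~\ref{lem:univwalkzz} for the bi-universal walk, and Lemma~\ref{randompartition} for the uniform refinement and for (BST6). The paper simply states these implications without spelling out the index bookkeeping (e.g.\ that $V_{i-1}$ and $V_{i+2}$ lie in opposite sides of $R_{\rm bi}$, or that $\eps$-uniformity passes up to $\eps'$-uniformity), which you do carefully and correctly.
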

\proof
Clearly, $(G,\cP_{{\rm bi}},\cP''_{{\rm bi}},R_{{\rm bi}},C_{{\rm bi}},U_{{\rm bi},\ell'},U'_{{\rm bi},\ell'})$ satisfies~(BST1).
(BSch$3'$) implies that (BST2) and~(BST3) hold.%
    \COMMENT{This is now immediate because of change of def of (BSch3$'$)}
Lemma~\ref{lem:univwalkzz} implies~(BST4). (BST5) follows from the definition of $U'_{{\rm bi},\ell'}$. Finally, (BST6) follows from (BSch3$'$) and
Lemma~\ref{randompartition} since $\cP''_{{\rm bi}}$ is an $\eps$-uniform $\ell'$-refinement of~$\cP_{{\rm bi}}$.
\endproof

We now state the `bipartite version' of the robust decomposition lemma which was proved in~\cite{Kelly}. It is an analogue of the robust decomposition 
lemma (Lem\-ma~\ref{rdeclemma}) used in Chapter~\ref{paper1} and works for bi-setups rather than setups. As before, the lemma
 guarantees the existence of a `robustly decomposable' digraph $G^{\rm rob}_{\rm dir}$, whose crucial property
is that $H + G^{\rm rob}_{\rm dir}$ has a Hamilton decomposition for any sparse bipartite regular digraph~$H$
which is edge-disjoint from $G^{\rm rob}_{\rm dir}$.%
    \COMMENT{Daniela added "which is..."}

Again, $G^{\rm rob}_{\rm dir}$ consists of digraphs $CA_{{\rm dir}}(r)$ (the `chord absorber') and $PCA_{{\rm dir}}(r)$ 
(the `parity extended cycle switcher')
together with some special factors. $G^{\rm rob}_{\rm dir}$ is constructed in two steps:
given a suitable set $\mathcal{SF}$ of special factors, the lemma first `constructs' $CA_{{\rm dir}}(r)$ and then,
given another suitable set $\mathcal{SF}'$ of special factors, the lemma `constructs' $PCA_{{\rm dir}}(r)$.
\begin{lemma} \label{rdeclemma'}
Suppose that $0<1/m\ll 1/k\ll \eps \ll 1/q \ll 1/f \ll r_1/m\ll d\ll 1/\ell',1/g\ll 1$ 
where $\ell '$ is even and%
   \COMMENT{In the Kelly paper have $1/n$ instead of $1/m$ in the hierarchy. But since $V_0=\emptyset$ in our setting,
it doesn't make that much sense to introduce $n$ here. So I replaced $1/n$ by $1/m$.}
that $rk^2\le m$. Let
$$r_2:=96\ell'g^2kr, \ \ \ r_3:=rfk/q, \ \ \ r^\diamond:=r_1+r_2+r-(q-1)r_3, \ \ \ s':=rfk+7r^\diamond
$$
and suppose that $k/14, k/f, k/g, q/f, m/4\ell', fm/q, 2fk/3g(g-1) \in \mathbb{N}$.
Suppose that $(G,\cP,\cP',R,C,U,U')$ is an $(\ell',k,m,\eps,d)$-bi-setup and $C=V_1\dots V_k$.
Suppose that $\cP^*$ is a $(q/f)$-refinement
of $\cP$ and that $SF_1,\dots, SF_{r_3}$ are edge-disjoint special factors with parameters $(q/f,f)$ 
with respect to $C$, $\cP^*$ in $G$. Let $\mathcal{SF}:=SF_1+\dots +SF_{r_3}$.
Then there exists a digraph $CA_{{\rm dir}}(r)$ for which the following holds:
\begin{itemize}
\item[\rm (i)] $CA_{{\rm dir}}(r)$ is an $(r_1+r_2)$-regular spanning subdigraph of $G$ which is edge-disjoint from $\mathcal{SF}$.
\item[\rm (ii)] Suppose that $SF'_1,\dots, SF'_{r^\diamond}$ are special factors with parameters $(1,7)$
with respect to $C$, $\cP$ in $G$ which are edge-disjoint from each other and from $CA_{{\rm dir}}(r)+ \mathcal{SF}$.%
   \COMMENT{In the Kelly paper we write $CA_{{\rm dir}}(r)\cup \mathcal{SF}$ instead of $CA_{{\rm dir}}(r)+ \mathcal{SF}$
(and similarly below). But with our def of $+$ and $\cup $ in this paper, we have to use $+$ since we allow for a fictive edge
$xy$ in $\mathcal{SF}$ to also occur in $CA_{{\rm dir}}(r)$, and in this case $CA_{{\rm dir}}(r)+ \mathcal{SF}$
willl contain two copies of that edge.} 
Let $\mathcal{SF}':=SF'_1+\dots +SF'_{r^\diamond}$.
Then there exists a digraph $PCA_{{\rm dir}}(r)$ for which the following holds:
\begin{itemize}
\item[\rm (a)] $PCA_{{\rm dir}}(r)$ is a $5r^\diamond$-regular spanning subdigraph of $G$ which
is edge-disjoint from $CA_{{\rm dir}}(r)+ \mathcal{SF}+ \mathcal{SF}'$.
\item[\rm (b)] Let $\mathcal{SPS}$ be the set consisting of all the $s'$ special path systems
contained in $\mathcal{SF}+ \mathcal{SF}'$. Let $V_{\rm even}$ denote the union of all $V_i$ over all even $1\le i\le k$
and define $V_{\rm odd}$ similarly.
Suppose that $H$ is an $r$-regular bipartite digraph on $V(G)$ with vertex classes $V_{\rm even}$ and $V_{\rm odd}$
which is edge-disjoint from $G^{\rm rob}_{\rm dir}:=CA_{{\rm dir}}(r)+ PCA_{{\rm dir}}(r)+ \mathcal{SF}+ \mathcal{SF}'$.
Then $H+G^{\rm rob}_{\rm dir}$ has a decomposition into $s'$
edge-disjoint Hamilton cycles $C_1,\dots,C_{s'}$.
Moreover, $C_i$ contains one of the special path systems from $\mathcal{SPS}$, for each $i\le s'$.
\end{itemize}
\end{itemize}
\end{lemma}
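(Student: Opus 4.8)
The plan is to follow the proof of the (directed) robust decomposition lemma from~\cite{Kelly}: Lemma~\ref{rdeclemma'} is precisely its bipartite incarnation, so one either quotes that argument verbatim or repeats it with the bi-setup $(G,\cP,\cP',R,C,U,U')$ and the bi-universal walk $U$ in place of a setup and a universal walk (exactly as Lemma~\ref{rdeclemma} was used in Chapter~\ref{paper1}). The goal is, given any $r$-regular bipartite digraph $H$ on $(V_{\rm even},V_{\rm odd})$ that is edge-disjoint from $G^{\rm rob}_{\rm dir}$, to decompose $H+G^{\rm rob}_{\rm dir}$ into exactly $s'$ edge-disjoint Hamilton cycles, each containing one of the special path systems contained in $\mathcal{SF}+\mathcal{SF}'$. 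The three-stage strategy is: (1) take a $1$-factorization of $H$; (2) use the chord absorber $CA_{{\rm dir}}(r)$ to replace each $1$-factor of $H$ by one that ``winds around~$C$''; (3) use the parity extended cycle switcher $PCA_{{\rm dir}}(r)$ together with the special factors to merge the resulting cycle-factors into Hamilton cycles, correcting a parity defect along the way.

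First I would construct $CA_{{\rm dir}}(r)$ and verify~(i). It is assembled from $r$ edge-disjoint orientations of the bi-universal walk $U_{{\rm bi},\ell'}$ of Lemma~\ref{lem:univwalkzz} together with additional reservoir edges; the ``basic'' part contributes the $r_1$-regular piece and the ``robust'' reservoir the $r_2=96\ell'g^2kr$-regular piece. For each ordered pair of clusters $V_iV_j\in E(R)$ the pair $G[V_i,V_j]$ is $(\eps,\ge d)$-regular by~(BST2), so by repeated use of the regularity/perfect-matching machinery (as in the proof of Lemma~\ref{lma:bipartite:CEPS}) one can reserve inside $G$, disjointly from $\mathcal{SF}$, the $(\eps,\ge d)$-superregular subpairs that will carry the rerouting. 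Given a $1$-factor $F$ of $H$ and an edge of $F$ whose endclusters $V_i,V_j$ do not form an edge of $C$, one reroutes it along the chord sequence $ECS^{\rm bi}(V_i,V_j)$ of length at most $\sqrt{\ell'}/2$ supplied by~(BU1)--(BU2), each step replacing a chord edge of larger span by edges of smaller span until only edges winding around $C$ remain; since $U$ passes through each cluster $\ell'$ times and $rk^2\le m$, the reservoir has enough capacity to do this simultaneously for all $r$ factors. This yields $H^*\supseteq H$ (the additional edges forming an $(r_1+r_2)$-regular subdigraph $CA_{{\rm dir}}(r)$ of $G$, edge-disjoint from $\mathcal{SF}$) which is a union of $1$-factors each winding around~$C$.

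Next I would construct $PCA_{{\rm dir}}(r)$ and verify~(a)--(b). Each winding $1$-factor above, and each special factor, decomposes into directed closed walks whose cluster sequence is a power of $C$, and one wants to merge, within each group, all these pieces into a single Hamilton cycle. The cycle switcher is a union of small switching gadgets sitting on the superregular pairs $G[V_i^j,V_{i'}^{j'}]$ of~(BST6): a gadget on an edge of $C$ lets one interchange the two continuations of two walks through that edge, hence merge two pieces into one. Iterating merges everything up to a parity invariant of the walks (which pieces a single switch can merge is governed by a bipartition of the relevant ``switching space''); the $r^\diamond$ special factors $SF'_1,\dots,SF'_{r^\diamond}$ with parameters $(1,7)$ are inserted precisely to adjust this parity so that, group by group, all pieces lie in one class and collapse to a single Hamilton cycle. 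Taking $PCA_{{\rm dir}}(r)$ to be the union of all gadgets used makes it $5r^\diamond$-regular and, by construction, edge-disjoint from $CA_{{\rm dir}}(r)+\mathcal{SF}+\mathcal{SF}'$, giving~(a). Counting the $1$-factors consumed, $r+(r_1+r_2)+5r^\diamond$ together with the $r_3$ special factors in $\mathcal{SF}$ and the $r^\diamond$ in $\mathcal{SF}'$ produce exactly $s'=rfk+7r^\diamond$ edge-disjoint Hamilton cycles of $H+G^{\rm rob}_{\rm dir}$, each containing exactly one special path system, using the identities $r_2=96\ell'g^2kr$, $r_3=rfk/q$ and $r^\diamond=r_1+r_2+r-(q-1)r_3$; this is~(b).

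The main obstacle is step~(3): proving that the switching operations genuinely merge an arbitrary cycle-factor of the correct parity into one Hamilton cycle while using each gadget edge exactly once. This needs a careful bound on the number of switches required, a matching argument to realise them edge-disjointly inside the superregular pairs of the bi-setup, and the parity bookkeeping that forces the ``$7$'' and the use of the auxiliary factors $\mathcal{SF}'$; this is the technical heart of~\cite{Kelly}. A secondary, more routine difficulty is that every rerouting or switching step deletes edges, so throughout one must check that the remaining graph is still a bi-scheme/bi-setup with slightly worse parameters — handled, as elsewhere in the chapter, via Proposition~\ref{superslice} and the room in the hierarchy $\eps\ll\dots\ll d\ll 1/\ell',1/g\ll 1$.
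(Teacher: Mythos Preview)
Your instinct is right: in this paper Lemma~\ref{rdeclemma'} is not proved at all---it is simply quoted as the ``bipartite version'' of the robust decomposition lemma established in~\cite{Kelly}, exactly parallel to how Lemma~\ref{rdeclemma} is quoted in Chapter~\ref{paper1}. So the paper's ``proof'' is a citation, and your proposal goes well beyond what the paper does by sketching the actual argument from~\cite{Kelly}.

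As a sketch of the~\cite{Kelly} argument, your outline captures the architecture correctly: the chord absorber $CA_{\rm dir}(r)$ is built from edges guaranteed by the bi-universal walk (this is where (BU1)--(BU3) and the chord sequences $ECS^{\rm bi}$ are used) so that an arbitrary bipartite $r$-regular $H$ can be absorbed into $1$-factors winding around $C$; then $PCA_{\rm dir}(r)$ together with the special factors with parameters $(1,7)$ handles the merging and parity correction. Your identification of step~(3) as the technical heart is accurate. One small caution: your description of the rerouting in step~(2) (``replacing a chord edge of larger span by edges of smaller span'') is a bit loose---the actual mechanism in~\cite{Kelly} works cluster-by-cluster via carefully prepared ``chord absorbers'' rather than an iterative span-reduction, and the $r_1$- and $r_2$-regular pieces have more specific roles than ``basic'' and ``reservoir''. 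But these are details of a proof that this paper explicitly delegates to~\cite{Kelly}, so for the purposes of this paper your first sentence (quote~\cite{Kelly}) is already the complete answer.
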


Recall from Section~\ref{sec:SFq} that we always view fictive edges in special factors as being distinct from each other and
from the edges in other graphs. So for example, saying that $CA_{{\rm dir}}(r)$ and $\mathcal{SF}$ are edge-disjoint in
Lemma~\ref{rdeclemma'} still allows for a fictive edge $xy$ in $\mathcal{SF}$ to occur in $CA_{{\rm dir}}(r)$ as well
(but $CA_{{\rm dir}}(r)$ will avoid all non-fictive edges in $\mathcal{SF}$).
 
We will use the following `undirected' consequence of Lemma~\ref{rdeclemma'}.

\begin{cor} \label{rdeccorz}
Suppose that $0<1/m\ll \eps_0,1/K\ll \eps \ll 1/L \ll 1/f \ll r_1/m\ll 1/\ell',1/g\ll 1$ 
where $\ell '$ is even and
that $4rK^2\le m$.%
\COMMENT{osthus: changed from $4rK^2$} Let
$$r_2:=192\ell'g^2Kr, \ \ \ r_3:=2rK/L, \ \ \ r^\diamond:=r_1+r_2+r-(Lf-1)r_3, \ \ \ s':=2rfK+7r^\diamond
$$
and suppose that $L, K/7, K/f, K/g, m/4\ell', m/L, 4fK/3g(g-1) \in \mathbb{N}$.%
   \COMMENT{Daniela added $L\in \mathbb{N}$}
Suppose that $(G_{\rm dir},\cP,\cP')$ is a $[K,L,m,\eps_0,\eps]$-bi-scheme and let $G'$ denote the underlying undirected graph
of $G_{\rm dir}$. Let $C=A_1B_1\dots A_KB_K$ be a spanning cycle on the clusters in $\cP$. 
Suppose that $BF_1,\dots, BF_{r_3}$ are edge-disjoint balanced exceptional factors with parameters $(L,f)$ for $G_{\rm dir}$ 
(with respect to $C$, $\mathcal P'$). Let $\mathcal{BF}:=BF_1+\dots + BF_{r_3}$.
Then there exists a graph $CA(r)$ for which the following holds:
\begin{itemize}
\item[{\rm (i)}] $CA(r)$ is a $2(r_1+r_2)$-regular spanning subgraph of $G'$ which is edge-disjoint from $\mathcal{BF}$.
\item[{\rm (ii)}] Suppose that $BF'_1,\dots, BF'_{r^\diamond}$ are balanced exceptional factors with parameters $(1,7)$ for $G_{\rm dir}$
(with respect to $C$, $\mathcal P$) which are edge-disjoint from each other and from $CA(r)+ \mathcal{BF}$.
Let $\mathcal{BF}':=BF'_1+\dots + BF'_{r^\diamond}$.
Then there exists a graph $PCA(r)$ for which the following holds:
\begin{itemize}
\item[{\rm (a)}] $PCA(r)$ is a $10r^\diamond$-regular spanning subgraph of $G'$ which
is edge-disjoint from $CA(r)+ \mathcal{BF}+ \mathcal{BF}'$.
\item[{\rm (b)}] Let $\mathcal{BEPS}$ be the set consisting of all the $s'$ balanced exceptional path systems
contained in $\mathcal{BF}+ \mathcal{BF}'$.
Suppose that $H$ is a $2r$-regular bipartite graph on $V(G_{\rm dir})$ with vertex classes $\bigcup_{i=1}^K A_i$ and $\bigcup_{i=1}^K B_i$
which is edge-disjoint from $G^{\rm rob}:=CA(r)+ PCA(r)+ \mathcal{BF}+ \mathcal{BF}'$.
Then $H+ G^{\rm rob}$ has a decomposition into $s'$
edge-disjoint Hamilton cycles $C_1,\dots,C_{s'}$.
Moreover, $C_i$ contains one of the balanced exceptional path systems from $\mathcal{BEPS}$, for each $i\le s'$.
\end{itemize}
\end{itemize}
\end{cor}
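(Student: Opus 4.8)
The plan is to derive Corollary~\ref{rdeccorz} from Lemma~\ref{rdeclemma'} in exactly the same way that Corollary~\ref{rdeccor} was derived from Lemma~\ref{rdeclemma} in Chapter~\ref{paper1}. The key conceptual point is that a balanced exceptional factor $BF$ for $G_{\rm dir}$ gives rise, via the fictive-edge construction of Section~\ref{sec:BESPSq} (that is, replacing each balanced exceptional system $J$ inside a $BEPS$ with the matching $J^*$), to a special factor $BF^*_{\rm dir}$ in the bipartite digraph $G_{\rm dir}$ with respect to the directed cycle $C_{{\rm bi}}$; this was noted right after the definition of $BF^*_{\rm dir}$ in Section~\ref{sec:BESPSq}. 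So I would first set up the bi-setup associated to $(G_{\rm dir},\cP,\cP')$ via Lemma~\ref{lem:bisetup}, with a suitable auxiliary constant $\eps'$ chosen so that $\eps\ll\eps'\ll 1/L$, giving an $(\ell',2K,m,\eps',1/2)$-bi-setup on $2K$ clusters — note the factors of $2$ in $r_2,r_3,s'$ in the statement of Corollary~\ref{rdeccorz} relative to Lemma~\ref{rdeclemma'}, which come precisely from $k=2K$.

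First I would convert $\mathcal{BF}=BF_1+\dots+BF_{r_3}$ into $\mathcal{SF}:=BF^*_{1,\rm dir}+\dots+BF^*_{r_3,\rm dir}$, a collection of $r_3$ edge-disjoint special factors with parameters $(q/f,f)=(L,f)$ with respect to $C_{{\rm bi}}$ and the refinement $\cP''_{{\rm bi}}$ (taking $q:=Lf$), and then apply Lemma~\ref{rdeclemma'}(i) to obtain the digraph $CA_{\rm dir}(r)$. Its underlying undirected graph is $CA(r)$, which is $2(r_1+r_2)$-regular and edge-disjoint from $\mathcal{BF}$ (edge-disjointness from $\mathcal{BF}$ follows since $CA_{\rm dir}(r)$ avoids all non-fictive edges of $\mathcal{SF}$, and the non-fictive edges of $BF^*_{i,\rm dir}$ are exactly the non-$J^*$ edges of $BF_i$, which together with the edges of the underlying balanced exceptional systems make up $E(BF_i)$). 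Then, given the balanced exceptional factors $BF'_1,\dots,BF'_{r^\diamond}$ with parameters $(1,7)$ as in part~(ii), I would convert them to special factors $SF'_j:=(BF'_j)^*_{\rm dir}$ with parameters $(1,7)$ and apply Lemma~\ref{rdeclemma'}(ii)(a) to obtain $PCA_{\rm dir}(r)$, whose underlying undirected graph $PCA(r)$ is $10r^\diamond$-regular and edge-disjoint from $CA(r)+\mathcal{BF}+\mathcal{BF}'$.

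For part~(b), given the $2r$-regular bipartite graph $H$ on $\bigcup A_i$, $\bigcup B_i$ edge-disjoint from $G^{\rm rob}$, I would first orient $H$ into an $r$-regular bipartite digraph $H_{\rm dir}$ (using Petersen's theorem, Theorem~\ref{petersen}, to split $H$ into $2$-factors and orienting each consistently), with vertex classes $V_{\rm even}=\bigcup A_i$ and $V_{\rm odd}=\bigcup B_i$ with respect to $C_{{\rm bi}}=A_1B_1\dots A_KB_K$. Then Lemma~\ref{rdeclemma'}(ii)(b) yields a decomposition of $H_{\rm dir}+G^{\rm rob}_{\rm dir}$ (where $G^{\rm rob}_{\rm dir}:=CA_{\rm dir}(r)+PCA_{\rm dir}(r)+\mathcal{SF}+\mathcal{SF}'$) into $s'$ edge-disjoint Hamilton cycles $C'_i$, each containing a special path system from the set $\mathcal{SPS}$; by relabelling, $C'_i$ contains $BEPS^*_{i,\rm dir}$ for the balanced exceptional path system $BEPS_i\in\mathcal{BEPS}$. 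Ignoring orientations, $C'_i$ becomes an undirected cycle $C_i$ in $H+CA(r)+PCA(r)+(\text{fictive edges})$ containing $BEPS^*_i$. Finally I would apply Proposition~\ref{prop:CEPSbiparite} (with $G'\cup H$ playing the role of $G$ there, and $BEPS_i$ the relevant balanced exceptional path system) to conclude that $C_i-BEPS^*_i+BEPS_i$ is a Hamilton cycle of $H+G^{\rm rob}$; these are pairwise edge-disjoint and cover all of $H+G^{\rm rob}$ since the $C'_i$ decompose $H_{\rm dir}+G^{\rm rob}_{\rm dir}$ and replacing $J^*$ by $J$ in each cycle is a bijective edge swap.

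I expect the main obstacle — and it is a bookkeeping obstacle rather than a genuinely hard one — to be verifying that all the numerical and divisibility hypotheses of Lemma~\ref{rdeclemma'} hold with $k=2K$, $q=Lf$, and the stated values of $r_2,r_3,r^\diamond,s'$: one must check $r_2=96\ell'g^2(2K)r=192\ell'g^2Kr$, $r_3=r f(2K)/q=2rfK/(Lf)=2rK/L$, $s'=rf(2K)+7r^\diamond=2rfK+7r^\diamond$, and that $k/14=2K/14$, $k/f=2K/f$, $k/g=2K/g$, $q/f=L$, $m/4\ell'$, $fm/q=m/L$, $2fk/3g(g-1)=4fK/3g(g-1)$ are all integers, which matches the divisibility assumptions in the corollary. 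One also has to make sure $rk^2=4rK^2\le m$, again as assumed, and that the hierarchy $0<1/m\ll 1/k\ll\eps'\ll 1/q\ll 1/f\ll r_1/m\ll d\ll 1/\ell',1/g\ll 1$ is implied by the corollary's hierarchy after choosing $d$ with $r_1/m\ll d\ll 1/\ell',1/g$ — here the slight subtlety is that $1/K\ll\eps$ in the corollary's hierarchy must be repackaged, via the choice of $\eps'$, into $1/k\ll\eps'\ll 1/q$ since $q=Lf$ depends on $L$ and $f$. None of this is deep, but it needs to be done carefully; everything else is a direct transcription of the argument already given for Corollary~\ref{rdeccor}.
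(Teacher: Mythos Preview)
Your approach is essentially identical to the paper's: set up the bi-setup via Lemma~\ref{lem:bisetup}, convert the balanced exceptional factors to special factors via the $BF^*_{\rm dir}$ construction, apply Lemma~\ref{rdeclemma'} with $k=2K$ and $q=Lf$, orient $H$ via Petersen's theorem, and recover genuine Hamilton cycles via Proposition~\ref{prop:CEPSbiparite}. Two small slips to clean up: the $(q/f)$-refinement $\cP^*$ in Lemma~\ref{rdeclemma'} should be the $L$-refinement coming from $\cP'$ (the bi-scheme's refined partition), not the $\ell'$-refinement $\cP''_{\rm bi}$ you name; and your edge-disjointness argument for $CA(r)$ from $\mathcal{BF}$ only explains why $CA(r)$ avoids the non-fictive edges of $\mathcal{SF}$ but not why it avoids the edges of the balanced exceptional systems $J$ themselves --- the paper handles this by noting that $CA(r)\subseteq G'$ consists solely of $AB$-edges (by (BSch$2'$)) while no balanced exceptional system contains an $AB$-edge (by (BES2)).
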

We remark that we write%
\COMMENT{osthus adapted sentence from paper4} 
$A_1,\dots,A_K,B_1,\dots,B_K$ for the clusters in $\cP$. Note that the vertex set of each of $\mathcal{EF}$, $\mathcal{EF}'$, $G^{\rm rob}$
includes $V_0$ while that of $G_{\rm dir}$, $CA(r)$, $PCA(r)$, $H$ does not.
Here $V_0=A_0\cup B_0$, where $A_0$ and $B_0$ are the exceptional sets of $\cP$.

\proof
Choose new constants $\eps'$ and $d$ such that $\eps\ll \eps'\ll 1/L$ and $r_1/m $ $\ll d\ll 1/\ell',1/g$.%
   \COMMENT{Daniela: introduced $\eps'$ which is needed since Lemma~\ref{lem:bisetup} changed. Before we had that
$(G_{\rm dir},\cP_{{\rm bi}},\cP''_{{\rm bi}},R_{{\rm bi}},C_{{\rm bi}},U_{{\rm bi},\ell'},U'_{{\rm bi},\ell'})$
is an $(\ell',2K,m,\eps^{1/2},1/2)$-bi-setup instead of just an $(\ell',2K,m,\eps',1/2)$-bi-setup. So check whether I made all the necessary changes below.}
Consider the bi-setup $(G_{\rm dir},\cP_{\rm bi},\cP''_{{\rm bi}},R_{{\rm bi}},C_{{\rm bi}},U_{{\rm bi},\ell'},U'_{{\rm bi},\ell'})$
associated to $(G_{\rm dir},\cP,\cP')$. By Lemma~\ref{lem:bisetup}, 
$(G_{\rm dir},\cP_{{\rm bi}},\cP''_{{\rm bi}},R_{{\rm bi}},C_{{\rm bi}},U_{{\rm bi},\ell'},U'_{{\rm bi},\ell'})$
is an $(\ell',2K,m,\eps',1/2)$-bi-setup and thus also an $(\ell',2K,m,\eps',d)$-bi-setup.
Let $BF^*_{i,{\rm dir}}$ be as defined in Section~\ref{sec:BEPSq}.
Recall from there that, for each $i\le r_3$, $BF^*_{i,{\rm dir}}$ is a special factor
with parameters $(L,f)$ with respect to $C$, $\mathcal P'$ in $G_{\rm dir}$ such that
${\rm Fict}(BF^*_{i,{\rm dir}})$ consists of all the edges in the $J^*$ for all the $Lf$ balanced exceptional systems $J$ contained in $BF_i$.
Thus we can apply Lemma~\ref{rdeclemma'} to $(G_{\rm dir},\cP_{{\rm bi}},\cP''_{{\rm bi}},R_{{\rm bi}},C_{{\rm bi}},U_{{\rm bi},\ell'},U'_{{\rm bi},\ell'})$
with $2K$, $Lf$, $\eps'$ playing the roles of $k$, $q$, $\eps$
in order to obtain a spanning subdigraph $CA_{{\rm dir}}(r)$
of $G_{\rm dir}$ which satisfies Lemma~\ref{rdeclemma'}(i). Hence the underlying undirected graph $CA(r)$ of
$CA_{{\rm dir}}(r)$ satisfies Corollary~\ref{rdeccorz}(i). Indeed, to check that $CA(r)$ and $\mathcal{BF}$ are edge-disjoint, by Lemma~\ref{rdeclemma'}(i)%
\COMMENT{osthus added by Lemma~\ref{rdeclemma'}(i)} 
it suffices to
check that $CA(r)$ avoids all edges in all the balanced exceptional systems $J$ contained in $BF_i$ (for all $i\le r_3$). But this
follows since $E(G_{\rm dir})\supseteq E(CA(r))$ consists only of $AB$-edges by (BSch2$'$) and since no balanced exceptional system
contains an $AB$-edge by~(BES2).%
   \COMMENT{Daniela added the last 2 sentences}

Now let $BF'_1,\dots, BF'_{r^\diamond}$ be balanced exceptional factors as described in Corollary~\ref{rdeccorz}(ii).
Similarly as before, for each $i\le r^\diamond$, $(BF'_i)^*_{\rm dir}$ is a special factor
with parameters $(1,7)$ with respect to $C$, $\mathcal P$ in $G_{\rm dir}$ such that
${\rm Fict}((BF'_i)^*_{\rm dir})$ consists of all the edges in the $J^*$ over all the $7$ balanced exceptional systems $J$ contained in $BF'_i$.
Thus we can apply Lemma~\ref{rdeclemma'} to obtain a spanning subdigraph $PCA_{{\rm dir}}(r)$
of $G_{\rm dir}$ which satisfies Lemma~\ref{rdeclemma'}(ii)(a) and~(ii)(b). Hence the underlying undirected graph
$PCA(r)$ of $PCA_{{\rm dir}}(r)$ satisfies Corollary~\ref{rdeccorz}(ii)(a).

It remains to check that Corollary~\ref{rdeccorz}(ii)(b) holds too. Thus let $H$ be as described in Corollary~\ref{rdeccorz}(ii)(b).
Let $H_{{\rm dir}}$ be an $r$-regular orientation of $H$. (To see that such an orientation exists, apply Petersen's theorem to obtain
a decomposition of $H$ into $2$-factors and then orient each $2$-factor to obtain a (directed) $1$-factor.)%
     \COMMENT{However, even if $H\subseteq G'$, this orientation might not agree with $G_{\rm dir}$, i.e.~we might not have that
$H\subseteq G_{\rm dir}$. This is the reason why we cannot assume that $H\subseteq G$ in Lemma~\ref{rdeclemma'}(ii)(b).
($H\subseteq G$ would fit better to the remainder of the Kelly paper.)}
Let $\mathcal{BF}^*_{\rm dir}$ be the union of the $BF^*_{i,{\rm dir}}$ over all $i\le r_3$
and let $(\mathcal{BF}')^*_{\rm dir}$ be the union of the $(BF'_i)^*_{\rm dir}$ over all $i\le r^\diamond$.
Then Lemma~\ref{rdeclemma'}(ii)(b) implies that
$H_{{\rm dir}}+ CA_{{\rm dir}}(r)+ PCA_{{\rm dir}}(r)+ \mathcal{BF}^*_{\rm dir}+ (\mathcal{BF}')^*_{\rm dir}$
has a decomposition into $s'$ edge-disjoint (directed) Hamilton cycles $C'_1,\dots,C'_{s'}$ such that each $C'_i$ contains
$BEPS^*_{i,{\rm dir}}$ for some balanced exceptional path system $BEPS_i$ from $\mathcal{BEPS}$.
Let $C_i$ be the undirected graph obtained from $C'_i-BEPS^*_{i,{\rm dir}}+BEPS_i$ by ignoring the directions of all
the edges. Then Proposition~\ref{prop:CEPSbiparite} (applied with $G'$ playing the role of $G$) implies that $C_1,\dots,C_{s'}$ is
a decomposition of $H+ G^{\rm rob}=H+ CA(r)+ PCA(r)+ \mathcal{BF}+ \mathcal{BF}'$ into edge-disjoint Hamilton cycles.
\endproof

\section{Proof of Theorem~$\text{\ref{NWmindegbip}}$}\label{sec:proof1}

The proof of Theorem~\ref{NWmindegbip} is similar to that of Theorem~\ref{1factbip} except that we  do not need to apply
the robust decomposition lemma in the proof of Theorem~\ref{NWmindegbip}. For both results, we will need an approximate decomposition result
(Lemma~\ref{almostthmbip}), which is stated below and proved in Chapter~\ref{paper3}.%
    \COMMENT{osthus changed the last sentence to refer to 3.2}
Lemma~\ref{almostthmbip} is a bipartite analogue of Lemma~\ref{almostthm}. It extends a suitable set of balanced exceptional systems into a set of edge-disjoint Hamilton cycles covering most edges of an almost complete 
and almost balanced bipartite graph.%
\COMMENT{osthus added extra sentence}
\begin{lemma}\label{almostthmbip}
Suppose that $0<1/n \ll \eps_0  \ll 1/K \ll \rho  \ll 1$ and $0 \leq \mu \ll 1$,
where $n,K \in \mathbb N$ and $K$ is even.
Suppose that $G$ is a graph on $n$ vertices and $\mathcal{P}$ is a $(K, m, \eps _0)$-partition of $V(G).$
Furthermore, suppose that the following conditions hold:%
	\COMMENT{Previously we had `$(G[A,B],\mathcal{P})$ is a $(K, m, \eps _0, \eps )$-bi-scheme'. now removed.}
\begin{itemize}
	\item[{\rm (a)}] $d(w,B_i) = (1 - 4 \mu \pm 4 /K) m $ and $d(v,A_i) = (1 - 4 \mu \pm 4 /K) m $ for all
	$w \in A$, $v \in B$ and $1\leq i \leq K$.%
	\COMMENT{Daniela swapped $v$ and $w$, now it is the same as in the proofs}
	\item[{\rm (b)}] There is a set $\mathcal J$ which consists of at most $(1/4-\mu - \rho)n$ edge-disjoint balanced exceptional systems with parameter $\eps_0$ in~$G$.
	\item[{\rm (c)}] $\mathcal J$ has a partition into $K^4$ sets $\mathcal J_{i_1,i_2,i_3,i_4}$ (one for all $1\le \I \le K$) such that each $\mathcal J_{\I}$ consists of precisely $|\mathcal J|/{K^4}$ $\i$-BES with respect to~$\cP$.
   \item[{\rm (d)}] Each $v \in A \cup B$ is incident with an edge in $J$ for at most $2 \eps_0 n $ $J \in \mathcal{J}$.%
   \COMMENT{This is a new property. This condition is implied by the fact that $(G[A,B],\mathcal{P})$ is a $(K, m, \eps _0, \eps )$-bi-scheme.($|A_0 \cup B_0| \le \eps_0 n$
and $\Delta( G[A]),\Delta( G[B]) \le \eps_0 n$.)}
\end{itemize}
Then $G$ contains $|\mathcal J|$ edge-disjoint Hamilton cycles such that each of these Hamilton cycles contains some $J\in\mathcal{J}$.
\end{lemma}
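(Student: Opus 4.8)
\textbf{Proof proposal for Lemma~\ref{almostthmbip}.}
The plan is to transform the problem of finding Hamilton cycles in $G$ (which contains the exceptional set $V_0$) into the problem of finding Hamilton cycles in an auxiliary \emph{bipartite} (multi)graph $G^*$ on vertex classes $A$ and $B$, using the substitution $J \mapsto J^*$ introduced in Section~\ref{BESstar}. Recall that for each balanced exceptional system $J\in\mathcal{J}$, Proposition~\ref{CES-H} tells us that a Hamilton cycle of $G[A\cup B]+J^*$ which is consistent with $J^*$ corresponds to a Hamilton cycle of $G$ containing $J$. So first I would form $G^*$ from $G[A,B]$ by adding, for every $J\in\mathcal{J}$, the fictive matching $J^*$; since each $J$ has $e(J)\le\eps_0 n$ by (BES4), and by~(d) each vertex of $A\cup B$ lies in at most $2\eps_0 n$ of the $J$, the total number of fictive edges added at any vertex is at most $4\eps_0 n$. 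Thus $G^*$ is still bipartite, roughly $(1-4\mu)m$-regular up to an error term of order $\eps_0 n + 4/K\cdot m$, and almost balanced between $A$ and $B$.

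The core of the argument is then an approximate Hamilton decomposition of $G^*$ in which each Hamilton cycle is \emph{consistent} with one prescribed fictive matching $J^*$. This is exactly the kind of statement proved by the techniques of Chapter~\ref{paper3} (the analogue, in the bipartite setting, of the construction used for Lemma~\ref{almostthm} in the two cliques case): one uses condition~(a) to ensure that $G[A,B]$ (and hence $G^*$) is close to regular and close to complete bipartite, and one uses the partition structure from~(c) — that $\mathcal{J}$ splits evenly into the $K^4$ localized classes $\mathcal{J}_{i_1,i_2,i_3,i_4}$ — to distribute the fictive matchings evenly so that the standard randomized absorption / Hamilton cycle extension machinery can be run. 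The key point is that a fictive matching $J^*$ consists of at most $\eps_0 n$ edges whose endpoints are localized to a bounded number of clusters, so extending $J^*$ (with its prescribed vertex ordering $x_1,y_1,\dots,x_{s'},y_{s'}$) into a Hamilton cycle of $G^*$ using only genuine $AB$-edges of $G$ is feasible greedily plus an approximate-decomposition step, since the leftover graph stays dense enough throughout (we remove only $|\mathcal{J}| \le n/4$ cycles, and $|\mathcal{J}|/K^4$ cycles per localized class). Having produced $|\mathcal{J}|$ edge-disjoint Hamilton cycles $C^*_1,\dots,C^*_{|\mathcal{J}|}$ of $G^*$ with $C^*_t$ consistent with $J^*_t$, Proposition~\ref{CES-H}(ii) converts each $C^*_t$ back into a Hamilton cycle $C_t$ of $G$ with $J_t\subseteq C_t$; since the $C^*_t$ are edge-disjoint in $G^*$ (and fictive edges are formally distinct), the $C_t$ are edge-disjoint in $G$, as required.

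I would carry this out in the following order: (1) set up the auxiliary bipartite graph $G^*$ and verify its near-regularity and near-balancedness from~(a), (b), (d); (2) invoke the bipartite approximate decomposition result of Chapter~\ref{paper3} — which takes as input a near-complete near-balanced bipartite graph together with an evenly-distributed family of localized `fictive' matchings (each small and consistent-orderable) — to obtain the consistent Hamilton cycles $C^*_t$ in $G^*$; (3) apply Proposition~\ref{CES-H}(ii) to pull back to $G$ and check edge-disjointness.

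The main obstacle is step~(2): one must check that the family $\{J^*_t\}$ genuinely satisfies the hypotheses of the approximate decomposition lemma of Chapter~\ref{paper3} — in particular that the fictive matchings are sufficiently sparse and localized (which follows from (BES4), (BES2) and the definition of $J^*$), that they are distributed evenly across the $K^4$ localized classes (from~(c)), and that $G^*$ remains dense and balanced enough after removing the used edges at each stage (from~(a) together with $|\mathcal{J}|\le(1/4-\mu-\rho)n$, so that a proportion $\rho$ of slack remains). Everything else — the regularization bookkeeping and the pull-back via Proposition~\ref{CES-H} — is routine.
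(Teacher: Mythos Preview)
Your high-level framework is right: replace each $J$ by the fictive bipartite matching $J^*$, work entirely on $A\cup B$, and pull back via Proposition~\ref{CES-H}. Step~(1) and step~(3) of your plan are exactly what the paper does. The problem is step~(2).

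Your step~(2) is circular. Lemma~\ref{almostthmbip} \emph{is} ``the bipartite approximate decomposition result of Chapter~\ref{paper3}'': that chapter exists precisely to prove Lemma~\ref{almostthmbip} (and its two-clique analogue Lemma~\ref{almostthm}). There is no black-box statement in Chapter~\ref{paper3} that takes a near-complete bipartite graph plus a family of localized fictive matchings and returns consistent Hamilton cycles; that is the lemma you are asked to prove. Saying ``extending $J^*$ into a Hamilton cycle is feasible greedily plus an approximate-decomposition step'' skips the entire technical content.

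Here is what the paper actually does, and what is missing from your outline. First, $G[A,B]$ is decomposed (Lemma~\ref{sysdecombip}) into $K/2$ edge-disjoint ``cyclic systems'' $(G_{j,{\rm dir}},\mathcal{Q},C_j)$, where each $G_j$ winds around a directed Hamilton cycle $C_j$ on the $2K$ clusters; a sparse reservoir graph $H_j$ is set aside from each, and the $J^*_{\rm dir}$ are distributed among the $C_j$. The key point you are missing is that a matching $J^*_{\rm dir}$ is \emph{not} locally balanced with respect to $C_j$: its edges go from $A_{i_1}\cup A_{i_2}$ to $B_{i_3}\cup B_{i_4}$, so one cannot directly extend it to a Hamilton cycle that winds around $C_j$. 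The paper therefore first extends each $J^*_{\rm dir}$ to a locally balanced path sequence (a ``balanced extension'', Lemma~\ref{balanceextensionbip}) by adding carefully chosen edges from the reservoir $H_j$; this step is genuinely delicate in the bipartite case and uses condition~(d). Only then does Lemma~\ref{merging} --- which \emph{does} require a cyclic system and a balanced extension as input, not just a dense bipartite graph --- produce the directed Hamilton cycles consistent with each $J^*_{\rm dir}$. Your proposal mentions none of these three ingredients (cyclic systems, balanced extensions, Lemma~\ref{merging}), and the balanced-extension step in particular is not something that falls out of a greedy argument.
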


To%
\COMMENT{osthus added para} 
prove Theorem~\ref{NWmindegbip}, we find a bi-framework via Corollary~\ref{coverA0B02c}.
Then we choose suitable balanced exceptional systems using Corollary~\ref{BEScor}.
Finally, we extend these into Hamilton cycles using Lemma~\ref{almostthmbip}.

\removelastskip\penalty55\medskip\noindent{\bf Proof of Theorem~\ref{NWmindegbip}. }
\noindent\textbf{Step 1: Choosing the constants and a bi-framework.}
By%
   \COMMENT{Daniela changed this proof substantially, eg bi-schemes are not used anymore. So read all of it again}
making $\alpha$ smaller if necessary, we may assume that $\alpha\ll 1$.
Define new constants such that 
\begin{align}
0 & < 1/n_0 \ll \eps_{\rm ex} \ll \epszero \ll \eps'_0\ll \eps' \ll \eps_1 \ll \eps_2 \ll \eps_3 \ll
\eps_4 \ll 1/K\ll \alpha \ll \eps\ll 1, \nonumber
\end{align}
where $K\in  \mathbb{N}$ and $K$ is even.%
\COMMENT{$\eps_3$ and $\eps_4$ are implicitly used when applying Corollary~\ref{BEScor}. }

Let $G$, $F$ and $D$ be as in Theorem~\ref{NWmindegbip}. 
Apply Corollary~\ref{coverA0B02c} with $\eps_{\rm ex}$, $\eps_0$ playing the role of $\eps$, $\eps^*$
to find a set $\cC_1$ of at most $\eps _{\rm ex} ^{1/3} n$ edge-disjoint Hamilton cycles in $F$ so that the graph $G_1$ obtained from $G$
by deleting all the edges in these Hamilton cycles forms part of an
$(\eps_0,\eps',K,D_1)$-bi-framework $(G_1,A,A_0,B,B_0)$ with $D_1 \ge D-2\eps_{\rm ex}^{1/3}n$.
Moreover, $F$ satisfies~(WF5) with respect to $\eps '$ and
\begin{equation}\label{eq:sizeC1}
|\cC_1|=(D-D_1)/2.
\end{equation}
In particular, this implies that $\delta (G_1) \geq D_1$ and that $D_1$ is even (since $D$ is even). 
Let $F_1$ be the graph obtained from $F$ by deleting all those edges lying on Hamilton cycles
in $\cC_1$. Then 
\begin{equation}\label{eq:degF1}
\delta(F_1) \ge \delta(F)-2|\cC_1|\ge (1/2-3\eps_{\rm ex}^{1/3})n.
\end{equation}
 Let
\begin{align*}
m :=\frac{|A|}{K}=\frac{|B|}{K} \qquad \text{and} \qquad t_{K}:=\frac{(1-20\eps_4)D_1}{2K^4}.
\end{align*}
By changing $\eps_4$ slightly, we may assume that $t_K\in\mathbb{N}$.

\smallskip

\noindent\textbf{Step 2: Choosing a $(K,m,\eps_0)$-partition $\cP$.}
Apply Lemma~\ref{part} to the bi-framework $(G_1,A,A_0,B,B_0)$ with $F_1$, $\eps_0$
playing the roles of $F$, $\eps$ in order to obtain partitions $A_1,\dots,A_{K}$ and
$B_1,\dots,B_{K}$ of $A$ and $B$ into sets of size $m$ such that together with $A_0$ and $B_0$ the sets
$A_i$ and $B_i$ form a $(K,m,\eps_0,\eps_1,\eps_2)$-partition $\cP$ for $G_1$.%

Note that by Lemma~\ref{part}(ii) and since $F$ satisfies (WF5),
for all $x \in A$ and $1 \leq j \leq K$, we have
\begin{eqnarray}\label{eq:degF'1}
d_{F_1}(x,B_j)& \ge & \frac{d_{F_1}(x,B) - \eps_1n}{K} \stackrel{{\rm (WF5)}}{\ge} \frac{d_{F_1}(x)-\eps' n -|B_0|- \eps_1 n}{K}\nonumber\\
& \stackrel{(\ref{eq:degF1})}{\ge} & \frac{(1/2-3\eps_{\rm ex}^{1/3})n- 2\eps_1 n}{K}
\ge (1-5\eps_1)m_.
\end{eqnarray}
Similarly, $d_{F_1}(y,A_i)\ge (1-5\eps_1)m$%
\COMMENT{Andy: deleted a prime}
for all $y\in B$ and $1 \leq i \leq K$.

\smallskip

\noindent\textbf{Step 3: Choosing balanced exceptional systems for the almost decomposition.}
Apply Corollary~\ref{BEScor} to the $(\eps_0,\eps',K,D_1)$-bi-framework $(G_1$,$A$,$A_0$,$B$,$B_0)$ with
$F_1$, $G_1$, $\eps_0$, $\eps'_0$, $D_1$ playing the roles
of $F$, $G$, $\eps$, $\eps_0$, $D$. Let $\cJ'$ be the union of the sets $\cJ_{i_1i_2i_3i_4}$ guaranteed by Corollary~\ref{BEScor}.
So $\cJ'$ consists of $K^4t_{K}$ edge-disjoint balanced exceptional systems with parameter $\eps'_0$ in $G_1$ (with respect to~$\mathcal{P}$).
Let $\cC_2$ denote the set of $10 \eps _4 D_1$ Hamilton cycles guaranteed by Corollary~\ref{BEScor}.
Let $F_2$ be the subgraph obtained from $F_1$ by deleting all the Hamilton cycles in~$\cC_2$.  
Note that 
\begin{equation}\label{eq:D2zz}
D_2:=D_1 -2|\mathcal C_2| =(1-20 \eps _4)D_1=2K^4 t_K =2|\cJ'|.
\end{equation}
\noindent\textbf{Step 4: Finding the remaining Hamilton cycles.}
Our next aim is to apply Lemma~\ref{almostthmbip} with $F_2$, $\cJ'$,  $\eps'$ playing the
roles of $G$, $\cJ$,  $\eps_0$.

Clearly, condition~(c) of Lemma~\ref{almostthmbip} is satisfied.
In order to see that condition (a) is satisfied, let $\mu:=1/K$ and note that for all $w\in A$ we have 
$$
d_{F_2}(w,B_i)\ge d_{F_1}(w,B_i)-2|\cC_2|\stackrel{(\ref{eq:degF'1})}{\ge} (1-5\eps_1)m-20\eps_4 D_1\ge (1-1/K)m.
$$
Similarly $d_{F_2}(v,A_i)\ge (1-1/K)m$ for all $v\in B$.

To check condition~(b), note that
$$|\cJ'|\stackrel{(\ref{eq:D2zz})}{=} \frac{D_2}{2}\le \frac{D}{2}\le (1/2-\alpha)\frac{n}{2}\le (1/4-\mu-\alpha/3)n .$$
Thus condition~(b) of Lemma~\ref{almostthmbip} holds with $\alpha/3$ playing the role of $\rho$.
Since the edges in $\cJ'$ lie in $G_1$ and  $(G_1,A,A_0,B,B_0)$ is an $(\eps_0,\eps',K,D_1)$-bi-framework,
(BFR5) implies that each $v \in A \cup B$ is incident with an edge in $J$ for at most $\eps ' n+|V_0|\leq 2\eps 'n$
$J \in \cJ '$. 
(Recall that in a balanced exceptional system there are no edges between $A$ and $B$.)
So condition~(d) of Lemma~\ref{almostthmbip} holds with $\eps '$ playing the role of $\eps _0$.

So we can indeed apply Lemma~\ref{almostthmbip} to obtain a collection $\cC_3$ of $|\cJ'|$ edge-disjoint Hamilton cycles in $F_2$
which cover all edges of $\bigcup \cJ'$. Then $\cC_1\cup \cC_2\cup \cC_3$ is a set of edge-disjoint Hamilton cycles in~$F$
of size
$$|\cC_1|+|\cC_2|+|\cC_3|\stackrel{(\ref{eq:sizeC1}),(\ref{eq:D2zz})}{=} \frac{D-D_1}{2}+ \frac{D_1-D_2}{2}+ \frac{D_2}{2}=\frac{D}{2},$$
as required.
\endproof


\section{Proof of Theorem~$\text{\ref{1factbip}}$}\label{sec:proof2}

As mentioned earlier, the proof of Theorem~\ref{1factbip} is similar to that of Theorem~\ref{NWmindegbip} except that we will also need to apply 
the robust decomposition lemma (Corollary~\ref{rdeccorz}). This means Steps 2--4 and Step 8 in the proof of Theorem~\ref{1factbip} did not appear
in the proof of Theorem~\ref{NWmindegbip}.
Steps 2--4 prepare the ground for the application of the robust decomposition lemma and in Step~8 we apply it to cover the leftover from the approximate decomposition step
with Hamilton cycles. Steps 5--7 contain the approximate decomposition step, using Lemma~\ref{almostthmbip}.
\COMMENT{osthus added extra sentence}

In our proof of Theorem~\ref{1factbip} it will be convenient to work with an undirected version of the bi-schemes introduced in Section~\ref{sec:findBFq}.
Given a graph $G$ and partitions $\mathcal P$ and $\cP'$ of a vertex set $V$, we call $(G, \mathcal{P},\mathcal{P}')$ a
\emph{$(K,L,m,\eps_0, \eps)$-bi-scheme} if the following properties hold:%
   \COMMENT{Daniela changed def to make $G$ into a bipartite graph with vertex classes $A$ and $B$ (ie replacing the old $G$ by $G-V_0$).
Also deleted the def of a $(K,m,\eps_0, \eps)$-bi-scheme} 
\begin{itemize}
\item[(BSch$1$)] $(\mathcal{P},\mathcal{P}')$ is a $(K,L,m,\eps_0)$-partition of $V$. Moreover, $V(G)=A\cup B$.
\item[(BSch$2$)] Every edge of $G$ joins some vertex in $A$ to some vertex in~$B$.
\item[(BSch$3$)] $d_G(v,A_{i,j}) \geq (1 - \eps) m/L $ and $d_G(w,B_{i,j}) \geq (1 - \eps) m/L $ for all
    $v \in B$, $w \in A$, $ i \leq K$ and $j\leq L$.
\end{itemize}
We will also use the following proposition.

\begin{prop}\label{lem:dirschemezz}
Suppose that $K, L, n, m/L \in \mathbb{N}$ and $0 <1/n  \ll \eps, \eps_0\ll 1$.
Let $(G,\mathcal{P}, \mathcal{P}')$ be a $(K, L, m, \epszero, \eps)$-bi-scheme with $|G| = n$.
Then there exists an orientation $G_{\rm dir}$ of $G$%
   \COMMENT{Daniela replaced $G-V_0$ by $G$ (also twice in the first para of the proof)}
such that $(G_{\rm dir}, \mathcal{P},\mathcal{P}')$ is a
$[K,L,m,\eps_0,2\sqrt{\eps}]$-bi-scheme.
\end{prop}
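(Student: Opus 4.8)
\textbf{Proof plan for Proposition~\ref{lem:dirschemezz}.}
The plan is to obtain $G_{\rm dir}$ by orienting each edge of $G$ independently and uniformly at random, and then to verify that the four conditions (BSch$1'$)--(BSch$4'$) of a $[K,L,m,\eps_0,2\sqrt{\eps}]$-bi-scheme hold with positive probability (in fact with high probability). Conditions (BSch$1'$) and (BSch$2'$) are immediate and deterministic: (BSch$1'$) says exactly that $(\mathcal{P},\mathcal{P}')$ is a $(K,L,m,\eps_0)$-partition of $V$ and that $V(G_{\rm dir})=A\cup B$, which is inherited verbatim from (BSch$1$); and (BSch$2'$) says every edge has one endvertex in $A$ and one in $B$, which is inherited from (BSch$2$) regardless of orientation. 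So the content is in (BSch$3'$) and (BSch$4'$).

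For (BSch$3'$), first I would observe that (BSch$3$) together with $|A_{i,j}|=|B_{i,j}|=m/L$ says that the underlying \emph{undirected} bipartite graph $G[A_{i,j},B_{i',j'}]$ has minimum degree at least $(1-\eps)m/L$ in each class; by Fact~\ref{simplefact} this is $[\sqrt{\eps},1]$-superregular as an undirected graph, and in particular $[\sqrt{\eps},\ge 1/2]$-regular with density close to $1$. Now, fixing a pair of clusters $A_{i,j}$, $B_{i',j'}$, for each vertex $v$ its out-neighbourhood into the other cluster is obtained by keeping each of its $(1\pm\eps)m/L$ undirected neighbours independently with probability $1/2$; a Chernoff bound (Proposition~\ref{chernoff}) shows $d^+_{G_{\rm dir}}(v, B_{i',j'})=(1/2\pm\sqrt{\eps})m/L$ with probability at least $1-1/n^2$, and similarly for in-degrees and for the pair $A_i,B_j$. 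Taking a union bound over all $O(n)$ vertices and $O(K^2L^2)$ pairs shows that with high probability all degree conditions in the definition of $[2\sqrt{\eps},1/2]$-superregularity hold. The $\eps$-regularity (as opposed to just the degree condition) of the oriented pair follows because the undirected pair is $[\sqrt{\eps},1]$-superregular, hence for any large subsets $S\subseteq A_{i,j}$, $T\subseteq B_{i',j'}$ almost all pairs $(s,t)$ with $s\in S$, $t\in T$ span an undirected edge, and an application of Proposition~\ref{chernoff} to the random orientation of those edges shows the directed density $d^+(S,T)$ concentrates at $1/2$; this gives $[2\sqrt{\eps},1/2]$-superregularity of $G_{\rm dir}[A_{i,j},B_{i',j'}]$, and the same argument for $G_{\rm dir}[A_i,B_j]$.

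For (BSch$4'$), fix distinct $x,y\in A$, a cluster $A_i$ and index $j\le L$. By (BSch$3$), $x$ has at least $(1-\eps)m/L$ undirected neighbours in $B_{i,j}$ and so does $y$; hence $|N_G(x)\cap N_G(y)\cap B_{i,j}|\ge (1-2\eps)m/L$. A vertex $z$ in this common neighbourhood lies in $N^+_{G_{\rm dir}}(x)\cap N^-_{G_{\rm dir}}(y)$ precisely when the edge $xz$ is oriented from $x$ to $z$ and $yz$ from $z$ to $y$, which happens independently with probability $1/4$; so the expected size of $N^+_{G_{\rm dir}}(x)\cap N^-_{G_{\rm dir}}(y)\cap B_{i,j}$ is at least $(1-2\eps)m/(4L)$, and Proposition~\ref{chernoff} gives that it is at least $(1-2\sqrt{\eps})m/(5L)$ with probability at least $1-1/n^2$. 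A union bound over all ordered pairs $x,y$ and all $i,j$ (and symmetrically over pairs in $B$ with clusters $A_{i,j}$) shows (BSch$4'$) holds with high probability. The main obstacle — though a mild one — is being slightly careful that the Chernoff estimate for the directed \emph{density} over subsets (not just degrees) genuinely upgrades $\eps$-regularity of the undirected pair to $2\sqrt{\eps}$-regularity of the oriented pair; this is routine but must be done for each of the $O(K^2L^2)$ pairs simultaneously via a union bound, which is fine since the error probability per event is at most $1/n^2$ and there are only polynomially many events. Combining, with positive probability all of (BSch$1'$)--(BSch$4'$) hold, yielding the desired orientation $G_{\rm dir}$.
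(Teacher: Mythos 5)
Your proposal is correct and matches the paper's argument: random uniform orientation, note that (BSch$1'$) and (BSch$2'$) are inherited deterministically, use Fact~\ref{simplefact} together with (BSch$3$) to get $[\sqrt{\eps},1]$-superregularity of the undirected pairs, and then apply the Chernoff--Hoeffding bound (Proposition~\ref{chernoff}) to verify both (BSch$3'$) and (BSch$4'$) with high probability. The paper states this in three sentences and leaves the Chernoff details implicit, whereas you have spelled them out, but there is no difference in substance.
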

\proof 
Randomly orient every edge in $G$ to obtain an oriented graph $G_{\rm dir}$. (So given any edge $xy$ in $G$
with probability $1/2$, $xy \in E(G_{\rm dir})$ and with probability $1/2$, $yx \in E(G_{\rm dir})$.)
(BSch$1'$) and (BSch$2'$) follow immediately from (BSch$1$) and (BSch$2$).

Note that Fact~\ref{simplefact} and (BSch$3$) imply that $G[A_{i,j},B_{i',j'}]$ is $[1, \sqrt{\eps}]$-superregu\-lar
with density at least $1-\eps$,
for all $i,i'\leq K$ and $j,j' \leq L$.
Using this, (BSch$3'$) follows easily from the large deviation bound in Proposition~\ref{chernoff}.
(BSch$4'$) follows from Proposition~\ref{chernoff} in a similar way.%
\COMMENT{osthus moved this into a comment: 
Given any distinct $x,y \in A$ and any $i\leq K$, $j\leq L$,  $|N_G (x) \cap N_G (y) \cap B_{i,j}|\geq (1-2\eps)m/L$ by (BSch$3$).
Given any vertex $z$ in $N_G (x) \cap N_G (y) \cap B_{i,j}$, $z$ is an element of
$N^+_{G_{\rm dir}} (x) \cap N^-_{G_{\rm dir}} (y) \cap B_{i,j}$ with probability $1/4$ (independent of any
other vertex $z'$ in $N_G (x) \cap N_G (y) \cap B_{i,j}$). Thus,
$$\mathbb E (|N^+_{G_{\rm dir}} (x) \cap N^-_{G_{\rm dir}} (y) \cap B_{i,j}|) \geq (1-2\eps)m/4L.$$
Hence, Proposition~\ref{chernoff} for the binomial distribution implies that, with high probability
$$|N^+_{G_{\rm dir}} (x) \cap N^-_{G_{\rm dir}} (y) \cap B_{i,j}|\geq (1-2\sqrt{\eps})m/5L$$
for all $x,y\in A$ and all $i\le K$ and $j\le L$.%
An analogous argument shows that
with high probability
$$|N^+_{G_{\rm dir}} (x) \cap N^-_{G_{\rm dir}} (y) \cap A_{i,j}|\geq (1-2\sqrt{\eps})m/5L$$
for all $x,y\in B$ and all $i\le K$ and $j\le L$.%
Therefore, with high probability, (BSch$4'$) is satisfied.
Fact~\ref{simplefact} and (BSch$3$) imply that $G[A_{i,j},B_{i',j'}]$ is $[1, \sqrt{\eps}]$-superregular
with density at least $1-\eps$,
for all $i,i'\leq K$ and $j,j' \leq L$. Let $G'_{\rm dir}:=G_{\rm dir} [A_{i,j}, B_{i',j'}]$.
Thus, given any $X \subseteq A_{i,j}$, $Y \subseteq B_{i',j'}$ with $|X|,|Y|\geq \sqrt{\eps}m/L$,
$$\frac{1}{2} (1-\eps -\sqrt{\eps})|X||Y| \leq \mathbb E( e_{G'_{\rm dir}} (X,Y)) \leq \frac{1}{2}|X||Y|.$$
Hence, Proposition~\ref{chernoff} for the binomial distribution implies that, with high probability,%
$$(1/2-3\sqrt{\eps}/2)|X||Y| \leq 
e_{G'_{\rm dir}} (X,Y)\leq (1/2+3\sqrt{\eps}/2)|X||Y|$$
for all $X \subseteq A_{i,j}$, $Y \subseteq B_{i',j'}$ with $|X|,|Y|\geq \sqrt{\eps}m/L$.
 Further, by (BSch$3$) and Proposition~\ref{chernoff} for the binomial distribution,
with high probability
$$(1/2-2{\eps})m/L \leq \delta (G'_{\rm dir} ), 
\Delta (G'_{\rm dir} )
\leq (1/2+2{\eps})m/L.$$
Together, this implies that with high probability $G'_{\rm dir}$ is $[2 \sqrt{\eps}, 1/2]$-superregular.
Analogous arguments show that, with high probability (BSch$3'$) is satisfied. Thus, with high probability
$(G_{\rm dir}, \mathcal{P},\mathcal{P}')$ is a
$[K,L,m,\eps_0,2\sqrt{\eps}]$-bi-scheme, as desired.}
\endproof

\removelastskip\penalty55\medskip\noindent{\bf Proof of Theorem~\ref{1factbip}. }

\noindent\textbf{Step 1: Choosing the constants and a bi-framework.}
Define new constants such that%
    \COMMENT{$\eps_3$ and $\eps_4$ are implicitly used when applying Corollary~\ref{BEScor}. } 
\begin{align}
0 & < 1/n_0 \ll \eps_{\rm ex} \ll \eps_* \ll \epszero \ll \eps'_0\ll \eps' \ll \eps_1 \ll \eps_2  \ll \eps_3 \ll
\eps_4\ll 1/K_2  \\
 &  \ll \gamma \ll 1/K_1 \ll  \eps''\ll 1/L \ll 1/f \ll \gamma_1  \ll 1/g \ll \eps\ll 1, \nonumber
\end{align}
where $K_1, K_2, L, f, g \in  \mathbb{N}$ and both $K_2$, $g$ are even.%
   \COMMENT{Daniela added $g$ even, we need this when applying Cor~\ref{rdeccorz} with $\ell':=g$} 
Note that we can choose the constants such that
$$\frac{K_1}{28fgL}, \frac{K_2}{4gLK_1}, \frac{4fK_1}{3g(g-1)} \in \mathbb{N}.$$

Let $G$ and $D$ be as in Theorem~\ref{1factbip}. By applying Dirac's theorem to remove a suitable number of edge-disjoint Hamilton cycles if necessary,
we may assume that $D\le n/2$. 
Apply Corollary~\ref{coverA0B02c} with $G$, $\eps_{\rm ex}$, $\eps_*$, $\eps _0$, $K_2$ playing the roles of $F$, $\eps$, $\eps ^*$, $\eps'$, $K$
to find a set $\cC_1$ of 
at most $\eps _{\rm ex} ^{1/3} n$ 
edge-disjoint Hamilton cycles in $G$ so that the graph $G_1$ obtained from $G$
by deleting all the edges in these Hamilton cycles forms part of an
$(\eps_*,\eps_0,K_2,D_1)$-bi-framework $(G_1,A,A_0,B,B_0)$, where%
   \COMMENT{Daniela added $=D-2|\cC_1|$ below, osthus added $|A|+\eps_0 n \ge n/2$ } 
\begin{equation} \label{D1eq}
|A|+\eps_0 n \ge n/2 \ge D_1 =D-2|\cC_1|\ge D-2\eps_{\rm ex}^{1/3}n\ge D-\eps_0 n \ge n/2-2\eps_0 n \ge |A|-2\eps_0 n.
\end{equation}
Note that  $G_1$ is $D_1$-regular and that $D_1$ is even since $D$ was even.
Moreover, since $K_2/LK_1\in\mathbb{N}$, $(G_1,A,A_0,B,B_0)$ is also an $(\eps_*,\eps_0,K_1L,D_1)$-bi-framework
and thus an $(\eps_*,\eps',K_1L,D_1)$-bi-framework.

Let%
     \COMMENT{$r^\diamond$ is $t$ in the $2$-clique case}
\begin{align*}
m_1 & :=\frac{|A|}{K_1}=\frac{|B|}{K_1}, \qquad r:=  \gamma m_1, \qquad
r_1 := \gamma_1 m_1, \qquad
r_2:= 192g^3K_1r, \\
r_3 & := \frac{2rK_1}{L}, \qquad r^\diamond := r_1 +r_2 +r -(Lf -1)r_3, \\
D_4 & :=D_1-2(Lfr_3+7r^\diamond), \qquad t_{K_1L} :=\frac{(1-20\eps_4)D_1}{2(K_1L)^4} .
\end{align*}
Note that (BFR4) implies $m_1/L\in \mathbb{N}$. Moreover, 
\begin{equation}\label{eq:rszzz}
r_2,r_3\le \gamma^{1/2}m_1\le \gamma^{1/3} r_1,  \qquad  r_1/2\le r^\diamond\le 2r_1.
\end{equation}
Further, by changing $\gamma,\gamma_1,\eps_4$ slightly, we may assume that $r/K^2_2,r_1,t_{K_1L}\in\mathbb{N}$.
Since $K_1/L\in\mathbb{N}$ this implies that $r_3\in\mathbb{N}$.
Finally, note that 
\begin{equation} \label{D4bound}
(1+3\eps_*)|A| \ge D \ge D_4  \stackrel{(\ref{eq:rszzz})}{\ge} D_1-\gamma_1 n \stackrel{(\ref{D1eq})}{\ge} |A|-2\gamma_1 n \ge (1-5\gamma_1)|A|.
\end{equation}

\noindent\textbf{Step 2: Choosing a $(K_1, L, m_1, \epszero)$-partition $(\mathcal{P}_1,\mathcal{P}'_1)$.}
We now prepare the ground for the construction of the robustly decomposable graph $G^{\rm rob}$, 
which we will obtain via the robust decomposition lemma (Corollary~\ref{rdeccorz}) in Step~4.

Recall that $(G_1,A,A_0,B,B_0)$ is an $(\eps_*,\eps',K_1L,D_1)$-bi-framework.
Apply Lem\-ma~\ref{part} with $G_1$, $D_1$, $K_1L$, $\eps_*$ playing the roles of $G$, $D$, $K$, $\eps$ to obtain partitions
$A'_1,\dots,A'_{K_1L}$ of $A$ and $B'_1,\dots,B'_{K_1L}$ of $B$
into sets of size $m_1/L$ such that together with $A_0$ and $B_0$ all
these sets $A'_i$ and $B'_i$ form a $(K_1L,m_1/L,\eps_*,\eps_1,\eps_2)$-partition
$\cP'_1$ for $G_1$. 
Note that $(1-\eps_0)n\le n-|A_0\cup B_0|=2K_1m_1\le n$ by (BFR4).
For all $i\le K_1$ and all $h\le L$, let $A_{i,h}:=A'_{(i-1)L+h}$. (So this is just a relabeling of the sets $A'_i$.)
Define $B_{i,h}$ similarly and let
$A_i:= \bigcup_{h\le L} A_{i,h}$ and $B_i:= \bigcup_{h\le L} B_{i,h}$.
Let $\cP_1:=\{A_0,B_0,A_1,\dots,A_{K_1},B_1,\dots,B_{K_1}\}$ denote the
corresponding $(K_1,m_1,\eps_0)$-partition of $V(G)$. Thus $(\cP_1,\cP'_1)$ is a $(K,L,m_1,\eps_0)$-partition of $V(G)$,
as defined in Section~\ref{sec:SFq}.
 
Let $G_2:=G_1[A,B]$.%
   \COMMENT{Daniela: changed def because of change in def of (..)-system, before had "Let $G_2$ be the spanning subgraph of $G_1$ obtained by
keeping all $AB$-edges and deleting all other edges". So $G_2$ included $V_0$.}
We claim that $(G_2, \mathcal{P}_1,\mathcal{P'}_1)$ is a $(K_1, L, m_1, \epszero,\eps')$-bi-scheme. Indeed, clearly~(BSch1) and~(BSch2) hold.
To verify (BSch3), recall that that $(G_1,A,A_0,B,B_0)$ is an $(\eps_*,\eps_0,K_1L,D_1)$-bi-framework and so%
   \COMMENT{Daniela added more detail}
by (BFR5) for all $x \in B$ we have 
$$
d_{G_2}(x,A)\ge d_{G_1}(x)-d_{G_1}(x,B')-|A_0| \ge D_1 - \eps_0n -|A_0| \stackrel{(\ref{D1eq})}{\ge} |A| -4 \eps_0 n
$$ and
similarly $d_{G_2}(y,B)\ge |B| -4 \eps_0 n$ for all $y\in A$. Since $\eps_0\ll \eps'/K_1L$, this implies~(BSch3).\COMMENT{Here we are just using that $\eps_0 n \ll \eps' |A_{i,j}|$, not e.g. (P2)}

\smallskip

\noindent\textbf{Step 3: Balanced exceptional systems for the robustly decomposable graph.}
In order to apply Corollary~\ref{rdeccorz}, we first need to construct suitable balanced exceptional systems.
Apply Corollary~\ref{BEScor} to the $(\eps_*,\eps',K_1L,D_1)$-bi-framework $(G_1,A,A_0,B,B_0)$
with $G_1$, $K_1L$, $\cP'_1$, $\eps_*$ playing the roles of $F$, $K$, $\cP$, $\eps$
in order to obtain a set $\cJ$ of $(K_1L)^4t_{K_1L}$ edge-disjoint balanced exceptional systems in $G_1$ with parameter $\eps_0$
such that for all $1 \le i'_1,i'_2,i'_3,i'_4  \le K_1L$ the set $\cJ$ contains precisely $t_{K_1L}$
$(i'_1,i'_2,i'_3,i'_4)$-BES with respect to the partition $\cP'_1$.
(Note that $F$ in Corollary~\ref{BEScor} satisfies (WF5) since $G_1$ satisfies (BFR5).)%
\COMMENT{osthus added bracket}
So $\cJ$ is the union of all the sets $\cJ_{i'_1i'_2i'_3i'_4}$ returned by
Corollary~\ref{BEScor}. (Note that we will not use all the balanced exceptional systems in~$\cJ$ and
we do not need to consider the Hamilton cycles guaranteed by this result. 
So we do not need the full strength of Corollary~\ref{BEScor} at this point.)

Our next aim is to choose two disjoint subsets $\mathcal{J}_{\rm CA}$ and $\mathcal{J}_{\rm PCA}$ of $\cJ$ with the
following properties:
\begin{itemize}	
\item[(a)] In total $\mathcal{J}_{\rm CA}$ contains $L f r_3$ balanced exceptional systems. For each $i\le f$ and each $h \le L$,
$\mathcal{J}_{\rm CA}$ contains precisely $r_3$ $(i_1,i_2,i_3,i_4)$-BES of style $h$ (with respect to the
$(K,L,m_1,\eps_0)$-partition $(\cP_1,\cP'_1)$) such that $i_1,i_2,i_3,$ $i_4\in \{(i-1)K_1/f+2,\dots,iK_1/f\}$.
\item[(b)] In total $\mathcal{J}_{\rm PCA}$ contains $7r^\diamond$ balanced exceptional systems. For each $i\le 7$,
$\mathcal{J}_{\rm PCA}$ contains precisely $r^\diamond$ $(i_1,i_2,i_3,i_4)$-BES (with respect to
the partition $\cP_1$) with
$i_1,i_2,i_3,i_4\in \{(i-1)K_1/7+2,\dots,iK_1/7\}$.
\end{itemize}
(Recall that we defined in Section~\ref{sec:findBFq} when an $(i_1,i_2,i_3,i_4)$-BES has style $h$ with respect to a
$(K,L,m_1,\eps_0)$-partition $(\cP_1,\cP'_1)$.)
To see that it is possible to choose $\mathcal{J}_{\rm CA}$ and $\mathcal{J}_{\rm PCA}$, split $\cJ$ into two sets $\cJ_1$ and $\cJ_2$ such that both $\cJ_1$ and $\cJ_2$
contain at least $t_{K_1L}/3$ $(i'_1,i'_2,i'_3,i'_4)$-BES with respect to~$\cP'_1$, for all $1 \le i'_1,i'_2,i'_3,i'_4  \le K_1L$.
Note that there are $(K_1/f-1)^4$ choices of 4-tuples $(i_1,i_2,i_3,i_4)$
with $i_1,i_2,i_3,i_4\in \{(i-1)K_1/f+2,\dots,iK_1/f\}$. Moreover, for each such 4-tuple $(i_1,i_2,i_3,i_4)$
and each $h\le L$ there is one 4-tuple $(i'_1,i'_2,i'_3,i'_4)$ with $1 \le i'_1,i'_2,i'_3,i'_4  \le K_1L$
and such that any $(i'_1,i'_2,i'_3,i'_4)$-BES with respect to~$\cP'_1$ is an $(i_1,i_2,i_3,i_4)$-BES of style $h$ with respect to
$(\cP_1,\cP'_1)$. Together with the fact that
$$ 
\frac{(K_1/f-1)^4t_{K_1L}}{3}\ge \frac{D_1}{7(Lf)^4}\ge \gamma^{1/2}n\stackrel{(\ref{eq:rszzz})}{\ge} r_3,
$$
this implies that we can choose a set $\mathcal{J}_{\rm CA}\subseteq \cJ_1$ satisfying~(a).

Similarly, there are $(K_1/7-1)^4$ choices of 4-tuples $(i_1,i_2,i_3,i_4)$
with $i_1$,$i_2$,$i_3$,$i_4\in \{(i-1)K_1/7+2,\dots,iK_1/7\}$. Moreover, for each such 4-tuple $(i_1,i_2,i_3,i_4)$
there are $L^4$ distinct 4-tuples $(i'_1,i'_2,i'_3,i'_4)$ with $1 \le i'_1,i'_2,i'_3,i'_4  \le K_1L$
and such that any $(i'_1,i'_2,i'_3,i'_4)$-BES with respect to~$\cP'_1$ is an $(i_1,i_2,i_3,i_4)$-BES with respect to
$\cP_1$. Together with the fact that
$$ \frac{(K_1/7-1)^4L^4t_{K_1L}}{3}\ge \frac{D_1}{7^5}\ge \frac{n}{3\cdot 7^5}\stackrel{(\ref{eq:rszzz})}{\ge} r^\diamond,$$
this implies that we can choose a set $\mathcal{J}_{\rm PCA}\subseteq \cJ_2$ satisfying~(b).

\smallskip

\noindent\textbf{Step 4: Finding the robustly decomposable graph.}
Recall that $(G_2, \mathcal{P}_1,\cP'_1)$ is a $(K_1, L, m_1, \epszero,\eps')$-bi-scheme.
Apply Proposition~\ref{lem:dirschemezz} with $G_2$, $\cP_1$, $\cP'_1$, $K_1$, $m_1$, $\eps'$ playing the roles of
$G$, $\cP$, $\cP'$, $K$, $m$, $\eps$ to obtain an orientation $G_{2,{\rm dir}}$ of $G_2$%
  \COMMENT{Daniela: had $G_2-V_0$, but now $G_2$ doesn't include $V_0$ anymore}
such that
$(G_{2,{\rm dir}}, \mathcal{P}_1,\cP'_1)$ is a $[K_1, L, m_1, \epszero,2\sqrt{\eps'}]$-bi-scheme. 
Let $C=A_1B_1A_2\dots $ $A_{K_1}B_{K_1}$ be a spanning cycle on the clusters in $\cP_1$.

Our next aim is to use Lemma~\ref{lma:EF-bipartite} in order to extend the balanced exceptional systems in $\mathcal{J}_{\rm CA}$ into $r_3$ edge-disjoint
balanced exceptional factors with parameters $(L,f)$ for $G_{2,{\rm dir}}$ (with respect to $C$, $\mathcal P'_1$).
For this, note that the condition on $\cJ_{CA}$ in Lemma~\ref{lma:EF-bipartite} with $r_3$ playing the role of $q$ is satisfied by~(a). 
Moreover, $Lr_3/m_1=2rK_1/m_1=2\gamma K_1\ll 1$. 
Thus we can indeed apply Lemma~\ref{lma:EF-bipartite} to $(G_{2,{\rm dir}}, \mathcal{P}_1,\cP'_1)$
with $\cJ_{CA}$, $2\sqrt{\eps'}$, $K_1$, $r_3$ playing the roles of
$\cJ$, $\eps$, $K$, $q$ in order to obtain $r_3$ edge-disjoint balanced exceptional factors $BF_1,\dots,BF_{r_3}$ with parameters $(L,f)$
for $G_{2,{\rm dir}}$ (with respect to $C$, $\mathcal P'_1$) such that together these balanced exceptional factors
cover all edges in $\bigcup \mathcal{J}_{\rm CA}$. Let $\mathcal{BF}_{\rm CA}:=BF_1+\dots+ BF_{r_3}$.

Note that $m_1/4g,m_1/L\in\mathbb{N}$ since $m_1=|A|/K_1$ and $|A|$ is divisible by $K_2$ and thus $m_1$ is divisible
by $4gL$ (since $K_2/4gLK_1\in\mathbb{N}$ by our assumption).
Furthermore, $4rK_1^2=4\gamma m_1K_1^2\le \gamma^{1/2}m_1\le m_1$.%
\COMMENT{osthus adapted calculation from error in statement of rob dec corollary}
Thus we can apply Corollary~\ref{rdeccorz} to the $[K_1, L, m_1, \epszero,\eps'']$-bi-scheme
$(G_{2,{\rm dir}}, \mathcal{P}_1,\cP'_1)$ with $K_1$, $\eps''$, $g$ playing the roles of $K$, $\eps$, $\ell'$ to obtain a spanning
subgraph $CA(r)$ of $G_2$ as described there. (Note that $G_2$ equals the graph $G'$ defined in Corollary~\ref{rdeccorz}.)%
    \COMMENT{Daniela: had $G_2-V_0$ instead of $G_2$ (twice)}
In particular, $CA(r)$ is $2(r_1+r_2)$-regular and edge-disjoint from $\mathcal{BF}_{\rm CA}$.

Let $G_3$ be the graph obtained from $G_2$ by deleting all the edges of $CA(r)+ \mathcal{BF}_{\rm CA}$.
Thus $G_3$ is obtained from $G_2$ by deleting at most $2(r_1+r_2+r_3)\le 6r_1=6\gamma_1m_1$ edges at every vertex in $A\cup B=V(G_3)$.
Let $G_{3,{\rm dir}}$ be the orientation of $G_3$%
    \COMMENT{Daniela: had $G_3-V_0$ instead of $G_3$}
in which every edge is oriented in the same way as in $G_{2,{\rm dir}}$.
Then Proposition~\ref{superslice} implies that%
   \COMMENT{Daniela added ref to Proposition~\ref{superslice}}
$(G_{3,{\rm dir}},\cP_1,\cP_1)$ is still a $[K_1, 1,m_1, \epszero,\eps]$-bi-scheme.%
\COMMENT{AL: changed to as $[K_1, m_1, \epszero,\eps]$-bi-scheme is no longer defined.}
Moreover,
$$\frac{r^\diamond}{m_1}\stackrel{(\ref{eq:rszzz})}{\le} \frac{2r_1}{m_1}=2\gamma_1\ll 1.$$
Together with~(b) this ensures that we can apply
Lemma~\ref{lma:EF-bipartite} to $(G_{3,{\rm dir}},\cP_1)$ with $\cP_1$, $\cJ_{PCA}$, $K_1$, $1$, $7$, $r^\diamond$ playing the roles of
$\cP$, $\cJ$, $K$, $L$, $f$, $q$ in order to obtain $r^\diamond$ edge-disjoint balanced exceptional factors $BF'_1,\dots,BF'_{r^\diamond}$ with parameters $(1,7)$
for $G_{3,{\rm dir}}$ (with respect to $C$, $\mathcal P_1$) such that together these balanced exceptional factors
cover all edges in $\bigcup \mathcal{J}_{\rm PCA}$. Let $\mathcal{BF}_{\rm PCA}:=BF'_1+\dots + BF'_{r^\diamond}$.

Apply Corollary~\ref{rdeccorz} to obtain a spanning
subgraph $PCA(r)$ of $G_2$ as described there.%
   \COMMENT{Daniela: had $G_2-V_0$ instead of $G_2$}
In particular, $PCA(r)$ is $10r^\diamond$-regular
and edge-disjoint from $CA(r)+ \mathcal{BF}_{\rm CA}+ \mathcal{BF}_{\rm PCA}$.

Let $G^{\rm rob}:=CA(r)+ PCA(r)+ \mathcal{BF}_{\rm CA}+ \mathcal{BF}_{\rm PCA}$.
Note that by~(\ref{EFdegq}) all the vertices in $V_0:=A_0\cup B_0$ have the same degree  
$r_0^{\rm rob}:=2(Lfr_3+7r^\diamond)$ in $G^{\rm rob}$.
So 
\begin{equation}\label{eq:rrobzz}
7r_1\stackrel{(\ref{eq:rszzz})}{\le} r_0^{\rm rob} \stackrel{(\ref{eq:rszzz})}{\le} 30r_1.
\end{equation}
Moreover,~(\ref{EFdegq}) also implies that all the vertices in $A\cup B$ have the same degree $r^{\rm rob}$ in $G^{\rm rob}$,
where $r^{\rm rob}=2(r_1+r_2+r_3+6r^\diamond)$. So 
$$
r_0^{\rm rob}-r^{\rm rob}=2 \left(Lfr_3+ r^\diamond- (r_1+r_2+r_3)\right)=2(Lfr_3+r-(Lf-1)r_3-r_3 )=2r.
$$

\noindent\textbf{Step 5: Choosing a $(K_2,m_2,\eps_0)$-partition $\cP_2$.}
We now prepare the ground for the approximate decomposition step (i.e.~to apply Lemma~\ref{almostthmbip}). 
For this, we need to work with a finer partition of $A \cup B$ than the previous one
(this will ensure that the leftover from the approximate decomposition step is sufficiently sparse compared to $G^{\rm rob}$).

Let $G_4:=G_1-G^{\rm rob}$ (where $G_1$ was defined in Step~1) and note that 
\begin{equation} \label{D4D1zz}
D_4= D_1-r_0^{\rm rob}=D_1-r^{\rm rob}-2r.
\end{equation}
So 
\begin{equation} \label{degrees4zz}
d_{G_4}(x)=D_4+2r \mbox{ for all } x \in A \cup B \mbox{  and  } d_{G_4}(x)=D_4 \mbox{ for all } x \in V_0.
\end{equation}
(Note%
   \COMMENT{Daniela deleted "Thus every vertex in $V_0$
has degree $D_4$ in $G_4$ while every vertex in $A\cup B$ has degree $D_4+2r$." which we had immediately after (\ref{degrees4zz}), since
this is what (\ref{degrees4zz}) says}
that $D_4$ is even since $D_1$ and $r_0 ^{\rm rob}$ are even.)
So $G_4$ is $D_4$-balanced with respect to $(A,A_0,B,B_0)$
by Proposition~\ref{edge_number}.
Together with the fact that $(G_1$,$A$,$A_0$, $B$,$B_0)$ is an $(\eps_*,\eps_0,K_2,D_1)$-bi-framework, this implies that
$(G_4$,$G_4$,$A$,$A_0$,$B$,$B_0)$ satisfies conditions (WF1)--(WF5) in the definition of an $(\eps_*,\eps_0,K_2,D_4)$-weak framework.
However, some vertices in $A_0\cup B_0$ might violate condition~(WF6). 
(But every vertex in $A\cup B$ will still satisfy~(WF6) with room to spare.) 
So we need to modify the partition of $V_0=A_0\cup B_0$ to obtain a new weak framework.

Consider a partition $A^*_0,B^*_0$ of $A_0\cup B_0$ which maximizes the number of edges in $G_4$ between
$A^*_0\cup A$ and $B^*_0\cup B$. 
Then $d_{G_4}(v, A^*_0\cup A) \le d_{G_4}(v)/2$ for all $v\in A^*_0$ since otherwise $A^*_0\setminus \{v\},B^*_0\cup \{v\}$ would
be a better partition of $A_0\cup B_0$. Similarly $d_{G_4}(v, B^*_0\cup B) \le d_{G_4}(v)/2$ for all $v\in B^*_0$.
 Thus (WF6) holds in $G_4$ (with respect to the partition
$A\cup A^*_0$ and $B\cup B^*_0$).
Moreover, Proposition~\ref{keepbalance} implies that $G_4$ is still $D_4$-balanced with respect to $(A,A^*_0,B,B^*_0)$.
Furthermore, with (BFR3) and (BFR4) applied to $G_1$, we obtain
$e_{G_4}(A\cup A^*_0)\le e_{G_1}(A\cup A_0)+|A^*_0||A\cup A^* _0|\le \eps_0 n^2$ and similarly $e_{G_4}(B\cup B^*_0)\le \eps_0 n^2$.
Finally, every vertex in $A\cup B$ has internal degree at most $\eps_0 n+|A_0 \cup B_0|\le 2\eps_0 n$ in $G_4$ (with respect to the partition
$A\cup A^*_0$ and $B\cup B^*_0$).%
\COMMENT{osthus replaced V0 with A0 B0} 
Altogether this implies that
$(G_4,G_4,A,A_0^*,B,B_0^*)$ is an $(\eps_0,2\eps_0,K_2,D_4)$-weak framework and thus also an $(\eps_0,\eps',K_2,D_4)$-weak framework.     

Without loss of generality we may assume that $|A^*_0|\geq |B^*_0|$.
Apply Lem\-ma~\ref{coverA0B02} to the $(\eps_0,\eps',K_2,D_4)$-weak framework $(G_4,G_4,A,A_0^*,B,B_0^*)$ to find a set $\mathcal C_2$ of $|\mathcal C_2| \leq \eps _0 n$ edge-disjoint Hamilton cycles in $G_4$ so that the graph $G_5$
obtained from $G_4$ by deleting all the edges of these Hamilton cycles forms part of an
$(\eps_0,\eps',K_2,D_5)$-bi-framework $(G_5,A,A_0^*,B,B_0^*)$, where
\begin{align}\label{eqD5}
D_5=D_4-2|\mathcal C_2| \geq D_4 -2\eps _0 n.
\end{align}
Since $D_4$ is even, $D_5$ is even. Further, 
\begin{equation} \label{degrees5zz}
d_{G_5}(x)\stackrel{(\ref{degrees4zz})}{=} D_5+2r \mbox{ for all } x \in A \cup B \ \ \mbox{and} \ \ d_{G_5}(x)\stackrel{(\ref{degrees4zz})}{=}D_5 \mbox{ for all } x \in A^*_0\cup B^*_0.
\end{equation}

Choose an additional constant $\eps '_4$ such that $\eps _3 \ll \eps '_4 \ll 1/K_2$ and so
that%
\COMMENT{Vital: we have had to wait until now to define $\eps '_4$, since it depends on $D_5$ and
$D_5$ only just defined. In particular, $D_5$ depends on $|\mathcal C_2|$.}
$$t_{K_2}:=\frac{(1-20\eps'_4)D_5}{2K_2^4} \in \mathbb N.$$

Now apply Lemma~\ref{part} to $(G_5,A,A_0^*,B,B_0^*)$ with $D_5$, $K_2$, $\eps_0$
playing the roles of $D$, $K$, $\eps$ in order to obtain partitions $A_1,\dots,A_{K_2}$ and
$B_1,\dots,B_{K_2}$ of $A$ and $B$ into sets of size 
\begin{equation} \label{m2def}
m_2:=|A|/K_2
\end{equation} such that together with $A^*_0$ and $B^*_0$ the sets
$A_i$ and $B_i$ form a $(K_2,m_2,\eps_0,\eps_1,\eps_2)$-partition $\cP_2$ for $G_5$.
(Note that the previous partition of $A$ and $B$  plays no role in the subsequent argument, so
denoting the clusters in~$\cP_2$ by $A_i$ and $B_i$ again will cause no notational conflicts.)%
   \COMMENT{Daniela deleted an entire paragraph verifying that $G'_5:=G_5[A,B]$ is part of a bi-scheme since this is not used anymore}

\smallskip

\noindent\textbf{Step 6: Balanced exceptional systems for the approximate decomposition.}
In order to apply Lemma~\ref{almostthmbip}, we first need to construct suitable balanced exceptional systems.
Apply Corollary~\ref{BEScor} to the $(\eps_0,\eps',K_2,D_5)$-bi-framework $(G_5,A,A_0^*,B,B_0^*)$ with
 $G_5$, $\eps_0$, $\eps'_0$, $\eps'_4$, $K_2$, $D_5$, $\cP_2$ playing the roles
of $F$, $\eps$, $\eps_0$, $\eps_4$, $K$, $D$, $\cP$. 
(Note that since we are letting $G_5$ play the role of $F$, condition (WF5) in the corollary immediately follows from (BFR5).)
Let $\cJ'$ be the union of the sets $\cJ_{i_1i_2i_3i_4}$ guaranteed by Corollary~\ref{BEScor}.
So $\cJ'$ consists of $K_2^4t_{K_2}$ edge-disjoint balanced exceptional systems with parameter $\eps'_0$ in $G_5$
(with respect to $\cP_2$).%
   \COMMENT{Daniela added brackets}
Let $\cC_3$ denote the set of Hamilton cycles guaranteed by Corollary~\ref{BEScor}. So $|\cC_3|=10\eps'_4D_5$.%
   \COMMENT{Daniela added new sentence}

Let $G_6$ be the subgraph obtained from $G_5$ by deleting all those edges lying in the Hamilton cycles from~$\cC_3$.
Set $D_6:=D_5-2|\cC_3|$.
So
\begin{equation} \label{degrees6}
d_{G_6}(x)\stackrel{(\ref{degrees5zz})}{=}D_6+2r \mbox{ for all } x \in A \cup B \qquad \mbox{and} \qquad d_{G_6}(x)\stackrel{(\ref{degrees5zz})}{=}D_6 \mbox{ for all } x \in V_0.
\end{equation}
(Note that $V_0=A_0 \cup B_0=A_0^* \cup B_0^*.$)%
\COMMENT{osthus added bracket}
Let $G'_6$ denote the subgraph of $G_6$ obtained by deleting all those
edges lying in the balanced exceptional systems from $\cJ'$. Thus $G'_6=G^\diamond$, where $G^\diamond$ is as defined in Corollary~\ref{BEScor}(iv).
In particular, $V_0$ is an isolated set in $G'_6$ and $G'_6$ is bipartite with vertex classes $A\cup A^*_0$ and $B\cup B^*_0$
(and thus also bipartite with vertex classes $A'=A\cup A_0$ and $B'=B\cup B_0$).%
    \COMMENT{Daniela deleted the following paragraph (as well as the next one saying that $G'_6[A,B]$ is still part of a bi-scheme): 
"Moreover, recall that (as remarked after the corresponding definition in Section~\ref{besconstruct}), a balanced exceptional
system does not contain any $AB$-edges and thus the same holds for $\bigcup \cJ'$ too.
Thus $G'_6$ can be obtained from $G_6$ by keeping all $AB$-edges and deleting all other edges."}


Consider any vertex $v\in V_0$. Then $v$ has degree $D_5$ in $G_5$, degree two in each Hamilton cycle from $\cC_3$,
degree two in each balanced exceptional system from $\cJ'$ and degree zero in~$G'_6$.
Thus
$$D_6+2|\cC_3|=D_5\stackrel{(\ref{degrees5zz})}{=}d_{G_5}(v)=2|\cC_3|+2|\cJ'|+d_{G'_6}(v)=2|\cC_3|+2|\cJ'|$$
and so
\begin{equation}\label{eq:D5zz}
D_6=2|\cJ'|.
\end{equation}

\noindent\textbf{Step 7: Approximate Hamilton cycle decomposition.}
Our next aim is to apply Lemma~\ref{almostthmbip} with $G_6$, $\cP_2$, $K_2$, $m_2$, $\cJ'$, $\eps'$ playing the
roles of $G$, $\cP$, $K$, $m$, $\cJ$, $\eps_0$.
 Clearly, condition~(c) of Lemma~\ref{almostthmbip} is satisfied.
In order to see that condition (a) is satisfied, let $\mu:=(r^{\rm rob}_0-2r)/4K_2m_2$ and note that%
   \COMMENT{Daniela changed inequality below, before had $1/K_2\ll \mu\ll \eps$, but we don't need this for Lemma~\ref{almostthmbip}}
$$
0\le \frac{\gamma_1 m_1}{4K_2m_2}\leq
\frac{7r_1-2r}{4K_2m_2}\stackrel{(\ref{eq:rrobzz})}{\le} \mu \stackrel{(\ref{eq:rrobzz})}{\le} \frac{30r_1}{4K_2m_2}\le
\frac{30\gamma_1 }{K_1}\ll 1.
$$
Recall that every vertex $v\in B$ satisfies%
\COMMENT{osthus adapted calculation to be an `equality' and not just a lower bound}
$$d_{G_5}(v) \stackrel{(\ref{degrees5zz})}{=} D_5+2r \stackrel{(\ref{D4D1zz}),(\ref{eqD5})}{=} D_1-r^{\rm rob}_0+2r \pm 2 \eps_0 n
\stackrel{(\ref{D1eq})}{=}  |A| -r^{\rm rob}_0+2r \pm 4 \eps_0 n.
$$
Moreover,
$$d_{G_5}(v,A)= d_{G_5}(v)-d_{G_5}(v,B\cup B^*_0)-|A^*_0|\ge d_{G_5}(v)-2\eps ' n,$$
where the last inequality holds since $(G_5,A,A^*_0,B,B^*_0)$ is an $(\eps_0,\eps ',K_2,D_5)$-bi-framework (c.f.~conditions (BFR4) and (BFR5)).
Together with the fact that $\cP_2$ is a $(K_2,m_2,\eps_0,\eps_1,\eps_2)$-partition for $G_5$ (c.f. condition (P2)),
this implies that
\begin{align*}
d_{G_5}(v,A_i) & =\frac{d_{G_5}(v,A)\pm \eps_1 n}{K_2}
=\frac{|A|-r^{\rm rob}_0+2r\pm 2\eps_1 n}{K_2} \\ &=
\left(1-\frac{r^{\rm rob}_0-2r}{K_2m_2}\pm 5\eps_1\right)m_2
 =(1-4\mu\pm 5\eps_1)m_2 \\ & =(1-4\mu\pm 1/K_2)m_2.
\end{align*}
Recall that $G_6$ is obtained from $G_5$ by deleting all those edges lying in the Hamilton cycles in $\cC_3$ and that
$$
|\cC_3|= 10\eps'_4 D_5 \leq 10\eps'_4 D_4
 \stackrel{(\ref{D4bound})}{\le} 11 \eps'_4 |A| \stackrel{(\ref{m2def})}{\leq} m_2/K_2.
$$
Altogether this implies that
$d_{G_6}(v,A_i)=(1-4\mu\pm 4/K_2)m_2$. Similarly one can show that $d_{G_6}(w,B_j)=(1-4\mu\pm 4/K_2)m_2$ for all $w\in A$.
So condition~(a) of Lemma~\ref{almostthmbip} holds.

To check condition~(b), note that%
   \COMMENT{Daniela added $\frac{D_4}{2}$ to the display below}
$$|\cJ'|\stackrel{(\ref{eq:D5zz})}{=}  \frac{D_6}{2}\le  \frac{D_4}{2}
\stackrel{(\ref{D4D1zz})}{\le}  \frac{D_1-r^{\rm rob}_0}{2}\le \frac{n}{4}-\mu \cdot 2K_2m_2-r\le
\left(\frac{1}{4}-\mu-\frac{\gamma}{3 K_1}\right)n.$$
Thus condition~(b) of Lemma~\ref{almostthmbip} holds with $\gamma/3 K_1$ playing the role of $\rho$.

Since the edges in $\cJ'$ lie in $G_5$ and  $(G_5,A,A^*_0,B,B^*_0)$ is an $(\eps_0,\eps',K_2,D_5)$-bi-framework,
(BFR5) implies that each $v \in A \cup B$ is incident with an edge in $J$ for at most $\eps ' n+|V_0|\leq 2\eps 'n$ of the%
\COMMENT{osthus added `of the} 
$J \in \cJ '$. 
(Recall that in a balanced exceptional system there are no edges between $A$ and $B$.)
So condition~(d) of Lemma~\ref{almostthmbip} holds with $\eps '$ playing the role of $\eps _0$.

So we can indeed apply Lemma~\ref{almostthmbip} to obtain a collection $\cC_4$ of $|\cJ'|$ edge-disjoint Hamilton cycles in $G_6$
which cover all edges of $\bigcup \cJ'$.

\smallskip

\noindent\textbf{Step 8: Decomposing the leftover and the robustly decomposable graph.}
Finally, we can apply the `robust decomposition property' of $G^{\rm rob}$ guaranteed by Corollary~\ref{rdeccorz}
to obtain a Hamilton decomposition of the leftover from the previous step together with $G^{\rm rob}$.
 
To achieve this, let $H'$ denote the subgraph of $G_6$ obtained by deleting all those edges lying in the Hamilton cycles from~$\mathcal{C}_4$.
Thus~(\ref{degrees6}) and~(\ref{eq:D5zz}) imply that
every vertex in $V_0$ is isolated in $H'$ while every vertex $v\in A\cup B$ has degree $d_{G_6}(v)-2|\cJ'|=D_6+2r-2|\cJ'|=2r$ in~$H'$
(the last equality follows from~(\ref{eq:D5zz})).
Moreover, $H'[A]$ and $H'[B]$ contain no edges. (This holds
since $H'$ is a spanning subgraph of $G_6-\bigcup \cJ'=G'_6$ and since we have
already seen that $G'_6$ is bipartite with vertex classes $A'$ and $B'$.) Now let $H:=H'[A ,B]$. Then 
Corollary~\ref{rdeccorz}(ii)(b) implies that $H + G^{\rm rob}$
has a Hamilton decomposition. Let $\cC_5$ denote the set of Hamilton cycles thus obtained.
Note that $H+G^{\rm rob}$ is a spanning subgraph of $G$ which contains all edges of $G$ which were not covered by $\cC_1\cup \cC_2\cup \cC_3\cup \cC_4$.
So $\cC_1\cup \cC_2\cup \cC_3\cup \cC_4 \cup \cC_5$ is a Hamilton decomposition of $G$.
\endproof

\chapter{Approximate decompositions}\label{paper3}

In this chapter we prove the approximate decomposition results, Lemmas~\ref{almostthm} and~\ref{almostthmbip}. Recall that
Lemma~\ref{almostthm} gives an approximate Hamilton decomposition of our graph (with some additional properties) in the two cliques case whilst Lemma~\ref{almostthmbip} gives an approximate Hamilton decomposition of our graph (with some additional properties) in the bipartite case.
 After introducing some tools in Section~\ref{tools},
we prove Lemma~\ref{almostthm} in Sections~\ref{systembalanced}--\ref{sec:extendmerge}.
We then prove Lemma~\ref{almostthmbip} using a similar approach in the final section.
We remind the reader that many of the relevant definitions for Lemmas~\ref{almostthm} and~\ref{almostthmbip} are stated in Sections~\ref{sec:BES} and~\ref{findBES} respectively.

In this chapter it is convenient to view matchings as graphs (in which every vertex has degree precisely one).

\section{Useful Results} \label{tools}

\subsection{Regular Spanning Subgraphs}
The following lemma implies that any almost complete balanced bipartite graph has an approximate decomposition into perfect matchings.
The proof is a straightforward application of the MaxFlowMinCut theorem.

\begin{lemma}\label{regularsub}
Suppose that
$0<1/m \ll \eps \ll \rho \ll 1$, that $0 \le \mu \le 1/4$ and that $m, \mu m, \rho m \in \mathbb{N}$.
Suppose that $\Gamma$ is a bipartite graph with vertex classes $U$ and $V$ of size $m$ and with
$(1-\mu-\eps)m \leq \delta (\Gamma) \leq \Delta (\Gamma) \leq (1-\mu+\eps)m$.  Then $\Gamma$ contains a spanning $(1-\mu-\rho)m$-regular subgraph
$\Gamma '$.
In particular, $\Gamma$ contains at least $(1-\mu-\rho)m$ edge-disjoint perfect matchings.
\end{lemma}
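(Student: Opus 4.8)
The plan is to reduce the claim to finding an integral fractional perfect matching structure via the MaxFlow-MinCut theorem. First I would set up the standard flow network: direct all edges of $\Gamma$ from $U$ to $V$, add a source $s$ with an edge of capacity $d:=(1-\mu-\rho)m$ to each vertex of $U$, add a sink $t$ with an edge of capacity $d$ from each vertex of $V$, and give every edge of $\Gamma$ capacity $1$ (or simply $\infty$, since the $s$- and $t$-edges already bound things). All capacities are integers, so by the integrality theorem a maximum flow is integral, and an integral flow of value $dm$ decomposes (by flow conservation and the capacity-$d$ constraints at $s$ and $t$) into a $d$-regular spanning subgraph $\Gamma'$ of $\Gamma$; this $\Gamma'$ is exactly what we want, and since $d$-regular bipartite graphs have perfect matchings (König/Hall) and decompose into $d$ edge-disjoint perfect matchings, the `in particular' clause follows immediately.

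Hence the crux is to show the maximum flow has value $dm$, equivalently (MaxFlowMinCut) that every $s$-$t$ cut has capacity at least $dm$. A cut is determined by a set $S\subseteq U$ and a set $T\subseteq V$ that lie on the source side (so $U\setminus S$ and $V\setminus T$ lie on the sink side); for the cut to be finite we need $N_\Gamma(S)\subseteq T$. Its capacity is
\begin{equation*}
d|U\setminus S| + d|T| + e_\Gamma(S, V\setminus T) = d(m-|S|) + d|T| + e_\Gamma(S, V\setminus T).
\end{equation*}
So I would need to verify
\begin{equation*}
d(m-|S|) + d|T| + e_\Gamma(S, V\setminus T) \ge dm,
\end{equation*}
i.e. $e_\Gamma(S,V\setminus T) \ge d(|S|-|T|)$, for every such $S,T$ with $N_\Gamma(S)\subseteq T$. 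This is the main obstacle, and it is where the degree hypothesis enters. The key observation is that $\delta(\Gamma)\ge(1-\mu-\eps)m \ge 3m/4 - \eps m > m/2$, so $\Gamma$ is connected and in fact every vertex of $U$ has more than half its potential neighbours; more usefully, any two vertices of $U$ have a common neighbour, and more generally $N_\Gamma(S)$ is large.

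To finish the cut estimate I would argue by cases on $|T|$. If $|T|\le|S|$ is small, then since $N_\Gamma(S)\subseteq T$ every vertex of $S$ sends all but at most $|T|$ of its edges into $V\setminus T$, hence $e_\Gamma(S,V\setminus T)\ge |S|(\delta(\Gamma)-|T|) \ge |S|((1-\mu-\eps)m - |T|)$; when $|T|$ is at most, say, $(1-\mu-2\eps)m - d = (\rho-\eps)m$ this already exceeds $d|S|\ge d(|S|-|T|)$ with room to spare. If instead $|T|$ is large, say $|T|\ge (\rho-\eps)m$, then I would instead bound $e_\Gamma(S,V\setminus T)$ trivially by $0$ and note that it suffices to have $|S|-|T|\le 0$; this follows because $N_\Gamma(S)\subseteq T$ forces $|T|\ge|N_\Gamma(S)|$, and a short counting argument using $\Delta(\Gamma)\le(1-\mu+\eps)m$ shows $|N_\Gamma(S)|\ge |S|$ once $|S|$ is not too close to $m$ — indeed $e_\Gamma(S,V)\ge\delta(\Gamma)|S|$ while $e_\Gamma(S,V)\le\Delta(\Gamma)|N_\Gamma(S)|$, giving $|N_\Gamma(S)|\ge \frac{\delta(\Gamma)}{\Delta(\Gamma)}|S| \ge \frac{(1-\mu-\eps)}{(1-\mu+\eps)}|S|\ge(1-3\eps)|S|$, which combined with the case $|T|\ge(\rho-\eps)m\gg 3\eps m$ and the trivial bound $|T|\ge 1$ closes the remaining range after splitting once more according to whether $|S|\le(1-\rho/2)m$ or not. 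The only genuinely delicate regime is $|S|$ very close to $m$ together with $|T|$ moderately large, but there $N_\Gamma(S)=V$ (since $\delta(\Gamma)>\mu m$ forces every vertex of $V$ to have a neighbour in any set $S$ with $|S|>\mu m+\eps m$), so $T=V$, $|T|=m\ge|S|$, and the inequality is immediate. Assembling these cases gives that every cut has capacity $\ge dm$, hence a $d$-regular spanning subgraph exists, completing the proof.
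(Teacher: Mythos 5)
Your setup is the same as the paper's (a flow network with source $s$, sink $t$, capacity $d:=(1-\mu-\rho)m$ on the $s$- and $t$-edges, and MaxFlowMinCut applied to bound the cut capacity by $dm$), but the cut-capacity estimate contains a genuine error. First a preliminary point: you parenthetically say the $\Gamma$-edges could have capacity $\infty$, but that cannot work, since an integral flow in that network only yields a $d$-regular bipartite \emph{multigraph} supported on $E(\Gamma)$, not a spanning $d$-regular subgraph of $\Gamma$. So you must take capacity $1$ on the $\Gamma$-edges. With capacity $1$, however, a finite cut with source side $S\cup T$ (for arbitrary $S\subseteq U$, $T\subseteq V$) is \emph{not} required to satisfy $N_\Gamma(S)\subseteq T$; the edges from $S$ to $V\setminus T$ are simply paid for at capacity $1$ each, which is exactly why the term $e_\Gamma(S,V\setminus T)$ appears in your formula. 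Your Case~1 (``$|T|$ small'') is then fine in substance despite the backwards wording (``since $N_\Gamma(S)\subseteq T$ every vertex of $S$ sends all but at most $|T|$ of its edges into $V\setminus T$'' should just be the observation that each $x\in S$ has at least $\delta(\Gamma)-|T|$ neighbours outside $T$).

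The real gap is Case~2. There you try to deduce $|S|\le|T|$ from ``$N_\Gamma(S)\subseteq T$ forces $|T|\ge|N_\Gamma(S)|$'', but that constraint does not hold, so $|S|>|T|$ is perfectly possible when $|T|>(\rho-\eps)m$, and your argument says nothing about $e_\Gamma(S,V\setminus T)$ in that range. Concretely, take $|T|$ around $m/2$ and $|S|=|T|+1$: you must show $e_\Gamma(S,V\setminus T)\ge d$, and neither ``$|T|\ge|S|$'' nor the trivial lower bound $e_\Gamma(S,V\setminus T)\ge 0$ gives this. The paper closes exactly this regime by the degree-sum inequality
\begin{equation*}
e_\Gamma(S,V\setminus T)\ \ge\ \sum_{x\in S}d_\Gamma(x)-\sum_{y\in T}d_\Gamma(y)\ \ge\ (1-\mu-\eps)m|S|-(1-\mu+\eps)m|T|,
\end{equation*}
combined with a clean reduction (by the symmetry $|S|-|T|=|V\setminus T|-|U\setminus S|$) to the case $|S|>(1-\mu-\rho)m$ and $|T|<(\mu+\rho)m$, where the displayed bound beats $d(|S|-|T|)$. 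Your proposal is missing this step; to repair it you need some estimate on $e_\Gamma(S,V\setminus T)$ that is valid without assuming $N_\Gamma(S)\subseteq T$, such as the one above, together with a case split that actually covers all $S,T$ (yours leaves out $|T|>(\rho-\eps)m$ with $|S|>|T|$).
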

\proof 
We first obtain a directed network $N$ from $\Gamma$ by adding a source $s$ and a sink $t$.
We add a directed edge $su$ of capacity $(1-\mu-\rho)m$ for each $u \in U$ and a directed edge $vt$ of capacity 
$(1-\mu-\rho)m$ for each $v \in V$. We give all the edges in $\Gamma$ capacity $1$ and direct them from $U$ to $V$.

Our aim is to show that the capacity of any $(s,t)$-cut%
	\COMMENT{Allan: replace cut with $(s,t)$-cut.}
 is at least $(1-\mu-\rho)m^2$. By the MaxFlowMinCut theorem this would imply
that $N$ admits an integer-valued flow of value $(1-\mu-\rho)m^2$ which by construction of $N$ implies the existence of our desired subgraph $\Gamma '$.

Consider any $(s,t)$-cut $(S, \overline{S})$ where $S= \{s\} \cup S_1 \cup S_2$ with $S_1 \subseteq U$ and $S_2 \subseteq V$.
Let $\overline{S}_1:=U\backslash S_1$ and $\overline{S}_2 :=V \backslash S_2$.
The capacity of this cut is
$$(1-\mu-\rho)m(m-|S_1|)+e(S_1, \overline{S}_2)+ (1-\mu-\rho)m|S_2|$$
and therefore our aim is to show that
\begin{align}\label{target}
e(S_1, \overline{S}_2) \geq (1-\mu-\rho)m(|S_1|-|S_2|).
\end{align}
If $|S_1| \le (1-\mu-\rho )m$, then 
\begin{align*}
	e(S_1, \overline{S}_2) &  \ge 
\left( ( 1-\mu-\eps) m - |S_2| \right) |S_1| \\
		& = (1-\mu-\rho) m (|S_1|-|S_2|) + (\rho-\eps ) m |S_1| +|S_2|\left( (1-\mu-\rho)m -|S_1| \right)\\
		& \ge  (1-\mu-\rho) m (|S_1|-|S_2|).
\end{align*}
Thus, we may assume that $|S_1| > (1-\mu-\rho )m$.
Note that $|S_1|-|S_2| = |\overline{S}_2|-|\overline{S}_1|$.
Therefore, by a similar argument, we may also assume that $|\overline{S}_2| > (1-\mu-\rho)m$ and so $|S_2| \le (\mu +\rho) m$.
This implies that
\begin{align*}
	e(S_1, \overline{S}_2) & \ge \sum_{x \in S_1} d_{\Gamma} (x) - \sum_{y \in S_2} d_{\Gamma} (y)
 \ge  (1-\mu-\eps ) m |S_1| - (1-\mu+\eps )m |S_2| \\
	& =(1-\mu-\rho) m (|S_1| - |S_2|) + \rho m (|S_1| -|S_2|) - \eps m( |S_1| +|S_2|)  \\
	& > (1-\mu-\rho)m (|S_1|-|S_2|) + (1- 2\mu-2\rho) \rho m^2 - (1+\mu+\rho) \eps m^2\\
	& \ge  (1-\mu-\rho)m (|S_1|-|S_2|).
\end{align*}
(Note that the last inequality follows as $\eps \ll \rho\ll 1$ and $ \mu \le 1/4$.)
So indeed (\ref{target}) is satisfied, as desired.
\endproof


\subsection{Hamilton Cyles in Robust Outexpanders} \label{sec:rob}
Recall that,
given $0<\nu \leq \tau<1$, we say that a digraph $G$ on $n$ vertices is a \emph{robust $(\nu, \tau)$-outexpander},
if for all $S\subseteq V(G)$ with $\tau n\le |S|\le (1-\tau)n$ the number of vertices that have at least $\nu n$
inneighbours in $S$ is at least $|S|+\nu n$. 
The following result was derived in~\cite{3con} as a straightforward consequence of the
result from~\cite{KOT10} that every robust outexpander of linear minimum degree has a Hamilton cycle.%
    \COMMENT{Daniela: reworded}

\begin{thm}\label{expanderthm}
Suppose that $0 < 1/n\ll \gamma \ll \nu \ll \tau\ll\eta<1$. Let~$G$ be a digraph on~$n$ vertices with
$\delta^+(G), \delta^-(G)\ge \eta n$ which is a robust $(\nu,\tau)$-outexpander. 
Let $y_1, \dots, y_p$ be distinct vertices in $V(G)$ with $p \le \gamma n$.
Then~$G$ contains a directed Hamilton cycle visiting $y_1, \dots, y_p$ in this  order.
\end{thm}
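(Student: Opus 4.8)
The plan is to deduce Theorem~\ref{expanderthm} from the main result of~\cite{KOT10}, which guarantees a Hamilton cycle in any robust outexpander of linear minimum degree. The only extra content here is that the Hamilton cycle can be forced to visit a bounded number of prescribed vertices $y_1,\dots,y_p$ in a prescribed cyclic order. The standard device for this is to pass to an auxiliary digraph $G'$ obtained by `splitting' each $y_i$: replace $y_i$ by two vertices $y_i^-$ (inheriting the inneighbours of $y_i$) and $y_i^+$ (inheriting the outneighbours of $y_i$), and add the directed edge $y_i^+ y_{i+1}^-$ for each $i$ (indices mod $p$). Any Hamilton cycle of $G'$ must traverse each added edge $y_i^+y_{i+1}^-$ (since $y_i^+$ has no other outneighbour apart from those inherited... actually $y_i^+$ keeps all out-edges, so instead one keeps $y_i^+$ with \emph{only} the single out-edge to $y_{i+1}^-$ and $y_i^-$ with \emph{only} the single in-edge from $y_{i-1}^+$, while the `real' copy of $y_i$ retains the remaining adjacencies). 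With this construction a Hamilton cycle of $G'$ projects to a closed spanning walk of $G$ which is in fact a Hamilton cycle visiting $y_1,\dots,y_p$ in the required order.

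The key steps, in order, would be: (1) Set up the auxiliary digraph $G'$ as above, being careful about exactly how the in/out-neighbourhoods are distributed among the split vertices and the degenerate `link' vertices, so that (a) $|V(G')|=n+O(p)=n+O(\gamma n)$, (b) any Hamilton cycle of $G'$ corresponds to a Hamilton cycle of $G$ through $y_1,\dots,y_p$ in order, and (c) $\delta^+(G'),\delta^-(G')$ are still linear in $|V(G')|$ — here we use $\delta^+(G),\delta^-(G)\ge \eta n$ and $p\le\gamma n\ll\eta n$, so the minimum degree drops only slightly and the link vertices are handled by the forced edges. (2) Check that $G'$ is still a robust $(\nu',\tau')$-outexpander for suitable $\nu'\ll\nu$, $\tau'$ slightly larger than $\tau$: this is essentially immediate since deleting or adding $O(\gamma n)$ vertices and a bounded-degree perturbation changes the robust expansion parameters by $O(\gamma)$, which is absorbed because $\gamma\ll\nu\ll\tau$. (3) Apply the Hamilton cycle theorem of~\cite{KOT10} to $G'$ to obtain a Hamilton cycle $C'$. (4) Project $C'$ back to $G$: contract each pair $\{y_i^-,y_i^+\}$ (together with the forced link edges) to recover $y_i$, obtaining a Hamilton cycle $C$ of $G$; verify that the forced structure guarantees $C$ meets $y_1,\dots,y_p$ in the prescribed order.

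I expect step~(1) to be the only genuinely fiddly part: one must choose the splitting so that the forced edges really are forced (no `shortcuts' in $G'$ bypassing a link vertex) while simultaneously not destroying the linear minimum degree or the expansion. A clean way is to make each $y_i^+$ have out-degree exactly one (its unique out-neighbour being $y_{i+1}^-$) and leave all other adjacencies of the original $y_i$ on a single retained copy; symmetrically for $y_i^-$. Then a Hamilton cycle of $G'$ must use the edge $y_i^+y_{i+1}^-$, must enter $y_i^+$ from the retained copy of $y_i$, and must leave $y_{i+1}^-$ into the retained copy of $y_{i+1}$, which pins down the order. Since this construction and verification are entirely routine and were carried out in~\cite{3con}, the write-up can be brief: state the construction, note properties (a)--(c), invoke~\cite{KOT10}, and conclude.

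In fact, since the excerpt explicitly says Theorem~\ref{expanderthm} "was derived in~\cite{3con} as a straightforward consequence" of~\cite{KOT10}, the honest proof is simply the citation together with a one-paragraph sketch of the splitting argument above; I would present it in that form rather than reproving~\cite{KOT10}.
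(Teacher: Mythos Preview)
The paper gives no proof of this statement; it simply quotes it from~\cite{3con}. Your bottom line---cite~\cite{3con}---is therefore exactly what the paper does, and on that level there is nothing to compare.

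That said, the sketch you offer as justification has a genuine gap. In your ``clean'' construction each link vertex $y_i^+$ has out-degree exactly one and $y_i^-$ has in-degree exactly one, so $\delta^+(G')=\delta^-(G')=1$. The result of~\cite{KOT10} requires linear minimum semidegree at \emph{every} vertex; it has no provision for a few exceptional low-degree vertices, so it cannot be applied to your $G'$ as a black box. Your claim in~(c) that ``the link vertices are handled by the forced edges'' conflates what you want the Hamilton cycle to do with what the hypotheses of the theorem demand. A reduction that does work goes differently: first use robust outexpansion to greedily build short internally-disjoint directed paths $P_i$ from $y_i$ to $y_{i+1}$ for $i<p$ (each of length $O(1/\nu)$, avoiding the other $y_j$), and delete their interiors to obtain a digraph $G^-$ which is still a robust outexpander of linear minimum semidegree. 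Then contract $y_1$ and $y_p$ into a single vertex $z$ with $N^+(z):=N^+_{G^-}(y_p)$ and $N^-(z):=N^-_{G^-}(y_1)$. Crucially, $z$ inherits linear in- and out-degree, so the contracted digraph still satisfies the hypotheses of~\cite{KOT10}; the resulting Hamilton cycle unfolds to a Hamilton path from $y_p$ to $y_1$ in $G^-$, and concatenating with the $P_i$ gives the desired Hamilton cycle in~$G$.
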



\subsection{A Regularity Concept for Sparse Graphs} \label{sec:reg}
We now formulate a concept of $\eps$-superregularity which is suitable for `sparse' graphs.
Let $G$ be a bipartite graph with vertex classes $U$ and $V$, both of size $m$.
Given $A\subseteq U$ and $B\subseteq V$, we write $d(A,B):=e(A,B)/|A||B|$ for the density of $G$
between $A$ and~$B$. Given $0<\eps,d,d^*,c<1$, we say that $G$ is \emph{$(\eps,d,d^*,c)$-superregular} if the following conditions are satisfied:
\begin{itemize}
\item[(Reg1)] Whenever $A\subseteq U$ and $B\subseteq V$ are sets of size at least $\eps m$, then
$d(A,B)=(1\pm \eps)d$.
\item[(Reg2)] For all $u,u'\in V(G)$ we have $|N(u)\cap N(u')|\le c^2m$.%
	\COMMENT{Previously, we had the following statement, which is the same.
For all $u,u'\in U$ we have $|N(u)\cap N(u')|\le c^2m$. Similarly, for all $v,v'\in V$ we have $|N(v)\cap N(v')|\le c^2m$.}
\item[(Reg3)] $\Delta(G)\le cm$.
\item[(Reg4)] $\delta(G)\ge d^*m$.
\end{itemize}
Note that the above definitions also make sense if $G$ is `sparse' in the sense that $d<\eps$ (which will be the case in our proofs).
A bipartite digraph $G=G[U,V]$ is \emph{$(\eps,d,d^*,c)$-superregular} if this holds for
the underlying undirected graph of~$G$.

The following observation follows immediately from the definition.

\begin{prop} \label{superslice6}
Suppose that $0<1/m \ll  d^*,d, \eps, \eps', c  \ll 1$ and $2\eps'\le d^*$.
Let $G$ be an $(\eps,d,d^*,c)$-superregular bipartite graph with vertex classes
$U$ and $V$ of size $m$. 
Let $U' \subseteq U$ and $V' \subseteq V$ with $|U'|=|V'| \ge (1- \eps')m$.
Then $G[U',V']$ is $(2\eps,d, d^*/2 ,2c)$-superregular.%
   \COMMENT{Let $m':= |U'| = |V'|$, so $(1- \eps')m \le m' \le m$.
Since $ 2\eps m'\ge \frac{\eps m'}{1- \eps'} \ge \eps m$, $G[U',V']$ satisfies (Reg1)--(Reg3).
To see (Reg4) note that the degrees in $G'$ are still at least $d^*m-\eps' m \ge d^* m'/2$.}
\end{prop}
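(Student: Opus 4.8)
The plan is to verify the four conditions (Reg1)--(Reg4) directly, using that restricting to $U'$, $V'$ changes each vertex class size by at most $\eps'm$ and can only decrease densities of pairs, degrees, and common-neighbourhood sizes. Throughout I would write $m':=|U'|=|V'|$, so that $(1-\eps')m\le m'\le m$; since $\eps'\ll 1$ we may assume $\eps'\le 1/2$, hence $m/2\le m'\le m$, which is the only size comparison needed.

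First I would check (Reg1). Given $A\subseteq U'$ and $B\subseteq V'$ with $|A|,|B|\ge 2\eps m'$, note $2\eps m'\ge 2\eps(1-\eps')m\ge \eps m$, so $A,B$ are large enough to apply (Reg1) for $G$, giving $d_G(A,B)=(1\pm\eps)d=(1\pm 2\eps)d$. Since the set of edges of $G[U',V']$ between $A$ and $B$ is exactly $E_G(A,B)$, the density is unchanged, so (Reg1) holds with parameters $2\eps$ and $d$. Next, (Reg2): for any two vertices $u,u'$ we have $|N_{G[U',V']}(u)\cap N_{G[U',V']}(u')|\le |N_G(u)\cap N_G(u')|\le c^2m\le 2c^2m'\le (2c)^2m'$, using $m\le 2m'$. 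Similarly (Reg3): $\Delta(G[U',V'])\le\Delta(G)\le cm\le 2cm'$.

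Finally (Reg4): any vertex of $G[U',V']$ loses at most $|V\setminus V'|=m-m'\le\eps'm$ (resp. $|U\setminus U'|\le\eps'm$) neighbours compared with $G$, so its degree is at least $d^*m-\eps'm=(d^*-\eps')m$; since $2\eps'\le d^*$ this is at least $(d^*/2)m\ge(d^*/2)m'$, giving (Reg4) with parameter $d^*/2$. Collecting these, $G[U',V']$ is $(2\eps,d,d^*/2,2c)$-superregular.

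There is no real obstacle here: the proof is a sequence of elementary size comparisons, the only mild points being to keep track of whether one compares against $m$ or $m'$ in (Reg2)--(Reg4) and to use the hypothesis $2\eps'\le d^*$ exactly once, in (Reg4). Since the statement says to omit the proof, in the write-up I would simply state that it follows immediately from the definition together with these observations, as indicated by the remark preceding the proposition.
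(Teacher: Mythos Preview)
Your proof is correct and follows essentially the same approach as the paper: the paper omits the proof entirely, leaving only a comment that verifies (Reg1)--(Reg3) via the size comparison $2\eps m'\ge \eps m$ and checks (Reg4) using $d^*m-\eps'm\ge (d^*/2)m'$, which is precisely what you do. Your verification of (Reg2) and (Reg3) via $m\le 2m'$ is the natural way to unpack the paper's claim that these ``follow immediately.''
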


The following two simple observations were made in~\cite{Kelly}.
\begin{prop} \label{superslice5}
Suppose that $0<1/m \ll d^*,d, \eps, c\ll 1$. Let $G$ be an $(\eps,d,d^*,c)$-superregular bipartite graph with vertex classes
$U$ and $V$ of size $m$. Suppose that $G'$ is obtained from $G$ by removing
at most $\eps^2 dm$ edges incident to each vertex from $G$.
Then $G'$ is $(2\eps,d,d^*-\eps^2 d,c)$-superregular.
\end{prop}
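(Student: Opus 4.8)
The plan is to verify the four conditions (Reg1)--(Reg4) for $G'$ directly from the fact that $G$ satisfies them and that we have removed only a few edges at each vertex. Write $m':=m$ throughout (the vertex classes are unchanged, only edges are deleted), and let $G'$ be obtained from $G$ by deleting at most $\eps^2 d m$ edges at each vertex.

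For (Reg1): fix $A\subseteq U$, $B\subseteq V$ with $|A|,|B|\ge 2\eps m$. Since $|A|,|B|\ge \eps m$ as well, we have $e_G(A,B)=(1\pm\eps)d|A||B|$ by (Reg1) for $G$. When passing to $G'$ we delete at most $\eps^2 dm$ edges at each of the $|A|$ vertices of $A$, so $e_G(A,B)-e_{G'}(A,B)\le \eps^2 dm|A|\le \eps^2 dm|B||A|/(2\eps m)=\eps d|A||B|/2$ (using $|B|\ge 2\eps m$). Hence
\[
e_{G'}(A,B)=(1\pm\eps)d|A||B|\pm \tfrac{\eps}{2} d|A||B|=(1\pm 2\eps)d|A||B|,
\]
so $d_{G'}(A,B)=(1\pm 2\eps)d$, giving (Reg1) for $G'$ with parameter $2\eps$. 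Conditions (Reg2) and (Reg3) are monotone under edge-deletion: for any $u,u'$ we have $N_{G'}(u)\cap N_{G'}(u')\subseteq N_G(u)\cap N_G(u')$, so $|N_{G'}(u)\cap N_{G'}(u')|\le c^2 m$; and $\Delta(G')\le\Delta(G)\le cm$. Finally, for (Reg4), every vertex loses at most $\eps^2 dm$ edges, so $\delta(G')\ge\delta(G)-\eps^2 dm\ge d^*m-\eps^2 dm=(d^*-\eps^2 d)m$.

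There is no real obstacle here — the statement is routine and the paper itself only says it was made in \cite{Kelly}. The one point requiring a little care is the bookkeeping in (Reg1): one must use the lower bound $|A|,|B|\ge 2\eps m$ (not just $\ge\eps m$) to absorb the $\eps^2 dm|A|$ error into an $\tfrac{\eps}{2}d|A||B|$ term, which is what forces the parameter to degrade from $\eps$ to $2\eps$ rather than staying at $\eps$. Everything else is immediate from monotonicity of the conditions under removal of edges. Note also that the hierarchy $0<1/m\ll d^*,d,\eps,c\ll 1$ guarantees $\eps^2 d<d^*$ so that $d^*-\eps^2 d$ is still a positive constant, and $2\eps\ll 1$, so the output parameters are in the admissible range.
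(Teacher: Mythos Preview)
Your proof is correct and complete. The paper does not include its own proof of this proposition; it simply states that the observation was made in~\cite{Kelly}, so there is nothing to compare against beyond noting that your direct verification of (Reg1)--(Reg4) is the natural approach and exactly what one would expect.

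One minor remark: your closing sentence that the hierarchy $0<1/m\ll d^*,d,\eps,c\ll 1$ guarantees $\eps^2 d<d^*$ is not quite accurate, since that hierarchy imposes no relation among $d^*,d,\eps,c$. However, this does not affect the argument: the proposition merely asserts that $G'$ is $(2\eps,d,d^*-\eps^2 d,c)$-superregular, and if $d^*-\eps^2 d\le 0$ then (Reg4) is vacuous. In the paper's applications the parameters are always chosen so that $d^*-\eps^2 d$ remains a positive constant.
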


\begin{lemma}\label{regtoexpander}
Let $0<1/m\ll \nu\ll \tau\ll d\le \eps\ll \mu,\zeta \le 1/2$ and let $G$ be an
$(\eps,d,\zeta d,d/\mu)$-superregular bipartite graph with vertex classes $U$ and $V$ of size $m$. Let $A\subseteq U$ be such that
$\tau m\le |A|\le (1-\tau) m$. Let $B\subseteq V$ be the set of all those vertices in $V$ which have at least $\nu m$ neighbours in $A$.
Then $|B|\ge |A|+\nu m$.
\end{lemma}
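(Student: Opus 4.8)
The plan is to argue by contradiction: suppose $|B|<|A|+\nu m$, and write $a:=|A|$, $C:=V\setminus B$, so that $|C|=m-|B|>m-a-\nu m$ and every vertex of $C$ has fewer than $\nu m$ neighbours in $A$. Also set $\overline A:=U\setminus A$, so that $\tau m\le a,|\overline A|\le(1-\tau)m$. The only facts I will use about $G$ are: every vertex has degree between $\zeta dm$ and $(d/\mu)m$ (by (Reg4) and (Reg3)); any two vertices have at most $(d/\mu)^2m$ common neighbours (by (Reg2)); and between any two sets of size at least $\eps m$ the density is $(1\pm\eps)d$ (by (Reg1)). Since $\tau\ll d\le\eps$, the set $A$ may be much smaller than $\eps m$, so (Reg1) is not always available for $A$; hence the argument splits according to the size of $a$ (equivalently of $|\overline A|$), using the codegree condition (Reg2) precisely in those ranges where (Reg1) is inapplicable.

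First I would treat the case $a\le\zeta^2\mu^2m/16$. Here I bound $\sum_{v\in V}d_A(v)^2=e(A,V)+2\sum_{\{u,u'\}\subseteq A}|N(u)\cap N(u')|\le e(A,V)+a^2(d/\mu)^2m\le 2a^2(d/\mu)^2m$, where the last inequality holds because $a\ge\tau m\gg\mu/d$ forces $e(A,V)\le a(d/\mu)m\le a^2(d/\mu)^2m$. On the other hand $e(A,B)=e(A,V)-e(A,C)\ge a\zeta dm-\nu m^2\ge a\zeta dm/2$ by (Reg4) and $\nu\le\tau\zeta d/2$. Then Cauchy--Schwarz gives $|B|\ge e(A,B)^2/\sum_{v\in B}d_A(v)^2\ge (a\zeta dm/2)^2/(2a^2(d/\mu)^2m)=\zeta^2\mu^2m/8\ge a+\nu m$, contradicting $|B|<a+\nu m$. (Every numerical inequality, e.g.\ $\nu\le\zeta^2\mu^2/16$, follows directly from the hierarchy.)

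It remains to handle $a>\zeta^2\mu^2m/16$, which in particular forces $a\ge\eps m$ because $\eps\ll\mu,\zeta$. If $|C|\ge\eps m$, then (Reg1) applied to the pair $(A,C)$ gives $e(A,C)\ge(1-\eps)d\eps m|C|$, while the definition of $C$ gives $e(A,C)<\nu m|C|$; since $\nu\ll d\eps$ this is a contradiction. So $|C|<\eps m$, i.e.\ $|B|>(1-\eps)m$, and if moreover $a\le(1-\eps-\nu)m$ then $|B|>(1-\eps)m\ge a+\nu m$, again a contradiction. Thus we are reduced to $a>(1-\eps-\nu)m$, i.e.\ $\overline A$ is small: $\tau m\le|\overline A|<2\eps m$. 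Now each $v\in C$ satisfies $d_{\overline A}(v)=d_G(v)-d_A(v)\ge\zeta dm-\nu m\ge\zeta dm/2$, so running the codegree computation of the first case with $\overline A$ in place of $A$ and summing only over $C$ gives $|C|\cdot(\zeta^2d^2m^2/16)\le\sum_{v\in V}\binom{d_{\overline A}(v)}{2}=\sum_{\{u,u'\}\subseteq\overline A}|N(u)\cap N(u')|\le\binom{|\overline A|}{2}(d/\mu)^2m$, whence $|C|\le 8|\overline A|^2/(\zeta^2\mu^2m)\le|\overline A|/2\le|\overline A|-\nu m=m-a-\nu m$, contradicting $|C|>m-a-\nu m$.

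The main obstacle, and the reason for the case split, is exactly that $A$ may be far smaller than $\eps m$ (since $\tau\ll d\le\eps$), so the $\eps$-regularity condition (Reg1) — the strongest hypothesis — cannot be applied to $A$ itself, and the crude bound $\Delta(G)\le(d/\mu)m$ loses a factor $1/(\zeta\mu)\ge4$ and is never enough on its own. The resolution is that (Reg2) controls codegrees well enough that a Cauchy--Schwarz (``too many cherries'') argument forces $B$ (respectively $C$) to occupy a fixed positive proportion of $V$ whenever the relevant set on the $U$-side is small, which comfortably beats the target $|A|+\nu m$ in that range; genuine $\eps$-regularity is then only needed in the intermediate-to-large range of $a$, where it is applicable. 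The one point requiring care is choosing the boundary of the case split (here $\zeta^2\mu^2m/16$ versus $\eps m$) so that all the numerical constraints are simultaneously satisfied, but each such constraint is a one-line consequence of $0<1/m\ll\nu\ll\tau\ll d\le\eps\ll\mu,\zeta\le1/2$.
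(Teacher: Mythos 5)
Your proof is correct, and all the numerical checks go through under the stated hierarchy. The paper does not actually contain a proof of Lemma~\ref{regtoexpander}: it is stated as one of ``two simple observations... made in~\cite{Kelly}'', so there is no in-text proof to compare against. That said, your three-way split is the standard and natural one for deducing robust (out)expansion from sparse superregularity: a Cauchy--Schwarz ``cherry-counting'' argument via~(Reg2)/(Reg3) handles $|A|$ below the regularity threshold (Case~1), $\eps$-regularity~(Reg1) handles the middle range where both $A$ and $C:=V\setminus B$ have size at least $\eps m$, and a mirrored cherry-counting argument applied to $\overline A=U\setminus A$ handles the range where $|\overline A|$ is again below the regularity threshold. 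This is in essence the same mechanism used in~\cite{Kelly}.

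A few small remarks on the write-up. In Case~1, the chain $|B|\ge e(A,B)^2/\sum_{v\in B}d_A(v)^2\ge e(A,B)^2/\sum_{v\in V}d_A(v)^2$ is written in the wrong order (the second fraction is smaller), but since you only use the final lower bound $|B|\ge e(A,B)^2/\sum_{v\in V}d_A(v)^2$ this is cosmetic. The step $e(A,V)\le a(d/\mu)m\le a^2(d/\mu)^2m$ does need $a(d/\mu)\ge1$, and your appeal to $a\ge\tau m$ with $m$ large is the right justification, but it is worth noting explicitly that this is where $1/m\ll\nu$ is used. Finally, the inequalities $\eps\le\zeta^2\mu^2/32$ and $\nu\le\tau\zeta d/2$ that your argument silently relies on are indeed licensed by the hierarchy because $\zeta,\mu$ are chosen before $\eps$, and $d,\zeta,\mu$ are chosen before $\tau$ and $\nu$; it would be cleaner to collect these at the start of the proof, as the paper does elsewhere.
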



\section{Systems and Balanced Extensions} \label{systembalanced}
\subsection{Sketch Proof of Lemma~\ref{almostthm}} \label{sec:sketchzz}
Roughly%
   \COMMENT{Deryk: new para}
speaking, the Hamilton cycles we find will have the following structure:
let $A_1,\dots,A_K\subseteq A$ and $B_1,\dots,B_K\subseteq B$ be the clusters of the $(K,m, \epszero)$-partition $\mathcal{P}$
of $V(G)$ given in Lemma~\ref{almostthm}. So $K$ is odd.
Let $\mathcal{R}_A$ be the complete graph on $A_1,\dots,A_K$ and $\mathcal{R}_B$ be the complete graph on $B_1,\dots,B_K$.
Since $K$ is odd, Walecki's theorem~\cite{lucas} implies that $\mathcal{R}_A$ has a Hamilton decomposition $C_{A,1}, \dots, C_{A,(K-1)/2}$, and similarly  $\mathcal{R}_B$
has a Hamilton decomposition $C_{B,1}, \dots, C_{B,(K-1)/2}$.%
\COMMENT{Andy: added last part of sentence.}
Every Hamilton cycle $C$ we construct in $G$ will have the property that there is a $j$ so that almost all edges of $C[A]$
wind around $C_{A,j}$ and almost all edges of $C[B]$ wind around $C_{B,j}$.
Below, we describe the main ideas involved in the construction of the Hamilton cycles in more detail.

As indicated above, the first idea is that we can reduce the problem of finding the required edge-disjoint Hamilton cycles (and possibly perfect matchings) in~$G$
to that of finding appropriate Hamilton cycles on each of $A$ and $B$ separately.

More precisely, let $\mathcal{J}$ be a set of edge-disjoint exceptional systems as given in Lemma~\ref{almostthm}.%
    \COMMENT{Daniela: reworded}
By deleting some edges if necessary, we may further assume that $\mathcal{J}$ is an edge-decomposition of $G - G[A] - G[B]$.
Thus, in order to prove Lemma~\ref{almostthm}, we have to find $|\mathcal{J}|$ suitable edge-disjoint subgraphs $H_{A,1}, \dots, H_{A,|\mathcal{J}|}$
of $G[A]$ and $|\mathcal{J}|$ suitable edge-disjoint subgraphs $H_{B,1}, \dots, H_{B,|\mathcal{J}|}$ of $G[B]$ such that
$H_s := H_{A,s} + H_{B,s}  + J_s$ are the desired spanning subgraphs of~$G$.
To prove this, for each $J \in \mathcal{J}$, we consider the two corresponding  auxiliary subgraphs $J^*_A$ and $J^*_B$ defined at the beginning of Section~\ref{sec:BES}. 
Thus  $J^*_A$ and $J^*_B$ have the following crucial properties:
\begin{itemize}
\item[($\alpha_1$)] $J^*_A$ and $J^*_B$ are matchings whose vertices are contained in $A$ and $B,$\COMMENT{B\'ela: added comma} respectively;
\item[($\alpha_2$)] the union of any Hamilton cycle $C^*_A$ in $G[A] + J^*_A$ containing $J^*_A$ (in some suitable order)
and any Hamilton cycle $C^*_B$ in $G[B] + J^*_B$ containing $J^*_B$ (in some suitable order) corresponds to either a Hamilton cycle
of $G$ containing $J$ or to the union of two edge-disjoint perfect matchings of $G$ containing $J$.
\end{itemize}
Furthermore, $J$ determines which of the cases in~($\alpha_2$) holds: If $J$ is a Hamilton exceptional system,
then ($\alpha_2$) will give a Hamilton cycle of $G$, while in the case when $J$ is a matching exceptional system, ($\alpha_2$)
will give the union of two edge-disjoint perfect matchings of $G$.
So roughly speaking, this allows us to work with multigraphs $G^*_A := G[A] + \sum_{J \in \mathcal{J}} J^*_A$ and
$G^*_B:= G[B] + \sum_{J \in \mathcal{J} } J^*_B$ rather than $G$ in the two steps.%
	\COMMENT{Allan: added the word multigraphs}
Furthermore, the processes of finding Hamilton cycles in $G^*_A$ and in $G^*_B$ are independent (see Section~\ref{sec:J*2clique} for more details).%
   \COMMENT{Deryk: added brackets} 

By symmetry, it suffices to consider $G^*_A$ in what follows.
The second idea of the proof is that as an intermediate step, we decompose $G_A^*$ into blown-up Hamilton cycles
$G^*_{A,j}$. Roughly speaking, we will then find an approximate Hamilton decomposition of each $G^*_{A,j}$ separately.

More precisely,%
    \COMMENT{Andy: reworded} 
recall that $\mathcal{R}_A$ denotes the complete graph whose vertex set is $\{A_1, \dots, A_K \}$.
As mentioned above, $\mathcal{R}_A$ has a Hamilton decomposition $C_{A,1}, \dots,$ $ C_{A,(K-1)/2}$.%
    \COMMENT{Daniela: changed $C_j$ to $C_{A,j}$ here and below}
We decompose $G[A]$ into edge-disjoint subgraphs $G_{A,1}, \dots,G_{A,{(K-1)/2}}$ such that each $G_{A,j}$ corresponds to the `blow-up' of $C_{A,j}$,
i.e.~$G_{A,j}[U,W] = G[U,W]$ for every edge $UW \in E(C_{A,j})$. (The edges of $G$ lying inside one of the clusters $A_1, \dots, A_K$
are deleted.) We also partition the set $\{J^*_A: J\in\mathcal{J}\}$ into $(K-1)/2$ sets
$\mathcal{J}^*_{A,1}, \dots, \mathcal{J}^*_{A,(K-1)/2}$ of roughly equal size. Set $G^*_{A,j} := G_{A,j} + \mathcal{J}^*_{A,j}$.
Thus in order to prove Lemma~\ref{almostthm}, we need to find $|\mathcal{J}^*_{A,j}|$ edge-disjoint Hamilton cycles in $G^*_{A,j}$
(for each $j\le (K-1)/2$).
Since $G^*_{A,j}$ is still close to being a blow-up of the cycle $C_{A,j}$, finding such Hamilton cycles 
seems feasible.

One complication is that in order to satisfy~($\alpha_2$), we need to ensure that each Hamilton cycle in $G^*_{A,j}$
contains some $J^*_A\in \mathcal{J}^*_{A,j}$ (and it must traverse the edges of $J^*_A$ in some given order).
To achieve this, we will both orient and order the edges of~$J^*_A$.
So we will actually consider an ordered directed matching $J^*_{A, {\rm dir}}$ instead of $J^*_A$. ($J^*_A$ itself will still be undirected and unordered).
We orient the edges of $G_{A,j}$ such that the resulting oriented graph $G_{A,j,{\rm dir}}$ is a blow-up of the directed cycle $C_{A,j}$.

However, $J^*_{A,{\rm dir}}$ may not be `locally balanced with respect to $C_{A,j}$'.
This means that it is impossible to extend $J^*_{A,{\rm dir}}$ into a directed Hamilton cycle using only edges of $G_{A,j,{\rm dir}}$.
For example, suppose that $G_{A,j,{\rm dir}}$ is a blow-up of the directed cycle $A_1A_2 \dots A_{K}$, i.e.~each edge of $G_{A,j,{\rm dir}}$
joins $A_i$ to $A_{i+1}$ for some $1\le i\le K$.
If $J^*_{A,{\rm dir}}$ is non-empty and $V(J^*_{A,{\rm dir}})\subseteq A_1$, then $J^*_{A,{\rm dir}}$
cannot be extended into a directed Hamilton cycle using edges of $G_{A,j,{\rm dir}}$ only.
Therefore, each $J^*_{A,{\rm dir}} $ will first be extended into a `locally balanced path sequence' $PS$.
$PS$ will have the property that it can be extended to a Hamilton cycle
using only edges of $G_{A,j,{\rm dir}}$. We will call the set $\mathcal{BE}_j$ consisting of all such $PS$ for all $J^*_A\in \mathcal{J}^*_{A,j}$
a balanced extension of $\mathcal{J}^*_{A,j}$.%
   \COMMENT{Deryk: changed $\mathcal{J}^*_{j}$ to $\mathcal{J}^*_{A,j}$ twice}
$\mathcal{BE}_j$ will be constructed in
Section~\ref{sec:BE2clique}, using edges from a sparse graph $H'$ on $A$ (which is actually removed from $G[A]$ before defining $G_{A,1}, \dots, G_{A,(K-1)/2}$).

Finally, we find the required directed Hamilton cycles in $G_{A,j,{\rm dir}}+ \mathcal{BE}_j$ in Section~\ref{sec:extendmerge}.
We construct these by first extending the path sequences in $\mathcal{BE}_j$ into (directed) $1$-factors, using edges which wind around the blow-up of $C_{A,j}$.
These are then transformed into Hamilton cycles using a small set of edges set aside earlier (again the set of these edges winds around the blow-up of $C_{A,j}$).

\subsection{Systems and Balanced Extensions} \label{system}
As mentioned above, the proof of Lemma~\ref{almostthm} requires an edge-decomposition and orientation of $G[A]$ and
$G[B]$ into blow-ups of directed cycles as well as `balanced extensions'. These are defined in the current subsection.

Let $k,m \in \mathbb{N}$. Recall that
a \emph{$(k,m)$-equipartition} $\mathcal{Q}$ of a set $V$ of vertices is a partition of $V$ into sets $V_1, \dots, V_k$ such that $|V_i| = m$ for all $i \le k$.
The $V_i$ are called \emph{clusters} of~$\mathcal{Q}$.
$(G, \mathcal Q, C)$ is a \emph{$(k,m, \mu, \eps )$-cyclic system}
 if the following properties hold:
\begin{enumerate}[label={(Sys{\arabic*})}]
	\item $G$ is a digraph and $\mathcal{Q}$ is a $(k,m)$-equipartition of $V(G)$.
	\item $C$ is a directed Hamilton cycle on $\mathcal{Q}$ and $G$ winds around $C$.%
    \COMMENT{Added the "winding around bit" since we need it in the proof of Lemma~\ref{merging}}
Moreover, for every edge $UW$ of $C$,
we have $d^+_G(u,W) = (1- \mu \pm \eps)m $ for every $u \in U$ and $d^-_G(w,U) = (1- \mu \pm \eps)m$ for every $w \in W$.
\end{enumerate}
So roughly speaking, such a cyclic system is a blown-up Hamilton cycle.

Let $\mathcal{Q}$ be a \emph{$(k,m)$-equipartition} of $V$ and let $C$ be a directed Hamilton cycle on~$\mathcal{Q}$.
We say that a digraph $H$ with $V(H)\subseteq V$ is \emph{locally balanced with respect to $C$} if for every edge $UW$
of $C$, the number of edges of $H$ with initial vertex in $U$ equals the number of edges of $H$ with final vertex in~$W$.

Recall that a path sequence is a digraph which is the union of vertex-disjoint directed paths (some of them might be trivial).
Let $M$ be a directed matching.
We say a path sequence $PS$ is a \emph{$V_i$-extension of $M$ with respect to $\mathcal{Q}$} if each edge of $M$ is
contained in a distinct directed path in $PS$ having its final vertex in~$V_i$.%
	\COMMENT{We write $V_i$-extension instead of $i$-extension, because in the bipartite case, we take $\mathcal{Q} = \{A_1, \dots, A_K, B_1, \dots, B_K\}$.}
Let $\mathcal{M} := \{ M_{1}, \dots, M_{q}\}$ be a set of directed matchings. 
A set $\mathcal{BE}$ of path sequences is a \emph{balanced extension of $\mathcal{M}$ with respect to $(\mathcal{Q},C)$
and parameters $(\eps, \ell)$} if $\mathcal{BE}$ satisfies the following properties:
\begin{itemize}
	\item[(BE1)] $\mathcal{BE}$ consists of $q$ path sequences $PS_1, \dots, PS_q$ such that $V(PS_i)\subseteq V$ for
each $i\le q$, each $PS_i$ is locally balanced with respect to $C$ and $PS_1-M_1,\dots,PS_q-M_q$ are edge-disjoint from each other.%
   \COMMENT{Daniela: now have that $PS_1-M_1,\dots,PS_q-M_q$ are edge-disjoint instead of the $PS_s$ themselves}
	\item[(BE2)] Each $PS_s$ is a $V_{i_s}$-extension of $M_s$ with respect to $\mathcal{Q}$ for some $i_s \le k$.
Moreover, for each $i \le k$ there are at most $\ell m/k$  indices $s\le q$ such that $i_s=i$.%
    \COMMENT{will have $k=2K$ in the bip case}
	\item[(BE3)] $|V(PS_s) \cap V_i |\le \eps m$ for all $i \le k $ and $s \le q$. Moreover, for each $i \le k$, there are at most $\ell m/k$ path sequences $PS_s \in \mathcal{BE}$ such that $V(PS_s) \cap V_i \ne \emptyset$.
\end{itemize}

Note that the `moreover part' of (BE3) implies the `moreover part' of (BE2).%
     \COMMENT{Deryk: new sentence}

Given an ordered directed matching $M = \{f_1, \dots, f_{\ell}\}$, we say that a directed cycle $C'$ is
\emph{consistent with $M$} if $C'$ contains $M$ and visits the edges $f_1, \dots ,f_{\ell}$ in this order.
The following observation will be useful:
suppose that $PS$ is a $V_i$-extension of $M$ and let $x_j$ be the final vertex of the path in $PS$ containing $f_j$.
(So $x_1, \dots, x_{\ell}$ are distinct vertices of $V_i$.)
Suppose also that $C'$ is a directed cycle which contains $PS$  and visits $x_1, \dots, x_{\ell}$ in this order.
Then $C'$ is consistent with $M$.

\section[Finding Systems and Balanced Extensions for Two Cliques Case]{Finding Systems and Balanced Extensions for the Two Cliques Case}
Let $G$ be a graph, let $\mathcal{P}$ be a $(K,m, \eps_0)$-partition of $V(G)$ and
let $\mathcal{J}$ be a set of exceptional systems as given by Lemma~\ref{almostthm}.
The aim of this section is to decompose $G[A]+G[B]$ into $(k, m, \mu , \eps )$-cyclic systems and to construct balanced extensions
as described in Section~\ref{sec:sketchzz}. First we need to define $J^*_{A, {\rm dir}}$ and $J^*_{B, {\rm dir}}$ for each exceptional system~$J\in \mathcal{J}$.
Recall from Section~\ref{sec:sketchzz} that these are introduced in order to be able to consider $G[A]$ and $G[B]$ separately
(and thus to be able to ignore the exceptional vertices in $V_0=A_0 \cup B_0$).

\subsection{Defining the Graphs $J^*_{A,{\rm dir}}$ and $J^*_{B, {\rm dir}}$} \label{sec:J*2clique}
We recall the following definition of $J^*$ from Section~\ref{sec:BES}.
Let $A,A_0,B,B_0$ be a partition of a vertex set $V$ on $n$ vertices and let $J$ be an exceptional system with parameter $\eps_0$.
Since each maximal path in $J$ has endpoints in $A \cup B$ and internal vertices in $V_0$, an exceptional system $J$
naturally induces a matching $J^*_{AB}$ on $A \cup B$.
More precisely, if $P_1, \dots ,P_{\ell'}$ are the non-trivial paths in~$J$ and $x_i, y_i$ are the endpoints of $P_i$, then
we define $J^*_{AB} := \{x_iy_i : i  \le \ell'\}$. Thus $e_{J^*_{AB}}(A,B)$ is equal to the number of $AB$-paths in~$J$.
In particular, $e_{J^*_{AB}}(A,B)$ is positive and even if $J$ is a Hamilton exceptional system, while $e_{J^*_{AB}}(A,B)=0$ if $J$
is a matching exceptional system. Without loss of generality we may assume that $x_1y_1, \dots, x_{2\ell}y_{2\ell}$ is
an enumeration of the edges of $J^*_{AB}[A,B]$, where $x_i \in A$ and $y_i \in B$. Define 
$$
J_A^* := J^*_{AB}[A] \cup \{x_{2i-1} x_{2i} :1 \le i \le \ell \} \ \
 \mbox{and} \ \ \ J_B^* := J^*_{AB}[B] \cup \{y_{2i} y_{2i+1} :1 \le i \le \ell \}
$$ 
(with indices considered modulo $2\ell$).
Let $J^* := J_A^* + J_B^*$. So $J^*$ is a matching and $e(J^*) = e(J^*_{AB})$. Moreover, by (EC2), (EC3) and (ES3) we have
\begin{align}
	e(J^*) = e(J^*_{AB}) \le |V_0| + \sqrt{\eps_0} n. \label{ESeqzz}
\end{align}
Recall that the  edges in $J^*$ are called \emph{fictive} edges, and  that if $J_1$ and $J_2$ are two edge-disjoint exceptional systems, then $J^*_1$
and $J^*_2$ may not be edge-disjoint.%
   \COMMENT{Daniela: deleted the following, since it is not needed anymore: However, we will always view fictive edges as being distinct from each other and from the edges
in other graphs. So in particular, whenever $J_1$ and $J_2$ are two exceptional systems, we will view $J^*_1$
and $J^*_2$ as being edge-disjoint.}

Recall that we say that an (undirected) cycle $C$ is \emph{consistent with $J_A^*$} if $C$ contains $J_A^*$ and (there is an orientation of $C$ which)
visits the vertices $x_1, \dots ,x_{2\ell}$ in this order.%
    \COMMENT{We need to prescribe a vertex rather than an edge ordering as it is important in Proposition~\ref{prop:ES}.}
In a similar way we define when a cycle is consistent with $J_B^*$.

As mentioned in Section~\ref{sec:sketchzz}, we will orient and order the edges of $J^*_A$ and $J^*_{B}$ in a suitable way
to obtain $J^*_{A, {\rm dir}}$ and $J^*_{B, {\rm dir}}$.
Accordingly, we will need an oriented version of Proposition~\ref{prop:ES}.
For this,  we first orient the edges of $J^*_{A}$ by orienting the edge $x_{2i-1} x_{2i}$ from $x_{2i-1}$ to $x_{2i}$ for all $i \le \ell$ and the edges of $J^*_{AB}[A]$ arbitrarily. 
Next we order these directed edges as $f_1, \dots, f_{\ell_A}$ such that $f_i = x_{2i-1} x_{2i}$ for all $i \le \ell$, where $\ell_A := e(J_A^*)$.
Define $J_{A, { \rm dir } }^*$ to be the ordered directed matching $\{f_1, \dots, f_{\ell_A} \}$.
Similarly, to define $J_{B, { \rm dir } }^*$, we first orient the edges of $J^*_{B}$ by orienting the edge $y_{2i} y_{2i+1}$
from $y_{2i}$ to $y_{2i+1}$ for all $i \le \ell$ and the edges of $J^*_{AB}[B]$ arbitrarily. 
Next we order these directed edges as $f_1', \dots, f_{\ell_B}'$ such that $f_i' = y_{2i} y_{2i+1}$ for all $i \le \ell$, where $\ell_B := e(J_B^*)$.
Define $J_{B, { \rm dir } }^*$ to be the ordered directed matching $\{f_1', \dots, f_{\ell_B}' \}$.
Note that if $J$ is an $(i,i')$-ES, then $V(J_{A, { \rm dir } }^*) \subseteq A_i$ and $V(J_{B, { \rm dir } }^*) \subseteq B_{i'}$.
Recall from Section~\ref{system} that a directed cycle $C_{A, {\rm dir}}$ is \emph{consistent with} $J_{A, { \rm dir } }^*$
if $C_{A, {\rm dir}}$ contains $J_{A, { \rm dir } }^*$ and visits the edges $f_1, \dots ,f_{\ell_A}$ in this order.%
   \COMMENT{Daniela: reworded} 
The following proposition, which is similar to Proposition~\ref{prop:EPS} follows easily from Proposition~\ref{prop:ES}.

\begin{prop} \label{prop:ES2}
Suppose that $A,A_0,B,B_0$ forms a partition of a vertex set $V$.
Let $J$ be an exceptional system. Let $C_{A, {\rm dir}}$ and $C_{B, {\rm dir}}$ be two directed cycles such that 
\begin{itemize}
	\item $C_{A, {\rm dir}}$ is a directed Hamilton cycle on $A$ that is consistent with $J_{A, {\rm dir}}^*$;
	\item $C_{B, {\rm dir}}$ is a directed Hamilton cycle on $B$ that is consistent with $J_{B, {\rm dir}}^*$.
\end{itemize}
Then the following assertions hold, where $C_A$ and $C_B$ are the undirected cycles obtained from $C_{A, {\rm dir}}$
and $C_{B, {\rm dir}}$ by ignoring the directions of all the edges. 
\begin{itemize}
	\item[\rm (i)] If $J$ is a Hamilton exceptional system, then $C_A+C_B - J^* +J$ is a Hamilton cycle on $V$.
		
	\item[\rm (ii)] If $J$ is a  matching exceptional system, then $C_A+C_B - J^* +J$ is the union of a Hamilton cycle on $A'$ and a Hamilton cycle on $B'$.
In particular, if both $|A'|$ and $|B'|$ are even, then $C_A+C_B - J^* +J$ is the union of two edge-disjoint perfect matchings on $V$.
\end{itemize}
\end{prop}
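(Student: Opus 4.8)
This is essentially the directed analogue of Proposition~\ref{prop:ES}, so the proof strategy is to reduce it to that proposition by ignoring orientations, once we have checked that the orientation and ordering data built into $J^*_{A,{\rm dir}}$ and $J^*_{B,{\rm dir}}$ give us precisely the \emph{consistency} (in the undirected sense) that Proposition~\ref{prop:ES} requires. First I would recall the relevant definitions: $J^*_{A,{\rm dir}}$ is obtained from $J^*_A$ by orienting $x_{2i-1}x_{2i}$ from $x_{2i-1}$ to $x_{2i}$ for each $i\le\ell$ (and the edges of $J^*_{AB}[A]$ arbitrarily), and then ordering them as $f_1,\dots,f_{\ell_A}$ with $f_i=x_{2i-1}x_{2i}$ for $i\le\ell$. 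By hypothesis $C_{A,{\rm dir}}$ is a directed Hamilton cycle on $A$ consistent with $J^*_{A,{\rm dir}}$, meaning $C_{A,{\rm dir}}$ contains all of $f_1,\dots,f_{\ell_A}$ and traverses them in this order. In particular, following $C_{A,{\rm dir}}$ around once, we encounter $x_1,x_2,x_3,x_4,\dots,x_{2\ell-1},x_{2\ell}$ in this order (the edge $f_i$ being traversed from $x_{2i-1}$ to $x_{2i}$). Hence the undirected cycle $C_A$ contains $J^*_A$ and has an orientation visiting $x_1,\dots,x_{2\ell}$ in this order — which is exactly the statement that $C_A$ is consistent with $J^*_A$ in the sense of Section~\ref{sec:BES}. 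The analogous argument, using the orientation of $y_{2i}y_{2i+1}$ from $y_{2i}$ to $y_{2i+1}$ and the ordering $f'_i=y_{2i}y_{2i+1}$, shows that $C_B$ is consistent with $J^*_B$ (note the ordering $y_2,y_3,\dots,y_{2\ell},y_1$ matches exactly the definition of consistency with $J^*_B$ given in Section~\ref{sec:BES}).

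\textbf{Conclusion.} Having established that $C_A$ is consistent with $J^*_A$ and $C_B$ is consistent with $J^*_B$, I would simply invoke Proposition~\ref{prop:ES}: since $C_A$ is a Hamilton cycle on $A$ consistent with $J^*_A$ and $C_B$ is a Hamilton cycle on $B$ consistent with $J^*_B$, Proposition~\ref{prop:ES}(i) gives that $C_A+C_B-J^*+J$ is a Hamilton cycle on $V$ when $J$ is a Hamilton exceptional system, and Proposition~\ref{prop:ES}(ii) gives that $C_A+C_B-J^*+J$ is the union of a Hamilton cycle on $A'$ and a Hamilton cycle on $B'$ when $J$ is a matching exceptional system (and hence, if $|A'|$ and $|B'|$ are both even, the union of two edge-disjoint perfect matchings on $V$, since an even cycle decomposes into two perfect matchings). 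This yields both (i) and (ii) exactly as stated.

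\textbf{Main obstacle.} There is no serious obstacle here — the whole content is bookkeeping about orientations and orderings. The one point that needs genuine care is verifying that the notion of "directed cycle consistent with the ordered directed matching $J^*_{A,{\rm dir}}$" really does imply the undirected notion of "cycle consistent with $J^*_A$", i.e.\ that the prescribed edge-order $f_1,\dots,f_{\ell_A}$ together with the fixed orientations of the $f_i$ forces the vertex-order $x_1,\dots,x_{2\ell}$ (and similarly $y_2,\dots,y_{2\ell},y_1$ for $B$, being alert to the cyclic shift in the definition of $J^*_B$). Once that translation is made, the result is immediate from Proposition~\ref{prop:ES}, so the proof is short.
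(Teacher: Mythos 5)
Your proof is correct and takes exactly the route the paper intends: the paper states only that Proposition~\ref{prop:ES2} ``follows easily from Proposition~\ref{prop:ES}'', and your argument is precisely the bookkeeping needed to make that reduction explicit (including the point about the cyclic shift $y_2,\dots,y_{2\ell},y_1$ in the definition of consistency with $J^*_B$).
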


\subsection{Finding Systems}
In this subsection, we will decompose (and orient) $G[A]$ into cyclic systems
$(G_{A,j,{\rm dir}}, \mathcal{Q}_A, C_{A,j})$, one for each $j\le (K-1)/2$.
Roughly speaking, this corresponds to a decomposition into (oriented) blown-up Hamilton cycles.
We will achieve this by considering a Hamilton decomposition of $\mathcal{R}_A$, where 
$\mathcal{R}_A$ is the complete graph on $\{A_1, \dots, A_K \}$. So each $C_{A,j}$ corresponds to one of the
Hamilton cycles in this Hamilton decomposition. We will split the set $\{J^*_{A,{\rm dir}} : J \in \mathcal{J} \}$
into subsets $\mathcal{J}^*_{A,j}$ and assign $\mathcal{J}^*_{A,j}$ to the $j$th cyclic system. Moreover,
for each $j\le (K-1)/2$,%
   \COMMENT{Daniela replaced $j\le (K-1)/2$ by $j\le (K-1)/2$}
we will also set aside a sparse spanning subgraph $H_{A,j}$%
\COMMENT{Andy:replaced $H_j$ with $H_{A,j}$.}
of $G[A]$, which is removed from $G[A]$
before the decomposition into cyclic systems. $H_{A,j}$%
\COMMENT{Andy:replaced $H_j$ with $H_{A,j}$.}
will be used later on in order to
find a balanced extension of $\mathcal{J}^*_{A,j}$. We proceed similarly for $G[B]$.

\begin{lemma}\label{sysdecom}
Suppose that $0<1/n \ll \eps_0  \ll 1/K \ll \rho    \ll 1$ and $0 \leq \mu \ll 1$,
where $n,K \in \mathbb N$ and $K$ is odd.
Suppose that $G$ is a graph on $n$ vertices and $\mathcal{P}$ is a $(K, m, \eps _0)$-partition of $V(G)$.
Furthermore, suppose that the following conditions hold:%
\begin{itemize}
	\item[{\rm (a)}] $d(v,A_i) = (1 - 4 \mu \pm 4 /K) m $ and $d(w,B_i) = (1 - 4 \mu \pm 4 /K) m $ for all
	$v \in A$, $w \in B$ and $1\leq i \leq K$.
	\item[{\rm (b)}] There is a set $\mathcal J$ which consists of at most $(1/4-\mu - \rho)n$ edge-disjoint exceptional systems with parameter $\eps_0$ in~$G$.
	\item[{\rm (c)}] $\mathcal J$ has a partition into $K^2$ sets $\mathcal J_{i,i'}$ (one for all $1\le i,i'\le K$) such that each
$\mathcal J_{i,i'}$ consists of precisely $|\mathcal J|/{K^2}$ $(i,i')$-ES with respect to~$\cP$.
   \item[{\rm (d)}] If $\mathcal{J}$ contains matching exceptional systems then $|A'|=|B'|$ is even.\end{itemize}%
Then for each $1 \le j \le (K-1)/2$, there is a pair of tuples $(G_{A,j}$,$\mathcal{Q}_{A}$,$C_{A,j}$,$ H_{A,j}$,$\mathcal{J}^*_{A,j})$
and $(G_{B,j}, \mathcal{Q}_{B}, C_{B,j}, H_{B,j}, \mathcal{J}^*_{B,j})$  such that the following assertions hold:
\begin{itemize}
\item[{\rm (a$_1$)}] Each of $C_{A,1}, \dots, C_{A,(K-1)/2}$ is a directed Hamilton cycle on $\mathcal{Q}_A := \{A_1,$ $ \dots, A_K \}$
such that the undirected versions of these Hamilton cycles form a Hamilton decomposition of the complete graph on $\mathcal{Q}_A$.
\item[{\rm (a$_2$)}] $\mathcal{J}^*_{A,1}, \dots, \mathcal{J}^*_{A,(K-1)/2}$ is a partition of $\{J^*_{A,{\rm dir}} : J \in \mathcal{J} \}$.
\item[{\rm (a$_3$)}] Each $\mathcal{J}^*_{A,j}$ has a partition into $K$ sets $\mathcal J^*_{A,j,i}$ (one for each $1\le i\le K$)
such that $|\mathcal J^*_{A,j,i}| \le (1- 4\mu - 3\rho) m/K$ and 
each $J^*_{A, {\rm dir}} \in \mathcal J^*_{A,j,i}$ is an ordered directed matching with $e(J^*_{A, {\rm dir}})\le 5 K \sqrt{\eps_0} m$
and $V(J^*_{A, {\rm dir}})\subseteq A_i$.
\item[{\rm (a$_4$)}] $G_{A,1},\dots, G_{A,(K-1)/2}, H_{A,1},\dots,  H_{A,(K-1)/2}$ are edge-disjoint subgraphs of $G[A]$.
\item[{\rm (a$_5$)}] $H_{A,j}[A_{i},A_{i'}]$ is a $10K\sqrt{\eps_0}m$-regular graph for all $j \le (K-1)/2$ and all $i,i' \le K$ with $i \ne i'$.%
\COMMENT{We have omitted the floor/ceiling on $10K\sqrt{\eps_0}m$.}
\item[{\rm (a$_6$)}] For each $j \le (K-1)/2$, there exists an orientation $G_{A,j,{\rm dir}}$ of $G_{A,j}$ such
that $(G_{A,j,{\rm dir}}, \mathcal{Q}_A, C_{A,j})$ is a $(K,m, 4\mu, 5/K)$-cyclic system.
\item[{\rm (a$_7$)}] Analogous statements to {\rm(a$_1$)--(a$_6$)} hold for $C_{B,j}, \mathcal{J}^*_{B,j},G_{B,j}, H_{B,j}$ for all
$j \le (K-1)/2$, with $\mathcal{Q}_B := \{ B_1, \dots, B_K\}$.
\end{itemize}
\end{lemma}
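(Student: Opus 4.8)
The plan is to work with $G[A]$ only (the argument for $G[B]$ in (a$_7$) being identical) and to build the five objects $C_{A,j}$, $\mathcal J^*_{A,j}$, $G_{A,j}$, $H_{A,j}$ and the orientation $G_{A,j,{\rm dir}}$ in this order. First I would set up the reduced graph: let $\mathcal R_A$ be the complete graph on the cluster set $\mathcal Q_A=\{A_1,\dots,A_K\}$. Since $K$ is odd, Walecki's theorem (stated as the theorem of~\cite{lucas} in the introduction) gives a decomposition of $\mathcal R_A$ into $(K-1)/2$ Hamilton cycles $C'_{A,1},\dots,C'_{A,(K-1)/2}$. Orient each $C'_{A,j}$ cyclically to obtain the directed Hamilton cycles $C_{A,j}$ on $\mathcal Q_A$; this gives~(a$_1$). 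This is the cheap part.

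\textbf{Splitting the exceptional systems.}
Next I would distribute the matchings $J^*_{A,{\rm dir}}$ among the $(K-1)/2$ cyclic systems. Recall from (c) that $\mathcal J$ partitions into the $K^2$ sets $\mathcal J_{i,i'}$, each of size $|\mathcal J|/K^2\le (1/4-\mu-\rho)n/K^2$. For each pair $(i,i')$, I would split $\mathcal J_{i,i'}$ as evenly as possible into $(K-1)/2$ parts and assign one part to each $\mathcal J^*_{A,j}$; then within $\mathcal J^*_{A,j}$ I would group together, for each cluster index $i$, those $J^*_{A,{\rm dir}}$ with $V(J^*_{A,{\rm dir}})\subseteq A_i$ (this is exactly those $J$ lying in $\mathcal J_{i,i'}$ for some $i'$) into the set $\mathcal J^*_{A,j,i}$. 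A short counting check using $n=2Km+|V_0|\le 2Km+\eps_0 n$ and $|A'|=|B'|$ gives $n\le (2+3\eps_0)Km$, hence
\[
|\mathcal J^*_{A,j,i}| \le \frac{|\mathcal J|}{K^2}\cdot\frac{K}{\,(K-1)/2\,} + 1
\le \frac{(1/4-\mu-\rho)(2+3\eps_0)Km}{K(K-1)/2}+1
\le (1-4\mu-3\rho)\frac{m}{K},
\]
where the last inequality uses $1/K\ll\rho$ and $\mu\ll 1$; this gives the size bound in~(a$_3$). The bound $e(J^*_{A,{\rm dir}})\le 5K\sqrt{\eps_0}m$ follows from \eqref{ESeqzz}: $e(J^*_{A,{\rm dir}})\le e(J^*)\le |V_0|+\sqrt{\eps_0}n\le \eps_0 n+\sqrt{\eps_0}n\le 5K\sqrt{\eps_0}m$. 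This gives~(a$_2$) and~(a$_3$).

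\textbf{Carving out $H_{A,j}$ and the cyclic systems.}
Now I would extract the sparse graphs $H_{A,j}$ one at a time. For each ordered pair of clusters $(A_i,A_{i'})$ with $i\ne i'$, the bipartite graph $G[A_i,A_{i'}]$ has minimum degree at least $(1-4\mu-4/K)m$ by~(a); since $10K\sqrt{\eps_0}m\le \eps_0^{1/4}m$ is tiny and we need to remove $(K-1)/2$ disjoint $10K\sqrt{\eps_0}m$-regular subgraphs, it suffices to check that after removing up to $(K-1)\cdot 10K\sqrt{\eps_0}m/2$ edges at each vertex the graph still contains the required regular subgraph; this follows by repeated application of Lemma~\ref{regularsub} (with $\mu$ there set to~$0$ and the error parameters absorbed into $\rho$), so we obtain edge-disjoint $10K\sqrt{\eps_0}m$-regular bipartite graphs $H_{A,1}[A_i,A_{i'}],\dots,H_{A,(K-1)/2}[A_i,A_{i'}]$ inside each $G[A_i,A_{i'}]$. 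Taking $H_{A,j}:=\bigcup_{i\ne i'}H_{A,j}[A_i,A_{i'}]$ gives~(a$_5$). For~(a$_4$) and~(a$_6$), set $G_{A,j}$ to be the subgraph of $G[A]$ consisting of all edges $G[A_i,A_{i'}]-H_{A,1}-\dots-H_{A,(K-1)/2}$ over the edges $A_iA_{i'}$ of the undirected cycle $C'_{A,j}$ (edges inside clusters and edges between non-consecutive clusters on $C'_{A,j}$ are simply discarded from $G_{A,j}$, but since the $C'_{A,j}$ decompose $\mathcal R_A$, every inter-cluster edge of $G[A]$ lies in exactly one $G_{A,j}$ or in one $H_{A,j}$). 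By construction the $G_{A,j}$ and $H_{A,j}$ are edge-disjoint, giving~(a$_4$). Finally, orient each $G_{A,j}$ so that every edge between consecutive clusters $A_i,A_{i+1}$ of the directed cycle $C_{A,j}$ is directed from $A_i$ to $A_{i+1}$; then $G_{A,j,{\rm dir}}$ winds around $C_{A,j}$, and for every edge $UW$ of $C_{A,j}$ and $u\in U$ we have $d^+_{G_{A,j,{\rm dir}}}(u,W)=d_{G[U,W]}(u)-d_{H_{A,1}\cup\dots\cup H_{A,(K-1)/2}}(u,W)=(1-4\mu\pm 4/K)m-(K-1)\cdot 10K\sqrt{\eps_0}m/2=(1-4\mu\pm 5/K)m$, and similarly for in-degrees, so (Sys1) and (Sys2) hold and $(G_{A,j,{\rm dir}},\mathcal Q_A,C_{A,j})$ is a $(K,m,4\mu,5/K)$-cyclic system, establishing~(a$_6$). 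The statement~(a$_7$) is proved by the identical argument applied to $B$ with $\mathcal Q_B$, using that the clusters $B_1,\dots,B_K$ and the sets $\mathcal J_{i,i'}$ (viewed via the $J^*_{B,{\rm dir}}$, which lie in $B_{i'}$) satisfy the symmetric versions of~(a)--(d).

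\textbf{Main obstacle.}
I expect the only genuinely delicate point to be the simultaneous extraction of the $(K-1)/2$ edge-disjoint regular graphs $H_{A,j}[A_i,A_{i'}]$ together with the verification that the leftover pairs $G_{A,j}[A_i,A_{i+1}]$ are still dense enough to satisfy (Sys2) with the claimed error $5/K$ — one has to be careful that the quantities $4/K$ (from~(a)) and the total amount $(K-1)\cdot 10K\sqrt{\eps_0}m/2$ removed into the $H_{A,j}$'s combine to at most $5/K$, which is where the hierarchy $\eps_0\ll 1/K$ is used. Everything else is bookkeeping: the size bound in~(a$_3$) is the counting estimate above, and the edge-bound on $J^*_{A,{\rm dir}}$ is immediate from \eqref{ESeqzz}.
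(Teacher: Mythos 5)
Your plan is essentially the paper's own proof: Walecki's theorem for the Hamilton decomposition of the reduced complete graph, an even splitting of each $\mathcal J_{i,i'}$ into $(K-1)/2$ parts to form the $\mathcal J^*_{A,j,i}$, and extraction of the sparse regular graphs $H_{A,j}[A_i,A_{i'}]$ via Lemma~\ref{regularsub} before defining $G_{A,j}$ as the union of the remaining pairs along $C_{A,j}$ and orienting it to wind around the cycle. The only place your write-up is not quite right is the extraction step. Lemma~\ref{regularsub} produces a \emph{dense} $(1-\mu_L-\rho_L)m$-regular spanning subgraph (equivalently, that many edge-disjoint perfect matchings); it does not directly hand you a sparse $10K\sqrt{\eps_0}\,m$-regular subgraph, so ``repeated application of Lemma~\ref{regularsub}'' is not the right description. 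The clean version (and what the paper does) is: apply Lemma~\ref{regularsub} \emph{once} per pair $(i,i')$ with $4\mu$ and $4/K$ playing the roles of $\mu$ and $\eps$ to get a $(1-4\mu-\rho)m$-regular bipartite graph $H_{i,i'}\subseteq G[A_i,A_{i'}]$, decompose $H_{i,i'}$ into perfect matchings by K\H{o}nig, and bundle groups of $10K\sqrt{\eps_0}\,m$ matchings into the $H_{A,j}[A_i,A_{i'}]$'s, which is possible since $(K-1)/2\cdot 10K\sqrt{\eps_0}\,m\ll(1-4\mu-\rho)m$. Your suggestion to ``set $\mu$ there to $0$ and absorb the errors into $\rho$'' is problematic when $\mu>0$: with $\mu_L=0$ the lemma targets a $(1-\rho_L)m$-regular subgraph, but $\Delta(G[A_i,A_{i'}])$ may only be about $(1-4\mu+4/K)m$, so no such subgraph need exist. (There is also a trivial bookkeeping slip in your size estimate for $|\mathcal J^*_{A,j,i}|$: the ceiling errors from splitting $\mathcal J_{i,i'}$ give $+K$, not $+1$, after summing over $i'\le K$; the conclusion is unaffected since you have room to spare against $(1-4\mu-3\rho)m/K$.)
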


\begin{proof}
Since $K$ is odd, by Walecki's theorem the complete graph on $\{A_1, \dots, A_K \}$ has a Hamilton decomposition.
(a$_1$) follows by orienting the edges of each of these Hamilton cycles to obtain directed Hamilton cycles $C_{A,1}, \dots, C_{A,(K-1)/2}$.

For each $i,i' \le K$, we partition $\mathcal J_{i,i'}$ into $(K-1)/2$ sets $\mathcal J_{i,i',j}$ (one for each $j\le (K-1)/2$)
whose sizes are as equal as possible.
Note that if $J \in \mathcal J_{i,i',j}$, then $J$ is an $(i,i')$-ES and so $V(J^*_{A, { \rm dir} }) \subseteq A_i$.
Since $\mathcal{P}$ is a $(K,m, \eps_0)$-partition of $V(G)$, $|V_0| \le \eps_0 n $ and $(1- \eps_0) n \le 2Km$.
Hence, 
\begin{align*}
e(J^*_{A, { \rm dir} }) \le e(J^*) \overset{\eqref{ESeqzz}}{\le} |V_0| + \sqrt{\eps_0}n 
\le 5 \sqrt{\eps_0} K m.
\end{align*}
(a$_2$) is satisfied by setting $\mathcal{J}^*_{A,j,i} := \bigcup_{i' \le K} \{J^*_{A, { \rm dir} }: J \in \mathcal J_{i,i',j}\} $
and $\mathcal{J}^*_{A,j} := \bigcup_{i \le K}  \mathcal{J}^*_{A,j,i}$.
Note that 
\begin{align*}
|\mathcal{J}^*_{A,j,i}| & \le \sum_{i' \le K}\left(\frac{2|\mathcal J_{i,i'}|}{K-1}+1 \right) \overset{{\rm (c)}}{=}  \frac{2|\mathcal J|}{K(K-1)}+K  
 \overset{{\rm (b)}}{\le} \frac{(1/2 - 2\mu - 2\rho) n}{K(K-1)} + K \\
& \le  (1-4 \mu - 3 \rho) m/K
\end{align*}
as $2K m  \ge (1- \eps_0)  n $ and $1 /n \ll \eps_0 \ll 1/K \ll \rho$.
Hence (a$_3$) holds.

For $i,i' \le K$ with $i \ne i'$, apply Lemma~\ref{regularsub} with $G[A_i,A_{i'}], 4/K, \rho, 4\mu$ playing the roles of $\Gamma, \eps, \rho, \mu$
to obtain a spanning $(1- 4 \mu - \rho)m$-regular subgraph $H_{i,i'}$ of $G[A_i,A_{i'}]$.
Since $H_{i,i'}$ is a regular bipartite graph and $\epszero \ll 1/K , \rho \ll 1$ and $0 \le \mu \ll 1$,
there exist $(K-1)/2$ edge-disjoint $10K \sqrt{ \epszero } m$-regular spanning subgraphs $H_{i,i',1}, \dots, H_{i,i',(K-1)/2}$ of $H_{i,i'}$.
Set $H_{A,j} := \sum_{1 \le i, i' \le K} H_{i,i',j}$ for each $j \le (K-1)/2$. So (a$_5$) holds.

Define $G_A:=G[A]-(H_{A,1}+\dots+H_{A,(K-1)/2})$.
Note that, as $\eps_0 \ll 1/K$, (a) implies that $d_{G_A}(v,A_i) = (1 - 4 \mu \pm 5 /K) m $ for all $v \in A$ and all $i \le K$.
For each $j \le (K-1)/2$, let $G_{A,j}$ be the graph on $A$ whose edge set is the union of $G_A[A_{i},A_{i'}]$ for each edge $A_iA_{i'} \in E(C_{A,j})$.
Define $G_{A,j, {\rm dir}}$ to be the oriented graph obtained from $G_{A,j}$ by orienting every edge in $G_A[A_{i},A_{i'}]$ from $A_i$ to $A_{i'}$
(for each edge $A_iA_{i'} \in E(C_{A,j})$).
Note that $ ( G_{A,j, {\rm dir}}, \mathcal{Q}_A, C_{A,j})$ is a $(K,m, 4\mu, 5/K) $-cyclic system for each $j \le (K-1)/2$.
Therefore, (a$_4$) and (a$_6$) hold. (a$_7$) can be proved by a similar argument.
\end{proof}

\subsection{Constructing Balanced Extensions} \label{sec:BE2clique}

Let $(G_{A,j}$,$\mathcal{Q}_A$,$C_{A,j}$,$H_{A,j}$,$\mathcal{J}^*_{A,j})$ be one of the $5$-tuples obtained by Lemma~\ref{sysdecom}.
The next lemma will be applied to find a balanced extension of $\mathcal{J}^*_{A,j}$ with respect to $(\mathcal{Q}_A,C_{A,j})$,
using edges of $H_{A,j}$ (after a suitable orientation of these edges).
Consider any $J^*_{A, {\rm dir}} \in \mathcal{J}^*_{A,j,i}$.%
   \COMMENT{Daniela: changed $\mathcal{J}^*_{A,j}$ to $\mathcal{J}^*_{A,j,i}$}
Lemma~\ref{sysdecom}(a$_3$) guarantees that
$V(J^*_{A, {\rm dir}}) \subseteq A_i$, and so $J^*_{A, {\rm dir}}$ is an $A_i$-extension of itself.
Therefore, in order to find a balanced extension of $\mathcal{J}^*_{A,j}$, it is enough to extend each $J^*_{A, {\rm dir}} \in \mathcal{J}^*_{A,j}$
into a locally balanced path sequence by adding a directed matching which is vertex-disjoint from $J^*_{A, {\rm dir}}$ in such a way that~(BE3)
is satisfied as well.

\begin{lemma}\label{balanceextension}
Suppose that $0<1/m \ll \eps  \ll 1$ and%
\COMMENT{Andy: added and} 
that $m,k \in \mathbb N$ with $k \ge 3$.%
\COMMENT{Need $k \ge 3$ to have a cycle. Note the switch from $K$ to $k$ is intentional here, as will have $k=2K$ in the bip section}
Let $\mathcal{Q} =\{V_1, \dots, V_k\}$ be a $(k,m)$-equipartition of a set $V$ of vertices and let $C = V_1 \dots V_k$ be a directed cycle.
Suppose that there exist a set $\mathcal{M}$ and a graph $H$ on $V$ such that the following conditions hold:
\begin{itemize}
\item[{\rm (i)}] $\mathcal{M}$ can be partitioned into $k$ sets $\mathcal M_1,\dots, \mathcal M_k$%
   \COMMENT{Daniela: reworded}
such that $|\mathcal M_i| \le m/k$ and each $M \in \mathcal M_{i}$ is an ordered directed matching with $e(M)\le \eps m$ and $V(M)\subseteq V_i$
(for all $i\le k$).
\item[{\rm (ii)}] $H[V_{i-1},V_{i+1}]$ is a $2 \eps m$-regular graph for all $i\le k$.
\end{itemize}
Then there exist an orientation $H_{{\rm dir}}$ of $H$ and a balanced extension $\mathcal{BE}$ of $\mathcal{M}$ with respect to
$(\mathcal{Q},C)$ and parameters $( 2 \eps , 3 )$ such that each path sequence in $\mathcal{BE}$ is obtained from some
$M\in \mathcal{M}$ by adding edges of~$H_{{\rm dir}}$.
\end{lemma}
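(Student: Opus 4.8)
\textbf{Proof plan for Lemma~\ref{balanceextension}.}
The idea is to process the matchings $M\in\mathcal{M}$ one at a time, and for each one to greedily build a $V_{i_M}$-extension $PS_M$ of $M$ (for a suitable target index $i_M$) by attaching to each edge of $M$ a short directed path that winds forward around $C$, using only edges of $H$; we simultaneously choose an orientation $H_{\rm dir}$ of $H$ that makes all of these paths consistent (every edge of $H[V_{i-1},V_{i+1}]$ will be oriented from $V_{i-1}$ to $V_{i+1}$, so that $H_{\rm dir}$ winds around the `doubled' cycle). The key point is the \emph{local balance} requirement (BE1): if $M\subseteq V_i$ with $e(M)=s$, then after extending, $PS_M$ must leave each cluster as often as it enters the next one. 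Since every edge of $H_{\rm dir}$ goes from $V_{j-1}$ to $V_{j+1}$ for some $j$, a path starting at $V_i$ and using $t$ such edges ends in $V_{i+2t}$ (indices mod $k$); so from $V_i$ we can reach $V_{i+2t}$ for any $t$, and by choosing the number of steps appropriately for the two parities we can reach any cluster. Concretely: pick $i_M$ so that $i_M-i$ is even if we want all $s$ paths to have the same even length, or handle the odd case by noting each path can start with one `short' edge of $H$ between $V_i$ and $V_{i+2}$ and then continue — in all cases each path of $PS_M$ is a forward-winding path of length at most $2k$, hence meets each cluster at most once along its `interior', and the $s$ paths together meet each cluster in at most $s\le\eps m$ vertices, giving the first part of (BE3) with room to spare (we need $|V(PS)\cap V_i|\le 2\eps m$).

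\textbf{Choice of target indices.} To satisfy the `moreover' parts of (BE2) and (BE3), I would distribute the target indices $i_M$ evenly: since $|\mathcal{M}|=\sum_i|\mathcal M_i|\le m$ and there are $k$ clusters, and since for a matching $M\in\mathcal M_i$ the reachable target clusters are all of $V_1,\dots,V_k$ (both parities reachable as above), we can assign the $i_M$ so that for each fixed target cluster $V_i$ at most $\lceil |\mathcal M|/k\rceil \le 2m/k$ of the $PS_M$ are $V_i$-extensions and at most (a similar count of) the $PS_M$ have $V(PS_M)\cap V_i\neq\emptyset$. The bound $\ell m/k$ with $\ell=3$ then follows provided the extension paths are kept short enough that each $PS_M$ meets only a bounded number of clusters and the total over all $M$ sharing a given interior cluster is $O(m/k)$; a more careful accounting (spreading the `interior' clusters used by the extension paths evenly, possible because $H[V_{j-1},V_{j+1}]$ is regular of degree $2\eps m$ for \emph{every} $j$) gives the factor $3$. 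I would make this quantitative by a simple counting/averaging argument rather than an explicit construction.

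\textbf{Greedy execution and edge-disjointness.} Having fixed, for each $M$, a target index $i_M$ and the combinatorial `shape' of its extension (how many edges of $H$ to use from each band $H[V_{j-1},V_{j+1}]$, and which $\le\eps m$ vertices of each relevant cluster to route through, chosen to balance the load across $M$'s), I would realise the extensions one $M$ at a time. When handling $M$, suppose some edges of $H$ have already been used by earlier extensions; since $|\mathcal M|\le m$ and each $M$ uses at most $\eps m\cdot 2k$ edges of $H$ in total, at each vertex we have removed at most $O(\eps k m)\ll \eps m$ edges, so $H[V_{j-1},V_{j+1}]$ minus the used edges still has minimum degree $\ge \eps m$ — this is more than enough to greedily pick $e(M)\le\eps m$ vertex-disjoint continuation edges at each of the $\le 2k$ steps, avoiding the vertices of $M$ and of the partial path system being built. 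This yields $PS_M-M\subseteq H_{\rm dir}$, the $PS_M-M$ pairwise edge-disjoint (we delete used edges as we go), each $PS_M$ a $V_{i_M}$-extension of $M$ that is locally balanced with respect to $C$ (every forward-winding path of $H_{\rm dir}$ contributes one departure from $V_j$ and one arrival at $V_{j+2}\subseteq$ (final vertex in $V_j$ of the previous hop) — I would verify local balance directly from the band structure), so (BE1)--(BE3) all hold with parameters $(2\eps,3)$.

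\textbf{Main obstacle.} The routine greedy edge-selection and the orientation of $H$ are straightforward. The genuinely delicate point is getting the \emph{two} `moreover' bounds (in (BE2) and (BE3)) down to the constant $\ell=3$ simultaneously: the extension paths of the various $M$'s inevitably pass through interior clusters, and a naive assignment of targets could overload one cluster with $\Theta(\eps k m)$ path-sequences passing through it. Resolving this requires choosing the targets $i_M$ and the routes so that both the number of $V_i$-extensions and the number of path-sequences meeting $V_i$ are spread evenly — i.e.\ an assignment/load-balancing argument using that $|\mathcal M|\le m$, $k\ge 3$, and the regularity of all bands $H[V_{j-1},V_{j+1}]$. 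I expect this bookkeeping to be where the real work lies, though it is ultimately elementary.
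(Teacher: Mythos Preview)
Your approach has a genuine gap: extending the paths of $M$ forward along bands of $H_{\rm dir}$ does \emph{not} produce a locally balanced path sequence. Trace the definition: an edge of $M\in\mathcal{M}_i$ has both endpoints in $V_i$, so for the $C$-edge $V_{i-1}V_i$ it contributes one ``final in $V_i$'' and zero ``initial in $V_{i-1}$'', and for $V_iV_{i+1}$ it contributes one ``initial in $V_i$'' and zero ``final in $V_{i+1}$''. The only $H$-edge whose contributions cancel both of these is an edge of $H[V_{i-1},V_{i+1}]$ oriented from $V_{i-1}$ to $V_{i+1}$. But such an edge is vertex-disjoint from $V_i$ and hence cannot lie on any path through the $M$-edge. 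Edges of the form $V_i\to V_{i+2}$ (the ones your forward paths would use) add $+1$ at $V_iV_{i+1}$ and $-1$ at $V_{i+1}V_{i+2}$, so they worsen the imbalance at $V_iV_{i+1}$ rather than fix it; no amount of winding further forward repairs the $-1$ at $V_{i-1}V_i$. Thus your ``verify local balance directly from the band structure'' step fails.

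The missing idea is that you do not need to extend $M$ at all. Since $V(M)\subseteq V_i$, every edge of $M$ is already a path with final vertex in $V_i$, so $M$ is itself a $V_i$-extension of $M$. To achieve local balance, simply add a matching $M'$ of size $e(M)$ in $H[V_{i-1},V_{i+1}]$ (oriented from $V_{i-1}$ to $V_{i+1}$), vertex-disjoint from $M$; set $PS:=M\cup M'$. The $2\eps m$-regularity of $H[V_{i-1},V_{i+1}]$ yields $2\eps m$ edge-disjoint perfect matchings, each splittable into $1/(2\eps)$ matchings of size $\ge\eps m$, so there are at least $m\ge m/k\ge|\mathcal M_i|$ edge-disjoint such $M'$ available---no greedy argument or load-balancing needed. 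Now $PS$ meets only $V_{i-1},V_i,V_{i+1}$, with $|V(PS)\cap V_i|=2e(M)\le 2\eps m$ and $|V(PS)\cap V_{i\pm1}|=e(M)\le\eps m$; and a fixed cluster $V_j$ is touched only by the path sequences built from $\mathcal M_{j-1}\cup\mathcal M_j\cup\mathcal M_{j+1}$, of which there are at most $3m/k$. This is exactly where the parameter $\ell=3$ comes from---it falls out immediately rather than requiring the delicate bookkeeping you anticipated.
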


\begin{proof}
Fix $i \le k$ and write $\mathcal{M}_i := \{ M_1, \dots, M_{|\mathcal{M}_i|}\}$.
We orient each edge in $H[V_{i-1},V_{i+1}]$ from $V_{i-1}$ to $V_{i+1}$.
By (ii), $H[V_{i-1},V_{i+1}]$ can be decomposed into $2 \eps m$ perfect matchings. 
Each perfect matching can be split into $1/2\eps $ matchings, each containing at least $ \eps m$ edges.%
    \COMMENT{Not sure why we need $1/2\eps $ instead of $1/\eps $ here. Is it because of the rounding? (Allan: Yes, it is due to rounding.)}
Recall from (i) that $|\mathcal{M}_i| \le m/k$ and $e(M_j ) \le \eps m$ for all $M_j  \in \mathcal{M}_i$.
Hence, $H[V_{i-1},V_{i+1}]$ contains $|\mathcal{M}_i|$ edge-disjoint matchings $M'_1, \dots, M'_{|\mathcal{M}_i|}$ with $e(M'_j) = e(M_j)$ for all $j \le |\mathcal{M}_i|$.
Define $PS_j:=M_j \cup M_j'$.
Note that $PS_j$ is locally balanced with respect to $C$.
Also, $PS_j$ is a $V_i$-extension of $M_j$ (as $M_j \in \mathcal{M}_i$ and so $V(M_j) \subseteq V_i$ by~(i)).
Moreover,
\begin{align}\label{eq:new}
	|V(PS_j) \cap V_{i'}|  = \begin{cases}
		|V(M_j)| =2 e(M_j ) \le 2 \eps m & \textrm{if $i' = i$,}\\
		e(M_j ) \le \eps m & \text{if $i' = i+1$ or $i' = i-1$,}\\
		0 & \text{otherwise.}
	\end{cases}
\end{align}
For each $i\le k$, set $\mathcal{PS}_i := \{ PS_1, \dots, PS_{|\mathcal{M}_i|}\}$.
Therefore, $\mathcal{BE}: = \bigcup_{i \le k} \mathcal{PS}_i$ is a balanced extension of $\mathcal{M}$ with respect to $(\mathcal{Q},C)$ and parameters $(2\eps, 3)$.
Indeed, (BE3) follows from (\ref{eq:new}). As remarked after the definition of a balanced extension, this also implies the
`moreover part' of (BE2).%
    \COMMENT{Daniela: last 2 sentences are new. Previously had "For (BE2), note that the number of $V_i$-extensions is $|\mathcal{M}_i| \le m/k$."}
Hence the lemma follows (by orienting the remaining edges of $H$ arbitrarily).
\end{proof}


\section{Constructing Hamilton Cycles via Balanced Extensions}  \label{sec:extendmerge}

Recall that a cyclic system can be viewed as a blow-up of a Hamilton cycle.%
   \COMMENT{Deryk: new sentence}
Given a cyclic system $(G,\mathcal{Q}, C)$ and a balanced extension~$\mathcal{BE}$ of a set $\mathcal{M}$
of ordered directed matchings,
our aim is to extend each path sequence in $\mathcal{BE}$ into a Hamilton cycle using edges of~$G$.
Moreover, each Hamilton cycle has to be consistent with a distinct $M \in \mathcal{M}$. This is achieved by the
following lemma, which is the key step in proving Lemma~\ref{almostthm}.%
   \COMMENT{Deryk: new sentence}

\begin{lemma}\label{merging}
Suppose that $0<1/m \ll \eps_0, \eps' , 1/k \ll 1/\ell , \rho  \leq 1$, that $0 \le \mu , \rho \ll 1$\COMMENT{It is intentional that I have stated two conditions with $\rho$ in. Thought
it was the easiest way to write $\eps_0, \eps' , 1/k \ll 1/\ell , \rho $, and $1/\ell \leq 1$ and
$\rho \ll 1$.}
 and that $m,k, \ell, q\in \mathbb{N}$ with $q \le (1- \mu - \rho) m$.
Let $(G,\mathcal{Q}, C)$ be a $(k,m, \mu, \eps')$-cyclic system and let $\mathcal{M} = \{M_1, \dots, M_q\}$
be a set of $q$ ordered directed matchings.%
	\COMMENT{Allan: $M_1, \dots, M_1$ are no longer edge-disjoint. Reason:
In two cliques case, $J^*_A = M_1$. This has lots of knock on effect on the rest....
The next two sentences also changed.}
Suppose that $\mathcal{BE} = \{ PS_1, \dots, PS_q \}$ is a balanced extension of $\mathcal{M}$ with respect to $(\mathcal{Q},C)$ and
parameters $(\eps_0, \ell)$ such that for each $s \le q$, $M_s \subseteq PS_s$.
Then there exist $q$ Hamilton cycles $C_1, \dots, C_q$ in $G + \mathcal{BE}$ such that for all $s \le q$, $C_s$ contains $PS_s$ and is consistent with $M_s$, and such that $C_1 - PS_1, \dots, C_q - PS_q$ are edge-disjoint subgraphs of $G$.
\end{lemma}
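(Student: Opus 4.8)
\textbf{Proof plan for Lemma~\ref{merging}.}
The plan is to process the matchings $M_1,\dots,M_q$ one at a time, and for each $s$ to carry out a two-stage construction inside the leftover of the cyclic system: first extend the path sequence $PS_s$ greedily into a $1$-factor (that is, a disjoint union of directed cycles covering $V(G)$) using only edges of $G$ that wind around $C$, and then merge the cycles of this $1$-factor into a single Hamilton cycle using a small reservoir of ``winding'' edges set aside in advance. Concretely, I would first split $G$ into $G=G'+G^{\rm res}$, where $G^{\rm res}$ is a sparse spanning subdigraph (removing only $O(\rho m/k)$ in- and out-edges at each vertex between consecutive clusters of $C$) whose role is to provide the merging edges for every $s$ simultaneously; by (Sys2) and Proposition~\ref{superslice5} (or rather its obvious directed analogue for cyclic systems), $(G',\mathcal{Q},C)$ is still a $(k,m,\mu,2\eps')$-cyclic system, and we keep $\Delta(G^{\rm res})$ small enough that it survives the whole process.

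For the first stage at step $s$: let $H_s$ denote the current subdigraph of $G'$ (i.e.\ $G'$ minus the edges of $C_1-PS_1,\dots,C_{s-1}-PS_{s-1}$ used so far). Since $q\le(1-\mu-\rho)m$ and each previously-built Hamilton cycle removes at most one winding edge in each direction at each vertex, $H_s$ is still a $(k,m,\mu,\rho/2)$-cyclic-system-like digraph: for each edge $UW$ of $C$ every $u\in U$ has $d^+_{H_s}(u,W)\ge \rho m/2$ and similarly for in-degrees. Now $PS_s$ is, by (BE1) and (BE2), a path sequence that is locally balanced with respect to $C$ and is a $V_{i_s}$-extension of $M_s$; moreover by (BE3) it touches each cluster in at most $\eps_0 m$ vertices. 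Because $PS_s$ is locally balanced, the ``deficiency'' at each cluster of $C$ (out-endpoints minus in-endpoints of the non-trivial paths, plus the uncovered vertices) can be matched up cluster-by-cluster, so one can greedily add winding edges of $H_s$ — connecting the final vertex of a path (or an uncovered vertex) in $U$ to the initial vertex of a path (or an uncovered vertex) in $W$ for each edge $UW$ of $C$ in turn — to obtain a spanning $1$-regular digraph $F_s\supseteq PS_s$ winding around $C$; the large minimum semidegree $\ge\rho m/2$ and the fact that only $\le\eps_0 m$ vertices per cluster are ``pre-used'' make this greedy extension possible. Note $F_s$ is consistent with $M_s$: the path of $F_s$ through $f_j\in M_s$ still ends at the vertex $x_j\in V_{i_s}$, and these are visited in order around the cycle $F_s$ passes through $V_{i_s}$, so the remark following the definition of balanced extension applies.

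For the second stage: $F_s$ is a disjoint union of directed cycles each winding around $C$; there are at most $m/k$ of them (one per vertex of, say, $V_1$). To merge two such cycles $D,D'$ we use a standard rotation: pick an edge $uw\in D$ and $u'w'\in D'$ with $u,u'$ in the same cluster $U$ and $w,w'$ in the next cluster $W$ along $C$, find winding edges $uw'$ and $u'w$ available in $G^{\rm res}$, and replace $\{uw,u'w'\}$ by $\{uw',u'w\}$ — this glues $D$ and $D'$ into one cycle while preserving the property of winding around $C$ and, crucially, if $uw,u'w'\notin M_s$ (which we can arrange, since $M_s$ has at most $\eps_0 m$ edges while each cycle has many winding edges in each cluster-pair) it preserves consistency with $M_s$. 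Iterating at most $m/k$ times merges $F_s$ into a single Hamilton cycle $C_s$; each merge spends two edges of $G^{\rm res}$ at a single cluster-pair, and since we do this for all $s\le q\le m$ with $G^{\rm res}$ having $\Omega(\rho m/k)$ edges between each consecutive cluster pair at each vertex, and only $\le m/k$ merges per $s$, the total demand $O(m^2/k)$ per cluster-pair is met provided $\rho\gg 1/k$ — here one should be slightly careful and instead reserve, for each $s$, a fresh small ``merging graph'' $R_s\subseteq G$ of the appropriate density, which is possible because $q\le(1-\mu-\rho)m$ leaves a $\rho m$ slack in the semidegrees. Finally $C_s$ contains $PS_s$, is consistent with $M_s$, and $C_s-PS_s\subseteq G$ is edge-disjoint from all earlier $C_{s'}-PS_{s'}$ by construction; this completes the induction.

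The main obstacle is the bookkeeping that keeps all $q$ constructions compatible: one must budget the winding edges so that (i) the first-stage greedy $1$-factor extension always has $\ge\rho m/2$ choices at each step despite the $\le\eps_0 m$ pre-used vertices per cluster and the edges already committed to $C_1,\dots,C_{s-1}$, and (ii) the second-stage merging always finds the two parallel winding edges $uw',u'w$ avoiding $M_s$. Both follow from the $\rho m$ slack in $q\le(1-\mu-\rho)m$ together with $\eps_0,\eps',1/k\ll\rho$, but making the dependencies explicit (in particular showing the leftover digraph remains a genuine cyclic system of semidegree $\ge\rho m/2$ throughout) is the technical heart of the argument; everything else is routine greedy matching and the classical cycle-merging trick.
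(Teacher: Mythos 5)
Your high-level architecture matches the paper's: you remove a sparse reservoir from $G$, extend each $PS_s$ greedily into a $1$-factor $F_s$ winding around $C$ inside the dense part, and then merge the cycles of $F_s$ using reservoir edges. This is exactly the sequence Lemma~\ref{slicelemmazz}, Lemma~\ref{extend1-factor}, Lemma~\ref{mergecycles2} in the paper. However, there is a genuine gap in your argument, and it is located at precisely the place the paper devotes the most technical care to.

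You assert that the $1$-factor $F_s$ (and hence the merged Hamilton cycle $C_s$) is automatically consistent with $M_s$: ``the path of $F_s$ through $f_j\in M_s$ still ends at $x_j\in V_{i_s}$, and these are visited in order.'' This is false. Consistency with the \emph{ordered} matching $M_s$ requires the Hamilton cycle to traverse the designated endpoints $x_1,\dots,x_r\in V_{i_s}$ in that specific cyclic order, and neither the greedy completion to a $1$-factor nor your rotation-merging step exerts any control whatsoever over this order. After the greedy step, a single cycle of $F_s$ may visit the $x_j$'s it contains in any permutation; after pairwise merging, you at best get a concatenation of arbitrary blocks, and your claim that ``avoiding $M_s$ during rotations preserves consistency'' only protects the edges of $M_s$ from being deleted, not the order in which the $x_j$'s are visited. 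The paper resolves this with a separate step, Lemma~\ref{ordercycle}: \emph{after} merging $F_s$ into a single Hamilton cycle $C'_s$, one replaces the perfect matching $C'_s[V^1_{i_s},V^2_{i_s+1}]$ by a new perfect matching in the reservoir superregular pair, chosen so that the resulting Hamilton cycle visits $x_1,\dots,x_r$ in the prescribed order. Finding that matching is itself not a greedy step: the paper passes to an auxiliary digraph that is a robust outexpander and invokes Theorem~\ref{expanderthm} (the result that robust outexpanders of linear semidegree contain a Hamilton cycle visiting a prescribed sequence of vertices in order). Without something playing the role of Lemma~\ref{ordercycle} and Theorem~\ref{expanderthm}, your construction yields a Hamilton cycle containing $M_s$ but not consistent with it, which is fatal for the subsequent applications of Proposition~\ref{prop:ES2} and Proposition~\ref{CES-H2}.

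A secondary remark: your merging step via pairwise rotations through a single adjacent cluster pair has a budgeting problem (over all $q$ factors the rotations concentrate $\Theta(m^2/k)$ demand on one pair, but the reservoir only supplies $\Theta(\rho m^2/k)$ edges there); you partially notice this and propose per-$s$ merging graphs $R_s$, but never say how to distribute them. The paper sidesteps this entirely with Lemma~\ref{mergecycles}: for each $s$ the merging happens only at cluster pairs indexed by $I_s$ (the clusters actually touched by $PS_s$), and condition~(BE3) caps how many $s$ use any given cluster pair at $\ell m/k\ll\eps^2\gamma m$, which is exactly what Proposition~\ref{superslice5} needs. This is not merely cosmetic: it is what makes the merging reservoir survive all $q$ steps. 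But even granting you could repair the budgeting, the missing ordering argument is the essential omission.
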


Lemma~\ref{merging} will be used both in the two cliques case (i.e.~to prove Lemma~\ref{almostthm}) and in the bipartite case (i.e.~to prove Lemma~\ref{almostthmbip}).

We now give an outline of the proof of Lemma~\ref{merging}, where for simplicity we assume that
the path sequences in the balanced extension~$\mathcal{BE}$ are edge-disjoint from each other.%
	\COMMENT{Deryk: reworded}
Our first step is to remove a sparse subdigraph $H$ from $G$ (see Lemma~\ref{slicelemmazz}), and set $G':= G - H$.
Next we extend each path sequence in~$\mathcal{BE}$ into a (directed) $1$-factor using edges of~$G'$
such that all these $1$-factors are edge-disjoint from each other (see Lemma~\ref{extend1-factor}). 
Finally, we use edges of $H$ to transform the $1$-factors into Hamilton cycles (see Lemma~\ref{mergecycles2}).

The following lemma enables us to find a suitable sparse subdigraph $H$ of $G$.
Recall that $(\eps, d, d^*, c)$-superregularity was defined in Section~\ref{sec:reg}.%
   \COMMENT{Daniela: statement and proof of Lemma~\ref{slicelemmazz} are slightly different now, since there
was a problem with the application of this lemma in the proof of Lemma~\ref{merging} - CHECK Lemma~\ref{slicelemmazz} and
the proof of Lemma~\ref{merging}!}

\begin{lemma} \label{slicelemmazz}
Suppose that $0<1/m\ll \eps' \ll  \gamma \ll \eps\ll 1$ and $0 \le \mu \ll \eps$.
Let $G$ be a bipartite graph with vertex classes $U$ and $W$ of size $m$ such that 
$d(v) = ( 1- \mu \pm \eps')m$ for all $v \in V(G)$.
Then there is a spanning subgraph $H$ of $G$ which satisfies the following properties: 
\begin{itemize}
\item[{\rm (i)}]  $H$ is $(\eps, 2\gamma , \gamma , 3 \gamma )$-superregular.
\item[{\rm (ii)}] Let $G':=G-H$. Then $d_{G'} (v) = (1 - \mu   \pm 4\gamma)m$ for all $v \in V(G)$.
\end{itemize}
\end{lemma}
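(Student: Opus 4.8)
The plan is to obtain $H$ as a random subgraph of $G$: independently include each edge of $G$ in $H$ with probability $p:=2\gamma$. I would then show that with positive probability (in fact with probability tending to $1$) the resulting graph $H$ satisfies (i) and (ii), which is enough since the lemma only asserts existence. The only genuinely non-automatic point is that $G$ is merely \emph{almost} regular and only \emph{almost} complete between large sets, so a preliminary deterministic observation is needed to know that the densities of $H$ come out right; after that everything is a routine application of the Chernoff--Hoeffding bound (Proposition~\ref{chernoff}) together with union bounds, using $0<1/m\ll\eps'\ll\gamma\ll\eps$ and $\mu\ll\eps$ to make the error terms small enough.

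\textbf{The deterministic preliminary.} First I would record that for all $A\subseteq U$ and $B\subseteq W$ with $|A|,|B|\ge\eps m$ one has $d_G(A,B)=1\pm\eps/3$. Indeed each $w\in B$ satisfies $d_G(w,A)\ge d_G(w)-(m-|A|)\ge |A|-(\mu+\eps')m$, so $e_G(A,B)\ge |B|(|A|-(\mu+\eps')m)$ and hence $d_G(A,B)\ge 1-(\mu+\eps')m/|A|\ge 1-(\mu+\eps')/\eps\ge 1-\eps/3$ (using $\mu,\eps'\ll\eps$), while $d_G(A,B)\le 1$ trivially. Consequently $p\cdot d_G(A,B)=(1\pm\eps/2)\cdot 2\gamma$ for all such $A,B$. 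This is the step that absorbs the irregularity of $G$; I expect it to be the main (though not deep) obstacle, since all the later estimates compare $d_H$ against $2\gamma$ via this identity.

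\textbf{Verifying (Reg1), (Reg3), (Reg4) and (ii).} For (Reg1): for fixed $A,B$ as above, $e_H(A,B)$ is a sum of independent Bernoulli variables of mean $p\,e_G(A,B)=(1\pm\eps/2)\cdot 2\gamma|A||B|\ge\gamma\eps^2m^2$, so Proposition~\ref{chernoff} with $a=\eps/4$ gives $d_H(A,B)=(1\pm\eps)\cdot 2\gamma$ with probability at least $1-2e^{-\Omega(\eps^4\gamma m^2)}$; since $m^2\gg m$ this survives a union bound over the at most $4^m$ pairs $(A,B)$. For the degree conditions, for each vertex $v$ the quantity $d_H(v)$ is a sum of independent Bernoullis with mean $p\,d_G(v)=2\gamma m\pm 2\gamma\eps'm$, so Proposition~\ref{chernoff} with $a=\gamma^{1/2}$ yields $d_H(v)=2\gamma m\pm\gamma^{3/2}m$ with probability $1-2e^{-\Omega(\gamma^2m)}$; a union bound over the $2m$ vertices then gives $\gamma m\le\delta(H)\le\Delta(H)\le 3\gamma m$, which is (Reg4) and (Reg3). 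The same estimate gives $d_{G'}(v)=d_G(v)-d_H(v)=(1-\mu-2\gamma)m\pm(\eps'+\gamma^{3/2})m=(1-\mu\pm4\gamma)m$ for every $v$, which is (ii).

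\textbf{Verifying (Reg2).} For $u,u'$ in the same vertex class, the events $\{w\in N_H(u)\cap N_H(u')\}$, taken over $w$ in the common $G$-neighbourhood of $u$ and $u'$, are independent (each depends on the inclusion of a distinct pair of edges of $G$) and each has probability $p^2=4\gamma^2$, so $|N_H(u)\cap N_H(u')|$ is a sum of at most $m$ independent Bernoulli($4\gamma^2$) variables, of mean at most $4\gamma^2m$. A large-deviation bound (Proposition~\ref{chernoff} when this mean is $\Omega(\gamma^2 m)$, and the crude bound $\mathbb P[X\ge t]\le 2^{-t}$ valid for $t\ge 2e\,\mathbb E X$ to handle the case of small mean) gives $|N_H(u)\cap N_H(u')|\le 9\gamma^2m=(3\gamma)^2m$ with probability $1-e^{-\Omega(\gamma^2m)}$; a union bound over the $O(m^2)$ such pairs finishes it (and if $u,u'$ lie in different classes then $N_H(u)\cap N_H(u')=\emptyset$). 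Taking a final union bound over the four events above, with positive probability $H$ satisfies (i) and (ii), which proves the lemma.
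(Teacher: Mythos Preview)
Your proof is correct and follows essentially the same route as the paper's proof: take a random spanning subgraph $H$ by including each edge independently with probability $2\gamma$, use the near-completeness of $G$ to pin down the expected densities, and apply Chernoff together with union bounds. The paper's argument is simply a terser version of yours (it records the expected values and then appeals to Proposition~\ref{chernoff} without spelling out the union bounds), so there is no substantive difference in approach.
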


\begin{proof}
Note that $\delta(G) \ge (1- \mu - \eps') m \ge (1- \eps^3) m$ as $\eps', \mu \ll \eps \ll 1$. 
Thus, whenever $A \subseteq U$ and $B \subseteq W$ are sets of size at least $\eps m,$\COMMENT{B\'ela: added comma} then
\begin{align}\label{exp1a}
	e_G(A,B) \ge (|B| - \eps^3 m)|A| \ge (1 - \eps^2)|A||B|.
\end{align}
Let $H$ be a random subgraph of $G$ which is obtained by including each edge of $G$ with probability $2\gamma $.
\eqref{exp1a} implies that whenever $A \subseteq U$ and $B \subseteq W$ are sets of size at least $\eps m$ then
\begin{align}\label{exp1}
2\gamma (1- \eps^2)|A||B| \leq \mathbb E (e_{H} (A,B)) \leq 2\gamma |A||B|.
\end{align}
Further, for all $u,u' \in V(H)$,
\begin{align}\label{exp2}
 \mathbb E (|N_{H} (u) \cap N_{H} (u')| ) \leq 4 \gamma ^2 m
\end{align}
and 
\begin{align}\label{exp3}
3\gamma m/2 \leq \mathbb E (\delta (H)),  \mathbb E (\Delta (H) )\leq 2 \gamma m .
\end{align}
Thus, \eqref{exp1}--\eqref{exp3} together with Proposition~\ref{chernoff} imply that, with high probability, $H$
is an $(\eps, 2\gamma , \gamma , 3 \gamma )$-superregular pair.
Since $\Delta (H) \leq 3 \gamma m$ by~(Reg3) and $\eps'\ll \gamma$, $G'$ satisfies~(ii).
\end{proof}

\subsection{Transforming a Balanced Extension into $1$-factors} \label{sec:extend}

The next lemma will be used to extend each locally balanced path sequence $PS$ belonging to a balanced extension $\mathcal{BE}$ into a
(directed) $1$-factor using edges from $G'$.
We will select the edges from $G'$ in such a way that (apart from the path sequences)%
   \COMMENT{Deryk: added brackets}
the $1$-factors obtained are edge-disjoint.

\begin{lemma}\label{extend1-factor}
Suppose that $0<1/m \ll  1/k \le \eps \ll\rho , 1/\ell \leq 1$, that $\rho \ll 1$,
that $0 \le \mu \le 1/4$ and that $q, m,k,\ell \in \mathbb{N}$
with $q \le (1- \mu - \rho) m$.
Let $(G,\mathcal{Q}, C)$ be a $(k,m, \mu, \eps)$-cyclic system, where $C= V_1 \dots V_k$.
Suppose that there exists a set $\mathcal{PS}$ of $q$ path sequences $PS_1,\dots, PS_q$ satisfying the following conditions:
\COMMENT{Allan: had $q$ edge-disjoint path sequences, also changed the last sentence in the statement.}
\begin{itemize}
	\item[\rm (i)] Each $PS_s \in \mathcal{PS}$ is  locally balanced with respect to $C$.
	\item[\rm (ii)] $|V(PS_s) \cap V_i |\le \eps m$ for all $i \le k $ and $s \le q$.
Moreover, for each $i \le k$, there are at most $\ell m/k$ $PS_s$ such that $V(PS_s) \cap V_i \ne \emptyset$.
\end{itemize}
Then there exist $q$ directed $1$-factors $F_1, \dots, F_q$ in $G+ \mathcal{PS}$ such that for all $s \le q$
$PS_s \subseteq F_s$ and $F_1 - PS_1, \dots, F_q - PS_q$ are edge-disjoint subgraphs of $G$.
\end{lemma}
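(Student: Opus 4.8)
The plan is to reduce the task to finding, independently at each position along the cycle $C$, a family of edge-disjoint perfect matchings in a near-complete bipartite graph, and then to extract those matchings from Hall's theorem together with Lemma~\ref{regularsub}.

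First I would record the basic structure. Write $C=V_1\dots V_k$. Since $G$ winds around $C$, the bipartite graphs $\Gamma_i:=G[V_i,V_{i+1}]$ (indices modulo $k$) are pairwise edge-disjoint with union $G$, and by (Sys2) each $\Gamma_i$ has vertex classes of size $m$, minimum degree at least $(1-\mu-\eps)m$ and maximum degree at most $(1-\mu+\eps)m$. For $s\le q$ and $i\le k$ set $X_{s,i}:=\{v\in V_i:d^+_{PS_s}(v)=0\}$ and $Y_{s,i}:=\{w\in V_{i+1}:d^-_{PS_s}(w)=0\}$. As $PS_s$ is a path sequence, $m-|X_{s,i}|$ equals the number $b_{s,i}$ of edges of $PS_s$ whose initial vertex lies in $V_i$, while $m-|Y_{s,i}|$ equals the number of edges of $PS_s$ whose final vertex lies in $V_{i+1}$; by the local balancedness of $PS_s$ with respect to $C$ (hypothesis~(i)) these two numbers coincide, so $|X_{s,i}|=|Y_{s,i}|$. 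Hypothesis~(ii) gives $b_{s,i}\le|V(PS_s)\cap V_i|\le\eps m$, and its `moreover' part shows that $S^i_0:=\{s\le q:b_{s,i}>0\}$ has size at most $\ell m/k$. The reduction is then: if for every $i$ one can find edge-disjoint subgraphs $R_{1,i},\dots,R_{q,i}$ of $\Gamma_i$ with $R_{s,i}$ a perfect matching of $\Gamma_i[X_{s,i},Y_{s,i}]$, then $F_s:=PS_s+\bigcup_{i\le k}R_{s,i}$ is the desired $1$-factor. Indeed, a vertex $v\in V_i$ has an out-edge in exactly one of the $R_{s,i'}$ (namely $R_{s,i}$) and only when $d^+_{PS_s}(v)=0$, so $d^+_{F_s}(v)=1$, and symmetrically $d^-_{F_s}(v)=1$; thus $F_s$ is a directed $1$-factor of $G+\mathcal{PS}$ with $PS_s\subseteq F_s$, and $F_s-PS_s=\bigcup_i R_{s,i}\subseteq G$, these being edge-disjoint over $s$ since distinct $\Gamma_i$ are edge-disjoint and, for fixed $i$, the $R_{s,i}$ are.

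To construct the $R_{s,i}$, fix $i$ and proceed in two stages. First I would select $R_{s,i}$ for the (few) indices $s\in S^i_0$ one at a time: after the earlier indices of $S^i_0$ have been treated, at most $|S^i_0|\le\ell m/k$ edges at each vertex of $\Gamma_i$ have been used, so deleting these from $\Gamma_i[X_{s,i},Y_{s,i}]$ leaves a bipartite graph with balanced vertex classes of size $|X_{s,i}|\le m$ and minimum degree at least $(1-\mu-\eps)m-\eps m-\ell m/k\ge m/2$, using $\mu\le 1/4$ and $\ell/k\le\ell\eps\ll\rho$; such a graph has a perfect matching by Hall's theorem. For the remaining indices $s\in S^i_1:=\{1,\dots,q\}\setminus S^i_0$ we have $X_{s,i}=V_i$ and $Y_{s,i}=V_{i+1}$, so it remains to find $|S^i_1|\le q$ edge-disjoint perfect matchings of $\Gamma_i$ with the (at most $\ell m/k$ per vertex) edges used in the first stage removed; this graph has balanced vertex classes of size $m$, minimum degree at least $(1-\mu-\eps-\ell/k)m$ and maximum degree at most $(1-\mu+\eps)m$, so Lemma~\ref{regularsub} (with $\eps+\ell/k$ in the role of its $\eps$, which is legitimate as $\eps+\ell/k\ll\rho$; the integrality requirements on $\mu m,\rho m$ are harmless here by the usual floors-and-ceilings convention, since we have $\rho m$ edges of slack) yields at least $(1-\mu-\rho)m\ge q\ge|S^i_1|$ edge-disjoint perfect matchings, which is more than enough. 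Carrying this out for all $i$ and assembling $F_s:=PS_s+\bigcup_{i\le k}R_{s,i}$ completes the proof, by the reduction above.

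The two points that need care are the identity $|X_{s,i}|=|Y_{s,i}|$ — this is precisely what the local balancedness hypothesis buys us, and it is what makes the position-by-position construction of perfect matchings possible — and the degree bookkeeping required to invoke Hall's theorem and Lemma~\ref{regularsub}. For the latter, the key device is to separate at each position $i$ the small exceptional set $S^i_0$ (where $X_{s,i},Y_{s,i}$ genuinely differ from $V_i,V_{i+1}$, handled greedily) from the bulk $S^i_1$ (handled in one application of Lemma~\ref{regularsub}), which keeps the total number of edges deleted at any vertex comfortably below $\rho m$. I expect this bookkeeping, rather than any conceptual difficulty, to be the main obstacle.
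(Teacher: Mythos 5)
Your proof is correct and follows essentially the same route as the paper's: for each position $i$ along $C$, it constructs the required perfect matchings in $G[V_i,V_{i+1}]$ by handling greedily (via Hall's theorem) the at most $\ell m/k$ path sequences whose vertices meet $V_i$, and then applying Lemma~\ref{regularsub} to the remainder of $G[V_i,V_{i+1}]$, with the local-balance hypothesis supplying the key identity $|X_{s,i}|=|Y_{s,i}|$. The only cosmetic difference is the bookkeeping in the second stage: the paper shifts $\rho$ to $\rho'=\rho+s_0/m$ to get exactly $q-s_0$ matchings, while you absorb the $\ell m/k$ degree loss into the $\eps$ parameter and note that $(1-\mu-\rho)m\ge q$ matchings is more than the $|S_1^i|$ needed.
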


\proof
By changing the values of $\rho$, $\mu$ and $\eps$ slightly,%
   \COMMENT{We do need to make $\eps$ slightly bigger for this as well (in order to ensure that $(G,\mathcal{Q}, C)$
is still a $(k,m, \mu, \eps)$-cyclic system)}
we may assume that $\rho m, \mu m \in \mathbb{N}$.
For each $s \le q$ and each $i \le k$, let $V_i^{s,-}$ (or $V_i^{s,+}$) be the set of vertices in $V_i$ with indegree (or outdegree) one in $PS_s$.
Since each $PS_s$ is locally balanced with respect to $C$, $|V_i^{s,+}| = |V_{i+1}^{s,-}| \le \eps m $ for all $s \le q$ and all $i \le k$
(where the inequality follows from~(ii)).
To prove the lemma, it suffices to show that for each $i \le k$, there exist edge-disjoint directed matchings $M^1_i, \dots, M^q_i$,
so that each $M^s_i$ is a perfect matching in $G[V_i \setminus V_i^{s,+},V_{i+1} \setminus V_{i+1}^{s,-}]$.
The lemma then follows by setting $F_s := PS_s + \sum_{i \le k} M^s_i$ for each $s \le q$.

Fix any $i \le k$.
Without loss of generality (by relabelling the $PS_s$ if necessary) we may assume that there exists an integer $s_0$ such that
$V_i^{s,+} \ne \emptyset$ for all $s \le s_0$ and $V_i^{s,+} = \emptyset $ for all $s_0 < s \le q$.
By~(ii), $s_0 \le \ell m/k$.
Suppose that for some $s$ with $1 \le s \le s_0$ we have already found 
our desired matchings $M^1_i, \dots, M^{s-1}_i$ in $G[V_i,V_{i+1}]$. 
Let 
$$
V'_i : = V_i \setminus V_i^{s,+}, \ \ \ V'_{i+1} : = V_{i+1} \setminus V_{i+1}^{s,-} \ \ \ \mbox{and} \ \ \
G_s := G[V_i',V_{i+1}'] - \sum_{s' < s} M^{s'}_{i}.
$$
Note that each $v \in V_i'$ satisfies
\begin{align} \nonumber
	d^+_{G_s}(v) \ge d^+_{G}(v,V_{i+1}) - ( |V_{i+1}^{s,-}|+s_0) 
 \ge (1- \mu - (2\eps + \ell/k) ) m 
\ge (1- \mu - \sqrt{\eps}) m  
\end{align}
by~(Sys2) and the fact that $ 1/k \le \eps \ll 1/{\ell}$.
Similarly, each $v \in V_{i+1}'$ satisfies $d^-_{G_s}(v) \ge (1- \mu - \sqrt{\eps} ) m$.
Thus $G_s$ contains a perfect matching $M_i^{s}$ (this follows, for example, from Hall's theorem).
So we can find edge-disjoint matchings $M^1_i, \dots, M^{s_0}_i$ in $G[V_i,V_{i+1}]$.

Let $G'$ be the subdigraph of $G[V_i,V_{i+1}]$ obtained by removing all the edges in $M^{1}_{i}, \dots, M^{s_0}_{i}$.
Since $V_i^{s,+} = \emptyset $ for all $s_0 < s \le q$ (and thus also $V_{i+1}^{s,-} = \emptyset$ for all such~$s$),%
	\COMMENT{Allan: had $V_{i}^{s,-}$ before.}
in order to prove the lemma it suffices to find $q-s_0$ edge-disjoint perfect matchings in $G'$.
Each $v \in V_i$ satisfies 
\begin{align*}
d^+_{G'}(v) = d^+_{G}(v,V_{i+1}) \pm  s_0 = d^+_{G} (v,V_{i+1}) \pm \ell m/k = (1- \mu \pm \sqrt{\eps}) m 
\end{align*}
by~(Sys2) and the fact that  $ 1/k \le \eps \ll 1/{\ell}$.
Similarly, each $v \in V_{i+1}$ satisfies $d^-_{G'}(v) = (1- \mu \pm \sqrt{\eps}) m$.
Set $\rho' : = \rho + s_0/m$. 
Note that $\rho' m \in \mathbb{N}$ and $\rho \le \rho' \le \rho + \ell/k \le 2\rho$ as $1/k \ll 1/\ell, \rho$.
Hence, $\eps \ll \rho' \ll 1$. Thus we can apply Lemma~\ref{regularsub} with $G', \rho', \sqrt{\eps}$
playing the roles of $\Gamma,\rho,\eps$ to obtain $(1-\mu - \rho') m $ edge-disjoint perfect matchings in~$G'$.
Since $(1-\mu - \rho') m  =  (1-\mu - \rho) m - s_0 \ge  q- s_0$, there exists $q-s_0$ edge-disjoint
perfect matchings $M_i^{s_0+1}, \dots, M_i^{q}$ in $G'$. This completes the proof of the lemma.
\endproof

\subsection{Merging Cycles to Obtain Hamilton Cycles} \label{sec:merge}

Recall that we have removed a sparse subdigraph $H$ from $G$ and that $G'=G-H$.
Our final step in the proof of Lemma~\ref{merging} is to merge the cycles from each of the $1$-factors $F_s$ returned by
Lemma~\ref{extend1-factor} to obtain edge-disjoint (directed) Hamilton cycles. 
We will apply Lemma~\ref{mergecycles} to merge the cycles of each $F_s$, using the edges in $H$.
However, the Hamilton cycles obtained in this way might not be consistent with the matching $M_s \in \mathcal{M}$
that lies in $PS_s$. Lemma~\ref{ordercycle} is designed to deal with this issue.

Lemma~\ref{mergecycles} was proved in~\cite{Kelly} and was first used to construct approximate Hamilton decompositions in~\cite{OS}.
Roughly speaking, it asserts the following:
suppose that we have a $1$-factor $F$ where most of the edges wind around a cycle $C=V_1\dots V_k$.
Suppose also that we have a digraph $H$ which winds around~$C$. (More precisely, $H$ is the
union of superregular pairs $H[V_i,V_{i+1}]$.)
Then we can transform $F$ into a Hamilton cycle $C'$ by using a few edges of $H$.
The crucial point is that when applying this lemma, the edges in $C'-F$ can be taken from a small number of the
superregular pairs $H[V_i,V_{i+1}]$ (i.e.~the set $J$ in Lemma~\ref{mergecycles} will be very small compared to~$k$).
In this way, we can transform many $1$-factors $F$ into edge-disjoint Hamilton cycles without using any of the pairs $H[V_i,V_{i+1}]$ too often.
This in turn means that we will be able to transform all of our $1$-factors into edge-disjoint Hamilton cycles
by using the edges of a single sparse graph~$H$.

\begin{lemma}\label{mergecycles}
Suppose that $0<1/m\ll d'\ll \eps\ll d\ll \zeta ,1/t\le 1/2$.
Let $V_1,\dots,V_k$ be pairwise disjoint clusters, each of size $m$, and let $C=V_1\dots V_k$ be a directed cycle on these clusters.
Let $H$ be a digraph on $V_1\cup \dots\cup V_k$ and let $J\subseteq E(C)$. For each edge $V_iV_{i+1}\in J$, let $V^1_i\subseteq V_i$
and $V^2_{i+1}\subseteq V_{i+1}$ be such that $|V^1_i|=|V^2_{i+1}|\ge m/100$ and 
 such that $H[V^1_i,V^2_{i+1}]$ is $(\eps,d',\zeta d',td'/d)$-superregular.
Suppose that $F$ is a $1$-regular digraph with $V_1\cup \dots \cup V_k\subseteq V(F)$ such that the following properties hold:
\begin{itemize}
\item[\rm{(i)}]For each edge $V_iV_{i+1}\in J$ the digraph $F[V^1_i,V^2_{i+1}]$ is a perfect matching.
\item[\rm{(ii)}] For each cycle $D$ in $F$ there is some edge $V_iV_{i+1}\in J$ such that $D$ contains a vertex
in $V^1_i$.
\item[\rm{(iii)}] Whenever $V_iV_{i+1}, V_jV_{j+1}\in J$ are such that $J$ avoids all edges in the segment $V_{i+1}CV_j$ of
$C$ from $V_{i+1}$ to $V_j$, then $F$ contains a path $P_{ij}$ joining some vertex $u_{i+1}\in V^2_{i+1}$ to some
vertex $u'_j\in V^1_j$ such that $P_{ij}$ winds around~$C$.
\end{itemize}
Then we can obtain a directed%
    \COMMENT{Daniela: added directed}
cycle on $V(F)$ from $F$ by replacing $F[V^1_i,V^2_{i+1}]$ with a suitable perfect matching
in $H[V^1_i,V^2_{i+1}]$ for each edge $V_iV_{i+1}\in J$.
\end{lemma}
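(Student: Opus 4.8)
The plan is to process the edges of $J$ one at a time, and for each edge $V_iV_{i+1}\in J$ replace the perfect matching $F[V^1_i,V^2_{i+1}]$ by a carefully chosen perfect matching $M_i$ in $H[V^1_i,V^2_{i+1}]$, so that after all $|J|$ replacements the resulting $1$-regular digraph $F'$ is a single directed cycle on $V(F)$. The key observation is that such a swap along the edge $V_iV_{i+1}$ acts as a ``rotation'': it replaces the pairing of the out-endpoints in $V^1_i$ with the in-endpoints in $V^2_{i+1}$, and by choosing the pairing appropriately we can merge cycles together. We will choose the matchings greedily, edge by edge in the cyclic order of $J$ around $C$, maintaining an invariant on the cycle structure of the partially modified digraph.

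\textbf{Setting up the invariant.} Enumerate the edges of $J$ as $V_{i_1}V_{i_1+1},\dots,V_{i_r}V_{i_r+1}$ in the cyclic order in which they appear around $C$, where $r=|J|$. For $0\le a\le r$ let $F^{(a)}$ be the digraph obtained from $F$ after replacing $F[V^1_{i_b},V^2_{i_b+1}]$ by our chosen matching $M_{i_b}$ for all $b\le a$, so $F^{(0)}=F$ and $F^{(r)}=F'$. Using (iii), between two consecutive edges $V_{i_b}V_{i_b+1}$ and $V_{i_{b+1}}V_{i_{b+1}+1}$ of $J$ the digraph $F$ (and hence $F^{(a)}$, since our swaps only touch the designated pairs) contains a path winding around $C$ from a vertex $u_{i_b+1}\in V^2_{i_b+1}$ to a vertex $u'_{i_{b+1}}\in V^1_{i_{b+1}}$; call this the \emph{spanning segment} $S_b$. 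Conditions (i) and (ii) guarantee that every vertex of $V(F)$ lies either on some spanning segment $S_b$ or on a segment of a cycle of $F$ between a vertex of some $V^1_{i_b}$ and a vertex of some $V^2_{i_b+1}$ traversed entirely outside the $V^1,V^2$-sets at edges of $J$; in any case every cycle of $F$ meets at least one $V^1_{i_b}$. The invariant we maintain is: after processing the first $a$ edges of $J$, the edges of $F^{(a)}$ incident to $\bigcup_{b>a}V^1_{i_b}\cup\bigcup_{b>a}V^2_{i_b+1}$ still form the matchings $F[V^1_{i_b},V^2_{i_b+1}]$, and the number of cycles in $F^{(a)}$ meeting $\bigcup_{b>a}V^1_{i_b}$ is at most (number of cycles of $F$) $-a$ but at least $1$, with equality to $1$ exactly when $a=r$.

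\textbf{The merging step and choice of matching.} At step $a+1$ we look at the current digraph $F^{(a)}$ and the pair $H[V^1_{i_{a+1}},V^2_{i_{a+1}+1}]$. Contract each maximal path of $F^{(a)}$ that starts in $V^2_{i_{a+1}+1}$, winds forward around $C$ and next reaches a vertex of $V^1_{i_{a+1}}$ — these are well-defined and vertex-disjoint, and by the invariant plus (iii) each vertex of $V^1_{i_{a+1}}$ is the endpoint of exactly one such path and likewise for $V^2_{i_{a+1}+1}$. This gives a natural bijection $\pi$ from $V^2_{i_{a+1}+1}$ to $V^1_{i_{a+1}}$; following $\pi$ and then a matching edge inside $H[V^1_{i_{a+1}},V^2_{i_{a+1}+1}]$ defines a permutation of $V^1_{i_{a+1}}$, whose cycle structure is exactly the cycle structure of $F^{(a+1)}$ restricted to cycles meeting $V^1_{i_{a+1}}$. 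We need a perfect matching $M_{i_{a+1}}$ in $H[V^1_{i_{a+1}},V^2_{i_{a+1}+1}]$ realising a prescribed permutation: namely one merging all $\ge 1$ of those cycles into a single cycle, unless $a+1=r$ in which case it must also close up into one global cycle — this can always be arranged because merging cycles into one is achieved by a single ``cyclic'' adjustment. The existence of a perfect matching realising \emph{any} prescribed bijection is exactly where $[\eps,d',\zeta d',td'/d]$-superregularity of $H[V^1_{i_{a+1}},V^2_{i_{a+1}+1}]$ is used: one finds the matching greedily using superregularity (as in Proposition~\ref{perfmatch} and the standard counting argument), noting $|V^1_{i_{a+1}}|=|V^2_{i_{a+1}+1}|\ge m/100$ is large. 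Since $r=|J|\le k$ and $1/m\ll d'\ll\eps\ll d$, the greedy choices never run into trouble.

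\textbf{Main obstacle.} The delicate point is bookkeeping the cycle structure: one must verify precisely that replacing $F[V^1_i,V^2_{i+1}]$ by $M_i$ along the bijection $\pi$ indeed corresponds to composing with the permutation of $V^1_i$ determined by $M_i$, and that choosing $M_i$ to merge exactly the right cycles at each step (and to close the final cycle at step $r$) is always possible inside a superregular pair. Establishing the bijection $\pi$ cleanly requires combining (i), (ii) and (iii): (i) makes $F[V^1_i,V^2_{i+1}]$ a matching so the contraction is well-defined, (ii) forces every cycle to meet some $V^1_{i_b}$ so no cycle is ``invisible'' to the process, and (iii) supplies the winding paths linking consecutive edges of $J$ so that after all swaps we genuinely get one cycle rather than several. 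Once this structural picture is set up, the superregularity input is routine: it merely guarantees that the prescribed permutation can be realised by an actual perfect matching of $H$, which follows from the usual greedy argument for superregular pairs since $td'/d$ bounds the maximum degree, $\zeta d'$ the minimum degree, and $d'$ the density, all consistent with the hierarchy $1/m\ll d'\ll\eps\ll d$.
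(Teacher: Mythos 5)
There is a genuine gap, and it is the central technical step. You write that you need ``a perfect matching $M_{i_{a+1}}$ in $H[V^1_{i_{a+1}},V^2_{i_{a+1}+1}]$ realising a prescribed permutation,'' and then assert that ``the existence of a perfect matching realising \emph{any} prescribed bijection is exactly where superregularity is used.'' This is false for the sparse superregular notion in play here. The pair $H[V^1_i,V^2_{i+1}]$ is $(\eps,d',\zeta d',td'/d)$-superregular in the sense of Section~\ref{sec:reg}, with $1/m\ll d'\ll\eps\ll d$: a typical vertex of $V^1_i$ has roughly $d'|V^2_{i+1}|$ out-neighbours, and (Reg3) caps the degree at $(td'/d)|V^2_{i+1}|$, which is still $o(|V^2_{i+1}|)$. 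So almost every prescribed bijection would require $H$-edges that simply do not exist, and no greedy/Hall-type argument can produce them. (Proposition~\ref{perfmatch}, which you invoke, concerns the dense $[\eps,d]$-superregular notion with constant $d$, so it does not apply here either; and in any case it gives only \emph{some} perfect matching, never one realising an arbitrary permutation.)

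What superregularity actually delivers — and this is the missing idea — is that after contracting the paths to identify $V^1_i$ with $V^2_{i+1}$ via the bijection you call $\pi$, the resulting auxiliary digraph $A$ on $V^1_i$ (with $N^+_A(\pi(u)):=N^+_H(u)$, as in the proof of Lemma~\ref{ordercycle}) is a robust outexpander with linear minimum degree (Lemma~\ref{regtoexpander}), and therefore has a Hamilton cycle (Theorem~\ref{expanderthm}). A Hamilton cycle of $A$ corresponds precisely to a perfect matching of $H[V^1_i,V^2_{i+1}]$ whose induced permutation of $V^1_i$ is a single cycle, which is what you need for the merge; you cannot ask for any more specific permutation than that, nor do you need to. I would also point out that your invariant is not correctly stated (when $a=r$, the set $\bigcup_{b>r}V^1_{i_b}$ is empty, so requiring exactly one cycle to meet it is vacuous/impossible), though the underlying idea — after processing $V_{i_{a}}V_{i_a+1}$ a single ``growing'' cycle contains all of $V^1_{i_1}\cup\dots\cup V^1_{i_a}$, and condition (iii) propagates this cycle into $V^1_{i_{a+1}}$ while (ii) ensures every other cycle is eventually absorbed — is sound and should be stated in that form. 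Finally, note that the paper itself does not prove Lemma~\ref{mergecycles}; it is quoted from \cite{Kelly}, so there is no internal proof to compare against.
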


\begin{lemma}\label{ordercycle}
Suppose that $0<1/m\ll \gamma \ll d' \ll \eps\ll d\ll \zeta ,1/t\le 1/2$.
Let $V_1,\dots,V_k$ be pairwise disjoint clusters, each of size $m$, and let $C=V_1\dots V_k$ be a directed cycle on these clusters.
Let $1\le i\le k$ be fixed and let $V^1_i\subseteq V_i$ and $V^2_{i+1}\subseteq V_{i+1}$ be such that $|V^1_i|=|V^2_{i+1}|\ge m/100$. Suppose that 
$H=H[V^1_i,V^2_{i+1}]$ is an $(\eps,d',\zeta d',td'/d)$-superregular bipartite digraph.
Let $X= \{x_1, \dots, x_p\} \subseteq V_{i}^1$ with $|X| \le \gamma m$.
Suppose that $C'$ is a directed cycle with $V_1\cup \dots \cup V_k\subseteq V(C')$ such that $C'[V^1_i,V^2_{i+1}]$ is a perfect matching.
Then we can obtain a directed cycle on $V(C')$ from $C'$ that visits the vertices $x_1, \dots, x_p$ in order by
replacing $C'[V^1_i,V^2_{i+1}]$ with a suitable perfect matching in $H[V_i^1,V_{i+1}^2]$.
\end{lemma}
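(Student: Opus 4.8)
\textbf{Proof plan for Lemma~\ref{ordercycle}.}

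The plan is to use $H$ as a reservoir of edges that allows us to locally ``reshuffle'' the perfect matching $C'[V^1_i,V^2_{i+1}]$, rerouting the cycle $C'$ so that it picks up the prescribed vertices $x_1,\dots,x_p$ in the correct cyclic order while remaining a single cycle on $V(C')$. The key observation is that $C'$ restricted to $V^1_i \cup V^2_{i+1}$ is a perfect matching $M'$, and that replacing $M'$ by any other perfect matching $M''$ in $H[V^1_i,V^2_{i+1}]$ yields a $1$-regular digraph $C'-M'+M''$ whose cycle structure is governed entirely by the permutation $\sigma$ of $V^1_i$ obtained by following $C'-M'$ from the head of an edge of $M''$ back to the tail of the next edge of $M''$; more precisely, if we contract each ``external'' segment of $C'$ (the portion of $C'$ lying outside the matching edges through $V^1_i$, which links the $V^2_{i+1}$-endpoint of one matching edge to the $V^1_i$-endpoint of another) to a single arc, we obtain a permutation digraph on $V^1_i$, and a choice of $M''$ corresponds to composing with another permutation. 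So the task reduces to: given the fixed permutation $\pi$ of $V^1_i$ coming from the external segments of $C'$, find a perfect matching $M''$ in $H[V^1_i,V^2_{i+1}]$ — equivalently a permutation $\tau$ of $V^1_i$ realizable inside $H$ — such that $\tau\circ\pi$ is a single $|V^1_i|$-cycle which, moreover, visits $x_1,\dots,x_p$ in this cyclic order (where ``visits $x_j$'' means the external segment emanating from $x_j$ is traversed in the appropriate place).

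The main work is therefore to build such a $\tau$ greedily using the superregularity of $H$. First I would set up notation: write $V^1_i = \{v_1,\dots,v_s\}$ with $s=|V^1_i|\ge m/100$, let $w\colon V^1_i\to V^2_{i+1}$ be defined by following the external segment of $C'$ forward from each vertex of $V^1_i$ until it first returns to $V^2_{i+1}$ — this is well defined since $C'$ is a single cycle and $C'[V^1_i,V^2_{i+1}]$ is a perfect matching (so every external segment starts at a vertex of $V^2_{i+1}$ reached via a matching edge and ends at the next vertex of $V^1_i$; a short argument using $|X|\le \gamma m$ ensures the $x_j$ lie on distinct external segments, which we may assume after a trivial relabelling). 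A perfect matching $M''$ of $H$ from $V^2_{i+1}$ to $V^1_i$ then determines, via $v\mapsto w(v)\mapsto M''(w(v))$, a permutation of $V^1_i$, and $C'-M'+M''$ is a single cycle on $V(C')$ iff this permutation is an $s$-cycle. Now I would construct the matching edge by edge, following exactly the ``rotation'' idea used in the proof of Lemma~\ref{mergecycles}: order the $x_j$ so that we want to build a Hamilton path through the ``quotient'' vertices that threads $x_1,\dots,x_p$ in order, and at each of the $\le s$ steps we must extend the current partial path by choosing an edge of $H$ out of a prescribed vertex of $V^2_{i+1}$ into a not-yet-used vertex of $V^1_i$ (subject at the $p$ special steps to landing on the external segment through the next required $x_j$). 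Each such choice is available because $H$ is $(\eps,d',\zeta d',td'/d)$-superregular: by (Reg1), the set of available neighbours has size $\ge (1-\eps)d' s$ minus the $\le s - (\text{remaining steps})$ already-used vertices, which is positive throughout since $\gamma \ll d'$ and $\eps\ll d'$ guarantee $(1-\eps)d' s - p \ge (1-\eps)d's - \gamma m > 0$; at the $p$ constrained steps we additionally only need the chosen vertex to lie on one specific external segment, and there is always at least one available neighbour on it because... — actually here one must be slightly careful, so I would instead only \emph{require} that the external segment through $x_j$ is used somewhere in the cycle in the right relative position, which is automatic once we ensure the final permutation is a single $s$-cycle and we have reserved the correct order of the $p$ marked segments along the path we build; (Reg1) and (Reg4) then suffice to complete the greedy construction, and (Reg2)--(Reg3) are not even needed here.

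The main obstacle I anticipate is the bookkeeping in the previous paragraph: precisely formalising the bijection between perfect matchings of $H[V^1_i,V^2_{i+1}]$ and permutations of $V^1_i$, verifying that ``$C'-M'+M''$ is a single cycle'' corresponds exactly to the composed permutation being an $s$-cycle, and then showing that the greedy rotation-extension procedure (which is the same device used in the proof of Lemma~\ref{mergecycles}) can be run while \emph{simultaneously} forcing the $p$ marked vertices to appear in the prescribed order. The order constraint is the genuinely new ingredient compared to Lemma~\ref{mergecycles}; it is handled by noting $p\le\gamma m \ll d'm$, so at each of the $p$ bottleneck steps we are choosing among $\ge (1-\eps)d' s - s(1-o(1))\cdot(\text{used fraction})$ candidates, which still contains a vertex on the required external segment once one checks that ``used'' vertices are few relative to $d'm$ at every stage — precisely the reason for the hierarchy $\gamma\ll d'\ll\eps$... wait, the hierarchy is $\gamma \ll d' \ll \eps$, so in fact $\gamma \ll d'$ is exactly what we need. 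I would therefore present the argument as: (1) reduce to the permutation/single-cycle reformulation; (2) greedily build $\tau$ via rotations, citing the proof technique of Lemma~\ref{mergecycles} and using only (Reg1), (Reg4) together with $\gamma\ll d'\ll\eps\ll d$; (3) observe the resulting matching $M''$ lies in $H[V^1_i,V^2_{i+1}]$ and that $C'-C'[V^1_i,V^2_{i+1}]+M''$ is the desired cycle visiting $x_1,\dots,x_p$ in order.
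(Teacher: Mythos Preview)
Your reduction in step~(1) is essentially the same as the paper's: both define a bijection between $V^1_i$ and $V^2_{i+1}$ by following the external segments of $C'$, observe that a perfect matching $M''$ in $H[V^1_i,V^2_{i+1}]$ induces a permutation of $V^1_i$ (or equivalently of $V^2_{i+1}$), and reduce the problem to finding a Hamilton cycle in the resulting auxiliary digraph that visits $p$ prescribed vertices in a given order.

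The gap is in step~(2). Your greedy extension cannot work as stated: the auxiliary digraph on $V^1_i$ has $s\ge m/100$ vertices, but by (Reg4) its minimum out- and indegree are only about $\zeta d'm$, and $d'\ll 1$ means $\zeta d'm\ll s$. Once you have used more than $\zeta d'm$ vertices of $V^1_i$ (which happens long before you are done, since you need $s\approx m/100\gg \zeta d'm$ steps), the out-neighbourhood of the current endpoint may lie entirely inside the used set, and neither (Reg1) nor (Reg4) gives a per-vertex guarantee into the small unused remainder. Your displayed inequality ``$(1-\eps)d's - (\text{used})>0$'' fails as soon as the number of used vertices exceeds $(1-\eps)d's$, i.e.\ after only a $d'$-fraction of the steps. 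The hierarchy $\gamma\ll d'$ handles the $p$ constrained steps but does nothing for the remaining $s-p\approx s$ steps.

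The paper avoids this by not attempting a greedy construction at all. Instead, it defines the auxiliary digraph $A$ on $V^2_{i+1}$ (via the same identification you describe), applies Lemma~\ref{regtoexpander} to deduce that $A$ is a robust $(\nu,\tau)$-outexpander for suitable $\nu\ll\tau\ll d'$, notes that $\delta^\pm(A)\ge \zeta d'|A|$ by (Reg4), and then invokes Theorem~\ref{expanderthm} to obtain a directed Hamilton cycle in $A$ visiting $f(x_1),\dots,f(x_p)$ in order. That Hamilton cycle immediately translates back to the required perfect matching in $H$. So the missing ingredient in your approach is precisely the robust-outexpander machinery (Lemma~\ref{regtoexpander} plus Theorem~\ref{expanderthm}), which is what allows one to find Hamilton cycles in digraphs whose minimum degree is linear but small.
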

\proof
Pick $\nu$ and $\tau$ such that $ \gamma \ll \nu \ll \tau\ll d'$.
For every $u \in V^1_{i}$, starting at $u$ we move along 
the cycle $C'$ (but in the opposite direction to the orientation of the edges) and let $f(u)$ be the first vertex on $C'$ in
$V^2_{i+1}$. (Note that $f(u)$ exists since $C'[V^1_i,V^2_{i+1}]$ is a perfect matching.
Moreover,
$f(u) \not = f(v)$ if $u \not = v$.)
Define an auxiliary digraph $A$ on $V^2_{i+1}$ such that $N^+_A(f(u)):=N^+_{H}(u)$.
So $A$ is obtained by identifying each pair $(u,f(u))$ into one vertex with an edge from $(u,f(u))$ to
$(v,f(v))$ if $H$ has an edge from $u$ to $f(v)$. So Lemma~\ref{regtoexpander} applied
with $d'$, $d/t$ playing the roles of $d$, $\mu$ implies
that $A$ is a robust $(\nu,\tau)$-outexpander.
Moreover, $\delta^+(A), \delta^-(A)\ge\zeta d'|V^2_{i+1}|=\zeta d'|A|$ by (Reg4).%
    \COMMENT{Actually $A$ might have loops. But after deleting them we still have a robust $(\nu,\tau)$-outexpander
with $\delta^0(A)\ge \zeta d'|A|-1$. So it's maybe better to gloss over it...}
Thus Theorem~\ref{expanderthm} implies that $A$ has a Hamilton cycle visiting $f(x_1), \dots, f(x_p)$ in order, which clearly corresponds to a perfect matching $M$ in~$H$ with the desired property.
\endproof

The above proof idea is actually quite similar to that for Lemma~\ref{mergecycles} itself.
We now apply Lemmas~\ref{mergecycles} and~\ref{ordercycle} to each $1$-factor $F_s$ given by Lemma~\ref{extend1-factor}
and obtain edge-disjoint Hamilton cycles that are consistent with the~$M_s$.

\begin{lemma}\label{mergecycles2}
Suppose that $0<1/m \ll  \eps_0, 1/k \ll \gamma  \ll \eps   \ll 1$, that $\gamma \ll 1/ \ell \le 1$ and that $q, m,k,\ell \in \mathbb{N}$.%
      \COMMENT{There is actually no condition on $q$ other than that fact that $q \le \ell m$ which is due to balanced extension.}
Let $\mathcal{Q}=\{V_1,\dots,V_k\}$ be a $(k,m)$-equipartition of a vertex set $V$ and let $C=V_1\dots V_k$ be a directed cycle.
Let $\mathcal{M} = \{M_1, \dots, M_q\}$ be a set of ordered directed matchings.
Suppose that $\mathcal{BE}=\{PS_1,\dots,PS_q\}$ is a balanced extension of $\mathcal{M}$ with  respect
to $(\mathcal{Q},C)$ and parameters $(\eps_0, \ell)$.%
    \COMMENT{Daniela: deleted "such that $M_s \subseteq PS_s$ for all $s \le q$" since it follows from (BE2)}
Furthermore, suppose that there exist $1$-regular digraphs $F_1, \dots, F_q $ on $V$ such that for each $s \le q$, $PS_s \subseteq F_s$ and such that
$F_s - PS_s$ winds around~$C$. 
Let $H$ be a digraph on $V$ which is edge-disjoint from each of $F_1 - PS_1, \dots, F_q - PS_q$%
		\COMMENT{Allan: We do not need $F_1-PS_1, \dots, F_s-PS_s$ are edge-disjoint.}
and such that $H[V_{i},V_{i+1}]$ is
$(\eps,2\gamma ,\gamma   , 3 \gamma)$-superregular for all $i\le k$. 
Then there exist $q$ Hamilton cycles $C_1, \dots, C_q$  in
$F_1+ \dots+F_q+H$ such that $C_s$ contains $PS_s$ and is consistent with $M_s$ for all $s \le q$ and such that
$C_1 - F_1, \dots, C_q - F_q$ are edge-disjoint subgraphs of $H$.%
	\COMMENT{Allan: I do mean $C_1 - F_1, \dots, C_q - F_q$ are pairwise edge-disjoint.}
\end{lemma}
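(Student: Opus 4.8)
\textbf{Proof plan for Lemma~\ref{mergecycles2}.}
The plan is to process the $1$-factors $F_1,\dots,F_q$ one at a time, using a small slice of $H$ to turn each $F_s$ into a Hamilton cycle $C_s$ containing $PS_s$ and consistent with $M_s$, while keeping track of how often each superregular pair $H[V_i,V_{i+1}]$ has been used so far. First I would choose auxiliary constants $d',\zeta,t$ with $\gamma\ll d'\ll\eps\ll d\ll\zeta,1/t\le 1/2$ so that Lemmas~\ref{mergecycles} and~\ref{ordercycle} become applicable; here $d:=2\gamma$ plays the role of the density of $H[V_i,V_{i+1}]$ and $d'$ will be the density of the tiny sub-pairs we actually use. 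Before starting, for each $i\le k$ split $H[V_i,V_{i+1}]$ into many edge-disjoint $(\eps,d',\zeta d',td'/d)$-superregular sub-pairs $H[V_i^{(r)},V_{i+1}^{(r)}]$ with $|V_i^{(r)}|=|V_{i+1}^{(r)}|\approx d'm/d$; since $d'\ll d$, the pair $H[V_i,V_{i+1}]$ supplies at least (say) $d/2d'$ such disjoint sub-pairs, comfortably more than the number of times it will ever be needed (which, as the bound below shows, is at most a constant multiple of $\ell m/k$, hence $o(d/d')\cdot$(something) $\ll dm$ — in any case far fewer than available, because each Hamilton cycle uses only a bounded number of sub-pairs per cluster and each cluster is hit by $O(\ell m/k)$ of the path systems). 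I would maintain, for each $i$, a pool of so-far-unused sub-pairs of $H[V_i,V_{i+1}]$.

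For a fixed $s$, I would apply Lemma~\ref{mergecycles} to $F:=F_s$. The index set $J\subseteq E(C)$ is chosen as follows: $PS_s$ is locally balanced with respect to $C$ and, by (BE3), $V(PS_s)$ meets at most a bounded number of clusters (more precisely $|V(PS_s)\cap V_i|\le\eps_0 m$ for all $i$, and the path system consists of at most $\eps_0 m$ edges in total after accounting for extensions, so it lives in $O(1)$ clusters); together with the requirement that the cycles of $F_s$ outside $PS_s$ wind around $C$, one can pick $J$ to consist of a bounded number of edges $V_iV_{i+1}$, spread around $C$, so that conditions (i)--(iii) of Lemma~\ref{mergecycles} hold. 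For each $V_iV_{i+1}\in J$ take $V_i^1,V_{i+1}^2$ to be the next unused sub-pair from the pool of $H[V_i,V_{i+1}]$ (shrinking $F_s[V_i,V_{i+1}]$ down to a perfect matching inside $V_i^1\times V_{i+1}^2$ by a standard Hall/regularity adjustment — here the $1$-regularity of $F_s$ and $|V_i^1|\ge m/100$ are what is needed; condition~(i) of Lemma~\ref{mergecycles} then becomes literally the hypothesis). Lemma~\ref{mergecycles} produces a directed Hamilton cycle $C_s'$ on $V$ obtained from $F_s$ by replacing each $F_s[V_i^1,V_{i+1}^2]$ with a matching inside $H[V_i^1,V_{i+1}^2]$; in particular $C_s'\supseteq PS_s$ and $C_s'-F_s$ lies entirely in the chosen sub-pairs of $H$, which are disjoint from everything used for $s'<s$.

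It remains to make $C_s'$ \emph{consistent with} $M_s$, i.e.\ to fix the cyclic order in which the edges of $M_s$ are traversed. This is exactly what Lemma~\ref{ordercycle} does: pick one edge $V_iV_{i+1}\in J$ such that no vertex of $M_s$ lies in $V_i^1$ (possible since $M_s$ lives in $O(1)$ clusters and $J$ can be chosen to include an index avoided by $M_s$), let $X\subseteq V_i^1$ be the (at most $\gamma m$) appropriately chosen vertices just before the initial vertices of the edges of $M_s$ along $C_s'$, and apply Lemma~\ref{ordercycle} with the sub-pair $H[V_i^1,V_{i+1}^2]$ to re-route $C_s'[V_i^1,V_{i+1}^2]$ into a perfect matching in $H[V_i^1,V_{i+1}^2]$ so that the resulting Hamilton cycle $C_s$ visits $X$ — equivalently the edges of $M_s$ — in the prescribed order. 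Since Lemma~\ref{ordercycle} only alters edges inside this one sub-pair (which is part of the pool already earmarked for $s$), $C_s-F_s$ still lies in $H$ and is still disjoint from $C_{s'}-F_{s'}$ for all $s'<s$. Using the observation recorded after the definition of a $V_i$-extension (a cycle containing a $V_i$-extension of $M_s$ and visiting the final vertices of its paths in order is automatically consistent with $M_s$), $C_s$ is consistent with $M_s$, as required.

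The main obstacle is bookkeeping the usage of $H$: one must verify that over all $q$ steps no pair $H[V_i,V_{i+1}]$ is asked for more sub-pairs than it contains. This follows because each $C_s-F_s$ touches only the $O(1)$ clusters indexed by $J$, and — crucially — by (BE3) each cluster $V_i$ is non-trivially met by at most $\ell m/k$ of the path systems $PS_s$, so when we choose $J$ for each $s$ to concentrate near the clusters that $PS_s$ actually uses, each fixed pair $H[V_i,V_{i+1}]$ is selected at most $O(\ell m/k)$ times; since $\gamma\ll 1/\ell$ and $1/k\ll\gamma$, this is far below the $\sim d/(2d')=\gamma/d'$ available disjoint sub-pairs after we also rescale so that $d'$ is chosen small in terms of $\ell,k,\gamma$. (Strictly, one should also handle the $F_s$ whose initial data forces $J$ to be spread out rather than concentrated; a uniform bound of the form ``each edge of $C$ lies in $J$ for at most a $1/k$-fraction of the steps, up to $O(\ell m/k)$'' suffices, and can be arranged by cycling the choice of $J$.) The rest is routine superregularity manipulation already packaged in Lemmas~\ref{superslice6}--\ref{regtoexpander}, \ref{mergecycles}, \ref{ordercycle}.
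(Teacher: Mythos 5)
Your high-level plan (apply Lemma~\ref{mergecycles} to turn each $F_s$ into a Hamilton cycle, then Lemma~\ref{ordercycle} to fix the cyclic order of $M_s$) is the same as the paper's, but the bookkeeping mechanism you propose does not work, and several steps are mismatched with the hypotheses of Lemma~\ref{mergecycles}.

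The central problem is your pre-slicing of each $H[V_i,V_{i+1}]$ into many edge-disjoint sub-pairs of size roughly $d'm/d$ with $d'\ll d$. Lemma~\ref{mergecycles} requires $|V_i^1|=|V_{i+1}^2|\ge m/100$, which a sub-pair of size $d'm/d\ll m$ cannot satisfy. More fundamentally, the sets $V_i^1,V_{i+1}^2$ are essentially forced by condition~(i) of Lemma~\ref{mergecycles}: one needs $F_s[V_i^1,V_{i+1}^2]$ to be a perfect matching. Since $F_s-PS_s$ winds around $C$, the natural (and essentially unique) choice is $V_i^1=V_i\setminus V_i^{s,+}$ and $V_{i+1}^2=V_{i+1}\setminus V_{i+1}^{s,-}$, where $V_i^{s,+}$ and $V_{i+1}^{s,-}$ are the sets of vertices with outgoing resp.\ incoming $PS_s$-edge; these have size $\ge (1-\eps_0)m$. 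There is no Hall/regularity device to ``shrink $F_s[V_i,V_{i+1}]$ to a perfect matching inside $V_i^1\times V_{i+1}^2$'' for an arbitrary small sub-pair: $F_s$ is a fixed $1$-regular digraph and a vertex of $V_i^1$ is in general matched outside $V_{i+1}^2$. The counting is also off: your pool has $O(d/d')=O(\gamma/d')$ sub-pairs per cluster, a constant, whereas by (BE3) a fixed pair $(V_i,V_{i+1})$ can be touched by up to $\ell m/k=\Theta(m)$ of the $PS_s$. The pool runs out. The paper instead keeps $V_i^1$ full-size and, before processing $F_s$, removes the edges of $C_{s'}-F_{s'}$ for $s'<s$ from $H$; since each earlier cycle removes at most one edge per vertex from $H[V_i,V_{i+1}]$, and this pair is touched at most $\ell m/k\le\eps^2\gamma m$ times, Proposition~\ref{superslice5} shows the pair stays superregular. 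The set $J$ in Lemma~\ref{mergecycles} is then simply $\{V_iV_{i+1}:i\in I_s\}$ where $I_s$ is the set of clusters met by $PS_s$ — there is no need (and no justification from (BE3)) to assume $PS_s$ lives in $O(1)$ clusters.

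Your reordering step also misses the structural point. You propose to reorder at a cluster $V_i$ avoided by $M_s$, using ``vertices just before the initial vertices of the edges of $M_s$'' — but these are spread around the cycle, not concentrated in any single $V_i^1$, so Lemma~\ref{ordercycle} does not apply. The correct choice is the cluster $V_{i_s}$ from (BE2), i.e.\ the one such that $PS_s$ is a $V_{i_s}$-extension of $M_s$: the final vertices $x_1,\dots,x_r$ of the paths of $PS_s$ containing the edges of $M_s$ all lie in $V_{i_s}$, have outdegree $0$ in $PS_s$ (hence lie in $V_{i_s}^1$), and by the remark following the definition of a $V_i$-extension, a Hamilton cycle visiting $x_1,\dots,x_r$ in order is automatically consistent with $M_s$. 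That is the entire reason (BE2) requires each $PS_s$ to be a $V_{i_s}$-extension, and your proof does not use it.
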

\begin{proof}
Recall from (BE2) that for each $s \le q$ there is some $i_s\le k$ such that $PS_s$ is a $V_{i_s}$-extension of $M_s$.
In particular, $M_s\subseteq PS_s$. Let $I_s$ be the set consisting of all $i \le k$ such that $V_i \cap V(PS_s) \ne \emptyset$.
Since $\mathcal{BE}$ is a balanced extension with parameters $(\eps_0, \ell)$, (BE3) implies that%
    \COMMENT{Deryk: deleted $|\{s : i_s = i\}| \le \ell m/k$ in the display below since it is not used}
for every $i \le k$ we have
\begin{align} \label{sbound}
|\{s : i \in I_s\}| \le \ell m/k.
\end{align}
For each $s\le q$ in turn, we are going to show that there exist Hamilton cycles $C_{1}, \dots, C_{s}$ in $F_1 + \dots + F_s+H$ such that 
\begin{itemize}
	\item[\rm(a$_s$)] $PS_{s'}\subseteq C_{s'}$ and $C_{s'}$ is consistent with $M_{s'}$ for all $s' \le s$,%
   \COMMENT{Daniela: added $PS_{s'}\subseteq C_{s'}$}
	\item[\rm(b$_s$)] $E(C_{s'} - F_{s'}) \subseteq \bigcup_{i \in I_{s'}} E(H[V_i,V_{i+1}])  $ for all $s' \le s$,
	\item[\rm(c$_s$)] $C_1 - F_1, \dots, C_{s}-F_s$ are pairwise edge-disjoint.%
	\COMMENT{Allan: had $C_1, \dots, C_s$}
\end{itemize}
So suppose that for some $s$ with $1\le s\le q$ we have already constructed $C_1, \dots, C_{s-1}$. We now construct $C_s$ as follows. 
Let $H_s:=H - \sum_{s' < s}(C_{s'} -F_{s'})$.%
	\COMMENT{Allan: had $H_s:=H-(C_1+\dots+C_{s-1})$}
Define a new constant $d$ such that $\eps \ll d \ll 1$.

Our first task is to apply Lemma~\ref{mergecycles} to $F_s$ to merge all the cycles in $F_s$ into a Hamilton cycle using only edges of $H_s$.
For each $i \in I_s$, let $V_i^{-}$  be the set of vertices in $V_i$ with indegree one in $PS_s$
and let $V_i^{+}$  be the set of vertices in $V_i$ with outdegree one in $PS_s$.
Set $V_i^1 := V_i \setminus V_i^{+}$ and $V_{i+1}^2 := V_{i+1} \setminus V_{i+1}^{-}$.
Since $PS_s$ is locally balanced, $|V_i^{+}| = |V_{i+1}^{-}| \le \eps_0 m $ for all $i \in I_s$ (where the inequality holds by~(BE3)).
By~(b$_{s-1}$) and \eqref{sbound}, $H_s[V_i,V_{i+1}]$ is obtained from $H[V_i,V_{i+1}]$ by removing at most
$|\{s' <s : i \in I_{s'}\}| \le \ell m /k \le \eps^2\gamma m$ edges from each vertex (as $1/k \ll \eps,\gamma, 1/\ell$).
So by Proposition~\ref{superslice5}, $H_s [V_i, V_{i+1}]$ is still $(2\eps,2\gamma ,\gamma/2 , 3 \gamma)$-superregular for each $i \in I_s$.
Recall that $|V_i \setminus V_i^1| = |V_{i+1} \setminus V_{i+1}^2| \le \eps_0 m$.
Hence $H_s [V_i^1, V_{i+1}^2]$ is $(4\eps,2\gamma ,\gamma/4 , 6 \gamma)$-superregular by Proposition~\ref{superslice6}
and thus also $(4\eps,2\gamma ,\gamma/4 , 4\gamma/d)$-superregular.

Let $E_s := \{V_iV_{i+1} : i \in I_s \}$.
Our aim is to apply Lemma~\ref{mergecycles} with $F_s$, $E_s$, $H_s$, $4 \eps$, $2\gamma$, $2$, $1/8$
playing%
    \COMMENT{Need $1/8$ instead of $1/4$ since the density is $2\gamma$}
the roles of $F$, $J$, $H$, $\eps$, $d'$, $t$, $\zeta$.
Our assumption that $F_s- PS_s$ winds around $C$ implies that for each $i\in I_s$, $F_s[V_i^1,V_{i+1}^2]$ is a perfect matching.
So Lemma~\ref{mergecycles}(i) holds. Note that every final vertex of a nontrivial%
    \COMMENT{Daniela: added nontrivial}
path in $PS_s$ must lie in $\bigcup_{i \in I_s} V_i^1$,
implying Lemma~\ref{mergecycles}(ii).%
   \COMMENT{This implies Lemma~\ref{mergecycles}(ii) for cycles $D$ of $F$ which contain some nontrivial path in $PS_s$. The other cycles of $F$
must wind around $C$ and so satisfy Lemma~\ref{mergecycles}(ii) as well.}
Finally, recall that $|V_i^1|, |V_{i+1}^2| \ge (1- \eps_0) m$ for all $i \in I_s$.
Together with our assumption that $F_s - PS_s$ winds around $C$, this easily implies Lemma~\ref{mergecycles}(iii).
So we can apply Lemma~\ref{mergecycles} to obtain a Hamilton cycle $C'_s$ which is constructed from $F_s$ by replacing
$F_s[V^1_i,V^2_{i+1}]$ with a suitable perfect matching in $H_s[V^1_i,V^2_{i+1}]$ for each $i\in I_s$. In particular,
$PS_s\subseteq C'_s$.

Let $H'_s:=H_s - (C'_{s} -F_{s})$.%
	\COMMENT{Allan: had $H'_s:=H_s-C'_{s}$}
Recall that $M_s$ is an ordered directed matching, say $M_s = \{ e_1, \dots, e_r\}$, and that $PS_s$ is a $V_{i_s}$-extension of $M_s$.
For each $j \le r$, let $P_j$ be the path in $PS_s$ containing $e_j$
and let $x_j$ denote the final vertex of $P_j$. Hence $x_1, \dots, x_r$ are distinct and lie in $V_{i_s}^1$.
Together with (BE3) this implies that $r \le \eps_0 m$.
Note that $H'_s [ V_{i_s}^1,V_{i_s+1}^2]$ is obtained from $H_s[ V_{i_s}^1,V_{i_s+1}^2 ]$ by removing a perfect matching, namely $C'_{s}[V_{i_s}^1, V_{i_s+1}^2]$.%
	\COMMENT{Allan: added $C'_{s}[V_{i_s}^1, V_{i_s+1}^2]$ since Lemma~\ref{ordercycle} needs $C'_{s}[V_{i_s}^1, V_{i_s+1}^2]$ is a perfect matching.}
So by Proposition~\ref{superslice5}, $H_s' [V_{i_s}^1, V_{i_s+1}^2]$ is still $(8\eps,2\gamma ,\gamma/8 , 4\gamma/d)$-superregular.
Apply Lemma~\ref{ordercycle} with
$C'_s$, $i_s$, $H_s'[V^1_{i_s}, V^2_{i_s+1}]$, $\eps_0$, $8\eps$, $2\gamma$, $2$, $1/16$ playing the roles of
$C'$, $i$, $H$, $\gamma$, $\eps$, $d'$, $t$, $\zeta$ to obtain a Hamilton cycle $C_s$ which visits $x_1,\dots, x_r$ in this order
and is constructed from $C'_s$ by replacing the perfect matching $C'_s[V_{i_s}^1,V_{i_s+1}^2]$%
    \COMMENT{Daniela: previously had $C'[V_{i_s},V_{i_s+1}]$}
with a suitable perfect matching in $H'_s[V_{i_s}^1,V_{i_s+1}^2]$.%
    \COMMENT{Daniela: previously had $H'_s[V_{i_s},V_{i_s+1}]$}
In particular, $PS_s\subseteq C_s$.

Note that $E(C_s - F_s) \subseteq  \bigcup_{i \in I_{s}} E(H_s[V_i,V_{i+1}])$, so (b$_s$) and (c$_s$) hold.
Since $PS_s \subseteq C_s$ and $x_j$ is the final vertex of $P_j$ and since $e_j\in E(P_j)$, it follows that $C_s$ visits
the edges $e_1, \dots,e_r$ in order. So $C_s$ is consistent with $M_s$, implying (a$_s$). 
\end{proof}

\removelastskip\penalty55\medskip\noindent{\bf Proof of Lemma~\ref{merging}.}%
	\COMMENT{Allan: the proof changed a bit..}
Let $\mathcal{Q}=\{V_1,\dots,V_k\}$. By relabeling the $V_i$ if necessary, we may assume that $C=V_1\dots V_k$.
Define new constants $\gamma$ and $\eps$ such that $\eps_0, \eps',1/k \ll \gamma \ll \eps,\rho , 1 /{\ell} $ and $\mu \ll \eps\ll 1$.%
    \COMMENT{Daniela: hierarchy and some of the constants below changed because of the modifications to Lemma~\ref{slicelemmazz}} 
For each $i \le k$ we apply Lemma~\ref{slicelemmazz} to (the underlying undirected graph of) $G[V_i,V_{i+1}]$ in order to obtain
a spanning subdigraph $H$ of $G$ which satisfies the following properties: 
\begin{itemize}
\item[{\rm (i$'$)}] For each $i \le k$, $H[V_i, V_{i+1}]$ is $(\eps, 2\gamma , \gamma , 3 \gamma )$-superregular.
\item[{\rm (ii$'$)}] Let $G':=G-H$. Then $(G', \mathcal{Q}, C)$ is a $(k,m, \mu,4\gamma)$-cyclic system.
\end{itemize}
Indeed, (ii$'$) follows easily from Lemma~\ref{slicelemmazz}(ii) and the definition of a $(k,m,\mu,4\gamma)$-cyclic system.
Recall that $\mathcal{BE} = \{PS_1, \dots, PS_q\}$ with $M_s \subseteq PS_s$ for all $s \le q$. Our next aim is to
apply Lemma~\ref{extend1-factor} with $G'$, $\mathcal{BE}$, $4\gamma$ playing the roles of $G$, $\mathcal{PS}$, $\eps$
to obtain $1$-factors $F_s$ extending the $PS_s$.%
    \COMMENT{Deryk: last bit is new}
Note that (BE1) and (BE3) imply that conditions~(i) and~(ii) of Lemma~\ref{extend1-factor} hold.
So we can apply Lemma~\ref{extend1-factor} to obtain $q$ (directed) $1$-factors $F_1, \dots, F_q$ in $G'+\mathcal{BE}$%
   \COMMENT{Daniela: replaced on $V$ by in $G'+\mathcal{BE}$}
such that $PS_s \subseteq F_s$ for all $s \le q$ and $F_1 - PS_1, \dots, F_q-PS_q$ are edge-disjoint subgraphs of $G'$.
Recall from (ii$'$) and (Sys2) that $G'$ (and thus also $F_s - PS_s$) winds around~$C$.
So we can apply Lemma~\ref{mergecycles2} to obtain $q$ Hamilton cycles $C_1, \dots, C_q$ in
$F_1+ \dots+F_q+H$ such that $C_s$ contains $PS_s$ and is consistent with $M_s$ for all $s \le q$, and such that $C_1 - F_1, \dots, C_q-F_q$ are edge-disjoint subgraphs of $H$.
Since $H$ and $G'$%
    \COMMENT{Daniela: had $H$ and $H'$ instead of $H$ and $G'$}
are edge-disjoint, $C_1 - PS_1, \dots, C_q-PS_q$ are edge-disjoint subgraphs of $G$.
\endproof

We can now put everything together to prove the approximate decomposition lemma in the two cliques case.
First we apply Lemma~\ref{sysdecom} to obtain cyclic systems and sparse subgraphs $H_{A,j}$ and $H_{B,j}$.
Then we apply Lemma~\ref{balanceextension} to balance out the exceptional systems into balanced extensions.
Next, we apply Lemma~\ref{merging} to $A$ and $B$ separately to extend the balanced extensions into Hamilton cycles.%
    \COMMENT{Deryk: added blabla}

\removelastskip\penalty55\medskip\noindent{\bf Proof of Lemma~\ref{almostthm}. }
Apply Lemma~\ref{sysdecom} to $G, \mathcal{P}$ and $\mathcal{J}$ to obtain  (for each $1 \le j \le (K-1)/2$)  pairs of 
tuples $(G_{A,j}, \mathcal{Q}_{A}, C_{A,j}, H_{A,j}, \mathcal{J}^*_{A,j})$ and $(G_{B,j}, \mathcal{Q}_{B}, C_{B,j},$ $ H_{B,j}, \mathcal{J}^*_{B,j})$
which satisfy (a$_1$)--(a$_7$).
Fix $j \le (K-1)/2$. Write $\mathcal{J}^*_{A,j} = \{J^*_{A, {\rm dir},1}, \dots, $ $J^*_{A, {\rm dir},q}\}$, where%
    \COMMENT{Deryk: defined $q$ and used it below}
\begin{equation}\label{eq:q}
q:=|\mathcal{J}^*_{A,j}|\le (1-4\mu-3\rho)m
\end{equation}
by~(a$_3$).
We now apply Lemma~\ref{balanceextension} with $\mathcal{J}^*_{A,j},\mathcal{Q}_A,C_{A,j}, H_{A,j}, K, 5K \sqrt{\eps_{0}}$ playing
the roles of $\mathcal{M}, \mathcal{Q}, C, H, k,\eps$ to obtain an orientation $H_{A,j,{\rm dir}}$ of $H_{A,j}$ and
a balanced extension $\mathcal{BE}_j$ of $\mathcal{J}^*_{A,j}$ with respect to $(\mathcal{Q}_A, C_{A,j})$ and parameters $(10K \sqrt{\eps_{0}},3)$.
(Note that (a$_3$) and (a$_5$) imply conditions (i) and (ii) of Lemma~\ref{balanceextension}.)
Write $\mathcal{BE}_j :=  \{PS_{1}, \dots, PS_q \}$ such that $J^*_{A, {\rm dir},s} \subseteq PS_s$ for all $s \le q$.
So (BE1) implies that%
   \COMMENT{Daniela: reworded}
$PS_{1} - J^*_{A, {\rm dir},1}, \dots, PS_q - J^*_{A, {\rm dir}, q}$ are edge-disjoint subgraphs of $H_{A,j,{\rm dir}}$.%
	\COMMENT{Allan:added more explanations}
Since $(G_{A,j, {\rm dir}}, \mathcal{Q}_{A}, C_{A,j})$ is a $(K,m, 4 \mu, 5/K)$-cyclic system by (a$_6$), (\ref{eq:q}) implies that%
    \COMMENT{Deryk: added ref to (\ref{eq:q})}
we can apply Lemma~\ref{merging} as follows:
\smallskip
\begin{center}
  \begin{tabular}{ r | c | c | c | c | c | c | c | c | c | c |c}

& $G_{A,j, {\rm dir}}$ & $\mathcal{Q}_{A}$ & $C_{A,j}$ &  $K$ & $\mathcal{J}^*_{A,j}$ & $q$ & $4 \mu$ & $3\rho$ & $10K \sqrt{\eps_{0}}$ & $5/K$ & $3$ \\ \hline
plays role of & $G$ & $\mathcal{Q}$ & $C$ & $k$ &  $\mathcal{M}$ & $q$ & $\mu$  & $\rho$ &  $\eps_0$ & $\eps'$ &  $\ell$ \\
  \end{tabular}
\end{center}
\smallskip
\noindent
In this way we obtain $q$ directed Hamilton cycles $C'_{A,j,1}, \dots, C'_{A,j,q}$
in $G_{A,j, {\rm dir}} + \mathcal{BE}_j$ such that $C'_{A,j,s}$ contains $PS_{s}$ and is consistent with $J^*_{A, {\rm dir},s}$ for all $s \le q$.
Moreover, $C'_{1} - J^*_{A, {\rm dir},1}, \dots, C'_q - J^*_{A, {\rm dir}, q}$ are edge-disjoint subgraphs of $G_{A,j,{\rm dir}} + H_{A,j,{\rm dir}}$.%
	\COMMENT{Allan: added details here, the rest of the proof remains the same.}
Repeat this process for all $j \le (K-1)/2$.

Write $\mathcal{J} = \{ J_1, \dots, J_{|\mathcal{J}|}\}$.
Recall from (a$_2$) that the $\mathcal{J}^*_{A,1}, \dots, \mathcal{J}^*_{A,(K-1)/2}$ partition $\{J_{A,{\rm dir}}^* :J \in \mathcal{J}\}$.
Therefore, we have obtained $|\mathcal{J}|$ directed Hamilton cycles%
    \COMMENT{Daniela: had "edge-disjoint directed Hamilton cycles" before}
$C'_{A,1}, \dots, C'_{A,|\mathcal{J}|}$ on vertex set $A$.
Moreover, by relabelling the $J_s$ if necessary, we may assume that $C'_{A,s}$ is consistent with $(J_s)^*_{A, {\rm dir}}$ for all $s \le |\mathcal{J}|$.
Furthermore, (a$_4$) implies that the undirected versions of $C'_{A,1} - (J_1)^*_{A, {\rm dir}}, \dots, C'_{A,|\mathcal{J}|} - (J_{|\mathcal{J}|})^*_{A, {\rm dir}}$
are edge-disjoint spanning subgraphs of $G[A]$.

Similarly we obtain directed Hamilton cycles%
    \COMMENT{Daniela: had "edge-disjoint directed Hamilton cycles" before}
$C'_{B,1}, \dots, C'_{B,|\mathcal{J}|}$ on vertex set $B$
so that $(J_s)^*_{B, {\rm dir}} \subseteq C'_{B,s}$ for all $s \le |\mathcal{J}|$.
Let $H_s$ be the undirected graph obtained from $C'_{A,s} + C'_{B,s}-J_s^* +J_s$ by ignoring all the orientations of the edges.
Recall that $J_1, \dots, J_{|\mathcal{J}|}$%
\COMMENT{Andy: replaced subscript $s$ with subscript $|\mathcal{J}|$.}
are edge-disjoint exceptional systems and that they are edge-disjoint from the $C'_{A,s} + C'_{B,s}-J_s^*$
by (EC3).%
    \COMMENT{Deryk: added detail}
So $H_1, \dots, H_{|\mathcal{J}|}$ are edge-disjoint spanning subgraphs of $G$.
Finally, Proposition~\ref{prop:ES2} implies that $H_1, \dots, H_{|\mathcal{J}|}$ are indeed as desired in Lemma~\ref{almostthm}.
\endproof


\section{The Bipartite Case}
Roughly speaking, the idea in this case is to reduce the problem of finding the desired edge-disjoint Hamilton cycles in~$G$
to that of finding suitable Hamilton cycles in an almost complete balanced bipartite graph.
This is achieved by considering the graphs $J^*_{\rm dir}$, whose definition we recall in the next subsection.
The main steps are similar to those in the proof of Lemma~\ref{almostthm}
(in fact, we re-use several of the lemmas, in particular Lemma~\ref{merging}).

We will construct the graphs $J^*_{\rm dir}$, which are based on balanced exceptional systems~$J$, in Section~\ref{sec:J*2bip}.
In Section~\ref{systembip} we describe a decomposition of~$G$ into blown-up Hamilton cycles.
We will construct balanced extensions in Section~\ref{sec:BEbip}
(this is more difficult than in the two cliques case).
Finally, we obtain the desired Hamilton cycles using Lemma~\ref{merging} (in the same way as in the two cliques case).

\subsection{Defining the Graphs $J^*_{\rm dir}$ for the Bipartite Case} \label{sec:J*2bip}
In this section we recall a number of definitions from Section~\ref{BESstar}.
Let $\mathcal{P}$ be a $(K,m,\eps)$-partition of a vertex set $V$ and
let $J$ be a balanced exceptional system with respect to~$\mathcal{P}$.
	   \COMMENT{Allan: I have rephrased the definition/construction of $J^*$. $J^*$ is still the same as before.
The construction is similar to the two cliques case.  So check whether things
are ok now. Note we no longer have conditions (BES$'$1), (BES$'$2), (BES$^*$1),(BES$^*$2) but they don't appear anywhere here (and only once in paper 2).}
Since each maximal path in $J$ has endpoints in $A \cup B$ and internal vertices in $V_0$ by (BES1), a balanced exceptional system $J$ naturally induces a matching $J^*_{AB}$ on $A \cup B$.
More precisely, if $P_1, \dots ,P_{\ell'}$ are the non-trivial paths in~$J$ and $x_i, y_i$ are the endpoints of $P_i$, then we define $J^*_{AB} := \{x_iy_i : i  \le \ell'\}$. 
Thus $J^*_{AB}$ is a matching by~(BES1) and $e(J^*_{AB}) \le e(J)$.%
    \COMMENT{Daniela: deleted  by~(BES4)}
Moreover, $J^*_{AB}$ and $E(J)$ cover exactly the same vertices in $A$. 
Similarly, they cover exactly the same vertices in $B$. 
So (BES3) implies that $e(J^*_{AB}[A])=e(J^*_{AB}[B])$.
We can write $E(J^*_{AB}[A])=\{x_1x_2, \dots, x_{2s-1}x_{2s}\}$,
$E(J^*_{AB}[B])=\{y_1y_2, \dots, y_{2s-1}y_{2s}\}$ and $E(J^*_{AB}[A,B])=\{x_{2s+1}y_{2s+1}, \dots, x_{s'}y_{s'}\}$, where $x_i \in A$ and $y_i \in B$.
Define $J^*:= \{ x_iy_i : 1 \le i \le s' \}$.
Note that 
\begin{align}
	e(J^*) =  e(J^*_{AB}) \le e(J). \label{BESeq}
\end{align}
As before, all edges of $J^*$ are called \emph{fictive edges}.%
   \COMMENT{Daniela: deleted "Similarly as in the two cliques case, we regard $J^*$ as being edge-disjoint from the original graph~$G$."}
Recall that an (undirected) cycle $D$ is \emph{consistent with $J^*$} if $D$ contains $J^*$ and (there is an orientation of $D$ which)
visits the vertices $x_1,y_1,x_2,\dots,y_{s'-1},x_{s'},y_{s'}$ in this order.

We will  need a directed version of Proposition~\ref{CES-H}(ii). This directed version immediately follows from  Proposition~\ref{CES-H}(ii)
and is similar to Proposition~\ref{prop:CEPSbiparite}.
For this, define $J^*_{\rm dir}$ to be the ordered directed matching $\{f_1, \dots, f_{s'}\}$ such that $f_i$ is a
directed edge from $x_i$ to $y_i$ for all $i \le s'$. So $J^*_{\rm dir}$ consists only of $AB$-edges.%
    \COMMENT{Deryk: added new sentence}
Similarly to the undirected case, we say that a directed cycle
$D_{\rm dir}$ is \emph{consistent with $J^*_{\rm dir}$}
if $D_{\rm dir}$ contains $J^*_{\rm dir}$ and visits the edges $f_1,\dots,f_{s'}$ in this order.

\begin{prop}\label{CES-H2}
Let $\cP$ be a $(K,m,\eps)$-partition of a vertex set $V$.
Let $G$ be a graph on $V$ and let $J$ be a balanced exceptional system with respect to~$\cP$ such that $J\subseteq G$.
Suppose that $D_{\rm dir}$ is a directed Hamilton cycle on $A \cup B$ such that $D_{\rm dir}$ is consistent with $J^*_{\rm dir}$.
Furthermore, suppose that $D - J^*  \subseteq G$, where $D$ is the cycle obtained from $D_{\rm dir}$ after ignoring the directions of all edges. 
Then $D-J^*+J$ is a Hamilton cycle of $G$.
\end{prop}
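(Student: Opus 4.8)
The plan is to deduce the proposition directly from Proposition~\ref{CES-H}(ii), applied with our graph $G$; all that is needed is to verify its two hypotheses for the undirected cycle $D$ obtained from $D_{\rm dir}$.

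First I would check that $D$ is a Hamilton cycle of the multigraph $G[A\cup B]+J^*$. Since $D_{\rm dir}$ is a directed Hamilton cycle on $A\cup B$, the undirected graph $D$ is a Hamilton cycle on the vertex set $A\cup B$; in particular every edge of $D$ has both endpoints in $A\cup B$. As $D-J^*\subseteq G$ by hypothesis, this gives $D-J^*\subseteq G[A\cup B]$, while the remaining edges of $D$ lie in $J^*$ (because $D_{\rm dir}$ contains $J^*_{\rm dir}$, whose underlying undirected matching is $J^*$). Hence $D\subseteq G[A\cup B]+J^*$, and since $D$ spans $A\cup B$ it is a Hamilton cycle of $G[A\cup B]+J^*$.

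Second I would check that $D$ is consistent with $J^*$ in the sense defined just before Proposition~\ref{CES-H}, i.e.\ that $D$ contains $J^*$ and that some orientation of $D$ traverses $x_1,y_1,x_2,\dots,y_{s'-1},x_{s'},y_{s'}$ in this order. That $D$ contains $J^*$ was noted above. For the ordering: by definition $J^*_{\rm dir}=\{f_1,\dots,f_{s'}\}$ with $f_i$ the directed edge from $x_i$ to $y_i$, and $D_{\rm dir}$ being consistent with $J^*_{\rm dir}$ means $D_{\rm dir}$ visits $f_1,\dots,f_{s'}$ in this order. Since $f_i$ leaves $x_i$ and enters $y_i$, the orientation of $D$ inherited from $D_{\rm dir}$ visits $x_1,y_1,x_2,\dots,x_{s'},y_{s'}$ in this cyclic order, which is exactly what is required. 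Now Proposition~\ref{CES-H}(ii) yields that $D-J^*+J$ is a Hamilton cycle of $G$, completing the proof.

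This argument is essentially bookkeeping and there is no genuine obstacle; the only point needing a little care is the passage between the directed notion of consistency, phrased via the ordering of the edges $f_1,\dots,f_{s'}$, and the undirected notion, phrased via the ordering of the vertices $x_1,y_1,\dots,x_{s'},y_{s'}$, which is immediate once one recalls that $f_i$ points from $x_i$ to $y_i$.
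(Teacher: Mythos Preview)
Your proposal is correct and matches the paper's own approach: the paper simply states that the proposition ``immediately follows from Proposition~\ref{CES-H}(ii)'' without spelling out the details, and your argument does precisely this verification. The one point you were careful about---translating the directed notion of consistency (ordering of the $f_i$) into the undirected one (ordering of the $x_i,y_i$)---is exactly the content that makes the deduction work, and your treatment of it is sound.
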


\subsection{Finding Systems} \label{systembip}

The following lemma gives a decomposition of an almost complete bipartite graph~$G$ into 
blown-up Hamilton cycles (together with an associated decomposition of exceptional systems).
Its proof is almost the same as that of Lemma~\ref{sysdecom}, so we omit it here.
The only difference is that instead of Walecki's theorem we use a result of Auerbach and Laskar~\cite{auerbach} to decompose the complete 
bipartite graph $K_{K,K}$ into Hamilton cycles, where $K$ is even.%
	\COMMENT{The proof is in an appendix at the end of the file.}
\begin{lemma}\label{sysdecombip}
Suppose that $0<1/n \ll \eps_0  \ll 1/K \ll \rho  \ll 1$ and $0 \leq \mu \ll 1$,
where $n,K \in \mathbb N$ and $K$ is even.
Suppose that $G$ is a graph on $n$ vertices and $\mathcal{P}=\{A_0,A_1,\dots,A_K,B_0,B_1,\dots,B_K\}$ is a $(K, m, \eps _0)$-partition of $V(G)$.
Furthermore, suppose that the following conditions hold:%
\COMMENT{Note that we do not need to assume Lemma~\ref{almostthmbip}(d)}
\begin{itemize}
	\item[{\rm (a)}] $d(v,B_i) = (1 - 4 \mu \pm 4 /K) m $ and $d(w,A_i) = (1 - 4 \mu \pm 4 /K) m $ for all
	$v \in A$, $w \in B$ and $1\leq i \leq K$.
	\item[{\rm (b)}] There is a set $\mathcal J$ which consists of at most $(1/4-\mu - \rho)n$ edge-disjoint  exceptional systems with parameter $\eps_0$ in~$G$.
	\item[{\rm (c)}] $\mathcal J$ has a partition into $K^4$ sets $\mathcal J_{i_1,i_2,i_3,i_4}$ (one for all $1\le \I \le K$) such that each $\mathcal J_{\I}$ consists of precisely $|\mathcal J|/{K^4}$ $\i$-BES with respect to~$\cP$.
\end{itemize}
Then for each $1 \le j \le K/2$, there is a tuple $(G_{j}, \mathcal{Q}, C_{j}, H_{j}, \mathcal{J}_{j})$
such that the following assertions hold, where $\mathcal{Q}:=\{A_1, \dots, A_K,B_1, \dots, B_K \}$:
\begin{itemize}
\item[{\rm (a$_1$)}] Each of $C_{1}, \dots, C_{K/2}$ is a directed Hamilton cycle on $\mathcal{Q}$ 
such that the undirected versions of these cycles form a Hamilton decomposition of the complete bipartite graph
whose vertex classes are $\{A_1, \dots, A_K \}$ and $\{B_1, \dots,$ $ B_K \}$.

\item[{\rm (a$_2$)}] $\mathcal{J}_{1}, \dots, \mathcal{J}_{K/2}$ is a partition of $\mathcal{J}$.

\item[{\rm (a$_3$)}] Each $\mathcal{J}_{j}$ has a partition into $K^4$ sets $\mathcal J_{j,\I}$ (one for all $1\le \I$ $\le K$)
such that $\mathcal J_{j,\I}$ consists of $\i$-BES with respect to~$\cP$ and $|\mathcal J_{j,\I}| \le (1- 4\mu - 3\rho) m/K^4$.

\item[{\rm (a$_4$)}] $G_{1},\dots, G_{K/2}, H_{1},\dots,  H_{K/2}$ are edge-disjoint subgraphs of $G[A,B]$.

\item[{\rm (a$_5$)}] $H_{j}[A_{i},B_{i'}]$ is a $(11K + 248/K) \eps_0 m$-regular graph for all $j \le K/2$ and all $i,i' \le K$.%
\COMMENT{We have omitted the floor/ceiling on $(11K + 248/K) \eps_0 m$.}

\item[{\rm (a$_6$)}] For each $j \le K/2$, there exists an orientation $G_{j,{\rm dir}}$ of $G_{j}$ such that $(G_{j,{\rm dir}}, \mathcal{Q}, C_{j})$ is a $(2K,m, 4\mu, 5/K)$-cyclic system.

\end{itemize}
\end{lemma}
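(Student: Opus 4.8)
The plan is to mirror the proof of Lemma~\ref{sysdecom} almost verbatim, making only the few modifications forced by the bipartite setting. I would begin by noting that the complete bipartite graph $K_{K,K}$ with $K$ even has a decomposition into $K/2$ edge-disjoint Hamilton cycles (Auerbach and Laskar~\cite{auerbach}). Orienting each such Hamilton cycle consistently yields the directed Hamilton cycles $C_1,\dots,C_{K/2}$ on $\mathcal{Q}=\{A_1,\dots,A_K,B_1,\dots,B_K\}$ required in (a$_1$). This replaces the use of Walecki's theorem in the proof of Lemma~\ref{sysdecom}.

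Next I would set up the partition of $\mathcal{J}$. For each $4$-tuple $(i_1,i_2,i_3,i_4)$, split $\mathcal{J}_{i_1,i_2,i_3,i_4}$ into $K/2$ subsets of as equal size as possible, one for each $j\le K/2$; collecting these across all $4$-tuples gives the sets $\mathcal{J}_{j}$ (for (a$_2$)) and their refinements $\mathcal{J}_{j,i_1,i_2,i_3,i_4}$ (for (a$_3$)). The bound $|\mathcal{J}_{j,i_1,i_2,i_3,i_4}|\le (1-4\mu-3\rho)m/K^4$ then follows from hypothesis~(c) together with $|\mathcal{J}|\le (1/4-\mu-\rho)n$ from~(b) and $2Km\ge (1-\eps_0)n$, exactly as the analogous estimate in Lemma~\ref{sysdecom}, using $1/n\ll\eps_0\ll 1/K\ll\rho$ to absorb the additive error terms coming from the rounding. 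Since the BES in $\mathcal{J}$ are already edge-disjoint from $G[A]+G[B]$ (indeed a BES contains no $AB$-edge but also lies in $G-G[A]-G[B]$), we do not need the $J^*_{\rm dir}$ here; the sets $\mathcal{J}_j$ themselves are what the lemma asks for.

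For the graphs $H_j$ and $G_j$: for each pair $i,i'\le K$, apply Lemma~\ref{regularsub} to $G[A_i,B_{i'}]$ (whose minimum and maximum degrees are $(1-4\mu\pm 4/K)m$ by hypothesis~(a)) with $4/K$, $\rho$, $4\mu$ in the roles of $\eps$, $\rho$, $\mu$, to obtain a spanning $(1-4\mu-\rho)m$-regular subgraph. Since this is a regular bipartite graph and $\eps_0\ll 1/K,\rho\ll 1$, it contains $K/2$ edge-disjoint spanning regular subgraphs of degree $(11K+248/K)\eps_0 m$ (the precise value of the degree is unimportant — any fixed $\Theta(\eps_0 m)$ value divisible appropriately works, and one chooses $(11K+248/K)\eps_0 m$ so as to match what is needed when Lemma~\ref{sysdecombip} is later invoked together with the balanced-extension argument, which wants $H_j[A_i,B_{i'}]$ to be about $2\eps$-regular with $\eps$ of order $K\eps_0$); set $H_j:=\sum_{i,i'\le K} H_{i,i',j}$, giving (a$_5$). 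Then set $G[A,B]':=G[A,B]-\sum_j H_j$; hypothesis~(a) and $\eps_0\ll 1/K$ give $d_{G[A,B]'}(v,B_i)=(1-4\mu\pm 5/K)m$ for $v\in A$ and symmetrically for $v\in B$. Define $G_j$ to be the union of $G[A,B]'[X,Y]$ over edges $XY$ of $C_j$, and orient each such bipartite graph from $X$ to $Y$ to get $G_{j,{\rm dir}}$; the degree estimate just stated shows $(G_{j,{\rm dir}},\mathcal{Q},C_j)$ is a $(2K,m,4\mu,5/K)$-cyclic system, giving (a$_4$) and (a$_6$).

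Since every step is a direct transcription of the corresponding step in the proof of Lemma~\ref{sysdecom} — the only genuine change being Auerbach--Laskar in place of Walecki, and the bookkeeping that $\mathcal{Q}$ now has $2K$ clusters and the BES are indexed by $4$-tuples rather than pairs — I do not expect any real obstacle. The one point requiring a little care is checking that the rounding errors in the partition of $\mathcal{J}$ and in splitting the regular subgraphs into $K/2$ pieces are all controlled by the hierarchy $1/n\ll\eps_0\ll 1/K\ll\rho$, which is routine. This is why the excerpt says the proof is "almost the same" and omits it.
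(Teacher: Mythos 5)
Your proposal reconstructs exactly the proof the paper intends: the paper omits it with the remark that it is ``almost the same'' as Lemma~\ref{sysdecom}, with Auerbach--Laskar replacing Walecki, and your steps (Hamilton decomposition of $K_{K,K}$, equitable splitting of each $\mathcal{J}_{\I}$ into $K/2$ parts, $(1-4\mu-\rho)m$-regular subgraphs of each $G[A_i,B_{i'}]$ via Lemma~\ref{regularsub}, peeling off $K/2$ edge-disjoint $(11K+248/K)\eps_0 m$-regular pieces to form the $H_j$, and defining the $G_j$ as blow-ups of the $C_j$) are exactly the correct transcription.

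One small inaccuracy in an aside: you assert that a balanced exceptional system ``lies in $G-G[A]-G[B]$.'' That is false --- by (BES2) a BES may well contain $A_{i_1}A_{i_2}$-edges and $B_{i_3}B_{i_4}$-edges, so it is not edge-disjoint from $G[A]+G[B]$ (this is precisely what distinguishes balanced exceptional systems in the bipartite case from exceptional systems in the two-cliques case, where (EC3) forbids such edges). What is true, and what the argument actually needs, is that a BES contains no $AB$-edges, so the BES are disjoint from $G[A,B]$ and hence from the $G_j$ and $H_j$. The reason you do not need the $J^*_{\rm dir}$ here is simply that the lemma's conclusion (a$_2$), (a$_3$) is stated in terms of a partition of $\mathcal{J}$ itself rather than of $\{J^*_{\rm dir}:J\in\mathcal{J}\}$; the conversion to $J^*_{\rm dir}$ is deferred to the application of Lemma~\ref{balanceextensionbip}. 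This slip does not affect the validity of any step, but the stated justification should be corrected.
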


\subsection{Constructing Balanced Extensions} \label{sec:BEbip}
Let $\cP=\{A_0,A_1,\dots,A_K,B_0,$ $B_1,\dots,B_K\}$ be a $(K,m,\eps)$-partition of a vertex set $V$, let $\mathcal{Q} := \{A_1$,$\dots$,$A_K$,$B_1$,$\dots$, $B_K \}$
and let $C= A_1B_1A_2B_2 \dots A_KB_K$ be a directed cycle.
Given a set $\mathcal{J}$ of balanced exceptional systems with respect to $\mathcal{P}$, we write $\mathcal{J}^*_{\rm dir}:=\{J^*_{\rm dir}:J\in\mathcal{J}\}$.
So $\mathcal{J}^*_{\rm dir}$ is a set of ordered directed matchings and thus it makes sense to construct a balanced extension of $\mathcal{J}^*_{\rm dir}$
with respect to $(\mathcal{Q},C)$. (Recall that balanced extensions were defined in Section~\ref{system}.)

Now consider any of the tuples $(G_{j}, \mathcal{Q}, C_{j}, H_{j}, \mathcal{J}_{j})$ guaranteed by Lemma~\ref{sysdecombip}.
We will apply the following lemma to find a balanced extension of $(\mathcal{J}_j)^*_{{\rm dir}}$ with respect to $(\mathcal{Q},C_{j})$, using edges of $H_{j}$
(after a suitable orientation of these edges). So the lemma is a bipartite analogue of Lemma~\ref{balanceextension}.
However, the proof is more involved than in the two cliques case.

\begin{lemma}\label{balanceextensionbip}
Suppose that $0<1/n \ll \eps  \ll 1/K \ll 1$,
where $n,K \in \mathbb N$.
Let $\cP=\{A_0,A_1,\dots,A_K,B_0,B_1,\dots,B_K\}$ be a $(K,m,\eps)$-partition of a set $V$ of $n$ vertices.
Let $\mathcal{Q} := \{A_1,\dots,A_K,B_1,\dots,B_K \}$ and let $C:= A_1B_1A_2B_2 \dots A_KB_K$ be a directed cycle. 
Suppose that there exist a set $\mathcal{J}$ of edge-disjoint balanced exceptional systems with respect to $\mathcal{P}$ and parameter $\eps$ and a graph $H$ such that the following conditions hold:
\begin{itemize}
\item[{\rm (i)}]  $\mathcal{J}$ can be partitioned into $K^4$ sets $\mathcal J_{\I}$ (one for all $1\le \I\le K$) such that $\mathcal J_{\I}$
consists of $\i$-BES with respect to~$\cP$ and $|\mathcal J_{\I}| \le m/K^4$.
\item[{\rm (ii)}] For each $v \in A \cup B$ the number of all those $J \in \mathcal{J}$ for which $v$ is incident to an edge in $J$ is at most $2 \eps n $.
\item[{\rm (iii)}] $H[A_{i},B_{i'}]$ is a $(11K+ 248/K)\eps m$-regular graph for all $i,i' \le K$.
\end{itemize}
Then there exist an orientation $H_{{\rm dir}}$ of $H$ and a balanced extension $\mathcal{BE}$ of $\mathcal{J}^*_{\rm dir}$ with respect to $(\mathcal{Q},C)$
and parameters $( 12 \eps K , 12)$ such that each path sequence in $\mathcal{BE}$ is obtained from some $J^*_{\rm dir}\in \mathcal{J}^*_{\rm dir}$
by adding edges of $H_{{\rm dir}}$.
\end{lemma}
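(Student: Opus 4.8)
The strategy mirrors the two cliques case (Lemma~\ref{balanceextension}), but the main new difficulty is that a balanced exceptional system $J$ need not be `locally balanced' with respect to the cycle $C = A_1B_1A_2\dots A_KB_K$: the directed matching $J^*_{\rm dir}$ has all its edges oriented from $A$ to $B$, so it winds around $C$ in a biased way and cannot be completed to a Hamilton cycle using only edges of a blow-up of $C$. Hence for each $J^*_{\rm dir}\in\mathcal{J}^*_{\rm dir}$ we must add extra edges (from a suitably oriented $H$) to turn $J^*_{\rm dir}$ into a locally balanced path sequence $PS$ which is a $V_i$-extension of $J^*_{\rm dir}$ for some cluster $V_i\in\mathcal{Q}$, while keeping all the $PS-J^*_{\rm dir}$ edge-disjoint and keeping the intersection of each $PS$ with each cluster small.

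\textbf{Key steps.} First I would fix $J\in\mathcal{J}_{i_1i_2i_3i_4}$ and analyse the `local imbalance' of $J^*_{\rm dir}$. Write $J^*_{\rm dir}=\{f_1,\dots,f_{s'}\}$ with $f_t$ directed from $x_t\in A$ to $y_t\in B$, where (recalling the construction in Section~\ref{sec:J*2bip}) the first $2s$ edges come in pairs $x_{2t-1}x_{2t}\in E(J^*_{AB}[A])$, $y_{2t-1}y_{2t}\in E(J^*_{AB}[B])$ and the remaining edges come from $E(J^*_{AB}[A,B])$; all the $x_t$ lie in $A_{i_1}\cup A_{i_2}$ and all the $y_t$ in $B_{i_3}\cup B_{i_4}$, and $s'\le e(J)\le\eps n$ by (BES4) and~\eqref{BESeq}. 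Starting from $J^*_{\rm dir}$, the plan is to prepend and append short directed paths (edges winding forward around $C$) to each $f_t$ so that: every path ends in a single designated cluster, say $A_1$ (take $i_s:=1$ for the cluster index of the $V_{i_s}$-extension — actually, to respect~(BE2) we should spread the choice of end-cluster, but since $|\mathcal J_{i_1i_2i_3i_4}|\le m/K^4\le m/(2K)$ we have plenty of room); and the resulting path sequence $PS$ is locally balanced with respect to~$C$. Concretely, for each $f_t=x_ty_t$, follow $C$ forward from the cluster containing $x_t$ back round to $A_1$, adding one edge of (oriented) $H$ into each successive cluster of $C$; this both makes the extension end in $A_1$ and, because the added segment passes once through each intermediate cluster in the cyclic order, contributes equally to `edges leaving $U$' and `edges entering $W$' for every edge $UW$ of $C$ not incident to the start/end of the segment — and the start/end contributions cancel in aggregate once one routes the two halves of $PS$ (the part before reaching $A_1$ and, from the other endpoint of the maximal path through $f_t$, the part after) symmetrically. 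This is exactly the routing used to prove Lemma~\ref{lma:bipartite:CEPS}, and the same bookkeeping shows local balance.

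\textbf{Edge-disjointness and the regularity budget.} The second main step is to perform these extensions greedily over all $J\in\mathcal{J}$ while keeping $PS_J - J^*_{\rm dir}$ pairwise edge-disjoint and verifying~(BE3). For each edge $UW$ of $C$, the segments we route through the pair $(U,W)$ use edges of $H[U,W]$; I would orient each $H[A_i,B_{i'}]$ from $A_i$ to $B_{i'}$ and each $H[B_{i'},A_i]$ (note $C$ also uses $B_iA_{i+1}$ edges) from $B_{i'}$ to $A_i$, and then decompose each regular bipartite graph $H[U,W]$ (which is $(11K+248/K)\eps m$-regular by~(iii)) into perfect matchings, splitting these into $\Theta(1/\eps)$ sub-matchings each of size $\ge\eps m$; since each $J^*_{\rm dir}$ needs at most $s'\le\eps n\le 2K\eps m$ edges from any single pair $H[U,W]$ and there are at most $|\mathcal J|\le n/4$ systems in total, a counting argument (the total demand on each pair $H[U,W]$ is at most $\sum_{J}(\text{edges of }PS_J\text{ in }H[U,W])$, which one bounds using~(ii) to control how often a given vertex is used) shows the regularity of $H$ suffices, provided the constant $11K+248/K$ is chosen large enough — this is precisely why that constant appears in~(iii). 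Finally one checks~(BE1)–(BE3) for the resulting set $\mathcal{BE}$: (BE1) is local balance plus edge-disjointness of the $PS_J-J^*_{\rm dir}$, just established; for~(BE2) and~(BE3) one notes $|V(PS_J)\cap V|$-intersections with any cluster are at most (number of extension edges through that cluster) $+\,|V(J^*_{\rm dir})\cap V|\le O(\eps n)\le 12\eps K m$, and the `moreover' parts follow since at most $|\mathcal J|/(2K)\le m/K^4\cdot K^3 \le \ell m/(2K)$ systems demand any fixed cluster — giving parameters $(12\eps K,12)$. The main obstacle is the routing/bookkeeping in the first step: making precise which short winding paths to attach to each $f_t$ so that the union is simultaneously locally balanced, a $V_i$-extension, and uses few edges per cluster; once that combinatorial routing is pinned down (essentially as in the proof of Lemma~\ref{lma:bipartite:CEPS}), the disjointness and regularity accounting is routine given~(i)–(iii).
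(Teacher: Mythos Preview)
Your routing idea has a genuine gap in the verification of the `moreover' part of (BE3). If, for each edge $f_t$ of $J^*_{\rm dir}$, you follow $C$ forward from its endpoint back round to a designated cluster, then the resulting path sequence $PS_s$ touches $\Theta(K)$ clusters of $\mathcal{Q}$ (indeed, winding along $C$ forces you through every intermediate cluster). But (BE3) with $k=2K$ and $\ell=12$ requires that each cluster is touched by at most $6m/K$ of the path sequences, whereas $|\mathcal{J}|$ can be as large as $(1/4-\mu-\rho)n \approx Km/2$. Your inequality ``at most $|\mathcal J|/(2K)$ systems demand any fixed cluster'' is an averaging fallacy: if every $PS_s$ winds through every cluster, then every cluster is touched by all $|\mathcal J|$ systems. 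The analogy with Lemma~\ref{lma:bipartite:CEPS} is misleading, because there the routing is confined to a single interval of bounded length.

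The paper avoids this by a two-step local construction that keeps each $PS'_s$ confined to $O(1)$ clusters determined by $(i_1,i_2,i_3,i_4)$. First, it extends $J^*_{s,\rm dir}$ to an $A_{i_1}$-extension $PS_s$ by adding a single directed matching from $V(J^*_{s,\rm dir})\cap(B_{i_3}\cup B_{i_4})$ straight back to $A_{i_1}$ (using a sub-regular piece $H'$ of $H$), giving paths of length two. These are not locally balanced, so in a second step it pairs each edge $e_r$ of $PS_s$ with a single compensating edge $f_r$: if $e_r$ is an $A_iB_{i'}$-edge, take $f_r$ to be an $A_{i'}B_i$-edge; if $e_r$ is a $B_iA_{i'}$-edge, take $f_r$ to be a $B_{i'-1}A_{i+1}$-edge. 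One checks directly that each pair $\{e_r,f_r\}$ is locally balanced with respect to $C$, and the resulting $PS'_s$ meets only clusters in $\{A_{i_1},A_{i_2},A_{i_3+1},A_{i_4+1}\}\cup\{B_{i_1-1},B_{i_3},B_{i_4},\dots\}$. The moreover part of (BE3) then follows from~(i), since for fixed $i$ at most $O(K^3)\cdot m/K^4 = O(m/K)$ of the $\mathcal J_{i_1i_2i_3i_4}$ have an index equal to $i$. This local cancellation, not global winding, is the missing idea.
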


The proof proceeds roughly as follows. Consider any $J \in \mathcal{J}_{\I}$.
We extend $J^*_{\rm dir}$ into a locally balanced path sequence in two steps.
For this, recall that $J^*_{\rm dir}$ consists only of edges from $A_{i_1} \cup A_{i_2}$ to $B_{i_3} \cup B_{i_4}$.%
    \COMMENT{Daniela: reworded}
In the first step, we construct a path sequence $PS$ that is an $A_{i_1}$-extension of $J^*_{\rm dir}$ by adding suitable $B_{i_3}A_{i_1}$- and $B_{i_4}A_{i_1}$-edges
from~$H$ to $J^*_{\rm dir}$. In the second step, we locally balance $PS$ in such a way that (BE1)--(BE3) are satisfied.%
    \COMMENT{Deryk: reworded}

\proof
First we decompose $H$ into $H'$ and $H''$ such that $H'[A_i,B_{i'}]$ is a $11 \eps K m$-regular graph for all $i,i' \le K$ and $H'' := H - H'$.
Hence $H''[A_i,B_{i'}]$ is a $248 \eps m/K$-regular graph for all $i,i' \le K$.

Write $\mathcal{J} := \{ J_1, \dots, J_{|\mathcal{J}|} \}$.
For each $s \le |\mathcal{J}|$, we will extend $J^*_{s,{\rm dir}}:=(J_s)^*_{\rm dir}$ into a path sequence $PS_s$ satisfying the following conditions:
\begin{itemize}
	\item[\rm ($\alpha_s$)] Suppose that $J_s \in \mathcal{J}_{\I}$.
Then $PS_s$ is an $A_{i_1}$-extension of $J^*_{s,{\rm dir}}$ consisting of precisely $e(J^*_{s})$ vertex-disjoint directed paths of length two.
	\item[\rm ($\beta_s$)] $V(PS_s) = V(J^*_{s,{\rm dir}}) \cup A'_s$, where $A'_s \subseteq A_{i_1} \setminus V(J^*_{s,{\rm dir}})$ and $|A'_s| = e(J^*_{s})$. 
	\item[\rm ($\gamma_s$)] $PS_s - J^*_{s,{\rm dir}}$ is a matching of size $e(J^*_{s})$ from $B'_s$ to $A'_s$, where $B'_s := V(J^*_{s,{\rm dir}}) \cap (B_{i_3} \cup B_{i_4})$.
	\item[\rm ($\delta_s$)] Let $M_s$ be the set of undirected edges obtained from $PS_s - J^*_{s,{\rm dir}}$ after ignoring all the orientations.
	Then $M_1, \dots, M_s$ are edge-disjoint matchings in $H'$.	
	\item[\rm ($\eps_s$)] $PS_s$ consists only of	edges from $A_{i_1} \cup A_{i_2}$ to $B_{i_3} \cup B_{i_4}$,  and 
from $B_{i_3} \cup B_{i_4}$ to $A_{i_1}$.
\end{itemize}
Note that ($\beta_s$) and ($\gamma_s$) together imply ($\eps_s$).
Suppose that for some $s$ with $1\le s \le |\mathcal{J}|$ we have already constructed $PS_1, \dots, PS_{s-1}$. We will now construct $PS_s$ as follows.
Let $\I$ be such that $J_s \in \mathcal{J}_{\I}$ and let  $H'_s:=H'-(M_1+ \dots +M_{s-1})$.
(BES4) implies that%
\COMMENT{Allan: added stackrel \eqref{BESeq}}
\begin{align}
e( J^*_{s, {\rm dir}} ) = e( J^*_s ) \stackrel{\eqref{BESeq}}{\le} e(J_s) \le \eps n \le 3 \eps K m 
\quad
\text{and}
\quad
|V(J^*_{s,{\rm dir}}) \cap A_{i_1}| \le 3\eps K m .
\label{J^*}
\end{align}
Consider any $s' < s$. Recall from the definition of $J^*_{s',{\rm dir}}$ that $V(J^*_{s',{\rm dir}})$ is the set of all those
vertices in $A\cup B$ which are covered by edges of $J_{s'}$. Together with ($\beta_{s'}$) and ($\gamma_{s'}$) this implies that
a vertex $v \in B$ is covered by~$M_{s'}$ if and only if $v$ is incident to an edge of $J_{s'}$.
Together with (ii) this in turn implies that for all $v \in B$ we have
\begin{align*}
d_{H'_s}( v , A_{i_1} ) & \ge  d_{H'}( v , A_{i_1} ) - \sum_{s' < s} d_{ M_{s'}}(v) 
\ge  11 K \eps m  - 2\eps n \\
& \ge 11 K \eps m - 5 K \eps m
 \stackrel{\eqref{J^*}}{\ge}  |V(J^*_{s,{\rm dir}}) \cap A_{i_1}|  + e(J^*_{s}).
\end{align*}
Note that $e ( J^*_s )=|V(J^*_{s,{\rm dir}}) \cap (B_{i_3}\cup B_{i_4})|=|B_s'|$. So we can
greedily find a matching $M_s$ of size $e ( J^*_s )$ in $H'_s[A_{i_1}\setminus V(J^*_{s,{\rm dir}}), B_s']$
(which therefore covers all vertices in $B_s'$).
Orient all edges of $M_s$ from $B_s'$ to $A_{i_1}$ and call the resulting directed matching $M_{s, {\rm dir}}$.
Set 
$$
PS_s:= J^*_{s,{\rm dir}} + M_{s, {\rm dir}}.
$$
Note that $PS_s$ consists of precisely $e(J^*_{s})$ directed paths of length two whose final vertices lie in $A_{i_1}$, so ($\alpha_s$)--($\eps_s$) hold by our construction.
This shows that we can obtain path sequences $PS_1, \dots, PS_{|\mathcal{J}|}$ satisfying ($\alpha_s$)--($\eps_s$)
for all $s\le |\mathcal{J}|$.%
   \COMMENT{Daniela: added for all $s\le |\mathcal{J}|$}

The following claim provides us with a `reservoir' of edges which we will use to balance out the edges of each $PS_s$ and thus extend
each $PS_s$ into a path sequence $PS'_s$ which is locally balanced with respect to~$C$.

\medskip

\noindent
{\bf Claim.} 
\emph{$H''$ contains $|\mathcal{J} |$ subgraphs $H''_1, \dots, H''_{|\mathcal{J} |}$ satisfying the following properties
for all $s\le |\mathcal{J} |$ and all $i,i'\le K$:
\begin{itemize}
	\item[\rm (a$_1$)] If $PS_s$ contains an $A_i B_{i'} $-edge, then $H''_s$ contains a matching between $A_{i'}$ and $B_{i}$ of size $30 \eps K m$.
	\item[\rm (a$_2$)] If $PS_s$ contains a $B_i A_{i'} $-edge, then $H''_s$ contains a matching between $A_{i+1}$ and $B_{i'-1}$ of size $30 \eps K  m$.
	\item[\rm (a$_3$)] $H''_1, \dots, H''_{|\mathcal{J} |}$ are edge-disjoint
	and for all $s\le |\mathcal{J} |$ the matchings guaranteed by {\rm (a$_1$)} and {\rm (a$_2$)} are edge-disjoint.
\end{itemize}
}
\noindent
So if $PS_s$ contains both an $A_i B_{i'} $-edge and a $B_{i'-1} A_{i+1} $-edge, then $H''_s$ contains a matching between $A_{i'}$ and $B_{i}$ of size  $60 \eps K m$.
\smallskip

\noindent
To prove the claim, first recall that $H''[A_i,B_{i'}]$ is a $248 \eps m/K$-regular graph for all $i,i' \le K$.
So $H''[A_{i},B_{i'}]$ can be decomposed into $248 \eps m/K$ perfect matchings. 
Each perfect matching can be split into $1/(31\eps K) $ matchings, each of size at least $30 \eps K  m$.
Therefore $H''[A_{i},B_{i'}]$ contains $8m/K^2$ edge-disjoint matchings, each of size at least $30 \eps K  m$.
(i) and ($\eps_s$) together imply that for any $i,i' \le K$, the number of $PS_s$ containing an $A_iB_{i'}$-edge is at most 
$$ \sum_{(\I)\ : \ i \in \{i_1,i_2\}, \ i' \in \{i_3,i_4\}} | \mathcal{J}_{\I} | \le 4 m/K^2.$$
Recall that $H''[A_{i'},B_{i}]$ contains $8m/K^2$ edge-disjoint matchings, each of size at least $30 \eps m$.
Thus we can assign a distinct matching in $H''[A_{i'},B_{i}]$ of size $30 \eps m$ to each $PS_s$ that contains an $A_{i}B_{i'}$-edge. 
Additionally, we can also assign a distinct matching in $H''[A_{i+1},B_{i'-1}]$ of size $30 \eps m$ to each $PS_s$ that contains a $B_{i}A_{i'}$-edge.%
    \COMMENT{Here we use that the number of $PS_s$ containing an $B_{i}A_{i'}$-edge is at most 
$\sum_{(i_2,i_3,i_4)\ : \ i \in \{i_3,i_4\}} | \mathcal{J}_{i',i_2,i_3,i_4} | \le 2 m/K^2.$}
For all $s \le |\mathcal{J}|$, let%
\COMMENT{Andy: set becomes let}
$H_s''$ be the union of all those matchings assigned to $PS_s$.%
    \COMMENT{Deryk: reworded}
Then $H_1'', \dots, H''_{|\mathcal{J} |}$ are as desired in the claim.
\medskip

For each $s\le |\mathcal{J} |$, we will now add suitable edges from $H''_s$ to $PS_s$ in order to obtain a path sequence $PS_s'$ which
is locally balanced with respect to $C$. So fix $s \le |\mathcal{J}|$ and let $e_1, \dots, e_{\ell}$ denote the edges of $PS_s$.
Note that $\ell = 2e(J^*_s) \le 6 K \eps m $ by~($\gamma_s$) and~(\ref{J^*}).%
    \COMMENT{Deryk: had $\ell = 2e(J^*_s) \le 2 \eps n \le 6 K \eps m $}
For each $r \le \ell$, we will find a directed edge $f_r$ satisfying the following conditions: 
\begin{itemize}
	\item[\rm (b$_1$)] If $e_r$ is an $A_i B_{i'} $-edge, then $f_r$ is an $A_{i'}B_{i}$-edge.
	\item[\rm (b$_2$)] If $e_r$ is a $B_i A_{i'} $-edge, then $f_r$ is a $B_{i'-1}A_{i+1}$-edge.
	\item[\rm (b$_3$)] The undirected version of $\{f_1, \dots, f_{\ell}\}$ is a matching in $H''_s$ and vertex-disjoint from $V(PS_s)$.
\end{itemize}
Suppose that for some $r \le \ell$ we have already constructed $f_1, \dots, f_{r-1}$.
Suppose that $e_r$ is an $A_i B_{i'} $-edge. (The argument for the other case is similar.) 
By~(a$_1$), $H''_s[A_{i'},B_{i}]$%
    \COMMENT{Daniela: had $H''_s[A_i,B_{i'}]$}
contains a matching of size $30 K \eps m $.
Note by ($\alpha_s$) and (b$_3$) that 
$$ |V(PS_s \cup \{f_1, \dots, f_{r-1} \})| \le 3 e(J^*_s) + 2(r-1) < 5 \ell \le 30 K \eps m. $$
Hence there exists an edge in $H''_s[A_{i'},B_{i}]$ that is vertex-disjoint from $PS_s \cup \{f_1, \dots,$ $ f_{r-1} \} $.
Orient one such edge from $A_{i'}$ to $B_{i}$ and call it $f_r$.
In this way, we can construct $f_1, \dots, f_{\ell}$ satisfying (b$_1$)--(b$_3$).

Let $PS'_s$ be digraph obtained from $PS_s$ by adding all the edges $f_1, \dots, f_{\ell}$.
Note that $PS'_s$ is a locally balanced path sequence with respect to~$C$.
(Indeed, $PS'_s$ is locally balanced since $\{e_r,f_r\}$ is locally balanced for each $r \le \ell$.)
Let $\I$ be such that $J\in\mathcal{J}_{\I}$. Then the following properties hold:
\begin{itemize}
	\item[(c$_1$)] $PS'_s$ is an $A_{i_1}$-extension of $J^*_{s,{\rm dir}}$.
	\item[(c$_2$)] $|V(PS_s') \cap A_i| , |V(PS_s') \cap B_i| \le 12 \eps K m$ for all $i\le K$.	
	\item[(c$_3$)] If $V(PS_s') \cap A_i \ne \emptyset$, then $i \in \{ \I, i_3+1 ,i_4+1 \}$.
	\item[(c$_4$)] If $V(PS_s') \cap B_i \ne \emptyset$, then $i \in \{ i_1-1, \I\}$.
\end{itemize}
Indeed, (c$_1$) is implied by ($\alpha_s$) and the definition of $PS'_s$.
Since $e(PS'_s) = 2 e(PS_s) = 4 e(J^*_s)$, (c$_2$) holds by~\eqref{J^*}.
Finally, (c$_3$) and (c$_4$) are implied by ($\eps_s$), (b$_1$) and (b$_2$) as $J_s \in \mathcal{J}_{\I}$.
 
Note that $PS'_1-J^*_{1,{\rm dir}}, \dots, PS'_{|\mathcal{J}|}-J^*_{|\mathcal{J}|,{\rm dir}}$ are%
     \COMMENT{Daniela: had $PS'_1, \dots, PS'_{|\mathcal{J}|}$}
pairwise edge-disjoint and let $\mathcal{BE}:=\{PS'_1, \dots, PS'_{|\mathcal{J}|}\}$.
We claim that $\mathcal{BE}$ is a balanced extension of $\mathcal{J}^*_{\rm dir}$ with respect to $(\mathcal{Q},C)$ and
parameters $( 12 \eps K, 12)$.
To see this, recall that $\mathcal{Q}=\{A_1$,$\dots$,$A_K$, $B_1$,$\dots$,$B_K\}$ is a $(2K,m)$-equipartition
of $V':=V\setminus (A_0\cup B_0)$. Clearly, (BE1) holds with $V'$ playing the role of~$V$.
(c$_3$) and (i) imply that for every $i \le K$ there are at most $6m/K$ $PS_s' \in \mathcal{BE}$ such that $V(PS_s') \cap A_i \ne \emptyset$.
A similar statement also holds for each~$B_i$.
So together with (c$_2$), this implies (BE3), where $2K$ plays the role of $k$ in (BE3).%
    \COMMENT{Need parameters $( 12 \eps K, 12)$ since we have $2K$ clusters and so we need that $6m/K=12m/2K$}
As remarked after the definition of a balanced extension, this implies the `moreover part' of (BE2). So (BE2) holds too.%
    \COMMENT{Daniela: previously had "Given $i \le K$, the number of $J$ such that $J \in \mathcal{J}_{i,i_2,i_3,i_4}$ for some $i_2,i_3,i_4$ is at most $m/K$ by~(i).
So (BE2) follows from (c$_1$), where $2K$ plays the role of $k$ in (BE2)."}
Therefore $\mathcal{BE}$ is a balanced extension, so the lemma follows (by orienting the remaining edges of $H$ arbitrarily).
\endproof

\removelastskip\penalty55\medskip\noindent{\bf Proof of Lemma~\ref{almostthmbip}. }
Apply Lemma~\ref{sysdecombip} to obtain tuples $(G_{j}, \mathcal{Q}, C_{j}, H_{j}, \mathcal{J}_{j})$ for all $j \le K/2$
satisfying (a$_1$)--(a$_6$). Fix $j \le K/2$ and write $\mathcal{J}_{j} :=\{J_{j,1}\dots,J_{j,|\mathcal{J}_{j}|}\}$.
Next, apply Lemma~\ref{balanceextensionbip} with $\mathcal{J}_{j},C_{j}, H_{j},  \eps_{0}$ playing the roles
of $\mathcal{J}, C, H, \eps$ to obtain an orientation $H_{j,{\rm dir}}$ of $H_j$ and a balanced extension $\mathcal{BE}_j$ of $\mathcal{J}_{j}$ with respect to
$(\mathcal{Q}, C_{j})$ and parameters $(12 \eps_0 K,12)$.
(Note that (a$_3$) and (a$_5$) imply conditions (i) and (iii) of Lemma~\ref{balanceextensionbip}.
Condition~(ii) follows from Lemma~\ref{almostthmbip}(d).)
So we can write $\mathcal{BE}_j :=\{PS_{j,1}\dots,PS_{j,|\mathcal{J}_{j}|}\}$ such that $(J_{j,s})^*_{\rm dir} \subseteq PS_{j,s}$ for all $s \le |\mathcal{J}_{j}|$.
Each path sequence in $\mathcal{BE}_j$ is obtained from some $(J_{j,s})^*_{\rm dir}$ by adding edges
of $H_{j,{\rm dir}}$.
Since $(G_{j, {\rm dir}}, \mathcal{Q}, C_{j})$ is a $(2K,m, 4 \mu, 5/K)$-cyclic system by Lemma~\ref{sysdecombip}(a$_6$), we can apply Lemma~\ref{merging} 
as follows:
\smallskip
\begin{center}
  \begin{tabular}{ r | c | c | c | c | c | c | c | c | c | c |c}
& $G_{j, {\rm dir}}$ & $\mathcal{Q}$ &  $C_{j}$  &  $2K$ & $\mathcal{J}^*_{j,{\rm dir}}$ & $|\mathcal{J}_j|$ & $4 \mu$ & $3\rho$ & $12\eps_{0}K$ & $5/K$ & $12$ \\ \hline
plays role of & $G$ & $\mathcal{Q}$ &  $C$ & $k$ & $\mathcal{M}$ & $q$ & $\mu$ & $\rho$ &  $\eps_0$ & $\eps'$ & $\ell$ \\
 \end{tabular}
\end{center}
\smallskip
\noindent
This gives us $|\mathcal{J}_{j}|$ directed Hamilton cycles $C'_{j,1}, \dots, C'_{j,|\mathcal{J}_{j}|}$ in $G_{j, {\rm dir}} + \mathcal{BE}_j$
such that each $C'_{j,s}$ contains $PS_{j,s}$ and is consistent with $(J_{j,s})^*_{\rm dir}$.
Moreover, (a$_4$) implies that%
     \COMMENT{Daniela: added (a$_4$) implies that}
$C'_{j,1} - (J_{j,1})^*_{\rm dir}, \dots, C'_{j,|\mathcal{J}_{j}|} - (J_{j,|\mathcal{J}_{j}|})^*_{\rm dir}$ are edge-disjoint subgraphs of $G_{j, {\rm dir}}+ H_{j, {\rm dir}}$.%
	\COMMENT{Allan: more details added.}
Repeat this process for all $j \le K/2$.

Recall from Lemma~\ref{sysdecombip}(a$_2$) that $\mathcal{J}_{1}, \dots, \mathcal{J}_{K/2}$ is a partition of $\mathcal{J}$.
Thus%
     \COMMENT{Daniela: had "Together with (a$_4$) this implies that" instead of "Thus" and "edge-disjoint directed Hamilton cycles" instead of
"directed Hamilton cycles" below}
we have obtained $|\mathcal{J}|$ directed Hamilton cycles 
$C'_{1}, \dots, C'_{|\mathcal{J}|}$ on $A\cup B$ such that each $C'_{s}$ is consistent with $(J_s)^*_{\rm dir}$ for some $J_s\in \mathcal{J}$
(and $J_s\neq J_{s'}$ whenever $s\neq s'$). 
Let $H_s$ be the undirected graph obtained from $C'_{s}-J_s^* +J_s$ by ignoring all the orientations of the edges.
Since $J_1, \dots, J_{|\mathcal{J}|}$ are edge-disjoint exceptional systems, $H_1, \dots, H_{|\mathcal{J}|}$ are edge-disjoint spanning subgraphs of $G$.
Finally, Proposition~\ref{CES-H2} implies that $H_1, \dots, H_{|\mathcal{J}|}$ are indeed as desired in Lemma~\ref{almostthmbip}.
\endproof

\section*{Acknowledgement}

We are grateful to Katherine Staden for drawing Figure~\ref{fig:eps}.

\backmatter



\bibliographystyle{amsalpha}


\printindex
\end{document}